\let\counterwithout\relax
\title{Equivariant coarse homotopy theory  and coarse algebraic $\boldsymbol{K}$-homology}
\author{
Ulrich Bunke\thanks{Fakult{\"a}t f{\"u}r Mathematik,
Universit{\"a}t Regensburg,
93040 Regensburg,
Germany\newline
ulrich.bunke@mathematik.uni-regensburg.de} 
\and
Alexander Engel\thanks{Fakult{\"a}t f{\"u}r Mathematik,
Universit{\"a}t Regensburg,
93040 Regensburg,
Germany\newline
alexander.engel@mathematik.uni-regensburg.de}
\and
Daniel Kasprowski\thanks{
	Rheinische Friedrich-Wilhelms-Universit\"at Bonn, Mathematisches Institut, Endenicher Allee 60,\newline 53115 Bonn, Germany\newline
	kasprowski@uni-bonn.de
}
\and
Christoph Winges\thanks{
	Rheinische Friedrich-Wilhelms-Universit\"at Bonn, Mathematisches Institut, Endenicher Allee 60,\newline 53115 Bonn, Germany\newline
	winges@math.uni-bonn.de}
}
\numberwithin{equation}{section}
\newtheorem{theorem}{Theorem}[section] 
\newtheorem{prop}[theorem]{Proposition}
\newtheorem{lem}[theorem]{Lemma}
\newtheorem{kor}[theorem]{Corollary}
\theoremstyle{remark}
\theoremstyle{definition}
\newtheorem{ddd-alt}[theorem]{Definition}
\newtheorem{ex-alt}[theorem]{Example}
\newtheorem{rem-alt}[theorem]{Remark}
\newenvironment{ddd}    
{%
	\pushQED{\qed}\begin{ddd-alt}}
	{\popQED\end{ddd-alt}}
\newenvironment{ex}    
{%
	\pushQED{\qed}\begin{ex-alt}}
	{\popQED\end{ex-alt}}
\newenvironment{rem}    
{%
	\pushQED{\qed}\begin{rem-alt}}
	{\popQED\end{rem-alt}}
\newcommand{\CoarseHomologyTheories}{\mathbf{CoarseHomologyTheories}}
\newcommand{\All}{\mathbf{All}}
\newcommand{\UBC}{\mathbf{UBC}}
\newcommand{\Rips}{\mathrm{Rips}}
\newcommand{\free}{\mathrm{free}}
\newcommand{\indd}{\mathrm{ind}}
\newcommand{\Mor}{\mathrm{Mor}}
\newcommand{\Yo}{\mathrm{Yo}}
\newcommand{\yo}{\mathrm{yo}}
\newcommand{\Res}{\mathrm{Res}}
\newcommand{\Orb}{\mathbf{Orb}}
\newcommand{\BC}{\mathbf{BornCoarse}}
\newcommand{\cN}{\mathcal{N}}
\newcommand{\Fin}{\mathbf{Fin}}
\newcommand{\Cofib}{\mathrm{Cofib}}
\newcommand{\cP}{\mathcal{P}}
\newcommand{\cZ}{{\mathcal{Z}}}
\newcommand{\cL}{{\mathcal{L}}}
\newcommand{\PSh}{{\mathbf{PSh}}}
\newcommand{\Add}{{\mathtt{Add}}}
\newcommand{\bA}{{\mathbf{A}}}
\newcommand{\cO}{{\mathcal{O}}}
\newcommand{\cU}{{\mathcal{U}}}
\newcommand{\cY}{{\mathcal{Y}}}
\newcommand{\cD}{{\mathcal{D}}}
 \DeclareMathOperator{\Cone}{Cone}
\newcommand{\cE}{{\mathcal{E}}}
\newcommand{\homolg}{\mathrm{hlg}}
\newcommand{\wfl}{\mathrm{wfl}}
\newcommand{\Coarse}{\mathbf{Coarse}}
\newcommand{\Spc}{\mathbf{Spc}}
\newcommand{\IN}{\mathbb{N}}
\newcommand{\IZ}{\mathbb{Z}}
\renewcommand{\Add}{\mathbf{Add}}
\renewcommand{\tilde}{\widetilde}
\DeclareMathOperator{\Nat}{Nat}
\begin{document}
\maketitle

\begin{abstract}
We study equivariant coarse homology theories through an axiomatic framework. To this end we introduce the category of equivariant bornological coarse spaces and construct the universal equivariant coarse homology theory with values in the category of equivariant coarse motivic spectra.

As examples of equivariant coarse homology theories we discuss equivariant coarse ordinary homology and equivariant coarse algebraic $K$-homology.

Moreover, we discuss the cone functor, its relation with equivariant homology theories in equivariant topology, and assembly and forget-control maps. This is a preparation for applications in subsequent papers aiming at split-injectivity results for the Farrell--Jones assembly map. 
\end{abstract}

\tableofcontents

\section{Introduction}

In this paper we study equivariant coarse homology theories.
We start with the equivariant generalization of the coarse homotopy theory developed by Bunke--Engel \cite{buen}. To this end we introduce the category of equivariant bornological coarse spaces and construct the universal equivariant coarse homology theory with values in the category of equivariant coarse motivic spectra.

As examples of equivariant coarse homology theories we discuss
equivariant ordinary coarse homology and equivariant coarse algebraic $K$-homology of an additive category.

An important application of equivariant coarse  homotopy theory is in the study of assembly maps which appear in isomorphism conjectures of Farrell--Jones or Baum--Connes type. The main tools for the transition between equivariant homology theories and equivariant coarse homology theories are the cone functor and the process of coarsification.  In this paper we give a detailed account of the cone functor and the construction of equivariant homology theories from equivariant coarse homology theories.  Then we introduce the coarsification functor and the forget-control map, and discuss its relation with the assembly maps.

The third part of the present paper provides the technical background for subsequent papers:
\begin{enumerate}
\item In \cite{transb} we show that a certain large scale geometric condition called finite decomposition complexity implies that a motivic version of the forget-control map is an equivalence.
\item In  \cite{desc} we study the descent principle. It states that under certain conditions the fact that the forget-control map becomes an equivalence after restriction of the action to all finite subgroups implies that it is split injective for the original (in general infinite) group. 

{We combine this with the results of \cite{transb} and the technical results of the present paper} to deduce split injectivity results for the original Farrell--Jones assembly map.
\item In \cite{ass} we study more formal aspects of the process of 
coarsification of homology theories and provide a general account for coarse assembly maps.
\end{enumerate}

A bornological coarse space is a set equipped with a coarse and a bornological structure such that these structures are compatible with each other. The category of bornological coarse spaces $\BC$ was introduced in \cite{buen} as a general framework for coarse geometry and {coarse} topology.  Interesting invariants of bornological coarse spaces up to {coarse} equivalence are coarse homology theories. In   \cite{buen} the examples of coarse ordinary homology and various coarse versions of topological $K$-homology {were discussed.} In order to study general properties of coarse homology theories the category of motivic coarse spectra $\Sp\cX$ {was constructed} as the target of the universal coarse homology theory
\[\Yo^{s}\colon\BC\to \Sp\cX\ .\]

One of the  motivations to consider equivariant coarse {algebraic topology} is that it appears as a building block of proofs that certain assembly maps are equivalences (Farrell--Jones conjecture \cite{Bartels:2011fk}, \cite{MR2030590} and Baum--Connes conjecture \cite{yu_baum_connes_conj_coarse_geom}).

 In \cref{iureghfiuwewefwefwefewf} we define for every group $\Gamma$  the  category {$\Gamma\BC$} of  $\Gamma$-bornological coarse spaces.  We provide various constructions of $\Gamma$-bornological coarse spaces from $\Gamma$-sets, metric spaces with isometric $\Gamma$-action, or $\Gamma$-simplicial complexes. We further show that the category of $\Gamma$-bornological coarse spaces admits coproducts and fiber products. It also has an interesting symmetric monoidal structure $\otimes$.

In \cref{eiwjweofewfewfew9} we introduce the notion of an equivariant coarse homology theory, and the terminology necessary to state its defining {properties:}
\begin{enumerate}
\item coarse invariance,
\item {coarse excision},
\item vanishing on flasques, and
\item $u$-continuity.
\end{enumerate}

Strongness {(introduced in \cref{secuiwe23g3r})} is an additional property which an equivariant coarse homology might have. It is important in order to interpret the forget-control map as a transformation between equivariant coarse homology theories.

In the present paper the most important additional property is continuity. We introduce this notion in 
\cref{oiefweiofwfewfiewfewfewf}.  Continuity is crucial if one wants to relate the forget-control map with the assembly map for the family of finite subgroups.

Following the line of thought of \cite{buen}, in \cref{e9werlgforeg99} we construct the universal equivariant coarse homology theory \[\Yo^{s}\colon\Gamma\BC\to \Gamma\Sp\cX\] with values in the category of equivariant coarse motivic spectra. Similarly, 
in \cref{oiefweiofwfewfiewfewfewf} we construct a universal continuous equivariant coarse homology theory 
\[\Yo_{c}^{s}\colon\Gamma\BC\to \Gamma\Sp\cX_{c}\ .\]

As examples of equivariant coarse homology theories, in \cref{weoihjowefewfewfewww} we introduce equivariant {coarse} ordinary homology $H\cX^{\Gamma}$ and equivariant coarse algebraic $K$-{homology} $K\bA\cX^{\Gamma}$ of an additive category $\bA$.  Our main results are the verification that the definitions indeed 
  satisfy the four defining properties of an equivariant coarse 
 homology theory. We also show that these examples have the additional properties of being strong, strongly additive and continuous.
 We calculate the evaluations of  these equivariant homology theories  on simple $\Gamma$-bornological coarse spaces. These calculations are important if one wants to understand which equivariant homology theories they induce after pull-back with the cone functor.

In most of the present paper we consider the theory for a  fixed group~$\Gamma$.
But in \cref{roigurgoregregreg} we  consider a homomorphism of groups $H\to \Gamma$ and provide various transitions from $H$-bornological coarse spaces to $\Gamma$-bornological coarse spaces and back. The most important examples are restriction and induction. 
Our  examples of equivariant coarse homology theories are defined for all groups, so in particular for $H$ and $\Gamma$. The group-change construction on the level of bornological coarse spaces are accomplished by natural transformations relating the evaluations of the $H$- and $\Gamma$-equivariant versions of the equivariant homology theories.  

In \cref{rgergiujoi345346456456} we introduce the category of $\Gamma$-uniform bornological coarse spaces $\Gamma\UBC$ and the cone functor
\[\cO\colon\Gamma\UBC\to \Gamma\BC\ .\] The construction of the cone is motivated by Bartels--Farrell--Jones--Reich \cite{MR2030590} and Mitchener \cite{MR1834777, mit}, and its  
main ingredient in the construction is the hybrid coarse structure first introduced by Wright \cite{nw} and studied in detail in \cite{buen}.
Our main technical results are homotopy invariance and excisiveness of the cone.  While the cone $\cO(X)$ depends on the coarse structure on $X$, its germ at $\infty$, {denoted by $\cO^{\infty}(X)$,} is essentially  independent of the coarse structure. So $\cO^{\infty}$ is very close to an equivariant homology theory. But it is still defined on $\Gamma\UBC$ and does not satisfy a wedge axiom.

In \cref{feijwifoewfffwfewfewf} we first review some general features of equivariant homotopy theory and then derive an equivariant  homology theory
\[\cO^{\infty}_{\homolg}\colon\Gamma\Top\to \Gamma\Sp\cX\]
from the functor $\cO^{\infty}$. We then introduce the classifying space $E_{\cF}\Gamma$ for a family $\cF$ of subgroups of $\Gamma$ and define the motivic assembly map as the map
\[\alpha_{E_{\cF}\Gamma}\colon\cO^{\infty}_{\homolg}(E_{\cF}\Gamma)\to 
\cO^{\infty}_{\homolg}(*)\]
induced by the morphism $E_{\cF}\Gamma\to *$. We discuss some conditions on a $\Gamma$-bornological coarse space $Q$ implying that  the twisted version $\alpha_{E_{\cF}}\otimes \Yo^{s}(Q)$   of the assembly map     becomes an equivalence.

In \cref{secjnk232332} we introduce the universal coarsification functor \[F^{\infty}\colon\Gamma\BC\to \Gamma\Sp\cX\] and the forget-control map
\[\beta\colon F^{\infty}\to \Sigma F^{0}\ ,\]
a natural transformation of functors from
$\Gamma\BC$ to $\Gamma\Sp\cX$.
The coarse geometry approach to the isomorphism conjectures provides conditions on a $\Gamma$-bornological coarse space $X$  
implying that the forget-control map $\beta_{X}\colon F^{\infty}(X)\to \Sigma F^{0}(X)$ is an equivalence, or becomes an equivalence after application of a suitable equivariant coarse homology theory. Since one is {also} interested in the assembly maps of equivariant homotopy theory like $\alpha_{E_{\cF}\Gamma}$ we provide a comparison between this assembly map and the forget-control~map.

\paragraph{Acknowledgements}
U.~Bunke and A.~Engel were supported by the SFB 1085 ``Higher Invariants''
funded by the Deutsche Forschungsgemeinschaft DFG. A.~Engel was furthermore supported by the Research Fellowship EN 1163/1-1 ``Mapping Analysis to Homology'', also funded by the DFG.
D.~Kasprowski and C.~Winges acknowledge support by the Max Planck Society. 

Parts of the present work were obtained during the
Junior Hausdorff Trimester Program “Topology” at the Hausdorff Research Institute for Mathematics (HIM) in Bonn. 

We would also like to thank Mark Ullmann for helpful discussions.

\part{General constructions}

\section{Equivariant bornological coarse spaces}

\subsection{Basic notions} \label{iureghfiuwewefwefwefewf}

In this section we introduce the equivariant version of the category of bornological coarse spaces introduced in \cite{buen}. We assume familiarity with \cite[Section 2]{buen}.

Let $\Gamma$ be a group. If $\Gamma$ acts on a bornological coarse space $X$ by automorphisms, then it acts on the set of coarse entourages $\cC$ of $X$. We let $\cC^{\Gamma}$ denote the partially ordered subset of  $\cC$ of entourages of $X$ which are fixed set-wise.

\begin{ddd}\label{vegieirt456456}
A \emph{$\Gamma$-bornological coarse space} is a bornological coarse space $X$  together with an action of $\Gamma$ by  automorphisms such that  $\cC^{\Gamma}$  
is cofinal in  $\cC$.

A  morphism  between $\Gamma$-bornological coarse spaces is a morphism of bornological coarse spaces  which is in addition $\Gamma$-equivariant.
\end{ddd}

We let $\Gamma\BC$ denote the category of $\Gamma$-bornological coarse spaces and morphisms. By considering a bornological coarse space  as a $\Gamma$-bornological coarse space with the trivial action we get a fully faithful functor
\begin{equation}\label{vreivjhoeivjoieveeerverv}
C\colon\BC\to \Gamma\BC\ .
\end{equation}

\begin{ex}
Let $X$ be a set with an  action of $\Gamma$ and $A\subseteq \cP(X\times X)^{\Gamma}$ be a family of $\Gamma$-invariant subsets. Then we can form  the  coarse structure $\cC := \cC\langle A\rangle$ generated by $A$. In this case $\cC^{\Gamma} $ is cofinal in $\cC$. Hence a $\Gamma$-coarse structure can be generated by a family of $\Gamma$-invariant entourages.
\end{ex}

\begin{rem}\label{ekfhwekjfkjewfewfwfwfewf}
Let $\Coarse $ denote the category of coarse spaces (i.e., sets with coarse structures) and controlled morphisms. We can consider $\Gamma$ equipped with  the minimal coarse structure $\cC\langle \diag_{\Gamma}\rangle$ as a group object in $\Coarse$. 

Let $X$ be a set with an action of a group $\Gamma$ and $\cC$ be a coarse structure {on $X$}.
The following conditions are equivalent:
\begin{enumerate}
\item $\cC^{\Gamma}$ is cofinal in $\cC$.
\item For every entourage $U$ in $\cC$ the set
$\bigcup_{\gamma\in \Gamma}(\gamma\times \gamma)(U)$ also belongs to $\cC$.
\item The action is a morphism $\Gamma\times X\to X$ in $\Coarse$.
\end{enumerate}
The proof is straightforward.

 We denote the category   of $\Gamma$-coarse spaces consisting of coarse spaces with a $\Gamma$-action satisfying the above conditions and equivariant and controlled maps by $\Gamma\Coarse$.
\end{rem}

\begin{ex}\label{eifjwfiowefewfe32453534}
We consider $\Gamma$ as a $\Gamma$-set with the left action. We furthermore let $\cB_{min}$ be the minimal bornology on $\Gamma$ consisting of the finite subsets. Finally,  we let  the coarse structure $\cC_{can}$ on $\Gamma$ be generated by the $\Gamma$-invariant sets
$\Gamma(B\times B)$ for all $B$ in $ \cB_{min}$.  

Then $(\Gamma,\cC_{can},\cB_{min})$ is a $\Gamma$-bornological coarse space called the  canonical $\Gamma$-bornological coarse space associated to $\Gamma$. We will denote it by $\Gamma_{can,min}$.
\end{ex}

\begin{ex}
Let $X$ be a set with an action of $\Gamma$. It gives rise to the $\Gamma$-bornological coarse space  $X_{min,min}$ with the minimal structures $\cB_{min}$ consisting of the finite subsets of $X$ and $\cC_{min}:=\cC\langle \diag_{X}\rangle$. We will use the notation $X_{min,min}$ for this $\Gamma$-bornological coarse space. 

For example, the identity of the underlying set of $\Gamma$ is  a morphism $\Gamma_{min,min}\to \Gamma_{can,min}$ of $\Gamma$-bornological coarse spaces.
\end{ex}

\begin{ex}
Let $X$ again be a set with an action of $\Gamma$. Then we can equip $X$ with the maximal bornological and coarse structures. In this way we get a $\Gamma$-bornological coarse space $X_{max,max}$.

Our notation convention is such that the first subcript indicates the coarse structure, while the second subscript reflects the bornological structure.

For example, we also have a $\Gamma$-bornological coarse space $X_{min,max}$ and morphisms of $\Gamma$-bornological coarse spaces
$X_{min,max}\to X_{max,max}$ {and $X_{min,max}\to X_{min,min}$} given by the identity of the underlying set of $X$.

Note that in general $X_{max,min}$ does not make sense since the minimal bornology is not compatible with the maximal coarse structure. 
\end{ex}

\begin{ex}
{Let} $(X,d)$ {be} a metric space with an isometric $\Gamma$-action. 
For $r$ in $(0,\infty)$ we consider the invariant entourages
\[U_{r}:=\{(x,y)\in X\times X\:|\:d(x,y) \le r\}\ .\]
The coarse structure associated to the metric is defined  by these entourages, i.e., given by
\[\cC_{d}:=\cC\langle \{U_{r}\:|\: r\in (0,\infty)\}\rangle\ .\]
Furthermore, the bornology associated to the metric is generated by the metrically bounded subsets, i.e., given by
\[\cB_{d}:=\cB\langle \{B(x,r)\:|\: x\in X\ , r\in (0,\infty)\}\rangle\ ,\]
where $B(x,r)$ denotes the metric ball of radius $r$ centered at $x$.

The  associated bornological coarse space $X_{d}:=(X,\cC_{d},\cB_{d})$ is a $\Gamma$-bornological coarse space. 

The identity of the underlying set of $X$ is a morphism {$X_{min,max}\to X_{d}$} of $\Gamma$-bornological coarse spaces.
\end{ex}

\begin{rem}
Let $\Gamma$ be a countable group equipped with a proper left invariant metric $d$. Then the $\Gamma$-bornological coarse spaces $\Gamma_d$ and $\Gamma_{can,min}$ are equal.
\end{rem} 

\begin{ex}\label{wefoiweofiewf}
 Let $X$ be a $\Gamma$-bornological coarse space with the coarse structure $\cC$ and the bornology $\cB$. For every invariant entourage $U$ in $\cC$ we consider the coarse structure
 $\cC_{U}:=\cC\langle \{U\}\rangle$. The coarse structure $\cC_{U}$ is compatible with $\cB$. We let
 \[X_{U}:=(X,\cC_{U},\cB)\]
 denote the resulting $\Gamma$-bornological coarse space. 
 The identity of the underlying set is a morphism of $\Gamma$-bornological coarse spaces $X_{U}\to X$. If $U^{\prime}$ is a second invariant entourage such that $U\subseteq U^{\prime}$, then we also have a morphism $X_{U}\to X_{U^{\prime}}$.
This construction is important for the formulation of the $u$-continuity condition in \cref{fewoijfewoifewofww45535345345}.
\end{ex}

\begin{ex}\label{exjk22332}
Let $X$ be a  $\Gamma$-bornological coarse space with coarse structure $\cC$ and bornology $\cB$, and let $Z$ be a $\Gamma$-invariant subset of~$X$.
Then we define the induced coarse structure and bornology on $Z$ as follows:
\begin{enumerate}
\item $\cC_{Z}:=\{(Z\times Z)\cap U\:|\: U\in \cC\}$
\item $\cB_{Z}:=\{Z\cap B\:|\: B\in \cB\}$.
\end{enumerate}
Then $Z_{X}:=(Z,\cC_{Z},\cB_{Z})$ is a $\Gamma$-bornological coarse space. The inclusion $Z_{X}\to X$ is a morphism of $\Gamma$-bornological coarse spaces.
\end{ex}

\begin{ex}
If $\Gamma$ acts on the underlying set of a bornological coarse space $ (X,\cC,\cB)$, then we can define a $\Gamma$-bornological coarse space $\Gamma X:=(X,\Gamma \cC,\Gamma \cB)$, where
\[  \Gamma\cC:=\cC\left\langle\left\{{\bigcup_{\gamma\in\Gamma}(\gamma\times\gamma)(U)}\:|\: U\in \cC\right\}\right\rangle\] and \begin{equation}\label{cjdhciuchiuchwc}  \Gamma\cB:=\cB\langle\{U[{\gamma} B]\:|\: U\in \Gamma\cC, {\gamma\in\Gamma} \text{ and } B\in \cB\}\rangle\ .\end{equation}
In general we must enlarge the bornology $\cB$ to $\Gamma\cB$ as described above in order to keep it compatible with the new coarse structure.
 
If $X$ was a $\Gamma$-bornological coarse spaces, then $\Gamma X=X$.
\end{ex}

\begin{ddd}\label{goihreiugiuuz34r34t3t3t}
Let $\Gamma$ denote a group and let $X$ be a set with an action of $\Gamma$. If $\cB$ is a bornology on $X$, then we let $\cB_{\Gamma}$ denote the bornology on $X$ which is generated by the sets $\Gamma B$ for all $B$ in $\cB$.
 
Let $(X,\cC,\cB)$ be a $\Gamma$-bornological coarse space.  Then the  new  bornology $\cB_{\Gamma}$ is compatible with the original coarse structure. The \emph{$\Gamma$-completion} of $(X,\cC,\cB)$ is defined to be the $\Gamma$-bornological coarse space $(X,\cC,\cB_{\Gamma})$.

Let $(X,\cC,\cB)$ be a $\Gamma$-bornological coarse space and $Y$ be a  subset of $X$. The subset $Y$ is called \emph{$\Gamma$-bounded} if it belongs to $\cB_{\Gamma}$.
\end{ddd}

\begin{ex}\label{fewkfjwklefjewlkfewfewe}
Let $(X,\cB)$ be a bornological space with an action of $\Gamma$ by proper maps.  We say that $\Gamma$ acts \emph{properly} if for every $B$ in $\cB$ the set $\{\gamma\in \Gamma\:|\: \gamma B\cap B\not=\emptyset\}$ is finite.

We define the coarse structure $\cC_\cB$ on $X$ {to be} generated by the $\Gamma$-invariant  entourages $U_{B}:=\Gamma(B\times B)$ for all $B$ in $\cB$.
If $\Gamma$ acts properly, then the bornological structure is compatible with this coarse structure and we get a $\Gamma$-bornological coarse space ${(X,\cC_\cB,\cB)}$.
\end{ex}

\begin{ex}
Let $X$ be a $\Gamma$-complete $\Gamma$-bornological coarse space with coarse structure $\cC$ and bornology $\cB$.
We equip the quotient set $\bar X:=\Gamma\backslash X$ with the maximal bornology $\bar \cB$ such that the projection $q\colon X\to \bar  X$ is proper, i.e.,
\[\bar \cB=\cB\langle\{\bar B\subseteq \bar X\:|\: q^{-1}(\bar B)\in \cB\}\rangle\ .\]
We furthermore equip $\bar X$ with the minimal coarse structure $\bar\cC$ such that $q$ is controlled, i.e.,
\[\bar \cC:=\cC\langle\{{(q\times q)}(U)\:|\: U\in \cC\}\rangle\ .\]
Then $\bar \cC$ and $\bar \cB$ are compatible and we obtain a bornological coarse space $(\bar X,\bar \cC,\bar \cB)$.

This construction produces a functor
\[Q\colon\Gamma\BC\to \BC\ , \quad Q(X,\cC,\cB):=(\bar X,\bar \cC,\bar \cB)\ .\]
It is easy to check that the morphism of bornological coarse spaces
$q\colon(X,\cC,\cB)\to (\bar X,\bar \cC,\bar \cB)$ can be interpreted as the unit of an adjunction
\[Q\colon\Gamma\BC\leftrightarrows \BC: C\ ,\]
where $C$ is the inclusion \eqref{vreivjhoeivjoieveeerverv}.
\end{ex}

\begin{lem}
The category $\Gamma\BC$ admits arbitrary coproducts and cartesian products.
\end{lem}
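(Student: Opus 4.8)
The plan is to construct both limits and colimits explicitly on underlying sets and then verify that the resulting coarse and bornological structures satisfy the cofinality condition of \cref{vegieirt456456}. For the \emph{coproduct} of a family $(X_i)_{i\in I}$ with structures $(\cC_i,\cB_i)$, I would take the disjoint union $X:=\coprod_{i\in I} X_i$ of underlying $\Gamma$-sets, equip it with the coarse structure $\cC$ generated by $\bigcup_{i\in I}\cC_i$ (viewing each $U\in\cC_i$ as an entourage of $X$ supported on $X_i\times X_i$), and with the bornology $\cB$ generated by $\bigcup_{i\in I}\cB_i$. One checks that these are compatible (every bounded set meets only finitely many, in fact one, summand, and the diagonal is contained in the union of the diagonals) and that the inclusions $X_i\to X$ are morphisms of $\Gamma$-bornological coarse spaces. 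For the cofinality of $\cC^\Gamma$: given $U\in\cC$, it is contained in a finite union of entourages, each lying in some $\cC_i$; replacing each by a $\Gamma$-invariant entourage in $\cC_i^\Gamma$ above it (possible since each $X_i$ is a $\Gamma$-bornological coarse space) and taking their union yields a $\Gamma$-invariant entourage of $X$ above $U$. The universal property is then immediate from the fact that a morphism out of a disjoint union is the same as a compatible family of morphisms, and the generating sets of $\cC$ and $\cB$ are designed precisely so that such a family is controlled and proper exactly when each component is.

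For the \emph{cartesian product} of $(X_i)_{i\in I}$, I would take the product set $X:=\prod_{i\in I} X_i$ with the diagonal $\Gamma$-action, the coarse structure $\cC$ consisting of all $U\subseteq X\times X$ such that $(\pr_i\times\pr_i)(U)\in\cC_i$ for every $i$ (equivalently, the coarse structure generated by $\prod_{i\in I} U_i$ with $U_i\in\cC_i$ and $U_i=\diag$ for all but finitely many $i$ — I would check these descriptions agree), and the bornology $\cB$ generated by sets $B$ with $\pr_i(B)\in\cB_i$ for all $i$; more care is needed here, since compatibility forces one to take the bornology generated by products $\prod B_i$ with $B_i\in\cB_i$ and $B_i=X_i$ for all but finitely many $i$ intersected appropriately, or simply the bornology of sets whose image under each $\pr_i$ is bounded and which are ``small'' — I would pin down the exact definition making $\cC$ and $\cB$ compatible and the projections $\pr_i$ morphisms. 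The cofinality condition holds because if $U\in\cC$ then each $(\pr_i\times\pr_i)(U)$ sits below some $U_i\in\cC_i^\Gamma$, and the preimage $\bigcap_i(\pr_i\times\pr_i)^{-1}(U_i)$ is a $\Gamma$-invariant entourage above $U$. The universal property follows from the universal property of the product of sets together with the observation that a map into $X$ is controlled (resp. proper) iff all its components are, which is exactly what the definitions of $\cC$ and $\cB$ encode.

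The main obstacle I expect is getting the bornology on the \emph{product} right. The naive choice ``$B$ is bounded iff every $\pr_i(B)$ is bounded'' is typically \emph{not} compatible with the product coarse structure (a $U$-thickening of such a $B$ can fail to have bounded projections when $I$ is infinite), so one must instead use the bornology generated by the ``box'' sets $\prod_i B_i$ with $B_i\in\cB_i$ and $B_i=X_i$ for cofinitely many $i$, and then verify both compatibility with $\cC$ and that the projections remain proper morphisms — the latter requiring that the preimage under $\pr_j$ of a bounded set of $X_j$ is bounded in $X$, which is true for this box bornology since $\pr_j^{-1}(B_j)=B_j\times\prod_{i\ne j}X_i$ is a generating box set. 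For finite $I$ this is routine, and for infinite $I$ it is the one place where the structures genuinely differ from the set-theoretic product and where I would spend the bulk of the verification. Everything else — checking the structures are coarse/bornological structures, are $\Gamma$-invariant appropriately, the maps are morphisms, and the universal properties hold — is a straightforward unwinding of definitions.
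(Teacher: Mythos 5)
Your coproduct bornology is wrong, and with it the universal property fails. You equip $\coprod_{i\in I}X_i$ with the bornology generated by $\bigcup_i\cB_i$ and assert that every bounded set meets only one (or at most finitely many) summand. But take $I$ infinite, every $X_i=*$ and $Y=*$: each $f_i\colon X_i\to Y$ is a morphism, yet the induced map $f\colon\coprod_i X_i\to *$ has $f^{-1}(*)=\coprod_i X_i$, which meets every summand and is therefore unbounded in your bornology, so $f$ is not proper. The coproduct bornology must instead declare a subset bounded if and only if its intersection with each $X_i$ is bounded --- this is exactly what the paper quotes from \cite[Lemma~2.24]{buen} in the proof of \cref{foifjoijoiiofiuewoifuoweffewf} --- and this does permit bounded sets spread over infinitely many summands, which is what makes the universal map proper. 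The paper's own proof of the present lemma is a one-liner that sidesteps these structure checks entirely: it takes the coproduct (resp.\ product) of the underlying bornological coarse spaces in $\BC$, known to exist from \cite{buen}, and equips it with the induced $\Gamma$-action; only the cofinality of $\cC^\Gamma$ then needs verifying, which follows summand-by-summand.

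Your product construction eventually lands on the right bornology, namely the one generated by the cylinders $\pr_j^{-1}(B_j)$, matching the explicit construction in the proof of \cref{grioeo34jigreergreg}. Two smaller slips remain. First, the two descriptions of the product coarse structure you tentatively equate are \emph{not} the same when $I$ is infinite: the coarse structure generated by \emph{all} products $\prod_iU_i$ with $U_i\in\cC_i$ is strictly larger than the box version in which $U_i=\diag$ for cofinitely many $i$; your primary description --- all $U$ with $(\pr_i\times\pr_i)(U)\in\cC_i$ for every $i$ --- is the correct one. Second, the empty product must be excluded, since $\Gamma\BC$ has no terminal object (\cref{houhgoihbgioanionioghio}).
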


\begin{proof}
The coproduct of  a family of  $\Gamma$-bornological coarse spaces is represented by the coproduct of the underlying  bornological coarse spaces with the induced $\Gamma$-action.

Similarly, the cartesian product of a family of $\Gamma$-bornological coarse spaces is represented by the cartesian product of the family of   the underlying bornological coarse spaces with the induced $\Gamma$-action.
\end{proof}
 
For more information about limits and colimits in $\Gamma\BC$ we refer to Section~\ref{griojoregjegegg}.

\begin{ex}\label{efwifuhwfowefewfewfwef}
We consider a family $(X_{i})_{i\in I}$ of $\Gamma$-bornological coarse spaces.  We define the free union
$\bigsqcup_{i\in I}^{\free}X_{i}$ as follows:
\begin{enumerate}
\item The  underlying $\Gamma$-set of the free union  is the disjoint union of $\Gamma$-sets $\bigsqcup_{i\in I}X_{i}$.
\item The coarse structure of the free union is generated by entourages $\bigsqcup_{i\in I} U_{i}$ for all families $(U_{i})_{i\in I}$, where $U_{i}$ is an entourage of $X_{i}$ for every $i$ in $I$.
\item The bornology is generated by the set
$\{B\mid B\in \cB(X_i), i\in I\}$ of subsets of $\bigsqcup_{i\in I}X_{i}$.
\end{enumerate}
If $I$ is finite, then the free union is the coproduct of the family. In general, we have a morphism of $\Gamma$-bornological coarse spaces
\[\coprod_{i\in I} X_{i}\to \bigsqcup_{i\in I}^{\free}X_{i}\]
{induced by the identity of the underlying sets.}
\end{ex}

\begin{ex}\label{fijwefiewofejfoiewjfoijojoi}
Let $X$ and $X^{\prime}$ be two $\Gamma$-bornological coarse spaces. Then we can form the $\Gamma$-bornological coarse space $X^{\prime}\otimes X$ whose coarse structure is the one of the cartesian product and the bornology is generated by the products $B^{\prime}\times B$ for bounded subsets $B^{\prime}$ of $X^{\prime}$ and $B$ of $X$.
This construction defines a symmetric monoidal structure
\[-\otimes -\colon\Gamma\BC\times \Gamma\BC\to \Gamma\BC\]
with tensor unit given by the one-point space.

Let $Y$ be a $\Gamma$-set. We can form the space $Y_{min,min}\otimes  X$. For a second $\Gamma$-set $Y^{\prime}$ we have a canonical isomorphism
\[(Y^{\prime}\times Y)_{min,min}\otimes X\cong Y_{min,min}^{\prime}\otimes (Y_{min,min}\otimes X)\ .\qedhere\]
\end{ex}

\subsection{Limits and colimits in \texorpdfstring{$\Gamma\BC$}{GammaBornCoarse}}\label{griojoregjegegg}

In this section we show that the category $\Gamma \BC$ admits all limits of non-empty diagrams and various colimits.
We furthermore discuss some special cases.

Let $\Gamma $ be a group.
In the following arguments we let
\begin{equation}
\label{eq_forget_to_set}
\iota\colon\Gamma \BC\to \Gamma \Set
\end{equation}
be the forgetful functor.
For a $\Gamma $-bornological coarse space $X$ we let $\cB_{X}$ and $\cC_{X}$ denote its bornology and coarse structure.

\begin{prop}\label{grioeo34jigreergreg}\mbox{}\begin{enumerate}\item 
The category  $\Gamma\Coarse$ admits all small limits. \item The category   $\Gamma \BC$ admits all limits of diagrams indexed by non-empty small categories.\end{enumerate}
\end{prop}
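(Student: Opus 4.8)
The plan is to construct limits explicitly on the level of underlying $\Gamma$-sets via the forgetful functor $\iota$ of \eqref{eq_forget_to_set}, and then to equip the limit set with the initial coarse and bornological structures making the projection maps morphisms. For part (1), given a small diagram $(X_i)_{i\in I}$ in $\Gamma\Coarse$, I would first form the limit $Y := \lim_{i\in I} \iota(X_i)$ in $\Gamma\Set$ (which exists since $\Gamma\Set$ is complete, being a functor category into $\Set$), together with the projections $p_i\colon Y\to X_i$. On $Y$ I would put the coarse structure $\cC_Y$ generated by the entourages $\bigcap_{i\in I}(p_i\times p_i)^{-1}(U_i)$ ranging over all families $(U_i)_{i\in I}$ with $U_i\in\cC_{X_i}$; equivalently, $\cC_Y$ is the smallest coarse structure making every $p_i$ controlled. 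One checks this is a coarse structure (closed under subsets, finite unions, inverses, compositions — using that each $(p_i\times p_i)^{-1}$ preserves these operations) and that the $\Gamma$-action on $Y$ is controlled, i.e.\ that $\cC_Y^\Gamma$ is cofinal in $\cC_Y$: this follows from \cref{ekfhwekjfkjewfewfwfwfewf}, since for each generating entourage $U=\bigcap_i (p_i\times p_i)^{-1}(U_i)$ we have $\bigcup_{\gamma}(\gamma\times\gamma)(U)\subseteq \bigcap_i (p_i\times p_i)^{-1}\big(\bigcup_\gamma(\gamma\times\gamma)(U_i)\big)$, and each right-hand entourage lies in $\cC_{X_i}$ because each $X_i$ is a $\Gamma$-coarse space. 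The universal property is then routine: a cone $(f_i\colon Z\to X_i)$ in $\Gamma\Coarse$ induces a unique $\Gamma$-map $f\colon Z\to Y$ at the level of sets, and $f$ is controlled because $(f\times f)^{-1}(U)=\bigcap_i (f_i\times f_i)^{-1}(U_i)$ lies in $\cC_Z$ as a finite-or-arbitrary intersection that is contained in the single entourage $(f_{i_0}\times f_{i_0})^{-1}(U_{i_0})$ for any fixed $i_0$ (using that coarse structures are closed under passing to subsets) — here the non-emptiness of $I$ is not yet needed since for $I=\emptyset$ the limit is the one-point coarse space.

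For part (2), the same strategy applies but now one must also produce a compatible bornology, and this is where non-emptiness of the index category enters. Given a non-empty small diagram $(X_i)_{i\in I}$ in $\Gamma\BC$, I would form the underlying-set limit $Y$ and the coarse structure $\cC_Y$ exactly as above (this makes $Y$ a $\Gamma$-coarse space by part (1)), and then equip $Y$ with the bornology $\cB_Y := \{ B\subseteq Y \mid p_i(B)\in\cB_{X_i} \text{ for all } i\in I\}$, equivalently the largest bornology making every $p_i$ bornological; here I pick some index $i_0$ (possible since $I\neq\emptyset$) and note $B\mapsto p_{i_0}(B)$ already forces boundedness, so $\cB_Y$ is a genuine bornology and not just a "cobornology". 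One then verifies compatibility of $\cB_Y$ with $\cC_Y$: given $B\in\cB_Y$ and $U\in\cC_Y$ with $U\subseteq \bigcap_i (p_i\times p_i)^{-1}(U_i)$, one has $p_i(U[B])\subseteq U_i[p_i(B)]\in\cB_{X_i}$ for all $i$, hence $U[B]\in\cB_Y$. The $\Gamma$-completion subtlety does not arise because we are taking the \emph{largest} compatible bornology, so it is automatically $\Gamma$-invariant. Finally the universal property: a cone in $\Gamma\BC$ is in particular a cone in $\Gamma\Coarse$, giving the unique controlled $\Gamma$-map $f\colon Z\to Y$, and $f$ is bornological because $p_i\circ f = f_i$ is bornological for each $i$, so $p_i(f(B))\in\cB_{X_i}$ for every $B\in\cB_Z$, i.e.\ $f(B)\in\cB_Y$.

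\textbf{Main obstacle.} The genuinely delicate point — and the reason the statement restricts to \emph{non-empty} diagrams — is the construction of the bornology: the naive "initial" structure $\{B \mid p_i(B)\in\cB_{X_i}\ \forall i\}$ is only a bornology (closed under subsets and finite unions with $Y=\bigcup$ of its members) when $I\neq\emptyset$, since for $I=\emptyset$ every subset of $Y=\{*\}$ would qualify, which is fine for a one-point set but the issue is rather that for empty $I$ there is no $X_i$ to pull back structure from and the "limit" would have to be the terminal object, whereas $\Gamma\BC$ — like $\BC$ — may fail to have a terminal object because the maximal coarse structure on a point is fine but on larger completions the maximal bornology and maximal coarse structure are incompatible. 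I would make sure to remark that for $I=\emptyset$ one would need a terminal object and that this is precisely what fails, thereby explaining the hypothesis. A secondary technical care-point is checking that $\cC_Y$ as defined really is closed under the coarse-structure axioms when $I$ is infinite — the key is that an arbitrary intersection $\bigcap_i (p_i\times p_i)^{-1}(U_i)$ need not itself be of the "composed/inverted" form, but any such operation applied to it is contained in the corresponding operation applied to a single factor, so membership follows from the downward-closure of coarse structures; I would spell this out briefly rather than leave it implicit.
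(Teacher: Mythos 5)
Your coarse structure on the limit is the same as the paper's (your $\bigcap_i (p_i\times p_i)^{-1}(U_i)$ equals $(\prod_i U_i)\cap(\tilde Y\times\tilde Y)$ once $Y$ is viewed inside the product), and your cofinality argument for $\Gamma$-invariance is fine. But the bornology and the morphism-checks are wrong because you have reversed both directional conditions that define a morphism of ($\Gamma$-)bornological coarse spaces. In $\BC$ and $\Gamma\BC$ a morphism is required to be \emph{controlled} (entourages push \emph{forward} to entourages: $(f\times f)(V)\in\cC_{X'}$ for $V\in\cC_X$) and \emph{proper} (bounded sets pull \emph{back} to bounded sets: $f^{-1}(B)\in\cB_X$ for $B\in\cB_{X'}$). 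You instead use ``$p_i$ bornological'' to mean $p_i(B)$ is bounded and ``controlled'' to mean $(f\times f)^{-1}(U)$ is an entourage, and your whole construction is built on these reversed readings.

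Concretely, this breaks the bornology: you take $\cB_Y=\{B\subseteq Y\mid p_i(B)\in\cB_{X_i}\ \forall i\}$, the largest bornology pushing forward into the $\cB_{X_i}$. But the projections must be \emph{proper}, i.e. $p_i^{-1}(B)$ must be bounded in $Y$ for $B$ bounded in $X_i$, and with your $\cB_Y$ they are not: for $Y=\IZ\times\IZ$ with the usual metric structures, $p_1^{-1}([0,1])$ has $p_2$-image all of $\IZ$, hence is not in your $\cB_Y$. So the projections fail to be morphisms in $\Gamma\BC$ and $(Y,\cC_Y,\cB_Y)$ is not a cone over the diagram at all. The paper instead takes the bornology \emph{generated by} the preimages $e_i^{-1}(B)$, i.e. the \emph{smallest} bornology making the projections proper; this is the ``initial'' structure dual to yours. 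Likewise your universal-property check ``$f$ is controlled because $(f\times f)^{-1}(U)\in\cC_Z$'' checks the wrong direction; the correct argument, as in the paper, is that for $V\in\cC_Z$ one has $(h\times h)(V)\subseteq\bigl(\prod_i (f_i\times f_i)(V)\bigr)\cap(\tilde Y\times\tilde Y)$, a generating entourage of $\cC_Y$, so $h$ is controlled because each $f_i$ is; and $h$ is proper because $h^{-1}(e_i^{-1}(B))=f_i^{-1}(B)$ is bounded and, since $I\neq\emptyset$, the $e_i^{-1}(B)$ cover $Y$ so every member of $\cB_Y$ lies in a finite union of them. This last step is also precisely where non-emptiness of $I$ enters: for $I=\emptyset$ the generated bornology is empty and in particular does not contain $\{*\}$, so the construction produces no bornology on $Y=*$; that is the paper's reason, and it is sharper than the general remark you give about terminal objects.

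Aside from this, a small terminological slip: your coarse structure $\cC_Y$ is the \emph{largest} (not smallest) coarse structure on $Y$ making every $p_i$ controlled; the smallest coarse structure making any map controlled is always the discrete one.
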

\begin{proof}
 We give the proof for $\Gamma \BC$. The statement for $\Gamma\Coarse$ can be obtained by ignoring all comments pertaining to bornologies and allowing in addition  the index set $I$ below to be empty. 

Note that the non-emptiness assumption on $I$ only enters into the part of the proof concerning bornologies.   See also \cref{houhgoihbgioanionioghio}.

Let $I$ be a non-empty small category and \[X\colon I\to \Gamma \BC\] be a functor.
We will show that $\lim_{I}X$ exists.

The category $\Gamma \Set$ is complete.
In a first step we form the $\Gamma $-set 
\[ \tilde Y:=\lim_{I}\iota X\ .\]

For every $i$ in $I$ we have a map of $\Gamma $-sets
$e_{i}\colon   \tilde Y\to \iota X(i)$. 

On $\tilde Y$ we define the bornology \[\cB_{Y}:=\cB\langle \{e_{i}^{-1}(B)\:|\: i\in I\: \text{and} \: B\in \cB_{X(i)}\} \rangle\ .\]
 
Since $\cB_{X(i)}$ is $\Gamma $-invariant for every $i$ in $I$, we see that $\cB_{Y}$ is $\Gamma $-invariant. 
 
We can view $\tilde Y$ as a subset of $\prod_{i\in I} \iota X(i) $. 
 {On $\tilde Y$ we define the coarse structure}
\[\cC_{Y}:=\cC \big\langle \big\{\big(\prod_{i\in I} U_{i}\big)\cap(\tilde Y\times \tilde Y) \:|\:(U_{i})_{i\in I}\in \prod_{i\in I} \cC_{X(i)} \big\} \big\rangle\ .\]
   
Using that $\cC_{X(i)}$ has a cofinal subset of invariant entourages for every $i$ in $I$, we see that the coarse structure $\cC_{Y}$  is generated by invariant entourages and is hence a $\Gamma $-coarse structure.

Finally, the relation
\[\big(\prod_{i\in I} U_{i}\big)[e_{j}^{-1}(B)]=e_{j}^{-1}( {U_{j}}[B])\]
for $j$ in $I$ and $B$ a subset of $X_{j}$
shows that the compatibility of $\cB_{X(j)}$ with $\cC_{X(j)}$ for all $j$ in $I$ implies that $\cB_{Y}$ and $\cC_{Y}$ are compatible.

We therefore have defined an object    $Y:=(\tilde Y,\cC_{Y},\cB_{Y}) $ in $\Gamma \BC$.
We now show that $e_{j}\colon Y\to X(j)$ is a morphism of $\Gamma $-bornological coarse spaces for all $j$ in $I$.
For every family $(U_{i})_{i\in I}$   we have  $e_{j}((\prod_{i\in I} U_{i})\cap(\tilde Y\times \tilde Y))\subseteq U_{j}$.
This implies that $e_{j}$ is controlled. Furthermore, $\cB_{Y}$ is defined such that $e_{j}$ is proper for every $j$ in $I$.

The morphisms $e_{i}\colon Y\to X(i)$ give a transformation
$e\colon \underline{Y}\to X$ in $\Fun(I,\Gamma \BC)$,  {where $\underline{Y}$ is the constant functor with value $Y$.}
We now show that $(Y,e)$ has the universal property of the limit .

We consider a pair $(Z,f)$ with $Z$ in $\Gamma \BC$ and with $f\colon \underline{Z}\to X$ a morphism in  $\Fun(I,\Gamma \BC)$.
Because the underlying $\Gamma $-set of $Y$ is the limit of the diagram $\iota X$ 
 there is a unique map $  h\colon \iota Z\to \iota Y$ of $\Gamma $-sets such that $\iota f=\iota e\circ  \underline{  h}$.
 
It suffices to show that $h$ is a morphism in $\Gamma \BC$.
Let $U$ be an entourage of $Z$. Then $f_{i}(U)$ is an entourage of $X(i)$ for every $i$ in $I$.
We have
\[h(U)\subseteq \big(\prod_{i\in I}  {(f_{i}\times f_i)}(U)\big)\cap (\tilde Y\times \tilde Y)\ ,\]
i.e., $h(U)$ is contained in one of the generating entourages of $\cC_{Y}$.
Hence $h$ is a  {controlled} map.

In order to show that $h$ is proper, we consider a generating bounded
subset $e_{i}^{-1}(B)$ for $i $ in $I$ and $B$ in $\cB_{X(i)}$.
Then $h^{-1}(e_{i}^{-1}(B))=f_{i}^{-1}(B)$ is bounded in $Z$.
Since $I$ is non-empty, the set of subsets $e_{i}^{-1}(B)$ for all $i$ in $I$ and $B$ in $\cB_{X(i)}$ cover $Y$.
Therefore, every element of $\cB_{Y}$ is contained in a finite union of such subsets.
We conclude that   $h$ is proper.
\end{proof}

\begin{rem}\label{houhgoihbgioanionioghio}
 Note that the last piece of the argument goes wrong if the index  category of the  diagram is empty.
 Then $Y=*$, and we need the subset $\{*\}$ (which is not of the form $e_{i}^{-1}(B)$) to generate the bornology.
 The empty limit does not exist since the category $\Gamma \BC$ does not have a final object.
\end{rem}

We now turn to colimits.
Let $I$ be a small category and \[X\colon I\to \Gamma \BC\] be a functor. The category $\Gamma \Set$ is cocomplete.
We form the $\Gamma $-set 
\[ \tilde Y:=\colim_{I}\iota X\ ,\]
 {where $\iota$ is the forgetful functor from \eqref{eq_forget_to_set}.}  
For every $i$ in $I$ we have a map $e(i)\colon \iota X(i)\to \tilde Y$ of $\Gamma $-sets.
\begin{ddd}
We say that the diagram $X$ is colim-admissible if   we have
\[e_{i}^{-1}((e_{j_{1}}(U_{1})\circ \dots\circ e_{j_{k}}(U_{k}))[\{y\}])\in \cB_{X(i)}\] for every $y$ in $\tilde Y$,
every $i$ in $I$, every $r$ in $\nat$, every family $(j_{1},\dots,j_{r})$ of objects of $I$, and every  family of entourages $U_{k}$ in $\cC_{X(j_{k})}$ for $k$ in $\{1,\dots,r\}$.
\end{ddd}

\begin{prop}\label{efiu9ewofwfwewfwef}\mbox{}\begin{enumerate}\item
The category  $\Gamma\Coarse$ admits all small colimits. \item The category  $\Gamma \BC$ admits colimits for all colim-admissible diagrams.\end{enumerate}
\end{prop}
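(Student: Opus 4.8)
The plan is to build the colimit by hand: transport it from $\Gamma\Set$ and then equip the resulting $\Gamma$-set with the structures generated by the structure maps. \emph{For part (1)}, given a small diagram $X\colon I\to\Gamma\Coarse$, form $\tilde Y:=\colim_I\iota X$ in the cocomplete category $\Gamma\Set$, with canonical maps $e_i\colon\iota X(i)\to\tilde Y$, and put
\[\cC_Y:=\cC\big\langle\big\{(e_i\times e_i)(U)\ \big|\ i\in I,\ U\in\cC_{X(i)}^{\Gamma}\big\}\big\rangle,\]
the coarse structure generated by the images of the invariant entourages. Since the image of a $\Gamma$-invariant entourage under an equivariant map is $\Gamma$-invariant, $\cC_Y$ is generated by invariant entourages and is hence a $\Gamma$-coarse structure; using cofinality of $\cC_{X(i)}^{\Gamma}$ in $\cC_{X(i)}$ one sees it is the smallest coarse structure for which all $e_i$ are controlled. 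The universal property is immediate: for a cocone $(f_i\colon X(i)\to Z)_{i\in I}$ the induced map of $\Gamma$-sets $h\colon\tilde Y\to\iota Z$ satisfies $h\big((e_i\times e_i)(U)\big)=(f_i\times f_i)(U)\in\cC_Z$, so it sends generators of $\cC_Y$ to entourages of $Z$ and is therefore controlled. Thus $(\tilde Y,\cC_Y)$ with the $e_i$ is $\colim_I X$; and, unlike for limits, nothing obstructs the empty diagram, whose colimit is the empty coarse space, an initial object of $\Gamma\Coarse$.

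\emph{For part (2)}, let $X\colon I\to\Gamma\BC$ be colim-admissible. I would keep $\tilde Y$ and $\cC_Y$ as above and set
\[\cB_Y:=\big\{A\subseteq\tilde Y\ \big|\ e_i^{-1}(A)\in\cB_{X(i)}\text{ for all }i\in I\big\},\]
the largest family of subsets making every $e_i$ proper. First I would check that $\cB_Y$ is a bornology: closure under subsets and finite unions is clear, and $\cB_Y$ covers $\tilde Y$ because the case $r=0$ of colim-admissibility gives $e_i^{-1}(\{y\})\in\cB_{X(i)}$ for all $i$ and all $y\in\tilde Y$; it is $\Gamma$-invariant since the $e_i$ are equivariant. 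Next comes compatibility of $\cB_Y$ with $\cC_Y$: by the standard reductions it suffices to check, for a single generating entourage $V=(e_j\times e_j)(U)$ and $A\in\cB_Y$, that $V[A]\in\cB_Y$; here $V[A]=e_j\big(U[e_j^{-1}(A)]\big)$, so $V[A]$ is the $e_j$-image of a bounded set of $X(j)$, and verifying that $e_l^{-1}(V[A])$ is bounded in $X(l)$ for every $l$ amounts to tracing iterated entourage images through the colimit identifications — this is exactly the content of colim-admissibility for general $r$ and general compositions $(e_{j_1}\times e_{j_1})(U_1)\circ\cdots\circ(e_{j_r}\times e_{j_r})(U_r)$. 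Once $(\tilde Y,\cC_Y,\cB_Y)$ is established as an object of $\Gamma\BC$, each $e_i$ is a morphism (controlled by construction of $\cC_Y$, proper by construction of $\cB_Y$), and the universal property goes through as in part (1): the map $h$ induced by a cocone $(f_i)$ is controlled on generators, and it is proper because $e_i^{-1}\big(h^{-1}(B)\big)=f_i^{-1}(B)\in\cB_{X(i)}$ for all $i$, so $h^{-1}(B)\in\cB_Y$ by definition.

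The step I expect to be the obstacle is the compatibility of $\cB_Y$ with $\cC_Y$. Both structures on the colimit are ``final'' ones — the coarse structure is generated by the structure maps, the bornology is the largest making them proper — but while finality for coarse structures is unconstrained (hence part (1) needs no hypothesis), a final bornology need not be compatible with a prescribed coarse structure. Colim-admissibility is precisely the condition isolating those diagrams for which this compatibility survives, so the real work is the bookkeeping that extracts the compatibility statement (and, simultaneously, the properness of the $e_i$) from the iterated-composition form of the hypothesis. All remaining points — the $\Gamma$-coarse structure axioms, $\Gamma$-invariance of $\cB_Y$, and the universal properties — are formal.
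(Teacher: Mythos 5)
The construction of the coarse structure on the colimit, the transport from $\Gamma\Set$, and the universal property arguments all match the paper's. The substantive divergence — and the gap — is in the definition of the bornology on the colimit.

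You take $\cB_Y$ to be $\{A\subseteq\tilde Y : e_i^{-1}(A)\in\cB_{X(i)}\ \text{for all}\ i\}$, i.e.\ the \emph{largest} family of subsets making every $e_i$ proper. The paper instead takes the a priori smaller family of all $B$ satisfying the stronger condition $e_i^{-1}\big((e_{j_1}(U_1)\circ\cdots\circ e_{j_r}(U_r))[B]\big)\in\cB_{X(i)}$ for every $i$, every finite string $(j_1,\dots,j_r)$, and every choice of entourages $U_k\in\cC_{X(j_k)}$ (the case $r=0$ recovers your condition). With this definition, compatibility with $\cC_Y$ is tautological: if $V$ is a generating entourage, then $W[V[B]]\subseteq(W\circ V)[B]$, and $W\circ V$ is again a composition of the allowed form, so $V[B]$ stays in the family. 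Colim-admissibility is then used only once, to ensure singletons (hence all finite sets) lie in this family, so that it is a genuine bornology.

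Your bornology, by contrast, is not obviously closed under thickening by $\cC_Y$-entourages, and you assert that verifying $e_l^{-1}(V[A])\in\cB_{X(l)}$ for $A\in\cB_Y$ ``is exactly the content of colim-admissibility for general $r$ and general compositions.'' It is not: colim-admissibility quantifies over one-point sets $\{y\}$, whereas your claim needs it for arbitrary bounded $A$. Concretely, for a single generating entourage $V=(e_j\times e_j)(U)$ and $A\in\cB_Y$ one has $V[A]=e_j\big(U[e_j^{-1}(A)]\big)=e_j(A')$ with $A':=U[e_j^{-1}(A)]$ bounded in $X(j)$; you would need $e_l^{-1}(e_j(A'))$ bounded for every $l$, i.e.\ that $e_j$ maps bounded sets to sets in $\cB_Y$. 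Morphisms of bornological coarse spaces are proper, not bornological, so this is not automatic, and writing $A'=\bigcup_{a\in A'}\{a\}$ only yields a (possibly infinite) union of bounded preimages, which need not be bounded. In short, the passage from the singleton statement to the bounded-set statement is precisely the gap your definition creates; the paper sidesteps it by building the required closure property into the definition of $\cB_Y$ from the start. Your proof of part (1) and the universal-property argument in part (2) are fine, but part (2) needs the paper's bornology (or an actual proof that your bornology coincides with it under colim-admissibility, which you have not supplied).
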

\begin{proof}
 We give the proof for $\Gamma \BC$. The statement for $\Gamma\Coarse$ can be obtained by ignoring all comments pertaining to bornologies.
Note that the assumption of colim-admissibility only enters to see that the bornology and the coarse structure on the colimit are compatible. 

Assume that \[X\colon I\to \Gamma \BC\] is a colim-admissible diagram. 
We show that  $\colim_{I}X$ exists.

In a first step we form the $\Gamma $-set 
\[ \tilde Y:=\colim_{I}\iota X\ .\]   
On $\tilde Y$ we  define the coarse structure
\[\cC_{Y}:=\cC\langle \{ {(e_{i}\times e_i)}(U)\:|\: i\in I \text{ and } U\in \cC_{X(i)}\}\rangle \ .\]
 Using the fact that $\cC_{X(i)}$ has a cofinal subset of invariant entourages we see that the coarse structure $\cC_{Y}$  is generated by invariant entourages
and is hence a $\Gamma $-coarse structure.

We define the bornology $\cB_{Y}$ to be the subset
  of $\cP(\tilde Y)$ consisting of the sets $B$
satisfying
\[e_{i}^{-1}((e_{j_{1}}(U_{1})\circ \dots\circ e_{j_{k}}(U_{k}))[B])\in \cB_{X(i)}\]
for every $i$ in $I$, every $r$ in $\nat$, every family $(j_{1},\dots,j_{r})$ of objects of $I$, and every  family of entourages $U_{k}$ in $\cC_{X(j_{k})}$ for $k$ in $\{1,\dots,r\}$.  
Since the diagram is colim-admissible, all one-point sets belong  to  $\cB_{Y}$. Furthermore $\cB_{Y}$ is obviously closed under forming finite unions and subsets. Consequently, $\cB_{Y}$ is a bornology on $Y$.
 Since  $\cB_{X(i)}$ and $\cC_{X(i)}$ are $\Gamma $-invariant 
for every $i$ in $I$ we see that $\cB_{Y}$ is $\Gamma $-invariant. 
We finally observe that  $\cC_{Y}$ and $\cB_{Y}$ are compatible by construction.

We  define now the  object $Y:=(\tilde Y,\cC_{Y},\cB_{Y})$ of $\Gamma \BC$.
By construction the maps $e_{i}\colon X(i)\to Y$ are morphisms in $\Gamma \BC$ for all $i$ in $I$.
The family of morphisms $(e_{i})_{i\in I}$ provides a morphism
$e\colon X\to \underline{Y}$ in $\Fun(I,\Gamma \BC)$, {where $\underline{Y}$ is the constant functor with value $Y$.}

We now show that $(Y,e)$ has the universal property of a colimit.

Consider a pair $(Z,f)$ with $Z$ in $\Gamma \BC$ and $f\colon X\to \underline{Z} $. 
Since the underlying $\Gamma $-set of $Y$ is the colimit of the diagram $\iota X$ 
 there is a unique map $h\colon \iota Y\to \iota Z$ of $\Gamma $-sets such that $\iota f=  \underline{h}\circ \iota e$.

 It suffices to show that $h$ is a morphism.
 
Let $i$ in be $I$ and $U$ be  in $\cC_{X(i)}$. Then
$h(e_{i}(U))=f_{i}(U_{i})$ is an entourage of $Z$. This implies that
$h$ is controlled.

Let now $B$ be a bounded subset of $Z$. Since $\cC_{Z}$ and $\cB_{Z}$ are  compatible and $f_{j}$ is controlled,  $f_{j}(U)[B]$ is bounded for every $j$ in $I$ and $U$ in $\cC_{X_{j}}$.
We now have\footnote{Here we use the general relation $U[f^{-1}(B)]\subseteq f^{-1}(f(U)[B])$ for a map $f\colon X\to Y$, entourage $U$ of~$X$ and subset $B$ of $Y$.}
\[e_{i}^{-1}(e_{j}(U)[h^{-1}(B)])\subseteq e_{i}^{-1}(h^{-1}(h(e_{j}(U))[B]))=f_{i}^{-1}(f_{j}(U)[B])\ .\]
Since $f_{i}$ is proper, we conclude that $e_{i}^{-1}(e_{j}(U)[h^{-1}(B)])$ is bounded. Since $i,j$ in $I$ and $U$ in $\cC_{X(j)}$ were arbitrary this shows that
$h^{-1}(B)$ is bounded in $Y$.
We conclude that $h$ is a proper map.
\end{proof}

\begin{ex}
If $(\cY,Z)$ is an equivariant  complementary pair (see Definition \ref{gerojogregege}) on $X$, then we have a push-out
\[\xymatrix{\colim \cY\cap Z\ar[r]\ar[d]&Z\ar[d]\\\colim \cY\ar[r]&X}\]
 {We first note that $\colim \cY$ is admissible since it is a filtered colimit of inclusions.} 
It is straightforward to check that the diagram is colim-admissible and that the bornology
on the space $X$ is the one of the colimit.
The only non-trivial fact to check is that the coarse structure on $X$ given by the colimit is not too small.
Let $U$ be an entourage of $X$. Assume that $i$ is in $I$ such that $Y_{i}\cup Z=X$. Since the family is big, there
exists $j$ in $I$ such that $U[Y_{i}]\subseteq Y_{j}$.  Let $e\colon Y_{j}\to X$ and $f\colon Z\to X$ be the inclusions.
Then
\[U\subseteq e(U\cap (Y_{j}\times Y_{j}))\cup f(U\cap (Z\times Z))\ .\qedhere\]
\end{ex}

\begin{ex}
By Proposition \ref{grioeo34jigreergreg} the category $\Gamma\BC$ admits fiber products.
Here we give an explicit description.
 We consider a diagram
 \[\xymatrix{ &X\ar[d]^{a}\\Y\ar[r]^{b}&Z}\]
 of $\Gamma$-bornological coarse spaces. We then form the cartesian product $X\times Y$ in the category $\Gamma\BC$. We define the $\Gamma$-bornological coarse space  $X\times_{Z}Y$ to be the subset of $X\times Y$ of pairs   $(x,y)$ with $a(x)=b(y)$ with the induced bornological and coarse structure. It is straightforward to check that the square 
\[\xymatrix{
X\times_{Z}Y\ar[d]\ar[r]&X\ar[d]^{a}\\
Y\ar[r]^{b}&Z
}\]
is cartesian in $\Gamma\BC$.
\end{ex}

\begin{ex}
Assume that $(Y,Z)$ is a coarsely excisive pair (see Definition \ref{dddi34546}) on $X$. Then we have a push-out
\[\xymatrix{Y\cap Z\ar[r]\ar[d]&Z\ar[d]\\Y\ar[r]&X}\]
It is straightforward to check that the diagram is colim-admissible and that the bornology
on the space $X$ is the one of the colimit.
The only non-trivial fact to check is that the coarse structure on $X$ given by the colimit is not too small.
Let $U$ be an entourage of $X$. Then there is an entourage $W$ of $X$ such that $U\subseteq W$ and $U[Z]\cap U[Y]\subseteq W[Z\cap Y]$.  Let $e\colon Y\to X$ and $f\colon Z\to X$ be the inclusions.
Then
\[U\subseteq e(W^{2}\cap (Y \times Y ))\cup f(W^{2}\cap (Z\times Z))\ .\qedhere\]
\end{ex}

\begin{ex}
	\label{ex:coeq}
Let $H$ be a group acting on a $\Gamma$-bornological coarse space $X$ such that the set of $H$-invariant entourages of $X$ 
is cofinal in all entourages. 

We let $B_{H}(X)$ denote 
$H$-completion of $X$ obtained from $X$ by replacing the bornology $\cB$ of $X$ by the bornology $B_{H}(\cB)$
generated by the subsets $HB$ for all $B$ in $\cB$. Then the coequalizer
\[(H_{min,min}\otimes X)_{max-\cB}\rightrightarrows B_{H}(X) \stackrel{\pi}{\to} X/H\]
for the $H$-action 
exists.  Here the index $-_{max-\cB}$ indicates that we replaced the bornology by the maximal bornology.
The two arrows are given by
$(h,x)\mapsto x$ and $(h,x)\mapsto hx$.
One checks easily that they are both morphisms of $\Gamma$-bornological coarse spaces: they are both proper since their domain has the maximal bornology, and if $U$ is a $H$-invariant entourage of~$X$ then both maps send
$\diag(H)\times U$ to $U$ and hence the maps are controlled in view of our assumption on the coarse structure of $X$.

Finally, we check that the coequalizer diagram is $\colim$-admissible.
It suffices to check that for every $H$-invariant entourage $U$ of $X$ 
and point $Hx$ in $X/H$ the set $U[\pi^{-1}(Hx)]$ is bounded in $B_{H}(X)$. This is the case since $U[x]$ belongs to $\cB$ and so
$U[\pi^{-1}(Hx)]=HU[x]$ belongs to $B_{H}(\cB)$.
\end{ex}

\begin{ex}
Let
\[ M \colon \Gamma\BC\to \Gamma\Set \]
be the functor which sends a $\Gamma$-bornological coarse space to its underlying $\Gamma$-set. In view of \cref{grioeo34jigreergreg},
it  preserves all limits over non-empty small index categories. It is in fact the right-adjoint of an adjunction  
\[ (-)_{min,max} \colon \Gamma\Set\leftrightarrows \Gamma\BC:M\ ,\]
where $(-)_{min,max}$ sends a $\Gamma$-set $S$ to the $\Gamma$-bornological coarse space obtained by equipping $S$ with the minimal coarse structure and maximal bornology. 
Indeed, for a $\Gamma$-bornological coarse space $X$ we have a natural identification
\[ \Hom_{\Gamma\BC}(S_{min,max},X)\cong \Hom_{\Gamma\Set}(S,M(X))\ .\qedhere\]
\end{ex}

\section{Equivariant coarse homology theories}
\label{eiwjweofewfewfew9}
  
The following notions are the obvious generalizations from the non-equivariant situation considered in \cite{buen}.

Two morphisms $f,f'\colon X\to X'$ between $\Gamma$-bornological coarse spaces are \emph{close} to each other if the subset  $\{(f(x),f'(x))\mid x\in X\}$ of $X^{\prime}\times X^{\prime}$ is an entourage, i.e. if they are close as morphisms between the underlying bornological coarse spaces.
\begin{ddd}\label{goreoigegerg}
 A  morphism   between $\Gamma$-bornological coarse spaces is an \emph{equivalence} if  it admits an   inverse morphism up to closeness.      \end{ddd}

\begin{ex}\label{sdfj89231}
We consider a $\Gamma$-bornological coarse space $X$, a $\Gamma$-invariant subset $A$ of $X$, and a $\Gamma$-invariant entourage $U$  of $X$. Then we can form the $U$-thickening $U[A]$ which is again $\Gamma$-invariant. We now assume that $U$ contains the diagonal. Then we have a natural inclusion $i\colon A\to U[A]$. This inclusion is in general not an equivalence of $\Gamma$-bornological coarse spaces.

For example, let the group $\Z$ act on $\C$ by $(n,z)\mapsto e^{2\pi i\theta n}z$, where $\theta$ is an irrational real number. Then the subset $\C\setminus \{0\}$ of $\C$ is $\Z$-invariant. Every non-trivial thickening of this subset contains the point $0$. This point is fixed by the action, but $\C\setminus \{0\}$ does not contain any fixed point which could serve as the image of $0$ under a potential inverse of the inclusion $\C\setminus \{0\}\to \C$.
\end{ex}

Let $X$ be a $\Gamma$-bornological coarse space and $A $ be a $\Gamma$-invariant subset of $X$.
\begin{ddd}
The subset $A$ is called \emph{nice} if for every invariant entourage $U$ of $X$ {containing the diagonal} the inclusion $A\to U[A]$ is an equivalence.
\end{ddd}

\begin{ex}\label{fuhewiufhuewffwefewf}
Let $X$ be a $\Gamma$-bornological coarse space and {let} $Y$ be a bornological coarse space considered as a $\Gamma$-bornological coarse space with the trivial $\Gamma$-action.

For every subset $A$ of $ Y$ the subset $A\times X$ of the $\Gamma$-bornological coarse space $Y\times X$  (or of $Y\otimes X$) is nice. 
\end{ex}

A filtered family of subsets of a set $X$ is a family $(Y_{i})_{i\in I}$ of subsets indexed by a filtered partially ordered set $I$  such that the map $I\to \cP(X)$ given by  $i\mapsto Y_{i}$ is order-preserving.

Let $X$ be a $\Gamma$-bornological coarse space. Recall from \cite{buen} that a \emph{big family} on $X$ is a filtered family of subsets $(Y_{i})_{i\in I}$ of $X$  such that for every entourage $U$ of $X$ and $i$ in $I$ there exists $j$ in $I$ such that $U[Y_{i}]\subseteq Y_{j}$.
 
\begin{ddd}
 An \emph{equivariant big family} on   $X$ is a big family      consisting of $\Gamma$-invariant subsets.  \end{ddd}
 
\begin{ex}\label{pokpogergeerge}
Let $X$ be a $\Gamma$-bornological coarse space and $A$   be a $\Gamma$-invariant subset of~$X$.
Then the family
\[\{A\}:=(U[A])_{U\in \cC^{\Gamma}}\]
is an equivariant big family.
\end{ex}

Let $X $ be a $\Gamma$-bornological coarse space. 
Recall from \cite{buen} that a complementary pair $(Z,\cY)$ on $X$ is a pair of a subset $Z$ of $X$ and a big family $\cY=(Y_{i})_{i\in I}$ on $X$ such that there exists $i$ in $I$   with $Z\cup Y_{i}=X$.
 
 \begin{ddd}\label{gerojogregege}
 An \emph{equivariant complementary pair}   on  $X$ is a complementary pair 
  $(Z,\cY)$ such that  $Z$ is a  $\Gamma$-invariant subset and   $\cY$ is an equivariant big family.
  \end{ddd}
  
  \begin{ddd}\label{hckjhckhwhuiechiu1}   A $\Gamma$-bornological coarse space $X$ is \emph{flasque} if it admits a morphism $f\colon X\to X$ such that
  \begin{enumerate}
  \item $f$ is close to $\id_{X}$.
  \item\label{fjweoifjewoijeoiejfiwjeiofjewfewfewf} For every entourage $U$ of $X$ the subset $\bigcup_{n\in \nat}(f^{n}\times f^{n})(U)$ is an entourage of $X$.
  \item\label{hckjhckhwhuiechiu} For every bounded subset $B$ of $X$ there exists an integer  $n$ such that
  $\Gamma B\cap f^{n}(X)=\emptyset$.
  \end{enumerate}
  
  We say that flasqueness of $X$ is \emph{implemented} by $f$.
   \end{ddd}
\begin{rem}\label{hckjhckhwhuiechiu2}
In Condition \ref{hckjhckhwhuiechiu} above one could require  the weaker condition 
$B\cap f^{n}({X})=\emptyset$ instead of  $\Gamma B\cap f^{n}({X})=\emptyset$.
Then much of the theory would go through, but we lose the possibility of descending the
group change functors ``$H$-completion'', ``quotient'' and ``induction'' to the motivic level.
\end{rem}

 Let $\bC$ be a cocomplete stable $\infty$-category.
 We consider {a functor}
 \[E\colon\Gamma\BC\to \bC\ .\]
 If $\cY=(Y_{i})_{i\in I}$ is a filtered family of  $\Gamma$-invariant subsets of $X$, then we set
   \begin{equation}\label{vrevkjervnjknvevvveverv}
E(\cY):=\colim_{i\in I} E(Y_{i})\ .
\end{equation} 
In this formula we consider the subsets $Y_{i}$ as $\Gamma$-bornological coarse spaces with the structures induced from $X$.
 
The set $\{0,1\}_{max,max}$ is a $\Gamma$-bornological coarse space with the trivial $\Gamma$-action.   
 
 \begin{ddd}\label{fewoijfewoifewofww45535345345}
 A \emph{$\Gamma$-equivariant $\bC$-valued coarse homology theory} is a functor
 \[E\colon\Gamma\BC\to \bC\] with the following properties:
 \begin{enumerate}
 \item\label{efjwefjewfewiofjewof}({C}oarse invariance) For all $X\in \Gamma\BC$ the projection $\{0,1\}_{max,max}\otimes  X\to X$ is sent by $E$ to an equivalence.
 \item({E}xcision) $E(\emptyset)\simeq 0$ and for every equivariant complementary pair   $(Z,\cY)$ on  a  $\Gamma$-bornological coarse space $X$ the   square  
 \[\xymatrix{E(Z\cap \cY)\ar[r]\ar[d]&E(Z)\ar[d]\\E(\cY)\ar[r]&E(X)}\]
 is  a push-out.
 \item ({Flasqueness}) If  a  $\Gamma$-bornological coarse space $X $ is  flasque, then $E(X)\simeq 0$.
 \item({u-Continuity}) For every  $\Gamma$-bornological coarse space $X $ the natural map
 \[\colim_{U\in \cC^{\Gamma}} E(X_{U})\xrightarrow{\simeq}E(X)\]
 is an equivalence (see \cref{wefoiweofiewf} for notation).
\end{enumerate}
If the group $\Gamma$ is clear from the context, then we will often just speak of an equivariant coarse homology theory.
\end{ddd}

\begin{rem}
Condition \ref{efjwefjewfewiofjewof} in the above definition is equivalent to the condition that $E$
sends equivalences (\cref{goreoigegerg}) of $\Gamma$-bornological coarse spaces to equivalences in $\bC$:

The projection $\{0,1\}_{max,max}\otimes X\to X$ is an equivalence of $\Gamma$-bornological coarse spaces. If $E$ preserves equivalences, then it sends this morphism to an equivalence.

Vice versa, if $E$ satisfies Condition \ref{efjwefjewfewiofjewof}, then it sends pairs of close maps to equivalent maps. If $f$ is an equivalence with inverse $g$ up to closeness, then $E(f\circ g)$ and $E(g\circ f)$ are  equivalent to $E(\id)$ and therefore  themselves equivalences. This implies by functoriality that $E(f)$ is an equivalence.
\end{rem}

{Let the cocomplete stable $\infty$-category $\bC$ have all small products.}
Let $(X_{i})_{i\in I}$ be a family of $\Gamma$-bornological coarse spaces. If $E$ is a $\bC$-valued equivariant coarse homology theory, then by excision for every index $i$ in $I$ we have a projection
$E(\bigsqcup_{i\in I}^{\free}X_{i})\to E(X_{i})$. The collection of these projections induces a morphism
\begin{equation}\label{iojoiergegerg}
E\big(\bigsqcup_{i\in I}^{\free}X_{i}\big)\to \prod_{i\in I} E(X_{i})
\end{equation}
 \begin{ddd}\label{rgfou894ut984t3kfnrekjf}
$E$ is called \emph{strongly additive} if \eqref{iojoiergegerg} is an equivalence for every family  $(X_{i})_{i\in I}$ of $\Gamma$-bornological coarse spaces.
\end{ddd}

Let $E$ be an equivariant coarse homology theory and let $S$ be a $\Gamma$-set.

\begin{lem}
If $E$ is strongly additive, then the twist $E(-\otimes S_{max,max})$ is strongly additive.
\end{lem}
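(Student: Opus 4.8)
The plan is to deduce the statement directly from the strong additivity of $E$ by means of the natural isomorphism of $\Gamma$-bornological coarse spaces
\[\Big(\bigsqcup_{i\in I}^{\free}X_{i}\Big)\otimes S_{max,max}\ \cong\ \bigsqcup_{i\in I}^{\free}\big(X_{i}\otimes S_{max,max}\big)\ ,\]
which is natural in the family $(X_{i})_{i\in I}$ of $\Gamma$-bornological coarse spaces.

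First I would verify this isomorphism by comparing the three structures, using that $S_{max,max}$ carries the \emph{minimal} coarse structure and the \emph{maximal} bornology. On underlying $\Gamma$-sets both sides are canonically $\big(\bigsqcup_{i\in I}X_{i}\big)\times S\cong\bigsqcup_{i\in I}(X_{i}\times S)$. For the coarse structures, the entourages of $S_{max,max}$ are precisely the subsets of $\diag_{S}$, so by the description of the coarse structure of $\otimes$ in \cref{fijwefiewofejfoiewjfoijojoi} and of the free union in \cref{efwifuhwfowefewfewfwef}, on both sides the coarse structure is generated by the entourages $\bigsqcup_{i\in I}(U_{i}\times\diag_{S})$ with $U_{i}$ an entourage of $X_{i}$. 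For the bornologies, every subset of $S$ is bounded, so on both sides the bornology is generated by the sets $B\times S$ with $B$ a bounded subset of some $X_{i}$. Hence the identity of the underlying $\Gamma$-sets is the desired isomorphism.

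Next I would recall that $E(-\otimes S_{max,max})$ is again an equivariant coarse homology theory in the sense of \cref{fewoijfewoifewofww45535345345} — coarse invariance, excision, vanishing on flasques, and $u$-continuity are all inherited from $E$ by a routine verification, again using the minimal coarse and maximal bornological structure of $S_{max,max}$ — so that the comparison map \eqref{iojoiergegerg} is defined for it. Its $i$-th component is obtained, as in the discussion preceding \cref{rgfou894ut984t3kfnrekjf}, by excision from the equivariant complementary pair $\big(X_{i},\{\bigsqcup_{j\neq i}^{\free}X_{j}\}\big)$ on $\bigsqcup_{j\in I}^{\free}X_{j}$. Applying $-\otimes S_{max,max}$ to this pair and transporting along the isomorphism above turns it into the equivariant complementary pair $\big(X_{i}\otimes S_{max,max},\{\bigsqcup_{j\neq i}^{\free}(X_{j}\otimes S_{max,max})\}\big)$ on $\bigsqcup_{j\in I}^{\free}(X_{j}\otimes S_{max,max})$, which is exactly the one defining the $i$-th component of \eqref{iojoiergegerg} for $E$ and the family $(X_{j}\otimes S_{max,max})_{j\in I}$. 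Thus the isomorphism identifies the map \eqref{iojoiergegerg} for $E(-\otimes S_{max,max})$ and $(X_{i})_{i\in I}$ with the map \eqref{iojoiergegerg} for $E$ and $(X_{i}\otimes S_{max,max})_{i\in I}$.

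Finally, the latter map is an equivalence because $E$ is strongly additive, so the former is as well, which is precisely the claim. The only non-formal ingredient is the identification of structures in the first step, where the choice of structures on $S_{max,max}$ is essential; the remaining work is purely bookkeeping, namely checking that $-\otimes S_{max,max}$ is compatible with the complementary pairs witnessing the splittings, so that the isomorphism intertwines the canonical comparison maps.
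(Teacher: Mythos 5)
Your approach matches the paper's one-line proof, which simply records the isomorphism
\[\Big(\bigsqcup_{i\in I}^{\free}X_{i}\Big)\otimes S_{max,max}\ \cong\ \bigsqcup_{i\in I}^{\free}\big(X_{i}\otimes S_{max,max}\big)\]
and lets the rest follow by naturality of the comparison map. However, your verification of this isomorphism rests on a misidentification of the structure on $S_{max,max}$. By the paper's notational convention (first subscript = coarse structure, second subscript = bornology, stated just after \cref{eifjwfiowefewfe32453534}), $S_{max,max}$ carries the \emph{maximal} coarse structure, so its entourages are \emph{all} subsets of $S\times S$; the subsets of $\diag_S$ are the entourages of the \emph{minimal} coarse structure, i.e.\ of $S_{min,?}$. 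Consequently the generators you should compare are $\big(\bigsqcup_{i\in I}U_{i}\big)\times V$ with $V\subseteq S\times S$ on the left and $\bigsqcup_{i\in I}(U_{i}\times V_{i})$ with $V_{i}\subseteq S\times S$ on the right, not $\bigsqcup_{i\in I}(U_{i}\times\diag_{S})$. The conclusion survives the correction, but the mechanism is slightly different from what you describe: the right-hand generator is contained in $\big(\bigsqcup_{i\in I}U_{i}\big)\times\big(\bigcup_{i\in I}V_{i}\big)$, and this is a left-hand generator because $\bigcup_{i\in I}V_{i}$ is again an entourage of $S_{max,max}$ (every subset of $S\times S$ is). That enlargement step would fail for an arbitrary coarse structure on $S$, since infinite unions of entourages need not be entourages; it happens to work for both the maximal and the minimal coarse structure, which is likely where the confusion crept in, but the statement uses the maximal one. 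Aside from this mislabeling the reduction to the strong additivity of $E$ is correct and is the same reduction the paper makes.
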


\begin{proof} 
This follows from the fact that for every family $(X_{i})_{i\in I}$ of $\Gamma$-bornological coarse spaces we have an isomorphism
\[\big(\coprod_{i\in I}^{\free}X_{i}\big)\otimes S_{max,max}\cong \coprod_{i\in I}^{\free}(X_{i}\otimes S_{max,max})\]
of $\Gamma$-bornological coarse spaces.
\end{proof}

\section{Equivariant coarse motivic spectra}
\label{e9werlgforeg99}

\subsection{Construction}
{In this section we} define the stable $\infty$-category of coarse motives $\Gamma\Sp\cX$. This is completely analogous to \cite[Sec.~3 \& 4]{buen}. The category $\Gamma\Sp\cX$ is designed such that  equivariant $\bC$-valued coarse homology theories (\cref{fewoijfewoifewofww45535345345}) are the same as colimit-preserving functors $\Gamma\Sp\cX\to \bC$. The precise formulation is \cref{lkjiooiwoigewgewgwegfw123}.

Let $\Spc$ be the $\infty$-category of spaces, i.e., the universal presentable $\infty$-category generated by $*$.
We start with the category  \[\PSh(\Gamma\BC):=\Fun(\Gamma\BC^{op},\Spc)\] of $\Spc$-valued presheaves on $\Gamma\BC$.
Let \begin{equation}\label{huiciuhciuewcwecewcwecwecwc}
\yo\colon\Gamma\BC\to \PSh(\Gamma\BC)
\end{equation} be the Yoneda embedding. Precomposition with it induces an equivalence
\[\Fun^{\mathrm{lim}}(\PSh(\Gamma\BC)^{op},\Spc)\simeq \PSh(\Gamma\BC)\ ,\]
where $\Fun^{\mathrm{lim}}$ denotes limit-preserving  functors  \cite[Thm. 5.1.5.6]{htt} (see also \cite[Rem. 3.9]{buen}).  
We use this equivalence in order to evaluate presheaves on other presheaves.   

For  a filtered  family $\cY=(Y_{i})_{i\in I}$ of invariant subsets on some $\Gamma$-bornological coarse space~$X$ we write
\begin{equation}\label{huiciuhciuewcwecewcwecwecwc1}
\yo(\cY):=\colim_{i\in I} \yo(Y_{i})\in \PSh(\Gamma\BC)\ .
\end{equation}
For $E$ in $\PSh(\Gamma\BC)$ we set
\[E(\cY):=E(\yo(\cY))\ .\]

\begin{rem}
For a $\Gamma$-bornological coarse space $X$ we have the equivalence \[E(X)\simeq E(\yo(X))\ .\]
Furthermore we have
\begin{equation}\label{vervheoioij}
E(\cY)\simeq \lim_{i\in I}E(Y_{i})
\end{equation}
for a filtered family $\cY=(Y_{i})_{i\in I}$ of invariant subsets on $X$.
\end{rem}

Let $E$ be an object of  $\PSh(\Gamma\BC)$.
\begin{ddd}\label{ewlfjwefewoi2r543555}
We say that
$E $ \emph{satisfies descent} if
\begin{enumerate}
\item $E(\emptyset)\simeq *$, and
\item for every equivariant complementary pair $(Z,\cY)$ on a $\Gamma$-bornological coarse space $X$ the square
\begin{equation}\label{wefkewjfio32ru23r23r23r32r}
\xymatrix{E(X)\ar[r]\ar[d]&E(Z)\ar[d]\\E(\cY)\ar[r]&E(Z\cap \cY)} 
\end{equation}
is cartesian.
\end{enumerate}
Presheaves which satisfy descent are called \emph{sheaves}.
\end{ddd}

\begin{rem}
One can show that
there is a subcanonical Grothendieck topology  $\tau_{\chi}$ on $\Gamma\BC$ such that
the $\tau_{\chi}$-sheaves are exactly the presheaves which satisfy descent  for equivariant complementary pairs. Since we will not be using this fact in this paper we will omit the arguments.
\end{rem}

We let $\Sh (\Gamma\BC)$ denote the full subcategory of $ \PSh(\Gamma\BC)$ of sheaves. We can characterize sheaves as presheaves which are local
with respect to the morphisms
\begin{eqnarray} \yo(\cY)\sqcup_{\yo(Z\cap \cY)}\yo(Z)&\to& \yo(X)\label{wklwecjwjciowecjoweicjwoiec} \\
\yo(\emptyset)&\to&*\nonumber
 \ .\end{eqnarray}

\begin{rem}
In order to fix set-theoretic issues we assume that all $\Gamma$-bornological coarse spaces and the index sets $I$ for the big families belong to some Grothendieck universe of small sets. The class of local objects is then generated by a small set of morphisms. The category $\Sh(\Gamma\BC)$ then belongs to a bigger universe.
\end{rem}

 We have a sheafification adjunction   \[\PSh (\Gamma\BC)\leftrightarrows \Sh(\Gamma\BC)\colon inclusion\ .\]

\begin{rem}
For a $\Gamma$-bornological coarse space $X$ the presheaf $\yo(X)$   is a compact object of $\PSh (\Gamma\BC)$.
If $\cY$ is a big family, then $\yo(\cY)$ is an in general infinite colimit of compact objects and hence not compact anymore.  Consequently, the morphisms \eqref{wklwecjwjciowecjoweicjwoiec} are not morphisms between compact objects. The localization $\Sh(\Gamma\BC)$ is therefore a presentable $\infty$-category, but it is not compactly generated.
\end{rem}

Let $E$ be an object of $\Sh (\Gamma\BC)$.
\begin{ddd}\label{iojoijfoiwejfowejffewf98u98u98uu94234234234} \mbox{}\begin{enumerate}
\item 
 $E$ is \emph{coarsely invariant}  if it is local with respect to the morphisms
 \[\yo (\{0,1\}_{max,max}\otimes X)\to \yo(X)\]  induced by the projection for every $\Gamma$-bornological coarse space $X$.  
 \item  $E$ \emph{vanishes on flasques} if it is local with respect to  the morphisms  
\[{\yo(\emptyset)} \to \yo(X)\]  for every 
flasque $\Gamma$-bornological coarse space $X$.
\item  $E$  is \emph{$u$-continuous}  if it is local
for the morphism
\[{\colim_{U\in \cC^{\Gamma}}\yo(X_{U})} \to \yo(X)\]
for every $\Gamma$-bornological coarse space $X$ (where $\cC^{\Gamma}$ denotes the invariant entourages of the space $X$). 
\end{enumerate}
The above notions are just the equivariant analogues of the corresponding notions from \cite[Sec.~3]{buen}.
\end{ddd}

\begin{ddd}\label{fewowijewoifjoewfjowefwefewfew}
We define the $\infty$-category of \emph{$\Gamma$-equivariant motivic coarse spaces} $\Gamma\Spc\cX$ as the full localizing subcategory of $\Sh(\Gamma\BC)$ of 
coarsely invariant, $u$-continuous sheaves which vanish on flasques. \end{ddd} 

The locality condition is generated by a small set of morphisms. Therefore
we have a localization adjunction
\begin{equation}\label{erglkhoi3hto34t34t43t34t3t}
\cL\colon\PSh(\Gamma\BC)\leftrightarrows \Gamma\Spc\cX: inclusion\ .
\end{equation}
 We define
\[\Yo:=\cL\circ \yo\colon\Gamma\BC\to \Gamma\Spc\cX\ .\]

The $\infty$-category $\Gamma\Spc\cX$ is a presentable $\infty$-category.

\begin{ddd}
We define the category of \emph{equivariant motivic coarse spectra} as the stabilization
 \[\Gamma\Sp\cX:=\Gamma \Spc\cX_{*}[\Sigma^{-1}]\]
 in the realm of presentable $\infty$-categories.
\end{ddd}
Then $\Gamma \Sp\cX$ is a stable presentable $\infty$-category which fits into an adjunction
\[\Sigma^{mot}_{+}\colon\Gamma\Spc\cX\leftrightarrows\Gamma \Sp\cX:\Omega^{mot}\ .\]
We further define the Yoneda functor
\[\Yo^{s}:=\Sigma^{mot}_{+}\circ \Yo\colon\Gamma\BC\to \Gamma\Sp\cX\ .\]

\begin{ddd}
We call $\Yo^{s}\colon\Gamma\BC\to \Gamma\Sp\cX$ the universal equivariant coarse homology theory.
\end{ddd}

For a $\Gamma$-bornological coarse space $X$ we consider the object $\Yo^{s}(X)$ of  $\Gamma\Sp\cX$ as the motive of $X$.
 
Let $\bC$ be a cocomplete stable $\infty$-category. Let $\Gamma\CoarseHomologyTheories_{\bC}$  denote the full subcategory of $\Fun(\Gamma\BC,\bC)$ of functors which are $\Gamma$-equivariant $\bC$-valued coarse homology theories in the sense of \cref{fewoijfewoifewofww45535345345}. By $\Fun^{\mathrm{colim}}(\Gamma\Sp\cX,\bC)$ we denote the full subcategory of $\Fun(\Gamma\Sp\cX,\bC)$ of colimit preserving functors.

The construction of $\Gamma\Sp\cX$ has the following consequence (see \cite[Cor.~4.6]{buen}):
\begin{kor}\label{lkjiooiwoigewgewgwegfw123}
The functor $\Yo^{s}$ is a $\Gamma\Sp\cX$-valued equivariant coarse homology theory. Furthermore,
precomposition with $\Yo^{s}$ induces an equivalence of $\infty$-categories
\[\Fun^{\mathrm{colim}}(\Gamma\Sp\cX,\bC)\to \Gamma\CoarseHomologyTheories_{\bC}\ .\]   \end{kor}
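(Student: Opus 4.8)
The plan is to follow the standard machinery for universal homology theories obtained by localization, exactly as in \cite[Cor.~4.6]{buen}. The key point is that by construction $\Gamma\Sp\cX$ is obtained from $\PSh(\Gamma\BC)$ by a sequence of localizations at small sets of morphisms, followed by pointing and stabilization, all performed in the realm of presentable $\infty$-categories. First I would invoke the universal property of the $\infty$-category of presheaves: for a cocomplete $\infty$-category $\bC$, precomposition with $\yo$ identifies $\Fun^{\mathrm{colim}}(\PSh(\Gamma\BC),\bC)$ with $\Fun(\Gamma\BC,\bC)$. Then, for each localization in the tower (sheafification, then imposing coarse invariance, vanishing on flasques, and $u$-continuity, then pointing, then stabilization), the universal property of a Bousfield localization says that colimit-preserving functors out of the localized category correspond to colimit-preserving functors out of the source which send the localizing morphisms to equivalences. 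Chaining these identifications, $\Fun^{\mathrm{colim}}(\Gamma\Sp\cX,\bC)$ is identified with the full subcategory of $\Fun(\Gamma\BC,\bC)$ consisting of those functors $E$ whose left Kan extension $\PSh(\Gamma\BC)\to\bC$ sends all the localizing morphisms to equivalences, and it remains only to check that this condition is literally the conjunction of the four axioms of \cref{fewoijfewoifewofww45535345345}.

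The second step, then, is an unwinding argument. For a functor $E\colon\Gamma\BC\to\bC$ let $E^{!}\colon\PSh(\Gamma\BC)\to\bC$ denote its colimit-preserving extension along $\yo$. Since $\bC$ is stable, the pointing and stabilization steps contribute nothing extra: any colimit-preserving functor to a stable category automatically factors through the stabilization and through the pointed category, so $E^{!}$ sends the morphisms $\yo(\emptyset)\to *$ and the suspension comparison to equivalences iff $E(\emptyset)\simeq 0$. I would then check the remaining three classes of localizing morphisms one at a time. For sheafification: $E^{!}$ inverts $\yo(\cY)\sqcup_{\yo(Z\cap\cY)}\yo(Z)\to\yo(X)$ for every equivariant complementary pair precisely when the square in the excision axiom is a push-out, using that $E^{!}(\yo(\cY))\simeq\colim_{i}E(Y_i)=E(\cY)$ because $E^{!}$ preserves colimits and $\yo(\cY)$ is by definition the colimit of the $\yo(Y_i)$; the $\yo(\emptyset)\to *$ morphism gives $E(\emptyset)\simeq 0$ as above. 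For coarse invariance: $E^{!}$ inverts $\yo(\{0,1\}_{max,max}\otimes X)\to\yo(X)$ iff $E$ sends the projection $\{0,1\}_{max,max}\otimes X\to X$ to an equivalence, which is axiom (1). For vanishing on flasques: $E^{!}$ inverts $\yo(\emptyset)\to\yo(X)$ for $X$ flasque iff $E(X)\simeq 0$ (using again $E(\emptyset)\simeq 0$), which is axiom (3). For $u$-continuity: $E^{!}$ inverts $\colim_{U\in\cC^{\Gamma}}\yo(X_U)\to\yo(X)$ iff $\colim_{U}E(X_U)\to E(X)$ is an equivalence, which is axiom (4). Conversely any $E$ satisfying the four axioms has $E^{!}$ inverting all these morphisms, hence factors (colimit-preservingly) through $\Gamma\Sp\cX$, and this factorization is the inverse construction.

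The first sentence of the corollary — that $\Yo^{s}$ is itself an equivariant coarse homology theory — follows from the equivalence applied with $\bC=\Gamma\Sp\cX$: the identity functor is colimit-preserving, and its image under the equivalence is $\Yo^{s}$, which therefore lies in $\Gamma\CoarseHomologyTheories_{\bC}$; alternatively, one observes directly that $\Yo=\cL\circ\yo$ lands in coarsely invariant $u$-continuous sheaves vanishing on flasques and that $\Sigma^{mot}_{+}$ preserves the relevant colimits, which is exactly what the four axioms assert. I do not expect any genuine obstacle here: the argument is entirely formal once one accepts the presentability statements already recorded in the excerpt (each locality condition is generated by a small set of morphisms, so each localization exists and has the expected universal property). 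The only mild subtlety worth a remark is the interaction between the big-family colimits $\yo(\cY)$ and $E^{!}$: one must know $E^{!}$ commutes with these specific filtered colimits, which is immediate since $E^{!}$ is by construction colimit-preserving and $\yo(\cY)$ is defined as the pertinent colimit in $\PSh(\Gamma\BC)$. I would simply cite \cite[Cor.~4.6]{buen} for the bookkeeping and indicate that every step of that proof goes through verbatim in the equivariant setting, since all the input — the descent localization, the three further localizations, and stabilization — has been set up in complete parallel.
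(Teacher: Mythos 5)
Your proposal is correct and takes the same route the paper does: the paper offers no independent proof but cites \cite[Cor.~4.6]{buen}, and your argument is precisely the standard localization bookkeeping that reference carries out — universal property of presheaves, chaining Bousfield localizations, unwinding each class of generating morphisms against the four axioms, and using stability of $\bC$ to absorb pointing and stabilization. The only point I would tidy is the remark on pointing and stabilization: these are not localizations at $\yo(\emptyset)\to *$ (that one already lives in the sheafification step), but rather carry their own universal properties (colimit-preserving functors to a pointed, resp.\ stable, presentable $\infty$-category factor essentially uniquely through pointing, resp.\ stabilization); your phrasing conflates them slightly, though the conclusion is unaffected.
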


\subsection{Properties}\label{riojreoirjgoiggoiergergege}

If $\cY=(Y_{i})_{i\in I}$ is an equivariant 
big family on a $\Gamma$-bornological coarse space $X$, then we define the equivariant motivic coarse spectrum
\begin{equation}\label{wefweew254}
\Yo^{s}(\cY):=\Sigma_+^{mot}\circ\cL\circ \yo(\cY)\ .
\end{equation}
{Note that we have $\Sigma_+^{mot}\circ\cL\circ \yo(\cY)\simeq \colim_{i\in I} \Yo^{s}(Y_{i})$.}

We  will use the notation
\begin{equation}\label{fkhwiufuiz823zr824234242424234234234234}
\Yo^{s}(X,\cY):=\Cofib(\Yo^{s}(\cY)\to \Yo^{s}(X))\ .
\end{equation}

By construction we have the following {properties}:
\begin{kor}\label{kjeflwfjewofewuf98ewuf98u798798234234324324343}\mbox{}
\begin{enumerate}
\item\label{ifjweifjewiojwefw231} We have a fiber sequence
\[\Yo^{s}(\cY)\to \Yo^{s}(X)\to \Yo^{s}(X,\cY)\to \Sigma \Yo^{s}(\cY)\]
\item\label{fwejiofjweiofuewofewf234} For an equivariant  complementary pair $(Z,\cY)$ on $X$  the natural morphism
\[\Yo^{s}(Z,Z\cap \cY)\to \Yo^{s}(X,\cY)\] is an equivalence.
\item \label{iweufhf89wfu89ewfew245} If $X\to X^{\prime}$ is an equivalence of $\Gamma$-bornological coarse spaces, then {the induced morphism} $\Yo^{s}(X)\to \Yo^{s}(X^{\prime})$ is an equivalence in $\Gamma\Sp\cX$.
\item\label{ofjewofefoewiufewfiewf09i23423434234} If $X$ is a flasque $\Gamma$-bornological coarse space, then $\Yo^{s}(X)\simeq 0$.
\item \label{efwijfiewfoiefe3u40934332r} For every  $\Gamma$-bornological coarse space $X$ with coarse structure $\cC$    {the natural map} \[\colim_{U\in \cC^{\Gamma}} \Yo^{s}(X_{U})\xrightarrow{\simeq}\Yo^{s}(X)\]
is an equivalence.\end{enumerate}
\end{kor}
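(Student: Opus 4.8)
The plan is to derive every item from \cref{lkjiooiwoigewgewgwegfw123}, which asserts that $\Yo^{s}$ is a $\Gamma\Sp\cX$-valued equivariant coarse homology theory, together with the formal behaviour of cofiber sequences in a stable $\infty$-category. As a preliminary I would note that $\Yo^{s}=\Sigma^{mot}_{+}\circ\cL\circ\yo$ is a composite of left adjoints, hence colimit-preserving; combined with \eqref{wefweew254} this shows $\Yo^{s}(\cY)\simeq\colim_{i\in I}\Yo^{s}(Y_{i})$, so that the notation of \eqref{wefweew254} is consistent with evaluating the coarse homology theory $\Yo^{s}$ on the big family $\cY$ in the sense of \eqref{vrevkjervnjknvevvveverv}. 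Granting this, items \ref{iweufhf89wfu89ewfew245}, \ref{ofjewofefoewiufewfiewf09i23423434234} and \ref{efwijfiewfoiefe3u40934332r} are immediate: they are precisely coarse invariance of $\Yo^{s}$ (in the equivalent form, recorded in the remark after \cref{fewoijfewoifewofww45535345345}, that $\Yo^{s}$ sends equivalences to equivalences), vanishing of $\Yo^{s}$ on flasques, and $u$-continuity of $\Yo^{s}$, all of which hold by \cref{lkjiooiwoigewgewgwegfw123}.

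For \ref{ifjweifjewiojwefw231} I would simply invoke stability of $\Gamma\Sp\cX$: by the definition \eqref{fkhwiufuiz823zr824234242424234234234234} the triangle $\Yo^{s}(\cY)\to\Yo^{s}(X)\to\Yo^{s}(X,\cY)$ is a cofiber sequence, hence also a fiber sequence, and its connecting map has target $\Sigma\Yo^{s}(\cY)$. This is purely formal and requires no coarse geometry.

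The only item needing an actual argument is \ref{fwejiofjweiofuewofewf234}, and here I would run the standard reasoning from \cite[Sec.~4]{buen}. Coarse excision for $\Yo^{s}$, available by \cref{lkjiooiwoigewgewgwegfw123}, says that for an equivariant complementary pair $(Z,\cY)$ on $X$ the square with corners $\Yo^{s}(Z\cap\cY)$, $\Yo^{s}(Z)$, $\Yo^{s}(\cY)$, $\Yo^{s}(X)$ is a push-out in $\Gamma\Sp\cX$. Since $\Gamma\Sp\cX$ is stable, this square is also a pull-back, and in particular the cofibers of its two horizontal maps are canonically equivalent; unwinding \eqref{fkhwiufuiz823zr824234242424234234234234} these cofibers are $\Yo^{s}(Z,Z\cap\cY)$ and $\Yo^{s}(X,\cY)$ respectively, which yields the asserted equivalence. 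I do not expect any genuine obstacle: the substance has been absorbed into the construction of $\Gamma\Sp\cX$ and into \cref{lkjiooiwoigewgewgwegfw123}, and the only mild care needed is the stability manoeuvre converting the excision push-out into a statement about relative motives.
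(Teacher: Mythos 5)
Your proof is correct and follows essentially the same route as the paper, which dismisses the corollary as holding "by construction"; you simply make explicit that items (3)--(5) are instances of the defining axioms of a coarse homology theory applied to $\Yo^{s}$ (via \cref{lkjiooiwoigewgewgwegfw123}), that item (1) is stability of $\Gamma\Sp\cX$, and that item (2) is the standard consequence of excision in a stable category that parallel arrows in a pushout square have equivalent cofibers. One minor slip: the Yoneda embedding $\yo$ is not a left adjoint and does not preserve colimits, so your justification of $\Yo^{s}(\cY)\simeq\colim_{i}\Yo^{s}(Y_{i})$ as "composite of left adjoints" is off, but the identity still holds because $\yo(\cY)$ is \emph{defined} by \eqref{huiciuhciuewcwecewcwecwecwc1} as $\colim_{i}\yo(Y_{i})$ and the remaining factor $\Sigma^{mot}_{+}\circ\cL$ is colimit-preserving, exactly as the paper observes just after \eqref{wefweew254}.
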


Let $X$ be a $\Gamma$-bornological coarse space and $A$  be a $\Gamma$-invariant subset of $X$. Recall that  $\{A\}$ denotes the equivariant big family generated by $A$ (\cref{pokpogergeerge}).
\begin{kor}\label{239023fsdsf}
If $A$ is  nice, then we the natural map
$\Yo^{s}(A)\to \Yo^{s}(\{A\})$ is an equivalence.
\end{kor}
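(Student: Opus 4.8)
The plan is to deduce this directly from $u$-continuity of $\Yo^s$ (\cref{kjeflwfjewofewuf89wfu89ewfew245} \ref{efwijfiewfoiefe3u40934332r}) together with the definition of niceness. Recall that $\{A\} = (U[A])_{U \in \cC^\Gamma}$ and that, by definition of the motivic evaluation on a big family together with the remark following \eqref{wefweew254}, we have $\Yo^s(\{A\}) \simeq \colim_{U \in \cC^\Gamma} \Yo^s(U[A])$, where each $U[A]$ carries the coarse and bornological structures induced from $X$. So the task is to show that the natural map $\Yo^s(A) \to \colim_{U \in \cC^\Gamma} \Yo^s(U[A])$ is an equivalence.

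First I would restrict attention to invariant entourages containing the diagonal: the subfamily $\cC^\Gamma_{\mathrm{diag}}$ of such entourages is cofinal in $\cC^\Gamma$ (one may always enlarge $U$ to $U \cup \diag_X$, which is still invariant), so the colimit over $\cC^\Gamma$ agrees with the colimit over $\cC^\Gamma_{\mathrm{diag}}$. For each such $U$, niceness of $A$ says precisely that the inclusion $A \to U[A]$ is an equivalence of $\Gamma$-bornological coarse spaces. By \cref{kjeflwfjewofewuf89wfu89ewfew245} \ref{iweufhf89wfu89ewfew245}, $\Yo^s$ sends equivalences of $\Gamma$-bornological coarse spaces to equivalences in $\Gamma\Sp\cX$, so each structure map $\Yo^s(A) \to \Yo^s(U[A])$ is already an equivalence. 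A filtered colimit of a diagram in which the map from a fixed initial-type object to every term is an equivalence is itself that equivalence; more precisely, the constant diagram on $\Yo^s(A)$ maps to the diagram $(\Yo^s(U[A]))_U$ by a levelwise equivalence, hence induces an equivalence on colimits, and the colimit of the constant filtered diagram is $\Yo^s(A)$ (filtered index categories are weakly contractible). This yields $\Yo^s(A) \xrightarrow{\simeq} \colim_{U} \Yo^s(U[A]) \simeq \Yo^s(\{A\})$, and one checks this composite is the natural map in the statement by compatibility of the structure maps.

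The only genuinely substantive point is verifying that $\cC^\Gamma_{\mathrm{diag}}$ is cofinal in $\cC^\Gamma$ and that $A = A_X$ sits compatibly as the $U = \diag_X$ term (or more precisely, that the canonical map $\Yo^s(A) \to \Yo^s(\{A\})$ is identified with the colimit comparison map), but this is immediate from the definitions of the induced structures in \cref{exjk22332} and of $\{A\}$ in \cref{pokpogergeerge}. The remaining steps are formal: niceness is exactly the hypothesis that makes each transition map an equivalence, and invariance of $\Yo^s$ under equivalences converts this into a statement about a filtered colimit of equivalences. I do not anticipate a real obstacle here; the lemma is essentially an unwinding of definitions once niceness is in hand.
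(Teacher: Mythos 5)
Your proof is correct and follows essentially the same route as the paper: niceness gives that each inclusion $A \to U[A]$ is an equivalence of $\Gamma$-bornological coarse spaces, $\Yo^s$ preserves such equivalences (\cref{kjeflwfjewofewuf98ewuf98u798798234234324324343}.\ref{iweufhf89wfu89ewfew245}), and the filtered colimit of levelwise equivalences is an equivalence. The paper's own proof is a two-line compression of exactly this argument; you add the (correct, and strictly speaking necessary) observation that one should pass to the cofinal subfamily of invariant entourages containing the diagonal, since the definition of niceness only speaks of those.
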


\begin{proof}
Since $A$ is nice, 
for every invariant  entourage $U$  of $X$  the inclusion $A\to U[A]$ is an equivalence. The assertion now follows since
$\Yo^{s}$ preserves equivalences.
\end{proof}

Let $X$ be a $\Gamma$-bornological coarse space and $Y,Z$ be invariant subsets such that $Y\cup Z=X$.
\begin{ddd}\label{dddi34546}
We say that  $(Y,Z)$ is a \emph{coarsely excisive pair}, if:
\begin{enumerate}
\item For every
entourage
$U$ of $X$ there exists an entourage $W$ of $X$ such that \[U[Y]\cap U[Z]\subseteq W[Y\cap Z]\ .\]
\item  \label{wfopkwopfeef}There exists a cofinal  set of invariant entourages $V$ of $X$ such that
$V[Y]\cap Z$ is nice.
\end{enumerate}
Note that Condition~\ref{wfopkwopfeef} is a new aspect of the equivariant theory.
\end{ddd}

Let $X$ be a $\Gamma$-bornological coarse space and $Y,Z$ be invariant subsets such that $Y\cup Z=X$.
\begin{kor}\label{cbdbbg}
{If $(Y,Z)$ is a coarsely excisive pair, then we have a cocartesian square
\[\xymatrix{\Yo^{s}(Y\cap Z)\ar[r]\ar[d]&\Yo^{s}(Z)\ar[d]\\\Yo^{s}(Y)\ar[r]&\Yo^{s}(X)}\]}
\end{kor}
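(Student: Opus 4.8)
The plan is to deduce this from the sheaf excision property built into $\Gamma\Spc\cX$ (and hence $\Gamma\Sp\cX$) via \cref{kjeflwfjewofewuf98ewuf98u798798234234324324343}, by producing from a coarsely excisive pair $(Y,Z)$ an equivariant complementary pair to which that excision applies, and then correcting the difference between $\Yo^{s}(Z\cap Y)$ and $\Yo^{s}(\{Z\}\cap \{Y\})$ using niceness. First I would set $\cY:=\{Y\}=(V[Y])_{V\in\cC^{\Gamma}}$, the equivariant big family generated by $Y$ from \cref{pokpogergeerge}. Since $Y\cup Z=X$ and $Y\subseteq V[Y]$ for every invariant entourage $V$ containing the diagonal, the pair $(Z,\cY)$ is an equivariant complementary pair on $X$ in the sense of \cref{gerojogregege}. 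Hence by \cref{kjeflwfjewofewuf98ewuf98u798798234234324324343}\ref{fwejiofjweiofuewofewf234}, or directly by the excision encoded in \cref{fewowijewoifjoewfjowefwefewfew}, the square
\[\xymatrix{\Yo^{s}(Z\cap \cY)\ar[r]\ar[d]&\Yo^{s}(Z)\ar[d]\\\Yo^{s}(\cY)\ar[r]&\Yo^{s}(X)}\]
is cocartesian, where $Z\cap\cY=(Z\cap V[Y])_{V\in\cC^{\Gamma}}$.

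Next I would identify the corners of this square with those in the asserted square. For the lower left corner: $\cY=(V[Y])_{V}$ is the equivariant big family $\{Y\}$ generated by $Y$, and by Condition~\ref{wfopkwopfeef} there is a cofinal set of invariant entourages $V$ with $V[Y]\cap Z$ nice — but I actually want $Y$ itself to be nice in $X$ so that $\Yo^{s}(Y)\xrightarrow{\simeq}\Yo^{s}(\{Y\})$ by \cref{239023fsdsf}. Here one should note that since $Y$ is one of the members of the big family $\{Y\}$ (taking $V=\diag_X$), and the family is filtered, the colimit $\Yo^{s}(\{Y\})=\colim_{V}\Yo^{s}(V[Y])$ receives a map from $\Yo^{s}(Y)$; the relevant input is that $Y\to V[Y]$ is an equivalence for a cofinal set of $V$, which is a niceness statement. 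I would therefore want to check (or cite from the coarsely excisive hypothesis, taking $Z$-analogue of Condition~\ref{wfopkwopfeef}, or observe by symmetry) that $Y$ and $Z$ and $Y\cap Z$ are nice in $X$; the cleanest route is to use Condition~\ref{wfopkwopfeef} for the cofinal set of $V$ with $V[Y]\cap Z$ nice and combine with Condition~(1) of \cref{dddi34546}, which controls $V[Y]\cap V[Z]$ inside $W[Y\cap Z]$, to show $\colim_V \Yo^{s}(Z\cap V[Y])\simeq \Yo^{s}(Y\cap Z)$. This last identification — replacing the big-family colimit $\Yo^{s}(Z\cap\{Y\})$ by $\Yo^{s}(Y\cap Z)$ — is the heart of the matter and is exactly why Condition~\ref{wfopkwopfeef} was included; I expect it to be the main obstacle. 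Concretely, for cofinal $V$ one has $Z\cap V[Y]$ nice, and one shows the inclusion $Y\cap Z\to Z\cap V[Y]$ becomes an equivalence after $\Yo^{s}$, using that $Z\cap V[Y]\subseteq (W[Y\cap Z])\cap Z$ for suitable $W$ by Condition~(1), so that the system $(Z\cap V[Y])_V$ is coarsely interleaved with the system $(W[Y\cap Z])_W$, whose colimit computes $\Yo^{s}(\{Y\cap Z\})\simeq\Yo^{s}(Y\cap Z)$ again by niceness.

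Finally I would assemble: the cocartesian square above has, after these identifications, corners $\Yo^{s}(Y\cap Z)$, $\Yo^{s}(Z)$, $\Yo^{s}(Y)$, $\Yo^{s}(X)$, and the maps agree with the ones induced by the inclusions (naturality of $\Yo^{s}$ and of the colimit maps), so the asserted square is cocartesian. I would be careful that all the replacements are along equivalences and are natural in the evident sense, so that a cocartesian square remains cocartesian; this is automatic since equivalences are closed under the relevant homotopy pushouts in the stable $\infty$-category $\Gamma\Sp\cX$. The only genuinely new ingredient beyond the non-equivariant argument of \cite{buen} is the niceness bookkeeping forced by \cref{sdfj89231}, handled by Condition~\ref{wfopkwopfeef}.
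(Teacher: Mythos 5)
Your strategy — sheaf excision for the complementary pair $(Z,\{Y\})$, then correcting the corners by niceness — is close in spirit but has concrete gaps, some of which you flagged without resolving. To replace $\Yo^{s}(\{Y\})=\colim_{V}\Yo^{s}(V[Y])$ by $\Yo^{s}(Y)$ you need $Y$ to be nice in $X$, and to replace $\Yo^{s}(Z\cap\{Y\})=\colim_{V}\Yo^{s}(Z\cap V[Y])$ by $\Yo^{s}(Y\cap Z)$ you in effect need $Y\cap Z$ to be nice. Neither follows from the hypothesis: Condition~\ref{wfopkwopfeef} in \cref{dddi34546} gives niceness only of the subsets $V[Y]\cap Z$ for a cofinal set of invariant entourages $V$, and there is no ``$Z$-analogue'' in the definition nor any symmetry to invoke. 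Your interleaving claim is also false as stated: the thickenings $W[Y\cap Z]$ need not lie in $Z$, so the filtered systems $(Z\cap V[Y])_{V}$ and $(W[Y\cap Z])_{W}$ are not interleaved (interleaving would require, for every $W$, some $V$ with $W[Y\cap Z]\subseteq Z\cap V[Y]$, which fails). Finally, niceness of $Z\cap V[Y]$ says nothing about whether $Y\cap Z\to Z\cap V[Y]$ becomes an equivalence under $\Yo^{s}$: niceness of a subset $A$ controls the inclusions $A\to U[A]$, not inclusions of subsets into $A$.

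The paper's proof instead cites \cite[Lem.~3.38]{buen} and says that the \emph{only} new equivariant input needed is the equivalence $\Yo^{s}(V[Y]\cap Z)\simeq\Yo^{s}(\{V[Y]\cap Z\})$ for large invariant $V$, which follows from Condition~\ref{wfopkwopfeef} by \cref{239023fsdsf}. Notice this is a \emph{different} equivalence than either of the two your argument requires; the proof of \cite[Lem.~3.38]{buen} must therefore be structured so that one only ever compares the buffer $V[Y]\cap Z$ with its own generated big family $\{V[Y]\cap Z\}$ (most plausibly by $u$-continuity, working on $X_V$ one invariant $V$ at a time, where the relevant big family is $(V^{n}[-])_{n}$), and never needs $Y$ or $Y\cap Z$ itself to be nice. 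Your direct complementary-pair approach does not close without those extra niceness assumptions; to finish, you would need to reproduce the structure of \cite[Lem.~3.38]{buen} and see exactly where the replacement $\Yo^{s}(V[Y]\cap Z)\simeq\Yo^{s}(\{V[Y]\cap Z\})$ enters.
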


\begin{proof}
The proof of \cite[Lem.~3.38]{buen} goes through literally. In the proof we need the equivalence 
\[\Yo^{s}(V[Y]\cap Z)\simeq \Yo^{s}(\{V[Y]\cap Z\})\]
for sufficiently large invariant entourages $V$ of $X$. This is ensured by Condition \ref{wfopkwopfeef} in the \cref{dddi34546} of coarse excisiveness.
\end{proof}

Let $X$ be a $\Gamma$-bornological coarse space.
Given two bornological and $\Gamma$-invariant maps $p=(p_{-},p_{+})$ with  $p_{-}\colon X\to (-\infty,0]$ and $p_{+}\colon X\to [0,\infty)$ we can form the  coarse cylinder $I_{p}X$ as in the non-equivariant case {\cite[Sec.~4.3]{buen}}. With its natural $\Gamma$-action it is  a $\Gamma$-bornological coarse space. The projection $I_{p}X\to X$ is a morphism.
We will call it an equivariant cylinder in order to stress that the datum $p$ was $\Gamma$-invariant.

Let $X$ be a $\Gamma$-bornological coarse space and $I_{p}X$ be an equivariant coarse cylinder.

\begin{kor}
The projection $I_{p}X\to X$ induces an equivalence
$\Yo^{s}(I_{p}X)\to \Yo^{s}(X)$. 
\end{kor}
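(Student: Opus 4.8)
The plan is to run the equivariant version of the argument for the coarse cylinder in \cite[Sec.~4.3]{buen}. The observation that makes the equivariant layer essentially automatic is that $\Gamma$ acts trivially on the cylinder coordinate, so every subspace of $I_{p}X$ cut out by conditions on that coordinate is $\Gamma$-invariant, and every retraction that collapses that coordinate is $\Gamma$-equivariant.

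\textbf{Step 1 (reduction to a one-sided cylinder).} Write $I_{p}X = I_{(0,p_{+})}X \cup I_{(p_{-},0)}X$ with $I_{(0,p_{+})}X := \{(t,x)\in I_{p}X\mid t\ge 0\}$ and $I_{(p_{-},0)}X := \{(t,x)\in I_{p}X\mid t\le 0\}$, both $\Gamma$-invariant, carrying the structures induced from $I_{p}X$, and with intersection the slice $\{0\}\otimes X$, which the projection identifies $\Gamma$-equivariantly with $X$. As in \cite{buen}, the pair $(I_{(0,p_{+})}X, I_{(p_{-},0)}X)$ is coarsely excisive (\cref{dddi34546}): Condition~\ref{wfopkwopfeef} holds because for an invariant entourage $V$ the set $V[I_{(0,p_{+})}X]\cap I_{(p_{-},0)}X$ is a bounded-height sub-cylinder, which is nice via the $\Gamma$-equivariant retraction clipping the cylinder coordinate to the relevant interval. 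Hence \cref{cbdbbg} produces a cocartesian square relating $\Yo^{s}(X)$, $\Yo^{s}(I_{(0,p_{+})}X)$, $\Yo^{s}(I_{(p_{-},0)}X)$ and $\Yo^{s}(I_{p}X)$, compatibly with the projections to $X$. Comparing it with the (trivially cocartesian) constant square of identities on $\Yo^{s}(X)$, I see it suffices to prove that each of $\Yo^{s}(I_{(0,p_{+})}X)\to\Yo^{s}(X)$ and $\Yo^{s}(I_{(p_{-},0)}X)\to\Yo^{s}(X)$ is an equivalence, and by the symmetry $t\mapsto -t$ it is enough to treat a one-sided cylinder $\Yo^{s}(I_{(0,q)}X)\to\Yo^{s}(X)$ for a $\Gamma$-invariant bornological map $q\colon X\to[0,\infty)$.

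\textbf{Step 2 (the one-sided cylinder via flasqueness).} Following \cite[Sec.~4.3]{buen}, I would realise $I_{(0,q)}X$ as a $\Gamma$-invariant subspace of the infinite one-sided cylinder $\hat I = \{(t,x)\mid t\ge 0\}$, with $\Gamma$-invariant complement $Z := \{(t,x)\mid t\ge q(x)\}$. Both $\hat I$ and $Z$ are flasque (\cref{hckjhckhwhuiechiu1}), flasqueness being implemented by the $\Gamma$-equivariant shift $f(t,x):=(t+1,x)$: it is close to the identity, it preserves differences of the cylinder coordinate (giving the iterated-entourage condition), and Condition~\ref{hckjhckhwhuiechiu} holds because a $\Gamma$-bounded subset of a cylinder has bounded cylinder coordinate whereas $f^{n}$ maps into $\{t\ge n\}$. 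Now $(Z,\{I_{(0,q)}X\})$ is an equivariant complementary pair (\cref{gerojogregege}) on $\hat I$, so excision together with item~\ref{ofjewofefoewiufewfiewf09i23423434234} of \cref{kjeflwfjewofewuf98ewuf98u798798234234324324343} (which gives $\Yo^{s}(\hat I)\simeq 0\simeq\Yo^{s}(Z)$) yields an equivalence $\Yo^{s}(Z\cap\{I_{(0,q)}X\})\simeq\Yo^{s}(\{I_{(0,q)}X\})$. As in \cite{buen}, the subspaces $I_{(0,q)}X$ and its top face $Z\cap I_{(0,q)}X = \{(q(x),x)\mid x\in X\}$ are nice in $\hat I$, so \cref{239023fsdsf} turns this into $\Yo^{s}(Z\cap I_{(0,q)}X)\simeq\Yo^{s}(I_{(0,q)}X)$; since the projection identifies $Z\cap I_{(0,q)}X$ $\Gamma$-equivariantly with $X$, this gives $\Yo^{s}(X)\simeq\Yo^{s}(I_{(0,q)}X)$, and tracing the maps shows it is the equivalence induced by the projection. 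Feeding this (and its mirror image under $t\mapsto -t$) into the cocartesian square of Step~1 would then give the claim.

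\textbf{Main obstacle.} The delicate points are all about being entitled to apply excision and \cref{239023fsdsf}: one must verify that the central slice $\{0\}\otimes X$, the one-sided cylinder $I_{(0,q)}X$ and its top face are \emph{nice} subsets of the relevant ambient $\Gamma$-bornological coarse spaces, and that the families used are genuine \emph{equivariant} big families. Non-equivariantly these are the standard checks of \cite[Sec.~4.3]{buen}; equivariantly they are exactly the kind of issue for which \cref{sdfj89231} and Condition~\ref{wfopkwopfeef} of \cref{dddi34546} were introduced, and they go through here only because the retractions involved merely collapse the cylinder coordinate, on which $\Gamma$ acts trivially, and are therefore $\Gamma$-equivariant. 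A secondary bookkeeping point is to carry the compatibility with the projections to $X$ through all of Step~1 and Step~2, so that the resulting chain of equivalences is genuinely an equivalence over $X$.
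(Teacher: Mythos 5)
Your proposal is correct and takes essentially the same approach as the paper: the paper's two-sentence proof simply asserts that the argument of \cite[Prop.~4.16]{buen} transfers once one checks that the subsets to which \cref{239023fsdsf} is applied are nice (citing \cref{fuhewiufhuewffwefewf}), and you reconstruct that argument in detail, correctly isolating the reason the equivariant version is automatic — the retractions that witness niceness only clip the cylinder coordinate, on which $\Gamma$ acts trivially, using the $\Gamma$-invariant datum $p$. The only minor divergence is that some of the subsets you invoke niceness for (such as $I_{(0,q)}X$ and its top face) are not literally of the product form $A\times X$ that \cref{fuhewiufhuewffwefewf} covers, but they are nice for the same underlying reason, so your argument goes through.
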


\begin{proof}
We observe that the proof of \cite[Prop.~4.16]{buen} goes through.  At all places in the argument where \cref{239023fsdsf} is used the corresponding subset is nice, see \cref{fuhewiufhuewffwefewf}.
\end{proof}

We say that two morphisms $f_{+},f_{-}\colon X\to X^{\prime}$ between $\Gamma$-bornological coarse spaces are \emph{homotopic} if there exists a cylinder $I_{p}X$ such that $p_{\pm}$ are $\Gamma$-invariant, bornological and in addition controlled, and if there exists a morphism $h\colon I_{p}X\to X^{\prime}$ such that $f_{\pm}=h\circ i_{\pm}$. This leads to an extension of the notion of coarse invariance.

\begin{kor}
If $f_+$ and $f_-$ are homotopic, then $\Yo^s(f_+)$ and $\Yo^s(f_-)$ are equivalent.
\end{kor}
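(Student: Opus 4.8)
The plan is to reduce the statement about homotopic maps to the already-established equivalence $\Yo^{s}(I_{p}X)\to \Yo^{s}(X)$ for equivariant coarse cylinders. First I would recall the setup: if $f_{+}$ and $f_{-}$ are homotopic, there is a cylinder $I_{p}X$ with $p_{\pm}$ $\Gamma$-invariant, bornological, and controlled, together with a morphism $h\colon I_{p}X\to X'$ such that $f_{\pm}=h\circ i_{\pm}$, where $i_{\pm}\colon X\to I_{p}X$ are the two structure inclusions (at the bottom/top of the cylinder).

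The key step is to observe that the structure inclusions $i_{+},i_{-}\colon X\to I_{p}X$ both become equivalences after applying $\Yo^{s}$, and moreover become the \emph{same} equivalence, namely the inverse of the equivalence $\pi_{*}\colon\Yo^{s}(I_{p}X)\xrightarrow{\simeq}\Yo^{s}(X)$ induced by the projection $\pi\colon I_{p}X\to X$. Indeed, $\pi\circ i_{+}=\id_{X}=\pi\circ i_{-}$, so applying $\Yo^{s}$ gives $\pi_{*}\circ\Yo^{s}(i_{+})=\id=\pi_{*}\circ\Yo^{s}(i_{-})$; since $\pi_{*}$ is an equivalence, we conclude $\Yo^{s}(i_{+})\simeq\pi_{*}^{-1}\simeq\Yo^{s}(i_{-})$ (as morphisms in the homotopy category, or coherently since $\pi_{*}$ has a contractible space of inverses). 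Then $\Yo^{s}(f_{\pm})=\Yo^{s}(h)\circ\Yo^{s}(i_{\pm})$, and since the two right-hand factors agree, $\Yo^{s}(f_{+})\simeq\Yo^{s}(f_{-})$.

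The only subtlety — the main obstacle, if one wants to be careful — is making sure the identification $\Yo^{s}(i_{+})\simeq\Yo^{s}(i_{-})$ is not just an abstract equality of two morphisms that happen to be inverse to $\pi_{*}$, but is genuinely witnessed by the geometry so that postcomposition with $\Yo^{s}(h)$ is legitimate. This is handled exactly as in the non-equivariant case \cite[Sec.~4.3]{buen}: one checks that $\pi\circ i_{+}=\pi\circ i_{-}=\id_{X}$ as actual morphisms of $\Gamma$-bornological coarse spaces, so the desired identity already holds at the level of $\Gamma\BC$ after composing with the equivalence-inducing projection, and functoriality of $\Yo^{s}$ does the rest. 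The hypotheses that $p_{\pm}$ are in addition \emph{controlled} are what guarantee $i_{\pm}$ are morphisms of bornological coarse spaces (not merely of the underlying sets), and $\Gamma$-invariance of $p$ makes $I_{p}X$ a $\Gamma$-bornological coarse space with $i_{\pm}$, $\pi$, and $h$ all equivariant, so the argument proceeds verbatim as for the non-equivariant statement, invoking the preceding corollary on coarse cylinders in place of \cite[Prop.~4.16]{buen}.
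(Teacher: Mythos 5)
Your proof is correct and is exactly the argument the paper implicitly relies on (the corollary is stated without proof, as an immediate consequence of the preceding cylinder corollary): from $\pi\circ i_{\pm}=\id_X$ and the fact that $\pi_*$ is an equivalence one gets $\Yo^s(i_+)\simeq\Yo^s(i_-)$, and postcomposing with $\Yo^s(h)$ finishes. Your cautious aside about the identification being "genuinely witnessed" is unnecessary — in any $\infty$-category two right inverses of a given equivalence are automatically equivalent, so nothing further needs to be checked.
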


\subsection{Symmetric monoidal structure}
\label{secni2039}

{Recall that} the category
$\Gamma\BC$ has a symmetric monoidal structure
\begin{equation}
\label{eq_43t5rgeseses34t}
- \otimes  -\colon\Gamma\BC\times \Gamma\BC\to \Gamma\BC
\end{equation}
with tensor unit~$*$.

\begin{lem}\label{lemjine2}
$\Gamma\Sp\cX$ has an induced closed symmetric monoidal structure $\otimes $ such that the functor $\Yo^s \colon \Gamma\BC \to \Gamma\Sp\cX$ is symmetric monoidal. The functor $\otimes$ commutes with colimits in each variable separately.

\end{lem}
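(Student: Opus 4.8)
The plan is to transport the symmetric monoidal structure from $\PSh(\Gamma\BC)$ through the chain of localizations, following the template of \cite{buen} (and the general machinery of Lurie's \cite{ha}).

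\textbf{Step 1: Day convolution on presheaves.} First I would equip $\PSh(\Gamma\BC) = \Fun(\Gamma\BC^{op},\Spc)$ with the Day convolution symmetric monoidal structure induced by \eqref{eq_43t5rgeseses34t}. This structure is closed, commutes with colimits in each variable, and is characterized by the property that $\yo\colon\Gamma\BC\to\PSh(\Gamma\BC)$ is symmetric monoidal; moreover it is the universal such structure, in the sense that for any cocomplete symmetric monoidal $\infty$-category $\bD$ with colimit-preserving tensor, symmetric monoidal colimit-preserving functors $\PSh(\Gamma\BC)\to\bD$ correspond to symmetric monoidal functors $\Gamma\BC\to\bD$.

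\textbf{Step 2: Descend along each localization.} The category $\Gamma\Spc\cX$ is obtained from $\PSh(\Gamma\BC)$ as a reflective localization at a small set of morphisms (first to sheaves, then imposing coarse invariance, vanishing on flasques, $u$-continuity). By Lurie's criterion for localizations of symmetric monoidal $\infty$-categories, the localization $\cL$ is compatible with the Day convolution structure provided the class of $\cL$-equivalences is closed under tensoring with arbitrary objects; equivalently, it suffices to check that tensoring each generating morphism in \eqref{wklwecjwjciowecjoweicjwoiec}, the coarse-invariance morphisms $\yo(\{0,1\}_{max,max}\otimes X)\to\yo(X)$, the flasqueness morphisms $\yo(\emptyset)\to\yo(X)$, and the $u$-continuity morphisms $\colim_{U\in\cC^\Gamma}\yo(X_U)\to\yo(X)$, with a representable $\yo(X')$ again yields an $\cL$-equivalence. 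Since $\yo(-)\otimes\yo(X')\simeq\yo(-\otimes X')$ by symmetric monoidality of $\yo$, this reduces to geometric statements about $\Gamma\BC$: that $(Z\otimes X', \cY\otimes X')$ is again an equivariant complementary pair, that $(\{0,1\}_{max,max}\otimes X)\otimes X'\cong\{0,1\}_{max,max}\otimes(X\otimes X')$, that $X\otimes X'$ is flasque whenever $X$ is, and that $\otimes X'$ is compatible with the coarse structures arising in $u$-continuity (here one uses that $X_U\otimes X'$ and $(X\otimes X')_{V}$ interact correctly, invoking the $u$-continuity locality to absorb the extra entourages). Each of these is routine and parallel to \cite[Sec.~4]{buen}. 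Then $\Gamma\Spc\cX$ inherits a closed symmetric monoidal structure with colimit-preserving tensor, and $\Yo=\cL\circ\yo$ is symmetric monoidal.

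\textbf{Step 3: Pass to pointed objects and stabilize.} Adding a disjoint basepoint gives a symmetric monoidal functor $\Gamma\Spc\cX\to\Gamma\Spc\cX_*$ (smash product), and stabilization $(-)_*[\Sigma^{-1}]$ within presentable $\infty$-categories is a symmetric monoidal operation on $\mathrm{Pr}^L$, so $\Gamma\Sp\cX$ acquires a closed symmetric monoidal structure for which $\Sigma^{mot}_+$ is symmetric monoidal and the tensor commutes with colimits in each variable (it is closed, hence a left adjoint in each slot). Composing, $\Yo^s = \Sigma^{mot}_+\circ\Yo$ is symmetric monoidal.

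\textbf{Main obstacle.} The only real work is Step 2: verifying that tensoring with a fixed $\Gamma$-bornological coarse space preserves the four classes of local equivalences. The subtlest point is $u$-continuity, where one must check that $\otimes X'$ does not create entourages outside the colimit $\colim_{U}\yo((X\otimes X')_U)$ — this requires a small entourage-chasing argument using that bounded sets of $X\otimes X'$ are products of bounded sets. The flasqueness case also requires noting that if $f\colon X\to X$ implements flasqueness then $f\otimes\id_{X'}$ implements flasqueness of $X\otimes X'$, using Condition~\ref{hckjhckhwhuiechiu} of \cref{hckjhckhwhuiechiu1} together with the description of bounded sets in a tensor product. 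All of this mirrors \cite[Sec.~4]{buen} and I would simply remark that those arguments carry over verbatim, spelling out only the $u$-continuity check.
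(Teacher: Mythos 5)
Your proposal is correct and follows the same route as the paper: Day convolution on $\PSh(\Gamma\BC)$, then checking that tensoring against representables preserves the generating local equivalences for each of the localizations (sheaves, coarse invariance, flasqueness, $u$-continuity), and finally stabilizing via the universal symmetric monoidal structure on $\Gamma\Spc\cX_{*}[\Sigma^{-1}]$ (the paper cites \cite[Thm.~5.1]{ggm} for this last step). The paper treats the Step~2 checks somewhat more tersely, but the content is identical.
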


\begin{proof}
We get an induced symmetric monoidal structure on $\PSh(\Gamma\BC)$ by the Day convolution product. The unit is given by $\yo(*)$, the Yoneda embedding is a strong symmetric monoidal functor, and $\PSh(\Gamma\BC)$ is closed symmetric monoidal.

For a $\Gamma$-bornological coarse space $Q$ the    functor
\[-\otimes  Q\colon\Gamma\BC\to \Gamma\BC\]
maps big families to big families and complementary pairs to complementary pairs. So the monoidal structure of $\PSh(\Gamma\BC)$ restricts to one on $\Sh(\Gamma\BC)$ and the sheafification adjunction is a symmetric monoidal adjunction.

The functor $-\otimes  Q$ furthermore respects closeness of morphisms and therefore coarse equivalences, and it respects flasqueness and $u$-continuity. So we get an induced  symmetric monoidal structure on $\Gamma\Spc\cX$ and $\Yo \colon \Gamma\BC \to \Gamma\Spc\cX$ is symmetric monoidal.

Since $\Gamma\Spc\cX$ is presentable, we can equip its stabilization $\Gamma\Sp\cX$ with a unique symmetric monoidal structure $\otimes$ such that stabilization $\Sigma^{mot}_{+} \colon \Gamma\Spc\cX \to \Gamma\Sp\cX$ is symmetric monoidal \cite[Thm.~5.1]{ggm}. 

It follows from the construction that  $\otimes$ commutes with colimits in each variable separately. 
\end{proof}

\subsection{Strong version}
\label{secuiwe23g3r}

In this section we discuss an additional property (strongness) which an equivariant coarse homology theory can have. Another condition (continuity) will be discussed in \cref{oiefweiofwfewfiewfewfewf}.

By definition, a  flasque $\Gamma$-bornological coarse space $X$ admits a morphism $f\colon X\to X$ satisfying the conditions listed in \cref{hckjhckhwhuiechiu1}.  
The first condition is the condition that $f$ is close to the identity.  This fact is usually used in order to deduce that $\Yo^{s}(f)\simeq \id_{\Yo^{s}(X)}$. In the following we will  use this weaker condition in order to define a more general notion of flasqueness.

\begin{ddd}\label{iogegergeger}
 $X$ is called \emph{weakly flasque} if it admits a morphism $f\colon X\to X$  satisfying \begin{enumerate}
\item\label{iogegergeger1}    $\Yo^{s}(f)\simeq  \id_{\Yo^{s}(X)}$. 
\item For every entourage $U$ of $X$ the subset $\bigcup_{n\in \nat} (f^{n}\times f^{n})(U)$ is again an entourage of~$X$.
\item For every bounded subset $B$ of $X$ there exists an integer $n$ such that ${\Gamma}B\cap f^{n}(X)=\emptyset$.
\end{enumerate}
We say that $f$ \emph{implements weak flasqueness} of $X$.
\end{ddd}

Let $\bC$ be a {cocomplete} stable $\infty$-category and consider a $\bC$-valued equivariant coarse homology theory $E$.
\begin{ddd}\label{foijofifwefwefewfw}
$E$ is called \emph{strong}  if $E(X)\simeq 0$ for all   weakly flasque $\Gamma$-bornological coarse spaces $X$.
\end{ddd}

{Let us incorporate now the condition of strongness on the motivic level.}

\begin{ddd}\label{def:wfl}
We define the version  of equivariant motivic spectra
$ \Gamma\Sp\cX_{\wfl}$ as the localization  of the category $\Gamma\Sp\cX$ at the  set of morphisms $0\to \Yo^{s}(X)$ for all weakly flasque $\Gamma$-bornological coarse spaces $X$.
\end{ddd}

The corresponding Yoneda functor   is denoted by
\[\Yo^{s}_{\wfl}\colon \Gamma\BC\to \Gamma\Sp\cX_{\wfl}\ .\]
 
We consider now the $\infty$-category of strong $\Gamma$-equivariant coarse homology theories. The construction of $\Gamma\Sp\cX_{\wfl}$ has the following immediate consequence:
\begin{kor}\label{lkjiooiwoi123gewgewgwegfw1}
The functor $\Yo_{\wfl}^{s}$ is a $\Gamma\Sp\cX_{\wfl}$-valued equivariant coarse homology theory. Furthermore,
precomposition with $\Yo_{\wfl}^{s}$ induces an equivalence of $\infty$-categories
\[\Fun^{\mathrm{colim}}(\Gamma\Sp\cX_{\wfl},\bC)\to\mathbf{strong} \Gamma\CoarseHomologyTheories_{\bC}\ .\]
\end{kor}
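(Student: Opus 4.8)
The plan is to deduce both assertions formally from \cref{lkjiooiwoigewgewgwegfw123} together with the universal property of the localization in \cref{def:wfl}, in complete parallel with the non-strong case. First I would collect the basic structural facts about $\Gamma\Sp\cX_{\wfl}$. The morphisms $0\to\Yo^{s}(X)$ for weakly flasque $X$ (\cref{iogegergeger}) form an honest \emph{set} $W$, since inside the fixed Grothendieck universe the weakly flasque $\Gamma$-bornological coarse spaces form an essentially small collection; hence the localization $L\colon\Gamma\Sp\cX\to\Gamma\Sp\cX_{\wfl}$ of \cref{def:wfl} exists, it is a left adjoint and therefore preserves all colimits (in particular the zero object, push-outs and filtered colimits), and $\Gamma\Sp\cX_{\wfl}$ is again a stable presentable $\infty$-category, being the quotient of $\Gamma\Sp\cX$ by the localizing subcategory generated by the set $\{\Yo^{s}(X)\mid X \text{ weakly flasque}\}$. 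By construction $\Yo^{s}_{\wfl}=L\circ\Yo^{s}$.

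For the first assertion I would apply \cref{lkjiooiwoigewgewgwegfw123} with $\bC:=\Gamma\Sp\cX_{\wfl}$ to the colimit-preserving functor $L$: its image $L\circ\Yo^{s}=\Yo^{s}_{\wfl}$ is then automatically an equivariant coarse homology theory. Unwinding this, coarse invariance, excision, vanishing on flasques and $u$-continuity of $\Yo^{s}$ are each expressed by $\Yo^{s}$ sending certain maps to equivalences, certain squares to push-outs, or certain colimits to colimits, and all of these are preserved by the colimit-preserving exact functor $L$.

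For the second assertion I would invoke the universal property of Bousfield localization: for every cocomplete stable $\infty$-category $\bC$, precomposition with $L$ identifies $\Fun^{\mathrm{colim}}(\Gamma\Sp\cX_{\wfl},\bC)$ with the full subcategory of $\Fun^{\mathrm{colim}}(\Gamma\Sp\cX,\bC)$ consisting of those colimit-preserving functors that send every morphism in $W$ to an equivalence. A colimit-preserving functor $F$ preserves initial objects, hence $F$ inverts $0\to\Yo^{s}(X)$ if and only if $F(\Yo^{s}(X))\simeq 0$. Under the equivalence of \cref{lkjiooiwoigewgewgwegfw123}, $F$ corresponds to the equivariant coarse homology theory $E:=F\circ\Yo^{s}$, and the condition $F(\Yo^{s}(X))\simeq 0$ for all weakly flasque $X$ is precisely the requirement that $E(X)\simeq 0$ for all weakly flasque $X$, i.e. that $E$ is strong in the sense of \cref{foijofifwefwefewfw}. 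Chaining the two equivalences, and noting that the resulting precomposition functor is precomposition with $L\circ\Yo^{s}=\Yo^{s}_{\wfl}$, gives the asserted equivalence $\Fun^{\mathrm{colim}}(\Gamma\Sp\cX_{\wfl},\bC)\to\mathbf{strong}\,\Gamma\CoarseHomologyTheories_{\bC}$.

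The argument is entirely formal, so I do not expect a genuine obstacle; the only points that deserve a line of care are the set-theoretic one — that $W$ really is a set, so that the localization machinery and hence its universal property apply — and the observation that the localization is taken inside presentable \emph{stable} $\infty$-categories, which is what both keeps $\Gamma\Sp\cX_{\wfl}$ stable and makes ``$F$ inverts $0\to\Yo^{s}(X)$'' equivalent to ``$F(\Yo^{s}(X))\simeq 0$''.
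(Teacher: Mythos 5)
Your argument is correct and fills in exactly the formal reasoning that the paper compresses into ``The construction of $\Gamma\Sp\cX_{\wfl}$ has the following immediate consequence.'' You correctly identify the two ingredients — the universal property of Bousfield localization at the set of maps $0\to\Yo^{s}(X)$, and the observation that a colimit-preserving functor into a stable target inverts $0\to\Yo^{s}(X)$ if and only if it annihilates $\Yo^{s}(X)$ — and chain them with \cref{lkjiooiwoigewgewgwegfw123}, which is precisely the intended route.
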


\section{Continuity}\label{oiefweiofwfewfiewfewfewf}

{The purpose of this section is to introduce the notion of continuity for equivariant coarse homology theories. This property will be crucially needed in \cref{reoirgioerigueog3453455}.  We will first introduce the notion of  trapping exhaustions in \cref{secjk223}. \cref{sec9000} contains the actual definition of continuous equivariant coarse homology theories, and \cref{sec02303223} incorporates continuity motivically. In the last \cref{secjk1222} we will show how one can  force   continuity for an equivariant coarse homology theory.}

\subsection{Trapping exhaustions}
\label{secjk223}

 In this section we will introduce the notion of a trapping exhaustion of a $\Gamma$-bornological space and discuss some examples and basic properties of this notion. We will also introduce the stronger notion of a co-$\Gamma$-bounded exhaustion.

Let $X$ be a bornological space and let $F$ be a subset of $X$.

\begin{ddd}
The  subset $F$ is called \emph{locally finite} if $B\cap F$ is finite for every  bounded subset $B$ of $X$.
\end{ddd}

\begin{ex}
Every finite subset of $X$ is locally finite. 
\end{ex}

\begin{ex}
If $X$ has the minimal  bornology on $X$, i.e., a subset is bounded if and only if it is finite, then every subset of $X$ is locally finite.
\end{ex}

\begin{ex}
If $X$ has the maximal bornology on $X$, i.e., every subset of $X$ is bounded, then the locally finite subsets of $X$ are exactly the finite subsets.
\end{ex}

Let $f\colon X\to X^{\prime}$ be a proper map between bornological spaces.
Let $F$ be a subset of $X$.
\begin{lem}\label{lirjfiowjfoifjoejfewfewf}
If $F$ is locally finite, then $f(F)$ is locally finite.
\end{lem}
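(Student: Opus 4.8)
The plan is to unwind the definitions and use properness of $f$ directly. Recall that $f\colon X\to X'$ is proper means that for every bounded subset $B'$ of $X'$, the preimage $f^{-1}(B')$ is bounded in $X$. To show $f(F)$ is locally finite, I must show that $B'\cap f(F)$ is finite for every bounded subset $B'$ of $X'$.

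So fix a bounded subset $B'$ of $X'$. The key observation is that $B'\cap f(F)\subseteq f\bigl(f^{-1}(B')\cap F\bigr)$: indeed, if $y\in B'\cap f(F)$, then $y=f(x)$ for some $x\in F$, and since $f(x)=y\in B'$ we have $x\in f^{-1}(B')$, hence $x\in f^{-1}(B')\cap F$ and $y\in f(f^{-1}(B')\cap F)$. Now $f^{-1}(B')$ is bounded in $X$ because $f$ is proper, so $f^{-1}(B')\cap F$ is finite because $F$ is locally finite. The image of a finite set under any map is finite, so $f\bigl(f^{-1}(B')\cap F\bigr)$ is finite, and therefore its subset $B'\cap f(F)$ is finite as well.

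Since $B'$ was an arbitrary bounded subset of $X'$, this shows that $f(F)$ is locally finite, which is exactly the claim. I do not expect any real obstacle here: the statement is a direct diagram-chase using the definition of properness and the definition of local finiteness, and the only slightly delicate point — the inclusion $B'\cap f(F)\subseteq f(f^{-1}(B')\cap F)$ — is verified by the elementary set-theoretic argument above. (Note this is the general relation $B'\cap f(F)= f(f^{-1}(B')\cap F)$ whenever $F\subseteq X$, but the inclusion suffices.)
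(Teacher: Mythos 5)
Your proof is correct and matches the paper's, which simply cites the same key inclusion $f(F)\cap B\subseteq f(F\cap f^{-1}(B))$ and leaves the rest implicit. You have just spelled out the routine verification that the paper omits.
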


\begin{proof}
We use the relation $f(F) \cap B \subseteq f(F \cap f^{-1}(B))$.
\end{proof}

We consider in the following a $\Gamma$-bornological  space $X$ and  a filtered family of invariant subsets $\cY=(Y_{i})_{i\in I}$.

\begin{ddd} \label{fiuwefziufewfwefwefewwefw}
The family $\cY$ is called a \emph{trapping exhaustion} if
for every locally finite, invariant subset $F$ of $X$ there exists $i$ in $I$ such that $F\subseteq Y_{i}$.
\end{ddd}

\begin{ex}\label{egriohreiohgiuhiuerhggre}
The family consisting of all {locally} finite, invariant subsets is a trapping exhaustion.

It might happen that a $\Gamma$-bornological coarse space does note admit any non-empty invariant locally finite subset. Consider e.g. $\Gamma$ with the maximal bornology.
In this case the empty family is a trapping exhaustion.
\end{ex}

{In the following we will introduce a particular kind of trapping exhaustions which we call co-$\Gamma$-bounded exhaustions.}

We consider a $\Gamma$-bornological  space $X$ and  a filtered family of invariant subsets $\cY = (Y_{i})_{i\in I}$.
We use the notation {and terminology} introduced in \cref{goihreiugiuuz34r34t3t3t}.

\begin{ddd}\label{dddjk2332}
The family $\cY$ is called a \emph{co-$\Gamma$-bounded exhaustion} if
\begin{enumerate}
\item $\cY$ is an exhaustion, i.e., $\bigcup_{i\in I}Y_{i}=X$, and
\item $\cY$ is co-$\Gamma$-bounded, i.e., there exists $i$ in $I$ such that $X\setminus Y_{i}$ is $\Gamma$-bounded.\qedhere
\end{enumerate}
\end{ddd}

We consider $\Gamma$-bornological   spaces $X$ and $Z$ and a filtered family $\cY:=(Y_{i})_{i\in I}$ of invariant subsets of $X$.
In the following we denote by $Z\otimes X$ the $\Gamma$-bornological space whose bornology is generated by
products $A\times B$ for all bounded subsets $A$ of $Z$ and $B$ of $X$.

\begin{lem}\label{greughiuui43gt3gg}
If $Z$ is bounded  and $\cY$ is a  co-$\Gamma$-bounded ({resp.,} trapping) exhaustion of $X$, $(Z\times Y_{i})_{i\in I}$ is a  co-$\Gamma$-bounded ({resp.,} trapping) exhaustion of $Z\otimes  X$.
\end{lem}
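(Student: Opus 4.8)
The plan is to prove the two assertions separately, in each case transporting the relevant property of $\cY$ along the projection $\pi\colon Z\otimes X\to X$ which is the identity on the underlying sets in the $X$-coordinate. First I would note that $(Z\times Y_i)_{i\in I}$ is again a filtered family of invariant subsets of $Z\otimes X$: each $Y_i$ is $\Gamma$-invariant, hence $Z\times Y_i$ is invariant for the diagonal action, and $i\mapsto Z\times Y_i$ is order-preserving because $i\mapsto Y_i$ is. The one observation used throughout is that $\pi$ is a $\Gamma$-equivariant proper map: equivariance is clear, and for a bounded subset $B$ of $X$ the preimage $\pi^{-1}(B)=Z\times B$ is bounded in $Z\otimes X$ precisely because $Z$ is bounded, so that $Z\times B$ is one of the generating bounded sets of $Z\otimes X$. (This is the only place where the hypothesis on $Z$ enters in the trapping case.)

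For the co-$\Gamma$-bounded case I would argue as follows. The exhaustion property is immediate from $\bigcup_{i\in I}(Z\times Y_i)=Z\times\bigcup_{i\in I}Y_i=Z\times X$. For co-$\Gamma$-boundedness, choose $i\in I$ with $X\setminus Y_i$ $\Gamma$-bounded; then $X\setminus Y_i\subseteq\Gamma B$ for a single bounded subset $B$ of $X$, using that a bornology is closed under finite unions. Since $Z$ is $\Gamma$-invariant we have $\gamma(Z\times B)=Z\times\gamma B$ for all $\gamma\in\Gamma$, hence $\Gamma(Z\times B)=Z\times\Gamma B$, and therefore $(Z\times X)\setminus(Z\times Y_i)=Z\times(X\setminus Y_i)\subseteq\Gamma(Z\times B)$ with $Z\times B$ bounded in $Z\otimes X$. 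Thus the complement of $Z\times Y_i$ is $\Gamma$-bounded.

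For the trapping case, let $F$ be a locally finite, invariant subset of $Z\otimes X$. Applying \cref{lirjfiowjfoifjoejfewfewf} to the proper map $\pi$ shows that $\pi(F)$ is locally finite in $X$, and $\pi(F)$ is $\Gamma$-invariant since $F$ is and $\pi$ is equivariant. Because $\cY$ is a trapping exhaustion there is $i\in I$ with $\pi(F)\subseteq Y_i$, and then $F\subseteq\pi^{-1}(\pi(F))\subseteq\pi^{-1}(Y_i)=Z\times Y_i$, as required.

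I do not expect any serious obstacle: the argument is essentially bookkeeping. The only content-bearing points are that $\pi$ is proper (where boundedness of $Z$ is used) and the identity $\Gamma(Z\times B)=Z\times\Gamma B$ in the co-$\Gamma$-bounded case (where $\Gamma$-invariance of $Z$ is used). Both are routine, so the write-up should be short.
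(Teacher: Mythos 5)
Your proof is correct and follows the same approach as the paper: for the trapping case the paper gives exactly this argument (properness of the projection plus \cref{lirjfiowjfoifjoejfewfewf}), while for the co-$\Gamma$-bounded case the paper merely declares it ``straightforward,'' which you have simply spelled out. The only slightly idiosyncratic phrasing is referring to ``$\Gamma$-invariance of $Z$'' when $Z$ is just the whole space acted on by $\Gamma$, so $\gamma(Z\times B)=Z\times\gamma B$ is automatic.
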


\begin{proof}
The co-$\Gamma$-bounded case is {straightforward.}

For the trapping case assume that $F$ is an invariant, locally finite subset of $Z\otimes  X$. Since the projection $p\colon Z\otimes  X\to X$ is proper, by  \cref{lirjfiowjfoifjoejfewfewf}  the subset $p(F)$ of $X$ is locally finite. Hence there exists $i$ in $I$ such that $p(F)\subseteq Y_{i}$, and therefore $F\subseteq Z\times Y_{i}$. 
\end{proof}

\begin{lem}\label{lemkjnsd222}
If $\cY$ is a co-$\Gamma$-bounded exhaustion of a $\Gamma$-bornological space $X$ then it is a trapping exhaustion.
\end{lem}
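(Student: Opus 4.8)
The plan is to show directly that every invariant, locally finite subset $F$ of $X$ is contained in some $Y_i$, using the two defining properties of a co-$\Gamma$-bounded exhaustion together with properness of the $\Gamma$-action implicit in the bornology (more precisely, the fact that $\Gamma$-bounded subsets are bounded).

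First I would invoke co-$\Gamma$-boundedness to fix an index $i_0 \in I$ such that $X \setminus Y_{i_0}$ is $\Gamma$-bounded, i.e.\ $X \setminus Y_{i_0} \in \cB_\Gamma$. Now let $F \subseteq X$ be an arbitrary invariant, locally finite subset. Since $F$ is invariant and $X \setminus Y_{i_0}$ is invariant (as $Y_{i_0}$ is invariant), the intersection $F \cap (X \setminus Y_{i_0})$ is an invariant subset of $X$. It is contained in the bounded set $X \setminus Y_{i_0}$, hence bounded; being a bounded subset of the locally finite set $F$, it is therefore finite. A finite invariant set is a finite union of finite orbits, hence is itself a finite set, so in particular $F \cap (X \setminus Y_{i_0})$ is locally finite and finite.

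Next I would use that $\cY$ is an exhaustion: since $\bigcup_{i \in I} Y_i = X$, the finitely many points of the finite set $F \cap (X \setminus Y_{i_0})$ are each contained in some member of the family, and by filteredness of $I$ there exists a single index $i_1 \in I$ with $F \cap (X \setminus Y_{i_0}) \subseteq Y_{i_1}$. Applying filteredness once more, choose $j \in I$ with $i_0 \leq j$ and $i_1 \leq j$, so that $Y_{i_0} \subseteq Y_j$ and $Y_{i_1} \subseteq Y_j$. Then
\[
F = (F \cap Y_{i_0}) \cup (F \cap (X \setminus Y_{i_0})) \subseteq Y_{i_0} \cup Y_{i_1} \subseteq Y_j,
\]
which shows $F \subseteq Y_j$ and hence that $\cY$ is a trapping exhaustion.

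The only subtle point — and the step I would double-check — is the claim that $F \cap (X \setminus Y_{i_0})$ is finite: this uses both that $X \setminus Y_{i_0}$ is bounded (from co-$\Gamma$-boundedness, noting $\cB_\Gamma$ consists of bounded sets) and that $F$ is locally finite, so the argument is really just unwinding the definitions in \cref{goihreiugiuuz34r34t3t3t} and \cref{fiuwefziufewfwefwefewwefw}. No essential obstacle is expected; the proof is a short formal manipulation with filtered families and the bornology.
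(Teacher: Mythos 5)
There is a genuine gap, and it is precisely at the step you flagged as "the only subtle point." You write that $F \cap (X \setminus Y_{i_0})$ "is contained in the bounded set $X \setminus Y_{i_0}$, hence bounded," and later parenthetically justify this by "noting $\cB_\Gamma$ consists of bounded sets." This is false in general. By \cref{goihreiugiuuz34r34t3t3t}, $\cB_\Gamma$ is the bornology generated by the sets $\Gamma B$ for $B$ in $\cB$; it contains $\cB$ but is typically strictly larger. Local finiteness of $F$ is a condition relative to the original bornology $\cB$, so it only gives finiteness of $F \cap B$ for $B$ in $\cB$, not for $B$ in $\cB_\Gamma$. Concretely, take $\Gamma = \IZ$ acting by translation on the first coordinate of $X = \IZ \times \{0,1\}$ with the minimal bornology, $F = \IZ\times\{0\}$, and the co-$\Gamma$-bounded exhaustion $\cY = (\emptyset, \IZ\times\{1\}, X)$. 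Here $X \setminus Y_{i_0} = \IZ\times\{0\}$ is $\Gamma$-bounded and $F \cap (X\setminus Y_{i_0}) = \IZ\times\{0\}$ is infinite, so your claimed finiteness fails (while the lemma itself of course still holds, since $F \subseteq X = Y_2$).

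The paper's proof avoids this by never forming $F \cap \Gamma B$. Instead it unwinds co-$\Gamma$-boundedness to write $X = \Gamma B \cup Y_{i}$ with $B$ a genuinely bounded subset (in $\cB$), applies local finiteness only to the honestly bounded set $B$ to conclude $F \cap B$ is finite, finds $j \geq i$ with $F \cap B \subseteq Y_j$ via the exhaustion property and filteredness, and then uses the $\Gamma$-invariance of both $F$ and $Y_j$ to propagate: for $x \in F \cap \Gamma B$, writing $x = \gamma b$ with $b \in B$ gives $b = \gamma^{-1}x \in F \cap B \subseteq Y_j$, hence $x = \gamma b \in Y_j$ by invariance of $Y_j$. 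Your proof is missing exactly this invariance-propagation step; the decomposition of $F$ along $Y_{i_0}$ and its complement cannot work because there is no reason for the complementary piece to be finite. The structure of your argument otherwise (fix a co-$\Gamma$-bounded index, isolate a finite part of $F$, pass to a larger $Y_j$ by filteredness) is the right shape, but the finiteness must come from intersecting with a bounded set, not a $\Gamma$-bounded one, and the containment of all of $F$ must be recovered by equivariance.
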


\begin{proof}
Let $F$   be an invariant, locally finite subset of $X$. Since $\cY$ is a {co-$\Gamma$-bounded} exhaustion of $X$ there exists an index $i$ in $I$ and a bounded subset $B$ of $X$ such that ${\Gamma B \cup Y_{i}} = X$. Since the union of  $\cY$  is $X$ and {$F\cap B$} is finite  there exists an index $j$ in $I$ such that $i\le j$ and {$F\cap B \subseteq Y_{j}$.} Since {$Y_{j}$} is invariant, then also {$F\subseteq Y_{j}$.}
\end{proof}

\begin{ex}\label{fwoiejoweijfoiwefewfewfew}
Let $Z$ be a  $\Gamma$-bounded $\Gamma$-bornological space and let $\cZ=(Z_{i})_{i\in I}$ be an exhaustion by not necessarily $\Gamma$-invariant subsets. 
For every $i$ in $I$ we consider the subset
\[D_{i}:=\Gamma(Z_{i}\times \{1\})\] of $  Z\times  \Gamma$.
We consider the $\Gamma$-bornological space $Z\otimes  \Gamma$,
where   $\Gamma$  has any $\Gamma$-invariant bornology.
The family
\[\cD:=(D_{i})_{i\in I}\]
is  a co-$\Gamma$-bounded (and hence trapping) exhaustion of $Z\otimes \Gamma $.
\end{ex}

We consider  $[0,\infty)$ as a $\Gamma$-bornological space with the trivial action and  the bornology generated by the subsets $[0,n]$ for all integers $n$.
In the following we will construct an interesting trapping exhaustion of the $\Gamma$-bornological space
\[[0,\infty)\otimes  Z\otimes  \Gamma\]
which will play an important role in \cref{reoirgioerigueog3453455}.

Note that in general $([0,\infty)\times D_{i})_{i\in I}$ is not trapping.

We consider the set of functions $I^{\nat}$ with its partial order induced from $I$. Then the partially ordered set
$I^{\nat}$ is filtered. For a function $\kappa$  in $I^{\nat}$ we define the set
\[Y_{\kappa}:=\bigcup_{n\in \nat}{[n-1,n]}\times D_{\kappa{(n)}}\ .\]

\begin{lem}\label{fkwehfueuiewiuu423}
If $Z$ is $\Gamma$-bounded, then 
$\cY:=(Y_{\kappa})_{\kappa\in I^{\nat}}$ is a trapping exhaustion of the space $[0,\infty)\otimes  Z\otimes  \Gamma$.
\end{lem}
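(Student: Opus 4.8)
The plan is to verify the two defining conditions of a trapping exhaustion (\cref{fiuwefziufewfwefwefewwefw}) for $\cY = (Y_{\kappa})_{\kappa \in I^{\nat}}$ on the space $X := [0,\infty)\otimes Z\otimes\Gamma$: first that the index set $I^{\nat}$ is filtered (already noted) and that $\kappa \mapsto Y_{\kappa}$ is order-preserving, and second — the substantive part — that every invariant, locally finite subset $F$ of $X$ is contained in some $Y_{\kappa}$. Monotonicity is immediate: if $\kappa \le \kappa'$ pointwise, then $D_{\kappa(n)}\subseteq D_{\kappa'(n)}$ for each $n$ since $\cZ$ is an exhaustion and hence the $D_i$ are ordered, so $Y_{\kappa}\subseteq Y_{\kappa'}$.

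For the trapping property, fix an invariant, locally finite subset $F\subseteq X$. The key idea is to work one ``level'' $[n-1,n]$ at a time. For each $n\in\nat$, consider $F_n := F\cap\bigl([n-1,n]\times Z\times\Gamma\bigr)$, and let $F_n'$ be its image under the projection $q\colon [0,\infty)\otimes Z\otimes\Gamma\to Z\otimes\Gamma$. Since $[n-1,n]$ is bounded in $[0,\infty)$ and $Z\otimes\Gamma$ carries the product bornology, $q$ restricted to $[n-1,n]\times Z\times\Gamma$ is proper, so by \cref{lirjfiowjfoifjoejfewfewf} the set $F_n'$ is locally finite in $Z\otimes\Gamma$; it is also $\Gamma$-invariant because $F$ is and $q$ is equivariant. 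Now I would invoke the fact that $\cD = (D_i)_{i\in I}$ is a trapping exhaustion of $Z\otimes\Gamma$ (\cref{fwoiejoweijfoiwefewfewfew}, using that $Z$ is $\Gamma$-bounded) to produce an index $\kappa(n)\in I$ with $F_n'\subseteq D_{\kappa(n)}$. This defines a function $\kappa\in I^{\nat}$, and by construction $F_n\subseteq [n-1,n]\times D_{\kappa(n)}$ for every $n$; since $F = \bigcup_{n\in\nat} F_n$ (as $[0,\infty)=\bigcup_n [n-1,n]$), we get $F\subseteq \bigcup_n [n-1,n]\times D_{\kappa(n)} = Y_{\kappa}$, as desired.

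The main obstacle I anticipate is the properness argument for $q|_{[n-1,n]\times Z\times\Gamma}$: one must check that the preimage under $q$ of a bounded subset of $Z\otimes\Gamma$, intersected with $[n-1,n]\times Z\times\Gamma$, is bounded in $[0,\infty)\otimes Z\otimes\Gamma$ — this follows since such a preimage is contained in $[n-1,n]\times A\times B$ for suitable bounded $A\subseteq Z$, $B\subseteq\Gamma$, which is a generating bounded set of the tensor product, but one should be careful that the bornology on $[0,\infty)$ (generated by the $[0,m]$) makes $[n-1,n]$ bounded. A secondary point to handle cleanly is the invariance of $F_n'$ together with local finiteness, so that \cref{fwoiejoweijfoiwefewfewfew} genuinely applies; here the hypothesis that $Z$ is $\Gamma$-bounded is exactly what is needed for $\cD$ to be trapping. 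Everything else is a routine unwinding of definitions.
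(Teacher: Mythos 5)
Your proof is correct and takes essentially the same approach as the paper: decompose $F$ level-by-level over the strips $[n-1,n]$ and apply the trapping property of $\cD$ on $Z\otimes\Gamma$ strip by strip. The only cosmetic difference is that you inline the projection argument (which the paper has packaged separately as \cref{greughiuui43gt3gg} and cites), rather than invoking it.
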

Note that the exhaustion $\cY$ is not co-$\Gamma$-bounded.

\begin{proof}
The members of $\cY$ are $\Gamma$-invariant subsets.
For every integer $n $ the family given by
$([{n-1},n]\times D_{i})_{i\in I}$ is a co-$\Gamma$-bounded exhaustion of $[{n-1},n]\otimes Z\otimes \Gamma$, since $[{n-1},n]$ is bounded and $(D_{i})_{i\in I}$ is a co-$\Gamma$-bounded {exhaustion} of $Z\otimes \Gamma$.
So it is trapping. 

Let $F$ be a $\Gamma$-invariant locally finite  subset of $ [0,\infty)\times  Z\times  \Gamma$.
 Then  $F\cap [{n-1},n]\times Z\times \Gamma$ is also locally finite and $\Gamma$-invariant. For every integer $n $ we can choose  $\kappa(n)$ in $I$ such that $(F\cap [{n-1},n]\times Z\times \Gamma) \subseteq [{n-1},n]\times D_{\kappa(n)}$.
 This describes a function   $\kappa $ in $I^{\nat}$ such that by  construction
 $F\subseteq Y_{\kappa}$.
\end{proof}

Let $f\colon X^{\prime}\to X$ be a proper map between $\Gamma$-bornological   spaces.
\begin{lem}
If $\cY$ is a co-$\Gamma$-bounded  (resp., trapping)  exhaustion of $X$, then $f^{-1}\cY$ is a co-$\Gamma$-bounded (resp., trapping) exhaustion of $X^{\prime}$.
\end{lem}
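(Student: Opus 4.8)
The plan is to verify both properties directly from the definitions, treating the ``trapping'' and ``co-$\Gamma$-bounded'' cases separately, and in each case showing that the preimage family $f^{-1}\cY := (f^{-1}(Y_i))_{i\in I}$ inherits the defining property from $\cY$.

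First I would record the easy structural facts. Since $f$ is $\Gamma$-equivariant (being a morphism of $\Gamma$-bornological spaces), each $f^{-1}(Y_i)$ is a $\Gamma$-invariant subset of $X'$; and since $i\mapsto Y_i$ is order-preserving, so is $i \mapsto f^{-1}(Y_i)$, so $f^{-1}\cY$ is again a filtered family of invariant subsets of $X'$. For the co-$\Gamma$-bounded case, suppose $\bigcup_{i\in I} Y_i = X$; then $\bigcup_{i\in I} f^{-1}(Y_i) = f^{-1}(X) = X'$, so $f^{-1}\cY$ is an exhaustion. Next, pick $i\in I$ with $X\setminus Y_i \in \cB_\Gamma$, i.e. $X\setminus Y_i$ is $\Gamma$-bounded. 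Then $X'\setminus f^{-1}(Y_i) = f^{-1}(X\setminus Y_i)$, and since $f$ is proper, $f^{-1}$ of a bounded set is bounded; I would need the mild observation that properness combined with equivariance gives that the preimage of a $\Gamma$-bounded set is $\Gamma$-bounded (using $f^{-1}(\Gamma B) = \Gamma f^{-1}(B)$ by equivariance, and that $f^{-1}(B)$ is bounded, so $f^{-1}(\Gamma B)$ lies in the $\Gamma$-completion bornology). Hence $X'\setminus f^{-1}(Y_i)$ is $\Gamma$-bounded, establishing the co-$\Gamma$-bounded case.

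For the trapping case, let $F'$ be a $\Gamma$-invariant, locally finite subset of $X'$. The key step is to push $F'$ forward: by \cref{lirjfiowjfoifjoejfewfewf}, $f(F')$ is a locally finite subset of $X$, and it is $\Gamma$-invariant because $f$ is equivariant and $F'$ is invariant. Since $\cY$ is a trapping exhaustion, there exists $i\in I$ with $f(F') \subseteq Y_i$, whence $F' \subseteq f^{-1}(f(F')) \subseteq f^{-1}(Y_i)$. This shows $f^{-1}\cY$ is a trapping exhaustion, and completes the proof.

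The argument is almost entirely formal, so there is no serious obstacle; the only point requiring a moment's care is the passage ``$\Gamma$-bounded $\Rightarrow$ preimage $\Gamma$-bounded'' in the co-$\Gamma$-bounded case, which uses equivariance of $f$ to commute $f^{-1}$ past the $\Gamma$-orbit and properness to control the non-equivariant bounded set. Everything else follows immediately from \cref{lirjfiowjfoifjoejfewfewf}, equivariance, and the elementary identities $f^{-1}(X) = X'$, $f^{-1}(A\setminus B) = f^{-1}(A)\setminus f^{-1}(B)$, and $F' \subseteq f^{-1}(f(F'))$.
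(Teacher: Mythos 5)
Your proof is correct and follows exactly the route the paper sketches: the paper's proof of the co-$\Gamma$-bounded case simply invokes ``properness and the fact that forming preimages commutes with forming complements,'' and for the trapping case it cites the lemma that proper maps send locally finite subsets to locally finite subsets — both of which you spell out in full detail, including the small point (which the paper leaves implicit) that equivariance lets $f^{-1}$ commute with the $\Gamma$-orbit so that preimages of $\Gamma$-bounded sets are $\Gamma$-bounded.
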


\begin{proof}
The co-$\Gamma$-bounded case is a direct consequence of properness and the fact that forming preimages
commutes with forming complements.

For the trapping case one uses in addition \cref{lirjfiowjfoifjoejfewfewf}.
\end{proof}

\subsection{Continuous equivariant coarse homology theories}
\label{sec9000}

In this section we will introduce an additional continuity condition on an equivariant coarse homology theory. We then verify that a continuous equivariant coarse homology theory preserves coproducts.

\begin{rem}
In \cref{seci232323} we show continuity of equivariant coarse ordinary homology theory and in \cref{prop:coarseK.continuous}  continuity of equivariant coarse algebraic $K$-homology. In \cref{gfoihwjgoij32rtgergergerge} we describe a version of the coarse algebraic $K$-homology theory which is not continuous.
\end{rem}

We can extend the notions of locally finite subsets and trapping / co-$\Gamma$-bounded exhaustions to $\Gamma$-bornological coarse spaces by just considering the underlying $\Gamma$-bornological spaces.

Let $\bC$ be a stable cocomplete $\infty$-category and \[E\colon\Gamma\BC\to \bC\] an equivariant coarse homology theory.
We use the convention \eqref{vrevkjervnjknvevvveverv} for the evaluation $E(\cY)$ on a  filtered family $\cY$ of invariant subsets of a $\Gamma$-bornological coarse space. We have a natural  morphism
\begin{equation}\label{fwkjfhweiufhiewfwefewf}
E(\cY)\to E(X)\ .
\end{equation}

\begin{ddd}\label{fweoiuiowefwefeve}
 $E$ is called \emph{continuous} if
for every  trapping exhaustion  $\cY$ of some $\Gamma$-bornological coarse space $X$ the morphism  \eqref{fwkjfhweiufhiewfwefewf} is an equivalence.
\end{ddd}

\begin{rem}\label{feoijewoif234345}
A continuous equivariant coarse homology theory $E$ is determined by its values on {locally finite, invariant} spaces. More precisely, let $\cF(X)$ be the filtered partially ordered set of {locally finite, invariant} subsets of $X$. In view of \cref{egriohreiohgiuhiuerhggre} we have a trapping exhaustion $\cY:=(F)_{F\in \cF(X)}$ of $X$. So we get
\[E(X)\simeq E(\cY)=  \colim_{F\in \cF(X)} E(F)\]
showing the claim.

If $\cF(X)$ is empty, i.e., $X$ does not admit non-empty locally finite $\Gamma$-invariant subsets (see \cref{egriohreiohgiuhiuerhggre}), and $E$ is continuous, then $E(X)\simeq 0$.
\end{rem}

By excision an equivariant coarse homology theory preserves coproducts of finite families of $\Gamma$-bornological coarse spaces. By the following lemma, for a continuous  equivariant coarse homology theory, {we can drop the word \emph{finite}}.
 
\begin{lem}\label{foifjoijoiiofiuewoifuoweffewf}
A continuous equivariant coarse homology theory preserves coproducts.
\end{lem}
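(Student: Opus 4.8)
The plan is to reduce the statement about an arbitrary coproduct $\coprod_{i\in I} X_i$ to the free union $\bigsqcup_{i\in I}^{\free} X_i$ together with a trapping exhaustion argument. First I would recall from \cref{efwifuhwfowefewfewfwef} that there is a canonical morphism $\coprod_{i\in I} X_i \to \bigsqcup_{i\in I}^{\free} X_i$ which is the identity on underlying $\Gamma$-sets, and that on each finite subset $J \subseteq I$ the restriction $\coprod_{i\in J} X_i \to \bigsqcup_{i\in J}^{\free} X_i$ is an isomorphism (for finite $I$ the free union \emph{is} the coproduct). Since $E$ is an equivariant coarse homology theory, excision gives that $E$ sends finite coproducts to finite coproducts, i.e.\ $E(\coprod_{i\in J} X_i) \simeq \bigoplus_{i\in J} E(X_i)$ for finite $J$; in a stable $\infty$-category finite coproducts are biproducts, so this is compatible with the projection maps. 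Taking the colimit over the filtered poset of finite subsets $J \subseteq I$ and using that $E$, being a functor into a \emph{cocomplete} stable $\infty$-category, and colimits commute, we get $\colim_{J} E(\coprod_{i\in J} X_i) \simeq \bigoplus_{i\in I} E(X_i) = \coprod_{i\in I} E(X_i)$.

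Next I would identify $\colim_{J} E(\coprod_{i\in J} X_i)$ with $E$ of something inside the free union. The key observation is that the subsets $\bigsqcup_{i\in J} X_i$ of $Y := \bigsqcup_{i\in I}^{\free} X_i$, indexed by finite $J \subseteq I$, with the induced structures, are exactly (isomorphic to) the $\Gamma$-bornological coarse spaces $\coprod_{i\in J} X_i$: the induced bornology on $\bigsqcup_{i\in J} X_i$ is generated by the bounded subsets of the individual $X_i$, and the induced coarse structure, by the shape of the free-union coarse structure, is the coproduct coarse structure. Moreover this family $(\bigsqcup_{i\in J} X_i)_J$ is a filtered family of $\Gamma$-invariant subsets of $Y$, so $E((\bigsqcup_{i\in J} X_i)_J) = \colim_J E(\coprod_{i\in J} X_i) \simeq \coprod_{i\in I} E(X_i)$ by the previous paragraph. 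It therefore suffices to show that $(\bigsqcup_{i\in J} X_i)_{J \text{ finite}}$ is a \emph{trapping exhaustion} of $Y$, because then continuity of $E$ (\cref{fweoiuiowefwefeve}) yields $E((\bigsqcup_{i\in J} X_i)_J) \xrightarrow{\simeq} E(Y) \simeq E(\coprod_{i\in I} X_i)$, where the last equivalence is \cref{iweufhf89wfu89ewfew245}-style reasoning once we know the comparison $\coprod X_i \to \bigsqcup^{\free} X_i$ is an equivalence — but in fact it is cleaner to just observe directly that, with the trapping exhaustion in hand, $E(Y) \simeq \coprod_i E(X_i)$ and separately that $E$ applied to $\coprod_i X_i$ is computed the same way, so the comparison map on $E$-level is an equivalence.

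The crux is thus the trapping-exhaustion claim: given a $\Gamma$-invariant, locally finite subset $F \subseteq Y$, I must produce a finite $J \subseteq I$ with $F \subseteq \bigsqcup_{i\in J} X_i$, equivalently $F \cap X_i = \emptyset$ for all but finitely many $i$. Here is where the bornology of the free union enters: each $X_i$, sitting inside $Y$, has a bounded subset hit by $F$ only in a finite set, but that alone does not bound the number of $i$'s met. The right move is to note that for $F$ to be locally finite and $\Gamma$-invariant, and given how bounded sets of $Y$ look (finite unions of bounded subsets of individual $X_i$'s), if $F$ met infinitely many $X_i$ we could — picking one point $x_i \in F \cap X_i$ for infinitely many $i$ — form the set $\{x_i\}$ (or its $\Gamma$-orbit $\Gamma\{x_i\}$), which is a subset of $F$; one then checks this subset fails local finiteness against a suitable bounded set, or more carefully, that $F$ cannot be locally finite. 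I expect this verification — pinning down exactly which subsets of the free union are bounded and deriving the contradiction, handling the $\Gamma$-action correctly (an orbit of a point in $X_i$ stays inside $X_i$ since the $\Gamma$-action on $\bigsqcup^{\free} X_i$ respects the decomposition) — to be the main obstacle, though it is ultimately a routine bornology computation. With the trapping exhaustion established, continuity of $E$ closes the argument.
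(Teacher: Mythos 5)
Your attempt to route through the free union $\bigsqcup^{\free}_{i\in I} X_i$ is where the argument breaks down, and the gap is precisely the trapping-exhaustion claim you flagged as ``the main obstacle.'' Recall that a subset of the free union is bounded iff it is contained in a \emph{finite} union of bounded subsets of individual $X_i$'s, so a bounded subset of $Y := \bigsqcup^{\free}_i X_i$ meets only finitely many $X_i$. Consequently, if you pick one point $x_i \in X_i$ for every $i\in I$, the set $\{x_i\}_{i\in I}$ \emph{is} locally finite in $Y$: any bounded $B\subseteq Y$ touches only finitely many $X_i$, hence $B\cap\{x_i\}_i$ is finite. (If each $X_i$ has free $\Gamma$-action you can even make this $\Gamma$-invariant by taking orbits.) So the family $(\bigsqcup_{i\in J} X_i)_{J\text{ finite}}$ is \emph{not} a trapping exhaustion of $Y$ — there is a locally finite invariant subset not contained in any finite sub-union — and continuity of $E$ gives you nothing here. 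Relatedly, the step $E(Y)\simeq E(\coprod_i X_i)$ has no justification: $\coprod$ and $\bigsqcup^{\free}$ differ in both coarse structure and bornology when $I$ is infinite, and they are not coarsely equivalent.

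The fix is to abandon the free union and work with the coproduct directly, which is what the paper does. The coproduct has a strictly \emph{larger} bornology (per \cite[Lem.~2.24]{buen}, $B$ is bounded iff $B\cap X_i$ is bounded for every $i$, with no finiteness condition on the number of $i$'s touched). With that bornology, the set $\{x_i\}_{i\in I}$ above \emph{is} bounded, so a locally finite invariant $F\subseteq\coprod_i X_i$ can meet only finitely many $X_i$ — exactly the point where your ``pick a point $x_i$ from each'' argument actually succeeds, because now it intersects a bounded set in an infinite set. From there one exhibits $\cF(\coprod_i X_i)$ as consisting of coarsely disjoint finite unions $F=\bigcup_{i\in J(F)} F_i$, applies finite excision to get $E(F)\simeq\bigoplus_{i\in J(F)}E(F_i)$, and commutes the colimit over $\cF(\coprod_i X_i)$ with the direct sum. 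Your intuition about where the finiteness of $J$ must come from was right; you just directed the bornology computation at the wrong space.
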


\begin{proof}
Let $(X_{i})_{i\in I}$ be a family of $\Gamma$-bornological coarse spaces.
We must show that the natural map
\[\bigoplus_{i\in I} E(X_{i})\to E \big( \coprod_{i\in I} X_{i} \big)\]
is an equivalence.
All invariant subsets of $\coprod_{i\in I} X_{i}$ have the
induced bornological coarse structures.

{Consider the following diagram, where the horizontal maps are equivalences by continuity and where $\cF(-)$ is the trapping exhaustion consisting of all locally finite, invariant subsets.
\[\xymatrix{
\bigoplus_{i\in I}\colim_{F_i\in\cF(X_i)}E(F_i)\ar[d]_{!}\ar[r]^-\simeq& \bigoplus_{i\in I}E(X_i)\ar[d]\\
\colim_{F\in\cF(\coprod_{i\in I}X_i)}E(F)\ar[r]^-\simeq&E(\coprod_{i\in I}X_i)	
}\]}
	
It remains to show that the map marked with ! is an equivalence. The bornology of the coproduct is described in \cite[Lemma 2.24]{buen}.
 A subset of the coproduct is bounded if and only if its intersection with $X_{i}$ for every $i$ in $I$ is bounded. This implies that  for
 every invariant, locally finite subset $F$ of $ \coprod_{i\in I} X_{i}$ there exists a minimal finite subset $J(F)$ of $ I$ such that $F\subseteq \coprod_{i\in J{(F)}} X_{i}$.  We write $F_{i}:=F\cap X_{i}$. Then
\[F=\bigcup_{i\in J(F)}F_{i}\] is a finite, coarsely disjoint decomposition. 
Hence
\[E(F)\simeq \bigoplus_{i\in J(F)} E(F_{i})\simeq \bigoplus_{i\in I}E(F_i)\ .\]
That the map marked with ! is an equivalence now follows from the equivalence
\[\bigoplus_{i\in I}\colim_{F_i\in \cF(X_i)}E(F_i)\simeq \colim_{(F_i)_i\in \cF(\coprod_{i\in I}X_i)}\bigoplus_{i\in I}E(F_i)\ .\qedhere\]
\end{proof}

\subsection{Continuous coarse motives}
\label{sec02303223}

In this section we will explain how to incorporate continuity on the motivic level and we will discuss basic properties of this procedure.
Recall the   Yoneda embedding \eqref{huiciuhciuewcwecewcwecwecwc}  and  the notation \eqref{huiciuhciuewcwecewcwecwecwc1}.
We have a natural morphism 
\begin{equation}\label{weflkewjfoi243r54}
\yo(\cY)\to \yo(X)\ .
\end{equation}

Let $E$ be an object of $\PSh(\Gamma\BC)$.
\begin{ddd}
We call $E$ \emph{continuous} if it is local with {respect to} the morphisms  \eqref{weflkewjfoi243r54}   for all  trapping exhaustions $\cY$ of $\Gamma$-bornological {coarse} spaces $X$.
\end{ddd}

\begin{rem}
Let $E$ be an object of   $\PSh(\Gamma\BC)$ and recall \eqref{vervheoioij}.
 The collection of restriction morphisms $E(X)\to E(Y_{i})$ for all $i$ in $I$ induce a natural morphism
\begin{equation}\label{eflkwfoewofjewfoiewjf}
E(X)\to E(\cY)\ .
\end{equation}

Then $E$ is continuous if and only if the morphism \eqref{eflkwfoewofjewfoiewjf} is an equivalence   for  every  trapping exhaustion  $\cY$ of a $\Gamma$-bornological {coarse} space  $X$.
\end{rem}

We now incorporate continuity on the motivic level by adding this relation to the list in \cref{fewowijewoifjoewfjowefwefewfew}.

\begin{ddd}\label{fewowijewoifjoewfjowefwefewfew22} We define the $\infty$-category of continuous $\Gamma$-equivariant motivic coarse spaces $\Gamma\Spc\cX_{c}$ to be the full localizing subcategory of $\Sh(\Gamma\BC)$ of 
  coarsely invariant, continuous and $u$-continuous sheaves which vanish on flasques.
  \end{ddd} 

The locality condition is generated by a small set of morphisms. Therefore
we have a localizing adjunction
\begin{equation}\label{erglkho83453t34t3t}
\cL_{c}\colon\PSh(\Gamma\BC)\leftrightarrows \Gamma\Spc\cX_{c}: inclusion\ .
\end{equation}
 We define
\[\Yo_{c}:=\cL_{c}\circ \yo\colon\Gamma\BC\to \Gamma\Spc\cX_{c}\ .\]
 
 We furthermore have a localizing adjunction
\[C\colon\Gamma\Spc\cX\leftrightarrows \Gamma\Spc\cX_{c}: inclusion\]
and the relations
\[\cL_{c}\simeq C\circ  \cL\ , \quad  \Yo_{c}\simeq C\circ \Yo\ .\]
where $\cL$ is as in \eqref{erglkhoi3hto34t34t43t34t3t}.

The $\infty$-category $\Gamma\Spc\cX$ is a presentable $\infty$-category.

\begin{ddd}\label{orjoeggregrege}
We define the category of continuous equivariant motivic coarse spectra as the stabilization
 \[\Gamma\Sp\cX_{c}:=\Gamma \Spc\cX_{c,*}[\Sigma^{-1}]\]
 in the realm of presentable $\infty$-categories.
\end{ddd}
Then $\Gamma \Sp\cX_{c}$ is a stable presentable $\infty$-category which fits into an adjunction
\[\Sigma^{mot}_{c,+}\colon\Gamma\Spc\cX_{c}\leftrightarrows {\Gamma \Sp\cX_{c}}:\Omega^{mot}_{c}\ .\]
We further define the following stable continuous version of the Yoneda functor:
\begin{equation}\label{jhjhfkjhewiufhwifuwefwefewfwef}
\Yo^{s}_{c}:=\Sigma^{mot}_{c,+}\circ \Yo_{c}\colon\Gamma\BC\to \Gamma\Sp\cX_{c}\ .
\end{equation} 

We obtain the functor $C^{s}$ in the following commuting square from the universal property of the stabilization:
\begin{equation}\label{oihiuhfiuwheiuewf23}
\xymatrix{\Gamma\Spc\cX\ar[r]^{\Sigma^{mot}_{+}}\ar[d]^{C}&\Gamma\Sp\cX\ar[d]^{C^{s}}\\
\Gamma\Spc\cX_{c}\ar[r]^{\Sigma^{mot}_{c,+}}&\Gamma\Sp\cX_{c}}
\end{equation}
We furthermore have the relation
\begin{equation}\label{eq3t4rger435}
\Yo^{s}_{c}\simeq C^{s}\circ \Yo^{s}\ .
\end{equation}

 Let $\bC$ be a cocomplete stable $\infty$-category.
We let $\mathbf{Cont}\Gamma\CoarseHomologyTheories_{\bC}$  denote the full subcategory of $\Fun(\Gamma\BC,\bC)$ of functors which are continuous $\Gamma$-equivariant coarse homology theories in the sense of \cref{fewoijfewoifewofww45535345345}.

The construction of $\Gamma\Sp\cX_{c}$ has the following immediate consequence:
\begin{kor}\label{lkjiooiwoigewgewgwegfw}
Precomposition with $\Yo_{c}^{s}$ induces an equivalence of $\infty$-catgeories
\[\Fun^{\mathrm{colim}}(\Gamma\Sp\cX_{c},\bC)\to \mathbf{Cont}\Gamma\CoarseHomologyTheories_{\bC}\ .\]
\end{kor}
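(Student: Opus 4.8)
The plan is to reduce this to the universal property of $\Gamma\Sp\cX$ already recorded in \cref{lkjiooiwoigewgewgwegfw123}, together with the localization adjunction that produces $\Gamma\Sp\cX_{c}$. First I would observe that, by the construction of $\Gamma\Sp\cX_{c}$ as an iterated localization of $\PSh(\Gamma\BC)$ (through $\Sh(\Gamma\BC)$, then $\Gamma\Spc\cX_{c}$, then stabilization), there is a colimit-preserving localization functor $C^{s}\colon\Gamma\Sp\cX\to\Gamma\Sp\cX_{c}$ fitting into the square \eqref{oihiuhfiuwheiuewf23}, with $\Yo^{s}_{c}\simeq C^{s}\circ\Yo^{s}$ by \eqref{eq3t4rger435}. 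Since $C^{s}$ is a left adjoint between presentable stable $\infty$-categories, precomposition with $C^{s}$ identifies $\Fun^{\mathrm{colim}}(\Gamma\Sp\cX_{c},\bC)$ with the full subcategory of $\Fun^{\mathrm{colim}}(\Gamma\Sp\cX,\bC)$ of those colimit-preserving functors $F$ which invert the localizing morphisms defining $\Gamma\Sp\cX_{c}$ — concretely, the morphisms $\Yo^{s}(\cY)\to\Yo^{s}(X)$ for all trapping exhaustions $\cY$ of $\Gamma$-bornological coarse spaces $X$ (the coarse-invariance, flasqueness and $u$-continuity relations are already imposed in $\Gamma\Sp\cX$).

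Next I would transport this along the equivalence $\Fun^{\mathrm{colim}}(\Gamma\Sp\cX,\bC)\simeq\Gamma\CoarseHomologyTheories_{\bC}$ of \cref{lkjiooiwoigewgewgwegfw123}. Under this equivalence a colimit-preserving functor $F$ corresponds to the equivariant coarse homology theory $E:=F\circ\Yo^{s}$, and for a filtered family $\cY=(Y_i)_{i\in I}$ of invariant subsets one has $E(\cY)=\colim_{i}E(Y_i)\simeq F(\colim_i\Yo^{s}(Y_i))\simeq F(\Yo^{s}(\cY))$ by colimit-preservation and the description \eqref{wefweew254} of $\Yo^{s}(\cY)$. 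Hence $F$ inverts the morphism $\Yo^{s}(\cY)\to\Yo^{s}(X)$ if and only if the natural map $E(\cY)\to E(X)$ of \eqref{fwkjfhweiufhiewfwefewf} is an equivalence; ranging over all trapping exhaustions, $F$ inverts all the relevant morphisms if and only if $E$ is continuous in the sense of \cref{fweoiuiowefwefeve}. Therefore the full subcategory of $\Gamma\CoarseHomologyTheories_{\bC}$ cut out by these conditions is exactly $\mathbf{Cont}\Gamma\CoarseHomologyTheories_{\bC}$, and the chain of equivalences
\[\Fun^{\mathrm{colim}}(\Gamma\Sp\cX_{c},\bC)\;\simeq\;\Fun^{\mathrm{colim}}(\Gamma\Sp\cX,\bC)_{C^{s}\text{-local}}\;\simeq\;\mathbf{Cont}\Gamma\CoarseHomologyTheories_{\bC}\]
is precisely the asserted one, induced by precomposition with $\Yo^{s}_{c}\simeq C^{s}\circ\Yo^{s}$. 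The same argument as in \cite[Cor.~4.6]{buen} shows $\Yo^{s}_{c}$ itself lands in continuous coarse homology theories, being $C^{s}$ composed with the universal one.

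The only genuinely non-formal point — and the step I expect to be the main obstacle — is matching the two notions of "continuous" across the $\Spc$-valued and $\bC$-valued settings and, more precisely, checking that imposing continuity (a condition phrased with colimits of representables in the unstable presheaf category, then stabilized) produces exactly the class of colimit-preserving functors inverting $\Yo^{s}(\cY)\to\Yo^{s}(X)$. This is handled exactly as the analogous passage in \cite{buen}: the localizing subcategory $\Gamma\Sp\cX_{c}\hookrightarrow\Gamma\Sp\cX$ is by construction the local objects for this set of morphisms (a small set, by the standing set-theoretic conventions), so a colimit-preserving $F\colon\Gamma\Sp\cX\to\bC$ factors through $C^{s}$ iff it inverts them, and the rest is the bookkeeping above. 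I would also remark that continuity automatically entails preservation of coproducts (\cref{foifjoijoiiofiuewoifuoweffewf}), so no further compatibility needs to be checked. Everything else is a routine invocation of the universal properties already in place.
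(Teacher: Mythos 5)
Your proposal is correct and is exactly the kind of unpacking the paper has in mind: the paper declares this "an immediate consequence of the construction" without further argument, and your two-step route — applying the universal property of $\Gamma\Sp\cX$ from Corollary \ref{lkjiooiwoigewgewgwegfw123} and then identifying the essential image under the further localization $C^{s}$ with the continuity condition — fills in precisely that gap. The only implicit ingredient (that $C^{s}$ is itself a Bousfield localization at the morphisms $\Yo^{s}(\cY)\to\Yo^{s}(X)$, i.e.\ that stabilization commutes with the accessible localization imposing continuity) is a standard fact that the paper likewise uses tacitly, and your translation between inverting these morphisms and the map $E(\cY)\to E(X)$ of \eqref{fwkjfhweiufhiewfwefewf} being an equivalence is airtight.
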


The Yoneda functor $\Yo^{s}_{c}$ has all the properties listed in \cref{riojreoirjgoiggoiergergege}.
In addition it satisfies:

\begin{kor} We have
\[\Yo^{s}_{c}(\cY)\simeq \Yo_{c}^{s}(X)\] for every trapping exhaustion $\cY$ of a $\Gamma$-bornological coarse space $X$.
\end{kor}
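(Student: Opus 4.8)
The plan is to deduce the statement directly from the construction of $\Gamma\Sp\cX_{c}$ as a localization, in complete analogy with the way the items of \cref{kjeflwfjewofewuf98ewuf98u798798234234324324343} follow from the construction of $\Gamma\Sp\cX$. First I would fix the convention, parallel to \eqref{wefweew254}, that for a trapping exhaustion $\cY=(Y_{i})_{i\in I}$ of $X$ one sets $\Yo^{s}_{c}(\cY):=\Sigma^{mot}_{c,+}\circ\cL_{c}\circ\yo(\cY)$, where $\yo(\cY)=\colim_{i\in I}\yo(Y_{i})$ as in \eqref{huiciuhciuewcwecewcwecwecwc1}. Since $\cL_{c}$ (see \eqref{erglkho83453t34t3t}) and $\Sigma^{mot}_{c,+}$ are both left adjoints, they preserve colimits, so $\Yo^{s}_{c}(\cY)\simeq\colim_{i\in I}\Yo^{s}_{c}(Y_{i})$ and the canonical map $\Yo^{s}_{c}(\cY)\to\Yo^{s}_{c}(X)$ is exactly the image under $\Sigma^{mot}_{c,+}\circ\cL_{c}$ of the morphism $\yo(\cY)\to\yo(X)$ of \eqref{weflkewjfoi243r54}.

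The core of the argument is then a short observation. By \cref{fewowijewoifjoewfjowefwefewfew22} the category $\Gamma\Spc\cX_{c}$ is the localization of $\Sh(\Gamma\BC)$ (equivalently of $\PSh(\Gamma\BC)$, after sheafification) at a small set of morphisms which, by the very definition of a continuous presheaf, contains $\yo(\cY)\to\yo(X)$ for every trapping exhaustion $\cY$. Hence the localization functor $\cL_{c}$ sends $\yo(\cY)\to\yo(X)$ to an equivalence, so $\cL_{c}\yo(\cY)\to\cL_{c}\yo(X)$ is an equivalence in $\Gamma\Spc\cX_{c}$. Applying the functor $\Sigma^{mot}_{c,+}$, which like any functor preserves equivalences, we conclude that $\Yo^{s}_{c}(\cY)\to\Yo^{s}_{c}(X)$ is an equivalence in $\Gamma\Sp\cX_{c}$, as claimed.

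There is no real obstacle here; the only points deserving a word of care are purely formal: that $\Sigma^{mot}_{c,+}\circ\cL_{c}$ commutes with the filtered colimit defining $\yo(\cY)$ (immediate, both being left adjoints), and that the trapping-exhaustion morphisms are genuinely among the morphisms inverted when passing to $\Gamma\Spc\cX_{c}$ (which is precisely the content of the continuity condition in \cref{fewowijewoifjoewfjowefwefewfew22}). Alternatively, one may phrase the proof by first recording that $\Yo^{s}_{c}$ is a continuous equivariant coarse homology theory in the sense of \cref{fweoiuiowefwefeve} — which again is immediate from the construction, since $\Gamma\Sp\cX_{c}$ is stable and cocomplete and $\Yo^{s}_{c}$ inverts the relevant morphisms — and then invoking \cref{fweoiuiowefwefeve} directly; one could equally use the relation $\Yo^{s}_{c}\simeq C^{s}\circ\Yo^{s}$ of \eqref{eq3t4rger435} together with the fact that $C^{s}$ inverts $\Yo^{s}(\cY)\to\Yo^{s}(X)$.
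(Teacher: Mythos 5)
Your argument is correct and is precisely the reasoning the paper leaves implicit: the corollary is stated without proof because it follows immediately from the construction of $\Gamma\Spc\cX_{c}$ as a localization at (among others) the morphisms $\yo(\cY)\to\yo(X)$ for trapping exhaustions, so $\cL_{c}$ inverts them and $\Sigma^{mot}_{c,+}$ preserves the resulting equivalence. The alternative phrasings you mention at the end (invoking continuity of $\Yo^{s}_{c}$ directly, or factoring through $C^{s}$) are also valid and equivalent.
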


So in particular, $\Yo^{s}_{c}(X)$ is determined by the collection of invariant, locally finite subsets of $X$:
\[\Yo^{s}_{c}(X)\simeq \colim_{F\in  \cF(X)}\Yo^{s}_{c}(F)\ .\]

Note that $\Yo^{s}_{c}$ is a $\Gamma\Sp\cX_{c}$-valued   continuous $\Gamma$-equivariant {coarse} homology theory. Hence \cref{foifjoijoiiofiuewoifuoweffewf} implies:
\begin{kor}
The functor $\Yo^{s}_{c}\colon\Gamma\BC\to \Gamma\Spc\cX_{c}$ preserves coproducts.
\end{kor}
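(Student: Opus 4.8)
The plan is to deduce this directly from \cref{foifjoijoiiofiuewoifuoweffewf}. First I would record that the hypotheses of that lemma are met: its statement concerns an arbitrary continuous equivariant coarse homology theory with values in a stable cocomplete $\infty$-category, and $\Gamma\Sp\cX_{c}$ is stable and presentable, hence in particular cocomplete. It therefore suffices to know that $\Yo^{s}_{c}\colon\Gamma\BC\to\Gamma\Sp\cX_{c}$ is itself a continuous equivariant coarse homology theory.

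That fact is exactly the observation made just before the statement, and it can be justified in two equivalent ways. One is to appeal to \cref{lkjiooiwoigewgewgwegfw}: precomposition with $\Yo^{s}_{c}$ is an equivalence from the colimit-preserving functors out of $\Gamma\Sp\cX_{c}$ onto $\mathbf{Cont}\Gamma\CoarseHomologyTheories_{\bC}$, and evaluating this equivalence at the identity functor of $\Gamma\Sp\cX_{c}$ exhibits $\Yo^{s}_{c}$ as a continuous equivariant coarse homology theory. The other is to verify the four axioms of \cref{fewoijfewoifewofww45535345345} together with continuity by hand, using that $\Gamma\Spc\cX_{c}$ is the localization of $\Sh(\Gamma\BC)$ at the classes of morphisms encoding coarse invariance, vanishing on flasques, $u$-continuity and continuity (\cref{fewowijewoifjoewfjowefwefewfew22}), in the same way the analogous statements for $\Yo^{s}$ are obtained in \cref{riojreoirjgoiggoiergergege}.

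Granting this, \cref{foifjoijoiiofiuewoifuoweffewf} applies verbatim: for any family $(X_{i})_{i\in I}$ of $\Gamma$-bornological coarse spaces --- the coproduct $\coprod_{i\in I}X_{i}$ existing in $\Gamma\BC$ by the coproduct lemma of \cref{iureghfiuwewefwefwefewf} --- the canonical comparison map $\bigoplus_{i\in I}\Yo^{s}_{c}(X_{i})\to\Yo^{s}_{c}(\coprod_{i\in I}X_{i})$ is an equivalence. I do not expect any genuine obstacle: the mathematical content is entirely contained in \cref{foifjoijoiiofiuewoifuoweffewf} (whose proof combines continuity with the finite additivity coming from excision), and the only point requiring thought is the identification of $\Yo^{s}_{c}$ as a continuous equivariant coarse homology theory, which is already available.
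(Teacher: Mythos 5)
Your argument is correct and follows exactly the paper's route: the paper simply notes that $\Yo^{s}_{c}$ is a continuous equivariant coarse homology theory (with target $\Gamma\Sp\cX_{c}$, which is stable and presentable, hence cocomplete) and then cites \cref{foifjoijoiiofiuewoifuoweffewf}. Your added justifications — reading the universality statement \cref{lkjiooiwoigewgewgwegfw} at the identity, or verifying the axioms from the localization description — are both valid ways to fill in the step the paper states without proof.
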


If $\cY=(Y_{i})_{i\in I}$ is a co-$\Gamma$-bounded (or trapping, respectively) exhaustion of a $\Gamma$-bornological space $X$ and $Z$ is a second $\Gamma$-bornological coarse space, then $\cY\times Z:=(Y_{i}\times Z)_{i\in I}$ is not necessarily  trapping   in $X\otimes Z$. As a consequence the symmetric monoidal structure $\otimes $ does not descend to continuous motivic coarse spectra.
But if $Z$ is bounded, then $\cY\times Z$ is again a trapping exhaustion of $X\otimes Z$ by \cref{greughiuui43gt3gg}.  We can conclude:

\begin{kor}\label{fliio2fo2fjoijewfwefewfw}
If $Z$ is a bounded $\Gamma$-bornological coarse space, then the functor
\[-\otimes Z\colon\Gamma\BC\to \Gamma\BC\] descends to a functor
\[-\otimes^{mot}Z\colon\Gamma\Sp\cX_{c}\to \Gamma\Sp\cX_{c}\] such that \begin{equation}\label{f3ih3iou4z89z8zt}
\Yo^{s}_{c}(X\otimes Z)\simeq \Yo^{s}_{c}(X)\otimes^{mot} Z\ .
\end{equation}
 \end{kor}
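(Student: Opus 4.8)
The plan is to obtain $-\otimes^{mot}Z$ from the universal property of $\Yo^{s}_{c}$ in \cref{lkjiooiwoigewgewgwegfw}, by checking that the composite $E:=\Yo^{s}_{c}\circ(-\otimes Z)\colon\Gamma\BC\to\Gamma\Sp\cX_{c}$ is a continuous equivariant coarse homology theory. That $E$ is an equivariant coarse homology theory (in the sense of \cref{fewoijfewoifewofww45535345345}) is almost formal: since $\Yo^{s}$ is symmetric monoidal by \cref{lemjine2}, we have $\Yo^{s}\circ(-\otimes Z)\simeq(-\otimes\Yo^{s}(Z))\circ\Yo^{s}$, so $\Yo^{s}\circ(-\otimes Z)$ is the composite of $\Yo^{s}$ with a colimit-preserving endofunctor of $\Gamma\Sp\cX$ and is therefore an equivariant coarse homology theory by \cref{lkjiooiwoigewgewgwegfw123}; and $E\simeq C^{s}\circ\bigl(\Yo^{s}\circ(-\otimes Z)\bigr)$ by \eqref{eq3t4rger435}, while postcomposition with the colimit-preserving exact functor $C^{s}\colon\Gamma\Sp\cX\to\Gamma\Sp\cX_{c}$ preserves coarse invariance, excision, vanishing on flasques, and $u$-continuity.

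The only point where the hypothesis on $Z$ is used is the verification of continuity of $E$. Let $\cY=(Y_{i})_{i\in I}$ be a trapping exhaustion of a $\Gamma$-bornological coarse space $X$. Writing $Y_{i}\otimes Z$ for the subset $Y_{i}\times Z$ of $X\otimes Z$ equipped with the induced structures of \cref{exjk22332} --- which one checks coincides with the tensor product $\Gamma$-bornological coarse space $Y_{i}\otimes Z$ --- \cref{greughiuui43gt3gg} shows, \emph{because $Z$ is bounded}, that $\cY\otimes Z:=(Y_{i}\otimes Z)_{i\in I}$ is again a trapping exhaustion, now of $X\otimes Z$. By the construction of $\Gamma\Sp\cX_{c}$, i.e.\ continuity of $\Yo^{s}_{c}$, we then get $E(\cY)=\colim_{i\in I}\Yo^{s}_{c}(Y_{i}\otimes Z)\simeq\Yo^{s}_{c}(X\otimes Z)=E(X)$, so $E$ is continuous. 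Now \cref{lkjiooiwoigewgewgwegfw} produces a colimit-preserving functor $-\otimes^{mot}Z\colon\Gamma\Sp\cX_{c}\to\Gamma\Sp\cX_{c}$ together with an equivalence $(-\otimes^{mot}Z)\circ\Yo^{s}_{c}\simeq\Yo^{s}_{c}\circ(-\otimes Z)$; this is exactly the asserted descent, and evaluating the equivalence at $X$ gives \eqref{f3ih3iou4z89z8zt}. (Compatibility of this functor with the monoidal functor $-\otimes Z$ on $\Gamma\Sp\cX$ from \cref{lemjine2} along $C^{s}$ then follows from uniqueness in \cref{lkjiooiwoigewgewgwegfw}.)

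I do not expect a real obstacle: once one sees that continuity of $E$ is precisely what \cref{greughiuui43gt3gg} delivers, the rest is a formal application of the universal properties of $\Yo^{s}$ and $\Yo^{s}_{c}$. The step to be careful about is the identification of $Y_{i}\otimes Z$ with the induced-structure subspace $Y_{i}\times Z\subseteq X\otimes Z$, at the level of both coarse and bornological structures, so that \cref{greughiuui43gt3gg} applies verbatim; this is a routine unwinding of the definitions of $\otimes$ and of induced structures. Boundedness of $Z$ enters only here: without it, $\cY\otimes Z$ need not be a trapping exhaustion and the argument breaks down, as the discussion immediately preceding the statement already notes.
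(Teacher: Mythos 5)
Your proposal is correct and takes essentially the same route as the paper: the content both rely on is precisely \cref{greughiuui43gt3gg}, which (using boundedness of $Z$) shows that tensoring with $Z$ preserves trapping exhaustions, so that the continuity relations in the definition of $\Gamma\Sp\cX_c$ are preserved. Your presentation packages this via the universal property of $\Yo^s_c$ from \cref{lkjiooiwoigewgewgwegfw} rather than arguing directly that the Day convolution product descends through the additional localization, but these are equivalent formulations of the same argument.
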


\subsection{Forcing continuity}
\label{secjk1222}

To every $\bC$-valued equivariant coarse homology theory $E$ we can naturally associate a continuous version $E_{cont}$. This is actually the best approximation of $E$ by some continuous $\bC$-valued equivariant coarse homology theory. We will show that there is an adjunction
	\[\mathclap{(-)_{cont}\colon\Gamma\CoarseHomologyTheories_{\bC} \leftrightarrows  \mathbf{Cont}\Gamma\CoarseHomologyTheories_{\bC}: inclusion\ .}\]

Let us first construct the functor $(-)_{cont}$. For simplicity of the presentation we will only describe it on objects. The honest construction of the functor is similar and just involves more complicated diagrams.

{Denote by $\Gamma\BC^{mb}$ the full subcategory of $\Gamma\BC$ spanned by the $\Gamma$-bornological coarse spaces which carry the minimal bornology.
Given a functor \[E \colon \Gamma\BC \to \bC\] with target a stable cocomplete $\infty$-category,
we define the functor $E_{cont}$ by left Kan extension:
\[\xymatrix{
 \Gamma\BC^{mb}\ar[r]^-{E}\ar[d] & \bC \\
 \Gamma\BC\ar@{..>}[ur]_-{\, E_{cont}}&}
\]
Since the image of every morphism originating in a $\Gamma$-bornological coarse space with minimal bornology is locally finite, the point-wise formula for the left Kan-extension implies that the canonical map
\begin{equation}\label{f2iuhfiu23f7832fhbfhjejfbhe}
\colim_{ F\in\cF(X) } E(F_{X}) \xrightarrow{\sim} E_{cont}(X)
\end{equation}
is an equivalence.}

\begin{lem}\label{feiuhziur34rt34rf}
If $E$ is an equivariant coarse homology theory, then $E_{cont}$ is a continuous equivariant coarse homology theory.
\end{lem}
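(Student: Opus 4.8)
The plan is to work throughout with the pointwise formula \eqref{f2iuhfiu23f7832fhbfhjejfbhe}, i.e.\ $E_{cont}(X)\simeq\colim_{F\in\cF(X)}E(F_{X})$ with $\cF(X)$ the filtered poset of invariant, locally finite subsets of $X$, and to reduce each of the four properties of \cref{fewoijfewoifewofww45535345345}, as well as continuity (\cref{fweoiuiowefwefeve}), to the corresponding property of $E$. The key elementary tool, used repeatedly, is: if $Z\subseteq X$ is an invariant subset equipped with the induced structure, then $\cF(Z)\subseteq\cF(X)$, one has $F_{Z}=F_{X}$ for $F\subseteq Z$, and intersection with $Z$ defines a functor $\cF(X)\to\cF(Z)$ which is cofinal — for $G\in\cF(Z)$ the relevant slice is the poset $\{F\in\cF(X)\mid G\subseteq F\}$, which is non-empty and closed under finite unions, hence filtered. (Here one uses that local finiteness depends only on the bornology, so that passing to subspaces and to $X_{U}$ does not disturb~$\cF$.)

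Granting this, continuity and excision are quick. For a trapping exhaustion $\cY=(Y_{i})_{i\in I}$ of $X$, writing $\colim_{i}E_{cont}(Y_{i})\simeq\colim_{(i,F)\in\int_{I}\cF(Y_{\bullet})}E(F_{X})$ as a colimit over the Grothendieck construction and observing that $(i,F)\mapsto F$ is cofinal — any invariant locally finite subset of $X$, and any finite union of such, is trapped in some $Y_{i}$ by \cref{fiuwefziufewfwefwefewwefw} — yields $E_{cont}(\cY)\xrightarrow{\simeq}E_{cont}(X)$. For an equivariant complementary pair $(Z,\cY)$ on $X$ and $F\in\cF(X)$, the pair $(Z\cap F,(Y_{i}\cap F)_{i})$ is an equivariant complementary pair on $F_{X}$; excision for $E$ gives a cocartesian square natural in $F$, and passing to the colimit over $\cF(X)$ (colimits preserve cocartesian squares), together with the cofinality statement applied to $Z$, $Y_{i}$, $Z\cap Y_{i}$ and the interchange of colimits, identifies its corners with $E_{cont}(Z\cap\cY)$, $E_{cont}(Z)$, $E_{cont}(\cY)$, $E_{cont}(X)$; moreover $E_{cont}(\emptyset)\simeq E(\emptyset)\simeq0$. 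Coarse invariance is analogous: the subfamily $(\{0,1\}_{max,max}\otimes G_{X})_{G\in\cF(X)}$ is cofinal in $\cF(\{0,1\}_{max,max}\otimes X)$ — project to $X$ along the (proper) projection, then close up under finite unions — so coarse invariance of $E$ gives $E_{cont}(\{0,1\}_{max,max}\otimes X)\simeq\colim_{G}E(\{0,1\}_{max,max}\otimes G_{X})\simeq\colim_{G}E(G_{X})=E_{cont}(X)$.

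Vanishing on flasques is the least formal step. Suppose $X$ is flasque with flasqueness implemented by $f\colon X\to X$. For $F\in\cF(X)$ set $F':=\bigcup_{n\in\nat}f^{n}(F)$. This is invariant, and it is still locally finite: for a bounded $B$ we have $F'\cap B=\bigcup_{n}(f^{n}(F)\cap B)$, each term is finite because $f^{n}$ is proper, and all but finitely many terms are empty because $f^{n}(X)\cap\Gamma B=\emptyset$ for $n$ large by the last condition of \cref{hckjhckhwhuiechiu1}. Since $f(F')\subseteq F'$, the restriction $f|_{F'}$ is an endomorphism of $F'_{X}$, and one checks directly that it implements flasqueness of $F'_{X}$ — closeness to the identity, the entourage condition, and the escape condition all pass to the invariant subset $F'$. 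Hence $E(F'_{X})\simeq0$ since $E$ vanishes on flasques. As the sets $F'$ form a cofinal subposet of $\cF(X)$ (every $F$ lies in its own $F'$, and the subposet is closed under finite unions), the colimit $E_{cont}(X)\simeq\colim_{F\in\cF(X)}E(F_{X})$ is a colimit over a non-empty index category with all values $0$, hence $0$.

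For $u$-continuity, note that $X_{U}$ has the bornology of $X$, so $\cF(X_{U})=\cF(X)$, and interchange of colimits reduces the claim to $\colim_{U\in\cC^{\Gamma}}E(F_{X_{U}})\simeq E(F_{X})$ for each fixed $F\in\cF(X)$, where $F_{X_{U}}$ denotes $F$ with the structure induced from $X_{U}$. Applying $u$-continuity of $E$ to $F_{X_{U}}$ rewrites $E(F_{X_{U}})$ as the colimit of $E((F_{X})_{W})$ over invariant entourages $W$ of $F_{X}$ lying in the coarse structure of $F_{X_{U}}$; a cofinality argument for $(U,W)\mapsto W$ into the poset of invariant entourages of $F_{X}$ — its slices are filtered, using that $\cC^{\Gamma}$ is cofinal in $\cC$ — then identifies $\colim_{U}E(F_{X_{U}})$ with $\colim_{W}E((F_{X})_{W})\simeq E(F_{X})$, again by $u$-continuity of $E$. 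I expect the flasqueness and $u$-continuity steps to carry the real content: the former rests on the observation that the escape condition in \cref{hckjhckhwhuiechiu1} is precisely what keeps $\bigcup_{n}f^{n}(F)$ locally finite, and the latter requires untangling a colimit over exhaustions nested inside a colimit over entourages, whereas the remaining properties follow formally once the cofinality lemma for $\cF(X)\to\cF(Z)$ is in place.
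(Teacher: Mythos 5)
Your proof is correct and follows essentially the same route as the paper's: reduce each axiom via the pointwise Kan-extension formula $E_{cont}(X)\simeq\colim_{F\in\cF(X)}E(F_X)$, using the cofinality of $\cF(X)\to\cF(Z)$, $F\mapsto F\cap Z$; handle flasqueness by saturating $F$ under $f$ to get a flasque invariant locally finite $\tilde F = \bigcup_n f^n(F)$ cofinal in $\cF(X)$; and interchange colimits for $u$-continuity (the paper phrases this via $F_{X_U}\simeq F_{(F\times F)\cap U}$ rather than your nested cofinality argument, but these are equivalent). Your exposition is somewhat more explicit about the cofinality lemma underpinning excision and coarse invariance, which the paper leaves implicit, but there is no substantive difference in approach.
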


\begin{proof}
We start with showing that $E_{cont}$ is coarsely invariant.
Let $X$ be a $\Gamma$-bornological space. 
Since the subsets
$\{0,1\}\times F$ of $\{0,1\}\times X$ for all locally finite  invariant subsets $F$  of $X$ are cofinal in all locally finite invariant subsets of $\{0,1\}_{max,max}\otimes X$ we get the second equivalence in the chain
\begin{eqnarray*}E_{cont}(\{0,1\}_{max,max}\otimes X)&\simeq &\colim_{F^{\prime}\in\cF(\{0,1\}\otimes X)  } E( F^{\prime}_{\{0,1\}_{max,max}\otimes X})\\
&\simeq& \colim_{F\in\cF(X)  }E(\{0,1\}_{max,max}\otimes F_X)\\
&\simeq&  \colim_{F\in\cF(X)  } E( F_{X})\\&\simeq& E_{cont}(X)\ .
\end{eqnarray*}
{See \cref{exjk22332} for the notation $F_X$ (the induced structures on the subset $F$).} The third equivalence {in the above chain of equivalences} follows from the coarse invariance of $E$.
 
Next we show that $E_{cont}$ satisfies excision.
If $(Z,\cY)$ is an invariant complementary pair on the $\Gamma$-bornological {coarse} space  $X$, then $(F \cap Z, F \cap \cY)$ is an invariant  complementary pair on~$F_{X}$.
Hence
\[\xymatrix{E_{cont}(Z\cap \cY)\ar[r]\ar[d]&E_{cont}(\cY)\ar[d]\\E_{cont}(Z)\ar[r]&E_{cont}(X)}\]
is the colimit of the push-out diagrams over $F$ in $\cF(X)$:
\[\xymatrix{E (F\cap Z\cap \cY)\ar[r]\ar[d]&E (F\cap \cY)\ar[d]\\E (F\cap Z)\ar[r]&E (F_{{X}})}\] (all subsets of $F$  are equipped with the bornological coarse structures induced from $F_{X}$)
and hence itself a push-out diagram.

We now show that $E_{cont}$ vanishes on flasques.
Assume that $X$ is a flasque $\Gamma$-bornological coarse space and that flasqueness is implemented by the morphism $f\colon X\to X$.
If $F$ is an invariant, locally finite subset of $X$, then $ \tilde F:=\bigcup_{n\in \nat} f^{n}(F)$ is again an invariant,  locally finite subset of $X$. Furthermore, $\tilde F_{X}$ is flasque with flasqueness implemented by the restriction $f|_{\tilde F}$. 
The inclusion $F\to \tilde F$ belongs to the structure maps for the colimit over $\cF(X)$.
 Since $E(\tilde F)\simeq 0$ this finally implies that $E_{cont}(X)\simeq 0$.

Finally, we have $u$-continuity by
\begin{eqnarray*} \colim_{U\in \cC} E_{cont}(X_{U})&\simeq&
\colim_{U\in \cC} \colim_{F\in\cF(X) } E(F_{X_{U}})\\&\simeq&
\colim_{U\in \cC} \colim_{F\in\cF(X) } E(F_{(F\times F)\cap U})\\
&\simeq&  \colim_{F\in\cF(X) }  \colim_{U\in \cC} E(F_{(F\times F)\cap U})\\
&\simeq& \colim_{F\in\cF(X) }   E(F_{X} )\\
&\simeq& E_{cont}(X)\ .
\end{eqnarray*}
This finishes the proof that $E_{cont}$ is an equivariant coarse homology theory.

We now argue that $E_{cont}$ is continuous. Let $X$ be a $\Gamma$-bornological coarse space and $\cY=(Y_{i})_{i\in I}$ be a trapping exhaustion of $X$. For every invariant, locally finite subset $F$ of~$X$ exists an index  $i$ in $I$ such that $F\subseteq Y_{i}$. Furthermore, for every $i$ in $I$ we have an inclusion $\cF(Y_{i})\subseteq \cF(X)$. This implies that
\[E_{cont}(\cY)\simeq \colim_{i\in I} \colim_{F\in \cF(Y_{i})}E(F_{Y_{i}}) \simeq \colim_{F\in \cF(X)} E(F_{X})\simeq E_{cont}(X)\ .\]
This finishes the proof of \cref{feiuhziur34rt34rf}.
\end{proof}

\begin{prop}\label{prop0200}
There exists an adjunction
\[\mathclap{(-)_{cont}\colon\Gamma\CoarseHomologyTheories_{\bC} \leftrightarrows  \mathbf{Cont}\Gamma\CoarseHomologyTheories_{\bC}: inclusion\ .}\]
\end{prop}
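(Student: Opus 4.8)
The plan is to recognize $(-)_{cont}$ as the comonad attached to the left Kan extension adjunction along the fully faithful inclusion $\iota\colon\Gamma\BC^{mb}\hookrightarrow\Gamma\BC$, and then to transport the resulting colocalization to the full subcategory $\Gamma\CoarseHomologyTheories_{\bC}$ of $\Fun(\Gamma\BC,\bC)$. Write $\iota^{*}\colon\Fun(\Gamma\BC,\bC)\to\Fun(\Gamma\BC^{mb},\bC)$ for restriction and $\iota_{!}$ for its left adjoint, which exists since $\bC$ is cocomplete; then by construction $E_{cont}=\iota_{!}\iota^{*}E$, and the counit of $\iota_{!}\dashv\iota^{*}$ supplies a natural transformation $\epsilon_{E}\colon E_{cont}\to E$. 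A first, purely bookkeeping task is to promote the object-level description of $(-)_{cont}$ recalled before \cref{feiuhziur34rt34rf} to this honest endofunctor together with its comonad structure --- this is precisely the ``honest construction [that] is similar and just involves more complicated diagrams''. Since $\iota$ is fully faithful, the unit $\mathrm{id}\to\iota^{*}\iota_{!}$ is an equivalence, so $\iota_{!}\iota^{*}$ is an idempotent comonad; by the theory of (co)reflective localizations (the dual of \cite[Section~5.2.7]{htt}) its counit $\epsilon$ exhibits the full subcategory of $\Fun(\Gamma\BC,\bC)$ of functors on which $\epsilon$ is invertible as a coreflective subcategory, with coreflector $(-)_{cont}$.

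Next I would prove the one step needing coarse-geometric input: a coarse homology theory $E$ is continuous if and only if $\epsilon_{E}\colon E_{cont}\to E$ is an equivalence. For ``$\Rightarrow$'' I would apply continuity to the trapping exhaustion $\cF(X)$ of all locally finite invariant subsets of $X$ from \cref{egriohreiohgiuhiuerhggre}: by \eqref{f2iuhfiu23f7832fhbfhjejfbhe} the comparison map $E(\cF(X))=\colim_{F\in\cF(X)}E(F_{X})\to E(X)$ is identified with $\epsilon_{E,X}$, and it is an equivalence by continuity (this is essentially \cref{feoijewoif234345}). For ``$\Leftarrow$'' I would take an arbitrary trapping exhaustion $\cY=(Y_{i})_{i\in I}$ of $X$, use $E\simeq E_{cont}$ to rewrite $E(Y_{i})\simeq\colim_{F\in\cF(Y_{i})}E(F_{X})$, and then observe that $\bigcup_{i\in I}\cF(Y_{i})=\cF(X)$ --- which is exactly the defining property of a trapping exhaustion --- so that, all indexing posets being filtered, the iterated colimit collapses to $E(\cY)\simeq\colim_{F\in\cF(X)}E(F_{X})\simeq E(X)$. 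Only the direction ``$\Rightarrow$'' is needed below; together with \cref{feiuhziur34rt34rf}, which already shows that $(-)_{cont}$ carries $\Gamma\CoarseHomologyTheories_{\bC}$ into $\mathbf{Cont}\Gamma\CoarseHomologyTheories_{\bC}$, it shows that the essential image of $(-)_{cont}$ on $\Gamma\CoarseHomologyTheories_{\bC}$ is exactly $\mathbf{Cont}\Gamma\CoarseHomologyTheories_{\bC}$.

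To conclude, I would read off the adjunction. Both $\Gamma\CoarseHomologyTheories_{\bC}$ and $\mathbf{Cont}\Gamma\CoarseHomologyTheories_{\bC}$ are full subcategories of $\Fun(\Gamma\BC,\bC)$, so mapping spaces between their objects are computed there. For $E$ in $\Gamma\CoarseHomologyTheories_{\bC}$ and $E'$ in $\mathbf{Cont}\Gamma\CoarseHomologyTheories_{\bC}$ the previous step gives $E'\simeq\iota_{!}\iota^{*}E'$; hence, by the adjunction $\iota_{!}\dashv\iota^{*}$, the space $\mathrm{Map}(E',E)$ is naturally equivalent to the mapping space $\mathrm{Map}(\iota^{*}E',\iota^{*}E)$ formed in $\Fun(\Gamma\BC^{mb},\bC)$, and the latter is in turn equivalent to $\mathrm{Map}(\iota_{!}\iota^{*}E',E_{cont})\simeq\mathrm{Map}(E',E_{cont})$ because $\iota^{*}\iota_{!}\simeq\mathrm{id}$. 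A triangle-identity check shows the resulting equivalence $\mathrm{Map}(E',E_{cont})\simeq\mathrm{Map}(E',E)$ is postcomposition with $\epsilon_{E}$, so the counit $\epsilon$ together with these equivalences exhibits the asserted adjunction --- concretely, the inclusion is left adjoint and $(-)_{cont}$ the colocalizing right adjoint, with counit $E_{cont}\to E$. I expect the main obstacle to be twofold: the bureaucratic task of making $(-)_{cont}$ an honest functor with its comonad structure and tracking its interaction with \cref{feiuhziur34rt34rf}, which is routine but lengthy; and, mathematically, the cofinality argument in the key identification, where one must check carefully that replacing the canonical exhaustion $\cF(X)$ by an arbitrary trapping exhaustion is a genuine finality statement about the poset $\cF(X)$ and that the colimit interchanges are legitimate. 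Everything else is formal or is already contained in \cref{feiuhziur34rt34rf}.
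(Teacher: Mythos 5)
Your proof is correct and proceeds along essentially the same lines as the paper's: both identify $(-)_{cont}$ as a colocalization on $\Gamma\CoarseHomologyTheories_{\bC}$ and conclude via (the dual of) the recognition criterion in HTT~5.2.7, with Remark~\ref{feoijewoif234345} supplying the key step that $\epsilon_E$ (the paper's $\eta_E$) is an equivalence for continuous $E$, and Lemma~\ref{feiuhziur34rt34rf} guaranteeing that $(-)_{cont}$ lands in $\mathbf{Cont}\Gamma\CoarseHomologyTheories_{\bC}$. The only real difference is one of exposition: you make explicit that $(-)_{cont}=\iota_!\iota^*$ for the fully faithful inclusion $\iota\colon\Gamma\BC^{mb}\hookrightarrow\Gamma\BC$, so that the idempotency (the paper's unexplained assertion that $\eta_{E_{cont}}$ and $(\eta_E)_{cont}$ are equivalences) is automatic from $\iota^*\iota_!\simeq\mathrm{id}$; this is a cleaner way to organize the same argument.
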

\begin{proof}
The collection of maps $F\to X$ for all locally finite invariant subsets $F$ of $X$ induces a transformation of functors 
  \[\eta\colon(-)_{cont}\to \id\] on $\Gamma\CoarseHomologyTheories_{\bC}$. By \cref{feoijewoif234345}, if $E$ is continuous, then the transformation $\eta_{E}\colon E_{cont}\to E$ is an equivalence. Moreover the transformations $\eta_{E_{cont}}$ and $(\eta_{E})_{cont}$ (i.e., the functor $(-)_{cont}$ applied to $\eta_{E}$) are equivalences. By \cite[Prop.~5.2.7.4]{htt} we get the desired adjunction.
\end{proof}

Let the cocomplete stable $\infty$-category $\bC$ admit all small products.  Let $E$ be a $\bC$-valued equivariant coarse homology theory.
\begin{lem}
If $E$ is strongly additive, then so is $E_{cont}$.
\end{lem}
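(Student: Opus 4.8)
The plan is to evaluate both sides of the strong additivity comparison map \eqref{iojoiergegerg} for $E_{cont}$ by combining the colimit formula \eqref{f2iuhfiu23f7832fhbfhjejfbhe} with the strong additivity of $E$. Fix a family $(X_{i})_{i\in I}$ of $\Gamma$-bornological coarse spaces. The first step is to understand the locally finite, invariant subsets of the free union: from the description of the bornology of $\bigsqcup_{i\in I}^{\free}X_{i}$, a subset $F$ is locally finite and invariant if and only if each $F_{i}:=F\cap X_{i}$ is locally finite and invariant in $X_{i}$, and in that case $F$ equipped with the structure induced from $\bigsqcup_{i\in I}^{\free}X_{i}$ is again a free union $\bigsqcup_{i\in I}^{\free}(F_{i})_{X_{i}}$. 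This identifies the poset $\cF(\bigsqcup_{i\in I}^{\free}X_{i})$ with the product poset $\prod_{i\in I}\cF(X_{i})$, each factor being filtered.

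Applying strong additivity of $E$ to $F_{X}=\bigsqcup_{i\in I}^{\free}(F_{i})_{X_{i}}$ gives $E(F_{X})\simeq\prod_{i\in I}E((F_{i})_{X_{i}})$, so \eqref{f2iuhfiu23f7832fhbfhjejfbhe} yields
\[E_{cont}\Big(\bigsqcup_{i\in I}^{\free}X_{i}\Big)\;\simeq\;\colim_{(F_{i})\in\prod_{i}\cF(X_{i})}\prod_{i\in I}E\big((F_{i})_{X_{i}}\big)\ ,\]
whereas \eqref{f2iuhfiu23f7832fhbfhjejfbhe} applied termwise gives $\prod_{i\in I}E_{cont}(X_{i})\simeq\prod_{i\in I}\colim_{F_{i}\in\cF(X_{i})}E((F_{i})_{X_{i}})$. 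A routine bookkeeping of the structure maps (coming from excision, which for the free union produces a splitting $E_{cont}(\bigsqcup^{\free}_{j}X_{j})\simeq E_{cont}(X_{i})\oplus E_{cont}(\bigsqcup^{\free}_{j\neq i}X_{j})$ for each $i$) shows that under these identifications the map \eqref{iojoiergegerg} for $E_{cont}$ becomes the canonical comparison morphism from the colimit of the products to the product of the colimits; so the claim reduces to showing that this morphism is an equivalence.

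I expect this last point to be the main obstacle, and I would isolate it as a separate lemma: for a family $(D_{i})_{i\in I}$ of filtered $\infty$-categories, functors $H_{i}\colon D_{i}\to\bC$ into a cocomplete stable $\infty$-category admitting small products, and the projections $p_{i}\colon\prod_{j}D_{j}\to D_{i}$, the natural map $\colim_{\prod_{j}D_{j}}\prod_{i}(H_{i}\circ p_{i})\to\prod_{i}\colim_{D_{i}}H_{i}$ is an equivalence. Each $p_{i}$ is cofinal, since the relevant slice categories are products of filtered categories (an arbitrary product of filtered categories is again filtered) and hence weakly contractible; thus $\colim_{\prod_{j}D_{j}}(H_{i}\circ p_{i})\simeq\colim_{D_{i}}H_{i}$ and the only real content is the commutation of the filtered colimit with the (possibly infinite) product. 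For $\bC=\mathbf{Sp}$ this follows by applying the functors $\pi_{n}$, which commute both with filtered colimits and with arbitrary products of spectra, and then checking the statement for $\mathbf{Ab}$-valued diagrams, where it is elementary: on either side an element is a compatible family $(v_{i})_{i}$ with $v_{i}$ represented in some $H_{i}(d_{i})$, and since $(d_{i})_{i}$ ranges over the whole product $\prod_{i}D_{i}$ the two descriptions coincide. The general case I would reduce to $\bC=\mathbf{Sp}$ — for compactly generated $\bC$ by testing against compact generators, with a little more care (or a direct argument with the posets) for full generality, after first checking how much generality the intended applications actually demand. With the lemma in hand, the remaining naturality bookkeeping completes the proof.
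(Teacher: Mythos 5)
Your approach is the same as the paper's: decompose $\cF(\bigsqcup^{\free}_i X_i)$ as the product poset $\prod_i\cF(X_i)$, apply strong additivity of $E$ to the identification $F_X\cong\bigsqcup^{\free}_i F_{i,X_i}$, and then interchange the filtered colimit over the product poset with the product of spectra. The interchange lemma you isolate is indeed the crux, and your justification for it is sound (cofinality of each projection $p_i$, then commutation of filtered colimits with products of spectra checked on $\pi_n$); the paper simply asserts this step with ``This implies'' and does not spell out the argument, so your extra care here is warranted rather than a detour.
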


\begin{proof}
Let $(X_{i})_{i\in I}$ be a family of $\Gamma$-bornological coarse spaces and set
$X:=\bigsqcup_{i\in I}^{\free}X_{i}$ (\cref{efwifuhwfowefewfewfwef}). For a subset $F$ of $X$ and $i$	 in $I$ we write $F_{i}:=F\cap X_{i}$.
Then $F$ is locally finite in $X$ if and only if $F_{i}$ is locally finite in $X_{i}$ for every $i$ in $I$. Furthermore, we have an isomorphism of $\Gamma$-bornological coarse spaces
$F_{X}\cong \bigsqcup_{i\in I}^{\free} F_{i,X_{i}}$.
This implies \[\colim_{F\in \cF(X)}\prod_{i\in I} E ( F_{i,X_{i}})\simeq\prod_{i\in I}\colim_{F_{i}\in \cF(X_{i})} E(F_{i,X_{i}})\ .\]
We must show that the right vertical map in the diagram \[\xymatrix{
\colim_{F\in\cF(X)}E(F_X)\ar[r]^-\simeq\ar[d]^\simeq&E_{cont}(X)\ar[dd]\\
\colim_{F\in \cF(X)}\prod_{i\in I} E ( F_{i,X_{i}} )\ar[d]^\simeq&\\
\prod_{i\in I}\colim_{F_i\in \cF(X_i)}E(F_{i,X_{i}})\ar[r]^-\simeq&\prod_{i\in I}E_{cont}(X_i)
}\]   is an equivalence. Indeed, the horizontal maps are equivalences by the definition of $E_{cont}$ and the upper vertical map is an equivalence since $E$ is strongly additive.
\end{proof}

\section{Change of groups}\label{roigurgoregregreg}
In this section we describe various change of groups constructions. They induce  adjunctions between
the corresponding categories of  equivariant motivic  coarse spectra which are very similar to the base change functors in motivic homotopy theory.

Assume that we are given an equivariant coarse homology theory defined for every group (as it is the case for all our examples). The compatibility of the equivariant coarse homology theory with the change of groups functors is expressed by natural transformations. These transformations are additional data and will be discussed for every example of equivariant coarse homology theory separately.

What we describe here is the beginning of a story which should finally capture all group-change functors in a sort of spectral Mackey functor  {formalism (see, e.g., Barwick \cite{Barwick:2014aa} and Barwick--Glasman--Shah \cite{Barwick:2015aa}).}
The obvious task here is to capture the relations between these functors  on the motivic level (like iterated restrictions or inductions and the Mackey relation, \cref{oieioefuew9843534535345}) together with all their higher coherences in a proper way. 

All change of groups transformations are  associated to a 
homomorphism of groups
\[ \iota\colon H\to \Gamma\ .\]

\subsection{Restriction}\label{rekijgerogoijo34544566} \label{roijgeogergrgrege}
 Every $\Gamma$-bornological coarse space  gives rise to an $H$-bornological coarse space, where the action of $H$ is induced from the action of $\Gamma$ via $\iota$.    In this way we get a restriction functor
\[\Res^{\Gamma}_{H}\colon\Gamma\BC\to H\BC\ .\] If the homomorphism $\iota$ is not clear from the context, then we 
  add it to the  notation and  write
$\Res^{\Gamma}_{H}(\iota)$.

The functor
$\Res^{\Gamma}_{H} $  induces a pull-back functor $\Res^{\Gamma}_{H,pre}$ for presheaves, which preserves  {all} limits and colimits. Since $\PSh( \Gamma\BC  )$ is a presentable $\infty$-category, by   Lurie \cite[Cor.\ 5.5.2.9]{htt} the functor $\Res^{\Gamma}_{H,pre}$ is the right-adjoint of  an adjunction 
\[\Res^{\Gamma,pre}_{H }\colon\PSh( \Gamma\BC  )\leftrightarrows \PSh(H\BC ):\Res^{\Gamma}_{H,pre}\ .\]
The functor $\Res^{\Gamma}_{H}$ sends equivariant complementary pairs on a $\Gamma$-bornological coarse space $X $ to 
equivariant complementary pairs on the $H$-bornological coarse space $\Res^{\Gamma}_{H}(X) $.
Consequently, the restriction functor $\Res^{\Gamma}_{H,pre}$ preserves sheaves. 
In the following, we decorate the Yoneda functors by the relevant group.
Using $\Res^{\Gamma,pre}_{H}\circ \yo_{\Gamma}\simeq \yo_{H}\circ \Res^{\Gamma}_{H}$, we see that  $\Res^{\Gamma,pre}_{H}$ sends the generators of the localization listed in \cref{iojoijfoiwejfowejffewf98u98u98uu94234234234} for the group $\Gamma$ to corresponding generators for $H$. We conclude  that
$\Res^{\Gamma}_{H,pre}$ preserves coarsely invariant sheaves, sheaves which vanish on flasque spaces,  and $u$-continuous sheaves. 
Hence we get an adjunction
\[\Res^{\Gamma,*}_{H } \colon \Gamma\Spc\cX\leftrightarrows H\Spc\cX:\Res^{\Gamma}_{H,*} \ .\]
Here 
 $
\Res^{\Gamma}_{H,*}$ is given by the restriction of $ \Res^{\Gamma}_{H,pre} $ to coarse motivic spaces, and its left adjoint satisfies
\[\Res^{\Gamma,*}_{H }\simeq \cL_{H}\circ \Res^{\Gamma,pre}_{H }\ ,\] where $\cL_{H} $ is  the localization as in \eqref{erglkhoi3hto34t34t43t34t3t} (we have added a subscript $H$ in order to indicate the relevant group).

Hence by {passing} to the stabilizations we get an adjunction
\[\Res^{\Gamma,Mot}_{H} \colon\Gamma\Sp\cX \leftrightarrows H\Sp\cX :\Res^{\Gamma}_{H,Mot} \ ,\]
where
$\Res^{\Gamma}_{H,Mot}$ is defined by  the  extension of the functor $\Res^{\Gamma}_{H,pre }$  to stable objects. The obvious equivalence 
\[\Res^{\Gamma,pre}_{H}\circ \yo_{\Gamma}\simeq \yo_{H}\circ \Res^{\Gamma}_{H}\] implies
 the equivalence
\[\Res^{\Gamma,Mot}_{H}\circ \Yo_{\Gamma}^{s} \simeq \Yo_{H}^{s}\circ  \Res^{\Gamma}_{H}\ .\]
where we have decorated the Yoneda functors by the relevant group.

\subsection{Completion}
We consider a $\Gamma$-bornological coarse space $X$. Let $\cC$ and $\cB$ denote the coarse structure and the bornology of $X$.
  We define a new compatible bornology $\cB_{H}$ on $X$ generated by the {$\iota(H)$}-completions {$\iota(H)B$}
  of the bounded subsets $B$ of $X$. We observe that $\cC$ and $\cB_{H}$ are compatible and that $\cB_{H}$ is $N_\Gamma(\iota(H))$-invariant (as a subset of $\cP(X)$), {where $N_{\Gamma}( \iota(H))$ denotes the normalizer of  the subgroup  $ \iota(H) $  in $\Gamma$.}
  
\begin{ddd} The \emph{$H$-completion} of $X$ is the $N_\Gamma(\iota(H))$-bornological coarse space defined by $B_{H}(X):=(X,\cC,\cB_{H})$.
\end{ddd}

In this way we define a functor
\[B_{H}\colon\Gamma\BC\to N_\Gamma(\iota(H))\BC\ .\]
The pull-back along $B_{H}$ induces an adjunction
\[B_{H}^{pre}\colon \PSh(\Gamma\BC) \leftrightarrows \PSh(N_\Gamma(\iota(H))\BC) \colon B_{H,pre}\ .\]
It is easy to see that
$B_{H,pre}$ preserves sheaves, $u$-continuity and coarse invariance, since its left-adjoint adjoint $B_{H}^{pre}$ preserves the corresponding generating morphisms.  See \cref{roijgeogergrgrege} for a similar argument.

If $X$  is a  flasque $\Gamma$-bornological coarse space with flasqueness implemented by $f \colon X\to X$, then $B_{H}(X)$ is flasque
with flasqueness implemented by the same map. Here   it is important to define flasqueness with Condition \ref{hckjhckhwhuiechiu}  in \cref{hckjhckhwhuiechiu1} and not the weaker one discussed in 
\cref{hckjhckhwhuiechiu2}. 
Consequently, $B_{H,pre}$ preserves presheaves which vanish on flasques.

Similarly as in the case of the restriction, 
we get an adjunction

\[B_{H}^{Mot}\colon\Gamma\Sp\cX\leftrightarrows N_\Gamma(\iota(H))\Sp\cX: B_{H,Mot}\]
and an equivalence
\[B_{H}^{Mot}\circ \Yo_{\Gamma}^{s}\simeq \Yo^{s}_{N_{\Gamma}(\iota(H))}\circ B_{H}\ .\]
 
The identity of the underlying set induces a morphism $b_{H}\colon B_{H}(X)\to \Res_{N_\Gamma(\iota(H))}^\Gamma (X)$.
 The transformation $b_{H}$ induces 
 a natural transformation of functors
\[b_{H}\colon B_{H}\to \Res_{N_\Gamma(\iota(H))}^\Gamma \colon\Gamma\BC\to N_\Gamma(\iota(H))\BC\ .\]

By functoriality, we get a transformation
\[ b_{H}\colon E\circ B_{H}\to E \circ \Res_{N_\Gamma(\iota(H))}^\Gamma \] 
for any equivariant coarse homology theory $E$.

 \begin{rem}
  If $H$ is a finite group, then $B_{H}\cong \Res_{N_\Gamma(\iota(H))}^\Gamma$. 
 \end{rem}

\subsection{Quotients}  
We consider a $\Gamma$-bornological coarse space $X$.
We  form the quotient  set $H\backslash X$ which carries an action of the group \[W_{\Gamma}(H):=N_{\Gamma}( \iota(H))/ \iota(H)\ ,\]
where $N_{\Gamma}( \iota(H))$ denotes the normalizer of  the subgroup  $ \iota(H) $  in $\Gamma$. 

Let $\pi\colon X\to H\backslash X$ denote the projection.
 We  equip $H\backslash X$ with the maximal bornology such that the projection $\pi\colon B_{H}(X)\to H\backslash X$ is proper.  
Furthermore, we  equip $H\backslash X$ with the minimal coarse structure such that    $\pi\colon X\to H\backslash X$ is controlled.
In this way we define a functor 
\[Q_{H}\colon\Gamma\BC\to W_{\Gamma}(H)\BC\ .\]

The projection maps define a natural transformation $\pi_H \colon B_H \to \Res_{N_\Gamma(\iota(H))}^{W_\Gamma(H)}(Q_H)$ of $N_\Gamma(\iota(H))\BC$-valued functors.

  For $X$ in $\Gamma\BC$
one can interpret $\Res^{W_{\Gamma}(H)}_{N_{\Gamma}(\iota(H))}Q_{H}(X)$ as a coequalizer.
For a  bornological coarse space $Z$ let $Z_{max-\cB}$ denote the  bornological coarse space obtained by replacing its bornology by the maximal bornology. The two maps $H\times X\to X$  in the lemma below are given by the projection $(h,x)\mapsto x$ and the action $(h,x)\mapsto h x$. 
The diagram and the colimit are considered in $N_{\Gamma}(\iota(H))\BC$, where the action of $\sigma$ in $N_{\Gamma}(\iota(H))$  on $H \times X$ is given by $\sigma(h,x):=(\sigma  h\sigma^{-1},\sigma x)$.

\begin{lem}\label{rgioergerggegergreg}
We have an isomorphism
\[\Res^{W_{\Gamma}(H)}_{N_{\Gamma}(\iota(H))}Q_{H}(X)\cong \colim \big( (H_{min,max}\otimes X)_{max-\cB}\rightrightarrows B_{H}(X)
\big)\ .\]
 \end{lem}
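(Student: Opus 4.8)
The plan is to deduce the statement from \cref{ex:coeq} together with the explicit description of colimits in \cref{efiu9ewofwfwewfwef}, the only real content being to keep track of the residual $N_{\Gamma}(\iota(H))$-action. The first observation is that, since the first subscript records the coarse and the second the bornological structure, the spaces $H_{min,min}\otimes X$ and $H_{min,max}\otimes X$ have the same coarse structure and differ only in the bornology on the factor $H$; this difference is overwritten by passing to the maximal bornology, so $(H_{min,min}\otimes X)_{max-\cB}=(H_{min,max}\otimes X)_{max-\cB}=:A$, and the diagram in the statement is literally the diagram of \cref{ex:coeq} applied to the $\iota(H)$-action on $X$. The hypothesis of that example holds because $X$ is a $\Gamma$-bornological coarse space, so its $\Gamma$-invariant entourages are cofinal, and these are in particular $\iota(H)$-invariant.

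Next I would record the $N_{\Gamma}(\iota(H))$-equivariant structure. One lets $N_{\Gamma}(\iota(H))$ act on $A$ by $\sigma\cdot(h,x):=(\sigma h\sigma^{-1},\sigma x)$, which is well-defined because $\sigma$ normalizes $\iota(H)$, and on $B_{H}(X)$ by restriction of the $\Gamma$-action, which is legitimate because the bornology $\cB_{H}$ of $B_{H}(X)$ is $N_{\Gamma}(\iota(H))$-invariant, as noted in the Completion subsection. A one-line check shows that the two structure maps $(h,x)\mapsto x$ and $(h,x)\mapsto hx$ are $N_{\Gamma}(\iota(H))$-equivariant, and they are morphisms of bornological coarse spaces by the argument already given in \cref{ex:coeq}: they are proper since $A$ carries the maximal bornology, and controlled since each sends $\diag(H)\times U$ into $U$ for every $\iota(H)$-invariant entourage $U$ of $X$, and such entourages are cofinal. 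Hence we have a diagram in $N_{\Gamma}(\iota(H))\BC$, and by \cref{ex:coeq} it is $\colim$-admissible, so its colimit exists by \cref{efiu9ewofwfwewfwef}.

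Finally I would compute this colimit using the formulas in the proof of \cref{efiu9ewofwfwewfwef}. On underlying $N_{\Gamma}(\iota(H))$-sets the colimit is the coequalizer $H\backslash X$, and the induced action $\sigma\cdot Hx=H(\sigma x)$ factors through $W_{\Gamma}(H)=N_{\Gamma}(\iota(H))/\iota(H)$ and is exactly the $W_{\Gamma}(H)$-action entering the definition of $Q_{H}$; so the underlying set with its action is $\Res^{W_{\Gamma}(H)}_{N_{\Gamma}(\iota(H))}Q_{H}(X)$. The colimit coarse structure is generated by the entourages $(\pi\times\pi)(U)$ with $U$ an entourage of $X$ (the generators coming from $A$ being subsumed by these), i.e.\ it is the minimal coarse structure making $\pi$ controlled. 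For the bornology, \cref{efiu9ewofwfwewfwef} describes it as the set of subsets $\bar B$ of $H\backslash X$ for which $\pi^{-1}\big((\pi\times\pi)(U_{1})\circ\dots\circ(\pi\times\pi)(U_{r})[\bar B]\big)$ is bounded in $B_{H}(X)$ for all finite families of entourages of $X$; using cofinality of the $\iota(H)$-invariant entourages one has $(\pi\times\pi)(U_{1})\circ\dots\circ(\pi\times\pi)(U_{r})\subseteq(\pi\times\pi)(W)$ for some $\iota(H)$-invariant $W$, and then $\pi^{-1}\big((\pi\times\pi)(W)[\bar B]\big)=W[\pi^{-1}(\bar B)]$, so by compatibility of the structures on $B_{H}(X)$ the condition collapses to $\pi^{-1}(\bar B)\in\cB_{H}$, which is the maximal bornology making $\pi$ proper. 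These coarse and bornological structures are precisely those of $Q_{H}(X)$, so the colimit is $\Res^{W_{\Gamma}(H)}_{N_{\Gamma}(\iota(H))}Q_{H}(X)$. The only mildly technical point, and where a little care is needed, is this last identification of the colimit bornology with the quotient bornology, which hinges on the collapse of the thickening conditions via cofinality of invariant entourages; everything else is bookkeeping on top of \cref{ex:coeq}.
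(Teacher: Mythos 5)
Your proof is correct and follows essentially the same route as the paper's very terse argument, which simply asserts that one checks the morphisms are controlled and proper, invokes \cref{ex:coeq} for colim-admissibility, and states that the structures on $Q_{H}(X)$ agree with the explicit colimit of \cref{efiu9ewofwfwewfwef}. Your version fills in the bookkeeping the paper omits — in particular the reconciliation of $H_{min,max}$ in the lemma with $H_{min,min}$ in \cref{ex:coeq} after passing to the maximal bornology, and the cofinality argument collapsing the compound thickening condition $\pi^{-1}\bigl((\pi\times\pi)(W)[\bar B]\bigr)=W[\pi^{-1}(\bar B)]$ to the quotient bornology — but the overall strategy is identical.
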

\begin{proof}
One checks that the two morphisms in the coequalizer diagram are controlled. Since we replaced the bornology on the domain by the maximal one they are obviously proper.
The coequalizer diagram is colim-admissible, {see \cref{ex:coeq}}. One checks that the description of the structures on $Q_{H}(X)$ given above coincides with the explicit description of the structures of the colimit given in the proof of \cref{efiu9ewofwfwewfwef}.
\end{proof}

 The functor 
$Q_{H} $
induces a pull-back in presheaves
$ Q_{H,pre}$. 
This functor preserves  {all} limits and colimits. Again by {Lurie} \cite[Cor.\ 5.5.2.9]{htt} 
it fits into an adjunction 
\[Q^{pre}_{H} \colon\PSh(\Gamma \BC  )\leftrightarrows \PSh( W_{\Gamma}(H)\BC ): Q_{H,pre} \ .\]

Note that the functor $Q_{H}$ induces a bijection between equivariant complementary pairs on a $\Gamma$-bornological coarse space  $X\ $ and on the $W_{\Gamma}(H) $-bornological coarse space 
$Q_{H}(X) $. Consequently, $Q_{H,pre} $ preserves sheaves.

It is furthermore clear that $Q_{H,pre} $ preserves coarsely invariant sheaves and $u$-continuous sheaves.   
 If the $\Gamma$-bornological coarse space $X $ is flasque with flasqueness implemented by $f\colon X\to X$, then the induced map $\bar f\colon Q_{H}(X)\to Q_{H}(X)$ implements flasqueness of $Q_{H}(X)$. It follows that
$Q_{H,pre}$ preserves coarse motivic spaces. 

Hence $Q_{H,pre} $ restricts to  equivariant coarse motivic spaces. Similar as before we get an adjunction
\[Q_{H}^{Mot}\colon \Gamma\Sp\cX
\leftrightarrows  W_{\Gamma}(H)\Sp\cX: Q_{H,Mot}\ .\]
We have the relation
\[\Yo_{W_{\Gamma}(H)}^{s}\circ Q_{H}\simeq Q_{H}^{Mot}\circ \Yo^{s}_{\Gamma}\ .\]

\subsection{Products}

We consider two groups $\Gamma$ and $\Gamma^{\prime}$. For a $\Gamma$-bornological coarse space $X $ and a $\Gamma^{\prime}$-bornological coarse space  $X^{\prime}  $ we can form the product
$X\otimes X^{\prime}$ which is a $ (\Gamma\times \Gamma^{\prime})$ bornological coarse space.
We fix the $\Gamma$-bornological coarse space $X$ and consider the functor
\[P_{X}:=X\otimes -\colon\Gamma^{\prime}\BC\to (\Gamma\times \Gamma^{\prime})\BC\ .\]
As in the preceding cases one can check that the restriction \[P_{X,pre}\colon\PSh ((\Gamma\times \Gamma^{\prime})\BC)\to \PSh(\Gamma^{\prime}\BC)\] along $P_{X}$ preserves coarse motivic spaces and
induces
an adjunction 
\[P_{X}^{Mot}\colon\Gamma^{\prime}\Sp\cX\leftrightarrows(\Gamma\times \Gamma^{\prime})\Sp\cX: P_{X,Mot}\]
such that \[P_{X}^{Mot} \circ \Yo^{s}_{\Gamma^{\prime}} \simeq \Yo^{s}_{(\Gamma\times \Gamma^{\prime})}\circ P_{X}\ .\]

\subsection{Induction}\label{wefiwuofwefwegwgewrgwe}

We now come back to our original situation and consider the homomorphism $\iota\colon H\to \Gamma$ of groups.
We define a $(\Gamma\times H)$-bornological coarse space $\hat \Gamma$ as follows: \begin{enumerate}
\item The underlying bornological coarse space of $\hat \Gamma$ is $\Gamma_{min,min}$.
\item The group $(\Gamma\times H)$ acts on the set $\hat \Gamma$ by \[(\gamma,h)\gamma^{\prime}:=\gamma\gamma^{\prime}\iota(h)^{-1}\ .\]
\end{enumerate}
 We define the functor
\[\hat P_{\Gamma}:=\Res^{\Gamma\times H\times H}_{\Gamma\times H}\circ P_{\hat \Gamma} \colon H\BC\to (\Gamma\times H)\BC\ ,\]
where the restriction is along the homomorphism \[\id_{\Gamma}\times \diag_{H}\colon\Gamma\times H\to \Gamma\times H\times H\ .\]

This functor extends to motives 
\[\hat P_{\Gamma}^{Mot}:=\Res^{\Gamma\times H\times H,Mot}_{\Gamma\times H}\circ P^{Mot}_{\hat \Gamma}\ .\]
By construction we have an adjunction
\[\hat P_{\Gamma}^{Mot}\colon H\Sp\cX\leftrightarrows (\Gamma\times H)\Sp\cX:\hat P_{\Gamma,Mot}\] and the relation
\[\Yo^{s}_{\Gamma\times H}\circ \hat P_{\Gamma}\simeq \hat P_{\Gamma}^{Mot}\circ \Yo^{s}_{H}\ .\]

We  consider the canonical embedding $\kappa\colon H\to \Gamma\times H$ into the second factor. Note that then we have $\Gamma\times H= N_{\Gamma\times H}(H)$ and hence
$W_{\Gamma\times H}(H)\cong \Gamma$. 
We thus have the  quotient functor  \[Q_{H}(\kappa)\colon(\Gamma\times H)\BC\to \Gamma\BC\]
(it is useful to add the embedding $\kappa$ as an argument since there is also the other obvious homomorphism $(\iota,\id)\colon H\to \Gamma\times H$). We define the induction functor as the composition
\[\Ind_{H}^{\Gamma}\colon H \BC\to  \Gamma\BC\ , \quad  \Ind_{H}^{\Gamma}:= Q_{H}(\kappa)\circ \hat P_{\Gamma}\ .\]
The induction functor extends to motives
\[\Ind_{H}^{\Gamma,Mot}:=Q_{H}^{Mot}(\kappa)\circ \hat P_{\Gamma}^{Mot}:H\Sp\cX\to \Gamma \Sp\cX\]
such that
\[\Ind_{H}^{\Gamma,Mot}\circ \Yo^{s}_{H}\simeq  \Yo_{\Gamma}^{s}\circ \Ind_{H}^{\Gamma}\ .\]
\begin{rem}
	\label{rem:jtjbzuzgkvrgkieilfu}
The underlying $\Gamma$-set of $\Ind_{H}^{\Gamma}(X)$ is 
 $\Gamma\times_{H} X$ with the left-action of $\Gamma$ on the left factor. Here $\Gamma\times_{H} X$ stands for the quotient set $H \backslash (\Gamma \times X)$ with respect to the action given by
\[h(\gamma,x)=(\gamma \iota(h^{-1}),hx)\ .\] The bornology on $\Gamma\times_{H} X$ is generated by the images of the subsets
$\{\gamma\}\times B$ for all bounded subsets of $X$, and the coarse structure is 
generated by the images of 
$\diag_{\Gamma}\times U$ for all entourages $U$ of $X$.
\end{rem}

Let  $K:=\ker(\iota \colon H\to \Gamma)$. 

\begin{lem}\label{jkhsdf8923}
We have $\Ind_{H}^{\Gamma}\cong \Ind_{H}^{\Gamma}\circ B_{K}$.
\end{lem}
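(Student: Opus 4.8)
The plan is to exhibit the natural isomorphism directly, using the transformation $b_{K}$ from the construction of the completion functor together with the explicit description of the induction functor given in \cref{rem:jtjbzuzgkvrgkieilfu}.

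First I would observe that $K=\ker(\iota)$ is normal in $H$, so $N_{H}(K)=H$ and the completion functor $B_{K}$ is an endofunctor of $H\BC$; hence $\Ind_{H}^{\Gamma}\circ B_{K}$ is a well-defined functor $H\BC\to\Gamma\BC$. The morphism $b_{K}\colon B_{K}(X)\to X$ is by construction the identity on underlying $H$-sets and is natural in $X$, so applying $\Ind_{H}^{\Gamma}$ yields a natural transformation $\Ind_{H}^{\Gamma}(b_{K})\colon\Ind_{H}^{\Gamma}\circ B_{K}\to\Ind_{H}^{\Gamma}$ which is the identity on the underlying $\Gamma$-set $\Gamma\times_{H}X$. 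It therefore suffices to check, for each $X$ in $H\BC$, that the coarse and bornological structures on $\Ind_{H}^{\Gamma}(B_{K}(X))$ and $\Ind_{H}^{\Gamma}(X)$ coincide.

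For the coarse structures this is immediate: the functor $B_{K}$ leaves the coarse structure of $X$ unchanged, and by \cref{rem:jtjbzuzgkvrgkieilfu} the coarse structure of an induction is generated by the images of $\diag_{\Gamma}\times U$ for entourages $U$ of the input, so the two coarse structures agree. For the bornologies, write $q\colon\Gamma\times X\to\Gamma\times_{H}X$ for the quotient map. The key point is that for $k\in K$ one has $k\cdot(\gamma,x)=(\gamma\iota(k^{-1}),kx)=(\gamma,kx)$ in $\Gamma\times X$, whence $q(\{\gamma\}\times B)=q(\{\gamma\}\times KB)$ for every subset $B$ of $X$. Since the bounded subsets of $B_{K}(X)$ are exactly the subsets of the sets $KB$ with $B$ bounded in $X$, the bornology of $\Ind_{H}^{\Gamma}(B_{K}(X))$, generated by the sets $q(\{\gamma\}\times B')$ with $B'$ bounded in $B_{K}(X)$, is generated by the sets $q(\{\gamma\}\times KB)=q(\{\gamma\}\times B)$ with $B$ bounded in $X$, which is precisely the generating set for the bornology of $\Ind_{H}^{\Gamma}(X)$. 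Hence the two bornologies agree, $\Ind_{H}^{\Gamma}(b_{K})$ is an isomorphism, and we obtain the asserted natural isomorphism $\Ind_{H}^{\Gamma}\cong\Ind_{H}^{\Gamma}\circ B_{K}$.

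I do not expect a genuine obstacle here; the only step requiring care is the bornology computation, and specifically the identity $k\cdot(\gamma,x)=(\gamma,kx)$, which encodes that the induction quotient already absorbs the action of $K=\ker\iota$ on $X$, so that enlarging the bornology of $X$ by $K$-thickenings has no effect after induction.
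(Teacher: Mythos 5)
Your proof is correct. The paper simply declares the lemma "Straightforward" and gives no argument, so there is nothing to compare against; what you have written is the natural fleshed-out version. The central observation — that $k\cdot(\gamma,x)=(\gamma,kx)$ for $k\in K=\ker\iota$, so the quotient map $q\colon\Gamma\times X\to\Gamma\times_{H}X$ already identifies $\{\gamma\}\times B$ with $\{\gamma\}\times KB$ and hence the $K$-thickened bornology generates nothing new after induction — is exactly the point, and your check that the coarse structures trivially agree and that $\Ind_{H}^{\Gamma}(b_{K})$ is natural closes the argument cleanly.
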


\begin{proof}
Straightforward.
\end{proof}

\begin{rem} Using  Lemma \ref{rgioergerggegergreg} one can easily check that  we have an isomorphism \[\Ind_{H}^{\Gamma}(X)\cong \colim \left((H_{min,max}\otimes \Gamma_{min,min}\otimes X)_{max-\cB}\rightrightarrows B_{H}(\Gamma_{min,min}\otimes X)\right)\]
in $\Gamma \BC$.
The two arrows are given by $(h,\gamma,x)\mapsto (\gamma,x)$ and $(h,\gamma,x)\mapsto (\gamma h^{-1},hx)$. The group $\Gamma$ acts on the factor $\Gamma_{min,min}$ by left multiplication.  
We used the more complicated description of the induction as a composition of various previously defined functors in order to deduce that induction descends to motives.
  \end{rem}

\begin{prop}
If $K$ is finite and the image of $\iota$ has finite index in $\Gamma$, then we have an adjunction
\[\Ind_{H}^{\Gamma}\colon H\BC\leftrightarrows \Gamma\BC: \Res^{\Gamma}_{H}\ .\]
\end{prop}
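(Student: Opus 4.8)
The plan is to exhibit a natural bijection of morphism sets directly. The backbone is the set-level adjunction, which holds for an arbitrary homomorphism $\iota\colon H\to\Gamma$: the functor $\Gamma\times_H-\colon H\Set\to\Gamma\Set$ is left adjoint to restriction along $\iota$, the bijection sending a $\Gamma$-equivariant map $\phi\colon\Gamma\times_H X\to Y$ to the $H$-equivariant map $\psi$ defined by $\psi(x):=\phi([e,x])$, with inverse $\psi\mapsto\phi$, $\phi([\gamma,x]):=\gamma\psi(x)$ (well defined and $\Gamma$-equivariant, using $H$-equivariance of $\psi$). By \cref{rem:jtjbzuzgkvrgkieilfu} the faithful forgetful functors $\Gamma\BC\to\Gamma\Set$ and $H\BC\to H\Set$ intertwine $\Ind_H^\Gamma$ with $\Gamma\times_H-$ and $\Res^\Gamma_H$ with restriction along $\iota$, so it suffices to show that this bijection restricts to morphisms; that is, for a $\Gamma$-equivariant set map $\phi\colon\Ind_H^\Gamma(X)\to Y$ with adjoint $\psi\colon X\to\Res^\Gamma_H(Y)$, I must show $\phi$ is a morphism of bornological coarse spaces (controlled and proper) if and only if $\psi$ is. Naturality in $X$ and $Y$ is then inherited from the set-level adjunction. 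Throughout I would use the explicit description from \cref{rem:jtjbzuzgkvrgkieilfu}: $\Ind_H^\Gamma(X)$ has underlying $\Gamma$-set $\Gamma\times_H X$, bornology generated by the images $q(\{\gamma\}\times B)$ ($B$ bounded in $X$) under the quotient $q\colon\Gamma\times X\to\Gamma\times_H X$, and coarse structure generated by the images $q(\diag_\Gamma\times U)$ ($U$ an entourage of $X$).

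For controlledness I would argue on generators. One has $(\phi\times\phi)\big(q(\diag_\Gamma\times U)\big)=\bigcup_{\gamma\in\Gamma}(\gamma\times\gamma)\big((\psi\times\psi)(U)\big)$ for every entourage $U$ of $X$. If $\psi$ is controlled, then $(\psi\times\psi)(U)$ is an entourage of $Y$, and hence so is $\bigcup_{\gamma\in\Gamma}(\gamma\times\gamma)\big((\psi\times\psi)(U)\big)$, since $Y$ is a $\Gamma$-bornological coarse space (condition~(2) of \cref{ekfhwekjfkjewfewfwfwfewf}); so $\phi$ is controlled. Conversely, if $\phi$ is controlled, then the displayed set is an entourage of $Y$ and its subset $(\psi\times\psi)(U)$ is one too, so $\psi$ is controlled. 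This step uses no finiteness hypothesis, and likewise the set-level adjunction is purely formal.

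The properness equivalence is the part where the hypotheses are needed, and it is the main obstacle. For ``$\psi$ proper $\Rightarrow$ $\phi$ proper'' I would fix representatives $s_1,\dots,s_n$ of the (finitely many) cosets $\Gamma/\iota(H)$ and, given a bounded subset $B\subseteq Y$, rewrite any $[(\gamma,x)]\in\phi^{-1}(B)$ as $[(s_j,hx)]$ with $\gamma=s_j\iota(h)$; then $s_j\psi(hx)=\gamma\psi(x)\in B$, so $hx\in\psi^{-1}(s_j^{-1}B)=:B_j$, a bounded subset of $X$ (using properness of $\psi$ and that $\Gamma$ acts on $Y$ by automorphisms). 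Hence $\phi^{-1}(B)\subseteq\bigcup_{j=1}^n q(\{s_j\}\times B_j)$, which is bounded. For ``$\phi$ proper $\Rightarrow$ $\psi$ proper'', given a bounded $B\subseteq Y$ write $\phi^{-1}(B)\subseteq\bigcup_{i=1}^m q(\{\gamma_i\}\times B_i)$ with $B_i$ bounded in $X$. If $x\in\psi^{-1}(B)$ then $[(e,x)]$ lies in some $q(\{\gamma_i\}\times B_i)$, i.e.\ $(e,x)=h\cdot(\gamma_i,b)=(\gamma_i\iota(h)^{-1},hb)$ for some $h\in H$, $b\in B_i$; this forces $\gamma_i\in\iota(H)$, say $\gamma_i=\iota(h_i)$, and then $h\in h_iK$ with $K=\ker\iota$, so $x=hb\in h_i(KB_i)$. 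Since $K$ is finite, $KB_i=\bigcup_{k\in K}kB_i$ is bounded, hence so is $h_i(KB_i)$, and therefore $\psi^{-1}(B)$ is contained in a finite union of bounded sets and is bounded.

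Assembling these, a $\Gamma$-equivariant set map $\Ind^\Gamma_H(X)\to Y$ is controlled and proper precisely when its adjoint $X\to\Res^\Gamma_H(Y)$ is, so the set-level bijection restricts to a natural bijection $\Hom_{\Gamma\BC}(\Ind^\Gamma_H(X),Y)\cong\Hom_{H\BC}(X,\Res^\Gamma_H(Y))$, which is the asserted adjunction. The one thing to keep straight is that the two finiteness hypotheses enter asymmetrically --- finite index of $\iota(H)$ is used only to bound $\phi^{-1}(B)$ by finitely many translates, and finiteness of $K=\ker\iota$ only to see that $KB_i$ stays bounded --- while controlledness and the underlying set-level adjunction are formal.
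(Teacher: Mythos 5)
Your proof is correct, and it organizes the argument differently from the paper. The paper exhibits the set-level unit $x\mapsto[e,x]$ and counit $[\gamma,y]\mapsto\gamma y$ and checks only that those two specific functions are morphisms in $\Gamma\BC$ (resp.\ $H\BC$), routing the properness check for the unit through the intermediate space $B_H(\Gamma_{min,min}\otimes X)$ used to construct $\Ind_H^{\Gamma}$; since the triangle identities hold already in $\Set$ and the forgetful functors are faithful, the adjunction then follows. You instead prove the full bi-implication that for an arbitrary $\Gamma$-equivariant $\phi$ and its set-level adjoint $\psi$, one is controlled and proper iff the other is, and show the hom-set bijection restricts. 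This is a little more work than strictly necessary (once $\Ind_H^\Gamma$ and $\Res_H^\Gamma$ are known to be functors, the unit and counit being morphisms already give the general bi-implication by composing), but it is self-contained, avoids the $B_K$/$B_H$/$Q_H$ factorizations, and displays very cleanly where each hypothesis enters: finite index for $\psi$ proper $\Rightarrow$ $\phi$ proper (equivalently the counit), finiteness of $K$ for $\phi$ proper $\Rightarrow$ $\psi$ proper (equivalently the unit), with controlledness and the set-level adjunction needing neither. The underlying computations match the paper's. One cosmetic point: the generators of the coarse structure of $\Ind_H^{\Gamma}(X)$ are $(q\times q)(\diag_\Gamma\times U)$ rather than $q(\diag_\Gamma\times U)$, so your displayed identity should be read as $(\phi\times\phi)\big((q\times q)(\diag_\Gamma\times U)\big)=\bigcup_{\gamma\in\Gamma}(\gamma\times\gamma)\big((\psi\times\psi)(U)\big)$; with that reading the step is exactly right.
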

\begin{proof}
 {As we observed earlier, the underlying set of $\Ind_H^\Gamma(X)$ is given by $\Gamma \times_H X$.
 Recall that the functions \footnote{We use the word ``function'' in order to denote maps between underlying sets}
 \[ X \to \Gamma \times_H X, \quad x \mapsto [e,x] \]
 and
 \[ \Gamma \times_H \Res_H^\Gamma(Y) \to Y, \quad [\gamma,y] \mapsto \gamma y \]
 define the unit and counit, respectively, of an adjunction $H\Set \rightleftarrows \Gamma\Set$.
 We must check that these functions define morphisms of equivariant bornological coarse spaces.
 \\
 Since $K$ is finite and normal in $H$, we have $B_KX \cong X$ for every $H$-bornological coarse space $X$.
 We claim that the function \[X \to \Gamma \times X\ , \quad x \mapsto (e,x)\] defines a natural morphism $B_KX \to B_H(\Gamma_{min,min} \otimes X)$.
 This function is obviously controlled. To see that it is proper, we note that the generating bounded subsets of $B_H(\Gamma_{min,min} \otimes X)$ are of the form $\iota(H)(\{\gamma\} \times B)$.
 The intersection of such a subset with $\{e\} \times X$ is equal to $\{e\} \times KB$, hence bounded because $K$ is finite.
 Then the unit is given by the composition
 \[ X\cong B_K(X) \to \Res^{\Gamma}_{H}(B_H(\Gamma_{min,min} \otimes X)) \xrightarrow{\pi} \Res^{\Gamma}_{H}(Q_H(\Gamma_{min,min} \otimes X)) = \Res^{\Gamma}_{H}(\Ind_H^\Gamma(X)). \]
 Consider now the composition \[F \colon \Gamma \times \Res_H^\Gamma Y \to \Gamma \times_H \Res_H^\Gamma Y \to Y\] of the projection map with the function which will provide the desired counit.
 It suffices to show that this composition is a morphism of bornological coarse spaces $B_H(\Gamma_{min,min} \otimes \Res_H^\Gamma(Y)) \to Y$.
 If $U$ is an entourage of $Y$, then $F(\diag_\Gamma \times U) = \bigcup_{\gamma \in \Gamma} (\gamma \times \gamma)(f \times f)(U)$ is an entourage of $Y$, see \cref{ekfhwekjfkjewfewfwfwfewf}.
 If $B$ is a bounded subset of $Y$, then $F^{-1}(B) = \bigcup_{\gamma \in \Gamma} \{\gamma^{-1}\} \times \gamma B$ is bounded since $\bigcup_{h \in H} \{\gamma \iota(h)^{-1}\} \times \iota(h)B$ is bounded for every $\gamma$ in $\Gamma$ and the image of $\iota$ has finite index in $\Gamma$.
 This shows that $F$, and hence the counit, is a morphism.}
\end{proof}

We now consider homomorphisms  $H \to \Gamma$ and $H' \to \Gamma$, and set $K:=\ker(H\to \Gamma)$ and $K':=\ker(H' \to \Gamma)$. Note that $W_{H}(K)\cong H/K=:\bar H$ and $W_{H'}(K')\cong H'/K'=:\bar H'$.

\begin{lem}\label{oieioefuew9843534535345}
Assume that  $H^{\prime}\backslash \Gamma /H$ is finite.

For an $H$-bornological coarse space  $X$ we have the relation
\begin{eqnarray*}
\Res^{\Gamma}_{H^{\prime}}\circ \Ind^{\Gamma}_{H}(X) & \cong &
\coprod_{[\gamma]\in \bar H^{\prime}\backslash \Gamma/\bar H} \Res^{\bar H^{\prime}}_{H^{\prime}} \circ \Ind^{\bar H^{\prime}}_{\bar H\cap \gamma^{-1} \bar H^{\prime}\gamma} \circ \Res^{\bar H}_{\bar H\cap \gamma^{-1} \bar H^{\prime}\gamma}(c_{\gamma})\circ \Ind^{\bar H}_{H} (X)\ .
\end{eqnarray*}
where $c_{\gamma}\colon \bar H\cap \gamma^{-1} \bar H^{\prime}\gamma\to \bar H$ is given by $\bar h \mapsto \gamma^{-1}\bar h\gamma$. 
\end{lem}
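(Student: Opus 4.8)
The plan is to compare the two $H'$-bornological coarse spaces by first identifying their underlying $H'$-sets via the classical Mackey double coset decomposition, and then checking that the coarse structures and bornologies agree under that bijection.

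First I would set up the set-level statement. By \cref{rem:jtjbzuzgkvrgkieilfu} the underlying $\Gamma$-set of $\Ind_H^\Gamma(X)$ is $\Gamma\times_H X$, equipped with the coarse structure generated by the images of $\diag_\Gamma\times U$ for entourages $U$ of $X$ and the bornology generated by the images of $\{\gamma\}\times B$ for bounded $B\subseteq X$. Restricting the $\Gamma$-action along $\iota'\colon H'\to\Gamma$ and using the $\Gamma$-equivariant projection $\Gamma\times_H X\to\Gamma/\iota(H)$, the $H'$-set $\Gamma\times_H X$ decomposes as a disjoint union indexed by the double cosets $\bar H'\backslash\Gamma/\bar H$, which coincide with $H'\backslash\Gamma/H$ since the two actions factor through $\iota,\iota'$, and which are finite in number by hypothesis. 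Over the orbit of a coset $\gamma\iota(H)$ the corresponding $H'$-subset is the set induced from the stabilizer $(\iota')^{-1}(\bar H'\cap\gamma\bar H\gamma^{-1})$ acting on the fibre, and the fibre over $\gamma\iota(H)$ is canonically $K\backslash X$ with the residual action; conjugation by $\gamma$ rewrites this data, so that — after invoking \cref{jkhsdf8923} (i.e.\ $\Ind_H^\Gamma\cong\Ind_H^\Gamma\circ B_K$) and transitivity of induction to reassemble the innermost piece as $\Ind_H^{\bar H}(X)$ — one obtains precisely the underlying $H'$-set of the claimed right-hand side, with the conjugation isomorphisms $c_\gamma$ implementing the passage between cosets.

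Next I would upgrade this to an isomorphism in $H'\BC$. For the coarse structure, the key observation is that every generating entourage $\diag_\Gamma\times U$ of $\Ind_H^\Gamma(X)$ is ``block diagonal'' with respect to the fibration $\Gamma\times_H X\to\Gamma/\iota(H)$, hence cannot connect points lying over distinct double cosets; therefore the $H'$-set decomposition is already a coproduct of coarse spaces, and since coproducts in $\Gamma\BC$ (equivalently in $H'\BC$) are computed on underlying bornological coarse spaces, it suffices to check the coarse structure within a single double coset, where it is transported isomorphically by conjugation by $\gamma$ to the coarse structure of the corresponding twisted $\Ind$--$\Res$ composite. For the bornology one argues similarly: each generating bounded set $\{\gamma\}\times B$ is supported in a single fibre, so a subset of $\Gamma\times_H X$ is bounded iff it is contained in a finite union of such fibre-supported sets; since there are only finitely many double cosets, this is exactly the coproduct bornology of the pieces, and within each piece the bornology of $\Ind_H^\Gamma(X)$ restricts to the one produced by the iterated completion/quotient construction on the right-hand side. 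Combining the three identifications yields the asserted isomorphism, naturally in $X$.

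The main obstacle I anticipate is not the coarse part — which is essentially automatic once one sees that everything is block diagonal over $\Gamma/\iota(H)$ — but the bookkeeping of bornologies through the composite of functors on the right-hand side. Each of $\Ind$, $\Res$ and $B_K$ modifies the bornology (enlarging to orbit-completions and then passing to quotients with the maximal compatible bornology), and one must verify on the nose that the bornology obtained by running this chain on a single double-coset summand coincides with the restriction of the bornology of $\Ind_H^\Gamma(X)$; the cleanest route is probably to use the explicit colimit/coequalizer description of $\Ind_H^\Gamma$ provided via \cref{rgioergerggegergreg}, so that both sides are exhibited as the same coequalizer of spaces carrying maximal or orbit bornologies. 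A secondary subtlety is keeping straight which subgroup acts through which of the several homomorphisms $\iota$, $\iota'$, the relevant inclusions, and $c_\gamma$, once the kernels $K$ and $K'$ are taken into account.
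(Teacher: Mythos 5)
Your proposal is correct and follows the same approach as the paper, which simply says ``One just makes all definitions explicit''; you have made explicit precisely the set-level Mackey decomposition, the block-diagonality of the coarse structure over $\Gamma/\iota(H)$, and the fibre-supported nature of the generating bounded sets, together with the finiteness of $\bar H'\backslash\Gamma/\bar H$ needed to identify the coproduct bornology, which is all the lemma requires. Your flagging of the bornology bookkeeping through the chain of $\Ind$, $\Res$ and $B_K$ as the delicate step is exactly the right thing to watch; in checking it you may also notice that the homomorphism $c_\gamma$ as written in the statement ($\bar h\mapsto\gamma^{-1}\bar h\gamma$ landing in $\bar H$) appears to have an index typo, since the natural target for this conjugation is $\gamma^{-1}\bar H\gamma$, and the cleaner way to read the summand is induction along $\bar H\cap\gamma^{-1}\bar H'\gamma \xrightarrow{\gamma(-)\gamma^{-1}} \bar H'$ composed with restriction along the inclusion $\bar H\cap\gamma^{-1}\bar H'\gamma \hookrightarrow \bar H$, which is what your stabilizer analysis produces.
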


\begin{proof}
One just makes all definitions explicit.
\end{proof}

The induction on the motivic level   is given by
 \[\Ind^{\Gamma, Mot}_{H}:=  Q_{H}^{Mot}\circ \hat P_{\Gamma}^{Mot}\ .\] This functor 
 fits into the adjunction
\[ \Ind^{\Gamma,Mot}_{H}\colon  H\Sp\cX\leftrightarrows  \Gamma\Sp\cX:  \Ind^{\Gamma}_{H,Mot}\]
and is compatible with the Yoneda functor:
\[\Ind^{\Gamma, Mot}_{H}\circ \Yo^{s}_{H}\simeq \Yo^{s}_{\Gamma}\circ \Ind^{\Gamma}_{H}\ .\]
Since the Yoneda functor preserves finite coproducts (since it is excisive), the \cref{oieioefuew9843534535345} implies:

\begin{kor}
If $H^{\prime}\backslash \Gamma /H$ is finite, then
\[ \Res^{\Gamma,Mot}_{H^{\prime}}\circ \Ind^{\Gamma,Mot}_{H}\simeq \bigoplus_{[\gamma]\in \bar H^{\prime}\backslash G/\bar H} 
 \Res^{\bar H^{\prime},Mot}_{H^{\prime}} \circ \Ind^{\bar H^{\prime}, Mot}_{\bar H\cap \gamma^{-1}\bar H^{\prime}\gamma}  \circ \Res^{\bar H,Mot}_{\bar H\cap \gamma^{-1}\bar H^{\prime}\gamma} (c_{\gamma})\circ Q^{Mot}_{K}\ .\]
\end{kor}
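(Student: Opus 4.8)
The plan is to obtain the asserted equivalence of functors $H\Sp\cX\to\Gamma\Sp\cX$ by applying the stable Yoneda functors to the $\Gamma\BC$-level isomorphism of \cref{oieioefuew9843534535345} and then transporting the result to the motivic level via the compatibilities of the group-change functors with $\Yo^{s}$ recorded throughout this section. To legitimize this, I would first observe that both sides of the claimed equivalence are colimit-preserving functors $H\Sp\cX\to\Gamma\Sp\cX$: the left-hand side is a composition of the left adjoints $\Ind^{\Gamma,Mot}_{H}$ and $\Res^{\Gamma,Mot}_{H^{\prime}}$, while the right-hand side is a finite direct sum (the index set $\bar H^{\prime}\backslash\Gamma/\bar H$ is finite, being a quotient of $H^{\prime}\backslash\Gamma/H$) of compositions of the left adjoints $\Res^{\bar H^{\prime},Mot}_{H^{\prime}}$, $\Ind^{\bar H^{\prime},Mot}_{\bar H\cap\gamma^{-1}\bar H^{\prime}\gamma}$, $\Res^{\bar H,Mot}_{\bar H\cap\gamma^{-1}\bar H^{\prime}\gamma}(c_{\gamma})$ and $Q^{Mot}_{K}$. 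By \cref{lkjiooiwoigewgewgwegfw123}, applied with $\bC=\Gamma\Sp\cX$, precomposition with $\Yo^{s}_{H}$ is an equivalence from colimit-preserving functors $H\Sp\cX\to\Gamma\Sp\cX$ onto $\Gamma\Sp\cX$-valued equivariant coarse homology theories; in particular it reflects equivalences, so it suffices to produce a natural equivalence between the two composites with $\Yo^{s}_{H}$, viewed as functors $H\BC\to\Gamma\Sp\cX$.

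Applying $\Yo^{s}_{H^{\prime}}$ to the isomorphism of \cref{oieioefuew9843534535345} and using on the left the equivalences $\Ind^{\Gamma,Mot}_{H}\circ\Yo^{s}_{H}\simeq\Yo^{s}_{\Gamma}\circ\Ind^{\Gamma}_{H}$ and $\Res^{\Gamma,Mot}_{H^{\prime}}\circ\Yo^{s}_{\Gamma}\simeq\Yo^{s}_{H^{\prime}}\circ\Res^{\Gamma}_{H^{\prime}}$ rewrites the left-hand side as $\Res^{\Gamma,Mot}_{H^{\prime}}\circ\Ind^{\Gamma,Mot}_{H}\circ\Yo^{s}_{H}$. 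On the right-hand side, $\Yo^{s}_{H^{\prime}}$ preserves the finite coproduct because it is excisive, and peeling off the outer functors one at a time by means of the analogous $\Yo$-compatibilities of restriction (also in its $c_{\gamma}$-twisted form, which is just restriction along a homomorphism and hence covered by the same compatibility) and of induction reduces each summand to its innermost term $\Yo^{s}_{\bar H}(\Ind^{\bar H}_{H}(X))$. Here I would invoke the identification $\Ind^{\bar H}_{H}\cong Q_{K}$ --- both functors send $X$ to $K\backslash X$ with its residual $H/K$-action and with the bornology and coarse structure prescribed by \cref{rem:jtjbzuzgkvrgkieilfu}, respectively by the construction of $Q_{K}$, so this follows by unwinding the definitions exactly as in the proof of \cref{oieioefuew9843534535345} --- together with $\Yo^{s}_{\bar H}\circ Q_{K}\simeq Q^{Mot}_{K}\circ\Yo^{s}_{H}$, to conclude $\Yo^{s}_{\bar H}(\Ind^{\bar H}_{H}(X))\simeq Q^{Mot}_{K}(\Yo^{s}_{H}(X))$. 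Assembling these rewritings with \cref{oieioefuew9843534535345} yields the desired natural equivalence of functors $H\BC\to\Gamma\Sp\cX$, which by the previous paragraph upgrades to the claimed equivalence on $H\Sp\cX$.

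The real content of the statement is \cref{oieioefuew9843534535345} itself, which is assumed, so I do not expect a serious obstacle. The only points needing care are bookkeeping the propagation of the chain of $\Yo$-compatibility equivalences through the four-fold nested composition $\Res\circ\Ind\circ\Res(c_{\gamma})\circ\Ind$, checking that all functors involved are colimit-preserving so that the reduction to precomposition with $\Yo^{s}_{H}$ is justified, and the identification $\Ind^{\bar H,Mot}_{H}\simeq Q^{Mot}_{K}$ that matches the innermost term of \cref{oieioefuew9843534535345} with the functor $Q^{Mot}_{K}$ appearing in the corollary.
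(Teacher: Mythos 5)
Your proof is correct and follows essentially the same route as the paper, which compresses the argument to the single observation that the Yoneda functor preserves finite coproducts and hence Lemma \ref{oieioefuew9843534535345} implies the corollary. You have merely made the implicit steps explicit: the use of the universal property of $H\Sp\cX$ (Corollary \ref{lkjiooiwoigewgewgwegfw123}) to reduce to a check after precomposition with $\Yo^{s}_{H}$, the chain of $\Yo$-compatibilities for the change-of-group functors, and the identification $\Ind^{\bar H}_{H}\cong Q_{K}$ matching the innermost term of the lemma with the $Q^{Mot}_{K}$ in the corollary. (One cosmetic slip: both sides are colimit-preserving functors $H\Sp\cX\to H^{\prime}\Sp\cX$, not $H\Sp\cX\to\Gamma\Sp\cX$, so the universal property should be invoked with $\bC=H^{\prime}\Sp\cX$; this does not affect the argument.)
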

 
\part{Examples}\label{weoihjowefewfewfewww}

\section{Equivariant coarse ordinary homology}

\subsection{Construction}

In this section we introduce equivariant coarse ordinary homology theory.
Its construction is completely analogous to the non-equivariant case \cite[Sec.~6.3]{buen}.

Let $X$ be a $\Gamma$-bornological coarse space and let $n$ be a natural number.
An \emph{$n$-chain on $X$} is a function $c \colon X^{n+1} \to \IZ$.
Its \emph{support} is defined by $\supp(c) := \{x \in X^{n+1} \, | \, c(x) \not= 0\}$.
{We typically think of $n$-chains as infinite linear combinations of points in $X^{n+1}$.}

Let $U$ be a coarse entourage of $X$ and let $B$ be a bounded subset.
A point $(x_0, \ldots, x_n)$ in $X^{n+1}$ is \emph{$U$-controlled} if $(x_{i},x_{j}) \in U$ for every $0 \leq i,j \leq n$.
The point $(x_0, \ldots, x_n)$ in $X^{n+1}$ \emph{meets $B$} if there exists $0 \leq i \leq n$ such that $x_{i}$ lies in $B$.

An $n$-chain $c$ is \emph{$U$-controlled} if every point of its support is $U$-controlled.
The $n$-chain $c$ is \emph{controlled} if it is $U$-controlled for some coarse entourage $U$ of $X$.
{Furthermore, $c$ is \emph{locally finite} if for every bounded subset $B$ of $X$ the set of points in $\supp(c)$ which meet $B$ is finite.}
We let $C\cX_{n}(X)$ denote the  abelian group of controlled locally finite $n$-chains.
   
The boundary operator $\partial \colon C\cX_{n}(X)\to C\cX_{n-1}(X)$ (for all $n\ge 1$) is defined to be $\partial := \sum_{i=0}^{n}(-1)^{i}\partial_{i}$,
where $\partial_{i}$ is the linear extension of the operator $X^{n+1} \to X^{n}$ omitting the $i$'th entry.  
One checks that $\partial$ is well-defined  and a differential of a chain complex.
By $C\cX(X)$ we denote the chain complex of locally finite and controlled chains on $X$.

\begin{ddd}
For every natural number $n$ we let  $C\cX^{\Gamma}_{n}(X)$ denote the subgroup of $\ C\cX_{n}(X) $ of locally finite and controlled $n$-chains which are in addition $\Gamma$-invariant.
\end{ddd}

For every natural number $n$ the boundary operator $\partial \colon C\cX_{n+1}(X)\to C\cX_{n}(X)$
restricts to a boundary operator $\partial\colon C\cX^{\Gamma}_{n+1}(X)\to C\cX^{\Gamma}_{n}(X)$ between the subgroups of $\Gamma$-invariants.
Hence we have defined a subcomplex $C\cX^{\Gamma}(X)$ of $ C\cX(X)$.
If $f\colon X\to X^{\prime}$ is a morphism between $\Gamma$-bornological coarse spaces,
then the induced map $C\cX(f) \colon C\cX(X)\to C\cX(X^{\prime})$  of chain complexes preserves the subcomplexes of $\Gamma$-invariants.
Therefore, we obtain a functor
\[ C\cX^{\Gamma}\colon\Gamma\BC\to \Ch\ , \]
where $\Ch$ denotes the category of chain complexes.

In order to go from chain complexes to spectra we use the Eilenberg--MacLane correspondence  \begin{equation}\label{fvihgoirvebvevvreveve}
\cE\cM \colon \Ch \to \Sp\ .
\end{equation}
One way to define this functor is as the composition
\[\cE\cM\colon \Ch\to \Ch[W^{-1}]\to \Sp\ ,\]
where the first functor is the localization of the category  of chain complexes at the quasi-isomorphisms, and the second functor
is the mapping spectrum functor $\map(\Z[0],\dots)$ of the stable $\infty$-category $\Ch[W^{-1}]$,
where $\Z[0]$ is the chain complex with $\Z$ placed in degree zero.

\begin{ddd}
We define the functor {$H\cX^{\Gamma}\colon\BC\to \Sp$}
by
\[H\cX^{\Gamma}:=\cE\cM\circ C\cX^{\Gamma}\ .\qedhere\]
\end{ddd}

\begin{theorem}\label{oifjweoifewfefwefwefewf}
$H\cX^{\Gamma}$ is an equivariant coarse homology theory.
\end{theorem}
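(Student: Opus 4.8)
The plan is to verify the four axioms of \cref{fewoijfewoifewofww45535345345} for $H\cX^{\Gamma}=\cE\cM\circ C\cX^{\Gamma}$ one at a time, following the non-equivariant template in \cite[Sec.~6.3]{buen} and checking that every construction there restricts compatibly to the subcomplexes of $\Gamma$-invariant chains. Since $\cE\cM$ sends quasi-isomorphisms to equivalences, preserves zero objects, and sends pushout squares of chain complexes (in the sense of homotopy pushouts, e.g.\ degreewise split short exact sequences) to pushout squares of spectra, it suffices to establish the corresponding statements on the level of the chain complex functor $C\cX^{\Gamma}\colon\Gamma\BC\to\Ch$.

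First I would treat \textbf{coarse invariance}. For a $\Gamma$-bornological coarse space $X$ the projection $p\colon\{0,1\}_{max,max}\otimes X\to X$ induces a chain map $C\cX^{\Gamma}(p)$; one constructs an explicit equivariant chain homotopy inverse exactly as in the non-equivariant case, noting that the prism-type homotopy operators built from the two inclusions $i_0,i_1\colon X\to\{0,1\}\otimes X$ are natural and hence commute with the $\Gamma$-action, so they restrict to the invariant subcomplexes. This also handles sending close morphisms to chain-homotopic maps. Second, \textbf{flasqueness}: if flasqueness of $X$ is implemented by $f\colon X\to X$, then the usual Eilenberg swindle $\sum_{n\ge 0}f^n$ is well-defined on locally finite controlled chains because of Conditions \ref{fjweoifjewoijeoiejfiwjeiofjewfewfewf} and \ref{hckjhckhwhuiechiu} in \cref{hckjhckhwhuiechiu1} (the latter — in its $\Gamma B$-form — is exactly what guarantees local finiteness of the swindled chain as a $\Gamma$-invariant chain), and since $f$ is close to $\id_X$ the swindle argument gives $C\cX^{\Gamma}(X)\simeq 0$; this is the one place where the stronger Condition~\ref{hckjhckhwhuiechiu} rather than the weaker variant of \cref{hckjhckhwhuiechiu2} is genuinely used. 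Third, \textbf{$u$-continuity}: every controlled invariant locally finite chain is $U$-controlled for some entourage $U$, and by \cref{vegieirt456456} we may take $U$ invariant, so $c\in C\cX^{\Gamma}(X_U)$; this gives $\colim_{U\in\cC^{\Gamma}}C\cX^{\Gamma}(X_U)\xrightarrow{\cong}C\cX^{\Gamma}(X)$ already at the level of chain complexes, and one passes to $\cE\cM$.

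The main work is \textbf{excision}. Given an equivariant complementary pair $(Z,\cY)$ with $\cY=(Y_i)_{i\in I}$ on $X$, one first shows $C\cX^{\Gamma}(\emptyset)=0$, then analyzes the square obtained by applying $C\cX^{\Gamma}$. Following \cite{buen}, the key algebraic input is that for $i$ with $Z\cup Y_i=X$ and any entourage $U$ one can split a $U$-controlled locally finite invariant chain on $X$ into a part supported near $Z$ and a part supported in $Y_j$ for suitable $j\ge i$ (using the big-family condition $U[Y_i]\subseteq Y_j$), and that this decomposition can be chosen $\Gamma$-equivariantly because $U$, $Z$, and all $Y_i$ are invariant — so one may, for instance, assign each $U$-controlled simplex to $Z$ or to $\cY$ by a $\Gamma$-invariant rule depending only on whether its first coordinate lies in $Z\setminus Y_i$ or in $Y_i$. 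This yields that
\[
C\cX^{\Gamma}(Z\cap\cY)\to C\cX^{\Gamma}(Z)\oplus C\cX^{\Gamma}(\cY)\to C\cX^{\Gamma}(X)
\]
is, up to chain homotopy, a short exact (hence homotopy cofiber) sequence, where $C\cX^{\Gamma}(\cY):=\colim_i C\cX^{\Gamma}(Y_i)$; applying $\cE\cM$ gives the required pushout of spectra. I expect the bookkeeping in this equivariance-of-the-decomposition step — making sure the chosen splitting is simultaneously $\Gamma$-invariant, compatible with the filtered colimit over $I$, and respects local finiteness — to be the principal obstacle, though conceptually it is a routine adaptation of \cite[Sec.~6.3]{buen}; all the genuinely new equivariant content is localized there and in the $\Gamma B$-form of the flasqueness condition.
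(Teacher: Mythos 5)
Your proposal takes exactly the approach the paper takes: the paper's proof is the single sentence that the arguments of \cite[Thm.~6.15]{buen} extend word-for-word to the equivariant case, and you are spelling out axiom by axiom why the non-equivariant constructions (prism homotopies, Eilenberg swindle, $U$-controlled decomposition for excision, entourage cofinality for $u$-continuity) restrict compatibly to the $\Gamma$-invariant subcomplexes. That reading is correct, and your observation that the invariance of $U$, $Z$, and the $Y_i$ is what makes the Mayer--Vietoris splitting equivariant is the only genuinely new point and is exactly where the equivariant content sits.

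One small inaccuracy worth flagging: your parenthetical claim that the $\Gamma B$-form of Condition~\ref{hckjhckhwhuiechiu} in \cref{hckjhckhwhuiechiu1} is ``exactly what guarantees local finiteness of the swindled chain'' and that this is ``the one place'' the stronger form is genuinely used is not right. For local finiteness of $S(c)=\sum_{n}(f^{n})_{*}c$ near a bounded set $B$ one only needs the weaker $B\cap f^{n}(X)=\emptyset$ for $n\gg 0$, since then only finitely many summands meet $B$ and each is locally finite; the fact that $S(c)$ is again $\Gamma$-invariant is immediate from $\Gamma$-equivariance of $f$ and needs no strengthening. As \cref{hckjhckhwhuiechiu2} explains, the $\Gamma B$-form is imposed so that flasqueness is preserved under the group-change functors ($H$-completion, quotient, induction), and plays no role in verifying that $H\cX^{\Gamma}$ itself satisfies the flasqueness axiom.
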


\begin{proof}
We observe that the arguments given in the proof of the \cite[Thm.~6.15]{buen} extend word-by-word to the equivariant case.
\end{proof}
 
\subsection{Calculations for spaces of the form \texorpdfstring{$\Gamma_{can,min} \otimes S_{min,max}$}{Gamma-can-min otimes S-min-max}}\label{wifhuifhiwefhieuwhufewfewfwf}

In this section we will do some computations of equivariant coarse ordinary homology groups.
In particular, we will relate it to ordinary group homology.

\begin{ex}
If the $\Gamma$-bornological coarse space $X$ has the trivial $\Gamma$-action, then we have an isomorphism $H\cX^{\Gamma}(X)\cong H\cX(X)$.
\end{ex}

In order to provide more examples we consider the group homology functor
\[ H(\Gamma,-)\colon\Mod(\Z[\Gamma])\to \Sp \]
which can be defined as the composition
\[ \Mod(\Z[\Gamma])\to  \Ch_{\Z[\Gamma]}\to  \Ch_{\Z[\Gamma]}[W^{-1}]\xleftarrow{\simeq}
   \Ch^{\mathit{free}}_{\Z[\Gamma]}[W^{-1}]\xrightarrow{\Z\otimes_{\Z[\Gamma]}-} \Ch_{\Z}[W^{-1}]\xrightarrow{\cE\cM} \Sp\ .\]
The first functor sends a $\Z[\Gamma]$-module to a chain complex of $\Z[\Gamma]$-modules concentrated in degree $0$.
The second functor is the localization at quasi-isomorphisms.
The equivalence in the third step is induced by the inclusion of the full subcategory of chain complexes of free $\Z[\Gamma]$-modules.
It is essentially surjective by the existence of free resolutions.
Finally, the functor $\Z\otimes_{\Z[\Gamma]}-$ is well-defined since it preserves quasi-isomorphisms between chain complexes of free $\Z[\Gamma]$-modules.
   
Since the above definition involves the inverse of an equivalence it does not directly provide an explicit formula.
But for calculations it is useful to choose an explicit model for $H(\Gamma,-)$. The standard choice is as follows.
We consider the chain complex of $\Z[\Gamma]$-modules $C(\Gamma)$ given by 
\[ \dots\to \Z[\Gamma^{n+1}]\to \Z[\Gamma^{n}]\to \dots\to \Z[\Gamma]\ .\]
The differential $C(\Gamma)_{n+1}\to C(\Gamma)_{n}$ is defined as the linear extension of the map
\[ (\gamma_{0},\dots,\gamma_{n+1})\to \sum_{i=0}^{n+1}(-1)^{i} (\gamma_{0},\dots,\widehat{\gamma}_{i},\dots,\gamma_{n+1})\ ,\]
where $\widehat{\gamma}_i$ indicates that this component gets omitted.
The group $\Gamma$ acts diagonally on the products $\Gamma^{n}$ and this induces the $\Z[\Gamma]$-module structure on $C(\Gamma)$.
We now consider the functor
\[ C(\Gamma,-)\colon\Mod(\Z[\Gamma])\to \Ch_{\Z}\ ,\quad  V\mapsto \Z\otimes_{\Z[\Gamma]} (C(\Gamma) \otimes_{\Z} V)\ , \]
which sends a $\Z[\Gamma]$-module  $V$ to its standard complex $C(\Gamma,V)$.
Here $\Gamma$ acts diagonally on $C(\Gamma)\otimes_\Z V$.
Then we have an equivalence of functors
\[H(\Gamma,-)\simeq \cE\cM\circ C(\Gamma,-)\ .\]

If $S$ is a $\Gamma$-set, then we form the $\Gamma$-bornological coarse space $S_{min,max}$
given by the  $\Gamma$-set $S$ with the maximal bornological and the minimal coarse structure.
In this way we get a functor
\[ (-)_{min,max}\colon\Gamma\Set\to \Gamma\BC\ .\]
We have furthermore a functor
\[ \Gamma\Set\to \Mod(\Z[\Gamma])\ , \quad S\mapsto \Z[S]\ .\]
We now have two functors $\Gamma\Set\to \Sp$ given by
\[ S\mapsto H\cX^{\Gamma}(\Gamma_{can,min} \otimes S_{min,max})\quad \text{and} \quad S\mapsto H(\Gamma,\Z[S])\ .\]

\begin{prop}[{cf.~{\cite[Prop.~3.8]{engel_loc}}}]\label{flkewfjwefjeoiwef234}
 There is a natural equivalence
 \[ H\cX^{\Gamma}(\Gamma_{can,min} \otimes S_{min,max})\simeq H(\Gamma, \Z[S])\ .\]
\end{prop}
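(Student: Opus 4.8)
The plan is to construct an explicit quasi-isomorphism of chain complexes of $\Z$-modules
\[
C\cX^{\Gamma}(\Gamma_{can,min}\otimes S_{min,max})\ \simeq\ C(\Gamma,\Z[S])
\]
and then apply the Eilenberg--MacLane functor $\cE\cM$ of \eqref{fvihgoirvebvevvreveve}, using the model $H(\Gamma,-)\simeq\cE\cM\circ C(\Gamma,-)$ recorded just before the statement. First I would unwind what an $n$-chain on $Y:=\Gamma_{can,min}\otimes S_{min,max}$ looks like. The underlying set of $Y$ is $\Gamma\times S$; the coarse structure is the product of $\cC_{can}$ on $\Gamma$ and the minimal coarse structure on $S$, so a controlled point $((\gamma_0,s_0),\dots,(\gamma_n,s_n))$ in $Y^{n+1}$ must have all $s_i$ equal (minimal coarse structure forces the diagonal) and all $\gamma_i$ lying in a common set of the form $\gamma(B\times B)$ for finite $B\subseteq\Gamma$, i.e.\ the $\gamma_i$ lie in a single left coset times a finite set — more usefully, $(\gamma_i,\gamma_j)\in\Gamma(B\times B)$ for all $i,j$. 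The bornology of $Y$ is the maximal one, so ``locally finite'' is vacuous except that a locally finite chain is controlled; but being $\Gamma$-invariant is the serious constraint. A $\Gamma$-invariant controlled $n$-chain is therefore determined by its values on a set of orbit representatives, and the diagonal $\Gamma$-action on $\Gamma^{n+1}\times S$ is free in the $\Gamma^{n+1}$ variable, so such a chain is a (finite, by control) $\Z$-linear combination of orbits $\Gamma\cdot((\gamma_0,\dots,\gamma_n),s)$.

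This is exactly the description of $C(\Gamma,\Z[S])_n=\Z\otimes_{\Z[\Gamma]}(\Z[\Gamma^{n+1}]\otimes_\Z\Z[S])$: the latter has a $\Z$-basis given by $\Gamma$-orbits of pairs $((\gamma_0,\dots,\gamma_{n}),s)$ with the diagonal action on $\Gamma^{n+1}$ and the given action on $S$. So I would define the map $C(\Gamma,\Z[S])_n\to C\cX^{\Gamma}_n(Y)$ sending a basis orbit to the indicator chain of the corresponding orbit in $Y^{n+1}$, and check: (i) the image is controlled — here one uses that the $\gamma_i$ of a single tuple differ by finitely many elements, so after averaging over $\Gamma$ the support still lies in a $\Gamma(B\times B)$-controlled set, i.e.\ in $\cC_{can}$; this is where the specific choice $\cC_{can}$ (generated by $\Gamma(B\times B)$, $B$ finite) rather than a coarser structure is essential; (ii) the map is a bijection onto $C\cX^{\Gamma}_n(Y)$ — surjectivity because every invariant controlled chain has support a finite union of orbits as above, injectivity because distinct orbits give distinct indicator chains; (iii) the map is a chain map — the face operators $\partial_i$ of $C\cX^{\Gamma}$ drop the $i$-th entry of $((\gamma_0,\dots,\gamma_n),s)$, which is precisely the differential of the standard complex $C(\Gamma)\otimes V$ tensored down over $\Z[\Gamma]$; and (iv) naturality in $S$ — a morphism of $\Gamma$-sets $S\to S'$ induces $Y\to Y'$ and the squares commute on orbit generators by inspection. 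Applying $\cE\cM$ gives the asserted natural equivalence.

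The main obstacle is step (i), matching the control condition on $\Gamma$-invariant chains with the ``finite support in the standard complex'' condition, i.e.\ verifying that $\Gamma$-invariance plus $Y$-controlledness of an $n$-chain is equivalent to being a \emph{finite} $\Z$-combination of orbits whose representative tuples are ``boundedly spread'' in the sense of $\cC_{can}$. One has to rule out, for instance, an invariant chain whose support meets infinitely many $\Gamma$-orbits but is still controlled: controlledness by a single entourage $U\in\cC_{can}$, $U\subseteq\Gamma(B\times B)$ for finite $B$, forces within each orbit the spread to be bounded, but one must also see that only finitely many orbits can be hit while staying inside one such $U$ together with $S$ being coarsely discrete — here the key point is that $U$-controlled tuples over a fixed $s\in S$ form finitely many $\Gamma$-orbits (since $B$ is finite), and invariance means the chain is constant on orbits, so finite total support follows. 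Once this bookkeeping is pinned down the rest is the routine verification that the two differentials and the two $S$-functorialities agree on generators, and I expect no difficulty there; the argument is, as the paper says of the preceding theorem, essentially the non-equivariant computation of \cite[Sec.~6.3]{buen} carried through on $\Gamma$-invariant chains, compare \cite[Prop.~3.8]{engel_loc}.
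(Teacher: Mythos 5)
The overall shape of your argument matches the paper's: both construct an explicit chain isomorphism between $C\cX^{\Gamma}(\Gamma_{can,min}\otimes S_{min,max})$ and the standard complex $C(\Gamma,\Z[S])$, with orbit indicator chains $\sum_{\gamma\in\Gamma}((\gamma\gamma_0,\gamma s),\dots,(\gamma\gamma_n,\gamma s))$ as the image of basis orbits. However, there is a genuine gap in your argument for bijectivity, rooted in a misreading of the bornology.

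You assert that ``the bornology of $Y$ is the maximal one, so `locally finite' is vacuous.'' This is false unless $\Gamma$ is finite. The bornology of $\Gamma_{can,min}\otimes S_{min,max}$ is the $\otimes$-bornology generated by products $B'\times B$ with $B'$ bounded in $\Gamma_{can,min}$ (hence \emph{finite}, since $\Gamma_{can,min}$ carries the minimal bornology) and $B$ arbitrary in $S$. So bounded subsets are precisely the subsets of $F\times S$ with $F\subseteq\Gamma$ finite --- not all of $\Gamma\times S$. (And even when $\Gamma$ \emph{is} trivial, a maximal bornology makes local finiteness equivalent to finite support, which is a very nontrivial constraint.) Because of this misstep you try to deduce ``finite total support'' of a $\Gamma$-invariant controlled chain from invariance and controlledness alone, via the claim that ``$U$-controlled tuples over a fixed $s\in S$ form finitely many $\Gamma$-orbits.'' That claim is also false: if $\Gamma$ acts freely on $S$ (take $S=\Gamma$), then each $U$-controlled tuple with fixed $s$-coordinate is its own $\Gamma$-orbit, and there are infinitely many of them (let the first $\Gamma$-coordinate range over $\Gamma$). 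So your argument for injectivity/surjectivity of the comparison map collapses; a $\Gamma$-invariant controlled chain need \emph{not} be a finite combination of orbit chains without the local finiteness hypothesis.

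The missing ingredient is exactly the use of local finiteness that the paper's inverse map $\psi$ encodes: one sends a chain $c$ to $\sum n_{((1,s),(\gamma_1,s),\dots,(\gamma_n,s))}\otimes(1,\gamma_1,\dots,\gamma_n,s)$, restricting to tuples whose first $\Gamma$-coordinate is $1$. If $c$ is $U$-controlled, the nonzero coefficients occur only for $\gamma_i\in U[\{1\}]$, a finite subset of $\Gamma$; hence the relevant support points all lie in the bounded set $U[\{1\}]\times S$, and local finiteness of $c$ forces the sum to be finite. This step --- a bounded set of the form $F\times S$ together with local finiteness --- is the actual mechanism that guarantees finite orbit support, and it cannot be replaced by the control/invariance bookkeeping you sketch. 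Once you repair this, the remaining checks (the chain map property, naturality, and the coefficient match using $\Gamma$-invariance and the fact that $S_{min,max}$ has the minimal coarse structure, so $n_{((\gamma_0,s_0),\dots,(\gamma_n,s_n))}=0$ unless $s_0=\dots=s_n$) go through as you describe.
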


\begin{proof}
 We claim that there is a natural isomorphism between $C\cX^{\Gamma}(\Gamma_{can,min} \otimes S_{min,max})$ and the standard complex $C(\Gamma, \Z[S])$.
To do so, we identify $\Z[\Gamma^{n+1}] \otimes_{\Z} \Z[S] \cong \Z[\Gamma^{n+1} \times S]$, where $\Gamma^{n+1} \times S$ carries the diagonal $\Gamma$-action.
Then we define the homomorphism
\begin{equation}\label{foijefiojewoiwef2342342424}
 \phi_n \colon C_n(\Gamma, \Z[S]) \cong \Z \otimes_{\Z[\Gamma]} \Z[\Gamma^{n+1} \times S] \to C\cX^{\Gamma}(\Gamma_{can,min} \otimes S_{min,max})
\end{equation}
as the linear extension of
\begin{equation}\label{ewfuwehfikewfewfw5435}
1 \otimes (\gamma_{0},\gamma_{1},\dots,\gamma_{n},s) \mapsto \sum_{\gamma \in \Gamma} ((\gamma\gamma_0,\gamma s), \ldots, (\gamma\gamma_n,\gamma s))\ .
\end{equation}
Note that all summands are different points on $(\Gamma \times S)^{n+1}$ so that the infinite sum makes sense, and it is $\Gamma$-invariant by construction.
Every point $((\gamma\gamma_0,\gamma s), \ldots, (\gamma\gamma_n,\gamma s))$ is controlled by the entourage $\Gamma\{(\gamma_i,\gamma_j) \mid 0 \leq i,j \leq n\} \times \diag_{S}$
of the $\Gamma$-bornological coarse space $\Gamma_{can,min} \otimes S_{min,max}$.
To show that this chain is also locally finite,
it suffices to check that there are only finitely many points in the support of the chain \eqref{ewfuwehfikewfewfw5435} which meet bounded sets of the form $B \times S$,
where $B$ is some finite subset of $\Gamma$.
This is clear since $\Gamma$ acts freely on $\Gamma^{n+1}$.
This finishes the argument for the assertion that \eqref{foijefiojewoiwef2342342424} is well-defined.

It is straightforward to check that the collection $\{ \phi_n \}_n$ is a chain map.
 
We now argue that  the map \eqref{foijefiojewoiwef2342342424} is an isomorphism. To this end we define an inverse
\[ \psi\colon C\cX_{n}(\Gamma_{can,min} \otimes S_{min,max}) \to \Z \otimes_{\Z} \Z[\Gamma^{n+1} \times S] \cong C_n(\Gamma, \Z[S])\ .\]
Let
\[c =\sum_{ x\in (\Gamma \times S)^{n+1}} n_{x}x \]
be an invariant, controlled and locally finite $n$-chain on $\Gamma_{can,min} \otimes S_{min,max}$.
We now define
\[ \psi(c) := \sum_{(\gamma_{1},\dots,\gamma_{n},s)\in  \Gamma^{n} \times S} n_{((1,s),(\gamma_{1},s)\dots,(\gamma_{n},s))} \otimes (1,\gamma_{1},\dots,\gamma_{n},s)\ .\]
Assume that $c$ is $U$-controlled. Then only summands with
$\{\gamma_{1},\dots,\gamma_{n}\}\subseteq U[\{1\}]$ contribute to the sum.
Since $U[\{1\}]$ is bounded and $c$ is locally finite we see that the number of non-trivial summands is finite.
This implies that $\psi(c)$ is well-defined.

It is straightforward to check that $\phi$ and $\psi$ are inverse to each other:
To see that $\psi \circ \phi = \id$, use that
\[ 1 \otimes (\gamma_0,\gamma_1,\ldots,\gamma_n,s) = 1 \otimes (1,\gamma_0^{-1}\gamma_1,\ldots,\gamma_0^{-1}\gamma_n,\gamma_0^{-1}s)\ .\]
The equality $\phi \circ \psi = \id$ follows from the $\Gamma$-invariance of an $n$-chain $c = \sum_{ x\in (\Gamma \times S)^{n+1}} n_{x}x$ together with the observation that
$n_{((\gamma_0,s_0),\ldots,(\gamma_n,s_n))} = 0$ unless $s_0 = \dots = s_n$.
The latter fact is due to $S$ carrying the minimal coarse structure.

One easily checks that $\phi$ is natural for maps between $\Gamma$-sets.
\end{proof}

\begin{rem}
 Assume that $S$ is a transitive $\Gamma$-set. If we fix a point $s$ in $S$ and let $\Gamma_{s}$ denote the stabilizer subgroup  of $s$, then we have an isomorphism of $\Z[\Gamma]$-modules
\[\Z[S]\cong \Ind_{\Gamma_{s}}^{\Gamma}\Z\ .\]
The induction isomorphism in group homology now gives the chain of equivalences
\[H(\Gamma,\Z[S])\simeq H(\Gamma, \Ind_{\Gamma_{s}}^{\Gamma}\Z)\simeq H(\Gamma_{s},\Z)\ .\]
{Since this identification involves the choice of the base point $s$ it is not reasonable to state any naturality (for morphisms of $\Gamma$-sets) of this equivalence.} 
\end{rem}
 
The following definition and proposition depend on definitions and results of the following sections. They are not needed later and can safely be skipped on first reading.

We can apply \cref{flkewfjwefjeoiwef234} in order to study the  equivariant homology theory $H\Z^{\Gamma} $ on $\Gamma$-topological spaces  induced by the equivariant coarse homology $H\cX^{\Gamma}$ and the {twist} $\Yo^{s}(\Gamma_{can,min})$; see \cref{ioefuwo345345345} and Equation \eqref{fwefgzu2382t78t476t476t746t764t76t} for the notation. 

\begin{ddd}
We define the equivariant ordinary homology theory by
\[H\Z^{\Gamma}:=H\cX^{\Gamma}_{\Yo^s(\Gamma_{can,min})}\cO^{\infty}_{{\homolg}} {\colon \Gamma\Top\to\Sp}\ .\qedhere\]
\end{ddd}

By the results from \cref{wfifjweoifjwoifewfwfewf} the associated equivariant homology theory is determined {on $\Gamma$-CW complexes} by the restriction of the functor $H\Z^{\Gamma}$ to 
  transitive $\Gamma$-sets. The following result describes this functor explicitly.

We consider the following two functors
$\Orb(\Gamma)\to \Sp$ given by
\[S\mapsto  H\Z^{\Gamma}(S)\ , \quad  S\mapsto H(\Gamma,\Z[S])\ .\]

\begin{prop}
For every  transitive $\Gamma$-set $S$ 
we have a natural equivalence   \[H\Z^{\Gamma}(S)\simeq \Sigma H(\Gamma,\Z[S])\]
of spectra.
\end{prop}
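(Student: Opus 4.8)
The plan is to unwind the definition of $H\Z^{\Gamma}$ and to reduce the statement to \cref{flkewfjwefjeoiwef234}.

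First recall the ingredients. By \cref{oifjweoifewfefwefwefewf} and \cref{lkjiooiwoigewgewgwegfw123}, the equivariant coarse homology theory $H\cX^{\Gamma}$ extends essentially uniquely to a colimit-preserving functor $\overline{H\cX^{\Gamma}}\colon\Gamma\Sp\cX\to\Sp$ with $\overline{H\cX^{\Gamma}}\circ\Yo^{s}\simeq H\cX^{\Gamma}$. By the definition of the twist (see \cref{ioefuwo345345345}), the functor $H\cX^{\Gamma}_{\Yo^{s}(\Gamma_{can,min})}$ is $M\mapsto\overline{H\cX^{\Gamma}}\big(M\otimes\Yo^{s}(\Gamma_{can,min})\big)$, for the symmetric monoidal structure of \cref{lemjine2}, which commutes with colimits in each variable. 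Since $\Yo^{s}$ is symmetric monoidal, for a $\Gamma$-bornological coarse space $Q$ we have $\overline{H\cX^{\Gamma}}\big(\Yo^{s}(Q)\otimes\Yo^{s}(\Gamma_{can,min})\big)\simeq H\cX^{\Gamma}(Q\otimes\Gamma_{can,min})$, and both sides preserve colimits in $Q$. Hence
\[H\Z^{\Gamma}(S)\simeq\overline{H\cX^{\Gamma}}\big(\cO^{\infty}_{\homolg}(S)\otimes\Yo^{s}(\Gamma_{can,min})\big)\ .\]

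Next I would compute $\cO^{\infty}_{\homolg}(S)$ for a transitive $\Gamma$-set $S$, using that $S$ with the discrete topology is a $0$-dimensional $\Gamma$-CW complex. The key input is a computation carried out in \cref{wfifjweoifjwoifewfwfewf}: combining the behaviour of the cone functor $\cO$ and its germ $\cO^{\infty}$ on discrete $\Gamma$-sets — in particular the cofibre sequence relating $\Yo^{s}\circ\cO$, the big family of bounded sub-cylinders of the cone, and $\cO^{\infty}$, which produces a suspension already in the case of a point, where $\cO^{\infty}_{\homolg}(\ast)$ is a suspension of $\Yo^{s}(\ast)$ — with the observation that, after the twist by $\Gamma_{can,min}$ (on which $\Gamma$ acts freely and properly), the bornology seen on the $S$-factor is forced to be the maximal one, one obtains a natural equivalence
\[\cO^{\infty}_{\homolg}(S)\otimes\Yo^{s}(\Gamma_{can,min})\simeq\Sigma\,\Yo^{s}(\Gamma_{can,min}\otimes S_{min,max})\ ,\]
functorially in $S\in\Orb(\Gamma)$. (For $S=\{\ast\}$ this specialises to $\cO^{\infty}_{\homolg}(\ast)\otimes\Yo^{s}(\Gamma_{can,min})\simeq\Sigma\Yo^{s}(\Gamma_{can,min})$.)

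Applying $\overline{H\cX^{\Gamma}}$ to this equivalence and using the first paragraph yields
\[H\Z^{\Gamma}(S)\simeq\Sigma\,H\cX^{\Gamma}(\Gamma_{can,min}\otimes S_{min,max})\ ,\]
and \cref{flkewfjwefjeoiwef234} identifies the right-hand side naturally with $\Sigma H(\Gamma,\Z[S])$. Since every step is natural in $S\in\Orb(\Gamma)$, this gives the asserted natural equivalence; the only point requiring genuine care is the naturality of the equivalence from the second paragraph, which I would build into its construction rather than check separately.

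The main obstacle is the second paragraph: one has to pin down precisely which bornological coarse structure on $S$ survives in the cone-at-infinity computation and verify that, after the $\Gamma_{can,min}$-twist, it coincides with that of $S_{min,max}$ — this is exactly where properness and freeness of the $\Gamma$-action on $\Gamma_{can,min}$ enter, and where the bulk of the work of \cref{wfifjweoifjwoifewfwfewf} is used. Granting that identification, the remainder is formal manipulation of the monoidal adjunctions together with \cref{flkewfjwefjeoiwef234}.
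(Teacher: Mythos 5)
Your overall route matches the paper's: unwind the definition of $H\Z^{\Gamma}$, compute $\cO^{\infty}_{\homolg}$ on a transitive $\Gamma$-set, use the symmetric monoidal structure to move the twist inside, and finish with Proposition~\ref{flkewfjwefjeoiwef234}. The intermediate equivalence you state, namely $\cO^{\infty}_{\homolg}(S)\otimes\Yo^{s}(\Gamma_{can,min})\simeq\Sigma\Yo^{s}(\Gamma_{can,min}\otimes S_{min,max})$, is also correct.

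However, your explanation of \emph{why} this equivalence holds is off in two respects, and this is exactly the step you flag as the ``main obstacle''. First, the twist by $\Gamma_{can,min}$ plays no role in producing the $S_{min,max}$ term: the paper already has, unconditionally, $\cO^{\infty}_{\homolg}(S)\simeq\Sigma\Yo^{s}(S_{min,max})$. This follows from the pointwise Kan-extension formula (since a transitive $\Gamma$-set is $\Gamma$-compact and $\Gamma$-metrizable, $\cO^{\infty}_{\homolg}(S)\simeq\cO^{\infty}(\cM(\cU(S)))=\cO^{\infty}(S_{disc,max,max})$) together with Proposition~\ref{oiefjewoifo23r4243455435345}: for a $\Gamma$-uniform bornological coarse space $X$ that is discrete as a uniform space, $\cO^{\infty}(X)\simeq\Sigma\Yo^{s}(F_{\cT}(X_{disc}))$, where $X_{disc}$ has the discrete coarse structure. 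The maximal bornology on $S$ comes purely from the definition of $\cM$ (which always assigns the maximal bornology and coarse structure), and the minimal coarse structure comes from the passage to $X_{disc}$ in that proposition. Neither freeness nor properness of the $\Gamma$-action on $\Gamma_{can,min}$ enters this step; you have conflated it with a \emph{different} use of freeness, namely the local-finiteness check inside the proof of Proposition~\ref{flkewfjwefjeoiwef234}. Second, the reference to \cref{wfifjweoifjwoifewfwfewf} for the ``bulk of the work'' is misdirected: that section only supplies the Kan-extension framework, not the actual cone-at-infinity calculation, which lives in Proposition~\ref{oiefjewoifo23r4243455435345} (and ultimately rests on flasqueness of $\cO(Y_{disc})$ from \cref{ropjreopg0iu09t34t34t34t3}). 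Once you invoke Proposition~\ref{oiefjewoifo23r4243455435345} correctly, the twist can be applied afterwards by mere monoidality of $\Yo^{s}$, with no bornological analysis required.
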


\begin{proof}
 By definition we have a natural equivalence 
\begin{align*}
H\Z^{\Gamma}(S) & \simeq H\cX^{\Gamma}(\cO^{\infty}(\cM(\cU(S)))\otimes \Yo^{s}(\Gamma_{can,min}))\\
& \simeq H\cX^{\Gamma}(\cO^{\infty}(S_{disc,max,max})\otimes \Yo^{s}(\Gamma_{can,min}))\ .
\end{align*}
 {By \cref{oiefjewoifo23r4243455435345}, $\cO^\infty(S_{disc,max,max})\simeq \Sigma\Yo^s(S_{min,max})$. This gives} the equivalence
\[ H\Z^{\Gamma}(S)\simeq \Sigma H\cX^{\Gamma}(S_{min,max}\otimes \Gamma_{can,min})\ .\]
By \cref{flkewfjwefjeoiwef234} we have a natural equivalence 
$H\cX^{\Gamma}(S_{min,max}\otimes \Gamma_{can,min})\simeq H(\Gamma,\Z[S])$ which finishes this proof. 
\end{proof}

\subsection{{Additional properties}}\label{seci232323}
 
Recall \cref{fweoiuiowefwefeve} of continuity of an equivariant coarse homology theory.

\begin{lem}
The equivariant coarse homology  theory $H\cX^{\Gamma}$ is continuous.
\end{lem}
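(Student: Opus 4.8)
The plan is to reduce the assertion to an elementary statement about the chain-level functor $C\cX^{\Gamma}$. Since $H\cX^{\Gamma}=\cE\cM\circ C\cX^{\Gamma}$ and $\cE\cM$ commutes with filtered colimits (the chain complex $\Z[0]$ is compact in $\Ch[W^{-1}]$; equivalently, homology commutes with filtered colimits of chain complexes), for a trapping exhaustion $\cY=(Y_{i})_{i\in I}$ of a $\Gamma$-bornological coarse space $X$ we have
\[
H\cX^{\Gamma}(\cY)=\colim_{i\in I}\cE\cM\big(C\cX^{\Gamma}(Y_{i})\big)\simeq \cE\cM\Big(\colim_{i\in I}C\cX^{\Gamma}(Y_{i})\Big),
\]
so it suffices to show that the canonical map $\colim_{i\in I}C\cX^{\Gamma}(Y_{i})\to C\cX^{\Gamma}(X)$ is an isomorphism of chain complexes. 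Each $Y_{i}$ carries the bornological coarse structure induced from $X$ (\cref{exjk22332}), so $C\cX^{\Gamma}_{n}(Y_{i})\to C\cX^{\Gamma}_{n}(X)$ is just the inclusion of the subgroup of those invariant, controlled, locally finite $n$-chains whose support lies in $Y_{i}^{n+1}$. In particular all transition maps and all maps into $C\cX^{\Gamma}(X)$ are injective, the colimit is identified with the union of these subgroups, and injectivity is automatic; only surjectivity requires an argument.

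For surjectivity, fix $n$ and let $c\in C\cX^{\Gamma}_{n}(X)$. Let $F\subseteq X$ be the set of all points of $X$ occurring as a coordinate of some point of $\supp(c)$, that is, $F=\bigcup_{j=0}^{n}\mathrm{pr}_{j}(\supp(c))$ with $\mathrm{pr}_{j}\colon X^{n+1}\to X$ the $j$-th projection. I claim that $F$ is a $\Gamma$-invariant, locally finite subset of $X$. Invariance is clear because $\supp(c)$ is invariant under the diagonal $\Gamma$-action, as $c$ is $\Gamma$-invariant. For local finiteness, let $B$ be a bounded subset of $X$; every point of $F\cap B$ is a coordinate of a point of $\supp(c)$ which meets $B$, and by local finiteness of $c$ there are only finitely many such points of $\supp(c)$, each with $n+1$ coordinates, so $F\cap B$ is finite.

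Since $\cY$ is a trapping exhaustion, there exists $i\in I$ with $F\subseteq Y_{i}$, hence $\supp(c)\subseteq F^{n+1}\subseteq Y_{i}^{n+1}$. Regarding $c$ as a chain on $Y_{i}$ with the induced structures, it is still $\Gamma$-invariant, it is controlled by $U\cap(Y_{i}\times Y_{i})$ for any entourage $U$ of $X$ controlling $c$, and it is locally finite since bounded subsets of $Y_{i}$ have the form $Y_{i}\cap B$ with $B$ bounded in $X$. Therefore $c$ lies in the image of $C\cX^{\Gamma}_{n}(Y_{i})\to C\cX^{\Gamma}_{n}(X)$. This proves surjectivity, establishes the isomorphism $\colim_{i\in I}C\cX^{\Gamma}(Y_{i})\cong C\cX^{\Gamma}(X)$, and hence the lemma. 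The only mildly delicate point is the local-finiteness verification for $F$; everything else is formal bookkeeping with the induced structures.
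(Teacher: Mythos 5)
Your proof is correct and follows essentially the same route as the paper's: reduce to the chain level using that the Eilenberg--MacLane correspondence preserves filtered colimits, and for a given invariant controlled locally finite chain $c$, take $F=\bigcup_{j=0}^{n}p_{j}(\supp(c))$, check it is invariant and locally finite, and trap it in some $Y_{i}$. The only difference is that you spell out the local-finiteness verification and the surjectivity/injectivity bookkeeping a bit more explicitly than the paper, which leaves these as observations.
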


\begin{proof}
Let $X$ be a $\Gamma$-bornological coarse space and $\cY:=(Y_{i})_{i\in I}$ be a trapping exhaustion. 
If $c$ is a chain in $C\cX^{\Gamma}_{n}(X)$, then
$\supp(c)$ is a $\Gamma$-invariant subset of $X^{n+1}$ which meets every bounded subset of $X$ in a finite set.
For $i$ in $\{0,\dots,n\}$ let $p_{i}\colon X^{n+1}\to X$ be the projection.
Then we consider the $\Gamma$-invariant subset \[F:=\bigcup_{i=0}^{n} p_{i}(\supp(c))\ .\]
Note that $c$ belongs to the image of the map
$C\cX_{n}^{\Gamma}(F_{X})\to C\cX_{n}^{\Gamma}(X)$
induced by the inclusion of $F$ into $X$.

Observe that $F$ is locally finite.
Hence there exists an index $i$  in $I$ such that $F\subseteq Y_{i}$. We conclude that \[C\cX_{n}^{\Gamma}(X)\cong \colim_{i\in I} C\cX_{n}^{\Gamma}(Y_{i})\ .\] 

The argument above implies that 
 we have an isomorphism of chain complexes
\[C\cX^{\Gamma}(X)\cong \colim_{i\in I} C\cX^{\Gamma}(Y_{i})\ .\] Since the Eilenberg--MacLane correspondence \eqref{fvihgoirvebvevvreveve} preserves filtered colimits we conclude that
\[H\cX^{\Gamma}(X)\simeq \colim_{i\in I} H\cX^{\Gamma}(Y_{i})\]
which was to be shown.
\end{proof}

Recall \cref{foijofifwefwefewfw} of strongness of an equivariant coarse homology theory.
\begin{lem}
The equivariant coarse homology  theory
$H\cX^{\Gamma}$ is strong.
\end{lem}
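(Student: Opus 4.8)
The plan is to run the Eilenberg swindle directly on the chain complex $C\cX^{\Gamma}(X)$ — exactly as in the proof that coarse ordinary homology vanishes on flasque spaces — and then feed in Condition~(1) of weak flasqueness via the universal property of $\Gamma\Sp\cX$. So fix a weakly flasque $\Gamma$-bornological coarse space $X$ with weak flasqueness implemented by $f\colon X\to X$ (\cref{iogegergeger}), and write $F:=C\cX^{\Gamma}(f)$ for the induced chain endomorphism of $C\cX^{\Gamma}(X)$.

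First I would check that the infinite sum $S:=\sum_{n\in \nat}F^{n}$ is a well-defined endomorphism of the chain complex $C\cX^{\Gamma}(X)$. If $c\in C\cX^{\Gamma}(X)$ is $U$-controlled, then $F^{n}(c)=C\cX^{\Gamma}(f^{n})(c)$ is $(f^{n}\times f^{n})(U)$-controlled, so $S(c)$ is controlled by $\bigcup_{n\in \nat}(f^{n}\times f^{n})(U)$, which is an entourage of $X$ by Condition~(2) of weak flasqueness. For local finiteness and for pointwise finiteness of the sum I would use Condition~(3): given a bounded subset $B$ of $X$ there is $N$ with $\Gamma B\cap f^{N}(X)=\emptyset$, hence $B\cap f^{n}(X)=\emptyset$ for all $n\ge N$ since $f^{n}(X)\subseteq f^{N}(X)$; consequently $\supp(F^{n}(c))$ is disjoint from $B$ for $n\ge N$, so only $F^{0}(c),\dots,F^{N-1}(c)$ contribute points of $\supp(S(c))$ meeting $B$, and each of these chains is locally finite. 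As each $F^{n}(c)$ is $\Gamma$-invariant, so is $S(c)$, and since the sum is pointwise finite it commutes with the (finite-sum) boundary operator, so $S$ is a chain map. Computing coefficient-wise one then obtains the swindle identity $F\circ S=\sum_{n\ge 1}F^{n}=S-\id$, i.e.\
\[S=\id_{C\cX^{\Gamma}(X)}+F\circ S\]
as endomorphisms of $C\cX^{\Gamma}(X)$. Passing to homology groups $H_{*}(C\cX^{\Gamma}(X))=\pi_{*}H\cX^{\Gamma}(X)$, on which $F$ acts as $H\cX^{\Gamma}(f)$, this reads $S_{*}=\id+H\cX^{\Gamma}(f)_{*}\circ S_{*}$ on every $H_{k}$.

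The crucial point — the only genuinely new ingredient compared with the non-equivariant flasque case of \cite{buen} — is that $H\cX^{\Gamma}(f)\simeq\id_{H\cX^{\Gamma}(X)}$, which is where Condition~(1) of weak flasqueness enters. Indeed, $H\cX^{\Gamma}$ is an equivariant coarse homology theory (\cref{oifjweoifewfefwefwefewf}), so by \cref{lkjiooiwoigewgewgwegfw123} it factors as $H\cX^{\Gamma}\simeq\overline{H\cX^{\Gamma}}\circ\Yo^{s}$ for a colimit-preserving functor $\overline{H\cX^{\Gamma}}\colon\Gamma\Sp\cX\to\Sp$; applying $\overline{H\cX^{\Gamma}}$ to the equivalence $\Yo^{s}(f)\simeq\id_{\Yo^{s}(X)}$ of Condition~(1) yields $H\cX^{\Gamma}(f)\simeq\id_{H\cX^{\Gamma}(X)}$, hence $H\cX^{\Gamma}(f)_{*}=\id$ on $H_{*}(C\cX^{\Gamma}(X))$. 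Substituting this into $S_{*}=\id+H\cX^{\Gamma}(f)_{*}\circ S_{*}$ gives $\id=0$ on $H_{k}(C\cX^{\Gamma}(X))$ for every $k$, so $C\cX^{\Gamma}(X)$ is acyclic and $H\cX^{\Gamma}(X)=\cE\cM(C\cX^{\Gamma}(X))\simeq 0$. I do not expect a genuine obstacle here: the only care needed is the bookkeeping that makes $S$ well defined (Conditions~(2) and (3)) and the observation that $\Yo^{s}(f)\simeq\id$ propagates to $H\cX^{\Gamma}(f)\simeq\id$, which is immediate from the universal property of $\Gamma\Sp\cX$.
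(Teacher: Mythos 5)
Your proof is correct and follows essentially the same route as the paper: run the Eilenberg swindle $S=\sum_{n}C\cX^{\Gamma}(f^{n})$ on the chain level (using Conditions~(2) and~(3) of weak flasqueness for well-definedness), obtain $S=\id+C\cX^{\Gamma}(f)\circ S$, and then use that $H\cX^{\Gamma}$ factors through $\Gamma\Sp\cX$ together with Condition~(1) to get $H\cX^{\Gamma}(f)\simeq\id$ and hence $H\cX^{\Gamma}(X)\simeq 0$. You merely spell out two points the paper delegates to references — the verification that $S$ is well defined (the paper cites \cite[Prop.~6.18]{buen}) and the reason $H\cX^{\Gamma}(f)\simeq\id$ (the paper just invokes that $H\cX^{\Gamma}$ is a coarse homology theory, which is precisely the factorization through $\Yo^{s}$ you make explicit).
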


\begin{proof}
We  can essentially repeat the proof of \cite[Prop.~6.18]{buen}.

Let $f\colon X\to X$ implement weak flasqueness of a $\Gamma$-bornological coarse space $X$.
Then   we can define the chain map  \[S:=\sum_{n=0}^{\infty} C\cX(f^{n})\colon C\cX(X)\to C\cX(X)\ .\] We refer to  \cite[Prop.~6.18]{buen} for the verification that this map is well-defined.
 We then have the identity of endomorphisms of $C\cX^{\Gamma}(X)$
\[\id_{C\cX(X)}+C\cX(f)\circ S=S\ .\]
Applying the Eilenberg--MacLane correspondence $\cE\cM$ this gives
\[\id_{H\cX^{\Gamma}(X)}+H\cX^{\Gamma}(f)\circ \cE\cM(S)=\cE\cM(S)\ .\]
Since we already know that $H\cX^{\Gamma}$ is a coarse homology theory we have the equivalence  $H\cX^{\Gamma}(f) \simeq \id_{H\cX^{\Gamma}(X)}$. Hence we get
$\id_{H\cX^{\Gamma}(X)}+\cE\cM(S)\simeq \cE\cM(S)$, and this implies that we must have $H\cX^{\Gamma}(X)\simeq 0$.
\end{proof}

Recall the definition of the free union of a family of $\Gamma$-bornological coarse spaces which was given in \cref{efwifuhwfowefewfewfwef} and the \cref{rgfou894ut984t3kfnrekjf} of the notion of strong additivity for an equivariant coarse homology theory.
 
\begin{lem}\label{roijwoiere54}
The equivariant coarse homology  theory $H\cX^{\Gamma}$ is strongly additive.
\end{lem}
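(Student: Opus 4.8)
The plan is to reduce the assertion to an isomorphism of chain complexes and then apply the Eilenberg--MacLane correspondence \eqref{fvihgoirvebvevvreveve}.

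First I would analyse the structure of the free union $\bigsqcup_{i\in I}^{\free}X_{i}$ from \cref{efwifuhwfowefewfewfwef}. Its coarse structure consists exactly of the subsets of the entourages $\bigsqcup_{i\in I}U_{i}$ for families $(U_{i})_{i\in I}$ with $U_{i}\in\cC_{X_{i}}$ (this family of generators is already closed under inverses, finite unions and compositions, and contains the diagonal); in particular every entourage is contained in $\bigsqcup_{i\in I}(X_{i}\times X_{i})$. Hence a controlled point of $(\bigsqcup_{i}X_{i})^{n+1}$ has all its entries in a single component, so the support of a controlled $n$-chain $c$ lies in $\bigsqcup_{i\in I}X_{i}^{n+1}$ and $c=\sum_{i\in I}c_{i}$ with $c_{i}:=c|_{X_{i}^{n+1}}$. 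I would then check that this decomposition yields, for every $n$, an isomorphism of abelian groups
\[
C\cX^{\Gamma}_{n}\Big(\bigsqcup_{i\in I}^{\free}X_{i}\Big)\;\cong\;\prod_{i\in I}C\cX^{\Gamma}_{n}(X_{i})\ .
\]
The point is that all three conditions defining a chain in $C\cX^{\Gamma}_{n}$ are componentwise: $\Gamma$-invariance because $\Gamma$ acts on the $X_{i}$ separately; controlledness by the description of the coarse structure above; and local finiteness because, by the description of the bornology of the free union, every bounded subset of $\bigsqcup_{i\in I}^{\free}X_{i}$ is contained in a finite union of bounded subsets of the $X_{i}$ and so meets only finitely many components. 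One really obtains the product and not the direct sum, since an arbitrary family $(c_{i})_{i\in I}$ with $c_{i}$ controlled by $U_{i}$ gives a chain controlled by $\bigsqcup_{i}U_{i}$. As the boundary operator only omits coordinates it preserves components, so this is an isomorphism of chain complexes $C\cX^{\Gamma}(\bigsqcup_{i\in I}^{\free}X_{i})\cong\prod_{i\in I}C\cX^{\Gamma}(X_{i})$, the right-hand side being the levelwise product.

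Applying $\cE\cM$ then gives an equivalence $H\cX^{\Gamma}(\bigsqcup_{i\in I}^{\free}X_{i})\xrightarrow{\simeq}\prod_{i\in I}H\cX^{\Gamma}(X_{i})$, because $\cE\cM$ sends levelwise products of chain complexes to products of spectra: the levelwise product computes the product in $\Ch[W^{-1}]$ (homology commutes with products), and the mapping-spectrum functor $\map(\Z[0],-)$ preserves limits as a right adjoint. It remains to identify this equivalence with the comparison map \eqref{iojoiergegerg}. Under the chain-level isomorphism the inclusion $X_{i}\to\bigsqcup_{j\in I}^{\free}X_{j}$ corresponds to the inclusion of the $i$-th factor and the excision projection onto $X_{i}$ (coming from the coarsely disjoint pair $(X_{i},\bigsqcup_{j\neq i}^{\free}X_{j})$) corresponds to the $i$-th coordinate projection, so before applying $\cE\cM$ the combined map is the identity of $\prod_{i\in I}C\cX^{\Gamma}(X_{i})$; hence \eqref{iojoiergegerg} is an equivalence. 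I expect the only real work to be this bookkeeping --- in particular making sure that both ``controlled'' and ``locally finite'' localise to the components so that the full product appears, and that the resulting equivalence is \eqref{iojoiergegerg} rather than some other automorphism of the product.
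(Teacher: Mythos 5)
Your proof is correct and follows the same route as the paper's: exhibit a chain-level isomorphism $C\cX^{\Gamma}\big(\bigsqcup_{i\in I}^{\free}X_{i}\big)\cong\prod_{i\in I}C\cX^{\Gamma}(X_{i})$ and then use that the Eilenberg--MacLane functor preserves products. The paper's proof is a one-liner (``By inspection of the definitions''), and your version simply spells out that inspection --- including the cofinality of entourages of the form $\bigsqcup_{i}U_{i}$, the componentwise nature of the three defining conditions, and the identification of the resulting equivalence with the comparison map \eqref{iojoiergegerg} --- all of which is accurate.
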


\begin{proof}
Let $(X_{i})_{i\in I}$ be a family of $\Gamma$-bornological coarse spaces. 
By inspection of the definitions,
\[C\cX^{\Gamma} \Big(  \bigsqcup_{i\in I}^{\free} X_{i} \Big) \cong \prod_{i\in I} C\cX^{\Gamma}(X_{i})\ .\]
We then use that the Eilenberg--MacLane correspondence \eqref{fvihgoirvebvevvreveve} preserves products.
\end{proof}

\subsection{Change of groups}

In this section we provide natural transformations which relate the equivariant coarse homology theory with the change of group functors considered in \cref{roigurgoregregreg}.
 
Let $\iota\colon H\to \Gamma$ be a homomorphism of groups. Let $X$ be a $\Gamma$-bornological coarse space.
 Since every $\Gamma$-invariant chain is $H$-invariant we 
 have an inclusion \[C\cX^{\Gamma}(X)\hookrightarrow C\cX^{H}(\Res^{\Gamma}_{H}X)\ .\] This gives a natural transformation between  equivariant coarse homology theories
\begin{equation}
\res^{\Gamma}_{H}\colon H\cX^{\Gamma}\to H\cX^{H}\circ \Res^{\Gamma}_{H}\ .
\end{equation}

We let the subset $X^{\prime}$ of $X$ be a set of representatives of $H$-orbits in $X$. Then we have a restriction map
\[r\colon C\cX_{n}(X)\to \Z^{X^{\prime}\times X^{n}}{\ ,\quad c \mapsto c|_{X^\prime \times X^n}}\ .\] 
We want to define a homomorphism
\begin{equation}\label{hcwekchewuchiuh}C\cX^{\Gamma}_{n}(X)\to C\cX^{W_{\Gamma}(H)}_{n}(Q_{H}(X))\end{equation} by linear extension of the  projection map $X^{\prime}\times X^{n}\to (H\backslash X)^{n+1}$ composed with the restriction $r$.
We will argue now that this is well-defined, i.e., that the sums appearing in this extension are finite. For $x$ in $X$ we denote by $[x]$    its orbit in $ H\backslash X$.
We consider a point $([x_{0}],\dots,[x_{n}])$ in  $(H\backslash X)^{n+1}$. Then $[x_{0}]\cap X^{\prime}$ consists of a unique point $x_{0}$.  

Let $c$ be in $C\cX_{n}(X)$ and assume that $c$ is $U$-controlled for  some entourage $U$ of $X$. Then we have \[\supp(c)\cap \{x_{0}\}\times X^{n}\subseteq 
(U[x_{0}])^{n+1}\ .\] Since  $U[x_{0}]$ is bounded the number of points of $\supp(c)$ which meet
$U[x_{0}]$ is finite. 

One checks that the  homomorphism \eqref{hcwekchewuchiuh} does not depend on the choice of the set of representatives $X^{\prime}
$ and has values in $W_{\Gamma}(H)$-invariant chains.
 Furthermore, it is compatible
with the differential and takes values in controlled and locally finite chains. 

We therefore get a natural transformation
\begin{equation}
q_{H}(\iota)\colon H\cX^{\Gamma}\to H\cX^{W_{\Gamma}(H)}\circ Q_{H}(\iota)\ .
\end{equation}

 Recall from \cref{wefiwuofwefwegwgewrgwe}   that $P_{\hat \Gamma}(X)= \Gamma_{min,min}\otimes X$ with the action of $\Gamma\times H$ given  by
 $(\gamma,h)(\gamma^{\prime},x)=(\gamma \gamma^{\prime}\iota(h)^{-1},hx)$.
We define a morphism  of chain complexes
\[ C\cX_{n}^{H}\to C\cX_{n}^{\Gamma\times H}(P_{\hat \Gamma}(X)) \]
by linear extension of the map
\[(x_{0},\dots,x_{n})\mapsto \sum_{\gamma^{\prime}\in \Gamma} ((\gamma^{\prime},x_{0}),(\gamma^{\prime},x_{1}),\dots, (\gamma^{\prime},x_{n}))\ .\]
In this way we get a transformation
\begin{equation}
\hat p_{\Gamma}\colon H\cX^{H}\to H\cX^{\Gamma\times H}\circ \hat P_{\Gamma}\ .
\end{equation}

We finally get a natural transformation \begin{equation}\label{regeg43t34434} \indd_{H}^{\Gamma}\simeq q_{H}(\kappa)\circ \hat p_{\Gamma}\colon H\cX^{H}\to H\cX^{\Gamma}\circ \Ind_{H}^{\Gamma}\ ,\end{equation}
 
where $\kappa\colon H\to \Gamma\times H$ is the inclusion of the second factor.

\begin{prop}If $\iota\colon H\to \Gamma$ is injective, then the transformation \eqref{regeg43t34434}
 is an equivalence of $H$-equivariant coarse homology theories.
\end{prop}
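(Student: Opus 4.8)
The plan is to show that the chain-level comparison underlying $\indd_{H}^{\Gamma}$ is already an isomorphism. Concretely, $\indd_{H}^{\Gamma}=\cE\cM(\phi)$ for the natural transformation
\[ \phi_{X}\colon C\cX^{H}(X)\longrightarrow C\cX^{\Gamma}(\Ind_{H}^{\Gamma}(X)) \]
obtained by composing the chain maps that define $\hat p_{\Gamma}$ and $q_{H}(\kappa)$, and $\cE\cM$ sends isomorphisms of chain complexes to equivalences of spectra; so it is enough to prove that $\phi_{X}$ is an isomorphism of chain complexes for every $H$-bornological coarse space $X$, naturally in $X$. Since $\iota$ is injective we identify $H$ with the subgroup $H':=\iota(H)$ of $\Gamma$.

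First I would make the structure of $\Ind_{H}^{\Gamma}(X)$ explicit, using \cref{rem:jtjbzuzgkvrgkieilfu}: its underlying $\Gamma$-set is $\Gamma\times_{H}X = H\backslash(\Gamma\times X)$, its coarse structure is generated by the images $E_{U}$ of $\diag_{\Gamma}\times U$ for entourages $U$ of $X$, and its bornology by the images of $\{\gamma\}\times B$ for $\gamma\in\Gamma$ and $B\in\cB_{X}$. Choosing a transversal $R\ni e$ of $H'$ in $\Gamma$, the injectivity of $\iota$ yields a decomposition into \emph{slices}
\[ \Ind_{H}^{\Gamma}(X)=\bigsqcup_{\rho\in R}X_{\rho}\ ,\qquad X_{\rho}:=\{[\rho,x]\mid x\in X\}\ , \]
where each $x\mapsto[\rho,x]$ is a bijection $X\to X_{\rho}$; moreover $\Gamma$ acts transitively on the set of slices with $\mathrm{Stab}_{\Gamma}(X_{e})=H'$, and under $X\cong X_{e}$ the element $\iota(h)$ acts as $h$. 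Because $X$ is an $H$-bornological coarse space one checks that the $E_{U}$ are closed under composition up to enlarging $U$, so they are cofinal; hence a point of $(\Ind_{H}^{\Gamma}(X))^{n+1}$ is controlled if and only if it lies in a single $X_{\rho}^{n+1}$ with $X$-coordinates spanning an entourage of $X$, and a subset of $\Ind_{H}^{\Gamma}(X)$ is bounded if and only if it lies in a finite union of subsets of slices that are bounded in the corresponding copies of $X$.

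Next I would construct the inverse of $\phi_{X}$ as restriction to the slice $X_{e}$. Using the two facts just stated (control and boundedness are detected slicewise), restriction to $X_{e}^{n+1}$ followed by $X_{e}\cong X$ sends controlled locally finite $\Gamma$-invariant chains to controlled locally finite $H$-invariant chains and commutes with the face maps, so it defines a natural chain map
\[ r_{X}\colon C\cX^{\Gamma}(\Ind_{H}^{\Gamma}(X))\longrightarrow C\cX^{H}(X)\ . \]
Unwinding the definition of $\hat p_{\Gamma}$ (which distributes $c$ over the slices of $\Gamma_{min,min}\otimes X$ having constant $\Gamma$-coordinate) and of $q_{H}(\kappa)$ (restriction to $H$-orbit representatives, then projection to $\Gamma\times_{H}X$) shows that $\phi_{X}(c)$ is exactly the unique $\Gamma$-invariant chain on $\Ind_{H}^{\Gamma}(X)$ supported on $\bigsqcup_{\rho}X_{\rho}^{n+1}$ whose restriction to $X_{e}^{n+1}$ is $c$; in particular $r_{X}\circ\phi_{X}=\id$. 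Conversely, for $d\in C\cX_{n}^{\Gamma}(\Ind_{H}^{\Gamma}(X))$ controlledness forces $\supp(d)\subseteq\bigsqcup_{\rho}X_{\rho}^{n+1}$, and since $\rho^{-1}$ carries $X_{\rho}$ bijectively onto $X_{e}$, $\Gamma$-invariance recovers $d$ from $d|_{X_{e}^{n+1}}=r_{X}(d)$; hence $\phi_{X}\circ r_{X}=\id$. Thus $\phi_{X}$ is an isomorphism of chain complexes natural in $X$, and applying $\cE\cM$ gives the asserted equivalence; in particular $H\cX^{\Gamma}\circ\Ind_{H}^{\Gamma}$ is an $H$-equivariant coarse homology theory equivalent to $H\cX^{H}$.

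The one delicate point, and the place where injectivity of $\iota$ is essential, is the claim that a $\Gamma$-invariant chain on $\Ind_{H}^{\Gamma}(X)$ is controlled (resp.\ locally finite) if and only if its restriction to the single slice $X_{e}$ is so as a chain on $X$. Controlledness is handled by the explicit generators $E_{U}$, but for local finiteness one must know that a single slice detects the bornology of $X$ itself rather than a quotient. By \cref{jkhsdf8923} one has $\Ind_{H}^{\Gamma}=\Ind_{H}^{\Gamma}\circ B_{K}$ with $K=\ker\iota$, so for non-injective $\iota$ the bornology of $\Ind_{H}^{\Gamma}(X)$ only sees the $K$-completion of $X$ and the slices are copies of $K\backslash X$; injectivity makes $K=1$, so each slice carries precisely the bornological coarse structure of $X$ and the argument goes through.
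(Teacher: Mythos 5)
Your proof is correct and takes essentially the same approach as the paper: the paper's two-sentence proof says that $x\mapsto[1,x]$ embeds $X$ as an $H$-invariant coarse component of $\Ind_H^\Gamma(X)$ and that restriction along this embedding gives a chain-level inverse to \eqref{regeg43t34434}, which is precisely your $r_X$. Your write-up fills in the slice decomposition, the verification that control and boundedness on $X_e$ match those of $X$, and the explicit reason injectivity (i.e.\ $K=1$, via \cref{jkhsdf8923}) is needed — all of which are tacit in the paper's one-liner.
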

 \begin{proof}	
	By \cref{rem:jtjbzuzgkvrgkieilfu}  the map
	$X\to  \Gamma\times_{H}X=\Ind_H^\Gamma (X)$ given by $x\mapsto [1,x]$	is an embedding of an $H$-invariant coarse component.
	Restriction along this map gives a map of chain complexes  $C\cX^\Gamma(\Ind_H^\Gamma (X)) \to C\cX^H(X)$ which induces the inverse  to \eqref{regeg43t34434}.
\end{proof}

\section{Equivariant coarse algebraic \texorpdfstring{$\boldsymbol{K}$}{K}-homology}

In this section, we define for every additive category $\bA$ with $\Gamma$-action its $\Gamma$-equivariant coarse algebraic $K$-homology
\[ K\bA\cX^\Gamma \colon \Gamma\BC \to \Sp\ .\]
The construction associates to a $\Gamma$-bornological coarse space $X$ an additive category  of equivariant $X$-controlled $\bA$-objects $\bV_{\bA}^{\Gamma}(X)$, and defines $K\bA\cX^\Gamma$ to be the (non-connective) algebraic $K$-theory spectrum of this category.

\subsection{The algebraic \texorpdfstring{$K$}{K}-theory functor}

We describe the properties   of the $K$-theory functor that we will use subsequently. See \cite[Sec.~2.1]{MR2030590} for similar statements.
Let $\Add$ denote the $1$-category of small additive categories and exact functors.
In the following, all additive categories will be small so that we can omit this adjective safely.

The $K$-theory functor is a functor 
\[ K \colon \Add \to \Sp \]
which has the following properties (we will recall the occurring notions further below):
\begin{enumerate}
\item (Normalization) \label{it:Kprop.normalization} It sends (a skeleton of) the additive category of finitely generated free modules over a ring $R$ to the non-connective $K$-theory (see e.g. \cite{MR2079996}) of that ring.
\item(Invariance) \label{it:Kprop.invariance} It sends  isomorphic exact functors to equivalent maps.
\item (Colimits) \label{it:Kprop.colimits} If $\bA = \colim_{i} \bA_{i}$ is a filtered colimit of additive subcategories, then the natural map $\colim_{i} K(\bA_{i}) \xrightarrow{\simeq} K(\bA)$ is an equivalence.
\item (Additivity) \label{it:Kprop.additivity} If $\Phi,\Psi \colon \bA \to \bA^{\prime}$ are exact functors between additive categories, then we have an equivalence $K(\Phi) + K(\Psi) \simeq K(\Phi \oplus \Psi)$ of morphisms between $K$-theory spectra.
\item(Exactness)\label{it:Kprop.exactness} If $\bA$ is a Karoubi filtration of $\bC$, then we have a fiber sequence
\[ K(\bA) \to K(\bC) \to K(\bC/\bA) \xrightarrow{\partial} K(\bA) \ .\]
\item(Products)\label{it:Kprop.products} If we have a family $(\bA_i)_{i \in I}$ of additive categories, then the natural map $K(\prod_{i \in I} \bA_i) \to \prod_{i \in I} K(\bA_i)$ is an equivalence.
\item (Flasqueness) \label{it:Kprop.flasqueness} It sends flasque additive categories to zero.
\end{enumerate}
Let us recall the definition of some notions appearing above.

A category is additive if admits a zero object and biproducts such that the operation $\Hom(A,B) \times \Hom(A,B) \to \Hom(A,B)$ which sends $f,g$ to
\[ f + g \colon A \xrightarrow{\Delta} A \oplus A \xrightarrow{f \oplus g} B \oplus B \xrightarrow{\id + \id} B \]
defines an abelian group structure on morphism sets.
In applications, it is useful to consider the equivalent characterization of additive categories as 
categories  which are enriched over abelian groups, have a zero object and admit finite coproducts {Lurie \cite[Sec.~1.1.2]{HA}.}

A morphism between additive categories is a  functor between the underlying categories which preserves the zero object and finite coproducts. Equivalently, one can require that the functor is compatible with the enrichement in abelian groups, see Mac Lane \cite[Prop.~VIII.2.4]{maclane}.

Given morphisms $\Phi,\Psi \colon \bA \to \bA^{\prime}$ between additive categories, we define a new morphism $\Phi \oplus \Psi \colon \bA \to \bA^{\prime}$ by choosing for every object $A$ of $\bA$ an object of $\bA^{\prime}$ representing the sum $\Phi(A) \oplus \Psi(A)$.
Since the sum of two functors is unique up to unique isomorphism, there is an essentially unique map $K(\Phi \oplus \Psi)$ by virtue of Property \eqref{it:Kprop.invariance}.

\begin{ddd}\label{def:flasquecat}
An additive category $\bA$ is called \emph{flasque} if there exists a functor $\Sigma \colon \bA \to \bA$ such that $\id_{\bA} \oplus \Sigma \cong \Sigma$.
\end{ddd}

Note that Property \eqref{it:Kprop.additivity} implies Property \eqref{it:Kprop.flasqueness}:
Assume that  $\bA$ is flasque and that $\Sigma \colon \bA \to \bA$ is a functor  satisfying $\id_\bA \oplus \Sigma \cong \Sigma$.  By Property \ref{it:Kprop.additivity} we then have an equivalence  $K(\Sigma) + \id_{K(\bA)} \simeq K(\Sigma)$, which implies that
$K(\bA)\simeq 0$.

Let $\bA \subseteq \bC$ be a full additive subcategory.
For $C,D$ in $\bC$, let $\Hom_\bC(C,\bA,D)$ denote the set of all morphisms in $\Hom_\bC(C,D)$ which factor through some object in $\bA$.
Then $\Hom_\bC(C,\bA,D)$ is a subgroup of $\Hom_\bC(C,D)$.
We let $\bC/\bA$ be the category with the same objects as $\bC$ and whose morphisms are given by
\[ \Hom_{\bC/\bA}(C,D) := \Hom_\bC(C,D) / \Hom_\bC(C,\bA,D)\ .\]
Note that $\bC/\bA$ is again an additive category.
It has the universal property that exact functors on $\bC/\bA$ correspond bijectively to exact functors on $\bC$ which vanish on $\bA$.

Let $\bC$ be an  additive category.
\begin{ddd}\label{def:karoubi}
 The inclusion $\bA \subseteq \bC$ of a full additive subcategory is a \emph{Karoubi filtration}
 if every diagram 
 \[ A \xrightarrow{f} C \xrightarrow{g} B \]
 in $\bC$, where $A,B$ are objects of $\bA$, admits an extension to a commutative diagram
 \[\xymatrix{
  A \ar[r]^{f}\ar[d] & C\ar[r]^{g}\ar[d]^{\cong} & B \\
  D\ar@{>->}[r]^-{inc} & D \oplus D^{\perp}\ar@{->>}[r]^-{\pr} & D\ar[u]
 }\]
 for some object $D$ of $\bA$.	
\end{ddd}

In \cite[Lem.~5.6]{KasFDC} it is shown that  \cref{def:karoubi} is equivalent to the standard definition of a Karoubi filtration as considered in \cite{MR1469141}.

An algebraic $K$-theory functor as described here can be furnished by restricting the $K$-theory functor constructed by Schlichting \cite{MR2206639} to additive categories.
For Property~\eqref{it:Kprop.normalization}, see \cite[Thms.~5 \& 8]{MR2206639}.
Property~\eqref{it:Kprop.invariance} is so elementary that it is rarely stated explicitly, but can be easily read off from the construction in \cite{MR2206639}.
Property~\eqref{it:Kprop.colimits} is a combination of \cite[Eq.~(9)]{MR0338129} and \cite[Cor.~5]{MR2206639}.
Property~\eqref{it:Kprop.additivity} follows from Property~\eqref{it:Kprop.exactness} (cf.~also \cite[Prop.~1.3.2]{MR802796}); the latter is proved in \cite[Thm.~2.10]{MR2079996}.
Property~\eqref{it:Kprop.products} is shown in \cite[Thm~1.2]{Kthprod}; for connective $K$-theory, this is originally due to Carlsson \cite{MR1351941}.

\subsection{\texorpdfstring{$X$}{X}-controlled \texorpdfstring{$A$}{A}-objects}\label{ffopjiofr23r2u23oif}

Let $X$ be a bornological coarse space with the bornology $\cB$ and the coarse structure $\cC$,
and let $\bA$ be an additive category with a (strict) $\Gamma$-action. The 
poset $\cB$   is ordered by the subset inclusion and will be regarded as a category.
 {For a functor $A\colon \cB\to\bA$ we define $\gamma A\colon \cB\to \bA$ to be the  functor sending a bounded set $B$ to $\gamma(A(\gamma^{-1}(B)))$.}

\begin{ddd}\label{def:Xcontrolledobject}
An \emph{equivariant $X$-controlled $\bA$-object} is a pair $(A,\rho)$ consisting of a functor
 $A \colon \cB \to \bA$
and a family $\rho=(\rho(\gamma))_{\gamma\in \Gamma}$ of natural isomorphisms 
 $\rho(\gamma)\colon A \to \gamma A$
 satisfying the following conditions:
 \begin{enumerate}
  \item\label{def:Xcontrolledobject:it1} $A(\emptyset) \cong 0$. 
  \item\label{def:Xcontrolledobject:it3} For all $B, B'$ in $\cB$, the commutative square
  \[\xymatrix{
   A(B \cap B')\ar[r]\ar[d] & A(B)\ar[d] \\
   A(B')\ar[r] & A(B \cup B')
  }\]
  is a push-out.
  \item\label{def:Xcontrolledobject:it4} For all $B$ in $\cB$, there exists a finite subset $F$ of $B$ such that the inclusion $F\subseteq B$ induces an isomorphism   $A(F) \xrightarrow{\cong} A(B)$.    \item \label{def:Xcontrolledobject:it5}For all pairs of elements  $\gamma,\gamma^{\prime}$ of $      \Gamma$ we have 
  $\rho( { \gamma} \gamma^{\prime})= \gamma\rho(\gamma^{\prime})\circ \rho(\gamma)$, where $\gamma\rho(\gamma^{\prime})$ is the natural transformation from $\gamma A$ to $ {\gamma \gamma^{\prime}} A$ induced from $\rho(\gamma^\prime)$.
  \qedhere
 \end{enumerate}
\end{ddd}

Let $(A,\rho)$ be an equivariant $X$-controlled $\bA$-object.

\begin{lem}\label{lem:Xcontrolled.props}
 \begin{enumerate}
  \item\label{lem:Xcontrolled.props.1}  \begin{enumerate}\item The canonical morphism
    \[ \bigoplus_{x \in F} A(\{x\}) \xrightarrow{\sum_{x \in F} A(\{x\} \subseteq F)} A(F) \]
    is an isomorphism for every finite subset $F$ of $X$. \item 
    For two finite subsets $F ,F^{\prime}$  of $X$ with $F\subseteq F'$ 
    we have a commuting square \[\xymatrix@C=7em{\bigoplus_{x \in F} A(\{x\})\ar[d]\ar[r]^-{\sum_{x \in F} A(\{x\} \subseteq F)}&A(F)\ar[d]\\ \bigoplus_{x \in F'} A(\{x\})\ar[r]^-{\sum_{x \in F^{\prime}} A(\{x\} \subseteq F^{\prime})} &A(F')}\ .\]
  \end{enumerate}
  \item\label{lem:Xcontrolled.props.2} \begin{enumerate} \item \label{efoiwfewff245} For a   bounded subset $B$ of $X$  there exists a unique minimal finite subset $F_B$ of $B$  such that  $A(F_B) \to A(B)$ is an isomorphism.
    \item If $B$ and $F_{B}$  are as in \ref{efoiwfewff245}, then for any subset $B'$ of $X$ with $F_B \subseteq B' \subseteq B$, the morphisms $A(F_B) \to A(B')$ and $A(B') \to A(B)$ are isomorphisms.  \end{enumerate}
 \end{enumerate}
\end{lem}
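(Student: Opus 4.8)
The plan is to prove the two parts of \cref{lem:Xcontrolled.props} in order, deriving each from \cref{def:Xcontrolledobject}. For part \ref{lem:Xcontrolled.props.1}(a), I would argue by induction on the cardinality of the finite set $F$. The base case $F=\emptyset$ is Condition \ref{def:Xcontrolledobject:it1}, and the case of a singleton is trivial. For the inductive step, write $F=F'\sqcup\{x\}$ with $F'$ of smaller cardinality; since $F'\cap\{x\}=\emptyset$, Condition \ref{def:Xcontrolledobject:it3} applied to the pair $(F',\{x\})$ together with $A(\emptyset)\cong 0$ (Condition \ref{def:Xcontrolledobject:it1}) gives that $A(F)\cong A(F')\oplus A(\{x\})$ via the canonical maps; combining with the inductive hypothesis $A(F')\cong\bigoplus_{y\in F'}A(\{y\})$ yields the claim. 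The commuting square in part \ref{lem:Xcontrolled.props.1}(b) is then a formal consequence: all four maps are built from the structure maps $A(S\subseteq S')$, which commute by functoriality of $A\colon\cB\to\bA$.

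For part \ref{lem:Xcontrolled.props.2}(a), existence of \emph{some} finite $F\subseteq B$ with $A(F)\xrightarrow{\cong}A(B)$ is Condition \ref{def:Xcontrolledobject:it4}. The point is to produce a \emph{unique minimal} such $F$, i.e.\ to show that if $F_1,F_2\subseteq B$ are both finite with $A(F_j)\to A(B)$ isomorphisms, then $F_1\cap F_2$ already has this property; then $F_B:=\bigcap$ over all such witnesses (a finite intersection suffices once we fix one witness $F_0$ and intersect the finitely many subsets of $F_0$ that work) is the minimal one. The key computational input is that, for finite $F_1\subseteq F_2$, the inclusion-induced map $A(F_1)\to A(F_2)$ is, under the isomorphism of part \ref{lem:Xcontrolled.props.1}(a), the canonical split inclusion $\bigoplus_{x\in F_1}A(\{x\})\hookrightarrow\bigoplus_{x\in F_2}A(\{x\})$. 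Hence $A(F_1)\to A(F_2)$ is an isomorphism precisely when $A(\{x\})\cong 0$ for all $x\in F_2\setminus F_1$. Applying this twice (to $F_1\cap F_2\subseteq F_1$ and to $F_1\cap F_2\subseteq F_2$, after checking that $A(F_j)\to A(B)$ being iso forces $A(\{x\})\cong 0$ for $x\in F_j\setminus F_B$ for the minimal candidate), one deduces that $A(F_1\cap F_2)\to A(B)$ is an isomorphism, which gives uniqueness of the minimum.

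Part \ref{lem:Xcontrolled.props.2}(b) then follows immediately: given $F_B\subseteq B'\subseteq B$ with $A(F_B)\to A(B)$ an isomorphism, the composite $A(F_B)\to A(B')\to A(B)$ is an isomorphism by functoriality, so $A(F_B)\to A(B')$ is a split monomorphism. To see it is also an epimorphism, apply Condition \ref{def:Xcontrolledobject:it4} to the bounded set $B'$ to get a finite $F'\subseteq B'$ with $A(F')\xrightarrow{\cong}A(B')$, enlarge to $F'\cup F_B$ (still finite, still inside $B'$, still mapping isomorphically to $B$ hence to $B'$ by the two-out-of-three style argument above via part \ref{lem:Xcontrolled.props.1}(a)), and use minimality of $F_B$ as a witness inside $B$ to conclude $A(\{x\})\cong 0$ for all $x\in(F'\cup F_B)\setminus F_B$; then $A(F_B)\to A(F'\cup F_B)$ is an isomorphism, whence $A(F_B)\to A(B')$ is too, and consequently $A(B')\to A(B)$ is an isomorphism as well.

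\textbf{Main obstacle.} I expect the only real subtlety to be the uniqueness-of-the-minimum argument in part \ref{lem:Xcontrolled.props.2}(a): one must be careful that "minimal finite subset inducing an isomorphism to $A(B)$" is well defined, which hinges on the observation that $A$ restricted to finite subsets of $X$ is, via part \ref{lem:Xcontrolled.props.1}, literally the functor $F\mapsto\bigoplus_{x\in F}A(\{x\})$ with the evident structure maps — so that "$A(F)\to A(B)$ is an isomorphism" translates into a purely combinatorial statement about which singletons $A(\{x\})$ vanish. Once this translation is made, everything reduces to bookkeeping with biproducts; the rest of the argument is routine diagram-chasing using functoriality of $A$ and the additive structure, with no further appeal to the coarse or bornological data beyond Conditions \ref{def:Xcontrolledobject:it1}, \ref{def:Xcontrolledobject:it3}, and \ref{def:Xcontrolledobject:it4} (the equivariance data $\rho$ plays no role in this particular lemma).
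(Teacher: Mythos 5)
Your overall plan—prove (1a) by induction on $|F|$ from Conditions \eqref{def:Xcontrolledobject:it1} and \eqref{def:Xcontrolledobject:it3}, derive (2a) by showing witnesses for Condition \eqref{def:Xcontrolledobject:it4} are closed under intersection, then get (2b) from (2a)—is the same as the paper's. The gap is in what you dismiss as "routine bookkeeping with biproducts": the check, which you flag in a parenthetical but do not carry out, that ``$A(F_j)\to A(B)$ being an isomorphism forces $A(\{x\})\cong 0$'' for the relevant points $x$.

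That check does not follow from the finite-set combinatorics you set up. If all you know is that $u\colon A(P)\oplus A(Q)\oplus A(R)\to A(B)$ (with $P=F_1\cap F_2$, $Q=F_1\setminus F_2$, $R=F_2\setminus F_1$) restricts to isomorphisms on $A(P)\oplus A(Q)$ and on $A(P)\oplus A(R)$, you can deduce only that $A(Q)$ and $A(R)$ are retracts of one another, not that they vanish; e.g.\ in abelian groups take $A(P)=A(Q)=A(R)=\Z$, $A(B)=\Z^2$, $u=\begin{pmatrix}1&0&0\\0&1&1\end{pmatrix}$. What makes the lemma true is an application of Condition \eqref{def:Xcontrolledobject:it3} to a pair of bounded sets at least one of which may be \emph{infinite}: for instance with $B_1=B\setminus\{x\}$ and $B_2=F\cup\{x\}$ one has $B_1\cap B_2=F$, $B_1\cup B_2=B$, and pushing out along the split monomorphism $A(F)\hookrightarrow A(F)\oplus A(\{x\})=A(F\cup\{x\})$ identifies $A(B)\cong A(B\setminus\{x\})\oplus A(\{x\})$ with $A(B\setminus\{x\})\to A(B)$ the first-summand inclusion. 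Since $A(F)\to A(B)$ factors through this split monomorphism as an isomorphism, the inclusion is also split epi, hence an isomorphism, hence $A(\{x\})\cong 0$. This is precisely what the paper's (terse) proof is invoking when it writes down a push-out square with $A(B)$ in the corner; your claim that ``no further appeal to the coarse or bornological data'' is needed beyond biproduct manipulation of finite sets is where the plan goes astray. The same missing input is used silently in your argument for (2b), when you assert that $A(F'\cup F_B)\to A(B)$ is an isomorphism and that $A(\{x\})\cong 0$ for $x\in(F'\cup F_B)\setminus F_B$.
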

\begin{proof}
 \cref{lem:Xcontrolled.props.1} is a direct consequence of \cref{def:Xcontrolledobject}\eqref{def:Xcontrolledobject:it1} 
 and \eqref{def:Xcontrolledobject:it3}.
 
 Suppose now that $F, F'$ are two finite subsets of $B$ as in \cref{def:Xcontrolledobject}\eqref{def:Xcontrolledobject:it4}.
   Then we obtain a push-out square
   \[\xymatrix{
    A(F \cap F')\ar[r]\ar[d] & A(F)\ar[d]^{\cong} \\
    A(F')\ar[r]^{\cong} & A(B)
   }\]
  in which all arrows are inclusions of direct summands. It follows that $A(F \cap F') \to A(B)$ is also an isomorphism. This implies the existence of $F_B$.
  
  For a bounded subset $B'$ with $F_B \subseteq B' \subseteq B$, inspection of a similar push-out square implies that $A(F_B) \to A(B')$ is an isomorphism, and the claim follows.
\end{proof}

Let $(A,\rho)$ be an equivariant $X$-controlled $\bA$-object.

\begin{ddd}
 The function $\sigma$ which sends a bounded subset $B$ of $X$ to the finite subset $F_B$ from \cref{lem:Xcontrolled.props} is called the \emph{support function} of $(A,\rho)$.
\end{ddd}

The support function is an order preserving, equivariant function from $\cB$ to the set of finite subsets of $X$ with the property that $\sigma(\sigma(B)) = \sigma(B)$ for every bounded subset $B$.

Let $(A,\rho), (A',\rho')$ be equivariant $X$-controlled $\bA$-objects and let $U$ be an invariant entourage of $X$.  
\begin{ddd}
 An \emph{equivariant $U$-controlled morphism} $f \colon (A,\rho) \to (A',\rho')$ is a natural transformation
 \[ f \colon A(-) \to A'(U[-])\ ,\]
 such that $\rho'(\gamma)\circ f=(\gamma f)\circ \rho(\gamma)$ for all elements $\gamma$ of $\Gamma$.
\end{ddd}
 We let $\Mor_U((A,\rho),(A',\rho'))$ denote the set of equivariant $U$-controlled morphisms.
Furthermore, we define the set of controlled morphisms from $A$ to $A'$ by
 \[ \Hom_{\bV^{\Gamma}_\bA(X)}((A,\rho),(A',\rho')) := \colim_{U \in  {\cC^\Gamma}} \Mor_U((A,\rho),(A',\rho'))\ .\]
 We denote the resulting category of equivariant $X$-controlled $\bA$-objects and equivariant controlled morphisms by $\bV_\bA^\Gamma(X)$.
 
We observe that the composition of a $U$-controlled and a $U^{\prime}$-controlled morphism is a $U\circ U^{\prime}$-controlled morphism. We conclude that  
composition in $\bV^\Gamma_\bA(X)$ is well-defined.   
\begin{lem}
 The category $\bV^\Gamma_\bA(X)$ is additive.
\end{lem}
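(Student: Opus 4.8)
The plan is to verify the three defining pieces of an additive category in turn: a zero object, finite biproducts, and an enrichment over abelian groups for which composition is bilinear. In each case the point is that the relevant structure is computed ``pointwise'' from the additive category $\bA$, so the required identities are inherited from $\bA$; the only extra bookkeeping comes from the support conditions of \cref{def:Xcontrolledobject} and from the passage to the colimit over invariant entourages in the definition of the morphism sets.

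For the zero object I would take the equivariant $X$-controlled $\bA$-object $0_{X}$ whose underlying functor sends every bounded set to a zero object of $\bA$ and whose structure isomorphisms $\rho(\gamma)$ are the identities; the conditions of \cref{def:Xcontrolledobject} are immediate (for \eqref{def:Xcontrolledobject:it4} take $F=\emptyset$), and since every $\Mor_{U}(0_{X},-)$ and $\Mor_{U}(-,0_{X})$ is the trivial group, $0_{X}$ is both initial and terminal. For biproducts, given $(A,\rho)$ and $(A',\rho')$ I would set $(A\oplus A')(B):=A(B)\oplus A'(B)$ with $\rho\oplus\rho'$ defined pointwise; here I use that each $\gamma\in\Gamma$ acts on $\bA$ by an exact functor, so $\gamma(A\oplus A')=\gamma A\oplus\gamma A'$ and the cocycle condition \eqref{def:Xcontrolledobject:it5} holds levelwise. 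Condition \eqref{def:Xcontrolledobject:it1} is clear, and \eqref{def:Xcontrolledobject:it3} follows because a direct sum of push-out squares in an additive category is again a push-out square. For \eqref{def:Xcontrolledobject:it4} I would take $F:=F_{B}\cup F_{B}'$, the union of the support sets of $A$ and $A'$ at $B$ furnished by \cref{lem:Xcontrolled.props}\eqref{lem:Xcontrolled.props.2}; since $F_{B}\subseteq F\subseteq B$ and $F_{B}'\subseteq F\subseteq B$, the same lemma gives that $A(F)\to A(B)$ and $A'(F)\to A'(B)$ are isomorphisms, hence so is $(A\oplus A')(F)\to(A\oplus A')(B)$. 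The inclusions and projections $A\to A\oplus A'\to A$ and $A'\to A\oplus A'\to A'$ are $\diag_{X}$-controlled morphisms (note $\diag_{X}\in\cC^{\Gamma}$), are equivariant by construction, and satisfy the biproduct identities $p_{A}\circ i_{A}=\id_{A}$, $p_{A'}\circ i_{A}=0$, and $i_{A}\circ p_{A}+i_{A'}\circ p_{A'}=\id_{A\oplus A'}$ because these hold levelwise in $\bA$; this exhibits $A\oplus A'$ as a biproduct in $\bV^{\Gamma}_{\bA}(X)$.

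For the enrichment: for each invariant entourage $U$ the set $\Mor_{U}((A,\rho),(A',\rho'))$ is the subgroup of the abelian group of natural transformations $A(-)\to A'(U[-])$ (abelian since the morphism sets of $\bA$ are abelian groups and natural transformations add pointwise) cut out by the linear-in-$f$ equation $\rho'(\gamma)\circ f=(\gamma f)\circ\rho(\gamma)$. The transition maps $\Mor_{U}\to\Mor_{U'}$ for $U\subseteq U'$ are group homomorphisms, and $\cC^{\Gamma}$ is directed (it is closed under finite unions), so $\Hom_{\bV^{\Gamma}_{\bA}(X)}((A,\rho),(A',\rho'))=\colim_{U\in\cC^{\Gamma}}\Mor_{U}((A,\rho),(A',\rho'))$ is a filtered colimit of abelian groups, hence an abelian group. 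Bilinearity of composition is checked on representatives: the composite of a $U$-controlled and a $U'$-controlled morphism is $U\circ U'$-controlled, and the identities $(f_{1}+f_{2})\circ g=f_{1}\circ g+f_{2}\circ g$ and $f\circ(g_{1}+g_{2})=f\circ g_{1}+f\circ g_{2}$ hold because composition in $\bA$ is bilinear and these relations are preserved by the colimit.

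I expect no serious obstacle; the one step that needs a genuine (if short) argument rather than a one-line appeal to $\bA$ is the support condition \eqref{def:Xcontrolledobject:it4} for the direct sum, where one uses \cref{lem:Xcontrolled.props}\eqref{lem:Xcontrolled.props.2} to produce a single finite set controlling $A$ and $A'$ simultaneously over a given bounded subset. Everything else is a routine transfer of the additive-category axioms from $\bA$ through the pointwise constructions and the filtered colimit defining the morphism groups.
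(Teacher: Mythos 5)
Your proof is correct and follows essentially the same route as the paper: form the direct sum pointwise in $\Fun(\cB,\bA)$ with $\rho\oplus\rho'$, verify the conditions of \cref{def:Xcontrolledobject}, and observe that the morphism sets (filtered colimits over $\cC^{\Gamma}$ of abelian groups of equivariant controlled natural transformations) realize this as a biproduct. You spell out more detail than the paper does — in particular your verification of condition \eqref{def:Xcontrolledobject:it4} via $F_{B}\cup F_{B}'$ and \cref{lem:Xcontrolled.props}\eqref{lem:Xcontrolled.props.2} is a cleaner version of the paper's terse remark — but the underlying argument is the same.
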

\begin{proof}
 Let $(A,\rho),(A',\rho')$ be equivariant $X$-controlled $\bA$-objects. Denote by $A \oplus A'$ their direct sum in $\Fun(\cB,\bA)$.
 Note that $(A \oplus A',\rho\oplus\rho')$ is an $X$-controlled $\bA$-object because finite unions of bounded sets are bounded.
 Since finite unions of coarse entourage are coarse entourages and there are canonical isomorphisms of $\Gamma$-sets   \[ \Nat(A \oplus A', C \circ U[-]) \cong \Nat(A, C \circ U[-]) \times \Nat(A', C \circ U[-]) \]
 and
 \[ \Nat(C, (A \oplus A') \circ U[-]) \cong \Nat(C, A \circ U[-]) \times \Nat(C, A' \circ U[-]) \]
 (we use the symbol $\Nat$ to denote the morphism sets in $\Fun(\cB,\bA)$)
 for all $U$ in  {$\cC^\Gamma$} and equivariant $X$-controlled $\bA$-objects $C$, it follows that $(A \oplus A',\rho\oplus \rho')$ is also a direct sum in $\bV^{\Gamma}_\bA(X)$.
 
Similarly, addition of morphisms in $\Fun(\cB,\bA)$ induces the addition operation of morphisms in $\bV^\Gamma_\bA(X)$.
\end{proof}

Next we discuss the functoriality of $\bV^\Gamma_\bA(X)$ in the variables $X$ and $\bA$.
For a $\Gamma$-equivariant exact functor $\Phi \colon \bA \to \bA'$ of additive categories, there exists an induced exact functor $\bV^\Gamma_\Phi(X) \colon \bV^\Gamma_\bA(X) \to \bV^\Gamma_{\bA'}(X)$ which sends an object $(A,\rho)$ to $(\Phi \circ A,\Phi(\rho))$.
Therefore, we have a functor
\[ \bV^\Gamma_{-}(X) \colon \Fun(B\Gamma,\Add) \to \Add\ .\]
Let $\phi \colon (X,\cB,\cC) \to (X',\cB',\cC')$ be a morphism of $\Gamma$-bornological coarse spaces,
and let $(A,\rho)$ be an equivariant $X$-controlled $\bA$-object.
Since $\phi$ is proper, we can define a functor $\phi_*A \colon \cB' \to \bA$ by
\[ \phi_*A(B) := A(\phi^{-1}(B))\ ,\]
and we define
\[\phi_*\rho(\gamma)(B)=\rho(\gamma)(\phi^{-1}(B))\ .\]
All properties of \cref{def:Xcontrolledobject} except \eqref{def:Xcontrolledobject:it4} are immediate.
To see that \eqref{def:Xcontrolledobject:it4} also holds, we note that $\sigma(\phi^{-1}(B)) \subseteq \phi^{-1}(\phi(\sigma(\phi^{-1}(B))) \subseteq \phi^{-1}(B)$ and apply \cref{lem:Xcontrolled.props} to see that $\phi_*A(\phi(\sigma(\phi^{-1}(B)))) \to \phi_*A(B)$ is an isomorphism.

Let $f \colon (A,\rho) \to (A',\rho')$ be an equivariant $U$-controlled morphism. Then there exists some $V$ in {$\cC^{\prime,\Gamma}$} such that $(\phi \times \phi)(U) \subseteq V$. Then $U[\phi^{-1}(B)] \subseteq \phi^{-1}(V[B])$ for all bounded subsets $B$ of $X$, so we obtain an induced $V$-controlled morphism
\[ \phi_*f = \{ f_{\phi^{-1}(B)} \colon \phi_*A(B) \to \phi_*A(V[B]) \}_{B \in \cB'}\ .\]
This defines a functor
\[ \phi_* \colon \bV^\Gamma_\bA(X) \to \bV^\Gamma_\bA(X')\ .\]
We thus have constructed a functor
\[ \bV^\Gamma_\bA \colon \Gamma\BC \to \Add\ .\]

\subsection{Coarse algebraic \texorpdfstring{$K$}{K}-homology}

Let $\Gamma$ be a group and $\bA$ be an additive category with a $\Gamma$-action.

\begin{ddd}
We define the \emph{coarse algebraic $K$-homology $K\bA\cX^\Gamma$} associated to $\bA$ as
 \[ K\bA\cX^\Gamma := K \circ \bV_\bA^\Gamma \colon \Gamma\BC \to \Sp\ .\qedhere\]
\end{ddd}

This section discusses the homological properties of $K\bA\cX^\Gamma$.
Our first goal is to prove the following theorem.

\begin{theorem}\label{thm:coarseK.homology}
 The functor $K\bA\cX^\Gamma$ is an equivariant coarse homology theory.
\end{theorem}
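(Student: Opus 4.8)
The strategy is to verify the four axioms of \cref{fewoijfewoifewofww45535345345} for $K\bA\cX^\Gamma = K\circ\bV_\bA^\Gamma$ one by one, reducing each to a property of the algebraic $K$-theory functor listed at the beginning of this section together with a structural property of the controlled category $\bV_\bA^\Gamma$. This follows the blueprint of the non-equivariant result \cite[Sec.~8]{buen}, so the main task is to check that the presence of the $\Gamma$-action introduces no obstructions.

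\emph{Coarse invariance.} I would show that for every $\Gamma$-bornological coarse space $X$ the projection $p\colon\{0,1\}_{max,max}\otimes X\to X$ induces an equivalence on $\bV_\bA^\Gamma$ up to natural isomorphism of exact functors, and then invoke Property \eqref{it:Kprop.invariance}. Concretely, $p_*$ has a section induced by either inclusion $i_0,i_1$, and $p_*i_{0,*}=\id$; in the other direction one builds an isomorphism $i_{0,*}p_*\cong\id$ of $\bV_\bA^\Gamma(\{0,1\}_{max,max}\otimes X)$ using that both objects restricted to the two points agree (an $X$-controlled object over a product with a two-point bounded discrete space splits as a pair of $X$-controlled objects), exactly as in the non-equivariant proof; the $\rho$-data are carried along compatibly because $i_0,i_1,p$ are $\Gamma$-equivariant.

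\emph{Excision and vanishing on the empty set.} We have $\bV_\bA^\Gamma(\emptyset)=0$, so $K\bA\cX^\Gamma(\emptyset)\simeq 0$ by Property \eqref{it:Kprop.flasqueness} (or normalization). For an equivariant complementary pair $(Z,\cY)$ on $X$: by $u$-continuity considerations one first reduces to $X$ with a single invariant entourage, so that $\cY$ may be replaced by a cofinal invariant $Y_i$ with $Z\cup Y_i=X$; write $Y=Y_i$. Then I would exhibit $\bV_\bA^\Gamma(Y\cap Z)\hookrightarrow\bV_\bA^\Gamma(Y)$ as a Karoubi filtration in the sense of \cref{def:karoubi} (the required extension of $A\xrightarrow{f}C\xrightarrow{g}B$ is produced by taking $D=A(\sigma(B_C))$ for a suitable bounded set supporting $C$, using \cref{lem:Xcontrolled.props}; the $\Gamma$-equivariance of the support function guarantees the factorization can be chosen $\Gamma$-invariantly), with Karoubi quotient equivalent to $\bV_\bA^\Gamma(X)/\bV_\bA^\Gamma(Z)$, and similarly $\bV_\bA^\Gamma(Z\cap\cY)\hookrightarrow\bV_\bA^\Gamma(Z)$ with the \emph{same} quotient category up to equivalence (an object supported in $Z$ away from $Z\cap\cY$ is, up to the big-family colimit, supported in $X$ away from $\cY$). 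Combining the two fiber sequences from Property \eqref{it:Kprop.exactness} and passing to the colimit over the big family (using Property \eqref{it:Kprop.colimits}) yields the pushout square; the colimit over $\cC^\Gamma$ is reinstated at the end via Property \eqref{it:Kprop.colimits} again. This is the step I expect to be the main obstacle: verifying the Karoubi filtration condition and the identification of quotient categories requires genuine care with the controlled support estimates, and one must check at each stage that the natural isomorphisms $\rho(\gamma)$ descend to the quotients (which they do, since the subgroups $\Hom_{\bC}(C,\bA,D)$ are $\Gamma$-stable because $\bA$ carries a $\Gamma$-action and the filtration subcategory is $\Gamma$-invariant).

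\emph{Flasqueness and $u$-continuity.} If $X$ is flasque with flasqueness implemented by $f\colon X\to X$, then the map $\Sigma:=\bigoplus_{n\geq 0}(f^n)_*$ is a well-defined exact endofunctor of $\bV_\bA^\Gamma(X)$ — well-definedness of the infinite direct sum uses Condition \eqref{fjweoifjewoijeoiejfiwjeiofjewfewfewf} (so that the sum stays controlled) and Condition \eqref{hckjhckhwhuiechiu} (so that the sum is still an $X$-controlled object, each bounded set being met by only finitely many summands); it satisfies $\id\oplus\Sigma\cong\Sigma$, so $\bV_\bA^\Gamma(X)$ is flasque in the sense of \cref{def:flasquecat} and $K\bA\cX^\Gamma(X)\simeq 0$ by Property \eqref{it:Kprop.flasqueness}. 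Finally, $u$-continuity is immediate from the definition of morphisms in $\bV_\bA^\Gamma(X)$ as $\colim_{U\in\cC^\Gamma}\Mor_U$ together with the observation that $\bV_\bA^\Gamma(X)=\colim_{U\in\cC^\Gamma}\bV_\bA^\Gamma(X_U)$ as a filtered colimit of additive subcategories (objects do not depend on the coarse structure, only morphisms do), so Property \eqref{it:Kprop.colimits} gives $\colim_{U\in\cC^\Gamma}K\bA\cX^\Gamma(X_U)\xrightarrow{\simeq}K\bA\cX^\Gamma(X)$.

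Throughout, I would remark that each verification is the equivariant analogue of the corresponding argument in \cite{buen}, the only new ingredient being the bookkeeping of the isomorphism families $\rho(\gamma)$, which is straightforward because all the categorical constructions involved (direct sums, quotients by $\Gamma$-stable ideals, filtered colimits, pushforward along equivariant maps) are compatible with the $\Gamma$-action by construction.
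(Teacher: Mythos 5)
Your proposal follows the paper's overall template, and the verifications of coarse invariance, flasqueness, and $u$-continuity are essentially the paper's arguments: close maps induce isomorphic pushforward functors (the paper isolates this as a lemma for $\phi,\psi$ close rather than the specific case $i_0,i_1$), the flasqueness argument with $\Sigma=\bigoplus_n(\phi^n)_*$ is identical, and the description of $\bV_\bA^\Gamma(X)$ as the filtered union of the subcategories $\bV_\bA^\Gamma(X_U)$ is exactly the paper's proof of $u$-continuity.

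The excision argument, however, has a genuine gap. You propose to ``reduce to a single invariant $Y_i$'' and then exhibit $\bV_\bA^\Gamma(Y_i\cap Z)\hookrightarrow\bV_\bA^\Gamma(Y_i)$ as a Karoubi filtration. This inclusion is not a Karoubi filtration in general. Given $A,B$ in $\bV_\bA^\Gamma(Y_i\cap Z)$ and a factorization $A\xrightarrow{f}C\xrightarrow{g}B$ through $C$ in $\bV_\bA^\Gamma(Y_i)$ with $U$-controlled morphisms, the only natural choice of splitting object is a restriction $C|_{U[Y_i\cap Z]}$, but $U[Y_i\cap Z]$ leaves $Y_i\cap Z$ and the resulting object is not supported in $Y_i\cap Z$. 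Your proposed $D=A(\sigma(B_C))$ also does not produce the required commuting square of \cref{def:karoubi}, because $D$ must be a direct summand of $C$ (not of $A$). The big family structure is precisely what fixes this: the paper works with the indexed union $\bV_\bA^\Gamma(\cY):=\bigcup_{i\in I}\bV_\bA^\Gamma(Y_i)$ as a full subcategory of $\bV_\bA^\Gamma(X)$, and in \cref{lem:bigfamily.karoubifiltration} uses the thickening property $U[Y_i]\subseteq Y_j$ to place the splitting object $C|_{Y_j}$ inside $\bV_\bA^\Gamma(\cY)$. The two Karoubi filtrations the paper uses are $\bV_\bA^\Gamma(Z\cap\cY)\subseteq\bV_\bA^\Gamma(Z)$ and $\bV_\bA^\Gamma(\cY)\subseteq\bV_\bA^\Gamma(X)$; the quotient categories are then shown to be equivalent by explicitly constructing an inverse to the induced functor $\Phi$ via restriction $(A,\rho)\mapsto(A|_Z,\rho|_Z)$ together with the observation that $\id-fp$ factors through $\bV_\bA^\Gamma(Y_i)$ for $i$ with $X\setminus Z\subseteq Y_i$. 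Also, the suggested ``reduction via $u$-continuity to a single entourage'' does not reduce the big family to a single member and should be dropped; the colimit over $I$ must be carried through the whole argument and only the $K$-theory colimit is resolved at the end via Property~\eqref{it:Kprop.colimits}.
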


We divide the proof of \cref{thm:coarseK.homology} into a sequence of lemmas.

\begin{lem}\label{lem:coarseK.ucontinuity}
 The functor $K\bA\cX^\Gamma$ is $u$-continuous.
\end{lem}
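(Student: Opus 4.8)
The statement to prove is that $K\bA\cX^\Gamma$ is $u$-continuous, i.e.\ that for every $\Gamma$-bornological coarse space $X$ with coarse structure $\cC$, the natural map
\[
\colim_{U\in\cC^\Gamma} K\bA\cX^\Gamma(X_U)\xrightarrow{\ \simeq\ } K\bA\cX^\Gamma(X)
\]
is an equivalence, where $X_U=(X,\cC\langle\{U\}\rangle,\cB)$ as in \cref{wefoiweofiewf}. The plan is to reduce this to Property~\eqref{it:Kprop.colimits} of the $K$-theory functor (commutation with filtered colimits of additive subcategories), by identifying $\bV_\bA^\Gamma(X)$ with the filtered colimit $\colim_{U\in\cC^\Gamma}\bV_\bA^\Gamma(X_U)$ as an additive category. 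Since $\cC^\Gamma$ is cofinal in $\cC$ and filtered (it is closed under finite unions of invariant entourages), this colimit is filtered, so $K(\colim_U \bV_\bA^\Gamma(X_U))\simeq\colim_U K(\bV_\bA^\Gamma(X_U))=\colim_U K\bA\cX^\Gamma(X_U)$, which would give the claim.

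First I would observe that for every $U$ in $\cC^\Gamma$ the identity on the underlying $\Gamma$-set induces an exact functor $\bV_\bA^\Gamma(X_U)\to\bV_\bA^\Gamma(X)$: an $X_U$-controlled $\bA$-object is literally an $X$-controlled $\bA$-object (the conditions in \cref{def:Xcontrolledobject} only involve the bornology $\cB$, which is the same for $X_U$ and $X$), and a $V$-controlled morphism for $V\in\cC\langle\{U\}\rangle^\Gamma$ is in particular a $V$-controlled morphism for the entourage $V$ of $X$. These functors are compatible with the maps $X_U\to X_{U'}$ for $U\subseteq U'$, so they assemble into a functor from the filtered poset $\cC^\Gamma$ to $\Add$, and hence a comparison functor $\colim_{U\in\cC^\Gamma}\bV_\bA^\Gamma(X_U)\to\bV_\bA^\Gamma(X)$. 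I would then check this is an isomorphism of additive categories on the nose. On objects it is essentially the identity: every object of $\bV_\bA^\Gamma(X)$ is an object of every $\bV_\bA^\Gamma(X_U)$ since the object-level data does not refer to the coarse structure at all. On morphisms, by definition $\Hom_{\bV_\bA^\Gamma(X)}((A,\rho),(A',\rho'))=\colim_{W\in\cC^\Gamma}\Mor_W((A,\rho),(A',\rho'))$; every invariant entourage $W$ of $X$ lies in $\cC\langle\{U\}\rangle$ for $U:=W$, and an equivariant $W$-controlled morphism for $X$ is exactly an equivariant $W$-controlled morphism for $X_W$. Tracking the colimit structure maps, one sees that
\[
\colim_{U\in\cC^\Gamma}\Hom_{\bV_\bA^\Gamma(X_U)}((A,\rho),(A',\rho'))
\;=\;\colim_{U\in\cC^\Gamma}\ \colim_{W\in(\cC\langle\{U\}\rangle)^\Gamma}\Mor_W
\;=\;\colim_{W\in\cC^\Gamma}\Mor_W\,,
\]
using that $\{\,(U,W): U\in\cC^\Gamma,\ W\in(\cC\langle\{U\}\rangle)^\Gamma\,\}$ is cofinal over $\cC^\Gamma$ via $W\mapsto(W,W)$, and that composition and the additive structure are all defined at finite levels and hence commute with the filtered colimit. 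This yields the desired isomorphism of additive categories.

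The remaining step is purely formal: apply $K$ and invoke Property~\eqref{it:Kprop.colimits} of the $K$-theory functor to the filtered colimit $\bV_\bA^\Gamma(X)\cong\colim_{U\in\cC^\Gamma}\bV_\bA^\Gamma(X_U)$, noting that the $\bV_\bA^\Gamma(X_U)$ are (up to the identification above) additive subcategories of $\bV_\bA^\Gamma(X)$ with union the whole category, to conclude $\colim_{U\in\cC^\Gamma}K\bA\cX^\Gamma(X_U)\xrightarrow{\simeq}K\bA\cX^\Gamma(X)$. I expect the only mild subtlety — the ``hard part'', though it is routine — to be bookkeeping the double colimit over pairs $(U,W)$ and verifying the cofinality claim, together with checking that the additive-category structure (biproducts, abelian group structure on Hom-sets) is genuinely preserved under the colimit, which follows because all of it is detected on finitely many entourages at a time. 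This mirrors the non-equivariant argument in \cite{buen}.
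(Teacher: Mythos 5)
Your proof is correct and takes essentially the same route as the paper: identify $\bV_\bA^\Gamma(X)$ as the filtered union of the subcategories $\bV_\bA^\Gamma(X_U)$ over $U\in\cC^\Gamma$ (the paper does this directly; your double-colimit bookkeeping is an equivalent way of phrasing the same cofinality), then invoke Property~\eqref{it:Kprop.colimits}. The one small point the paper makes explicit which you leave implicit is that each $\Phi_U\colon\bV_\bA^\Gamma(X_U)\to\bV_\bA^\Gamma(X)$ is faithful because the colimit structure maps $\Mor_W\to\Mor_{W'}$ are postcompositions with inclusions of direct summands, hence monomorphisms — this is what makes the ``union of subcategories'' formulation required by Property~\eqref{it:Kprop.colimits} legitimate.
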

\begin{proof}
 Let $X$ be a $\Gamma$-bornological coarse space, and let $U$ be an invariant entourage of $X$.
 The natural map $X_U \to X$ induces a functor $\Phi_U \colon \bV_\bA^\Gamma(X_U) \to \bV_\bA^\Gamma(X)$.
 Since the definition of equivariant $X$-controlled $\bA$-objects is independent of the coarse structure,
 $\Phi_U$ is the identity on objects.
 Additionally, since inclusions of direct summands are monomorphisms, $\Phi_U$ is faithful.
 
 This allows us to view $\bV_\bA^\Gamma(X_U)$ as a subcategory of $\bV_\bA^\Gamma(X)$, and we have
 \[ \bV_\bA^\Gamma(X) = \bigcup_{U \in \cC^\Gamma} \bV_\bA^\Gamma(X_{U}) \]
 since every morphism in $ \bV_\bA^\Gamma(X)$  is $U$-controlled for some $U$ in $\cC^\Gamma$.
 
 Since the algebraic $K$-theory functor is compatible with filtered colimits (Property~\eqref{it:Kprop.colimits}),
 the claim of the lemma follows.
\end{proof}

Let  $\phi,\psi \colon X \to X'$ be morphisms of $\Gamma$-bornological coarse spaces.
\begin{lem}\label{lem:coarseK.closemaps}
If  $\phi$ and $\psi$ are close, then $\phi_*$ and $\psi_*$ are isomorphic.
\end{lem}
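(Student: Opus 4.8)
The plan is to construct an explicit natural isomorphism between the functors $\phi_*$ and $\psi_*$ from $\bV_\bA^\Gamma(X)$ to $\bV_\bA^\Gamma(X')$. Since $\phi$ and $\psi$ are close, the set $W := \{(\phi(x),\psi(x)) \mid x \in X\} \cup \diag_{X'}$ is an entourage of $X'$, and because both $\phi$ and $\psi$ are $\Gamma$-equivariant this entourage is invariant; replacing $W$ by $W \cup W^{-1}$ we may assume it is symmetric. The idea is that for an equivariant $X$-controlled $\bA$-object $(A,\rho)$, the two objects $\phi_*(A,\rho)$ and $\psi_*(A,\rho)$ have underlying functors $\cB' \to \bA$ given by $B \mapsto A(\phi^{-1}(B))$ and $B \mapsto A(\psi^{-1}(B))$, and the key point is that both of these agree, up to canonical isomorphism, with the value of $A$ on a slightly larger set. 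Concretely, for a bounded subset $B$ of $X'$ one has $\phi^{-1}(B) \subseteq \psi^{-1}(W[B])$ and $\psi^{-1}(B) \subseteq \phi^{-1}(W[B])$, so applying \cref{lem:Xcontrolled.props} one sees that the support functions localise everything into a common finite set and the comparison maps are isomorphisms.

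More precisely, first I would fix an invariant entourage $U$ of $X$ and show that $\phi_*$ and $\psi_*$ agree on $U$-controlled morphisms up to the natural transformation being constructed; this reduces the problem to exhibiting, for each object $(A,\rho)$, an isomorphism $\tau_{(A,\rho)} \colon \phi_*(A,\rho) \to \psi_*(A,\rho)$ that is a $W$-controlled morphism and is natural. To build $\tau_{(A,\rho)}$ at a bounded set $B \subseteq X'$, I would use that the inclusions $\phi^{-1}(B) \hookrightarrow \phi^{-1}(B) \cup \psi^{-1}(B)$ and $\psi^{-1}(W[B]) \hookrightarrow \cdots$ together with \cref{def:Xcontrolledobject}\eqref{def:Xcontrolledobject:it4} and \cref{lem:Xcontrolled.props}\eqref{lem:Xcontrolled.props.2} force certain structure maps of $A$ to be isomorphisms; composing the forward inclusion-induced map with the inverse of an isomorphism gives the component $A(\phi^{-1}(B)) \to A(\psi^{-1}(W[B]))$. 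One then checks that these components assemble into a natural transformation $\phi_* A(-) \to \psi_* A(W[-])$, i.e. a $W$-controlled morphism, that it is compatible with the $\rho$-data (this is immediate from equivariance of $\phi$, $\psi$ and $W$), that it is an isomorphism in $\bV_\bA^\Gamma(X')$ (with inverse constructed symmetrically, using $\psi^{-1}(B) \subseteq \phi^{-1}(W[B])$), and that it is natural in $(A,\rho)$ and independent of choices up to the colimit defining morphism sets in $\bV_\bA^\Gamma(X')$.

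The main obstacle I expect is bookkeeping rather than conceptual: one must be careful that the comparison isomorphisms between $A$ evaluated on various subsets are the \emph{canonical} ones coming from the push-out axiom and the finiteness axiom, so that naturality in $(A,\rho)$ and well-definedness in the colimit $\colim_{U} \Mor_U$ both hold; and one must track the entourage sizes ($W$, $W^2$, etc.) so that the composite $\tau \circ \sigma^{-1}$ landing back at the identity is visibly controlled by some fixed invariant entourage. This is essentially the equivariant elaboration of the non-equivariant argument (as in \cite{MR2030590} or the corresponding statement in \cite{buen}), with the only genuinely new ingredient being the verification that all constructions respect the $\rho$-structures, which follows formally from the $\Gamma$-invariance of $W$ and the equivariance of the support functions noted after \cref{def:Xcontrolledobject}.
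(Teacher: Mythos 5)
Your proposal is correct in its core idea and essentially matches the paper's argument: choose a symmetric invariant entourage $W$ of $X'$ containing the diagonal and the graph of closeness, observe the inclusions $\phi^{-1}(B) \subseteq \psi^{-1}(W[B])$ and $\psi^{-1}(B) \subseteq \phi^{-1}(W[B])$, build a $W$-controlled transformation $f \colon \phi_*A \to \psi_*A$ and a reverse $g \colon \psi_*A \to \phi_*A$, and check that the composites are the identity \emph{in the colimit} defining morphism sets in $\bV^\Gamma_\bA(X')$.

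However, your construction of the component map is needlessly roundabout. You propose going through unions $\phi^{-1}(B) \cup \psi^{-1}(B)$, invoking \cref{def:Xcontrolledobject}\eqref{def:Xcontrolledobject:it4} and \cref{lem:Xcontrolled.props}\eqref{lem:Xcontrolled.props.2} to identify support sets, and then composing a forward map with the \emph{inverse} of an isomorphism. None of that is necessary: since $A$ is a functor on the poset $\cB'$ (ordered by inclusion), the single set inclusion $\phi^{-1}(B) \subseteq \psi^{-1}(W[B])$ directly yields a morphism $A(\phi^{-1}(B)) \to A(\psi^{-1}(W[B]))$ in $\bA$, and these components visibly assemble into a natural $W$-controlled morphism. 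You never have to invert anything or localize via support functions. The same remark applies to $g$, and then $g \circ f$ is just $A$ applied to the inclusions $\phi^{-1}(B) \subseteq \phi^{-1}(W^2[B])$, which is the identity morphism of $\phi_*A$ in the colimit $\colim_U \Mor_U$ — exactly the paper's argument. Your handling of the $\rho$-data (following formally from equivariance of $\phi$, $\psi$ and $\Gamma$-invariance of $W$) is correct. So the approach is the right one, but drop the detour through \cref{lem:Xcontrolled.props}: the map you want already exists by functoriality on the bornology poset.
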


\begin{proof}
 Let $U^{\prime}$ be a symmetric entourage of $X^{\prime}$ containing the diagonal such that $(\phi(x),\psi(x))$ lies in $U^{\prime}$ for all $x$ in $X$.
 Note that this implies $\phi^{-1}(B^{\prime}) \subseteq \psi^{-1}(U^{\prime}[B^{\prime}])$ and $\psi^{-1}(B^{\prime}) \subseteq \phi^{-1}(U^{\prime}[B^{\prime}])$ for all bounded subsets $B^{\prime}$  of $X^{\prime}$.
 
Let $(A,\rho)$ be an equivariant $X$-controlled $\bA$-object. The maps
\[A(\phi^{-1}(B^{\prime})) \to A(\psi^{-1}(U^{\prime}[B^{\prime}]))\]
define a natural morphism $f \colon \phi_*A \to \psi_*A$, and similarly we have a natural morphism $g \colon \psi_*A \to \phi_*A$.
 Since the composition $g \circ f$ is given by the natural transformation
 \[ \{ A(\phi^{-1}(B^{\prime}) \subseteq \phi^{-1}((U^{\prime})^2[B^{\prime}])) \colon \phi_*A \to \phi_*A \circ (U^{\prime})^{2}[-] \}_{B^{\prime} \in \cB^{\prime}}\ ,\]
 we have $g \circ f = \id_{\phi_*A}$. Similarly, $f \circ g = \id_{\psi_*A}$.
 It follows that $\phi_* \cong \psi_*$.
\end{proof}

\begin{kor}\label{kor:coarseK.coarseinvariance}
 The functor $K\bA\cX^\Gamma$ is coarsely invariant.
\end{kor}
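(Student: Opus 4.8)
The plan is to show directly that the projection $p\colon \{0,1\}_{max,max}\otimes X\to X$ induces an equivalence after applying $\bV^\Gamma_\bA$ (up to natural isomorphism) and then to transport this through the $K$-theory functor; by definition this is precisely the coarse invariance condition of \cref{fewoijfewoifewofww45535345345}.

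First I would introduce the section $s\colon X\to \{0,1\}_{max,max}\otimes X$, $x\mapsto (0,x)$. Since $\{0,1\}_{max,max}$ carries the trivial $\Gamma$-action together with the maximal coarse and bornological structures, both $p$ and $s$ are easily checked to be morphisms of $\Gamma$-bornological coarse spaces. They satisfy $p\circ s=\id_X$, and $s\circ p$ is close to $\id_{\{0,1\}_{max,max}\otimes X}$: indeed the set $\{((s\circ p)(i,x),(i,x))\mid (i,x)\in \{0,1\}\times X\}$ is contained in $(\{0,1\}\times\{0,1\})\times\diag_X$, which is an entourage of $\{0,1\}_{max,max}\otimes X$ because the coarse structure on $\{0,1\}$ is maximal.

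Next I would use that $\bV^\Gamma_\bA\colon\Gamma\BC\to\Add$ is strictly functorial (recall $\phi_*A(B)=A(\phi^{-1}(B))$, so that $(\phi\circ\psi)_*=\phi_*\circ\psi_*$ and $(\id)_*=\id$). Functoriality gives $p_*\circ s_*=\id_{\bV^\Gamma_\bA(X)}$, while \cref{lem:coarseK.closemaps}, applied to the close pair $s\circ p$ and $\id$, yields a natural isomorphism $s_*\circ p_*\cong\id_{\bV^\Gamma_\bA(\{0,1\}_{max,max}\otimes X)}$. Hence $p_*$ is an equivalence of additive categories with quasi-inverse $s_*$. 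Applying $K$ and combining Property~\eqref{it:Kprop.invariance} with the functoriality of $K$, we obtain $K(p_*)\circ K(s_*)=K(\id)=\id$ and $K(s_*)\circ K(p_*)\simeq K(\id)=\id$, so that $K\bA\cX^\Gamma(p)=K(p_*)$ is an equivalence of spectra, which is what had to be shown.

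There is no serious obstacle in this argument. The only points requiring (minor) care are the verification that $s\circ p$ is close to the identity — which hinges on $\{0,1\}$ carrying the maximal coarse structure — and the bookkeeping that $\bV^\Gamma_\bA$ is strict enough as a functor for the identifications $p_*\circ s_*=\id$ and $s_*\circ p_*\cong\id$ to be taken literally.
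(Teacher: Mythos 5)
Your argument is correct and is essentially the paper's own proof, just spelled out in full: the paper simply cites \cref{lem:coarseK.closemaps} together with Property~\eqref{it:Kprop.invariance}, and your verification that $p_*$ is an equivalence of additive categories with quasi-inverse $s_*$ (using $p\circ s=\id$ and $s\circ p$ close to $\id$) is exactly the reasoning being compressed into that one-line proof.
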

\begin{proof}
 This is a direct consequence of \cref{lem:coarseK.closemaps} together with the Property~\eqref{it:Kprop.invariance} of the algebraic $K$-theory functor.
\end{proof}

\begin{lem}\label{lem:coarseK.flasqueness}
 The functor $K\bA\cX^\Gamma$ vanishes on flasque $\Gamma$-bornological coarse spaces.
\end{lem}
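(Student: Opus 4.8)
The plan is to run the Eilenberg swindle at the level of additive categories. Concretely, if $X$ is flasque with flasqueness implemented by a morphism $f\colon X\to X$, I would show that the additive category $\bV^\Gamma_\bA(X)$ is flasque in the sense of \cref{def:flasquecat}; then $K\bA\cX^\Gamma(X)=K(\bV^\Gamma_\bA(X))\simeq 0$ by Property~\eqref{it:Kprop.flasqueness} of the $K$-theory functor (equivalently, directly from Property~\eqref{it:Kprop.additivity}, as noted in the text). The candidate swindle functor is $S:=\bigoplus_{n\in\nat}f^n_{*}\colon\bV^\Gamma_\bA(X)\to\bV^\Gamma_\bA(X)$ with $f^0_{*}=\id$, where $f^n_{*}$ is the pushforward along the $n$-fold iterate $f^n$ (which is proper, being a composition of morphisms).

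The first and main task is to check that $S$ is an exact functor with values in $\bV^\Gamma_\bA(X)$; this is where the two substantial conditions on $f$ enter. Let $(A,\rho)$ be an equivariant $X$-controlled $\bA$-object. For a bounded subset $B$ of $X$, Condition~\eqref{hckjhckhwhuiechiu} of \cref{hckjhckhwhuiechiu1} (even in the weaker form of \cref{hckjhckhwhuiechiu2}) gives $B\cap f^n(X)=\emptyset$ for all sufficiently large $n$, hence $(f^n)^{-1}(B)=\emptyset$ and $f^n_{*}A(B)=A(\emptyset)\cong 0$ for $n\gg 0$; therefore $S(A)(B)=\bigoplus_{n\in\nat}A((f^n)^{-1}(B))$ is a finite biproduct and a genuine object of $\bA$. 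Equipping $\bigoplus_{n\in\nat}f^n_{*}A$ with the isomorphisms $\bigoplus_{n\in\nat}f^n_{*}\rho$, one verifies \cref{def:Xcontrolledobject}: conditions \eqref{def:Xcontrolledobject:it1} and \eqref{def:Xcontrolledobject:it3} are immediate, condition \eqref{def:Xcontrolledobject:it4} follows by taking $F:=\bigcup_{n}f^n(\sigma((f^n)^{-1}(B)))$ (a finite subset of $B$, $\sigma$ the support function of $(A,\rho)$) and applying \cref{lem:Xcontrolled.props}\eqref{lem:Xcontrolled.props.2}, and condition \eqref{def:Xcontrolledobject:it5} holds because $f$ is $\Gamma$-equivariant, so $f^n_{*}$ commutes with the $\Gamma$-action. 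For an equivariant $U$-controlled morphism $g$ with $U\in\cC^\Gamma$, Condition~\eqref{fjweoifjewoijeoiejfiwjeiofjewfewfewf} of \cref{hckjhckhwhuiechiu1} ensures that $W:=U\cup\bigcup_{n\in\nat}(f^n\times f^n)(U)$ is an entourage of $X$, which is moreover $\Gamma$-invariant (again because $f$ commutes with the action), so $W\in\cC^\Gamma$ and $S(g):=\bigoplus_{n\in\nat}f^n_{*}g$ is a $W$-controlled morphism, hence a morphism of $\bV^\Gamma_\bA(X)$. Functoriality of $S$, and preservation of the zero object and of finite biproducts, are then routine, all relevant sums being finite over each bounded set.

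Finally, I would produce a natural isomorphism $S\cong\id_{\bV^\Gamma_\bA(X)}\oplus S$. Here the closeness of $f$ to $\id_X$ (the first condition of \cref{hckjhckhwhuiechiu1}) is used: by \cref{lem:coarseK.closemaps} there is a natural isomorphism $\theta\colon\id=(\id_X)_{*}\xrightarrow{\cong}f_{*}$, and whiskering with $f^{n-1}_{*}$ and composing with the canonical isomorphism $f^{n-1}_{*}\circ f_{*}\cong f^n_{*}$ yields natural isomorphisms $\eta_n\colon f^{n-1}_{*}\xrightarrow{\cong}f^n_{*}$ for every $n\geq 1$. The family $(\eta_n^{-1})_{n\geq 1}$ then identifies $\bigoplus_{n\geq 1}f^n_{*}$ with $\bigoplus_{n\geq 1}f^{n-1}_{*}=\bigoplus_{m\in\nat}f^m_{*}=S$, so that $S=f^0_{*}\oplus\bigoplus_{n\geq 1}f^n_{*}\cong\id\oplus S$, naturally in the argument. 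Thus $\bV^\Gamma_\bA(X)$ is flasque and the lemma follows. The main obstacle is entirely in the second paragraph: one must make sure that the infinite sum $\bigoplus_{n\in\nat}f^n_{*}$ never leaves $\bV^\Gamma_\bA(X)$, and the two nontrivial properties of a flasque space are needed precisely for this (Condition~\eqref{hckjhckhwhuiechiu} to keep objects locally finite, Condition~\eqref{fjweoifjewoijeoiejfiwjeiofjewfewfewf} to keep morphisms controlled); the swindle itself is then purely formal.
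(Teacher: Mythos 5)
Your proposal is correct and follows exactly the same Eilenberg-swindle strategy as the paper: construct $\Sigma:=\bigoplus_{n\in\nat}(f^n)_*$, verify that it is a well-defined exact endofunctor of $\bV_\bA^\Gamma(X)$ using Conditions \ref{fjweoifjewoijeoiejfiwjeiofjewfewfewf} and \ref{hckjhckhwhuiechiu} of \cref{hckjhckhwhuiechiu1}, and then use closeness of $f$ to $\id_X$ together with \cref{lem:coarseK.closemaps} to obtain $\id\oplus\Sigma\cong\Sigma$, whence flasqueness of $\bV_\bA^\Gamma(X)$ and vanishing of $K$-theory. You supply slightly more detail than the paper in checking \cref{def:Xcontrolledobject}\eqref{def:Xcontrolledobject:it4} for the object $\Sigma(A,\rho)$, and your side remark that the weaker version of Condition \ref{hckjhckhwhuiechiu} from \cref{hckjhckhwhuiechiu2} suffices here is also accurate.
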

\begin{proof}
 Let $X$ be a $\Gamma$-bornological coarse space with flasqueness implemented by $\phi \colon X \to X$.
 We claim that the functor
 \[ \Sigma := \bigoplus_{n \in \IN} (\phi^n)_* \colon \bV_\bA^\Gamma(X) \to \bV_\bA^\Gamma(X) \]
 is well-defined (up to canonical isomorphism).
 
 For every bounded subset $B$ of $X$, there exists some $n$ in $\IN$ such that $(\phi^n)^{-1}(B) = \emptyset$,
 so the direct sum $\bigoplus_{n \in \IN} (\phi^n)_*A$ exists for every equivariant $X$-controlled object $(A,\rho)$.
 Let $f \colon (A,\rho) \to (A',\rho')$ be a $U$-controlled morphism.
 Then $\bigoplus_{n \in \IN} (\phi^n)_*f$ is $V$-controlled, where $V:=\bigcup_{n \in \IN} (\phi \times \phi)^n(U)$ is again a coarse  entourage of $X$ by assumption on $\phi$.
 So $\Sigma$ is an exact functor.
 
 Since $\phi$ is close to $\id_X$, we conclude from \cref{lem:coarseK.closemaps} that $\phi_* \circ \Sigma$ and $\Sigma$ are  isomorphic.
 Hence, \[\id_{\bV_\bA^\Gamma(X)} \oplus \Sigma \cong \id_{\bV_\bA^\Gamma(X)} \oplus (\phi_* \circ \Sigma) \cong \Sigma\ ,\]
 and we deduce the lemma from Property~\eqref{it:Kprop.flasqueness} of the algebraic $K$-theory functor.
\end{proof}

Let $X$ be a $\Gamma$-bornological coarse space and $Z$ a $\Gamma$-invariant subset of $X$. For an equivariant $X$-controlled object $(A,\rho)$, we denote by $(A|_Z,\rho|_Z)$ the restriction to $Z$, that is $A|_Z$ is the restriction of $A$ to $\cB\cap Z$ and $\rho|_Z$ the appropriate restriction of $\rho$.

Let $X$ be a $\Gamma$-bornological coarse space and let $\cY = (Y_i)_{i \in I}$ be an equivariant big family in $X$.
For each $i$ in $I$, the canonical exact functor $\bV_\bA^\Gamma(Y_i) \to \bV_\bA^\Gamma(X)$ is injective on objects and fully faithful,
so we can regard $\bV_\bA^\Gamma(Y_i)$ as a full subcategory of $\bV_\bA^\Gamma(X)$. Define
\[ \bV_\bA^\Gamma(\cY) := \bigcup_{i \in I} \bV_\bA^\Gamma(Y_i)\ ,\]
considered as a full subcategory of $\bV_\bA^\Gamma(X)$.

\begin{lem}\label{lem:bigfamily.karoubifiltration}
 The inclusion $\bV_\bA^\Gamma(\cY) \to \bV_\bA^\Gamma(X)$ is a Karoubi filtration.
\end{lem}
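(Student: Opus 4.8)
The goal is to verify that $\bV_\bA^\Gamma(\cY) \to \bV_\bA^\Gamma(X)$ satisfies the defining property of a Karoubi filtration from \cref{def:karoubi}. So I would start with a diagram $A \xrightarrow{f} C \xrightarrow{g} B$ in $\bV_\bA^\Gamma(X)$ with $A$ and $B$ objects of $\bV_\bA^\Gamma(\cY)$, and I must produce an object $D$ of $\bV_\bA^\Gamma(\cY)$ together with a commutative diagram witnessing the Karoubi condition. Since the morphisms $f$ and $g$ are controlled, there is an invariant entourage $U$ of $X$ with both $f$ and $g$ being $U$-controlled. Because $A$ lies in some $\bV_\bA^\Gamma(Y_i)$ and $B$ in some $\bV_\bA^\Gamma(Y_j)$, and the family $\cY$ is filtered, I may assume $A$ and $B$ are both $Y_i$-controlled for a single index $i$; that is, their support functions take values in (finite subsets of) $Y_i$.

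The key geometric input is that $\cY$ is an equivariant \emph{big} family: for the entourage $U$ and the index $i$ there exists $k \in I$ with $U[Y_i] \subseteq Y_k$, and by enlarging $U$ to a symmetric invariant entourage containing the diagonal I can arrange that the relevant thickenings all land in $Y_k$. The candidate for $D$ is then the "restriction of $C$ to the part supported near $Y_i$": concretely, I would let $D$ be the equivariant $X$-controlled $\bA$-object determined on bounded sets $B'$ by $C(B' \cap U[Y_i])$ — using \cref{lem:Xcontrolled.props} to see that this is again a well-defined equivariant $X$-controlled $\bA$-object with support function taking values in $Y_k$, hence an object of $\bV_\bA^\Gamma(\cY)$. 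The decomposition $D \oplus D^\perp \cong C$ comes from the push-out property in \cref{def:Xcontrolledobject}\eqref{def:Xcontrolledobject:it3}: writing $C(B')$ as the pushout of $C(B' \cap U[Y_i]) \leftarrow C(B' \cap U[Y_i] \cap W) \to C(B' \cap W)$ for a suitable complementary region $W$, one extracts $D^\perp$ as the complementary summand (this uses that the relevant intersections are eventually empty on bounded sets, so the pushouts are biproducts of direct summands as in \cref{lem:Xcontrolled.props}\eqref{lem:Xcontrolled.props.1}). The maps $f$ and $g$ factor through $D$ because they are $U$-controlled and $A$, $B$ are supported in $Y_i$: the image of $f$ lands in the part of $C$ supported in $U[Y_i]$, and dually $g$ annihilates the complementary summand.

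The main obstacle is the bookkeeping needed to make "the part of $C$ supported near $Y_i$" into an honest equivariant $X$-controlled $\bA$-object and to produce the direct-sum decomposition $C \cong D \oplus D^\perp$ compatibly with the $\Gamma$-action $\rho$ — one must check conditions \eqref{def:Xcontrolledobject:it1}--\eqref{def:Xcontrolledobject:it5} of \cref{def:Xcontrolledobject} for $D$ and for $D^\perp$, and that the splitting morphisms are $0$-controlled and $\Gamma$-equivariant. The equivariance is automatic provided the chosen region $U[Y_i]$ is $\Gamma$-invariant, which it is since $U$ and $Y_i$ are invariant; the pushout squares are preserved by the $\rho(\gamma)$ since these are natural isomorphisms. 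Once $D$ is constructed, the verification that $f$ factors as $A \to D \hookrightarrow C$ and $g$ factors as $C \twoheadrightarrow D \to B$ is a direct consequence of the support estimates above. This is the equivariant analogue of the non-equivariant statement (cf. \cite[Sec.~2.1]{MR2030590} and the Karoubi-filtration arguments in the references cited after \cref{def:karoubi}), and the only genuinely new point is tracking the $\Gamma$-action throughout, which causes no difficulty because all the relevant subsets and entourages are invariant.
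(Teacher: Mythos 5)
Your proposal is correct and follows essentially the same route as the paper: pick an index $i$ with $A,B$ supported in $Y_i$, a symmetric invariant $U$ containing the diagonal controlling both $f$ and $g$, use bigness of $\cY$ to find $Y_j \supseteq U[Y_i]$, and take $D$ to be the restriction of $C$ to (a thickening of) $Y_i$, with $D^\perp$ the complementary restriction, so that $U$-control of $f$ forces its image into $D$ and $U$-control of $g$ forces it to kill $D^\perp$. The only cosmetic difference is that you restrict $C$ to $U[Y_i]$ while the paper restricts directly to $Y_j$; both give an object of $\bV_\bA^\Gamma(\cY)$ and the rest of the argument is identical.
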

\begin{proof}
 Let $(A,\rho), (A',\rho')$ be objects in $\bV_\bA^\Gamma(\cY)$, $(C,\rho_C)$ be an object in $\bV_\bA^\Gamma(X)$, and let $f \colon A \to C$ and $g \colon C \to A'$ be morphisms.
 Choose $i$ in $I$ such that both $A$ and $A'$ are objects in $\bV_\bA^\Gamma(Y_i)$, and pick an invariant and symmetric entourage $U$ which contains the diagonal such that $f$ and $g$ are $U$-controlled.
 Let $j$ in $I$ be such that $U[Y_i] \subseteq Y_j$.
 Since $f$ is a natural transformation $A \to C \circ U[-]$ and $g|_{C|_{X \setminus Y_j}} = 0$, the following diagram commutes:
 \[\xymatrix{
  A\ar[r]^-{f}\ar[d] & C\ar[d]^{\cong}\ar[r]^-{g} & A' \\
  C|_{Y_j}\ar[r]^-{inc} & C|_{Y_j} \oplus C|_{X \setminus Y_j}\ar[r]^-{\pr} & C|_{Y_j}\ar[u]
 }\]
 Hence, the inclusion $\bV_\bA^\Gamma(\cY) \to \bV_\bA^\Gamma(X)$ is a Karoubi filtration.
\end{proof}

\begin{prop}\label{prop:coarseK.excision}
 The functor $K\bA\cX^\Gamma$ is excisive.
\end{prop}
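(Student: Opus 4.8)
The plan is to deduce excision from the standard Karoubi-filtration description of controlled $K$-theory, carried out $\Gamma$-equivariantly in the spirit of \cite[Sec.~2]{MR2030590}. First, $K\bA\cX^{\Gamma}(\emptyset)\simeq 0$ because $\bV_{\bA}^{\Gamma}(\emptyset)$ has only the zero object, hence is flasque in the sense of \cref{def:flasquecat}, so that Property~\eqref{it:Kprop.flasqueness} applies. Next, fix an equivariant complementary pair $(Z,\cY)$ on $X$, with $\cY=(Y_{i})_{i\in I}$ and an index $i_{0}$ such that $Z\cup Y_{i_{0}}=X$. Applying \cref{lem:bigfamily.karoubifiltration} to $X$ with the equivariant big family $\cY$ and to $Z$ with the equivariant big family $Z\cap\cY:=(Z\cap Y_{i})_{i\in I}$ yields Karoubi filtrations $\bV_{\bA}^{\Gamma}(\cY)\to\bV_{\bA}^{\Gamma}(X)$ and $\bV_{\bA}^{\Gamma}(Z\cap\cY)\to\bV_{\bA}^{\Gamma}(Z)$; Property~\eqref{it:Kprop.exactness} turns these into fiber sequences, and Property~\eqref{it:Kprop.colimits} together with the identity $\bV_{\bA}^{\Gamma}(\cY)=\bigcup_{i}\bV_{\bA}^{\Gamma}(Y_{i})$ gives $K(\bV_{\bA}^{\Gamma}(\cY))\simeq K\bA\cX^{\Gamma}(\cY)$ and likewise $K(\bV_{\bA}^{\Gamma}(Z\cap\cY))\simeq K\bA\cX^{\Gamma}(Z\cap\cY)$. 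Since a square of spectra is cocartesian exactly when the induced map of vertical cofibers is an equivalence, it remains to prove that the exact quotient functor
\[ \Psi\colon \bV_{\bA}^{\Gamma}(Z)/\bV_{\bA}^{\Gamma}(Z\cap\cY)\longrightarrow \bV_{\bA}^{\Gamma}(X)/\bV_{\bA}^{\Gamma}(\cY) \]
—well-defined by the universal property of the Karoubi quotient, since each object of $\bV_{\bA}^{\Gamma}(Z\cap\cY)$ lies in some $\bV_{\bA}^{\Gamma}(Y_{i})$ and thus maps to zero—induces an equivalence on $K$-theory.

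The heart of the proof, and the step I expect to be the main obstacle, is to show that $\Psi$ is in fact an equivalence of additive categories; Property~\eqref{it:Kprop.invariance} then delivers the equivalence on $K$-theory. For essential surjectivity, given an equivariant $X$-controlled object $(A,\rho)$ I would restrict it to $Z$ to obtain an object $(A|_{Z},\rho|_{Z})$ of $\bV_{\bA}^{\Gamma}(Z)$ and push it forward along the inclusion $j\colon Z\to X$; using \cref{lem:Xcontrolled.props} and the push-out property \cref{def:Xcontrolledobject}\eqref{def:Xcontrolledobject:it3} one checks $A\cong j_{*}(A|_{Z})\oplus A''$, where $A''$ is the equivariant $X$-controlled object with $A''(\{x\})=A(\{x\})$ for $x\notin Z$ and $A''(\{x\})=0$ otherwise, the decomposition being $\rho$-equivariant because $Z$ and $X\setminus Z$ are $\Gamma$-invariant. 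Since $X\setminus Z\subseteq Y_{i_{0}}$, the object $A''$ lies in $\bV_{\bA}^{\Gamma}(Y_{i_{0}})\subseteq\bV_{\bA}^{\Gamma}(\cY)$, so $[A]\cong[j_{*}(A|_{Z})]$ in the quotient. For fullness and faithfulness I would work at the level of controlled morphisms: a $U$-controlled morphism between objects pushed forward from $Z$ only involves a $U$-thickening of $Z$, and a factorization of such a morphism through an object $C$ of $\bV_{\bA}^{\Gamma}(\cY)$ can be replaced by a factorization through the restriction of $C$ to $U[Z]\cap Y_{j}$ for a suitable $j$ with $U[Y_{i}]\subseteq Y_{j}$, which after restriction to $Z$ is an object of $\bV_{\bA}^{\Gamma}(Z\cap\cY)$. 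This uses that morphism sets in $\bV_{\bA}^{\Gamma}$ are filtered colimits over invariant entourages and that $Z\cap\cY$ is an equivariant big family on $Z$; it is precisely the non-equivariant bookkeeping of \cite[Sec.~2]{MR2030590}, with the additional, routine obligation to keep all subsets, functors and natural transformations $\Gamma$-invariant, which is automatic since $Z$, the $Y_{i}$, $X\setminus Z$, $U[Z]$ and the support functions are equivariant.

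Once $\Psi$ is known to be an equivalence, the two fiber sequences above have canonically identified third terms, so the comparison map between the vertical cofibers of the square
\[\xymatrix{K\bA\cX^{\Gamma}(Z\cap\cY)\ar[r]\ar[d]&K\bA\cX^{\Gamma}(Z)\ar[d]\\K\bA\cX^{\Gamma}(\cY)\ar[r]&K\bA\cX^{\Gamma}(X)}\]
is an equivalence, whence the square is cocartesian in $\Sp$. Together with $K\bA\cX^{\Gamma}(\emptyset)\simeq 0$ this establishes excision.
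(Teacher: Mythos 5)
Your proof follows the paper's strategy almost step for step: the vanishing on $\emptyset$ (you argue via flasqueness of the zero category, the paper via the normalization property; both work), applying \cref{lem:bigfamily.karoubifiltration} together with the exactness and colimits properties of the $K$-theory functor to obtain a map of fibre sequences with the correct first two columns, and then reducing the cocartesianness of the square to showing that the induced functor between Karoubi quotients is an equivalence. The paper's treatment of that last step is shorter and more robust than your sketch: it checks directly that for every $(A,\rho)$ in $\bV_\bA^\Gamma(X)$ the inclusion $f\colon (A|_Z,\rho|_Z)\to(A,\rho)$ and projection $p\colon(A,\rho)\to(A|_Z,\rho|_Z)$ become mutually inverse isomorphisms in $\bV_\bA^\Gamma(X)/\bV_\bA^\Gamma(\cY)$, because $\id_A-fp$ factors through $A|_{Y_{i_0}}$ once $X\setminus Z\subseteq Y_{i_0}$; this single observation yields essential surjectivity and, via the resulting natural isomorphism $A\cong A|_Z$ in the quotient, a quasi-inverse given by restriction.

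One step in your sketch of fullness/faithfulness would fail as written. Given a factorization $A\xrightarrow{f}C\xrightarrow{h}B$ with $C$ in $\bV_\bA^\Gamma(Y_i)$ and all morphisms $U$-controlled, the composite $g=hf$ does factor through $C|_{U[Z]\cap Y_i}$, but restricting this intermediate object further to $Z$ destroys the factorization: the summand of $g$ that is routed through $C|_{Y_i\setminus Z}$ is exactly what such a restriction kills, and it need not vanish. The correct observation is that this ``bad'' summand has all of its matrix entries supported at pairs $(a,b)$ with $a,b\in U[X\setminus Z]\cap Z$, so it factors through the restriction $B|_{U[X\setminus Z]\cap Z}$ of the \emph{target}, which lies in $\bV_\bA^\Gamma(Z\cap Y_j)$ once $j$ is chosen with $U[Y_{i_0}]\subseteq Y_j$. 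In other words, the object one factors through should be a restriction of $B$ (or of $A$), not of $C$; this is also where the bigness of $\cY$ enters. The paper's argument avoids this bookkeeping entirely.
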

\begin{proof}
 The category of $\emptyset$-controlled $\bA$-objects is the zero category, which has trivial $K$-theory by Property~\eqref{it:Kprop.normalization}.

 Let $X$ be a $\Gamma$-bornological coarse space, and let $(Z,\cY)$ be an equivariant complementary pair on $X$.
 Both inclusions $\bV_\bA^\Gamma(Z \cap \cY) \to \bV_\bA^\Gamma(Z)$ and $\bV_\bA^\Gamma(\cY) \to \bV_\bA^\Gamma(X)$ are Karoubi filtrations by \cref{lem:bigfamily.karoubifiltration}.
 Therefore, we obtain by Property~\eqref{it:Kprop.exactness} of the algebraic $K$-theory functor a map of fiber sequences
 \[\xymatrix{
  K(\bV_\bA^\Gamma(Z \cap \cY))\ar[r]\ar[d] & K(\bV_\bA^\Gamma(Z))\ar[r]\ar[d] & K(\bV_\bA^\Gamma(Z)/\bV_\bA^\Gamma(Z \cap \cY))\ar[d]\ar[r]^-{\partial} & K(\bV_\bA^\Gamma(Z \cap \cY))\ar[d] \\
  K(\bV_\bA^\Gamma(\cY))\ar[r] & K(\bV_\bA^\Gamma(X))\ar[r] & K(\bV_\bA^\Gamma(X)/\bV_\bA^\Gamma(\cY))\ar[r]^-{\partial} & K(\bV_\bA^\Gamma(\cY))
 }\]
 Consider the induced exact functor $\Phi \colon \bV_\bA^\Gamma(Z)/\bV_\bA^\Gamma(Z \cap \cY) \to \bV_\bA^\Gamma(X)/\bV_\bA^\Gamma(\cY)$.
 
 Let $(A,\rho)$ in $\bV_\bA^\Gamma(X)$ and consider the natural morphisms $f \colon (A|_Z,\rho|_Z) \to (A,\rho)$ and $p \colon (A,\rho) \to (A|_Z,\rho|_Z)$. Clearly, $pf = \id_{(A|_Z,\rho|_Z)}$.
 Pick $i$ in $I$ such that $X \setminus Z \subseteq Y_i$.
 Then $\id_{(A,\rho)} - fp$ factors through $(A|_{Y_i},\rho|_{Y_i})$, so $f$ and $p$ define mutually inverse isomorphisms in $\bV_\bA^\Gamma(X)/\bV_\bA^\Gamma(\cY)$.
 We conclude that $\Phi \circ \Psi \cong \id_{\bV_\bA^\Gamma(X)/\bV_\bA^\Gamma(\cY)}$, so $\Phi$ is an equivalence of categories.
 
 It follows from Property~\eqref{it:Kprop.invariance} of the algebraic $K$-theory functor that
 \[\xymatrix{
  K(\bV_\bA^\Gamma(Z \cap \cY))\ar[r]\ar[d] & K(\bV_\bA^\Gamma(Z))\ar[d] \\
  K(\bV_\bA^\Gamma(\cY))\ar[r] & K(\bV_\bA^\Gamma(X))
 }\]
 is a push-out.
 By Property~\eqref{it:Kprop.colimits} of algebraic $K$-theory we have $K(\bV_\bA^\Gamma(\cY)) \simeq K\bA\cX^\Gamma(\cY)$. 
 This proves excision.
\end{proof}

\begin{rem}\label{rem:excision.projection}
Let $(X,\cB,\cC)$ be a $\Gamma$-bornological coarse space and let $Y$ be a $\Gamma$-invariant subspace of $X$ with the property that $U[Y]=Y$ for every $U$ in $\cC$. Then $(Y, X\setminus Y)$ is a coarsely excisive pair. 
Inspecting the proof of \cref{prop:coarseK.excision}, we obtain the following commutative diagram:
\[\xymatrix{
	0\ar[r]\ar[d] & K(\bV_\bA^\Gamma(Y))\ar[r]^-{\simeq}\ar[d] & K(\bV_\bA^\Gamma(Y))\ar[d]^-{\simeq} \\
	K(\bV_\bA^\Gamma(X\setminus Y))\ar[r] & K(\bV_\bA^\Gamma(X))\ar[r] & K(\bV_\bA^\Gamma(X)/\bV_\bA^\Gamma(X\setminus Y))
}\]
In addition, we observe that an inverse to the right vertical equivalence is induced by the functor $\Psi$ which is given by $\Psi(A,\rho) = (A|_{Y},\rho|_{Y})$.
Since this functor is already well-defined as a functor $\Psi \colon \bV_\bA^\Gamma(X) \to \bV_\bA^\Gamma(Y)$, we see that the projection map
\[ K(\bV_\bA^\Gamma(X)) \simeq K(\bV_\bA^\Gamma(X\setminus Y)) \oplus K(\bV_\bA^\Gamma(Y)) \to K(\bV_\bA^\Gamma(Y)) \]
arising from excision coincides with $K(\Psi)$.
\end{rem}

\cref{thm:coarseK.homology} follows now by combining \cref{lem:coarseK.ucontinuity}, \cref{kor:coarseK.coarseinvariance}, \cref{lem:coarseK.flasqueness} and \cref{prop:coarseK.excision}.
In the remainder of this section, we establish some additional properties of the equivariant coarse homology theory $K\bA\cX^\Gamma$.
{For the next two propositions recall} the notions of continuity (\cref{fweoiuiowefwefeve}) and strongness (\cref{foijofifwefwefewfw}).

\begin{prop}\label{prop:coarseK.continuous}
 The equivariant coarse homology theory $K\bA\cX^\Gamma$ is continuous.
\end{prop}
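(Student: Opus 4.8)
The plan is to deduce continuity from the compatibility of algebraic $K$-theory with filtered colimits and with equivalences of categories, reducing everything to a statement about the categories $\bV_\bA^\Gamma(-)$. Fix a trapping exhaustion $\cY = (Y_i)_{i\in I}$ of a $\Gamma$-bornological coarse space $X$; by \cref{fweoiuiowefwefeve} we must show that the natural map
$K\bA\cX^\Gamma(\cY) = \colim_{i\in I} K(\bV_\bA^\Gamma(Y_i)) \to K(\bV_\bA^\Gamma(X)) = K\bA\cX^\Gamma(X)$
is an equivalence. Recall from the discussion preceding \cref{lem:bigfamily.karoubifiltration} that $\bV_\bA^\Gamma(\cY) := \bigcup_{i\in I}\bV_\bA^\Gamma(Y_i)$ is a full additive subcategory of $\bV_\bA^\Gamma(X)$ and, by Property~\eqref{it:Kprop.colimits}, that the map $K\bA\cX^\Gamma(\cY) \to K(\bV_\bA^\Gamma(\cY))$ is an equivalence. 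So it suffices to show that the fully faithful inclusion $\bV_\bA^\Gamma(\cY) \hookrightarrow \bV_\bA^\Gamma(X)$ is an equivalence of categories --- only essential surjectivity is at issue --- and then to invoke Property~\eqref{it:Kprop.invariance}.

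For essential surjectivity, let $(A,\rho)$ be an equivariant $X$-controlled $\bA$-object with support function $\sigma$, and put $F := \{x\in X \mid A(\{x\}) \not\cong 0\}$. First I would check that $F$ is $\Gamma$-invariant and locally finite. Invariance is immediate since $\rho(\gamma)$ identifies $A(\{x\})$ with $\gamma\bigl(A(\{\gamma^{-1}x\})\bigr)$. For local finiteness, fix a bounded $B$ and $x\in B\setminus\sigma(B)$; writing $G := \sigma(B)\cup\{x\}$, the canonical splitting $A(G)\cong\bigoplus_{y\in G}A(\{y\})$ from \cref{lem:Xcontrolled.props} identifies $A(\sigma(B))\to A(G)$ with the inclusion of a direct summand, while \cref{lem:Xcontrolled.props} also shows that $A(\sigma(B))\to A(G)$ is an isomorphism because $\sigma(B)\subseteq G\subseteq B$; hence $A(\{x\})\cong 0$, so $F\cap B\subseteq\sigma(B)$ is finite. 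Since $\cY$ is a trapping exhaustion (\cref{fiuwefziufewfwefwefewwefw}), there is an $i\in I$ with $F\subseteq Y_i$.

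It then remains to identify $(A,\rho)$, up to isomorphism in $\bV_\bA^\Gamma(X)$, with the image of the restriction $(A|_{Y_i},\rho|_{Y_i})$, which lies in $\bV_\bA^\Gamma(Y_i)$ by \cref{exjk22332}. That image sends a bounded set $B$ to $A(B\cap Y_i)$, and the inclusions $B\cap Y_i\subseteq B$ define a natural, $\rho$-compatible (and $\diag_X$-controlled) morphism to $(A,\rho)$. Since $\sigma(B)\subseteq F\cap B\subseteq B\cap Y_i\subseteq B$, \cref{lem:Xcontrolled.props} shows that $A(B\cap Y_i)\to A(B)$ is an isomorphism for every bounded $B$, so this is an isomorphism in $\bV_\bA^\Gamma(X)$. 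This proves essential surjectivity, hence the proposition.

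The main obstacle is the essential-surjectivity step, and specifically its first half: one has to see that an arbitrary equivariant controlled object carries a locally finite, invariant ``support'' --- precisely the kind of datum a continuous theory should only depend on --- and then verify, via \cref{lem:Xcontrolled.props}, that restricting the object to a member of the trapping exhaustion that contains this support reproduces it up to canonical isomorphism. The remaining reductions to Properties~\eqref{it:Kprop.colimits} and~\eqref{it:Kprop.invariance} of the $K$-theory functor are formal.
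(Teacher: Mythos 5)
Your proof is correct and takes essentially the same approach as the paper: identify the support $\{x \mid A(\{x\})\ncong 0\}$ as a locally finite, invariant subset and use Property~\eqref{it:Kprop.colimits} to conclude. You are somewhat more careful than the paper in spelling out that the restriction $(A|_{Y_i},\rho|_{Y_i})$ is only canonically isomorphic (not equal) to $(A,\rho)$, and in invoking Property~\eqref{it:Kprop.invariance} to dispose of that isomorphism.
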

\begin{proof}
 Let $(A,\rho)$ be an equivariant $X$-controlled object. Set $S := \{ x \in X \mid A(\{x\}) \ncong 0 \}$.
 By definition, we have $S \cap B = \sigma(B)$, where $\sigma$ is the support function of $A$.
 Hence, $S$ is a locally finite subset of $X$, so $(A,\rho)$ lies in the full subcategory $\bV_\bA^\Gamma(S)$ of $\bV_\bA^\Gamma(X)$.
 This shows that $\bV_\bA^\Gamma(X) = \bigcup_{S \subseteq X\ \text{locally finite}} \bV_\bA^\Gamma(S)$.
 By Property~\eqref{it:Kprop.colimits} of the algebraic $K$-theory functor, it follows that $K\bA\cX^\Gamma$ is continuous.
\end{proof}

\begin{prop}\label{prop:coarseK.strong}
 The equivariant coarse homology theory $K\bA\cX^\Gamma$ is strong.
\end{prop}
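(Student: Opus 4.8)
The statement to prove is \cref{prop:coarseK.strong}: the equivariant coarse homology theory $K\bA\cX^\Gamma$ is strong, i.e.\ $K\bA\cX^\Gamma(X) \simeq 0$ whenever $X$ is a weakly flasque $\Gamma$-bornological coarse space. The plan is to imitate the proof of \cref{lem:coarseK.flasqueness} as closely as possible, replacing the hypothesis that $f$ is close to $\id_X$ (which is used there only through \cref{lem:coarseK.closemaps}) by the weaker hypothesis from \cref{iogegergeger}, namely $\Yo^s(f) \simeq \id_{\Yo^s(X)}$. The whole point of \cref{def:wfl}/\cref{foijofifwefwefewfw} is that a coarse homology theory which factors through $\Gamma\Sp\cX_{\wfl}$ is automatically strong; so it suffices to check that $K\bA\cX^\Gamma$ sends the morphisms $0 \to \Yo^s(X)$, $X$ weakly flasque, to equivalences — equivalently, that $K\bA\cX^\Gamma(f) \simeq \id$ for the implementing map $f$, together with the swindle argument.

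Here is the order in which I would carry this out. First, let $f\colon X\to X$ implement weak flasqueness of $X$. As in the proof of \cref{lem:coarseK.flasqueness}, the two properties of $f$ that are purely coarse-geometric — that $\bigcup_{n\in\IN}(f^n\times f^n)(U)$ is an entourage for every entourage $U$, and that for every bounded $B$ there is $n$ with $\Gamma B \cap f^n(X)=\emptyset$ — are exactly conditions \eqref{iogegergeger1}'s companions in \cref{iogegergeger}, so the functor
\[ \Sigma := \bigoplus_{n\in\IN}(f^n)_* \colon \bV_\bA^\Gamma(X) \to \bV_\bA^\Gamma(X) \]
is well-defined and exact by the identical argument (the second property guarantees the direct sum exists object-wise; the first guarantees a $U$-controlled morphism maps to a controlled morphism). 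Second, I claim $(f)_* \circ \Sigma \cong \Sigma$. In \cref{lem:coarseK.flasqueness} this used \cref{lem:coarseK.closemaps}; here I cannot invoke closeness, but I do not need an isomorphism of functors — I only need $K(\Sigma) \simeq K(f_*\circ\Sigma) = K(f_*)\circ K(\Sigma)$ as maps of spectra, and this follows from $\Yo^s(f)\simeq\id$ if I first observe that $K(f_*) = K\bA\cX^\Gamma(f)$ and that $K\bA\cX^\Gamma$, being an equivariant coarse homology theory (\cref{thm:coarseK.homology}), factors through $\Yo^s$ by \cref{lkjiooiwoigewgewgwegfw123}; hence $K\bA\cX^\Gamma(f)\simeq K\bA\cX^\Gamma(\id)=\id_{K\bA\cX^\Gamma(X)}$. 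Third, from $\id_{\bV_\bA^\Gamma(X)} \oplus \Sigma \cong \Sigma$ (a genuine categorical isomorphism, since on objects both sides send $(A,\rho)$ to $\bigoplus_{n\ge 0}(f^n)_*(A,\rho)$ — note $(f^0)_* = \id$) we get by Property~\eqref{it:Kprop.additivity} of algebraic $K$-theory that $K(\Sigma) + \id_{K\bA\cX^\Gamma(X)} \simeq K(\Sigma)$, whence $K\bA\cX^\Gamma(X) = K(\bV_\bA^\Gamma(X)) \simeq 0$.

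The only subtlety — and the step I expect to need the most care — is the second one: reconciling "$\Yo^s(f)\simeq\id$" (a statement about the \emph{value} of the universal coarse homology theory on the morphism $f$) with what we need, which is "$K\bA\cX^\Gamma(f)\simeq\id$". This is precisely the content of \cref{lkjiooiwoigewgewgwegfw123}: since $K\bA\cX^\Gamma$ is a $\Sp$-valued equivariant coarse homology theory, it is of the form $\bar E\circ\Yo^s$ for a colimit-preserving functor $\bar E\colon\Gamma\Sp\cX\to\Sp$, and therefore any morphism of $\Gamma$-bornological coarse spaces that $\Yo^s$ sends to an equivalence is also sent to an equivalence by $K\bA\cX^\Gamma$. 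Apart from this observation the argument is essentially a transcription of \cref{lem:coarseK.flasqueness}, so I would keep the write-up short and cross-reference that proof for the well-definedness and exactness of $\Sigma$ rather than repeating it.
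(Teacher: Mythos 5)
Your overall strategy is the same as the paper's: form the Eilenberg swindle functor $\Sigma = \bigoplus_{n\in\IN}(f^n)_*$, use the factorization of $K\bA\cX^\Gamma$ through $\Yo^s$ (\cref{lkjiooiwoigewgewgwegfw123}) to pass from $\Yo^s(f)\simeq\id$ to $K\bA\cX^\Gamma(f)\simeq\id$, and then run the swindle on $K$-theory. Your explicit appeal to \cref{lkjiooiwoigewgewgwegfw123} for the second point is welcome, since the paper compresses this to the phrase ``by assumption.''

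However, your Step 3 contains a genuine error. The claimed isomorphism $\id_{\bV_\bA^\Gamma(X)}\oplus\Sigma\cong\Sigma$ is false: if it held, it would exhibit $\bV_\bA^\Gamma(X)$ as a flasque additive category in the sense of \cref{def:flasquecat}, and $K\bA\cX^\Gamma(X)\simeq 0$ would then follow from Property~\eqref{it:Kprop.flasqueness} alone, without ever invoking the hypothesis $\Yo^s(f)\simeq\id$ --- a clear signal something is wrong. Concretely, $\id\oplus\Sigma$ sends $(A,\rho)$ to $(A,\rho)\oplus\bigoplus_{n\geq 0}(f^n)_*(A,\rho)$, which has one extra summand of $(A,\rho)$ compared to $\Sigma(A,\rho)$. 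The genuine categorical isomorphism --- which is what your parenthetical justification in Step 3 actually describes, via the re-indexing $(f^0)_*=\id$ --- is $\id\oplus(f_*\circ\Sigma)\cong\Sigma$, since $f_*\circ\Sigma=\bigoplus_{n\geq 1}(f^n)_*$ and prepending $\id=(f^0)_*$ re-indexes to $\Sigma$. Once corrected, the swindle must route through your Step 2: the chain is
\[ \id_{K\bA\cX^\Gamma(X)} + K(\Sigma) \simeq \id_{K\bA\cX^\Gamma(X)} + K(f_*)\circ K(\Sigma) \simeq K\bigl(\id_{\bV_\bA^\Gamma(X)}\oplus(f_*\circ\Sigma)\bigr) \simeq K(\Sigma)\ , \]
where the first equivalence uses $K(f_*)\simeq\id$ (your Step 2), the second is Property~\eqref{it:Kprop.additivity}, and the third is the genuine isomorphism. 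As written, your Step 3 never invokes Step 2, which shows the slip is not merely typographical: the hypothesis on $f$ must enter exactly at this point, and in your version it does not.
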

\begin{proof}
 Let $X$ be a $\Gamma$-bornological coarse space with weak flasqueness implemented by $\phi \colon X \to X$.
 As in the proof of \cref{lem:coarseK.flasqueness}, the functor $\Sigma \colon \bV_\bA^\Gamma(X) \to \bV_\bA^\Gamma(X)$ given by
 \[ \Sigma := \bigoplus_{n \in \IN} (\phi^n)_* \]
 is well-defined. By assumption, we have $\id_{K\bA\cX^\Gamma(X)} = K\bA\cX^\Gamma(\phi)$. 
 Now apply Property~\eqref{it:Kprop.additivity} of algebraic $K$-theory to deduce that
 \[ \id_{K\bA\cX^\Gamma(X)} + K(\Sigma) \simeq \id_{K\bA\cX^\Gamma(X)} + K\bA\cX^\Gamma(\phi) \circ K(\Sigma) \simeq K(\id_{\bV_\bA^\Gamma(X)} \oplus \phi_* \circ \Sigma) \simeq K(\Sigma)\ ,\]
 so $\id_{K\bA\cX^\Gamma(X)} \simeq 0$.
\end{proof}

Recall the definition of the free union of a family of $\Gamma$-bornological coarse spaces which was given in \cref{efwifuhwfowefewfewfwef} and \cref{rgfou894ut984t3kfnrekjf} of the notion of strong additivity for an equivariant coarse homology theory.

\begin{prop}\label{prop:coarseK.additive}
 The equivariant coarse homology theory $K\bA\cX^\Gamma$ is strongly additive.
\end{prop}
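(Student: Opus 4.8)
The plan is to identify the additive category of equivariant controlled $\bA$-objects on a free union with the corresponding product category, and then to conclude with the product property~\eqref{it:Kprop.products} of the algebraic $K$-theory functor. Write $X:=\bigsqcup_{i\in I}^{\free}X_i$. First I would record the relevant features of this free union (\cref{efwifuhwfowefewfewfwef}): a subset of $X$ is bounded if and only if it is contained in a finite union of bounded subsets of the $X_i$, so in particular it meets only finitely many $X_i$; and, using that each coarse structure $\cC_{X_i}$ has a cofinal family of invariant entourages, every invariant entourage of $X$ is contained in one of the form $\bigsqcup_{i\in I}U_i$ with $U_i$ an invariant entourage of $X_i$ — hence such entourages are cofinal in the invariant entourages of $X$.

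Next I would study the restriction functor
\[\Psi\colon\bV_\bA^\Gamma(X)\longrightarrow\prod_{i\in I}\bV_\bA^\Gamma(X_i)\ ,\qquad(A,\rho)\mapsto\big(A|_{X_i},\rho|_{X_i}\big)_{i\in I}\ ,\]
which is well-defined on objects because each $X_i$ is a $\Gamma$-invariant subset of $X$, and on morphisms because any controlled morphism is $U$-controlled for some invariant $U\subseteq\bigsqcup_iU_i$. I claim that $\Psi$ is an isomorphism of additive categories. It is surjective on objects: given a family $(A_i,\rho_i)_{i\in I}$, the assignment $A(B):=\bigoplus_{i\in I}A_i(B\cap X_i)$ — a finite biproduct, as $B$ meets only finitely many $X_i$ — together with $\rho(\gamma):=\bigoplus_i\rho_i(\gamma)$ defines an equivariant $X$-controlled $\bA$-object, the conditions of \cref{def:Xcontrolledobject} being verified componentwise and using that finite biproducts of push-out squares are push-out squares; and $\Psi$ sends it to the given family. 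For full faithfulness I would use that
\[\Hom_{\bV_\bA^\Gamma(X)}\big((A,\rho),(A',\rho')\big)=\colim_{U\in\cC^{\Gamma}}\Mor_U\big((A,\rho),(A',\rho')\big)\ ,\]
that the entourages $\bigsqcup_iU_i$ are cofinal, and that $\Mor_{\bigsqcup_iU_i}((A,\rho),(A',\rho'))=\prod_i\Mor_{U_i}((A|_{X_i},\rho|_{X_i}),(A'|_{X_i},\rho'|_{X_i}))$, the latter by a short computation resting on $A(B)=\bigoplus_iA(B\cap X_i)$ and naturality. The colimit is then a filtered colimit, over the product of the posets of invariant entourages of the $X_i$, of a functor which is itself a product over $I$; such a colimit commutes with the product over $I$, so the Hom set becomes $\prod_i\Hom_{\bV_\bA^\Gamma(X_i)}((A|_{X_i},\rho|_{X_i}),(A'|_{X_i},\rho'|_{X_i}))$, which is the morphism set of the product category.

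Granting the isomorphism $\Psi$, Property~\eqref{it:Kprop.invariance} yields an equivalence $K\bA\cX^\Gamma(X)=K(\bV_\bA^\Gamma(X))\simeq K(\prod_i\bV_\bA^\Gamma(X_i))$, and Property~\eqref{it:Kprop.products} yields $K(\prod_i\bV_\bA^\Gamma(X_i))\simeq\prod_iK(\bV_\bA^\Gamma(X_i))=\prod_iK\bA\cX^\Gamma(X_i)$. It remains to identify this composite with the natural map~\eqref{iojoiergegerg}: for each $i$ the subset $X_i\subseteq X$ satisfies $U[X_i]=X_i$ for every entourage $U$ of $X$, so \cref{rem:excision.projection} applies and identifies the $i$-th projection of~\eqref{iojoiergegerg} with the map induced by the restriction $(A,\rho)\mapsto(A|_{X_i},\rho|_{X_i})$, which is the $i$-th component of $\Psi$ followed by the canonical comparison; this is exactly the $i$-th component of the equivalence constructed above. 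I expect the main obstacle to be the identification $\bV_\bA^\Gamma(\bigsqcup_i^{\free}X_i)\cong\prod_i\bV_\bA^\Gamma(X_i)$, and within it the commutation of the filtered colimit defining the controlled-morphism groups with the product over $I$; this hinges on having a cofinal family of invariant entourages of the free union of product form, which in turn uses the cofinality of the invariant entourages in each factor. Everything else is a formal consequence of property~\eqref{it:Kprop.products}.
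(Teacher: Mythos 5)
Your argument is correct and takes essentially the same route as the paper's: both identify $\bV_\bA^\Gamma(\bigsqcup_{i\in I}^{\free}X_i)$ with $\prod_{i\in I}\bV_\bA^\Gamma(X_i)$ via the restriction functor and then combine the product property~\eqref{it:Kprop.products} of the $K$-theory functor with \cref{rem:excision.projection} to match the resulting equivalence with the natural map~\eqref{iojoiergegerg}. The paper phrases the categorical comparison slightly differently, exhibiting the direct-sum functor in the other direction and checking that the two composites are naturally isomorphic to the identities rather than proving full faithfulness of restriction head-on; either way you only need an equivalence, not the isomorphism of categories you assert, since with generic choices of biproducts the inverse construction $A(B):=\bigoplus_j A_j(B\cap X_j)$ (where the summands $A_j(\emptyset)$ for $j\ne i$ are zero objects but not literally $0$) returns the original family only up to natural isomorphism, which is all that $K$-theory sees.
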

\begin{proof}
 Let $(X_i)_{i \in I}$ be a family of $\Gamma$-bornological coarse spaces.
 The functors \[\Phi_j \colon \bV_\bA^\Gamma(\bigsqcup_{i \in I}^{\free} X_i) \to \bV_\bA^\Gamma(X_j)\]
 sending a $\bigsqcup_{i \in I}^{\free} X_i$-controlled $\bA$-object $(A,\rho)$ to $(A|_{X_j},\rho|_{X_j})$ for $j$ in $I$ assemble to a functor
 \[ \Phi \colon \bV_\bA^\Gamma(\bigsqcup_{i \in I}^{\free} X_i) \to \prod_{i \in I} \bV_\bA^\Gamma(X_i)\ .\]
 For $j$ in $I$, let $\iota_j \colon X_j \to \bigsqcup_{i \in I}^{\free} X_i$ denote the inclusion. 
 We claim that the functor
 \[ \Psi \colon \prod_{i \in I} \bV_\bA^\Gamma(X_i) \to \bV_\bA^\Gamma(\bigsqcup_{i \in I}^{\free} X_i) \]
 which sends a sequence $(A_i,\rho_i)_i$ to $\bigoplus_{i \in I} (\iota_i)_*(A_i,\rho_i)$ is well-defined (up to canonical isomorphism).
 We only have to check that the direct sum exists. This   follows from the fact that for every bounded subset $B$
  of $\bigsqcup_{i \in I}^{\free} X_i$ the subset $\{i\in I\:|\: B\cap X_{i}\not=\emptyset\}$ is finite, and 
 that $B \cap X_i$ is bounded for all $i$ in $I$.
 
 Clearly, $\Psi \circ \Phi$ is   isomorphic to the identity.
 The composition $\Psi \circ \Phi$ is also   isomorphic to the identity since $(A,\rho) \cong \bigoplus_{i \in I} (A|_{X_i},\rho|_{X_i})$ for all objects $(A,\rho)$.
 
 Using Properties~\eqref{it:Kprop.invariance} and \eqref{it:Kprop.products}, we conclude that
 \[ K(\bV_\bA^\Gamma(\bigsqcup_{i \in I}^{\free} X_i)) \xrightarrow{K(\Phi)} K(\prod_{i \in I} \bV_\bA^\Gamma(X_i)) \xrightarrow{\simeq} \prod_{i \in I} K(\bV_\bA^\Gamma(X_i)) \]
 is an equivalence. Note that the $j$-th component of this equivalence is given by the map $K(\Phi_j)$.
 Now apply \cref{rem:excision.projection} to see that $K(\Phi_j)$ agrees with the projection map coming from excision.
\end{proof}

\subsection{Calculations}
\label{sec092323}
\subsubsection{Examples of the form $(\Gamma/H)_{min,min}$}

Let $H$ be a subgroup of $\Gamma$.  Let $\bA$ be an additive category with trivial $\Gamma$-action.
	
	\begin{lem}\label{kljfioejoifjweiofjwefewfewfw}
		We have an equivalence
		\[K\bA\cX^{\Gamma}((\Gamma/H)_{min,min})\simeq  K(\Fun(BH,\bA)) \ .\]
	\end{lem}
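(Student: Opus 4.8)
The plan is to establish an equivalence of additive categories $\bV_\bA^\Gamma((\Gamma/H)_{min,min}) \simeq \Fun(BH,\bA)$ and then apply the $K$-theory functor, using Property~\eqref{it:Kprop.invariance} to conclude. Write $X := (\Gamma/H)_{min,min}$. Since $X$ carries the minimal coarse structure, every controlled morphism is $\diag_X$-controlled, so $\bV_\bA^\Gamma(X)$ is just the category of equivariant $X$-controlled $\bA$-objects with natural transformations $A(-) \to A'(-)$ that intertwine the $\rho$'s; there is no non-trivial propagation to keep track of. Moreover, since $\bA$ has trivial $\Gamma$-action, the data $\rho(\gamma)\colon A \to \gamma A$ from \cref{def:Xcontrolledobject} amounts to ordinary natural isomorphisms $A \to A(\gamma^{-1}(-))$ satisfying the cocycle condition~\eqref{def:Xcontrolledobject:it5}.

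First I would exploit the bornological structure: $X$ carries the minimal bornology, so by \cref{lem:Xcontrolled.props}\eqref{lem:Xcontrolled.props.1} an $X$-controlled $\bA$-object $(A,\rho)$ is determined by the family of objects $A(\{x\})$ for $x \in \Gamma/H$ (together with the canonical decompositions on finite subsets), and conditions~\eqref{def:Xcontrolledobject:it1}--\eqref{def:Xcontrolledobject:it4} are then automatic. The equivariance datum $\rho$ gives isomorphisms $A(\{x\}) \to A(\{\gamma^{-1}x\})$, again subject to the cocycle condition. Thus $\bV_\bA^\Gamma(X)$ is equivalent to the category of $\Gamma$-equivariant families $(M_x)_{x \in \Gamma/H}$ of objects of $\bA$ and equivariant families of morphisms — i.e. $\Gamma$-equivariant $\bA$-valued ``sheaves'' on the $\Gamma$-set $\Gamma/H$. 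The next step is the standard identification of such equivariant families over a transitive $\Gamma$-set with representations of the stabilizer: restriction to the object $x_0 := eH$ sends $(M_x)_x$ to $M_{x_0}$ equipped with the action of $H = \Stab(x_0)$ via $\rho$, giving a functor $\bV_\bA^\Gamma(X) \to \Fun(BH,\bA)$; the inverse builds $(M_{\gamma x_0})_\gamma$ from an $H$-object $N$ by the usual induced/coinduced construction (choosing coset representatives), and one checks the two composites are naturally isomorphic to the identity. (One should be slightly careful about whether morphisms in $\bV_\bA^\Gamma(X)$ are all natural transformations or only those controlled by \emph{invariant} entourages, but since the only entourage needed is $\diag_X$, which is invariant, this causes no trouble.)

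Having produced the equivalence of additive categories $\bV_\bA^\Gamma((\Gamma/H)_{min,min}) \xrightarrow{\sim} \Fun(BH,\bA)$, I would conclude by applying $K$ and invoking Property~\eqref{it:Kprop.invariance}, which guarantees that an equivalence of additive categories induces an equivalence on $K$-theory spectra. The main obstacle is the careful bookkeeping in the second step: making the passage from ``equivariant $\bA$-valued family on $\Gamma/H$'' to ``object of $\Fun(BH,\bA)$'' fully precise, in particular verifying the cocycle condition~\eqref{def:Xcontrolledobject:it5} is exactly what makes the reconstruction functor well-defined and the two composites canonically isomorphic to the identity. Everything else is formal given \cref{lem:Xcontrolled.props} and the listed properties of $K$.
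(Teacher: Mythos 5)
Your plan is correct and follows essentially the same route as the paper: identify objects of $\bV_\bA^\Gamma((\Gamma/H)_{min,min})$ with their fibers at $eH$ carrying the residual $H$-action from $\rho$, construct the inverse using a choice of coset representatives (the paper's section $s\colon \Gamma/H\to\Gamma$), and then apply Property~\eqref{it:Kprop.invariance}. The only cosmetic difference is that you insert an intermediate description as ``equivariant $\bA$-valued families over $\Gamma/H$'' before passing to $\Fun(BH,\bA)$, whereas the paper defines the two functors $\Phi$ and $\Psi$ directly.
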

	\begin{proof} 
	In view of the Property \ref{it:Kprop.invariance} of the $K$-theory functor it suffices to 
	construct an equivalence of additive categories
	\[\Phi\colon \bV_{\bA}^{\Gamma}((\Gamma/H)_{min,min})\to \Fun(BH,\bA)\ .\]
	This functor sends an object $(A,\rho)$ of $ \bV^{\Gamma }_{\bA}((\Gamma/H)_{min,min})$ to the functor sending $\gamma$ to $\rho(\gamma)(\{eH\})\colon A(\{eH\})\to A(\{eH\})$. 
		
	Furthermore, the functor $\Phi$ sends a morphism 
	\[f\colon (A,\rho)\to (A',\rho')\]
	in $ \bV^{\Gamma }_{\bA}((\Gamma/H)_{min,min})$ to the transformation $f(\{eH\})\colon  A(\{eH\})\to A'(\{eH\})$.  
	
	In order to define an inverse functor we choose a section $s\colon G/H\to G$ of the projection $G\to G/H$. Then 
	\[\Psi\colon  \Fun(BH,\bA)\to \bV_{\bA}^{\Gamma}((\Gamma/H)_{min,min})\]
	sends a functor $F\colon BH\to \bA$ to the following object $(A,\rho)$ of $ \bV_{\bA}^{\Gamma}((\Gamma/H)_{min,min})$: we choose $A(B)=\bigoplus_{b\in B}F(*)$ and $\rho$ is defined on an element $\gamma$ of $\Gamma$ such that $\rho(\gamma)(B)$   
	is the morphism  $\bigoplus_{b\in B}F(*)\to \bigoplus_{b\in \gamma^{-1}(B)}F(*)$ sending the summand with index $b=gH$ to the summand with index $\gamma^{-1} gH$ via $F(h)$, 
	where $h$ is the element of $H$ which is uniquely determined by the equation $\gamma^{-1} s(gH)=s(\gamma^{-1}g H)h$.  
	
	It is an easy exercise to construct the isomorphisms from the compositions $\Psi\circ \Phi$ and $\Phi\circ \Psi$ to the respective identity functors.
	\end{proof}
\subsubsection{Examples of the form $X_{min,max}\otimes \Gamma_{can,min}$}\label{ekfheqwufuiiu23zr23r2r}
We consider the group $\Gamma$ as a $\Gamma$-bornological coarse space $\Gamma_{can,min}$. In applications of coarse homotopy theory to proofs of the Farrell--Jones conjecture
the coarse algebraic $K$-{homology} $K\bA \cX^{\Gamma}$ twisted by  $\Yo^{s}(\Gamma_{can,min})$ plays an important role. Therefore it is relevant to calculate the spectra 
\[K\bA\cX^{\Gamma}_{\Yo^{s}(\Gamma_{can,min})}((\Gamma/H)_{min,max})\simeq K\bA\cX^{\Gamma}((\Gamma/H)_{min,max}\otimes  \Gamma_{can,min})\ .\] More generally, we will replace $\Gamma/H$ by any $\Gamma$-set $X$.

\begin{ddd}[{\cite[Def.~2.1]{bartels-reich}}]
	Let $\bA$ be an additive category with a $\Gamma$-action and let $X$ be a $\Gamma$-set. We define a new additive category denoted $\bA\ast_\Gamma X$ as follows. An object $A$ in $\bA\ast_\Gamma X$ is a family $A = (A_x)_{x\in X}$ of objects in $\bA$ where we require that $\{x\in X\mid A_x\neq 0\}$ is a finite set. A morphism	$\phi\colon A \to B$ is a collection of morphisms $\phi=(\phi_{x,g})_{(x,g)\in X\times \Gamma}$, where $\phi_{x,g}\colon A_x\to g(B_{g^{-1}x})$ is a morphism in $\bA$. We require that the set of pairs $(x,g)$ in $X\times \Gamma$ with $\phi_{x,g}\neq 0$ is finite. Addition of morphisms is defined componentwise. Composition of morphisms
	is defined as the convolution product.
\end{ddd}
\begin{rem}
	In \cite[Def.~2.1]{bartels-reich} additive categories with right $\Gamma$-action are used. For us it is more convenient to consider left $\Gamma$-actions. 
\end{rem}
Let $H$ be a subgroup of $\Gamma$.
\begin{ddd}
	We will denote $\bA\ast_\Gamma(\Gamma/H)$ by $\bA[H]$.
\end{ddd}
If $\bA$ is the category of finitely generated, free $R$-modules for some ring $R$, then $\bA[H]$ is equivalent to the category of finitely generated, free $R[H]$-modules.

The following calculation closely follows Bartels--Farrell--Jones--Reich \cite[Sec.~6.1 and Proof of Prop.~6.2]{MR2030590}.

\begin{prop}\label{fwefjewfeoewf3453453455} For every $\Gamma$-set $X$ we have an equivalence 
	\[\bV_{\bA}^{\Gamma}(X_{min,max}\otimes \Gamma_{can,min})\simeq \bA\ast_\Gamma X\ .\]
\end{prop}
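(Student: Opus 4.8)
The plan is to construct an equivalence of additive categories $\Phi\colon \bV_{\bA}^{\Gamma}(X_{min,max}\otimes \Gamma_{can,min})\to \bA\ast_\Gamma X$ directly and check it is fully faithful and essentially surjective. First I would unravel what an equivariant $(X_{min,max}\otimes \Gamma_{can,min})$-controlled $\bA$-object looks like. The underlying set is $X\times\Gamma$, the bornology is generated by $B'\times B$ with $B'\subseteq X$ arbitrary and $B\subseteq\Gamma$ finite (since $X_{min,max}$ has the maximal bornology and $\Gamma_{can,min}$ the minimal one), and the coarse structure is the product of the minimal coarse structure on $X$ with $\cC_{can}$. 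By \cref{lem:Xcontrolled.props} an object $(A,\rho)$ is determined up to canonical isomorphism by the objects $A(\{(x,\gamma)\})$ together with the transition isomorphisms; because the support function takes values in locally finite subsets and bounded sets have finite intersection with the locally finite support, for each fixed $x\in X$ the object must be ``supported at finitely many $\Gamma$-fibres'' in the appropriate sense. The equivariance datum $\rho$ lets one transport everything to the fibre over $e\in\Gamma$, so the object is essentially the family $(A_x)_{x\in X}:=(A(\{(x,e)\}))_{x\in X}$ with $\{x\mid A_x\neq 0\}$ finite — which is exactly an object of $\bA\ast_\Gamma X$. One has to be careful: the $\Gamma$-action on $X\times\Gamma$ is $\delta(x,\gamma)=(\delta x,\delta\gamma)$, so $\rho(\delta)$ identifies $A(\{(x,\gamma)\})$ with $\delta A(\{(\delta^{-1}x,\delta^{-1}\gamma)\})$; using $\rho$ to pin down the fibre over $e$ gives the $\Gamma$-action on the family $(A_x)_x$ that appears in the definition of $\bA\ast_\Gamma X$.

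Next I would analyze morphisms. A morphism $f\colon(A,\rho)\to(A',\rho')$ is $U$-controlled for some invariant entourage $U$ of the product; since $X$ carries the minimal coarse structure, $U$ forces the $X$-coordinate to be preserved, and the $\Gamma_{can,min}$-coordinate is controlled by some $\Gamma(B\times B)$ with $B\subseteq\Gamma$ finite. Evaluating on one-point sets and using \cref{lem:Xcontrolled.props.1}, $f$ is determined by the component maps $A(\{(x,\gamma)\})\to A'(\{(x,\gamma')\})$ with $(\gamma,\gamma')$ lying in the generating entourage, i.e.\ $\gamma'=\gamma g^{-1}$ for $g$ ranging over a finite subset of $\Gamma$. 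Transporting to the fibre over $e$ via $\rho,\rho'$, the data become maps $\phi_{x,g}\colon A_x\to g(A'_{g^{-1}x})$ with only finitely many nonzero, and the equivariance relation $\rho'(\delta)\circ f=(\delta f)\circ\rho(\delta)$ translates into the requirement that the full collection is invariant — which is precisely how a morphism in $\bA\ast_\Gamma X$ is packaged (a finite collection $(\phi_{x,g})$). I would check that composition of controlled morphisms, which is computed fibrewise as a sum over factorizations, matches the convolution product defining composition in $\bA\ast_\Gamma X$, and that addition is componentwise on both sides. This gives a well-defined additive functor $\Phi$; the inverse $\Psi$ sends $(A_x)_x$ to the object with $A(B):=\bigoplus_{(x,\gamma)\in B\cap \text{support}} \gamma(A_x)$ and the evident $\rho$, following the recipe in the proof of \cref{kljfioejoifjweiofjwefewfewfw}.

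I expect the main obstacle to be the bookkeeping around the non-trivial $\Gamma$-action on $\bA$ and the convolution product: one must verify carefully that the composition of a $\Gamma(B\times B)$-controlled morphism with a $\Gamma(B'\times B')$-controlled one, which lands in $\Gamma(BB'\times BB')$-controlled morphisms, corresponds under $\Phi$ to the convolution $(\psi\phi)_{x,g}=\sum_{g_1g_2=g}g_1(\psi_{g_1^{-1}x,g_2})\circ\phi_{x,g_1}$, including getting all the $\gamma$-twists and the associativity conventions (strict versus weak $\Gamma$-action) right. The verification that $\Phi$ and $\Psi$ are mutually quasi-inverse is then, as in \cref{kljfioejoifjweiofjwefewfewfw}, a matter of writing down the natural isomorphisms $\Psi\circ\Phi\cong\id$ and $\Phi\circ\Psi\cong\id$ using \cref{lem:Xcontrolled.props} (so that an object is the direct sum of its one-point restrictions over its locally finite support, each fibre over $e$ recovering the original family). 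Everything else — properness, $\Gamma$-invariance of the structures, the finiteness conditions matching up — is routine once the dictionary between one-point data and the families $(A_x)_x$, $(\phi_{x,g})$ is set up. Since the statement and its proof closely follow \cite[Sec.~6.1]{MR2030590}, I would cite that reference for the detailed diagram chase and present the argument at the level of the construction of $\Phi$ and $\Psi$ together with the identification of composition laws.
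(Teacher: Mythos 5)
Your proposal follows essentially the same route as the paper's proof: define $\Phi$ by evaluating an equivariant $(X_{min,max}\otimes\Gamma_{can,min})$-controlled object on the $1$-fibre one-point sets, observe that the maximal bornology on $X$ combined with the minimal bornology on $\Gamma$ forces the family $(A_x)_x := (A(\{(x,1)\}))_x$ to have finite support, use that $X$ carries the minimal coarse structure to restrict propagation to the $\Gamma$-direction, and use $\rho$ to transport everything to the $1$-fibre so that a morphism is encoded by a finite collection $(\phi_{x,g})$. This is exactly what the paper does; the only presentational difference is that you propose to exhibit an explicit quasi-inverse $\Psi$, whereas the paper verifies fully faithful and essentially surjective directly (using freeness of the $\Gamma$-action on $X\times\Gamma$ for fullness, as you also invoke). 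So the two arguments are the same in substance.

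There is, however, one genuine error in your formula for $\Psi$. You set $A(B) := \bigoplus_{(x,\gamma)} \gamma(A_x)$, so on one-point sets $A(\{(x,\gamma)\}) = \gamma(A_x)$. This does not admit an equivariance datum $\rho$ in general: $\rho(\delta)$ would need to give an isomorphism $A(\{(x,\gamma)\}) \cong \delta A(\{(\delta^{-1}x,\delta^{-1}\gamma)\})$, i.e.\ $\gamma(A_x) \cong \gamma(A_{\delta^{-1}x})$, which fails for a generic family $(A_x)_x$ because there is no $\Gamma$-action on the family of objects in $\bA\ast_\Gamma X$ (contrary to what you suggest at the end of your first paragraph — the definition of $\bA\ast_\Gamma X$ imposes no equivariance on objects, only on nothing; the $\Gamma$-action on $\bA$ enters only in the morphisms). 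The correct assignment is $A(\{(x,\gamma)\}) := \gamma(A_{\gamma^{-1}x})$; with this the required $\rho(\delta)$ is the canonical identification $\gamma(A_{\gamma^{-1}x}) = \delta\bigl((\delta^{-1}\gamma)(A_{(\delta^{-1}\gamma)^{-1}\delta^{-1}x})\bigr)$, and one recovers $A(\{(x,1)\}) = A_x$ as the paper requires. Your formula works in the proof of the earlier Lemma because there all one-point values are the same object $F(*)$; here the $A_x$ vary and the $\gamma^{-1}$-shift of the index is essential.
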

\begin{proof} 
The desired equivalence is given by an exact functor
\[\Phi\colon \bV_{\bA}^{\Gamma }(X_{min,max}\otimes \Gamma_{can,min})\to \bA\ast_\Gamma X\ .\]

We define $\Phi$ as follows:
\begin{enumerate}
	\item For an object $(A,\rho)$ in $\bV_{\bA}^{\Gamma }(X_{min,max}\otimes \Gamma_{can,min})$, we define $\Phi(A,\rho)_x$ as $A(\{(x,1)\})$.
	\item For a morphism $f\colon (A,\rho)\to (A',\rho')$ we define $\Phi(f)_{x,g}$ as the composition
	\[A(\{x,1\})\xrightarrow{f}A'(\{x\}\times F)\xrightarrow{p_g}A'(\{x,g\})\xrightarrow{\rho'(g)}gA'(\{g^{-1}x,1\})\ ,\]
	where $F$ is a finite subset of $\Gamma$ containing $g$ and $p_g$ is the projection arising from the identification $\bigoplus_{f\in F}A(\{x,f\})\xrightarrow{\cong}A(\{x\}\times F)$.
\end{enumerate}
Note that $\Phi(f)_{x,g}$ is independent of the choice of $F$.

We will first show that $\Phi$ is fully faithful. A morphism $f\colon (A,\rho)\to (A',\rho')$ is determined by its values on $A(\{x,1\})$ by equivariance. Since $X_{min, max}$ has the minimal coarse structure, the family $(\Phi(f)_{x,g})_{(x,g)\in X\times \Gamma}$
determines $f$. Hence $\Phi$ is faithful.

{Since the $\Gamma$-action on $X_{min,max}\otimes \Gamma_{can,min}$ is free, for every finite subset $F$ of $\Gamma$ and every family of morphisms $A(\{x,1\})\to A'(\{x\}\times F)$ indexed by points $x$ in $X$ there exists a unique equivariant extension to a morphism $f\colon (A,\rho)\to (A,\rho')$. Let $\phi\colon \Phi(A,\rho)\to \Phi(A',\rho')$ be any morphism in $A\ast_\Gamma X$. Let $F:=\{g\in \Gamma\mid \exists x\in X : \phi_{g,x}\neq 0\}$, then $F$ is a finite subset of $\Gamma$. The family of morphisms
\[\left((A(\{x,1\})\xrightarrow{\bigoplus\limits_{g\in F}\rho'_{g^{-1}}\circ \phi_{x,g}}\bigoplus_{g\in F}A'(\{x,g\})\xrightarrow{\cong}A'(\{x\}\times F)\right)_{x\in X}\]
extends to a morphism $f\colon (A,\rho)\to (A',\rho')$ with $\Phi(f)=\phi$. This shows that $\Phi$ is fully faithful.}

We now show that $\Phi$ is essentially surjective. Every finitely supported family $(A_x)_{x\in X}$ of objects of $A$ extends essentially uniquely to an equivariant object $(A,\rho)$ with $A(\{x,1\})=A_x$ for all $x$ in $X$. This uses the choice of finite sums of the objects $A_x$.  \end{proof}

Let $X$ be a $\Gamma$-set.
\begin{kor}
	We have an equivalence
	\[K\bA\cX^\Gamma(X_{min,max}\otimes\Gamma_{can,min})\simeq K(\bA\ast_\Gamma X)\ .\]
\end{kor}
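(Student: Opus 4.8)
The plan is to obtain this corollary as an immediate consequence of \cref{fwefjewfeoewf3453453455}. First I would unwind the definition: since $K\bA\cX^\Gamma = K \circ \bV_\bA^\Gamma$, we have
\[
K\bA\cX^\Gamma(X_{min,max}\otimes\Gamma_{can,min}) = K\bigl(\bV_{\bA}^{\Gamma}(X_{min,max}\otimes \Gamma_{can,min})\bigr)\ .
\]
Now \cref{fwefjewfeoewf3453453455} supplies an exact functor
\[
\Phi\colon \bV_{\bA}^{\Gamma}(X_{min,max}\otimes \Gamma_{can,min})\longrightarrow \bA\ast_\Gamma X
\]
which is shown there to be fully faithful and essentially surjective, i.e.\ an equivalence of additive categories; in particular it admits an exact inverse up to natural isomorphism. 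Applying $K$ and invoking Property~\eqref{it:Kprop.invariance} of the algebraic $K$-theory functor --- which sends isomorphic exact functors to equivalent maps, and hence equivalences of additive categories to equivalences of spectra --- yields the asserted equivalence $K\bA\cX^\Gamma(X_{min,max}\otimes\Gamma_{can,min})\simeq K(\bA\ast_\Gamma X)$.

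There is no real obstacle here: all the work has already been carried out in \cref{fwefjewfeoewf3453453455}, and the corollary is a formal deduction. The only points one should keep in mind are that $\Phi$ is an \emph{exact} functor (being additive and an equivalence, this is automatic) and that Property~\eqref{it:Kprop.invariance} is formulated precisely so as to turn such an equivalence of categories into an equivalence of $K$-theory spectra. If one additionally wanted a naturality statement in the $\Gamma$-set $X$ --- which the corollary as stated does not claim --- one would check, directly from the explicit formulas defining $\Phi$ in the proof of \cref{fwefjewfeoewf3453453455}, that $\Phi$ intertwines the functoriality of $\bV_{\bA}^{\Gamma}((-)_{min,max}\otimes\Gamma_{can,min})$ with that of $\bA\ast_\Gamma(-)$; but this is routine.
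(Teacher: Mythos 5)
Your proposal is correct and takes essentially the same route as the paper: the corollary is stated immediately after \cref{fwefjewfeoewf3453453455} with no explicit proof precisely because it follows by applying $K$ and Property~\eqref{it:Kprop.invariance} to the equivalence of additive categories established there, exactly as you argue.
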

 
\subsubsection{Examples of the form $(\Gamma/H)_{min,min}\otimes\Gamma_{?,max}$.}
Let $H$ be a subgroup of $\Gamma$ and let $\bA$ be an additive category with $\Gamma$-action.
\begin{lem}\label{rgoergug9gu43g9}
	If $H$ is finite, then
	\[K\bA\cX^{\Gamma}((\Gamma/H)_{min,min}\otimes \Gamma_{max,max})\simeq  K\bA\cX^{\Gamma}((\Gamma/H)_{min,min}\otimes \Gamma_{can,max})\simeq  K(\bA[H])\ .\]
	Otherwise
	\[K\bA\cX^{\Gamma}((\Gamma/H)_{min,min}\otimes \Gamma_{max,max})\simeq  K\bA\cX^{\Gamma}((\Gamma/H)_{min,min}\otimes \Gamma_{can,max})\simeq 0\ .\]
\end{lem}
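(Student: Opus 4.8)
The plan is to reduce everything to continuity of $K\bA\cX^{\Gamma}$ (\cref{prop:coarseK.continuous}): by \cref{feoijewoif234345}, for every $Y$ in $\Gamma\BC$ one has $K\bA\cX^{\Gamma}(Y)\simeq\colim_{S\in\cF(Y)}K\bA\cX^{\Gamma}(S_{Y})$, where $\cF(Y)$ is the filtered poset of locally finite, invariant subsets of $Y$ equipped with the induced structures. So the first step is to classify these subsets for $Y=(\Gamma/H)_{min,min}\otimes\Gamma_{?,max}$ with $?\in\{max,can\}$: an invariant subset $S$ of such a $Y$ is determined by its fibre $S_{eH}:=\{g\in\Gamma\mid(eH,g)\in S\}$, which has to be left-$H$-invariant and is hence a union of right cosets $Hg$; moreover $S$ is locally finite precisely when $S_{eH}$ is finite. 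If $H$ is infinite this forces $S_{eH}=\emptyset$, so $Y$ admits no non-empty locally finite invariant subset and continuity yields $K\bA\cX^{\Gamma}((\Gamma/H)_{min,min}\otimes\Gamma_{max,max})\simeq K\bA\cX^{\Gamma}((\Gamma/H)_{min,min}\otimes\Gamma_{can,max})\simeq 0$. This disposes of the case that $H$ is infinite.

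Now assume $H$ is finite. Then a non-empty $S\in\cF(Y)$ corresponds to a finite non-empty set of right cosets $\{Hs_{1},\dots,Hs_{m}\}$ and equals $\bigsqcup_{i=1}^{m}\{(\delta H,\delta s_{i})\mid\delta\in\Gamma\}$. The key computation is that, with the structure induced from $Y$, this $\Gamma$-bornological coarse space is isomorphic to $P_{m}\otimes T$, where $P_{m}$ is the $m$-point bounded blob with trivial $\Gamma$-action (maximal coarse structure, minimal bornology) and $T:=(\Gamma,\cC\langle\{(\gamma,\gamma')\mid\gamma^{-1}\gamma'\in H\}\rangle,\cB_{min})$ carries the left translation action; finiteness of $H$ enters to ensure that the individual blobs are bounded and that the cross-terms $s_{i}Hs_{j}^{-1}$ are finite and hence contained in every relevant coarse structure. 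Since an injective map between bounded blobs is a coarse equivalence and $-\otimes T$ preserves equivalences, the inclusions $S\hookrightarrow S'$ induce coarse equivalences $P_{m}\otimes T\to P_{m'}\otimes T$, which $K\bA\cX^{\Gamma}$ sends to equivalences. Hence the continuity colimit is a filtered colimit of equivalences (after passing to the cofinal subposet of non-empty subsets), and $K\bA\cX^{\Gamma}(Y)\simeq K\bA\cX^{\Gamma}(P_{1}\otimes T)=K\bA\cX^{\Gamma}(T)$.

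It remains to identify $K\bA\cX^{\Gamma}(T)$ with $K(\bA[H])$. For this the plan is to run the same analysis for the space $Y':=(\Gamma/H)_{min,max}\otimes\Gamma_{can,min}$, whose locally finite invariant subsets are classified by finite subsets of $\Gamma/H$; again each such subset with its induced structure is isomorphic to some $P_{m}\otimes T$ — but this time the fibrewise coarse structure on the $i$-th piece is conjugate to $\cC\langle\{(\gamma,\gamma')\mid\gamma^{-1}\gamma'\in H\}\rangle$ and must be straightened by right translation by a chosen coset representative. As before, continuity gives $K\bA\cX^{\Gamma}(Y')\simeq K\bA\cX^{\Gamma}(T)$. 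On the other hand, \cref{fwefjewfeoewf3453453455} (applied after $K$) identifies $K\bA\cX^{\Gamma}(Y')$ with $K(\bA\ast_{\Gamma}(\Gamma/H))=K(\bA[H])$. Combining the two identifications gives $K\bA\cX^{\Gamma}(T)\simeq K(\bA[H])$, and therefore $K\bA\cX^{\Gamma}((\Gamma/H)_{min,min}\otimes\Gamma_{max,max})\simeq K\bA\cX^{\Gamma}((\Gamma/H)_{min,min}\otimes\Gamma_{can,max})\simeq K(\bA[H])$.

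The part I expect to be the main obstacle is the key computation identifying the induced structure on a locally finite invariant subset with $P_{m}\otimes T$: one has to classify these subsets by their fibre over $eH$, check carefully that both the coarse structure and the bornology induced from $Y$ (resp.\ $Y'$) agree with those of $P_{m}\otimes T$ — for $Y'$ this requires conjugating by coset representatives to trivialise the twist on each piece — and verify that the transition maps in the continuity colimit are coarse equivalences. Granting this, the conclusion is a formal consequence of continuity together with \cref{fwefjewfeoewf3453453455}.
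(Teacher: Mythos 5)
Your proposal is correct, and the computation you flag as the main obstacle does go through (including the conjugation step for $Y'$). But your route is genuinely different from the paper's. The paper's (very terse) proof directly adapts the functor $\Phi$ from \cref{fwefjewfeoewf3453453455} to the space $(\Gamma/H)_{min,min}\otimes\Gamma_{?,max}$, establishing an equivalence of additive categories $\bV_\bA^\Gamma((\Gamma/H)_{min,min}\otimes\Gamma_{?,max})\simeq\bA[H]$ (resp.\ the zero category for $H$ infinite) and only then taking $K$-theory; the only change required is in the finiteness argument, since the family $(A(\{(x,1)\}))_{x\in\Gamma/H}$ is now finitely supported not because $(\Gamma/H)\times\{1\}$ is bounded (it isn't: the bornology on $\Gamma/H$ is minimal here) but because the fibre over $\{eH\}\times\Gamma$ is a finite $H$-invariant set, hence a finite union of right $H$-cosets, which for $H$ finite also constrains the support in the first coordinate via equivariance.

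You instead invoke continuity of $K\bA\cX^\Gamma$ to write its value on $Y$ as a filtered colimit over locally finite invariant subsets, identify each nonempty such subset with $P_m\otimes T$, note that the transition maps are coarse equivalences, and then bootstrap the identification $K\bA\cX^\Gamma(T)\simeq K(\bA[H])$ from \cref{fwefjewfeoewf3453453455} by running the same reduction for the auxiliary space $(\Gamma/H)_{min,max}\otimes\Gamma_{can,min}$. This uses more machinery (continuity plus the $Y'$ detour) and only recovers the statement at the level of $K$-theory rather than at the level of additive categories, but it has the merit of reusing \cref{fwefjewfeoewf3453453455} as a black box rather than re-running its proof with modified finiteness arguments; it also makes the role of local finiteness very transparent. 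One small imprecision: under your identifications the inclusion $S\hookrightarrow S'$ is not literally $\sigma\otimes\id_T$ for the induced injection $\sigma\colon\{1,\dots,m\}\to\{1,\dots,m'\}$ but only close to it (the discrepancy is given by right multiplication by elements of $H$, which lies in the entourage $U_H$); this is harmless since $K\bA\cX^\Gamma$ identifies close maps, but it should be stated.
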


\begin{proof}
We argue similarly as in the proof of  \cref{fwefjewfeoewf3453453455} for $X=(\Gamma/H)_{min,min}$.
If $H$ is finite, then the set $F$ appearing in the proof is still finite,  but for a different reason.

If $H$ is infinite, then there are no non-trivial $\Gamma$-invariant  $(\Gamma/H)_{min,min}\otimes\Gamma_{?,max}  $  controlled modules (this does not depend on the coarse structures).
\end{proof}

\subsection{Change of groups}

Let $H$ be a subgroup of $\Gamma$.
\begin{theorem}
There is an equivalence of $H$-equivariant coarse homology theories
\[\indd_H^\Gamma\colon K\bA\cX^H \xrightarrow{\simeq} K\bA\cX^\Gamma\circ \Ind_H^\Gamma\ .\]
\end{theorem}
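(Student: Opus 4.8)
The plan is to construct an explicit equivalence of additive categories $\bV^{H}_{\bA}(X) \xrightarrow{\simeq} \bV^{\Gamma}_{\bA}(\Ind_{H}^{\Gamma}(X))$, natural in the $H$-bornological coarse space $X$, and then apply the invariance property \eqref{it:Kprop.invariance} of the $K$-theory functor. Recall from \cref{rem:jtjbzuzgkvrgkieilfu} that the underlying $\Gamma$-set of $\Ind_{H}^{\Gamma}(X)$ is $\Gamma \times_{H} X$, that a bounded subset is generated by the images of $\{\gamma\} \times B$ for $B$ bounded in $X$, and that the coarse structure is generated by images of $\diag_{\Gamma} \times U$. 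The key geometric observation is that $\Ind_{H}^{\Gamma}(X)$ decomposes, as a (non-equivariant, or rather $H$-equivariant via a chosen set of coset representatives) bornological coarse space, into a free union $\bigsqcup^{\free}$ of copies of $X$ indexed by $\Gamma/H$, with $X$ itself (embedded via $x \mapsto [e,x]$) being one invariant coarse component in the sense used in the proof of \cref{prop:coarseK.additive} and \cref{rem:excision.projection}.

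First I would define the functor on objects: given an equivariant $X$-controlled $\bA$-object $(A,\rho)$ over $H$, I extend it to $\Gamma \times_{H} X$ by declaring its value on $\{\gamma\} \times B$ (more precisely, on the image of such a set) to be $\gamma A(B)$ equipped with the transported structure, and on a general bounded subset by the push-out/direct-sum formula forced by \cref{def:Xcontrolledobject}\eqref{def:Xcontrolledobject:it3}; the $\Gamma$-equivariance structure $\rho'$ is then determined by the cocycle condition \eqref{def:Xcontrolledobject:it5} together with the original $\rho$ on $H$ and the free transitive action of $\Gamma$ on the set of components $\Gamma/H$. This is essentially the ``induced module'' construction. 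Conversely, restriction along the embedding $\iota_{[e]}\colon X \to \Ind_{H}^{\Gamma}(X)$, $x \mapsto [e,x]$ — which is $H$-equivariant and a coarse embedding onto the component $\{[e,x]\}$ — sends an equivariant $\Ind_{H}^{\Gamma}(X)$-controlled object to an $X$-controlled one over $H$. I would check that these two functors are mutually inverse up to natural isomorphism: one composite is literally the identity, and the other is isomorphic to the identity because every $\Gamma \times_{H} X$-controlled object is the direct sum of its restrictions to the components $\{[\gamma]\} \times X$, exactly as in the argument of \cref{prop:coarseK.additive}. On morphisms, a $U$-controlled morphism over $H$ extends uniquely by $\Gamma$-equivariance to a $(\diag_{\Gamma} \times U)$-controlled morphism, using that $\Gamma$ acts freely on the set of components, and this matches the description of morphisms in $\bV^{\Gamma}_{\bA}$ as colimits over invariant entourages.

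Then I would verify naturality in $X$: for a morphism $\phi\colon X \to X'$ in $H\BC$ the induced $\phi_{*}$ commutes with the extension-by-induction up to the canonical isomorphism, because $\Ind_{H}^{\Gamma}$ is a functor and pushforward is compatible with the coset decomposition. Finally, applying $K$ and using \eqref{it:Kprop.invariance} yields the claimed equivalence $\indd_{H}^{\Gamma}\colon K\bA\cX^{H} \xrightarrow{\simeq} K\bA\cX^{\Gamma}\circ \Ind_{H}^{\Gamma}$; that this agrees with the transformation denoted $\indd_{H}^{\Gamma}$ constructed motivically (analogous to \eqref{regeg43t34434} in the ordinary-homology case, via $q_{H}(\kappa)\circ \hat p_{\Gamma}$) is a matter of unwinding the definitions of $\hat P_{\Gamma}$ and $Q_{H}(\kappa)$ on controlled objects.

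The main obstacle I expect is bookkeeping the equivariance data: checking carefully that the extension $(A,\rho) \mapsto (A',\rho')$ genuinely satisfies all four axioms of \cref{def:Xcontrolledobject} — in particular that the support-finiteness axiom \eqref{def:Xcontrolledobject:it4} survives (it does, because each bounded subset of $\Gamma \times_{H} X$ meets only finitely many components and within each component reduces to the bounded case over $X$), and that the cocycle identity \eqref{def:Xcontrolledobject:it5} for $\rho'$ holds on the nose rather than just up to isomorphism, which requires the $\Gamma$-action on $\bA$ to be strict (as assumed in \cref{ffopjiofr23r2u23oif}). A secondary subtlety is that $\Ind$ involves the intermediate constructions $\hat P_{\Gamma}$ and $Q_{H}(\kappa)$ rather than a bare quotient, so one should either work directly with the explicit description from \cref{rem:jtjbzuzgkvrgkieilfu} or trace through \cref{rgioergerggegergreg}; I would use the explicit description to keep the argument transparent, as in \cref{prop:coarseK.additive}. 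No continuity, flasqueness or excision input is needed beyond what is already packaged in the $K$-theory properties \eqref{it:Kprop.invariance} and \eqref{it:Kprop.colimits}.
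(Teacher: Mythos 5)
Your proposal is correct and takes essentially the same approach as the paper: the paper restricts controlled objects along the $H$-invariant coarse component $X\cong H\times_H X\hookrightarrow \Ind_H^\Gamma X$ and checks, as in the proof of \cref{kljfioejoifjweiofjwefewfewfw} (choosing coset representatives to build the inverse), that this gives an equivalence of additive categories, which is exactly your restriction/``induced module'' pair. The only stylistic difference is the order in which the two functors are written down; the content is identical.
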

In the proof we will construct a equivalence in the other direction. We state the theorem in this form since this is the more common direction.
\begin{proof}
	Let $(X,\cB,\cC)$ be a bornological coarse space. Recall from the \cref{rem:jtjbzuzgkvrgkieilfu} that the bornological coarse space $\Ind_H^\Gamma X$ is given by the set $\Gamma\times_H X$ with bornology generated by the subsets $\{g\}\times B$ for $B$ in $\cB$ and coarse structure generated by the entourages $\diag_\Gamma\times U$ for $U$ in $\cC$.
	
	Note that $H\times_H X\simeq X$ is an $H$-invariant coarse component of $\Ind_H^\Gamma X$. Hence, restricting an object $(A,\rho)$ of $\bV_\bA^\Gamma(\Ind_H^\Gamma X)$ to $(A|_X,\rho_X)$ yields a functor
	\[\bV_\bA^\Gamma(\Ind_H^\Gamma X)\to \bV_\bA^H(X)\ .\]
	Similarly to the proof of \cref{kljfioejoifjweiofjwefewfewfw}, one checks that this functor is an equivalence.
\end{proof}

Let $H$ be a subgroup of $\Gamma$. Sending a $\Gamma$-equivariant $X$-controlled object $(A,\rho)$ to the object $(A,\{\rho(h)\}_{h\in H})$ yields a natural transformation
\begin{equation}
\res_H^\Gamma\colon K\bA\cX^\Gamma\to K\bA\cX^H\circ \Res_H^\Gamma\ .
\end{equation}

\subsection{Variations on the definition}

Let again $\bA$ be an additive category with a $\Gamma$-action and $(X,\cB,\cC)$ be a $\Gamma$-bornological coarse space.
Intuitively, an equivariant $X$-controlled $\bA$-object $(A,\rho)$ is some (infinite) sum of objects in $\bA$ parametrized by points in $X$ together with an action of $\Gamma$.
One may want to keep track of the ``global'' object associated to an $X$-controlled object explicitly.
The purpose of this section is to give an alternative definition of $\bV_\bA^\Gamma(X)$ which accomplishes precisely this,
and discuss a variation of this definition which leads to an example of a non-continuous coarse homology theory.

Since an equivariant $X$-controlled $\bA$-object usually involves an infinite number of objects in $\bA$, we need to enlarge our coefficient category appropriately.
Therefore, let $\bA \to \widehat{\bA}$ be a fully faithful and $\Gamma$-equivariant embedding of $\bA$ into an additive category $\widehat{\bA}$ with $\Gamma$-action which admits sufficiently large direct sums.
The sum completion of $\bA$ is a canonical choice for $\widehat{\bA}$.

For an object $A$ in $\widehat{\bA}$, let $\Pi(A)$ denote the set of idempotents on $A$ whose image splits off as a direct sum and is isomorphic to an object in $\bA$.

\begin{ddd}\label{ddd:global.controlled.object}
 A \emph{global equivariant $X$-controlled $\bA$-object} is a triple $(A,\phi,\rho)$ consisting of
 \begin{enumerate}
  \item an object $A$ in $\widehat{\bA}$;
  \item a function $\phi \colon \cB \to \Pi(A)$;
  \item a morphism $\rho(\gamma) \colon A \to \gamma A$ for every element $\gamma$ in $\Gamma$;
 \end{enumerate}
 such that the following conditions are satisfied:
 \begin{enumerate}
  \item\label{it:global.controlled.object1} The function $\phi$ satisfies the following relations:
   \begin{enumerate}
    \item $\phi(\emptyset) = 0$;
    \item $\phi(B_1 \cup B_2) = \phi(B_1) + \phi(B_2) - \phi(B_1 \cap B_2)$;
    \item $\phi(B_1 \cap B_2) = \phi(B_1) \circ \phi(B_2)$.
   \end{enumerate}
  \item\label{it:global.controlled.object2} For every bounded subset $B$ of $X$, there exists some finite subset $F$ of $B$ such that $\phi(B) = \phi(F)$.
  \item For all pairs of elements $\gamma$, $\gamma'$ in $\Gamma$, we have $\rho(\gamma'\gamma) = \gamma\rho(\gamma') \circ \rho(\gamma)$.
  \item For all elements $\gamma$ of $\Gamma$ and bounded subsets $B$ of $X$, we have the equality
  \[ \phi(\gamma^{-1} B) = \rho(\gamma)^{-1} \circ \gamma \phi(B) \circ \rho(\gamma)\ .\qedhere\]
 \end{enumerate}
\end{ddd}

In contrast to the data specifying an equivariant $X$-controlled object, this definition does not fix a chosen image for each of the idempotents $\phi(B)$.

\begin{ddd}
 A morphism $f \colon (A,\phi,\rho) \to (A',\phi',\rho')$ of global equivariant $X$-controlled $\bA$-objects is a morphism $f \colon A \to A'$ in $\widehat{\bA}$ satisfying the following conditions:
 \begin{enumerate}
  \item $f$ is equivariant in the sense that $(\gamma f) \circ \rho(\gamma) = \rho'(\gamma) \circ f$;
  \item $f$ is controlled in the sense that the set
  \[ \bigcap \{ U \subseteq X \times X \mid \forall\ B,B' \in \cB \colon U[B] \cap B' = \emptyset \Rightarrow \phi'(B')f\phi(B) = 0 \} \]
  is an entourage of $X$.\qedhere
 \end{enumerate}
\end{ddd}

Morphisms of global equivariant $X$-controlled $\bA$-objects can be composed.
We denote the resulting category by $\bV_{\bA \subseteq \widehat{\bA}}^\Gamma(X)$.
Similar to the discussion in \cref{ffopjiofr23r2u23oif}, one shows that $\bV_{\bA \subseteq \widehat{\bA}}^\Gamma(X)$ is additive. If $f\colon X\to X^{\prime}$ is a morphism of $\Gamma$-bornological coarse spaces, then we define a functor
\[f_{*}\colon\bV_{\bA \subseteq \widehat{\bA}}^\Gamma(X)\to \bV_{\bA \subseteq \widehat{\bA}}^\Gamma(X^{\prime})\]
which sends $(A,\phi,\rho)$ to  $(A,f_{*}\phi,\rho)$, were $f_{*}\phi(B^{\prime}):=\phi(f^{-1}(B^{\prime}))$ for all $B^{\prime}$ in $\cB^{\prime}$. Furthermore, the construction is functorial
 with respect to commutative squares of additive functors
\[\xymatrix{
 \bA\ar[r]\ar[d]_{\Phi} & \widehat{\bA}\ar[d]^{\widehat{\Phi}} \\
 \bA'\ar[r] & \widehat{\bA}'
}\]
in which the horizontal arrows are fully faithful and $\Gamma$-equivariant embeddings.

We have already indicated how objects in $\bV_\bA^\Gamma(X)$ correspond to objects in $\bV_{\bA \subseteq \widehat{\bA}}^\Gamma(X)$ and vice versa,
namely by summing up all values of an equivariant $X$-controlled object and choosing images of idempotents, respectively.
In fact, it is not difficult to show the following.

\begin{prop} 
There is a zig-zag of equivalence between the functors $\bV_\bA^\Gamma$ and $\bV_{\bA \subseteq \widehat{\bA}}^\Gamma$ from $\Gamma\BC$ to $\Add$.
\end{prop}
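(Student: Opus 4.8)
The plan is to mediate between the two descriptions by a third category in which the idempotents $\phi(B)$ come equipped with chosen images. For a $\Gamma$-bornological coarse space $X$, let $\bV^{\Gamma,\mathrm{sp}}_{\bA\subseteq\widehat{\bA}}(X)$ be the category whose objects are global equivariant $X$-controlled $\bA$-objects $(A,\phi,\rho)$ in the sense of \cref{ddd:global.controlled.object} together with, for every bounded $B$, a chosen image $A(B)$ of $\phi(B)$ (an object of $\widehat{\bA}$ isomorphic to an object of $\bA$, with a chosen split monomorphism $A(B)\hookrightarrow A$ and split epimorphism $A\twoheadrightarrow A(B)$ composing to $\phi(B)$) such that these choices are natural in $B$ along the inclusions $B\subseteq B'$; morphisms are the global morphisms $f\colon A\to A'$ of \cref{ddd:global.controlled.object}. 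As in \cref{ffopjiofr23r2u23oif} one checks that $\bV^{\Gamma,\mathrm{sp}}_{\bA\subseteq\widehat{\bA}}(X)$ is additive, and the construction is functorial in $X$ (via push-forward) and in the embedding $\bA\subseteq\widehat{\bA}$, so it defines a functor $\Gamma\BC\to\Add$.

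First I would show that the forgetful functor $\bV^{\Gamma,\mathrm{sp}}_{\bA\subseteq\widehat{\bA}}\to\bV^\Gamma_{\bA\subseteq\widehat{\bA}}$ is a natural equivalence. It is fully faithful since the morphism sets coincide; it is essentially surjective because, by the definition of $\Pi(A)$, each $\phi(B)$ has an image splitting off, and using the inclusion–exclusion relations in \cref{ddd:global.controlled.object}\eqref{it:global.controlled.object1}, the finiteness condition \eqref{it:global.controlled.object2}, and the $\rho$-compatibility, one can make these choices coherently over the poset $\cB$ and $\Gamma$-equivariantly — this is the same bookkeeping as in \cref{lem:Xcontrolled.props}, where everything reduces to the pairwise orthogonal atoms $\phi(\{x\})$.

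Next I would show that the functor $\bV^{\Gamma,\mathrm{sp}}_{\bA\subseteq\widehat{\bA}}\to\bV^\Gamma_\bA$, sending $(A,\phi,\rho,A(-))$ to the equivariant $X$-controlled $\bA$-object with underlying functor $B\mapsto A(B)$ (structure maps and $\rho$-data transported along the chosen inclusions) and a global morphism $f$ to the natural transformation $A(-)\to A'(U[-])$ it induces — which is $U$-controlled for the support entourage $U$ of $f$, hence defines an element of $\colim_{U\in\cC^\Gamma}\Mor_U$ — is also a natural equivalence. Essential surjectivity uses the hypothesis that $\widehat{\bA}$ admits sufficiently large direct sums: given $(A',\rho')$ in $\bV^\Gamma_\bA(X)$, pick an object $\widehat{A}$ of $\widehat{\bA}$ representing $\bigoplus_{x\in X}A'(\{x\})$, let $\phi(B)$ be the projection onto the summands indexed by the finite set $\sigma(B)$, and use \cref{def:Xcontrolledobject}\eqref{def:Xcontrolledobject:it3}–\eqref{def:Xcontrolledobject:it4} (which say precisely that $A'(B)\cong\bigoplus_{x\in\sigma(B)}A'(\{x\})$) together with \cref{lem:Xcontrolled.props} to verify the axioms of \cref{ddd:global.controlled.object}; the canonical splittings then recover $(A',\rho')$. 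Full faithfulness amounts to the statement that a $U$-controlled natural transformation between the local data of two chosen global objects assembles, by summing up components, to a unique global morphism which is $U$-controlled in the sense of \cref{ddd:global.controlled.object}, compatibly with the colimit over $\cC^\Gamma$.

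Composing the two equivalences yields the asserted zig-zag
\[\bV^\Gamma_\bA\ \xleftarrow{\ \simeq\ }\ \bV^{\Gamma,\mathrm{sp}}_{\bA\subseteq\widehat{\bA}}\ \xrightarrow{\ \simeq\ }\ \bV^\Gamma_{\bA\subseteq\widehat{\bA}}\]
of functors $\Gamma\BC\to\Add$; one is forced into a zig-zag rather than a single equivalence because globalizing a local object requires a non-canonical choice of direct sum in $\widehat{\bA}$. I expect the main obstacle to be entirely of a bookkeeping nature: checking that the chosen images of the $\phi(B)$ and the globalizing direct sums can be organized coherently with respect to the poset of bounded sets and the relation $\phi(\gamma^{-1}B)=\rho(\gamma)^{-1}\circ\gamma\phi(B)\circ\rho(\gamma)$, that a global object is pinned down up to isomorphism by the data of its bounded pieces, and that the colimit over entourages defining the morphisms of $\bV^\Gamma_\bA$ matches the controlled-morphism condition of \cref{ddd:global.controlled.object} — all routine but somewhat lengthy — together with the verification that every construction above is natural in $X$ and in $\bA\subseteq\widehat{\bA}$.
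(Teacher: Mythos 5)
The paper omits a proof here entirely (it only says ``it is not difficult to show''), so there is no proof of record to compare against; I can only assess your sketch on its own terms. Your strategy — mediating via a category $\bV^{\Gamma,\mathrm{sp}}_{\bA\subseteq\widehat{\bA}}$ of global objects equipped with chosen splittings, with a forgetful equivalence in one direction and a ``take local pieces'' equivalence in the other — is the natural route and is essentially correct. The observation that the zig-zag is forced because globalization requires a non-canonical choice of direct sums in $\widehat{\bA}$ is the right reason.

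Two places deserve more than a wave of the hand, and one is a genuine subtlety you gloss over. First, a cosmetic point: your intermediate category should require the chosen images $A(B)$ to lie in $\bA$ (via the fully faithful embedding), not merely ``isomorphic to an object of $\bA$'', so that the local functor actually lands in $\Fun(\cB,\bA)$ as \cref{def:Xcontrolledobject} requires. Second, and more seriously: faithfulness of $\bV^{\Gamma,\mathrm{sp}}_{\bA\subseteq\widehat{\bA}}\to\bV^\Gamma_\bA$ requires that a global morphism $f\colon A\to A'$ in $\widehat{\bA}$ is determined by the family of its compressions $\phi'(B')f\phi(B)$ over bounded $B,B'$. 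This is \emph{not} automatic for an arbitrary additive $\widehat{\bA}$ with ``sufficiently large direct sums'': if $A'$ is a large direct sum in $\widehat{\bA}$ that is not a coproduct in the strong sense, there can be a nonzero $f$ with $\phi'(B')f\phi(B)=0$ for all $B,B'$, and such an $f$ would even be controlled with $V_f=\emptyset$, breaking faithfulness. The statement is fine for the sum completion (where direct sums are coproducts and morphisms are column-finite matrices), which the paper singles out as the canonical choice, but if you want the proposition for general $\widehat{\bA}$ you should record the hypothesis that morphisms out of the relevant direct sums in $\widehat{\bA}$ are determined by their components — or simply work with the sum completion. Similarly, your essential-surjectivity step in that direction (assembling the componentwise maps $A(\{x\})\to\widehat{A}'$ into a single $f$) implicitly uses the coproduct universal property of $\bigoplus_{x}A(\{x\})$ in $\widehat{\bA}$, which again holds in the sum completion but should be stated. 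With those caveats made explicit, the rest — orthogonality of the atoms $\phi(\{x\})$ from \cref{ddd:global.controlled.object}\eqref{it:global.controlled.object1}, finiteness from \eqref{it:global.controlled.object2}, and the $\rho$-compatibility transporting the chosen images along $\rho(\gamma)$ — is, as you say, routine bookkeeping paralleling \cref{lem:Xcontrolled.props}.
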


\begin{ex}\label{gfoihwjgoij32rtgergergerge}
We now provide a modification of the definition of $\bV_{\bA \subseteq \widehat{\bA}}^\Gamma$
which leads to a non-continuous coarse homology theory. 
 
 Let $(A,\phi,\rho)$ be a global equivariant $X$-controlled $\bA$-object.
 Condition \eqref{it:global.controlled.object2} implies, in the presence of Condition \eqref{it:global.controlled.object1},
 that the object $(A,\phi)$ is essentially determined by its restriction to the poset of finite subsets of $X$, and that the support of $(A,\phi)$, i.e.\ the set $\{ x \in X \mid \phi(\{x\} \neq 0 \}$,
 is a locally finite subset of $X$.
 
 Dropping Condition \eqref{it:global.controlled.object2}, we obtain another additive category $\bV_{\bA,\Psi}^\Gamma(X)$ which is also functorial in $\Gamma$-bornological coarse spaces.
 Taking non-connective algebraic $K$-theory of this category also gives rise to an equivariant coarse homology theory $K\bA\cX^\Gamma_\Psi$ since the proofs of \cref{lem:coarseK.ucontinuity}, \cref{lem:coarseK.closemaps}, \cref{lem:coarseK.flasqueness} and \cref{prop:coarseK.excision} go through without change.
 However, the proof of continuity (\cref{prop:coarseK.continuous}) does not apply to $K\bA\cX^\Gamma_\Psi$ since the condition ensuring that the support of an object is locally finite has been omitted.
 
 In fact, the following example shows that the coarse homology theory $K\bA\cX_\Psi$ is not continuous:
 Suppose that there exists an object $A$ in $\bA$ whose class in $K_0$ is non-trivial.
 Consider the bornological coarse space $\IN_{min,max}$.
 Choose an ultrafilter $\cF$ on $\IN$ and define the function $\phi \colon \cP(\IN) \to \Pi(A)$ by
 \[ \phi(B) := \begin{cases} \id_A & B \in \cF, \\ 0 & B \notin \cF. \end{cases} \]
 Then $(A,\phi)$ is a global $\IN_{min,max}$-controlled $\bA$-object since $\cF$ is an ultrafilter.
 Moreover, the morphism of bornological coarse spaces $\IN_{min,max} \to *$ induces a homomorphism
 \[ \pi_0 K\bA\cX_\Psi(\IN_{min,max}) \to \pi_0 K\bA\cX_\Psi(*) \cong K_0(\bA) \]
 which maps the class $[(A,\phi)]$ to $[A] \neq 0$.
 
 On the other hand, locally finite subsets of $\IN_{min,max}$ are precisely the finite subsets of $\IN$.
 Since each finite subset $F$ is a union of coarse components of $\IN_{min,max}$, the pair of subsets $(F,\IN\setminus F)$ is a complementary pair.
 From excision, we obtain a direct sum decomposition
 \[ K\bA\cX_\Psi(\IN_{min,max}) \simeq K\bA\cX_\Psi(F_{min,max}) \oplus K\bA\cX_\Psi((\IN\setminus F)_{min,max})\ ,\]
 in which the projection $K\bA\cX_\Psi(\IN_{min,max}) \to K\bA\cX_\Psi(F_{min,max})$ is induced by the functor restricting $\IN_{min,max}$-controlled objects to $F$ (cf.~\cref{rem:excision.projection}).
 Since the restriction of $(A,\phi)$ to any finite subset of $\IN$ is the zero object,
 we conclude that the class $[(A,\phi)]$ does not lie in the image of the comparison map
 \[ \colim_{F \subseteq \IN\ \text{finite}} \pi_0 K\bA\cX_\Psi(F_{min,max}) \to \pi_0 K\bA\cX_\Psi(\IN_{min,max})\ .\]
 In particular, $K\bA\cX_\Psi$ is not continuous.
\end{ex}

\part{Cones and assembly maps}

\section{Cones}\label{rgergiujoi345346456456}

{In this section we will introduce the cone and the `cone at infinity' functors, and discuss their properties. The cone at infinity is later used in \cref{secjks9990rfwe} to define the universal assembly map, and in \cref{ifjwoifwefewfewff} to transform coarse homology theories into topological homology theories. In conjunction with the Rips complex construction the cone at infinity will be used in \cref{secjkfwe0002323} to construct the universal coarse assembly map.}

{The first three Sections~\ref{secjsdfgnk23w}--\ref{secjn2309sdmnb} are technical preparations. The cone functor is then  defined and discussed in \cref{seccone} and \cref{secjknds2sd23}.}

\subsection{\texorpdfstring{$\Gamma$}{Gamma}-uniform bornological coarse spaces}
\label{secjsdfgnk23w}
 
In this section we introduce the category $\Gamma\UBC$ of $\Gamma$-uniform {bornological} coarse spaces. Its objects are $\Gamma$-bornological {coarse} spaces with an additional $\Gamma$-uniform structure. {The additional datum of a $\Gamma$-uniform structure is needed in order to define hybrid structures, see \cref{sechybrid}.}

We start with recalling some basics on uniform spaces. Let $X$ be a set and $\cT$ be a subset of $\cP(X\times X)$, the power set of $X\times X$.
\begin{ddd}
The set $\cT$ is a {uniform structure} if it is {non-empty}, closed under composition, {inversion}, supersets, finite intersection, {every element of $\cT$ contains the diagonal of $X$,} and if for every $U$ in $\cT$ there exists  $V$ in $\cT$ such that $V\circ V\subseteq U$.
\end{ddd}

\begin{rem}
 {Note that any subset $S$ of $\cP(X \times X)$ with the property that for every $U$ in $S$ there exists $V$ in $X$ such that $V \circ V \subseteq U$ generates a uniform structure on $X$ by taking the closure of $S$ under composition, inversion, supersets and finite intersection.}
\end{rem}

 The elements of $\cT$ are called \emph{uniform entourages}. We will consider $\cT$ as a filtered partially ordered set whose order relation is the opposite of the inclusion relation.
\begin{ddd}
A \emph{uniform space} is a pair $(X,\cT)$ of a set $X$ together with a uniform structure $\cT$.
\end{ddd}

Let $(X,\cT)$ and $(X^{\prime},\cT^{\prime})$ be uniform  spaces and $f\colon X\to X^{\prime}$ be a map of sets.
\begin{ddd}
The map $f$ is called a \emph{uniform map} if for every uniform entourage $U^{\prime}$  of $X^{\prime}$  we have   $(f\times f)^{-1}(U^{\prime})\in \cT$.
\end{ddd}

If a group $\Gamma$ acts on a uniform space $(X,\cT)$, then it acts on the set of uniform entourages~$\cT$. We let $\cT^{\Gamma}$  denote  the subset of  $\cT$ of $\Gamma$-invariant uniform entourages.

\begin{ddd}
A  \emph{$\Gamma$-uniform space} is a uniform space $(X,\cT)$ with an action of $\Gamma$ by automorphisms such that $\cT^{\Gamma}$ is cofinal in $\cT$.
\end{ddd}

A uniform structure $\cT$ on a $\Gamma$-set  $X$  such that  $X$ is a  $\Gamma$-uniform  
space will be called a \emph{$\Gamma$-uniform structure}.

\begin{ex}
The uniform structure $\cT_{d}$ of a metric space $(X,d)$ is generated by the  uniform entourages
\begin{equation}\label{fhbeufhiu24r}
U_{r}:=\{(x,y)\in X\times X\:|\: d(x,y) \le r\}
\end{equation}
for all $r$ in $(0,\infty)$.
 
If $\Gamma$ acts isometrically on a metric space $(X,d)$, then the {associated uniform space} $X_{u,d}:=(X,\cT_{d})$ 
is a $\Gamma$-uniform space. 

If the metric is implicitly clear, then we will also write $X_{u}$ instead of $X_{u,d}$.

{A uniformly continuous map between metric spaces induces a uniform map between the
{associated} uniform spaces.}

The standard metric turns $\R$ into a uniform space $\R_{u}$.
  The action of $\Z$ on $\R$ by dilatations $(n,x)\mapsto 2^{n}x$ is an action on $\R_{u}$ by automorphisms of uniform spaces, but $\R_{u}$ is not a  $\Z$-uniform space.
\end{ex}

Let $\Gamma\bU$ be the category of $\Gamma$-uniform spaces and  uniform equivariant maps.

\begin{ex}\label{rliheiogherigergregreg}
Let $K$ be a simplicial complex with a simplicial $\Gamma$-action. On $K$ we consider the path quasi-metric induced by the spherical metric on the simplices. This quasi-metric is preserved by $\Gamma$ and  the associated uniform structure  is a $\Gamma$-uniform structure. Therefore a simplicial complex with a simplicial $\Gamma$-action gives rise to a $\Gamma$-uniform space $K_{u}$.

If $K\to K^{\prime}$ is an equivariant simplicial map, then it is a  uniform map  $K_{u}\to K^{\prime}_{u}$  between the $\Gamma$-uniform spaces.
\end{ex}

We now consider the combination of uniform and bornological coarse structures. 
We consider a $\Gamma$-set $X$ with a $\Gamma$-coarse structure $\cC$ and a $\Gamma$-uniform structure $\cT$. 
\begin{ddd} We say that $\cC$ and $\cT$ are \emph{compatible} if $\cC^{\Gamma}\cap \cT^{\Gamma}\not=\emptyset$.
\end{ddd}
In words, the coarse and the uniform structures are compatible if there exists an invariant entourage which is both a coarse entourage and a uniform entourage.

\begin{ddd}
 We define the category $\Gamma\UBC$ of $\Gamma$-uniform bornological coarse spaces as follows:
\begin{enumerate}
\item The objects of $\Gamma\UBC$ are $\Gamma$-bornological coarse spaces with an additional compatible $\Gamma$-uniform structure. 
\item The   morphisms of $\Gamma\UBC$ are morphisms of $\Gamma$-bornological coarse spaces which are in addition uniform.\qedhere
\end{enumerate}
\end{ddd}

\begin{ex}
Let $(X,d)$ be a quasi-metric space with an action of $\Gamma$  by isometries and a $\Gamma$-invariant bornology $\cB$. Assume that the metric and the bornology are compatible in the sense  that for every $r$ in $(0,\infty)$ and $B$ in $\cB$ we have $U_{r}[B]\in \cB$, where $U_{r}$ is as in \eqref{fhbeufhiu24r}. Then we get a $\Gamma$-uniform bornological coarse space $X_{du}$ with the following structures:
\begin{enumerate}
\item The coarse structure is generated by the {coarse} entourages $U_{r}$ for all $r$ in $(0,\infty)$.
\item The uniform structure is generated by the {uniform entourages} $U_{r}$ for all $r$ in $(0,\infty)$.
\item The bornology is $\cB$.
\end{enumerate}
If $(X^{\prime},d^{\prime})$ is a second quasi-metric space with isometric $\Gamma$-action and $f\colon X\to X^{\prime}$ is a proper (this refers to the bornologies), $\Gamma$-equivariant contraction, then
$f\colon X_{du}\to X^{\prime}_{du}$ is a morphism of $\Gamma$-uniform bornological coarse spaces.
\end{ex}

\begin{ex}\label{ifjwoifwoefwefewfewfw}
Let $K$ be a $\Gamma$-simplicial complex equipped with the spherical quasi-metric.
Then we can equip $K$ with the bornology of metrically bounded subsets and obtain a $\Gamma$-uniform bornological coarse space $K_{du} $.
 Alternatively we can equip it with the maximal bornology and get the $\Gamma$-uniform bornological coarse space $K_{du,max}$.

A morphism of $\Gamma$-simplicial complexes $f \colon K\to K^{\prime}$ always induces the two morphisms $f\colon K_{du,max}\to K^{\prime}_{du,max}$ and $K_{du,max}\to K^{\prime}_{du}$
of $\Gamma$-uniform bornological coarse spaces. If $f$ is proper (in the sense that preimages of simplices are finite complexes), then it also induces a morphism $f \colon K_{du}\to K_{du}^{\prime}$.
\end{ex}

\begin{ex}\label{fhwiofweewff}
For a $\Gamma$-uniform bornological coarse space $X$ let $F_{\cT}(X)$ denote the underlying $\Gamma$-bornological coarse space obtained by forgetting the datum of the uniform structure.
Let $X$ and $Y$ be $\Gamma$-uniform bornological coarse spaces. Then we define the $\Gamma$-uniform bornological coarse space $X\otimes Y$  such that  $\cF_{\cT}(X\otimes Y)=F_{\cT}(X)\otimes F_{\cT}(Y)$  and the $\Gamma$-uniform structure on $X\otimes Y$ is generated by the  products $U\times V$ for all pairs of uniform entourages $U$ of $X$ and $V$ of $Y$. \end{ex}

\subsection{Hybrid structures}
\label{sechybrid}

{In this section we will define   hybrid coarse structures, which will feature in the definition of cones in \cref{seccone}.}

We consider a $\Gamma$-uniform bornological coarse space~$X$ with coarse structure $\cC$, bornology~$\cB$, and uniform structure $\cT$. Let furthermore a $\Gamma$-invariant big family $\cY=(Y_{i})_{i\in I}$ be given. 
\begin{ddd}
The pair $(X,\cY)$ is called \emph{hybrid data}.
\end{ddd}
In this situation we can define the hybrid coarse structure $\cC_{h}$ {as follows.}

Note that $\cP(X\times X)^{\Gamma}$ is a filtered poset with the opposite of the inclusion relation. We consider a 
 function $\phi\colon I\to \cP(X\times X)^{\Gamma}$, i.e., an order-preserving map. 
 \begin{ddd} \label{efokwpfwefwefwefewffew}The function $\phi$
  is called $\cT^{\Gamma}$-admissible, if for every $U$ in $\cT^{\Gamma}$ there exists $i$ in $I$ such that $\phi(i)\subseteq U$.\end{ddd}

Given a $\cT^{\Gamma}$-admissible function $\phi\colon I\to \cP(X\times X)^{\Gamma}$ we define the entourage
\[U_{\phi}:=\left\{(x,y)\in X\times X\:|\: {\big(\forall i\in I\:|\: (x,y) \in Y_{i}\times Y_{i} \text{ or } (x,y) \in \phi(i)\big)}\right\}\ .\]
Note that $U_{\phi}$ is $\Gamma$-invariant.
\begin{ddd}\label{efioofwefwefwefewf}
The \emph{hybrid coarse structure} $\cC_{h}$  is the coarse structure generated by the entourages $U\cap U_{\phi}$ for all $U$ in ${\cC^\Gamma}$ and $\cT^{\Gamma}$-admissible functions $\phi\colon I\to \cP(X\times X)^{\Gamma}$.
\end{ddd}

\begin{ddd}
The \emph{hybrid space} $X_{h}$  is defined to be the $\Gamma$-bornological coarse space with underlying set $X$,  the hybrid coarse structure $\cC_{h}$ and the bornological structure $\cB$.
\end{ddd}

A morphism of hybrid data $f\colon (X,\cY)\to (X^{\prime},\cY^{\prime})$ is a morphism of $\Gamma$-uniform bornological coarse spaces which is compatible with the big families $\cY=(Y_{i})_{i\in I}$ and $\cY^{\prime}=(Y^{\prime}_{i^{\prime}})_{i^{\prime}\in I^{\prime}}$ in the sense that for every $i$ in $I$ there exists $i^{\prime}$ in $I^{\prime}$ such that $f(Y_{i})\subseteq Y^{\prime}_{i^{\prime}}$.

\begin{lem}
If $f$ is a morphism of hybrid data, then the underlying map of sets is a morphism $f\colon X_{h}\to X^{\prime}_{h}$ of $\Gamma$-bornological coarse spaces.
\end{lem}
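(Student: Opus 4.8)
The plan is to show that the set-theoretic map underlying $f$ is controlled and proper as a map $X_h \to X'_h$; properness is immediate since the bornologies on $X_h$ and $X'_h$ agree with those on $X$ and $X'$ respectively, and $f$ is already proper as a morphism of $\Gamma$-bornological coarse spaces. So the only real content is controlledness: we must check that $f \times f$ sends a generating entourage $U \cap U_\phi$ of $\cC_h$ to an entourage of $\cC'_h$. Since $f$ is a morphism of $\Gamma$-uniform bornological coarse spaces, $(f \times f)(U)$ is contained in some invariant coarse entourage $U'$ of $X'$, so it suffices to find a $\cT'^\Gamma$-admissible function $\phi' \colon I' \to \cP(X' \times X')^\Gamma$ with $(f \times f)(U_\phi) \subseteq U'_{\phi'}$ (after possibly enlarging $U'$).

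First I would fix, for each $i'$ in $I'$, the set $J(i') := \{ i \in I \mid f(Y_i) \subseteq Y'_{i'} \}$, which is nonempty and upward-closed by the compatibility of $f$ with the big families, and define
\[
\phi'(i') := \bigcap_{i \in J(i')} (f \times f)\big(\phi(i)\big)^{\mathrm{sat}},
\]
or more robustly, take $\phi'(i')$ to be the smallest invariant entourage of $X'$ containing $(f\times f)(\phi(i_0))$ for some chosen $i_0 \in J(i')$ — one has to be a bit careful here to get an honest order-preserving function, so I would instead pick a cofinal increasing chain and define $\phi'$ by a coherent choice along it, or observe that one only needs $\phi'$ up to the generated coarse structure. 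The key point to verify is $\cT'^\Gamma$-admissibility of $\phi'$: given $U'$ in $\cT'^\Gamma$, the uniformity of $f$ gives $(f\times f)^{-1}(U') \in \cT$, and after intersecting with a $\Gamma$-invariant uniform entourage (using that $\cT^\Gamma$ is cofinal in $\cT$) we get $V \in \cT^\Gamma$ with $(f\times f)(V) \subseteq U'$; by $\cT^\Gamma$-admissibility of $\phi$ there is $i$ with $\phi(i) \subseteq V$, hence $(f\times f)(\phi(i)) \subseteq U'$, and taking any $i'$ with $i \in J(i')$ gives $\phi'(i') \subseteq U'$ as desired.

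The main verification is then the containment $(f \times f)(U_\phi) \subseteq U'_{\phi'}$: take $(x,y)$ with $(f(x),f(y)) \notin Y'_{i'} \times Y'_{i'}$; then for every $i \in J(i')$ we have $f(x) \notin Y'_{i'}$ or $f(y) \notin Y'_{i'}$, hence $x \notin Y_i$ or $y \notin Y_i$, so $(x,y) \in \phi(i)$ (since $(x,y) \in U_\phi$), giving $(f(x),f(y)) \in (f\times f)(\phi(i))$ for all $i \in J(i')$, i.e.\ $(f(x),f(y)) \in \phi'(i')$. This is exactly the defining condition for membership in $U'_{\phi'}$. Finally one records that $(f\times f)(U \cap U_\phi) \subseteq U' \cap U'_{\phi'}$, which is a generating entourage of $\cC'_h$, completing the proof. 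The step I expect to require the most care is not any single estimate but the bookkeeping needed to produce a genuine order-preserving function $\phi'$ out of the pointwise choices — this is routine but fiddly, and is the reason the analogous non-equivariant lemma in \cite{buen} is stated with a short proof; the equivariant version works the same way once one insists on $\Gamma$-invariant uniform entourages throughout.
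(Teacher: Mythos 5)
Your proof is essentially correct, and since the paper simply cites \cite[Lem.~5.15]{buen} for the non-equivariant version and says nothing more, you have in fact supplied the argument that is left implicit. The approach is the expected one: bornologies are unchanged, so only controlledness is at issue, and one pushes a $\cT^{\Gamma}$-admissible $\phi$ forward to a $\cT'^{\Gamma}$-admissible $\phi'$ via the index sets $J(i')$.

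Your closing worry about ``fiddly bookkeeping'' to make $\phi'$ honestly order-preserving is, however, unfounded, and in fact the first formula you tentatively wrote down is the right one and needs no repair. Take $\phi'(i') := \bigcap_{i\in J(i')}(f\times f)(\phi(i))$ with no saturation: since $\phi(i)$ is $\Gamma$-invariant and $f$ is $\Gamma$-equivariant, $(f\times f)(\phi(i))$ is already $\Gamma$-invariant, so the superscript ``sat'' is superfluous, and an intersection of invariant sets is invariant. Order-preservation is automatic because $J$ is monotone: if $i'_1\le i'_2$ then $Y'_{i'_1}\subseteq Y'_{i'_2}$, hence $J(i'_1)\subseteq J(i'_2)$, hence $\phi'(i'_2)\subseteq\phi'(i'_1)$, which is precisely what ``order-preserving'' means under the opposite-of-inclusion order on $\cP(X'\times X')^{\Gamma}$. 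Your claim that $J(i')$ is nonempty is also not true in general, but it is not needed: if $J(i')=\emptyset$ the empty intersection is $X'\times X'$, which is harmless in the containment argument, and for the admissibility check you only ever use that the particular good index $i$ (with $\phi(i)$ small) belongs to $J(i')$ for \emph{some} $i'$, which the compatibility of $f$ with the big families guarantees directly. With the intersection definition, both the $\cT'^{\Gamma}$-admissibility of $\phi'$ and the containment $(f\times f)(U_\phi)\subseteq U'_{\phi'}$ go through exactly as you wrote, and the alternative ``choose $i_0$'' construction -- which genuinely would require extra coherence work -- can simply be dropped.
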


\begin{proof}
\cite[Lem.~5.15]{buen}.
\end{proof}

\begin{rem}
One could set up a category of hybrid data and understand the construction of the hybrid structure as a functor from hybrid data to $\Gamma$-bornological coarse spaces. 
\end{rem}

\subsection{Decomposition Theorem and Homotopy Theorem}
\label{secjn2309sdmnb}

{In this section we discuss the Decomposition Theorem and the Homotopy Theorem for hybrid coarse structures. These two theorems constitute important technical results which are needed to prove crucial properties of the cone functor in \cref{secjknds2sd23}.}

Let $A,B$ be $\Gamma$-invariant subsets of a $\Gamma$-uniform space $Y$ with uniform structure $\cT$. For an entourage  $U$  of $Y$  we set \[\cT^{\Gamma}_{\subseteq U}:=\{V\in \cT^{\Gamma}\:|\: V\subseteq U\}\ .\]
\begin{ddd}\label{fewoifweifoew23242342341}
The pair $(A,B)$ is an \emph{equivariant uniform decomposition} if
\begin{enumerate}
\item  $Y=A\cup B$, and
\item there is  an invariant  uniform entourage $U$ of $Y$   and a function
\[s\colon \cP(Y\times Y)_{\subseteq U}^{\Gamma} \to \cP(Y\times Y)^{\Gamma}\]
such that for every $W$ in $\cP(Y\times Y)_{\subseteq U}^{\Gamma}$ we have  the inclusion
\[W[A]\cap W[B]\subseteq s(W)[A\cap B]\]
and the restriction
$s_{|\cT^{\Gamma}_{\subseteq U}}$ is $\cT^{\Gamma}$-admissible.\qedhere
\end{enumerate}
\end{ddd}
According to \cref{efokwpfwefwefwefewffew},  the function $s_{|\cT^{\Gamma}_{\subseteq U}}$ is  $\cT^{\Gamma}$-admissible  if for every entourage $V$ in $\cT^{\Gamma}$ there is an entourage $W$ in $\cT^{\Gamma}_{\subseteq U}$ such that $ {s}(W)\subseteq V$.

\begin{ex}
Assume that  $K$ is a $\Gamma$-simplicial complex, and $A$ and $B$ are $\Gamma$-invariant subcomplexes such that $K=A\cup B$. Then $(A,B)$ is an equivariant uniform decomposition
of $K_{u}$. This follows from \cite[Ex.~5.19]{buen}.
\end{ex}

Let $Y$ be a $\Gamma$-uniform bornological coarse space with   an invariant  big family $\cY=(Y_{i})_{i\in I} $. We further assume that   $(A,B)$ is an   equivariant  uniform decomposition of $Y$.  We let $Y_{h}$ denote the associated bornological coarse space with the hybrid structure. 

We write $A_{h}$ for the $\Gamma$-bornological coarse space obtained from  the $\Gamma$-uniform bornological coarse structure on $A$ induced from $Y$ by first restricting the hybrid data to $A$ and then forming the hybrid structure. By 
$A_{Y_{h}}$ we denote the $\Gamma$-bornological coarse space 
obtained by restricting the structures of $Y_{h}$ to the subset $A$.
It was shown in \cite[Lem.~5.17]{buen} that $A_h=A_{Y_h}$.

\begin{ddd}
	A $\Gamma$-uniform space $(Y,\cT)$ is called Hausdorff, if $\bigcap_{U\in\cT}U=\diag_Y$.
\end{ddd}

\begin{theorem}(Decomposition Theorem)\label{sdf8923njn}
If $I=\nat$ and $Y$ is Hausdorff, then the following square in $\Gamma\Sp\cX$ is cocartesian:
\begin{equation}\label{dqwdqd3kjnr23kjrn23r32r32}
\xymatrix{\Yo^{s}((A\cap B)_{h},  A\cap B\cap \cY)\ar[r]\ar[d]&\Yo^{s}(A_{h},A\cap\cY )\ar[d]\\\Yo^{s}(B_{h},B\cap \cY )\ar[r]&\Yo^{s}(Y_{h},\cY)}
\end{equation}
\end{theorem}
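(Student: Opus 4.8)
The plan is to follow the proof of the non-equivariant Decomposition Theorem in \cite{buen} essentially verbatim; the only genuinely new points are (i) to carry the $\Gamma$-action through every construction, which is possible because the hybrid coarse structure (\cref{efioofwefwefwefewf}), the uniform structure, and the admissible functions (\cref{efokwpfwefwefwefewffew}) were set up precisely so that their $\Gamma$-invariant parts are cofinal, and (ii) to verify the extra niceness hypothesis that enters equivariant coarse excision, namely Condition~\eqref{wfopkwopfeef} of \cref{dddi34546}.

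First I would invoke \cite[Lem.~5.17]{buen} (valid equivariantly by the same argument), which identifies $A_h$, $B_h$ and $(A\cap B)_h$ with the subspaces $A_{Y_h}$, $B_{Y_h}$, $(A\cap B)_{Y_h}$ of $Y_h$ carrying the structures induced from $Y_h$; thus all four corners of the square are subspaces of $Y_h$. The theorem then splits into two parts. The first is a formal \emph{relative} upgrade of \cref{cbdbbg}: if $(A,B)$ is a coarsely excisive pair on $Y_h$ and $\cY$ is any big family on $Y_h$, then the square of relative motives \eqref{dqwdqd3kjnr23kjrn23r32r32} is cocartesian. This follows from the non-relative \cref{cbdbbg} by taking, in the stable category $\Gamma\Sp\cX$, the cofiber of the map from the pushout square obtained by applying \cref{cbdbbg} levelwise to the coarsely excisive pairs $(A\cap Y_i,B\cap Y_i)$ on $(Y_i)_{Y_h}$ (again using the equivariant \cite[Lem.~5.17]{buen} and restriction of the decomposition datum to see that a coarsely excisive pair restricts to each $Y_i$) and then passing to the filtered colimit over $i$; the fiber sequences of \cref{kjeflwfjewofewuf98ewuf98u798798234234324324343}\eqref{ifjweifjewiojwefw231} turn this cofiber comparison into the asserted cocartesian square. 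Concretely one is comparing $\Cofib\bigl(\Yo^{s}((A\cap B)_{h},A\cap B\cap\cY)\to\Yo^{s}(B_{h},B\cap\cY)\bigr)$ with $\Cofib\bigl(\Yo^{s}(A_{h},A\cap\cY)\to\Yo^{s}(Y_{h},\cY)\bigr)$.

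The second and substantial part is the claim that $(A,B)$ really is a coarsely excisive pair on $Y_h$ in the sense of \cref{dddi34546}. Checking Condition~1 is the combinatorial heart: given a generating hybrid entourage $U_0\cap U_\phi$ of $Y_h$, one must produce a hybrid entourage $W$ with $(U_0\cap U_\phi)[A]\cap(U_0\cap U_\phi)[B]\subseteq W[A\cap B]$. Here one uses the function $s$ of the equivariant uniform decomposition (\cref{fewoifweifoew23242342341}): on the ``uniform part'' of $U_\phi$ it provides the needed thickening $s(\phi(i))[A\cap B]$, and one assembles the values $s(\phi(i))$ into a single $\cT^\Gamma$-admissible function $\psi$ by a diagonal construction over $\nat$ — this is exactly the step that forces the hypothesis $I=\nat$ — and sets $W:=U_0\cap U_\psi$. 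Every set occurring can be taken $\Gamma$-invariant because $\cT^\Gamma$ is cofinal in $\cT$ and $\cC_h^\Gamma$ is cofinal in $\cC_h$, so the non-equivariant argument of \cite{buen} goes through with the superscripts~$\Gamma$ inserted. For Condition~\eqref{wfopkwopfeef} one uses that $Y$ is Hausdorff: since $\bigcap_{U\in\cT}U=\diag_Y$, the thickenings $U_\phi[A]$ collapse onto $A$ at finite scale, so that for a cofinal set of invariant hybrid entourages $V$ the set $V[A]\cap B$ differs from $A\cap B$ only inside a member of the big family $\cY$ and is therefore hybrid-nice; this is again the computation from \cite{buen}, and it is what makes \cref{239023fsdsf} applicable.

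Putting the two parts together gives the theorem. I expect the main obstacle to be the diagonal construction of $\psi$ in the verification of Condition~1 together with the bookkeeping needed to see that insisting on $\Gamma$-invariant entourages and admissible functions costs nothing — a point that hinges on the cofinality clauses built into \cref{vegieirt456456} and \cref{efokwpfwefwefwefewffew} and on the equivariant form of \cite[Lem.~5.17]{buen} relating $A_h$ to $A_{Y_h}$.
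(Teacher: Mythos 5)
Your plan hinges on showing that $(A,B)$ is a coarsely excisive pair on $Y_h$ in the sense of \cref{dddi34546}, and that claim is false under the hypotheses of the theorem. On $Y_0 \times Y_0$ every generating hybrid entourage $U_0 \cap U_\phi$ reduces to $U_0$, since the clause in the definition of $U_\phi$ is vacuously satisfied for pairs lying in $Y_i \times Y_i$; so the hybrid coarse structure restricted to $Y_0$ is just the original coarse structure of $Y$ restricted to $Y_0$. Coarse excisiveness of $(A,B)$ on $Y_h$ would therefore force coarse excisiveness of $(A\cap Y_0,B\cap Y_0)$ with respect to the original coarse structure, which the Decomposition Theorem deliberately does \emph{not} assume — this is exactly the point that distinguishes it from the cone case (\cref{ifeioewfueoiwfwfewfwf}, which does assume coarse excisiveness of $(A,B)$; cf.\ \cref{vvwoivwivweoivjweiov} and \cref{oerjgroigrgregegergeg}). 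Concretely, your proposed $W = U_0\cap U_\psi$ must satisfy $U_0[A]\cap U_0[B]\cap Y_0 \subseteq W[A\cap B]$, and the function $s$ of the uniform decomposition gives you no control at the scale of $U_0$; it only controls arbitrarily small uniform entourages.

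The fix is not a formal cofiber comparison after the fact: the quotient by the big family $\cY$ has to enter \emph{before} one invokes the uniform decomposition. The non-equivariant proof in \cite{buen} first uses $u$-continuity to fix an entourage, then excision against the equivariant complementary pair built from $\cY$ to replace each of the four relative motives by one in which a large initial piece $Y_n$ has been discarded; since these modifications happen inside $\cY$, the relative motives are unchanged. Only on the remaining "tail" is the hybrid entourage actually of uniformly small size everywhere, so that the admissible function $s$ and your diagonal construction over $\nat$ apply, and Hausdorffness yields the niceness needed for \cref{239023fsdsf}. Your reading of the role of $I=\nat$, of the equivariant version of \cite[Lem.~5.17]{buen}, and of the Hausdorff hypothesis are all right; what is missing is the initial surgery that discards $Y_n$, which is what makes the absent large-scale coarse excision harmless.
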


\begin{proof}
The proof of  \cite[Thm.~5.20]{buen}
goes through word-for-word. One just works with invariant entourages or invariant uniform neighbourhoods everywhere.
\end{proof}

We consider a $\Gamma$-uniform {bornological coarse} space $Y$ with an invariant  big family $\cY=(Y_{n})_{ {n\in \IN}} $. We consider the unit interval $[0,1]$ with the trivial $\Gamma$-action
as a $\Gamma$-uniform bornological coarse space $[0,1]_{du}$ with the structures induced from the metric. 
On the tensor product $[0,1]_{du}\otimes Y$ (\cref{fhwiofweewff}) we consider the big family   {$([0,1]\times Y_{n})_{n\in\IN}$}. Let $\cB$ denote the bornology of $Y$.
  
\begin{theorem}[Homotopy Theorem]\label{fiewfewifieowifowioioio23r}
 {Assume that for every $B$ in $\cB$ there exists $n$ in $\IN$ such that $B\subseteq Y_{n}$.}
Then the projection induces an equivalence 
 \[\Yo^{s}(([0,1]_{du}\otimes Y)_{h})\to \Yo^{s}(Y_{h})\ .\]
\end{theorem}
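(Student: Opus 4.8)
The plan is to follow the proof of the non-equivariant Homotopy Theorem in \cite[Sec.~5]{buen}, checking at each step that the arguments pass to the equivariant setting by working with $\Gamma$-invariant entourages and $\Gamma$-invariant uniform neighbourhoods throughout. The overall strategy is to interpolate between the identity of $([0,1]_{du}\otimes Y)_h$ and the composite of the projection $p\colon ([0,1]_{du}\otimes Y)_h\to Y_h$ with the inclusion $i_0\colon Y_h\to ([0,1]_{du}\otimes Y)_h$ at $0$, and to show that both composites $p\circ i_0 = \id_{Y_h}$ and $i_0\circ p$ induce the same map on $\Yo^s$ as the identity. Since $p\circ i_0=\id$ already on the nose, the real content is showing $\Yo^s(i_0\circ p)\simeq \id_{\Yo^s(([0,1]_{du}\otimes Y)_h)}$, i.e.\ that $i_0\circ p$ is (coarsely) equivalent to the identity, or can be connected to it by a sequence of maps detected by $\Yo^s$.

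The key steps I would carry out are: First, set up the squeezing map. For a sufficiently fast-growing function one rescales the $[0,1]$-coordinate so that, near the members $Y_n$ of the big family, the interval gets contracted; here the hypothesis that every bounded $B$ lies in some $Y_n$ is exactly what is needed so that the squeezing map, although only ``eventually'' a contraction in the hybrid structure, is still a morphism $([0,1]_{du}\otimes Y)_h\to ([0,1]_{du}\otimes Y)_h$ of $\Gamma$-bornological coarse spaces — one checks it is controlled for the hybrid coarse structure using $\cT^\Gamma$-admissible functions, and proper using the bound-to-$Y_n$ hypothesis. Second, observe that this squeezing map is close to the identity on $([0,1]_{du}\otimes Y)_h$ (the displacement in the $[0,1]$-direction is bounded by $1$, hence an entourage), so $\Yo^s$ sends it to the identity by \cref{kjeflwfjewofewuf98ewuf98u798798234234324324343}\eqref{iweufhf89wfu89ewfew245}. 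Third, post-compose the squeezed map with a further ``collapse to $0$'' homotopy: because the image of the squeezing map already lies, outside the preimage of each $Y_n$, in a region where the $[0,1]$-coordinate is controlled, one can construct an equivariant coarse cylinder (or directly a morphism) exhibiting the collapsed map $i_0\circ p$ as homotopic, in the sense of \cite[Sec.~4.3]{buen}, to the squeezed map. Using the corollary that $\Yo^s$ sends homotopic maps to equivalent maps, this gives $\Yo^s(i_0\circ p)\simeq \Yo^s(\text{squeeze})\simeq\id$, and combining with $p\circ i_0=\id$ we conclude that $\Yo^s(p)$ is an equivalence with inverse $\Yo^s(i_0)$.

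The main obstacle I expect is the precise construction and verification of the squeezing/collapsing morphism: one must choose the reparametrisation of the $[0,1]$-factor carefully relative to the exhaustion $(Y_n)_{n\in\IN}$ so that the resulting self-map is simultaneously (a) controlled for the hybrid coarse structure — which forces the use of $\cT^\Gamma$-admissible functions $\phi\colon\IN\to\cP(([0,1]\times Y)^2)^\Gamma$ and a somewhat delicate estimate showing the squeezed entourages stay inside $U\cap U_\phi$ — and (b) proper, and (c) $\Gamma$-equivariant, and (d) close to the identity. In the non-equivariant case this is \cite[Thm.~5.?]{buen}; here one has to re-run it replacing ``uniform entourage'' by ``$\Gamma$-invariant uniform entourage'' and using that $\cT^\Gamma$ is cofinal in $\cT$ and $\cC^\Gamma$ cofinal in $\cC$, together with $u$-continuity of $\Yo^s$ (\cref{kjeflwfjewofewuf98ewuf98u798798234234324324343}\eqref{efwijfiewfoiefe3u40934332r}) to reduce to a single invariant entourage at a time. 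Once the morphism is in hand, the rest — closeness, homotopy invariance — is formal and cites results already established in \cref{riojreoirjgoiggoiergergege}. So in the write-up I would simply say: ``The proof of \cite[Thm.~5.?]{buen} goes through word-for-word, working with invariant entourages and invariant uniform neighbourhoods everywhere, and using \cref{kjeflwfjewofewuf98ewuf98u798798234234324324343} in place of the corresponding non-equivariant statements,'' mirroring the phrasing used for the Decomposition Theorem above.
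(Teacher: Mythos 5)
Your proposal takes essentially the same approach as the paper: the paper's proof is the single sentence ``The proof of \cite[Thm.~5.25]{buen} goes through in the present equivariant case,'' and your closing paragraph proposes exactly that, with the same prescription of replacing entourages and uniform neighbourhoods by their $\Gamma$-invariant counterparts. Your reconstruction of the underlying squeezing-and-collapsing argument from \cite{buen} and the role of the exhaustion hypothesis is a reasonable sketch of what the cited non-equivariant proof does, so there is no gap.
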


\begin{proof}
The proof of  \cite[Thm.~5.25]{buen} goes through in the present equivariant case.
\end{proof}

\subsection{The cone functor}
\label{seccone}

In this section we define the cone functor
\begin{equation}\label{ecew2r4r43r}
\cO\colon \Gamma\UBC\to \Gamma\BC
\end{equation}
{and prove that decompositions of the spaces which are simultaneously uniformly and coarsely excisive  lead to corresponding coarsely excisive decompositions of the cones.}
  
We consider the metric space $[0,\infty)$ with the metric induced from the inclusion into $\R$ and the trivial $\Gamma$-action.  We get a 
 $\Gamma$-uniform bornological coarse space  $[0,\infty)_{du}$. For a $\Gamma$-uniform bornological coarse space $Y$  we  form the $\Gamma$-uniform bornological coarse space \[  [0,\infty)_{du}\otimes Y \] (\cref{fhwiofweewff}).
 This $\Gamma$-uniform bornological coarse   space has a canonical
big family given by
\begin{equation}\label{fewklfjewfejewfewf}
\cY(Y):=([0,n]\times Y)_{n\in \nat}\ .
\end{equation}

A morphism $f\colon Y\to Y^{\prime}$ of $\Gamma$-uniform bornological coarse spaces induces a morphism of hybrid data
\[(  [0,\infty)_{du}\otimes Y,\cY(Y))\to  ( [0,\infty)_{du}\otimes Y^{\prime},\cY(Y^{\prime}))\]
 given by the map $\id_{[0,\infty)}\times f$ on the underlying sets.

\begin{ddd}\label{wefjweoifwe345345}
The \emph{cone functor} \eqref{ecew2r4r43r}  is defined such that it sends the $\Gamma$-uniform bornological coarse space $Y$ to the $\Gamma$-bornological coarse space
\[\cO(Y):= ([0,\infty)_{du}\otimes Y)_{h} \] and a morphism $f\colon Y\to Y^{\prime}$ of $\Gamma$-uniform bornological coarse spaces to the morphism \[\cO(f)\colon \cO(Y)\to \cO(Y^{\prime})\] of $\Gamma$-bornological coarse spaces given by the map $\id_{[0,\infty)}\times f$ of the underlying sets. \end{ddd}
 
\begin{ex}\label{ropjreopg0iu09t34t34t34t3}
If the  $\Gamma$-uniform bornological coarse space $Y$  is discrete as a uniform and as a coarse space, then $\cO(Y)$ is flasque: Flasqueness of $\cO(Y)$ can be implemented by the map $(t,y) \mapsto (t + \frac{1}{t+1}, y)$.
 
  If the uniform structure of $Y$  is   discrete, but the coarse structure of $Y$ is strictly larger than the discrete one, then the above morphism $f$ does not implement flasqueness of $\cO(Y)$, because Condition \ref{fjweoifjewoijeoiejfiwjeiofjewfewfewf} in \cref{hckjhckhwhuiechiu1} is  violated.
 \end{ex} 

Let $X$ and $Y$ be $\Gamma$-uniform bornological coarse spaces. Recall that $F_{\cT}$ is the functor from $\Gamma$-uniform bornological coarse spaces to $\Gamma$-bornological coarse spaces which forgets the uniform structure.
\begin{lem}\label{reopgjreogjoregregregeg}
If $Y$ is discrete as a coarse space, then
\[\cO(X)\otimes F_{\cT}(Y) \cong \cO(X\otimes Y)\ .\]
\end{lem}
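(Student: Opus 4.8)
The statement asserts a natural isomorphism of $\Gamma$-bornological coarse spaces $\cO(X) \otimes F_{\cT}(Y) \cong \cO(X \otimes Y)$ under the hypothesis that $Y$ is coarsely discrete. The plan is to exhibit the obvious candidate bijection on underlying $\Gamma$-sets and then check that it matches the three structures (underlying set with $\Gamma$-action, bornology, coarse structure) on both sides. The underlying set of $\cO(X) \otimes F_{\cT}(Y)$ is $([0,\infty) \times X) \times Y$, and the underlying set of $\cO(X \otimes Y)$ is $[0,\infty) \times (X \times Y)$; the reassociation map $\sigma\colon ((t,x),y) \mapsto (t,(x,y))$ is an equivariant bijection. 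So everything reduces to showing that $\sigma$ is an isomorphism of bornological coarse structures, and for this we can forget the group action (invariance of all structures under $\Gamma$ being automatic from the definitions).

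\textbf{Bornology.} First I would dispatch the bornologies. On $\cO(X) \otimes F_{\cT}(Y)$ the bornology is generated by products $B \times B_Y$ where $B$ ranges over bounded subsets of $\cO(X)$ (equivalently of $[0,\infty)_{du} \otimes X$, since the hybrid structure does not change the bornology) and $B_Y$ over bounded subsets of $Y$; and bounded subsets of $[0,\infty)_{du}\otimes X$ are generated by $[0,n]\times B_X$. On $\cO(X\otimes Y) = ([0,\infty)_{du}\otimes (X\otimes Y))_h$ the bornology is likewise that of $[0,\infty)_{du}\otimes(X\otimes Y)$, generated by $[0,n]\times(B_X\times B_Y)$. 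Under $\sigma$ the generating set $([0,n]\times B_X)\times B_Y$ corresponds exactly to $[0,n]\times(B_X\times B_Y)$, so the bornologies agree. This is routine unwinding of \cref{fijwefiewofejfoiewjfoijojoi} and \cref{fhwiofweewff}.

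\textbf{Coarse structure — the main point.} The real content is the coarse structures, and here the hypothesis that $Y$ is coarsely discrete is essential. The coarse structure of $\cO(X)\otimes F_{\cT}(Y)$ is the product coarse structure $\cC_{h}(X) \times \cC_{\disc}(Y)$, i.e.\ generated by entourages $V \cap \diag_Y$ where $V$ is a hybrid entourage of $[0,\infty)_{du}\otimes X$ and we intersect with $\diag_Y$ because $Y$ is discrete. A hybrid entourage of $[0,\infty)_{du}\otimes X$ has the form $(U\cap U_\phi)$ with $U$ a coarse entourage of $[0,\infty)_{du}\otimes X$ and $\phi$ a $\cT^{\Gamma}$-admissible function on $\IN$ valued in invariant subsets of $([0,\infty)\times X)^{\times 2}$, where the reference big family is $\cY(X) = ([0,n]\times X)_n$. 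On the other side, $\cO(X\otimes Y)$ carries the hybrid coarse structure built from the big family $\cY(X\otimes Y) = ([0,n]\times(X\times Y))_n$, the coarse structure of $[0,\infty)_{du}\otimes(X\otimes Y)$ (which is $\cC([0,\infty))\times\cC(X)\times\cC_{\disc}(Y)$), and the uniform structure of $[0,\infty)_{du}\otimes(X\otimes Y)$, whose invariant entourages are generated by products of $\cT$-entourages of $[0,\infty)$, of $X$, and of $Y$; but since $Y$ is coarsely discrete and we intersect uniform admissibility data against the coarse structure, the effective uniform data only sees $[0,\infty)$ and $X$. Concretely I would argue: (i) the big families correspond under $\sigma$ up to the factor $Y$, namely $\sigma([0,n]\times(X\times Y)) = ([0,n]\times X)\times Y$; (ii) a $\cT^{\Gamma}$-admissible $\phi$ for $([0,\infty)_{du}\otimes X, \cY(X))$ corresponds, via $\phi'(n) := \sigma(\phi(n)) \cap (\diag\text{-condition on }Y)$, wait — more carefully: since $Y$ is coarsely discrete, an admissible function $\phi$ for the $X$-side produces an admissible function $\phi\times\diag_Y$ for the $(X\otimes Y)$-side, because the uniform structure of $Y$ plays no role once we have already restricted to $\diag_Y$ coarsely, and conversely any admissible function on the $(X\otimes Y)$-side can, after intersecting with the (automatically present) $\diag_Y$ in every coarse entourage, be pushed to an admissible function on the $X$-side; (iii) consequently the generating entourages $(U\cap U_\phi)\cap\diag_Y$ of the left-hand side match the generating entourages $U'\cap U_{\phi'}$ of the right-hand side under $\sigma$, using that $U_{\phi\times\diag_Y} = (U_\phi\times Y\times Y)\cap(\text{something})$ and carefully that $\sigma(U_\phi \cap \diag_Y) = U_{\phi'} $. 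The cleanest way to carry this out is to first observe directly from \cref{efioofwefwefwefewf} that forming the hybrid structure commutes with $-\otimes F_{\cT}(Y)$ for coarsely discrete $Y$ — i.e.\ prove the lemma ``$(Z \otimes Y)_h \cong Z_h \otimes Y$ when $Y$ is coarsely discrete and $Z$ is hybrid data with the tensored big family'' — and then apply it to $Z = [0,\infty)_{du}\otimes X$. I expect the main obstacle to be bookkeeping: verifying that $\cT^{\Gamma}$-admissibility is preserved in both directions under the correspondence of admissible functions, since admissibility is tested against all invariant uniform entourages of the tensor product, and one must check that the ``extra'' uniform directions coming from $Y$ impose no additional constraint precisely because $Y$'s coarse structure is discrete (so $U_\phi$ is intersected with $\diag_Y$ anyway, killing any dependence on the $Y$-uniform structure). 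Once that is checked, naturality in $X$ and $Y$ is immediate since $\sigma$ is visibly natural.
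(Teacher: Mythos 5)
Your proof is correct and takes the same approach as the paper, which simply states that the isomorphism is ``immediate from the definitions'': you are spelling out the definitional unwinding — the reassociation bijection, the agreement of bornologies, and the observation that because $Y$ is coarsely discrete the $Y$-factor of every coarse entourage lies in $\diag_Y$, so that $\phi \mapsto \phi\times\diag_Y$ (using that $\diag_Y$ is contained in every uniform entourage of $Y$) and $\psi\mapsto p_\ast\psi$ (using that $Y\times Y$ is a uniform entourage) mediate a bijection between admissible functions that makes the hybrid structures match. The sketch is a bit rough in places (the self-correction ``wait — more carefully'') but the plan and the two admissibility checks are exactly what is needed, so it is the paper's argument in expanded form.
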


\begin{proof}
Immediate from the definitions.
\end{proof}
 
Let  $Y$ be a $\Gamma$-uniform bornological coarse space with coarse and uniform structures $\cC$, $\cT$, and let $A,B$ be $\Gamma$-invariant subsets of $Y$.
\begin{lem}\label{ifeioewfueoiwfwfewfwf}
If $(A,B)$ is an {equivariant} uniform {(\cref{fewoifweifoew23242342341})} and coarsely excisive decomposition of $Y$, then 
\[( [0,\infty)\times A  ,    [0,\infty)\times B )\] is a coarsely excisive  pair on $\cO(Y)$.
\end{lem}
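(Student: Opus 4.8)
We must verify the two conditions in \cref{dddi34546} for the pair $([0,\infty)\times A,[0,\infty)\times B)$ on the $\Gamma$-bornological coarse space $\cO(Y)=([0,\infty)_{du}\otimes Y)_h$. Since $(A,B)$ covers $Y$, the two subsets cover $[0,\infty)\times Y$, so it remains to check the thickening condition and the niceness condition (\ref{wfopkwopfeef}). The plan is to analyse a general generating entourage $U\cap U_\phi$ of the hybrid coarse structure on $\cO(Y)$, where $U\in\cC([0,\infty)_{du}\otimes Y)^\Gamma$ and $\phi\colon\IN\to\cP(([0,\infty)\times Y)^{\times 2})^\Gamma$ is $\cT^\Gamma$-admissible (here the big family is $\cY(Y)=([0,n]\times Y)_{n\in\IN}$ from \eqref{fewklfjewfejewfewf}).

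\textbf{Step 1: the thickening condition.} Write $W:=U\cap U_\phi$ for a generating hybrid entourage. First I would use that $(A,B)$ is coarsely excisive in $Y$ (ignoring time): the coarse structure of $[0,\infty)_{du}\otimes Y$ is the product of the metric coarse structure on $[0,\infty)$ and $\cC_Y$, and for the coarse-excision inclusion $U[[0,\infty)\times A]\cap U[[0,\infty)\times B]\subseteq V[[0,\infty)\times(A\cap B)]$ one combines a coarse thickening from the $[0,\infty)$-factor with a coarse thickening $V_0$ of $A\cap B$ in $Y$ coming from coarse excisiveness. Then I would intersect with the uniform contribution: because $(A,B)$ is an \emph{equivariant uniform} decomposition there is a function $s$ with $W'[A]\cap W'[B]\subseteq s(W')[A\cap B]$ for uniform entourages $W'\subseteq U_0$, and $s$ is $\cT^\Gamma$-admissible. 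The key point is to assemble a $\cT^\Gamma$-admissible function $\psi\colon\IN\to\cP$ on the cone out of $\phi$ and $s$ (essentially $\psi(n)$ being a product of an $[0,\infty)$-uniform entourage with $s$ applied to the $Y$-uniform part of $\phi(n)$) so that $W_\psi$ controls the intersection of the $U_\phi$-parts of the two thickenings. Putting these together yields the required $W$-thickening; this is the same bookkeeping as in \cite[proof of Lem.~5.17 \& Thm.~5.20]{buen} but now tracking the coarse directions as well, and it is the step I expect to be the main obstacle, since one must show that the combined function is still admissible and that its associated entourage is still a hybrid entourage of $\cO(Y)$.

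\textbf{Step 2: niceness.} For Condition~\ref{wfopkwopfeef} I would show that for a cofinal set of invariant hybrid entourages $V$ of $\cO(Y)$ the subset $V[[0,\infty)\times A]\cap([0,\infty)\times B)$ is nice in $\cO(Y)$. The intersection is contained in a set of the form $[0,\infty)\times(V_0[A]\cap B)$ up to finitely-many-time-slices corrections near $0$; using that $(A,B)$ is a uniform decomposition, $V_0[A]\cap B$ is contained in a uniform thickening of $A\cap B$, and for subsets of the cone which are (essentially) products $[0,\infty)\times(-)$ with a trivial time-action one can invoke \cref{fuhewiufhuewffwefewf}: a product $[0,\infty)\times C$ inside $[0,\infty)_{du}\otimes Y$ (with the trivial action on the $[0,\infty)$-factor) has nice intersection behaviour, so the relevant thickenings $A'\to V'[A']$ are equivalences. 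Concretely I would exhibit a deformation contracting $V'[A']$ back onto $A'$ that is a morphism of the hybrid coarse spaces — this is where the Homotopy Theorem~\ref{fiewfewifieowifowioioio23r}-style reasoning enters — giving the required equivalence. As the excerpt's discussion of \cref{dddi34546} emphasises, this condition is the genuinely new equivariant ingredient, so I would write it out carefully, but the structure parallels the non-equivariant niceness arguments.

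\textbf{Wrap-up.} Having verified both conditions, $([0,\infty)\times A,[0,\infty)\times B)$ is a coarsely excisive pair on $\cO(Y)$ by \cref{dddi34546}, and by \cref{cbdbbg} this moreover produces a cocartesian square of motives $\Yo^s$ — though the lemma only claims the coarsely excisive statement, so the proof ends once both conditions are checked. I expect the write-up to mirror \cite[Lem.~5.17]{buen} closely, with the only real additional work being the careful handling of the admissible functions in Step 1 and the equivariant niceness in Step 2.
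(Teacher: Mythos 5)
Your Step 1 matches the paper's argument. The paper fixes a generating hybrid entourage $U_\psi\cap W_r$ with $\psi=(\phi,\kappa)$, chooses $W'$ via coarse excisiveness of $(A,B)$ in $Y$ (so $W[A]\cap W[B]\subseteq W'[A\cap B]$), and then builds the new $\cT^\Gamma$-admissible function $\phi'$ by patching: $\phi'(u)=W'$ for $u<u_0$ and $\phi'(u)=s(\phi(u))$ for $u\geq u_0$, where $u_0$ is chosen (by $\cT^\Gamma$-admissibility of $\phi$) so that $\phi(u)\subseteq U$ for $u\geq u_0$. Your sketch of "assemble a $\cT^\Gamma$-admissible function out of $\phi$ and $s$" captures this, but you omit the cutoff $u_0$, which is not merely cosmetic: the function $s$ is only defined on entourages contained in $U$, so for small times one must fall back on the coarse thickening $W'$. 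If you wrote this out, you would rediscover exactly the paper's case split.

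Your Step 2, however, addresses something the paper's proof is silent about: the paper verifies \emph{only} the thickening condition of \cref{dddi34546} and does not explicitly verify the niceness condition \ref{wfopkwopfeef}. Your instinct that both conditions appear in the definition and should be checked is correct, but the argument you sketch does not close. \cref{fuhewiufhuewffwefewf} concerns subsets of the form $A\times X$ where $A$ lies in a \emph{trivial-action} factor and $X$ is the \emph{whole} second factor; the set $V[[0,\infty)\times A]\cap([0,\infty)\times B)$ for a hybrid entourage $V=U_\psi\cap W_r$ is not of this form. Its $Y$-slice at time $t$ is governed by $\phi(\lceil t\rceil)$, which shrinks as $t\to\infty$, so the set is a "tapering" region and is not $[0,\infty)\times C$ for any fixed $C\subseteq Y$ (nor is it such a product "up to finitely many low time-slices"). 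Invoking a Homotopy-Theorem-style contraction would need a concrete morphism of hybrid bornological coarse spaces that you have not produced, and it is not obvious such a deformation exists. If you want to verify \ref{wfopkwopfeef} you should argue directly from the structure of hybrid thickenings, not via \cref{fuhewiufhuewffwefewf}.
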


\begin{proof}
We have  {$([0,\infty)\times A)\cup ([0,\infty)\times B)=[0,\infty)\times Y$.}
 
Let $s$ and $U$ be as in \cref{fewoifweifoew23242342341}, let $\phi\colon  {\IN} \to \cP(Y\times Y)^{\Gamma}$ be a $\cT^{\Gamma}$-admissible function and let
$\kappa\colon [0,\infty)\to [0,\infty)$ be monotoneously decreasing  such that $\lim_{u\to \infty} \kappa(u)=0$.  
The pair $\psi:=(\phi,\kappa)$ determines the invariant entourage
\[U_{\psi}:=\{((a,x),(b,y))\in  ([0,\infty)\times Y)^{\times 2} \:|\: |a-b|\le \kappa(\max\{a,b\}) \:\&\: (x,y)\in \phi(\lceil a \rceil )\cap \phi(\lceil b \rceil)   \}\ .\]

For $W $ in $\cC^{\Gamma}$ and $r$ in $(0,\infty)$ we consider the entourage  $W_{r}:=U_{r}\times W$ of $[0,\infty)_{d}\otimes Y$.
The entourages of the form $U_{\psi}\cap W_{r}$ for all $\psi$ as above, $r$ in $(0,\infty)$ and  $W $ in $\cC^{\Gamma}$ are cofinal     in the  hybrid coarse structure of $ \cO(Y)$. 

We now fix $\psi$, $W $   {and $r$} as above. We must show that there exist $r^{\prime}$ in $(0,\infty)$, $W^{\prime}$ in $\cC^{\Gamma}$, and $\psi^{\prime}$ such that
\begin{equation}\label{jkcjekhduihui2hr}(U_{\psi}\cap W_{r} )[[0,\infty)\times A]\cap (U_{\psi}\cap W_{r})[[0,\infty)\times B]\subseteq (U_{\psi^{\prime}}\cap W^{\prime}_{r^{\prime}})[[0,\infty)\times (A\cap B)]\ .\end{equation}
 Using coarse excisiveness of $(A,B)$  we can choose an invariant entourage $W^{\prime}$ of $X$   such that $W[A]\cap W[B]\subseteq W^{\prime}[A\cap B]$. We further set
  $r^{\prime} := r$.   
Then
\begin{equation}\label{rigjrwiogiojio4j4t} W_{r} [[0,\infty)\times A]\cap W_{r}[[0,\infty)\times B]\subseteq W^{\prime}_{r}[[0,\infty)\times (A\cap B)] .\end{equation}
By $\cT^{\Gamma}$-admissibility of $\phi$
there is $u_{0}$ in ${\IN}$ such that $\phi(u)\subseteq U$ for all $u$ in ${\IN}$ with $u\ge u_{0}$.
We define \[\phi^{\prime}\colon  {\IN} \to  {\cP(Y\times Y)^{\Gamma}}\ , \quad \phi^{\prime}(u):=\left\{\begin{array}{cc} W^{\prime} &  {u < u_0} \\ s(\phi(u))&u\ge u_{0}\end{array}\right.
\]
Then $\phi^{\prime}$ is $\cT^{\Gamma}$-admissible. We further set $\psi^{\prime}:=(\phi^{\prime},\kappa)$.
We claim that
\begin{equation}\label{kleklfefwefewf}
(U_{\psi}\cap W_{r})[[0,\infty)\times A]\cap  (U_{\psi}\cap W_{r}) [[0,\infty)\times B]
\subseteq U_{\psi^{\prime}}[ [0,\infty)\times (A\cap B)]\ .
\end{equation}
Consider a point $(u,z)$ in  $(U_{{\psi}}\cap W_{r})[[0,\infty)\times A]\cap   (U_{{\psi}}\cap W_{r}) [[0,\infty)\times B]
$. Then there exists
$(a,x)$ in $[0,\infty)\times A$ such that we have
\[|a-u|\le \kappa(\max\{a,u\}) \quad \text{and} \quad (z,x)\in \phi ({\lceil u \rceil})\cap \phi({\lceil a \rceil})\ ,\]
and there exist $(b,y)$ in  $[0,\infty)\times B$ such that
\[|b-u|\le  \kappa(\max\{b,u\}) \quad \text{and} \quad (z,y)\in \phi({\lceil u \rceil})\cap \phi({\lceil b \rceil})\ .\]
In particular, we have $z\in (\phi({\lceil u \rceil}) \cap W)[A]\cap (\phi({\lceil u \rceil}) \cap W)[B]$. If $u\ge u_{0}$, then we have  $z\in \phi^{\prime}({\lceil u \rceil}) [A\cap B]$ {by the corresponding property of $s$ (see \cref{fewoifweifoew23242342341})}.
Let $w $ in $A\cap B$ be such that we have $(z,w)\in \phi^{\prime}(\lceil u \rceil)$. Then
$((u,z),(u,w))\in U_{{\psi}^{\prime}}[[0,\infty)\times (A\cap B)]$. 
If $u<u_{0}$, then again $((u,z),(u,w))\in U_{{\psi}^{\prime}}[[0,\infty)\times (A\cap B)]$ because of the choice of $W^{\prime}$.

The relations \eqref{rigjrwiogiojio4j4t} and \eqref{kleklfefwefewf} together imply \eqref{jkcjekhduihui2hr}.
\end{proof}

\begin{rem}\label{vvwoivwivweoivjweiov}
In the above proof, in contrast to the general Decomposition Theorem~\ref{sdf8923njn} for hybrid structures, we do not use that $Y$ is Hausdorff.
\end{rem}
 
\subsection{The cone at infinity}
\label{secjknds2sd23}
  
In this section we will define the ``cone at infinity'' functor.
It fits into the cone fiber sequence.
We  discuss invariance under coarsenings and  calculate it for  discrete spaces.  Furthermore, we show that  the ``cone at infinity'' is  excisive and homotopy invariant.
 
 If $Y$ is a  $\Gamma$-uniform bornological coarse space $Y$, then 
  $\cO(Y)$ has a canonical big family $\cY(Y)$ given by   \eqref{fewklfjewfejewfewf}. 
  Recall the notation \eqref{fkhwiufuiz823zr824234242424234234234234}.
 \begin{ddd}
We define the functor
\[\cO^{\infty}\colon \Gamma\UBC\to \Gamma\Sp\cX\] by
\[\cO^{\infty}(Y):=\Yo^{s}(\cO(Y) ,\cY(Y) )\ .\qedhere\]
\end{ddd}

{Recall that $F_{\cT}$ is the functor from $\Gamma$-uniform bornological coarse spaces to $\Gamma$-bornological coarse spaces which forgets the uniform structure.}
For $n$ in $\nat$ let $([0,n]\times Y)_{\cO(Y)}$ denote the $\Gamma$-bornological coarse space given by the subset $[0,n]\times Y$  of $ \cO(Y)$  with the  induced structures. The inclusion
\[F_{\cT}(Y)\to ([0,n]\times Y)_{\cO(Y)}\ , \quad y\mapsto (0,y)\]
is an equivalence of $\Gamma$-bornological coarse spaces for every integer $n$. Hence we have an equivalence
\[\Yo^{s}(Y)\simeq \Yo^{s}(\cY(Y)_{\cO(Y)})\ .\]

\begin{kor}\label{wefoiuweiofuewwefwwefwf}
For every $\Gamma$-uniform bornological coarse space $Y$ we have a natural fiber sequence
\begin{equation}\label{gerpogikrepoergerg}
 \Yo^{s}(F_{\cT}(Y))\to \Yo^{s}(\cO(Y))\to \cO^{\infty}(Y) \xrightarrow{\partial} \Sigma \Yo^{s}(F_{\cT}(Y)) 
\end{equation}
in $\Gamma\Sp\cX$.
\end{kor}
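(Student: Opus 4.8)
The plan is to recognize \eqref{gerpogikrepoergerg} as an instance of the general cofiber sequence \eqref{fkhwiufuiz823zr824234242424234234234234} applied to the big family $\cY(Y)$ on $\cO(Y)$. By definition, $\Yo^{s}(\cO(Y),\cY(Y)) = \Cofib(\Yo^{s}(\cY(Y))\to \Yo^{s}(\cO(Y)))$, and by \cref{kjeflwfjewofewuf98ewuf98u798798234234324324343}\eqref{ifjweifjewiojwefw231} this cofiber sits in a fiber sequence
\[
\Yo^{s}(\cY(Y))\to \Yo^{s}(\cO(Y))\to \cO^{\infty}(Y)\xrightarrow{\partial}\Sigma\Yo^{s}(\cY(Y))\ .
\]
So the only thing to do is to identify $\Yo^{s}(\cY(Y))$ with $\Yo^{s}(F_{\cT}(Y))$ naturally in $Y$.

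First I would make precise the claim preceding the corollary: for each $n$ in $\nat$ the inclusion $F_{\cT}(Y)\to ([0,n]\times Y)_{\cO(Y)}$, $y\mapsto (0,y)$, is an equivalence of $\Gamma$-bornological coarse spaces. An inverse up to closeness is the projection $(t,y)\mapsto y$; composing the two ways around gives the identity on $Y$ and a self-map of $([0,n]\times Y)_{\cO(Y)}$ close to the identity, since the subset $\{((t,y),(0,y))\mid (t,y)\in [0,n]\times Y\}$ is contained in $U_{n}\times \diag_{Y}$, which is an entourage of $\cO(Y)$ (it lies in the hybrid coarse structure because $[0,n]\times Y$ is one of the members $Y_n$ of the big family $\cY(Y)$, so any pair within it is admissible). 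Since $\Yo^{s}$ preserves equivalences (\cref{kjeflwfjewofewuf98ewuf98u798798234234324324343}\eqref{iweufhf89wfu89ewfew245}), we get $\Yo^{s}(F_{\cT}(Y))\xrightarrow{\simeq}\Yo^{s}(([0,n]\times Y)_{\cO(Y)})$ compatibly with the structure maps of the filtered system (the transition maps $[0,n]\times Y\to [0,n+1]\times Y$ commute with the inclusions of $F_{\cT}(Y)$ up to closeness, hence strictly after applying $\Yo^{s}$). Passing to the colimit over $n$ and using $\Yo^{s}(\cY(Y))\simeq \colim_{n}\Yo^{s}(([0,n]\times Y)_{\cO(Y)})$ (the definition \eqref{wefweew254} of $\Yo^{s}$ on big families) yields a natural equivalence $\Yo^{s}(F_{\cT}(Y))\simeq \Yo^{s}(\cY(Y))$.

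Substituting this equivalence into the fiber sequence above gives the asserted sequence \eqref{gerpogikrepoergerg}; naturality in $Y$ is inherited from the naturality of all the ingredients, namely the naturality of the cofiber sequence \eqref{fkhwiufuiz823zr824234242424234234234234} and of the comparison $\Yo^{s}(F_{\cT}(Y))\simeq\Yo^{s}(\cY(Y))$ just constructed. The main (minor) obstacle is the bookkeeping in the previous paragraph: one must check that the equivalences $\Yo^{s}(F_{\cT}(Y))\simeq\Yo^{s}(([0,n]\times Y)_{\cO(Y)})$ assemble into a map of diagrams (over the poset $\nat$ and over the category $\Gamma\UBC$), so that the induced map on colimits is well-defined and natural — this is a routine but slightly tedious coherence verification, of exactly the kind already carried out for the non-equivariant cone in \cite{buen}, and I would simply state that the argument goes through verbatim in the equivariant setting.

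\hB
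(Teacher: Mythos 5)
Your proposal is correct and follows the same route as the paper: invoke the cofiber sequence of \cref{kjeflwfjewofewuf98ewuf98u798798234234324324343}.\ref{ifjweifjewiojwefw231} for the pair $(\cO(Y),\cY(Y))$, then identify $\Yo^{s}(\cY(Y))\simeq\Yo^{s}(F_{\cT}(Y))$ via the equivalences $F_{\cT}(Y)\to ([0,n]\times Y)_{\cO(Y)}$. You supply more explicit detail on why these are equivalences and why they are compatible across $n$, but the argument is the one the paper gives.
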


\begin{proof}
The fiber sequence is associated to the pair $(\cO(Y),\cY(Y))$, see \cref{kjeflwfjewofewuf98ewuf98u798798234234324324343}.\ref{ifjweifjewiojwefw231}.
\end{proof}

Let $Y$ be a $\Gamma$-uniform bornological coarse space.
Then we consider the   $\Gamma$-bornological coarse space
$\cO(Y)_{-}$  obtained from the $\Gamma$-uniform bornological coarse space
$\R\otimes Y$ by taking the hybrid coarse structure \ref{efioofwefwefwefewf} associated to the big family
$ ((-\infty,n]\times Y)_{n\in \nat}$. 
Note that the subset $[0,\infty)\times Y$ of $\cO(Y)_{-}$ with the induced structures is the cone $\cO(Y)$. 
We then have maps of $\Gamma$-bornological coarse spaces
\[F_{\cT}(Y)\xrightarrow{i} \cO(Y)\xrightarrow{j} \cO(Y)_{-}\xrightarrow{d} F_{\cT}(\R_{du}\otimes Y)\ .\]
The first two maps $i$ and $j$ are the inclusions, and the last map $d$  is given by the identity of the underlying sets.

\begin{prop}\label{ofwpefwefewfewf}
We have a commutative diagram in $\Gamma\Sp\cX$
\begin{equation}\label{voihoi3f3f4f}
\xymatrix{\Yo^{s}(\cO(Y))\ar@{=}[d]\ar[r]^{j}&\Yo^{s}(\cO(Y)_{-})\ar[d]^{\simeq}\ar[r]^-{d}&\Yo^{s}(F_{\cT}(\R_{du}\otimes Y))\ar[d]^{\simeq}\\
\Yo^{s}(\cO(Y))\ar[r]&\cO^{\infty}(Y)\ar[r]^-{\partial}&\Sigma \Yo^{s}(F_{\cT}(Y))}
\end{equation}
\end{prop}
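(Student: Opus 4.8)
The plan is to identify the bottom row of \eqref{voihoi3f3f4f} with the cone fiber sequence from \cref{wefoiuweiofuewwefwwefwf} and to show that the two vertical maps on the right are equivalences, whence the diagram commutes because all squares are built from the same underlying maps of sets. First I would analyse the middle vertical map $\Yo^{s}(\cO(Y)_{-})\to \cO^{\infty}(Y)$. The space $\cO(Y)_{-}$ carries the hybrid structure for the big family $((-\infty,n]\times Y)_{n\in\nat}$ on $\R_{du}\otimes Y$. Consider the $\Gamma$-invariant subset $Z:=[0,\infty)\times Y$, whose induced structure is exactly $\cO(Y)$, together with the big family $\cW:=((-\infty,n]\times Y)_{n\in\nat}$. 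Since every entourage of $\cO(Y)_{-}$ is (up to the admissible-function bookkeeping) controlled in the $\R$-direction by a function tending to $0$ at $+\infty$ and bounded by some $U_r$ near $-\infty$, one checks that $(Z,\cW)$ is an equivariant complementary pair on $\cO(Y)_{-}$: for $n$ large enough $Z\cup((-\infty,n]\times Y)=\R\times Y$. Then \cref{kjeflwfjewofewuf98ewuf98u798798234234324324343}.\ref{fwejiofjweiofuewofewf234} gives an equivalence $\Yo^{s}(Z,Z\cap\cW)\xrightarrow{\simeq}\Yo^{s}(\cO(Y)_{-},\cW)$. Now $Z\cap\cW=([0,n]\times Y)_{n\in\nat}=\cY(Y)$ with the structures induced from $\cO(Y)$, so $\Yo^{s}(Z,Z\cap\cW)=\cO^{\infty}(Y)$ by definition. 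Moreover $\cW$ is, up to cofinality, contained in the family used to build the hybrid structure, and the half-spaces $(-\infty,n]\times Y$ are flasque (translate towards $-\infty$); hence $\Yo^{s}(\cW)\simeq 0$, so $\Yo^{s}(\cO(Y)_{-})\to \Yo^{s}(\cO(Y)_{-},\cW)$ is also an equivalence. Composing, the middle vertical map is an equivalence, and by construction it is compatible with the inclusion $j$ from the left and with $d$ on the right.

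Next I would treat the right-hand vertical map. By the cone fiber sequence, $\cO^{\infty}(Y)\xrightarrow{\partial}\Sigma\Yo^{s}(F_{\cT}(Y))$ is the connecting map of the pair $(\cO(Y),\cY(Y))$, i.e.\ $\cO^{\infty}(Y)=\Cofib(\Yo^{s}(\cY(Y))\to\Yo^{s}(\cO(Y)))$ and $\partial$ is the canonical map to $\Sigma\Yo^{s}(\cY(Y))\simeq\Sigma\Yo^{s}(F_{\cT}(Y))$ (using the equivalence $\Yo^{s}(F_{\cT}(Y))\simeq\Yo^{s}(\cY(Y)_{\cO(Y)})$ established just before \cref{wefoiuweiofuewwefwwefwf}). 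On the other hand, $\cO(Y)_{-}$ is a flasque $\Gamma$-bornological coarse space via the shift $(t,y)\mapsto(t-1,y)$: indeed this map is close to the identity in $\cO(Y)_{-}$, it satisfies the entourage-union condition because iterating the shift only moves points towards $-\infty$ where the control function relaxes, and every bounded set is eventually escaped. Hence $\Yo^{s}(\cO(Y)_{-})\simeq 0$, which forces $d$ to factor through the cofiber of $j$, and one identifies $d$ up to the equivalence $\Yo^{s}(F_{\cT}(\R_{du}\otimes Y))\simeq\Sigma\Yo^{s}(F_{\cT}(Y))$ with $\partial$: the right-hand vertical map is the composite of this suspension identification. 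The suspension identification itself comes from writing $\R\otimes Y=((-\infty,0]\times Y)\cup([0,\infty)\times Y)$ as a coarsely excisive pair with both pieces flasque, so Mayer--Vietoris gives $\Yo^{s}(F_{\cT}(\R_{du}\otimes Y))\simeq\Sigma\Yo^{s}(F_{\cT}(Y))$.

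With these two equivalences in hand, commutativity of the two squares in \eqref{voihoi3f3f4f} follows because every arrow in the diagram is induced by a map of underlying $\Gamma$-sets (inclusions and identities) and the identifications above were chosen to be the ones induced by these maps; there are no coherence choices to worry about. I would finish by noting that the left square commutes by the definition of $\cO^{\infty}(Y)$ as $\Yo^{s}(\cO(Y),\cY(Y))$ together with the naturality of the fiber sequence in \cref{kjeflwfjewofewuf98ewuf98u798798234234324324343}.\ref{ifjweifjewiojwefw231}, and the right square by the identification of $d$ with $\partial$ just described. The main obstacle I anticipate is the careful verification that $(Z,\cW)$ is genuinely a complementary pair in $\cO(Y)_{-}$ and that $\cW$ is cofinal (modulo flasque/equivalence considerations) inside the hybrid big family, i.e.\ controlling the interplay between the admissible functions $\phi$, the $\R$-direction control function $\kappa$, and the restriction to $Z$; this is where one must reprove (or invoke the restriction lemma $A_h=A_{Y_h}$ of \cite[Lem.~5.17]{buen}) that the hybrid structure on $Z$ induced from $\cO(Y)_{-}$ agrees with the hybrid structure defining $\cO(Y)$. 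Everything else is a formal manipulation of fiber sequences in $\Gamma\Sp\cX$.
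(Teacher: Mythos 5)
Your treatment of the middle vertical equivalence is correct but follows a different route than the paper. You use the complementary pair $(Z,\cW)$ on $\cO(Y)_{-}$ with $Z=[0,\infty)\times Y$ and $\cW$ the big family defining the hybrid structure, together with flasqueness of the members of $\cW$; the paper instead uses the coarsely excisive decomposition $((-\infty,0]\times Y,\ [0,\infty)\times Y)$ of $\cO(Y)_{-}$ together with flasqueness of the ray $F_{\cT}((-\infty,0]_{du}\otimes Y)$. Both yield the equivalence $\Yo^{s}(\cO(Y)_{-})\simeq \cO^{\infty}(Y)$, and your use of \cite[Lem.~5.17]{buen} to identify $Z\cap\cW$ with $\cY(Y)$ inside $\cO(Y)$ is the right ingredient.

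There is a genuine error in the second half: $\cO(Y)_{-}$ is \emph{not} flasque, and this cannot be repaired. The shift $f(t,y)=(t-1,y)$ is a bijection of $\R\times Y$, so $f^{n}(X)=X$ for every $n$ and Condition \eqref{hckjhckhwhuiechiu} of \cref{hckjhckhwhuiechiu1} (the escape condition $\Gamma B\cap f^{n}(X)=\emptyset$) fails for any nonempty bounded $B$. It is also not close to the identity in the hybrid coarse structure: near $+\infty$ the hybrid entourages are forced to shrink to the diagonal by admissibility of $\phi$, so the entourage $\{((t,y),(t-1,y))\}$, which has fixed width $1$ in the $\R$-direction, is not a hybrid entourage. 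More immediately, flasqueness of $\cO(Y)_{-}$ would contradict the first half of your own argument: you have just shown $\Yo^{s}(\cO(Y)_{-})\simeq\cO^{\infty}(Y)$, so $\Yo^{s}(\cO(Y)_{-})\simeq 0$ would force $\cO^{\infty}(Y)\simeq 0$, which is false already for $Y=*$ (by \cref{oiefjewoifo23r4243455435345}, $\cO^{\infty}(*)\simeq\Sigma\Yo^{s}(*)$). Consequently the subsequent claim that $\Yo^{s}(\cO(Y)_{-})\simeq 0$ ``forces $d$ to factor through the cofiber of $j$'' is both based on a false premise and incoherent --- if the source of $d$ vanished, $d$ would be the zero map, not something identifying with $\partial$. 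The correct argument uses flasqueness of the \emph{rays} $F_{\cT}([0,\infty)_{du}\otimes Y)$ and $F_{\cT}((-\infty,0]_{du}\otimes Y)$ (with the product coarse structure, no hybridization), plus coarse excision for the decompositions of $\cO(Y)_{-}$ and of $F_{\cT}(\R_{du}\otimes Y)$ into half-spaces: the outer cocartesian square then gives the suspension identification $\Yo^{s}(F_{\cT}(\R_{du}\otimes Y))\simeq\Sigma\Yo^{s}(F_{\cT}(Y))$, and the right cocartesian square (with its vanishing upper right corner $\Yo^{s}(F_{\cT}([0,\infty)_{du}\otimes Y))\simeq 0$) identifies $d$ with $\partial$.
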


\begin{rem}
This proposition identifies a segment of the cone sequence \eqref{gerpogikrepoergerg} with a sequence represented by 
maps between   $\Gamma$-bornological coarse spaces. It in particular shows that the cone $\cO^{\infty}(Y)$ is represented by the $\Gamma$-bornological coarse space $\cO(Y)_{-}$.
\end{rem}

\begin{proof}[Proof of \cref{ofwpefwefewfewf}]
We  consider the diagram of motivic coarse spectra
\begin{equation}\label{vhiuf3f34}
\xymatrix{\Yo^{s}(F_{\cT}(Y))\ar[r]^{i}\ar[d]&\Yo^{s}(\cO(Y))\ar[r]\ar[d]^{j}&\Yo^{s}(F_{\cT}([0,\infty)\otimes Y))\ar[d]\\\Yo^{s}(F_{\cT}((-\infty,0]\otimes Y))\ar[r]&\Yo^{s}(\cO(Y)_{-})\ar[r]^-{d}&\Yo^{s}(F_{\cT}(\R_{du}\otimes Y))}
\end{equation}
The left   and right vertical and the lower left horizontal map are given by the canonical inclusions.
The upper right horizontal map is the identity map of the underlying sets.
This diagram commutes since it is obtained by applying $\Yo^{s}$ to a commuting diagram of bornological coarse spaces.

The left square in \eqref{vhiuf3f34} is cocartesian since the pair
$((-\infty,0]\times Y, \cO(Y))$ in $\cO(Y)_{-}$ is coarsely excisive.
Furthermore, since
$((-\infty,0]\times Y,[0,\infty)\times Y)$ is coarsely excisive in $F_{\cT}(\R_{du}\otimes Y)$ the 
outer square is cocartesian. It follows that the right square is cocartesian.

Since the upper right and the lower left corners in \eqref{vhiuf3f34}  are trivial by flasqueness of
the rays the diagram is equivalent to the composition
\[\xymatrix{\Yo^{s}(F_{\cT}(Y))\ar[r]^{i}\ar[d]&\Yo^{s}(\cO(Y))\ar[r]\ar[d]^{j}&0\ar[d]\\0\ar[r]&\Yo^{s}(\cO(Y)_{-})\ar[r]^-{d}&  \Yo^{s}( F_{\cT}(\R_{du}\otimes  Y))}\]
of cocartesian squares.
Note that $\cO^{\infty}(Y)$ is defined as the cofiber of the left upper horizontal map $i$. Hence the 
  left square yields  the middle vertical  equivalence in \eqref{voihoi3f3f4f}.

 The outer square yields the equivalence
$ \Yo^{s}( F_{\cT}(\R_{du}\otimes  Y))\simeq \Sigma \Yo^{s}(Y)$. The
 right square then identifies  $d$ 
  with the boundary map $\partial$ of the cone sequence.
\end{proof}

Next we will observe that $\cO^{\infty}(Y)$ is essentially independent of the coarse structure on $Y$.
Let $Y$ be a $\Gamma$-uniform bornological coarse space  with coarse structure $\cC$, bornology $\cB$ and uniform structure $\cT$. Let $\cC^{\prime}$   be a $\Gamma$-coarse structure on $Y$ such that $\cC\subseteq \cC^{\prime}$ and $\cC^{\prime}$  is still compatible with the bornology. We write $Y^{\prime}$ for the $\Gamma$-uniform bornological coarse space obtained from $Y$ by replacing the coarse structure $\cC$ by $\cC^{\prime} $.   Then the identity  map of the underlying sets is a morphism $Y\to Y^{\prime}$ of {$\Gamma$-uniform bornological coarse spaces}.
We will call such a morphism a \emph{coarsening}. 

Let  $Y$ be a $\Gamma$-uniform bornological coarse space.

\begin{prop}\label{fiiwofwufw9e8fuew9fwefwef}
If $Y\to Y^{\prime}$ is a coarsening, then the induced map
\[\cO^{\infty}(Y)\to \cO^{\infty}(Y^{\prime})\]
is an equivalence.
\end{prop}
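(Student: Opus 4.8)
The plan is to reduce the statement to the Homotopy Theorem~\ref{fiewfewifieowifowioioio23r} by interpolating between the two coarse structures using a coarse cylinder whose two ends carry $\cC$ and $\cC'$ respectively. Concretely, I would first use \cref{ofwpefwefewfewf} to replace $\cO^\infty(Y)$ by the $\Gamma$-bornological coarse space $\cO(Y)_-$, so that the map $\cO^\infty(Y)\to\cO^\infty(Y')$ is identified with the map $\Yo^s(\cO(Y)_-)\to\Yo^s(\cO(Y')_-)$ induced by the identity of underlying sets. Thus it suffices to show this latter morphism is an equivalence in $\Gamma\Sp\cX$.

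\textbf{Key steps.} First I would set up the right auxiliary space. On $\IR \otimes Y$ consider two commuting families of entourages: the hybrid-type condition relative to the big family $((-\infty,n]\times Y)_{n\in\IN}$, and a ``cylinder'' datum $p = (p_-,p_+)$ on $\IR$ (with $p_\pm$ the standard functions as in \cite[Sec.~4.3]{buen}) which allows the coarse structure to grow from $\cC$ near one end to $\cC'$ near the other. More precisely, following the proof of \cite[Prop.~4.16]{buen}, build an intermediate $\Gamma$-bornological coarse space $W$ on the underlying set of $[0,1]_{du}\otimes\cO(Y)_-$ (or a suitable coarse cylinder thereof) whose hybrid-plus-cylinder coarse structure restricts to the coarse structure of $\cO(Y)_-$ at one end of $[0,1]$ and to that of $\cO(Y')_-$ at the other. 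Second, I would invoke the Homotopy Theorem~\ref{fiewfewifieowifowioioio23r} (whose bornological hypothesis is satisfied because the big family $((-\infty,n]\times Y)_{n\in\IN}$ exhausts the bornology of $\cO(Y)_-$, cf.~the discussion around \cref{fewklfjewfejewfewf}) to conclude that the projection $W \to \cO(Y)_-$ induces an equivalence $\Yo^s(W)\to\Yo^s(\cO(Y)_-)$, and similarly the inclusion of the other end $\cO(Y')_-\hookrightarrow W$ is a coarse equivalence onto its image, hence $\Yo^s$ sends it to an equivalence. Composing, the two inclusions of ends into $W$ are both sent to equivalences, and since the canonical map $\cO(Y)_-\to\cO(Y')_-$ factors (up to closeness) through these, \cref{kjeflwfjewofewuf98ewuf98u798798234234324324343}.\ref{iweufhf89wfu89ewfew245} together with two-out-of-three gives the claim. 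Finally, one checks naturality: a morphism $Y_1\to Y_2$ of $\Gamma$-uniform bornological coarse spaces compatible with coarsenings induces a map of the interpolating cylinders, so the equivalence is natural.

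\textbf{Main obstacle.} The technical heart is constructing the interpolating coarse structure on the cylinder and verifying it is genuinely a $\Gamma$-coarse structure compatible with the bornology, with hybrid entourages controlled by $\cT^\Gamma$-admissible functions interacting correctly with the cylinder datum $p$. This is exactly the kind of entourage bookkeeping carried out in the proof of \cite[Prop.~4.16]{buen}, but here one must simultaneously track (i) the hybrid condition relative to $((-\infty,n]\times Y)_{n\in\IN}$, (ii) the growth of the coarse structure from $\cC$ to $\cC'$ along $[0,1]$, and (iii) $\Gamma$-invariance, since $\cC'$ is assumed $\Gamma$-invariant and compatible. I expect the bulk of the work to be verifying that the projection $W\to\cO(Y)_-$ meets the hypotheses of the Homotopy Theorem and that the two end-inclusions are coarse equivalences; both should follow by the same arguments as in the non-equivariant case \cite[Prop.~4.16]{buen}, working with invariant entourages throughout, exactly as the excerpt does for the other cone results (see the proofs of \cref{wefoiuweiofuewwefwwefwf} and \cref{ofwpefwefewfewf}).
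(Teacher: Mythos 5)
Your approach is genuinely different from the paper's and, as written, contains a gap that I don't think can be filled without significantly more work than you anticipate.

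The crux of your argument is a purported interpolating cylinder $W$ whose coarse structure grows from the $\cC$-hybrid structure at one end of $[0,1]$ to the $\cC'$-hybrid structure at the other, to which you then want to apply the Homotopy Theorem~\ref{fiewfewifieowifowioioio23r}. But the Homotopy Theorem concerns $([0,1]_{du}\otimes Y)_h$, in which the underlying spatial coarse structure on the $Y$-direction is the \emph{same} on every slice $\{t\}\times Y$; what varies is only the hybrid constraint. Your cylinder $W$ is supposed to have a coarse structure on the $\IR\times Y$-direction that \emph{changes} from $\cC$-based hybrid to $\cC'$-based hybrid as $t$ moves along $[0,1]$. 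That is not a hybrid structure on $[0,1]_{du}\otimes(\IR\otimes Y)$ in the sense of \cref{efioofwefwefwefewf}, so \cref{fiewfewifieowifowioioio23r} does not apply. To make this approach work you would effectively have to prove a new homotopy theorem for a new kind of cylinder — a task on the same order of difficulty as the whole Decomposition/Homotopy machinery of Section~\ref{secjn2309sdmnb}. Moreover, even granting the existence of $W$, the claim that the end-inclusion $\cO(Y')_- \hookrightarrow W$ is a coarse equivalence onto its image (with the correct induced structure on the slice) is precisely the kind of statement that needs proof, not an observation; and since the identity map $\cO(Y')_-\to\cO(Y)_-$ is \emph{not} a morphism (the coarse structure on $\cO(Y')_-$ is strictly larger), you cannot appeal to two-out-of-three in the straightforward way you suggest without first sorting out the direction of all the arrows.

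The paper's proof avoids all of this. It expresses $\cO^\infty(Y)$ as a colimit of relative motives $\Yo^{s}(([n,\infty)\times Y)_{U}, (\{n\}\times Y)_{U})$, using $u$-continuity and excision to localize the computation to $[n,\infty)\times Y$ with $n\to\infty$. The key observation is then elementary: for any $\cT^{\Gamma}$-admissible function $\phi$, because $\cT$ and $\cC$ are \emph{compatible}, there is an $n_0$ with $\phi(n)\in\cC$ for all $n\geq n_0$. Hence the hybrid entourages of $\cO(Y')$ restricted to $[n,\infty)\times Y$ (for $n$ large) are already controlled by entourages of $\cO(Y)$, and the two colimits over entourages agree. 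The difference $\cC\subseteq\cC'$ is invisible ``at infinity'' in the cone direction, and one never needs to interpolate between the structures at all. You should study the sequence of equivalences \eqref{vrevkervkeio43uoir34r4r34r}--\eqref{vrevkervkeio43uoir34r4r34rr334343434} in the paper: the conceptual content is in the passage from \eqref{vrevkervkeio43uoir34r4r34122} to \eqref{vrevkervkeio43uoir34r4r34122r}, which is exactly the observation about admissible functions, and the rest is bookkeeping with $u$-continuity and excision.
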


\begin{proof}
 By definition of $\cO^{\infty}(Y)$ we have 
\begin{equation}\label{vrevkervkeio43uoir34r4r34t4tr4rr}
\cO^{\infty}(Y)\simeq \colim_{n\in \nat }  \Yo^{s}(([0,\infty)_{du}\otimes Y)_{h} ,[0,n] \times  Y   )\ ,
\end{equation} 
where the subsets    $[0,n] \times  Y   $ of $ ([0,\infty)_{du}\otimes Y)_{h}$ have the induced bornological coarse structure.   By $u$-continuity of $\Yo^{s}$ we have
\begin{equation}\label{vrevkervkeio43uoir34r4r34}
\cO^{\infty}(Y )\simeq \colim_{n\in \nat }\colim_{U} \Yo^{s}(([0,\infty)_{du} \otimes  Y  )_{U} ,([0,n] \times  Y )_{U}   )\ ,
\end{equation}
 where $U$ runs over the $\Gamma$-invariant entourages of $([0,\infty)_{du}\otimes Y)_{h}$. Here  for a subset $X^{\prime}$ of $  X$ the notation $X^{\prime}_{U}$ denotes the set $X^{\prime}$ with the structures induced from $X_{U}$, i.e., $X^{\prime}_{U}$ is a short-hand notation for $X^{\prime}_{X_{U}}$.
For every integer $n $, there is a cofinal set of entourages $U$ such that the pair
\[([0,n] \times Y  ,[n,\infty) \times Y  )\]
is coarsely excisive {on} $([0,\infty) \times Y)_{U}$. In fact, this is  a coarsely excisive pair for any entourage $U$ {that allows propagation from $\{n\} \times Y$} in the direction of the ray.
Since the Yoneda functor $\Yo^{s}$ is excisive we get the equivalence
\begin{equation}\label{vrevkervkeio43uoir34r4r341}
\cO^{\infty}(Y )\simeq \colim_{n\in \nat }\colim_{U} \Yo^{s}(([n,\infty) \times  Y  )_{U} ,(\{n\} \times  Y )_{U} )\ .
\end{equation}
 
In general, for a $\Gamma$-bornological coarse space $X$  with coarse structure $\cC$ and an invariant subset $Z$ we have an equivalence
\begin{equation}\label{hckjhwkjehcewchiuhuwcwcw}
\colim_{U\in \cC^{\Gamma}}\Yo^{s}(Z_{U})\simeq   \colim_{U\in \cC^{\Gamma}}
\Yo^{s}(Z_{(Z\times Z)\cap U }) \end{equation}
(here we  must not omit the colimit).
We insert this into  \eqref{vrevkervkeio43uoir34r4r341} and get 
\begin{equation}\label{vrevkervkeio43uoir34r4r343}
\cO^{\infty}(Y)\simeq \colim_{n\in \nat }\colim_{U} \Yo^{s}(([n,\infty) \times  Y)_{U_{n}  } ,(\{n\}\times  Y)_{U_{n} } )\ ,
\end{equation}
where we use the abbreviation $U_{n}:= (([n,\infty)\times  Y)\times ([n,\infty)\times  Y))\cap U$.
We can now interchange the order of the colimits and get
\begin{equation}\label{vrevkervkeio43uoir34r4r34122}
\cO^{\infty}(Y)\simeq  \colim_{U}\colim_{n\in \nat } \Yo^{s}(([n,\infty) \times Y )_{U_{n}  } ,(\{n\}\times  Y )_{U_{n} } )\ .
\end{equation}

We argue now that in this formula we can replace the colimit over the invariant entourages  $U$
of  $([0,\infty)_{du}\otimes Y)_{h}$
by the colimit over all invariant entourages $U^{\prime}$ of $([0,\infty)_{du}\otimes Y^{\prime})_{h}$.
To this end we consider the generating  entourages $U_{\psi}\cap W^{\prime}$ of $\cO(Y^{\prime})$ (see the proof of \cref{ifeioewfueoiwfwfewfwf} for notation).  Since $\cT$ and $\cC$ are compatible,   there exists an integer  $n_{0} $  sufficiently large such that $\phi(n_{0})\in \cC$. But then, since $\phi$ is monotoneous, we have
$\phi(x)\in \cC$ for every $x$ in $[n_{0},\infty)$. We conclude that for every integer $n$ with $n\ge n_{0}$ we have
\[(U_{\psi}\cap W^{\prime})\cap \big( ([n,\infty)\times Y)\times ([n,\infty)\times Y) \big) \subseteq U_{\psi}\cap (W^{\prime}\cap \phi(n))\] and $W^{\prime}\cap \phi(n)\in \cC$.

This gives 
 \begin{equation}\label{vrevkervkeio43uoir34r4r34122r}
\cO^{\infty}(Y)\simeq  \colim_{U^{\prime}}\colim_{n\in \nat } \Yo^{s}(([n,\infty) \times  Y)_{U^{\prime}_{n}  } ,(\{n\}\times  Y)_{U^{\prime}_{n} } )\ ,
\end{equation}
 where now $U^{\prime}$ runs over the invariant entourages of
 $([0,\infty)_{du}\otimes Y^{\prime})_{h}$.  Going the argument above backwards with $Y$ replaced by $Y^{\prime}$ we end up  with 
  \begin{equation}\label{vrevkervkeio43uoir34r4r34rr334343434}
\cO^{\infty}(Y)\simeq \colim_{n\in \nat }  \Yo^{s}(([0,\infty)_{du}\otimes  Y^{\prime})_{h}   ,[0,n] \times Y    )\simeq \cO^{\infty}(Y^{\prime})
\end{equation}
and this completes the proof.
\end{proof}

In the following proposition we use the invariance under coarsening in order to calculate the value of the $\cO^{\infty}$-functor on $\Gamma$-uniform bornological coarse spaces whose underlying uniform structure is discrete.

For a $\Gamma$-uniform bornological coarse space  $X$ which is discrete as a uniform space let $X_{disc}$ denote the $\Gamma$-uniform bornological coarse space obtained by replacing the coarse structure by the discrete coarse structure.

\begin{rem}
If $X$ is not discrete as a uniform space, then the discrete coarse structure is not compatible with the uniform structure.
\end{rem}

Let $X$ be a $\Gamma$-uniform bornological coarse space. 
\begin{prop}\label{oiefjewoifo23r4243455435345}
If $X$ is discrete as a uniform space, then we have an equivalence
\[\cO^{\infty}(X)\simeq \Sigma \Yo^{s}({F_\cT(X_{disc})})\]
in $\Gamma\Sp\cX$.
\end{prop}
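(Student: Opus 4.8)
The plan is to combine the coarsening invariance \cref{fiiwofwufw9e8fuew9fwefwef} with the cone fiber sequence \eqref{gerpogikrepoergerg} and the flasqueness observation of \cref{ropjreopg0iu09t34t34t34t3}. First I would apply \cref{fiiwofwufw9e8fuew9fwefwef} to the coarsening $X_{disc} \to X$: since $X$ is discrete as a uniform space, the discrete coarse structure on the underlying set is compatible with the uniform structure, so $X_{disc}$ is a genuine object of $\Gamma\UBC$ and the identity of underlying sets is a morphism $X_{disc}\to X$ (it increases the coarse structure). Hence $\cO^{\infty}(X_{disc})\xrightarrow{\simeq}\cO^{\infty}(X)$, and it suffices to compute $\cO^{\infty}(X_{disc})$.

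Next I would feed $Y:=X_{disc}$ into the natural fiber sequence of \cref{wefoiuweiofuewwefwwefwf},
\[
\Yo^{s}(F_{\cT}(X_{disc}))\to \Yo^{s}(\cO(X_{disc}))\to \cO^{\infty}(X_{disc})\xrightarrow{\partial}\Sigma\Yo^{s}(F_{\cT}(X_{disc}))\ .
\]
By \cref{ropjreopg0iu09t34t34t34t3}, since $X_{disc}$ is discrete both as a uniform space and as a coarse space, the cone $\cO(X_{disc})$ is flasque, with flasqueness implemented by $(t,y)\mapsto (t+\tfrac{1}{t+1},y)$; one should check quickly that the three conditions of \cref{hckjhckhwhuiechiu1} hold — closeness to the identity is clear, the entourage condition holds because the coarse structure on $[0,\infty)_{du}\otimes X_{disc}$ only involves bounded movement in the $[0,\infty)$-direction together with the discrete structure on $X$, and the boundedness-escape condition holds because bounded subsets of $\cO(X_{disc})$ project to bounded subsets of $[0,\infty)$. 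Therefore $\Yo^{s}(\cO(X_{disc}))\simeq 0$ by \cref{kjeflwfjewofewuf98ewuf98u798798234234324324343}.\ref{ofjewofefoewiufewfiewf09i23423434234}, and the fiber sequence forces $\partial$ to be an equivalence $\cO^{\infty}(X_{disc})\simeq \Sigma\Yo^{s}(F_{\cT}(X_{disc}))$.

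Finally I would assemble the chain of equivalences
\[
\cO^{\infty}(X)\simeq \cO^{\infty}(X_{disc})\simeq \Sigma\Yo^{s}(F_{\cT}(X_{disc}))\ ,
\]
which is exactly the claimed formula (up to the harmless identification of $F_{\cT}(X_{disc})$ with the $\Gamma$-bornological coarse space underlying $X_{disc}$). I expect the main obstacle to be purely bookkeeping: verifying carefully that the self-map $(t,y)\mapsto (t+\tfrac1{t+1},y)$ really implements flasqueness of $\cO(X_{disc})$ in the equivariant sense of \cref{hckjhckhwhuiechiu1} — in particular Condition \ref{hckjhckhwhuiechiu} with the $\Gamma$-orbit $\Gamma B$ rather than just $B$, which works because $X$ is $\Gamma$-uniformly and $\Gamma$-coarsely discrete so bounded sets stay controlled under the $\Gamma$-action — and checking that $X_{disc}$ is legitimately an object of $\Gamma\UBC$ (compatibility of the discrete coarse structure with the given bornology and uniform structure). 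Everything else is a direct application of results already established in the excerpt.
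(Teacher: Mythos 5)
Your proof is correct and takes essentially the same route as the paper: pass to $X_{disc}$ via coarsening invariance (\cref{fiiwofwufw9e8fuew9fwefwef}), invoke the flasqueness of $\cO(X_{disc})$ from \cref{ropjreopg0iu09t34t34t34t3}, and read off the equivalence from the cone fiber sequence of \cref{wefoiuweiofuewwefwwefwf}. The extra verification of the flasqueness conditions that you flag as potential bookkeeping is already contained in \cref{ropjreopg0iu09t34t34t34t3}, so nothing further is required.
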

\begin{proof}
{Since  $X_{disc} \to X$  is a coarsening, by \cref{fiiwofwufw9e8fuew9fwefwef}  we have  an equivalence}
\[\cO^{\infty}({X_{disc}})\xrightarrow{\simeq} \cO^{\infty}(X)\ .\]
By \cref{ropjreopg0iu09t34t34t34t3} we know that $\cO({X_{disc}})$ is flasque  and hence $\Yo^{s}(\cO({X_{disc}}))\simeq 0$. The fiber sequence obtained in \cref{wefoiuweiofuewwefwwefwf} yields an equivalence $\cO^{\infty}({X_{disc}})\simeq \Sigma \Yo^{s}({F_\cT(X_{disc})})$ as desired.
\end{proof}

Next we discuss excision and homotopy invariance for $\cO^{\infty}$.

Let $Y$ be a $\Gamma$-uniform bornological coarse space  and $A,B$ be $\Gamma$-invariant subsets of $Y$.
 
\begin{kor}\label{kldjedjoiewufowe23435335}
If $(A,B)$ is an equivariant uniformly and coarsely excisive decomposition, then the following square in $\Gamma\Sp\cX$ is cocartesian:
\begin{equation}\label{dqwd234fe23r32r32}
\xymatrix{ \cO^{\infty}(A\cap B)\ar[r]\ar[d]&\cO^{\infty}(B)\ar[d]\\\cO^{\infty}(A)\ar[r]&\cO^{\infty}(Y)}
\end{equation}
\end{kor}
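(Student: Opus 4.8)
The plan is to reduce the claim to the Decomposition Theorem~\ref{sdf8923njn} applied to the $\Gamma$-uniform bornological coarse space $[0,\infty)_{du}\otimes Y$ equipped with the big family $\cY(Y)=([0,n]\times Y)_{n\in\nat}$, together with the equivariant uniform decomposition induced by $(A,B)$. First I would observe that $([0,\infty)\times A, [0,\infty)\times B)$ is an equivariant uniform decomposition of $[0,\infty)_{du}\otimes Y$: since $(A,B)$ is a uniform decomposition of $Y$ with witnessing entourage $U$ and function $s$, the pair on the product is witnessed by the entourage $U_r \times U$ (for a fixed small $r$) and the function that applies $s$ in the $Y$-coordinate and keeps the $[0,\infty)$-coordinate unchanged; $\cT^\Gamma$-admissibility is inherited. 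Likewise $[0,\infty)\times(A\cap B) = ([0,\infty)\times A)\cap([0,\infty)\times B)$, and the big family restricts compatibly: $\cY(Y)$ restricted to $[0,\infty)\times A$ is $([0,n]\times A)_{n\in\nat}$, and similarly for $B$ and $A\cap B$.

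Next I would invoke the identification $A_h = A_{Y_h}$ from \cite[Lem.~5.17]{buen} (recalled before \cref{sdf8923njn}), applied to the subsets $[0,\infty)\times A$, $[0,\infty)\times B$, and $[0,\infty)\times(A\cap B)$ of $[0,\infty)_{du}\otimes Y$: the hybrid coarse structure formed from the restricted hybrid data agrees with the coarse structure induced from the hybrid structure on the ambient cone $\cO(Y)$. Because the hybrid data here is indexed by $I=\nat$, and because \cref{vvwoivwivweoivjweiov} together with \cref{ifeioewfueoiwfwfewfwf} shows that the Hausdorff hypothesis is not actually needed in this cone situation — more precisely, one checks that for the specific big family $\cY(Y)$ the Decomposition Theorem goes through without the Hausdorff assumption, exactly as in \cref{ifeioewfueoiwfwfewfwf} — \cref{sdf8923njn} yields that the square
\[
\xymatrix{
\Yo^{s}((\cO(A\cap B)), \cY(A\cap B))\ar[r]\ar[d] & \Yo^{s}(\cO(A), \cY(A))\ar[d]\\
\Yo^{s}(\cO(B), \cY(B))\ar[r] & \Yo^{s}(\cO(Y), \cY(Y))
}
\]
is cocartesian, where I have used that $([0,\infty)\times A)_h \cong \cO(A)$ with its canonical big family $\cY(A)$, and similarly for $B$ and $A\cap B$ (this is immediate from \cref{wefjweoifwe345345} and the identification of restricted hybrid structures above, noting that the uniform and bornological structures on $A$, $B$, $A\cap B$ are the ones induced from $Y$).

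Finally I would observe that each corner of this square is, by definition, $\cO^\infty$ of the corresponding space: $\cO^{\infty}(A) = \Yo^{s}(\cO(A),\cY(A))$ and so on. Thus the displayed cocartesian square is exactly \eqref{dqwd234fe23r32r32}, completing the proof. The main obstacle I anticipate is the bookkeeping needed to check carefully that the Decomposition Theorem applies despite the absence of a Hausdorff hypothesis: one must verify that the step in the proof of \cite[Thm.~5.20]{buen} where Hausdorffness is used can be bypassed for the cone's canonical big family, which is precisely the content already exploited in the proof of \cref{ifeioewfueoiwfwfewfwf} (see \cref{vvwoivwivweoivjweiov}). A clean way to organize this is to quote the coarse excision statement \cref{cbdbbg} for the coarsely excisive pair $([0,\infty)\times A, [0,\infty)\times B)$ on $\cO(Y)$ provided by \cref{ifeioewfueoiwfwfewfwf}, and then pass to the relative motivic spectra (the cofibers against the big family $\cY(Y)$) using the general formalism of \cref{kjeflwfjewofewuf98ewuf98u798798234234324324343}.\eqref{fwejiofjweiofuewofewf234}, thereby avoiding reliance on the Hausdorff form of the Decomposition Theorem altogether; this alternative route is likely the most economical.
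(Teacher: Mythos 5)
Your main route—applying the Decomposition Theorem \ref{sdf8923njn} directly—requires the uniform space $[0,\infty)_{du}\otimes Y$ to be Hausdorff, and the corollary places no such hypothesis on $Y$. You acknowledge this, but the proposed workaround is unverified: \cref{vvwoivwivweoivjweiov} and \cref{ifeioewfueoiwfwfewfwf} only establish that $([0,\infty)\times A,[0,\infty)\times B)$ is a \emph{coarsely excisive pair} on $\cO(Y)$ without a Hausdorff hypothesis; they do not supply a Hausdorff-free version of the Decomposition Theorem itself, which is a stronger relative statement. In fact \cref{oerjgroigrgregegergeg} makes exactly this trade explicit: the Decomposition-Theorem route is available only when $Y$ is Hausdorff, and the route you call ``most economical'' is the one the paper actually takes precisely because it avoids that assumption.

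Your final paragraph is the right argument, but as written it is missing one of its two legs. From \cref{ifeioewfueoiwfwfewfwf} together with \cref{cbdbbg} you get the cocartesian square with corners $\Yo^s(\cO(A\cap B))$, $\Yo^s(\cO(A))$, $\Yo^s(\cO(B))$, $\Yo^s(\cO(Y))$. You then want to pass to $\cO^\infty(-)\simeq\Cofib\big(\Yo^s(F_\cT(-))\to\Yo^s(\cO(-))\big)$ via the cone sequence \eqref{gerpogikrepoergerg} (not \eqref{fwejiofjweiofuewofewf234}, which concerns complementary pairs and is not the relevant tool here). But the cofiber of a map of commutative squares is cocartesian only when \emph{both} the source square and the target square are cocartesian; the $\Yo^s(\cO(-))$ square alone does not suffice. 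The missing input is the cocartesian square with corners $\Yo^s(F_\cT(A\cap B))$, $\Yo^s(F_\cT(A))$, $\Yo^s(F_\cT(B))$, $\Yo^s(F_\cT(Y))$, and this comes from a second, direct application of \cref{cbdbbg} to the coarsely excisive pair $(A,B)$ on $Y$ itself—the hypothesis you never explicitly use at this level. With both cocartesian squares in hand, the cone sequence \eqref{gerpogikrepoergerg} exhibits \eqref{dqwd234fe23r32r32} as the cofiber of a map of cocartesian squares and hence cocartesian. This two-square-plus-cofiber argument is exactly the paper's proof.
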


\begin{proof}
Since $(A,B)$ is coarsely excisive
 the square
\[\xymatrix{ \Yo^{s}(\cF_{\cT}(A)\cap \cF_{\cT}(B))\ar[r]\ar[d]&\Yo^{s}(\cF_{\cT}(B))\ar[d]\\\Yo^{s}(\cF_{\cT}(A))\ar[r]&\Yo^{s}(\cF_{\cT}(Y))}\] is cocartesian.
Furthermore, by \cref{ifeioewfueoiwfwfewfwf} the square
\[\xymatrix{ \cO(A\cap B)\ar[r]\ar[d]&\cO(B)\ar[d]\\\cO(A)\ar[r]&\cO(Y)}\]
is  cocartesian. Now it just remains to use the cone sequence \eqref{gerpogikrepoergerg} 
in order to conclude that the square \eqref{dqwd234fe23r32r32} is cocartesian.
\end{proof}

\begin{rem}\label{oerjgroigrgregegergeg}
If we assume that  the underlying uniform space of  $Y$ is Hausdorff, then we could drop the assumption that $(A,B)$  is coarsely excisive. In this case it will follow from the Decomposition Theorem \ref{sdf8923njn} applied to   the equivariant  uniform  decomposition
$([0,\infty) \times A,[0,\infty) \times B)$ of $[0,\infty)_{du}\otimes Y$
that \eqref{dqwd234fe23r32r32} is cocartesian.
\end{rem}

\begin{kor}\label{fijofiwewefewf}
The functor $\cO^{\infty}\colon \Gamma\UBC\to \Gamma\Sp\cX$ is homotopy invariant.
\end{kor}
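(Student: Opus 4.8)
The plan is to reduce homotopy invariance of $\cO^{\infty}$ to the Homotopy Theorem~\ref{fiewfewifieowifowioioio23r} together with the cone fiber sequence of \cref{wefoiuweiofuewwefwwefwf}. Recall that homotopy invariance means: if $f_{0},f_{1}\colon Y\to Y'$ are morphisms in $\Gamma\UBC$ that are uniformly homotopic — i.e.\ there is a morphism $h\colon [0,1]_{du}\otimes Y\to Y'$ in $\Gamma\UBC$ with $f_{j}=h\circ i_{j}$, where $i_{j}\colon Y\to [0,1]_{du}\otimes Y$ are the inclusions at the endpoints — then $\cO^{\infty}(f_{0})\simeq \cO^{\infty}(f_{1})$. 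By functoriality it suffices to show that $\cO^{\infty}(i_{0})$ and $\cO^{\infty}(i_{1})$ are equivalent, and in fact that the projection $\pi\colon [0,1]_{du}\otimes Y\to Y$ induces an equivalence $\cO^{\infty}(\pi)\colon\cO^{\infty}([0,1]_{du}\otimes Y)\xrightarrow{\simeq}\cO^{\infty}(Y)$; since $\pi\circ i_{0}=\pi\circ i_{1}=\id_{Y}$, the two maps $\cO^{\infty}(i_{j})$ are then both inverse to $\cO^{\infty}(\pi)$ and hence equivalent.

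The first step is to identify $\cO([0,1]_{du}\otimes Y)$ with a hybrid space to which the Homotopy Theorem applies. By definition $\cO([0,1]_{du}\otimes Y)=([0,\infty)_{du}\otimes [0,1]_{du}\otimes Y)_{h}$, where the big family is $\cY([0,1]_{du}\otimes Y)=([0,n]\times [0,1]\times Y)_{n\in\nat}$. Using the symmetry of $\otimes$ (\cref{fhwiofweewff}) we may rewrite this as $([0,1]_{du}\otimes ([0,\infty)_{du}\otimes Y))_{h}$ with respect to the big family $([0,1]\times [0,n]\times Y)_{n\in\nat}$. This is exactly the hybrid space $([0,1]_{du}\otimes Z)_{h}$ appearing in the Homotopy Theorem, where $Z:=[0,\infty)_{du}\otimes Y$ is equipped with the big family $\cY(Y)=([0,n]\times Y)_{n\in\nat}$. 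Here I must check the hypothesis of \cref{fiewfewifieowifowioioio23r}: for every bounded subset $B$ of $Z$ there exists $n$ with $B\subseteq [0,n]\times Y$. This holds because the bornology of $[0,\infty)_{du}$ is generated by the intervals $[0,n]$, so a bounded subset of $[0,\infty)_{du}\otimes Y$ is contained in some $[0,n]\times B'$ with $B'$ bounded in $Y$, in particular in $[0,n]\times Y$. Thus the Homotopy Theorem gives an equivalence $\Yo^{s}(([0,1]_{du}\otimes Z)_{h})\xrightarrow{\simeq}\Yo^{s}(Z_{h})$, i.e.\ $\Yo^{s}(\cO([0,1]_{du}\otimes Y))\xrightarrow{\simeq}\Yo^{s}(\cO(Y))$.

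The second step is to promote this equivalence from the cone to the cone at infinity, using the fiber sequence
\[
\Yo^{s}(F_{\cT}(Y'))\to\Yo^{s}(\cO(Y'))\to\cO^{\infty}(Y')\xrightarrow{\partial}\Sigma\Yo^{s}(F_{\cT}(Y'))
\]
of \cref{wefoiuweiofuewwefwwefwf}, applied naturally to $Y'=[0,1]_{du}\otimes Y$ and $Y'=Y$ and the projection $\pi$ between them. We obtain a morphism of fiber sequences whose left vertical map is $\Yo^{s}(F_{\cT}(\pi))$ and whose middle vertical map is $\Yo^{s}(\cO(\pi))$. The middle map is an equivalence by Step~1 (note $\cO(\pi)$ is the projection $\id_{[0,\infty)}\times\pi$, which under the identification of Step~1 is precisely the projection $[0,1]_{du}\otimes Z\to Z$). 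The left map is an equivalence because $F_{\cT}(\pi)\colon F_{\cT}([0,1]_{du}\otimes Y)\to F_{\cT}(Y)$ is a coarse equivalence — indeed the projection $\{0,1\}_{max,max}\otimes(-)\to(-)$ and $[0,1]_{du}\otimes(-)$ are treated identically in the non-equivariant theory, or more directly $F_{\cT}([0,1]_{du}\otimes Y)\to F_{\cT}(Y)$ is an equivariant cylinder projection and hence an equivalence in $\Gamma\Sp\cX$ by coarse invariance of $\Yo^{s}$ (\cref{kjeflwfjewofewuf98ewuf98u798798234234324324343}.\ref{iweufhf89wfu89ewfew245}, since $i_{0}\colon Y\to [0,1]_{du}\otimes Y$ is a section close to nothing — more carefully, $[0,1]_{du}\otimes Y$ is an equivariant coarse cylinder over $Y$). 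Once both outer vertical maps are equivalences, the five lemma in the stable $\infty$-category $\Gamma\Sp\cX$ forces the induced map $\cO^{\infty}(\pi)\colon\cO^{\infty}([0,1]_{du}\otimes Y)\to\cO^{\infty}(Y)$ to be an equivalence, completing the proof.

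The main obstacle I anticipate is the bookkeeping in Step~1: verifying that the reassociation $[0,\infty)_{du}\otimes[0,1]_{du}\otimes Y\cong [0,1]_{du}\otimes([0,\infty)_{du}\otimes Y)$ carries the big family $\cY([0,1]_{du}\otimes Y)$ to the big family $([0,1]\times Y_{n})_{n}$ on which the Homotopy Theorem is phrased, and confirming that the hypothesis on bounded subsets is met — this is where the precise definition of the tensor bornology (\cref{fijwefiewofejfoiewjfoijojoi}, \cref{fhwiofweewff}) and of the hybrid structure (\cref{efioofwefwefwefewf}) must be matched up carefully. Everything else is formal: invariance of the cone fiber sequence, the five lemma, and coarse invariance of $\Yo^{s}$ are already available from the earlier parts of the paper.
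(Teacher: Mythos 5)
Your proposal is correct and follows essentially the same route as the paper: identify $\cO([0,1]_{du}\otimes Y)$ with $([0,1]_{du}\otimes[0,\infty)_{du}\otimes Y)_{h}$ by reassociating the tensor factors, apply the Homotopy Theorem to obtain $\Yo^{s}(\cO([0,1]_{du}\otimes Y))\simeq\Yo^{s}(\cO(Y))$, and then transport this equivalence to the cofiber $\cO^{\infty}$. The only cosmetic difference is in the final step: the paper directly notes that the projections $[0,1]_{du}\otimes[0,n]_{du}\otimes Y\to[0,n]_{du}\otimes Y$ induce equivalences and so the term $\Yo^{s}(\cY(-))$ in $\cO^{\infty}(-)=\Cofib(\Yo^{s}(\cY(-))\to\Yo^{s}(\cO(-)))$ is also controlled, while you invoke the fiber sequence of \cref{wefoiuweiofuewwefwwefwf} in its $F_{\cT}$-form and apply the five lemma — but $\Yo^{s}(F_{\cT}(Y'))\simeq\Yo^{s}(\cY(Y'))$ is exactly the identification made just before that corollary, so these are the same argument in different clothing.
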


\begin{proof}
Let $Y$ be a $\Gamma$-uniform bornological coarse space. Let $h\colon [0,1]_{du}\otimes Y\to Y$ be the projection. By functoriality of the cone
  we  get  the morphism
\[\cO(h)\colon \cO([0,1]_{du}\otimes Y)\to \cO(Y)\ .\]
We now observe that
\[\cO([0,1]_{du}\otimes Y)\cong([0,\infty)_{du}\otimes [0,1]_{du}\otimes Y)_{h}\cong  ([0,1]_{du}\otimes[0,\infty)_{du}\otimes Y)_{h}\ .\]
By the Homotopy Theorem \ref{fiewfewifieowifowioioio23r} we get an equivalence
\[\Yo^{s}(\cO([0,1]_{du}\otimes Y))\simeq \Yo^{s}(( [0,1]_{du}\otimes [0,\infty)_{d}\otimes Y)_{h})\simeq \Yo^{s}(([0,\infty)_{du}\otimes Y)_{h})\simeq \Yo^{s}(\cO(Y))\ .\]
Since the projections $[0,1]_{du}\otimes [0,n]_{du}\otimes Y\to [0,n]_{du}\otimes Y$  induce equivalences of underlying $\Gamma$-bornological coarse spaces we conclude that the projection $h$ induces an equivalence
\[\cO^{\infty}([0,1]_{du}\otimes Y)\to \cO^{\infty}(Y)\] in $\Gamma\Sp\cX$.
\end{proof}

\section{Topological assembly maps}\label{feijwifoewfffwfewfewf}

The overall theme of this section is the interplay between equariant coarse homology theories and equivariant homology theories.

We start in \cref{wfifjweoifjwoifewfwfewf} with the general  discussion of equivariant homology theories, and then in \cref{giuheriugregege443435}  we modify the  ``cone at infinity'' functor $\cO^\infty$ to get an equivariant $\Gamma\Sp\cX$-valued homology theory \[\cO_{{\homolg}}^{\infty}\colon \Gamma\Top\to \Gamma\Sp\cX\ .\] We introduce classifying spaces  $E_{\cF}\Gamma$ for families $ \cF$ of subgroups  and define
the motivic  assembly map \[\alpha_{\cF}\colon \cO_{{\homolg}}^{\infty}(E_{\cF}\Gamma)\to  \cO_{{\homolg}}^{\infty}(*)\] in \cref{secjks9990rfwe}. Using the cone sequence we define further versions $\alpha_{X,Q}$  of the motivic  assembly map with {twist $Q$}
and discuss some instances where it is an equivalence.  Finally, in \cref{ifjwoifwefewfewff} we use the functor $\cO_{{\homolg}}^{\infty}$ in order to derive equivariant homology theories from coarse homology theories.

\subsection{Equivariant homology theories}\label{wfifjweoifjwoifewfwfewf}

In this  section we will  {recall} the notion of a (strong) equivariant homology theory on $\Gamma$-topological spaces.

Our basic category of topological spaces is the convenient   category $\Top$    of compactly generated weakly Hausdorff spaces. A map between topological spaces  
is  a weak equivalence if it induces an isomorphism between the sets of connected components and isomorphisms of homotopy groups in all positive degrees and for all choices of base points.
The  $\infty$-category obtained from (the nerve of) $\Top$ by inverting these weak equivalences is a model for the presentable $\infty$-category $\Spc$ of spaces. In particular,  we have the localization functor  \begin{equation}\label{fjiiioioio}\kappa\colon \Top\to \Spc\ .
\end{equation}

A $\Gamma$-topological space is a topological space with an action of the group $\Gamma$ by automorphisms. We  denote the category of $\Gamma$-topological spaces and equivariant continuous maps by  $\Gamma\Top$. A weak equivalence between $\Gamma$-topological spaces is a $\Gamma$-equivariant map which induces weak equivalences on fixed-point spaces for all subgroups of $\Gamma$. We will model this homotopy theory by presheaves on the orbit category of $\Gamma$.

The orbit category $\Orb(\Gamma)$ of $\Gamma$ is the category of transitive  $\Gamma$-sets and equivariant maps. A $\Gamma$-set can naturally be considered as a discrete $\Gamma$-topological space. In this way we get   a fully faithful functor
$\Orb(\Gamma)\to \Gamma\Top$. For a transitive  $\Gamma$-set $S$  and  $\Gamma$-topological space $X$ we consider the topological space   $\Map_{\Gamma\Top}(S,X)$  of equivariant maps from $S$ to $X$.  
  
\begin{rem}
We consider a transitive  $\Gamma$-set $S$.  If we fix a base point $s $ in $S$ and denote the stabilizer of $s$ by $\Gamma_{s}$, then we get an identification   $\Map(S,X)\simeq X^{\Gamma_{s}}$, where $X^{\Gamma_{s}}$ is the subspace of $\Gamma_{s}$-fixed points. 
\end{rem}
We define a functor
\begin{equation}\label{oiiuhf4u9824f23ff}
\ell\colon \Gamma\Top\to \PSh(\Orb(\Gamma))\ \, \text{by} \ \, \ell(X) (S):=\kappa( \Map_{\Gamma\Top} (S,X))\ \, \text{for} \ \,  S\in \Orb(\Gamma)\ . 
\end{equation}

\begin{rem}\label{wrfiowfewfewfewfew}
A map between topological spaces is a weak equivalence if and only if its image under $\kappa$ is an equivalence. Consequently,
a map between $\Gamma$-topological spaces is a weak equivalence if and only if its image under $\ell$ is an equivalence. 
 By Elmendorf's theorem \cite[Thm.~VI.6.3]{MR1413302} (which boils down to the assertion that $\ell$ is essentially surjective) the functor $\ell$ induces   an equivalence
 \begin{equation}\label{gijoirfjoervververv}
\Gamma\Top[W^{-1}]\xrightarrow{\simeq} \PSh(\Orb(\Gamma))\ , 
\end{equation}
where $\Gamma\Top[W^{-1}] $ denotes   the $\infty$-category  obtained from $\Gamma\Top$ by inverting the weak equivalences. Occasionally we will use the fact that a weak equivalence in $\Gamma\Top$ between $\Gamma$-CW-complexes is actually a  homotopy equivalence in $\Gamma\Top$.
\end{rem}

Let $\bC$ be a   cocomplete $\infty$-category. By the universal property of the presheaf category we have an equivalence of $\infty$-categories
\begin{equation}\label{chbwehjfbwejf87zr84r343t3t}
\Fun^{\mathrm{colim}}(\PSh(\Orb(\Gamma) ),\bC)\simeq \Fun(\Orb(\Gamma) ,\bC)\ ,
\end{equation}
where the superscript $\mathrm{colim}$ stands for colimit-preserving. The localization functor \eqref{oiiuhf4u9824f23ff}
 induces a faithful restriction functor
\begin{equation}\label{fwefewiuiu425435}
\Fun^{\mathrm{colim}}( \PSh(\Orb(\Gamma),\bC )\to \Fun(\Gamma\Top,\bC)\ .
\end{equation}
 
From now on we assume that $\bC$ is cocomplete and stable.

Our preferred definition of the notion of an equivariant $\bC$-valued  homology theory would be the following.

Let $E\colon \Gamma\Top\to \bC$ be a functor.
\begin{ddd}\label{dddjk4r}
$E $ is called a \emph{strong equivariant $\bC$-valued homology theory} if it is in the essential image of \eqref{fwefewiuiu425435}.
\end{ddd}
Assume that we are given a functor $E $ as above. If it sends weak equivalences to equivalences, then, using the equivalence \eqref{gijoirfjoervververv}, it extends essentially uniquely to a functor $\PSh(\Orb(\Gamma)) \to \bC$. The functor $E$ is an equivariant $\bC$-valued  homology theory if this extension preserves colimits. In general it seems to be complicated to check these conditions if $E$ is given by some geometric construction. 
For this reason we add the adjective \emph{strong} in order to distinguish this notion from the \cref{fweopfkjpwef345345} of an equivariant $\bC$-valued homology theory that we actually work with.

Let $E\colon \Gamma\Top\to \bC$ be a functor. We extend $E$ to pairs $(X,A)$ of $\Gamma$-topological spaces and subspaces  by setting 
\[E(X,A):=\Cofib \big( E(*)\to E(X\cup_{A}\Cone(A)) \big)\ ,\]
where $\Cone(A)$ denotes the cone over $A$ and $*$ is the base point of the cone.
\begin{ddd}\label{fweopfkjpwef345345}
The functor $E$ is called an \emph{equivariant $\bC$-valued homology theory} if it has the following properties:
\begin{enumerate}
\item(Homotopy invariance) For every  $\Gamma$-topological space $X$ the projection induces an equivalence
\[E([0,1]\times X)\to E(X)\ .\]
\item (Excision) If $(X,A)$ is a pair of $\Gamma$-topological spaces and $U$ is an invariant open subset of $A$ such that $\overline U$ is contained in the interior of $A$, then the inclusion $(X\setminus U,A\setminus U)\to (X,A)$ induces an equivalence \[E(X\setminus U,A\setminus U)\to E(X,A)\ .\]
\item (Wedge axiom) For every family $(X_{i})_{i\in I}$ of $\Gamma$-topological spaces the canonical map
\[\bigoplus_{i\in I} E(X_{i})\xrightarrow{\simeq}E \Big( \coprod_{i\in I} X_{i} \Big)\]
is an equivalence.\qedhere
\end{enumerate}
\end{ddd}

\begin{rem}\label{kefhwefiuziu24234324}
In order to verify that $E$ satisfies excision one must show that $E$ sends the square
\[\xymatrix{\Cone(A\setminus U)\ar[r]\ar[d]&(X\setminus U) \cup_{A\setminus U}\Cone(A\setminus U)\ar[d]\\\Cone(A)\ar[r]&X\cup_{A}\Cone(A)}\] to a push-out square. For homotopy invariant functors $E$ this is equivalent to the property that $E$ sends the right vertical map in the square above to an equivalence.

For homotopy invariant functors $E$  excision  follows from the stronger condition  of closed excision, i.e., that for every decomposition $(A,B)$ of $X$ into closed invariant subsets     the diagram
\[\xymatrix{E(A\cap B)\ar[r]\ar[d]&E(A)\ar[d]\\E(B)\ar[r]&E(X)}\]
is a push-out square. This can be seen as follows. Assume that $E$ is homotopy invariant and satisfies closed excision. Let   $X$, $ A$, and $ U$ be as above. Then we have a closed decomposition
\[(X\setminus U\cup_{A\setminus U} \Cone(A\setminus U)\: ,\:  \Cone(\overline U))\]
of $X\cup_{A} \Cone(A)$ with intersection
$\Cone(\overline U\setminus U)$. By closed excision we get the cocartesian square
\[\xymatrix{E(\Cone(\overline U\setminus U))\ar[r]\ar[d]&E(X\setminus U\cup_{A\setminus U} \Cone(A\setminus U))\ar[d]\\ E(\Cone(\overline U))  \ar[r]&E(X\cup_{A} \Cone(A))}\]
Since $E$ is homotopy invariant it sends the left vertical map to an equivalence since  cones are contractible. Consequently, the right vertical map is an equivalence, too.
\end{rem}

\begin{rem}
Let $E$ be an equivariant $\bC$-valued homology theory. In general we can not expect that it factorizes over the localization \eqref{oiiuhf4u9824f23ff}.

Using the equivalence \eqref{chbwehjfbwejf87zr84r343t3t} the restriction of $E$ to the orbit category gives rise to a strong equivariant $\bC$-valued homology theory $E^{\%}$ which comes with a natural transformation $E^{\%}\to E$. Using the theory developed {by Davis--L\"{u}ck} \cite[Sec.~3]{davis_lueck} one can check that
\[E^{\%}(X)\xrightarrow{\simeq} E(X)\]
for all $\Gamma$-CW-complexes $X$.
\end{rem}

\subsection{The cone as an equivariant homology theory}\label{giuheriugregege443435}

In \cref{secjknds2sd23} we have seen that the ``cone at infinity'' functor $\cO^\infty$ is a homotopy invariant and excisive functor from   $\Gamma\UBC$ to $\Sp\cX$. In this section we modify this functor in order  to get an equivariant homology theory
$\cO_{{\homolg}}^{\infty}\colon \Gamma\Top\to \Gamma\Sp\cX$.

If $X$ is a $\Gamma$-uniform space, then we can consider $X$ as a $\Gamma$-uniform bornological coarse space $X_{max,max}$ by equipping the uniform space $X$ in addition with the maximal coarse structure and the maximal bornology. 
In this way we get a functor
\begin{equation}\label{verivjiojioru34f34f}
\cM \colon \Gamma\bU\to \Gamma\UBC\ , \quad X\mapsto \cM(X):=X_{max,max}\ .
\end{equation}

Let $Y$ be a $\Gamma$-set and $Q $ be a $\Gamma$-bornological coarse space. The projection $Y_{max,max}\to *$ induces a morphism
\begin{equation}\label{jhqehuidhwdqwdqwd}
\Yo^{s}( {Y_{max,max}})\otimes \Yo^{s}(Q)\to \Yo^{s}(Q)
\end{equation} 
in $\Gamma\Sp\cX$.

\begin{lem}\label{roigirhgreiughreige}
If the underlying set of $Q$ is a free $\Gamma$-set, then
\eqref{jhqehuidhwdqwdqwd} is an equivalence.
\end{lem}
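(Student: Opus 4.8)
The plan is to reduce the statement to the assertion that the motivic coarse spectrum $\Yo^{s}(Q)$ is \emph{$\{0,1\}_{max,max}$-invariant} in the stronger sense that tensoring with $Y_{max,max}$ for any free $\Gamma$-set $Y$ does not change it. First I would observe that since $\otimes$ commutes with colimits in each variable (\cref{lemjine2}) and every $\Gamma$-set is a filtered colimit of its finite $\Gamma$-subsets, and since a free $\Gamma$-set is a coproduct of copies of $\Gamma$, it suffices to treat the case $Y = \Gamma$ (the general free case then follows by writing $Y_{max,max}$ as a suitable colimit built out of $\Gamma$, using that $\Yo^{s}$ is excisive hence sends finite coproducts to finite coproducts, together with $u$-continuity to handle the maximal coarse structure on $Y$). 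So the crux is to show that for $Q$ with free underlying $\Gamma$-set the morphism $\Yo^{s}(\Gamma_{max,max}) \otimes \Yo^{s}(Q) \to \Yo^{s}(Q)$ is an equivalence, i.e.\ $\Yo^{s}(\Gamma_{max,max} \otimes Q) \to \Yo^{s}(Q)$ is an equivalence.

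Next I would analyze $\Gamma_{max,max} \otimes Q$ directly as a $\Gamma$-bornological coarse space. Because the underlying $\Gamma$-set of $Q$ is free, the diagonal $\Gamma$-action on $\Gamma \times Q$ is also free, and one has a $\Gamma$-equivariant bijection $\Gamma \times Q \cong \Gamma \times (\Gamma\backslash(\Gamma\times Q))$; concretely, sending $(\gamma, q)$ to $(\gamma, [\gamma^{-1}, q])$ trivializes the action so that $\Gamma$ acts only on the first factor. Under this identification I would like to see $\Gamma_{max,max}\otimes Q$ as a free union, or at least as a filtered colimit of coarsely excisive pieces, of copies of $Q$ indexed by the orbit set, with the $\Gamma$-action permuting the pieces. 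The key point to check is that the coarse structure of $\Gamma_{max,max}\otimes Q$ (which by definition of $\otimes$ is the product coarse structure, with $\Gamma$ carrying the \emph{maximal} coarse structure) decomposes accordingly: an entourage of $\Gamma_{max,max}$ can relate any two points of $\Gamma$, so a priori the pieces are \emph{not} coarsely disjoint. This is exactly the flasqueness phenomenon: $\Gamma_{max,max}$ itself is flasque (it admits a shift-type endomorphism close to the identity whose powers eventually leave any bounded — i.e.\ finite — set), so $\Yo^{s}(\Gamma_{max,max})\simeq 0$.

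Therefore the cleaner route, which I would actually carry out, is: $\Gamma_{max,max}\otimes Q$ maps to $Q$ by the projection, and I would build an explicit retraction up to the relations imposed in $\Gamma\Sp\cX$ using flasqueness. Writing $\Gamma_{max,max}\otimes Q$ via the trivialization above as $\Gamma_{max,max}\otimes Q_0$ where $Q_0$ is $Q$ with the $\Gamma$-action forgotten on the $\Gamma$-factor side, one uses that for a \emph{trivial}-action target the tensor with the flasque space $\Gamma_{max,max}$ is itself flasque relative to $Q$ — more precisely, the endomorphism $f\otimes \id_{Q}$, with $f\colon \Gamma_{max,max}\to\Gamma_{max,max}$ implementing flasqueness, is close to the identity, satisfies the union-of-powers entourage condition (here the maximal coarse structure on $\Gamma$ is what makes $\bigcup_n (f^n\times f^n)(U)$ still an entourage), and satisfies the boundedness-escaping condition because bounded sets of $\Gamma_{max,max}\otimes Q$ project to finite subsets of $\Gamma$. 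Hence $\Gamma_{max,max}\otimes Q$ is flasque\,... but that would force $\Yo^{s}(\Gamma_{max,max}\otimes Q)\simeq 0$, which contradicts it being $\simeq \Yo^{s}(Q)$ unless $\Yo^{s}(Q)\simeq 0$. So flasqueness is \emph{not} what is wanted, and the honest input must be a splitting off of one copy.

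The correct mechanism, and the main obstacle, is to exhibit $Q \hookrightarrow \Gamma_{max,max}\otimes Q$ (inclusion at a fixed $\gamma_0\in\Gamma$, i.e.\ $q\mapsto(\gamma_0,q)$ — note this is \emph{not} $\Gamma$-equivariant, so one must instead take the $\Gamma$-orbit, landing back in the free-union picture) as a \emph{nice} $\Gamma$-invariant subspace whose complement is flasque, and then invoke \cref{239023fsdsf} together with the cone/cofiber sequence of \cref{kjeflwfjewofewuf98ewuf98u798798234234324324343}. Concretely: set $Z := \Gamma\cdot(\{\gamma_0\}\times Q)$, which by freeness is $\Gamma$-equivariantly isomorphic to $\Gamma\times_{\{e\}} Q$-type induced space, hence to $\Yo^s$ of it being $\simeq \Yo^s(Q)$ by the induction computation in the spirit of \cref{rem:jtjbzuzgkvrgkieilfu}; the pair $(Z, \Gamma_{max,max}\otimes Q)$ is coarsely excisive (Condition \ref{wfopkwopfeef} of \cref{dddi34546} holds because all the relevant thickenings $V[Z]\cap(\text{complement})$ are products of finite subsets of $\Gamma$ with $Q$, which are nice by \cref{fuhewiufhuewffwefewf}), and the ``complement direction'' is flasque by the shift along $\Gamma_{max,max}$. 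Feeding this into \cref{cbdbbg} and \cref{kjeflwfjewofewuf98ewuf98u798798234234324324343}.\ref{ofjewofefoewiufewfiewf09i23423434234} yields $\Yo^s(\Gamma_{max,max}\otimes Q)\simeq \Yo^s(Z)\simeq \Yo^s(Q)$, and one checks this equivalence is inverse to the projection map \eqref{jhqehuidhwdqwdqwd}. The hard part will be setting up the coarsely excisive decomposition carefully enough that the niceness hypothesis \ref{wfopkwopfeef} is genuinely satisfied for a cofinal set of invariant entourages, and tracking that the resulting equivalence is the one induced by the projection rather than merely an abstract equivalence.
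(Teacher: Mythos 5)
The paper's proof is much simpler than what you are attempting, and your proposed route has a genuine gap. The paper's argument is essentially one observation: since $Q$ is a free $\Gamma$-set, one can choose any $\Gamma$-equivariant map of sets $\kappa\colon Q\to Y$ (pick orbit representatives in $Q$ and send them to arbitrary points of $Y$, then extend equivariantly). The graph map $(\kappa,\id)\colon Q\to Y_{max,max}\otimes Q$ is automatically controlled because $Y$ carries the \emph{maximal} coarse structure, and automatically proper because a bounded subset $B'\times B$ of $Y_{max,max}\otimes Q$ pulls back to $\kappa^{-1}(B')\cap B\subseteq B$. Moreover $(\kappa,\id)$ is an inverse of the projection up to closeness: one composite is the identity, and the other sends $(y,q)\mapsto(\kappa(q),q)$, which is close to $\id$ precisely because $\{((y,q),(\kappa(q),q))\}\subseteq (Y\times Y)\times\diag_Q$ is an entourage of $Y_{max,max}\otimes Q$. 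So the projection is an equivalence of $\Gamma$-bornological coarse spaces, and $\Yo^s$ (being symmetric monoidal by \cref{lemjine2}) turns it into \eqref{jhqehuidhwdqwdqwd}. No reduction to $Y=\Gamma$, no excision, no flasqueness is needed; the maximal coarse structure on $Y$ trivializes all the closeness and controlledness checks.

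The concrete flaw in your proposal is the definition of $Z:=\Gamma\cdot(\{\gamma_0\}\times Q)$. When the underlying $\Gamma$-set of $Q$ is free, this set is in fact all of $\Gamma\times Q$: for any $(\gamma_1,q_1)$, set $\gamma:=\gamma_1\gamma_0^{-1}$ and $q_0:=\gamma^{-1}q_1$; then $\gamma\cdot(\gamma_0,q_0)=(\gamma_1,q_1)$. So the coarsely excisive decomposition you intend to build is vacuous, the ``complement direction'' does not exist, and the claimed identification of $\Yo^s(Z)$ with $\Yo^s(Q)$ via an induction computation is not available since $Z$ is not an induced space but the whole product. The flasqueness detour you also entertain fails for the reason you yourself notice (and also because bounded subsets of $\Gamma_{max,max}\otimes Q$ need not project to finite subsets of $\Gamma$, since $\Gamma_{max,max}$ carries the maximal bornology), so that path is closed as well. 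What is missing from your proposal is the elementary observation that the free hypothesis yields a $\Gamma$-equivariant section $Q\to Y$, and that the maximal coarse structure on $Y$ makes the resulting graph map an inverse up to closeness.
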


\begin{proof}
Since $Q$ is a free $\Gamma$-set we can choose a $\Gamma$-equivariant map of sets $\kappa\colon Q\to Y$. The map
$(\kappa,\id)\colon Q\to  {Y_{max,max}}\otimes  Q$ is then a morphism of $\Gamma$-bornological coarse spaces. It is an inverse to the projection up to equivalence. Hence the projection is an equivalence of $\Gamma$-bornological coarse spaces and therefore
$\Yo^{s}( {Y_{max,max}}\otimes Q)\to \Yo^{s}(Q)$ is an equivalence.
We now use that
$\Yo^{s}$ is a symmetric monoidal functor (see \cref{secni2039}) in order to rewrite the domain of the morphism as in \eqref{jhqehuidhwdqwdqwd}.
\end{proof}

The functor
\begin{equation}
\label{eqnj77we23}
\cO^{\infty}\circ \cM\colon \Gamma\bU\to \Gamma\Sp\cX
\end{equation}
behaves very much like an equivariant homology theory.
Of course, it is not defined on $\Gamma\Top$,  but on  $\Gamma\bU$. On the other hand   it is homotopy invariant by \cref{fijofiwewefewf}, and  sends equivariant uniform decompositions to push-outs by \cref{kldjedjoiewufowe23435335}. The drawback is that it in general only preserves finite coproducts.

In order  to improve  these points  we define a new functor 
\[\cO_{{\homolg}}^{\infty}\colon \Gamma\Top\to \Gamma\Sp\cX\]
by first restricting $ \cO^{\infty}\circ \cM $ to the subcategory of $\Gamma$-compact $\Gamma$-metrizable spaces and then left-Kan extending the result  to $\Gamma\Top$. In the following we describe the details.

{Let $X$ be a $\Gamma$-toplogical space. Recall that $X$ is \emph{$\Gamma$-compact} if there exists a compact subset $K$ of $X$ such that $\Gamma K = X$,
and that $X$ is \emph{$\Gamma$-metrizable} if there exists a $\Gamma$-invariant metric on $X$ which induces the topology of $X$.}

We denote by $\Gamma\Top^{cm}$ the full sub-category of $\Gamma\Top$ spanned by all the $\Gamma$-compact and $\Gamma$-metrizable $\Gamma$-topological spaces. Associated to any $X$ in $\Gamma\Top^{cm}$ we define 
\[ \cN(X) := \{ N \subseteq X \times X \mid N\text{ contains a $\Gamma$-invariant neighborhood of the diagonal}\}\ .\] Furthermore we set $\cU(X) := (X, \cN(X))$.
For a $\Gamma$-invariant metric on $X$ which is compatible with the topology we let $\cT_d$ denote the associated metric uniform structure on $X$.
\begin{lem}\label{lem:nbhdvsmetricentourage}Assume that $X$ is in $\Gamma\Top^{cm}$.
\begin{enumerate} 
	\item $\cN(X)$ is a $\Gamma$-uniform structure. 
	\item If $d$ is a $\Gamma$-invariant metric compatible with the topology, then $\cN(X)=\cT_{d}$. 
	\item The assignment $X \mapsto \cU(X)$ defines a functor $\cU \colon \Gamma\Top^{cm} \to \Gamma\bU$. 
\end{enumerate}
\end{lem}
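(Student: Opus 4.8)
The plan is to verify the three assertions essentially by reducing everything to the metric case. Let $X$ be in $\Gamma\Top^{cm}$ and fix a $\Gamma$-invariant metric $d$ on $X$ compatible with the topology; such a metric exists by the definition of $\Gamma$-metrizability. Since $X$ is compact and $d$ induces its topology, the metric $d$ is complete and, more importantly, the metric uniform structure $\cT_d$ is the \emph{unique} uniform structure compatible with the topology of $X$ (a compact Hausdorff space carries a unique compatible uniformity, whose entourages are precisely the neighbourhoods of the diagonal in $X\times X$). The first two claims will follow once I show the set-theoretic identity $\cN(X)=\cT_d$, since $\cT_d$ is already a $\Gamma$-uniform structure: it is $\Gamma$-invariant because $d$ is, and $\cT_d^\Gamma$ is cofinal in $\cT_d$ because the generating entourages $U_r = \{(x,y)\mid d(x,y)\le r\}$ are themselves $\Gamma$-invariant.

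For the inclusion $\cT_d \subseteq \cN(X)$: every $U_r$ is a $\Gamma$-invariant subset of $X\times X$ containing the diagonal, and it is a neighbourhood of the diagonal because $d$ is continuous; hence $U_r\in\cN(X)$, and since $\cN(X)$ is closed under supersets we get all of $\cT_d$. For the reverse inclusion $\cN(X)\subseteq \cT_d$: let $N\in\cN(X)$, so $N$ contains a $\Gamma$-invariant open neighbourhood $V$ of the diagonal. The complement $(X\times X)\setminus V$ is a closed, hence compact, subset of $X\times X$ disjoint from the (compact) diagonal, so the continuous function $d\colon X\times X\to[0,\infty)$ attains a positive minimum $r_0$ on it; therefore $U_{r_0}\subseteq V\subseteq N$, and $N\in\cT_d$. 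This proves (1) and (2) simultaneously, and in particular shows that $\cN(X)$ does not depend on the choice of compatible $\Gamma$-invariant metric.

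For (3), I must check functoriality. Given a morphism $f\colon X\to X'$ in $\Gamma\Top^{cm}$, i.e.\ a continuous $\Gamma$-equivariant map between $\Gamma$-compact $\Gamma$-metrizable spaces, I claim $f\colon\cU(X)\to\cU(X')$ is a uniform equivariant map. Equivariance is inherited from $f$. For uniformity, let $N'\in\cN(X')$; pick $\Gamma$-invariant compatible metrics $d$ on $X$ and $d'$ on $X'$. By (2), $N'$ contains some $U'_{r}$, and by the compactness of $X$ together with the continuity of $f$, the map $f$ is uniformly continuous with respect to $d,d'$ (a continuous map from a compact metric space is uniformly continuous); hence there is $s>0$ with $(f\times f)(U_s)\subseteq U'_r\subseteq N'$, so $(f\times f)^{-1}(N')\supseteq U_s$, which lies in $\cN(X)=\cT_d$. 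Thus $(f\times f)^{-1}(N')\in\cN(X)$, proving $f$ is uniform. Composition and identities are obviously respected, so $\cU$ is a functor. The only mild subtlety to watch is that the various metrics are auxiliary choices that must be eliminated from the final statements, but the identity $\cN(X)=\cT_d$ makes $\cN(X)$ manifestly intrinsic, so no coherence issue arises; the main (and only genuinely non-formal) point is the compactness argument giving a uniform lower bound $r_0$ for $d$ away from the diagonal, which is where $\Gamma$-compactness of $X$ is used.
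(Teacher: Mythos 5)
There is a genuine gap: you read ``$\Gamma$-compact'' as ``compact''. By the paper's definition, $X$ is $\Gamma$-compact if $\Gamma K=X$ for some compact $K\subseteq X$; for infinite $\Gamma$ this is strictly weaker than compactness of $X$ (for instance, $X=\IR$ with the $\Z$-translation action is $\Z$-compact but not compact). Your argument uses compactness of $X$ twice in an essential way: once invoking uniqueness of the compatible uniformity on a compact Hausdorff space, and again when asserting that $(X\times X)\setminus V$ is compact so that $d$ attains a positive minimum on it. Both steps fail when $X$ is merely $\Gamma$-compact. The closing remark that ``this is where $\Gamma$-compactness is used'' is aspirational rather than accurate — the proof as written never uses $\Gamma K=X$ or the $\Gamma$-equivariance of anything beyond the metric. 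The same issue recurs in part (3), where you appeal to uniform continuity of a continuous map on a compact metric domain.

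The paper's proof handles the non-compact case by introducing the function $b(x)=\sup\{\epsilon>0\mid B_\epsilon(x)\times B_\epsilon(x)\subseteq N\}$, showing it is positive, $\Gamma$-invariant, and $1$-Lipschitz (hence continuous), and then using $\Gamma$-compactness: $b$ attains a positive minimum $\epsilon_0$ on the compact set $K$, and $\Gamma$-invariance propagates $b\geq\epsilon_0$ to all of $X=\Gamma K$, yielding $U_{\epsilon_0}\subseteq N$. Your argument could be repaired along similar lines — e.g.\ extract a Lebesgue-type number $\epsilon_0$ for $V$ on $K$ via a finite subcover of $K$ by balls $B_{\epsilon_x/2}(x)$, then use $\Gamma$-invariance of $V$ and $U_{\epsilon_0}$ together with $\Gamma K=X$ to conclude $U_{\epsilon_0}\subseteq V$ — but as it stands the compactness steps are not justified. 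Part (3) then needs the analogous care: uniform continuity of $f$ must be established from $\Gamma$-compactness and $\Gamma$-equivariance rather than compactness of $X$ (the paper avoids this entirely by deducing part (3) directly from the characterization of $\cN(X)$ as invariant neighbourhoods of the diagonal, which makes functoriality essentially formal).
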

\begin{proof} By assumption we can choose a $\Gamma$-invariant metric $d$ which is compatible with the topology.
 We claim that for every $\Gamma$-invariant neighborhood $N$ of the diagonal there exists some $T$ in $\cT_d$ such that $T \subseteq N$. The case $N=X\times X$ is trivial and we assume that $N$ is a proper subset. 
 We define a function
 \[ b \colon X \to (0,\infty), \quad x \mapsto \sup \{ \epsilon \in (0,\infty)\mid B_\epsilon(x) \times B_\epsilon(x) \subseteq N \}\ , \]
 where $B_\epsilon(x)$ denotes the open ball of radius $\epsilon$ around $x$ (with respect to $d$).
Since $d$ is compatible with the topology the argument of the $\sup$ is non-empty for every $x$ in $X$ and the value $b(x)$ is indeed positive. Furthermore, the  supremum is attained. Moreover, the supremun can not be infinite since we assume that $N$ is a proper subset.
 Finally, we observe that $b$ is $\Gamma$-equivariant with respect to the trivial $\Gamma$-action on $(0,\infty)$.
 
 We now show that the function $b$ is 1-Lipschitz. Let $x,y$ be two points in $X$ and let $\delta$ be their distance. If both $b(x)$ and  $b(y)$ are less than $\delta$, then so is their distance. Therefore, we can assume that $b(x)\geq b(y)$ and $b(x)\geq \delta$. By the triangle inequality we have
 	\[B_{b(x)-\delta}(y)\times B_{b(x)-\delta}(y)\subseteq B_{b(x)}(x)\times B_{b(x)}(x)\subseteq N\]
 This implies $b(x)\geq b(y)\geq b(x)-\delta$. In particular, $|b(x)-b(y)|\leq \delta$ and $b$ is indeed 1-Lipschitz and thus continuous.
 
By assumption we can choose a compact subset $K$ of $ X$ such that $\Gamma K = X$. Since $b$ is continuous, the restriction  $b|_K \colon K \to (0,\infty)$ attains a positive minimal value, which we denote by  $\epsilon_0$.
 By $\Gamma$-equivariance of $b$, we then have $b(x) \geq \epsilon_0$ for all $x$ in $X$.
 We conclude that the metric uniform entourage $\{ (x,y)\in X\times X \mid d(x,y) < \epsilon_0 \}$ is contained in $N$.
 
 Note that every metric uniform entourage is a neighbourhood of the diagonal since $d$ is compatible with the topology.
 Since both $\cN(X)$ and $\cT_d$ are closed under taking supersets, we have shown that $\cN(X) = \cT_d$. This shows the first two assertions of the Lemma.
 
  We now show the third assertion. Let $f\colon X\to X^{\prime}$ be an equivariant  continuous map between two $\Gamma$-compact and $\Gamma$-metrizable $\Gamma$-topological spaces.  We must show that   $f\colon (X,\cN(X))\to (X^{\prime},\cN(X^{\prime}))$ is uniformly continuous.
  Let $V^{\prime}$ belong to $\cN(X^{\prime})$. Then $V^{\prime}$ contains a $\Gamma$-invariant neighbourhood of the diagonal $U^{\prime}$.  Since $f$ is equivariant and continuous, $(f^{-1}\times f^{-1})(U)$ is a $\Gamma$-invariant neighbourhood of the diagonal of $X$ contained in 
  $(f^{-1}\times f^{-1})(V^{\prime})$. Consequently, $(f^{-1}\times f^{-1})(V^{\prime})\in \cN(X)$. This implies that
  $f\colon (X,\cN(X))\to (X^{\prime},\cN(X^{\prime}))$ is uniformly continuous.
\end{proof}

Let $X$ be a $\Gamma$-topological space, and let $A$ and $B$  be {closed} $\Gamma$-invariant subsets of $X$ such that $A \cup B = X$.
\begin{lem}\label{feifhweifewffewfwf}
 If $X$ is $\Gamma$-compact and $\Gamma$-metrizable, then $(A,B)$ is an equivariant uniform decomposition of $\cU(X)$ (\cref{fewoifweifoew23242342341}).
\end{lem}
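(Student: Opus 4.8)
I describe the plan; the details are a quantitative, equivariant version of the elementary fact that on a compact metric space points near two closed sets whose union is the whole space are near their intersection.

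We verify the two conditions in \cref{fewoifweifoew23242342341}. Condition~(1), $X=A\cup B$, is assumed. For condition~(2), use $\Gamma$-metrizability to fix a $\Gamma$-invariant metric $d$ on $X$ inducing its topology, and invoke \cref{lem:nbhdvsmetricentourage} to identify the uniform structure of $\cU(X)$ with $\cT_{d}$; in particular the $\Gamma$-invariant metric entourages $U_{r}:=\{(x,y)\:|\:d(x,y)\le r\}$, $r>0$, are uniform entourages of $\cU(X)$ and are cofinal in $\cT^{\Gamma}$. If $A\cap B=\emptyset$, then $A$ and $B$ are disjoint closed subsets of the compact space $X$, hence at positive distance, so $U_{r}[A]\cap U_{r}[B]=\emptyset$ for all sufficiently small $r$; one then takes $U:=U_{r_{0}}$ for such an $r_{0}$ and $s\equiv\diag_{X}$, and both requirements are trivially satisfied. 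So assume from now on that $A\cap B\neq\emptyset$.

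The next step, and the only non-formal one, is to control the function $f\colon(0,\infty)\to[0,\infty)$ defined by
\[
f(r):=\sup\{\,d(x,A\cap B)\:|\:x\in U_{r}[A]\cap U_{r}[B]\,\}
\]
(with the convention $\sup\emptyset=0$), which is finite since $X$ has finite diameter. I claim $f(r)\to 0$ as $r\to 0^{+}$, and this is where $\Gamma$-compactness is used. Suppose not; then there are $\varepsilon>0$, a sequence $r_{n}\downarrow 0$ and points $x_{n}$ with $x_{n}\in U_{r_{n}}[A]\cap U_{r_{n}}[B]$ and $d(x_{n},A\cap B)>\varepsilon$. Since $d$, $A$, $B$ and $A\cap B$ are $\Gamma$-invariant, this condition on $x_{n}$ is $\Gamma$-invariant, so after translating by suitable group elements we may assume all $x_{n}$ lie in a fixed compact set $K$ with $\Gamma K=X$. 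Passing to a convergent subsequence $x_{n}\to x_{\infty}\in K$ and using that $x\mapsto d(x,A)$ and $x\mapsto d(x,B)$ are $1$-Lipschitz with $d(x_{n},A),d(x_{n},B)\le r_{n}\to 0$, we obtain $x_{\infty}\in\overline{A}\cap\overline{B}=A\cap B$, which contradicts $d(x_{\infty},A\cap B)=\lim_{n}d(x_{n},A\cap B)\ge\varepsilon$.

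With the claim established, choose a nondecreasing $g\colon(0,\infty)\to(0,\infty)$ with $g\ge f$ and $g(r)\to 0$ as $r\to 0^{+}$, for instance $g(r):=r+\sup_{0<t\le r}f(t)$. Put $U:=U_{1}$, and for a $\Gamma$-invariant subset $W\subseteq U$ set $\rho(W):=\sup\{d(x,y)\:|\:(x,y)\in W\}\in[0,1]$ and
\[
s(W):=\begin{cases}\diag_{X},&\rho(W)=0,\\ U_{g(\rho(W))},&\rho(W)>0.\end{cases}
\]
Then $s$ has values in $\Gamma$-invariant entourages and is order-preserving. For the required inclusion, note $W[A]\subseteq U_{\rho(W)}[A]$ and likewise for $B$, so every $x\in W[A]\cap W[B]$ satisfies $d(x,A\cap B)\le f(\rho(W))\le g(\rho(W))$; since $A\cap B$ is compact this distance is attained, whence $x\in s(W)[A\cap B]$ (the subcase $\rho(W)=0$ being immediate). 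Finally, for $\cT^{\Gamma}$-admissibility of $s|_{\cT^{\Gamma}_{\subseteq U}}$: given $V\in\cT^{\Gamma}$, pick $\varepsilon>0$ with $U_{\varepsilon}\subseteq V$ and then $\delta\in(0,1]$ with $g(\delta)\le\varepsilon$; the entourage $W:=U_{\delta}\in\cT^{\Gamma}_{\subseteq U}$ satisfies $s(W)\subseteq U_{g(\delta)}\subseteq U_{\varepsilon}\subseteq V$. This verifies condition~(2) and completes the proof; the only step that is not a routine verification is the convergence $f(r)\to 0$.
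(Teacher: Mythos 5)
Your proof takes essentially the same route as the paper's: both pick an invariant metric via \cref{lem:nbhdvsmetricentourage}, treat the case $A\cap B=\emptyset$ separately, and then study a radius-parametrised function recording how far $U_{r}[A]\cap U_{r}[B]$ sits from $A\cap B$, proving it finite and showing it tends to zero as $r\to 0$ by a translate-then-extract-a-convergent-subsequence contradiction (your $f$ is, up to cosmetics, the paper's $s$, and your convergence argument is the same one the paper runs using its identity $A\cap B=\bigcap_{e>0}U_{e}[A]\cap U_{e}[B]$).

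Two of your justifications are wrong as stated, although the facts they are meant to support are true and the argument survives. First, a $\Gamma$-compact $\Gamma$-metrizable space need not have finite diameter — $\R$ with the translation action of $\Z$ is the standard counterexample — so ``$f$ is finite since $X$ has finite diameter'' is not a valid reason. The correct reason, and the one the paper uses, is that $x\mapsto d(x,A\cap B)$ is a continuous $\Gamma$-invariant function (because $d$ and $A\cap B$ are $\Gamma$-invariant), hence bounded, since it attains its maximum on the compact $K$ with $\Gamma K=X$; equivalently, $U_{R}[A\cap B]=X$ for some $R$. Second, $A\cap B$ need not be compact: with $X=\R$ and the $\Z$-action, take $A=\bigcup_{n}[n,n+\tfrac12]$ and $B=\bigcup_{n}[n+\tfrac12,n+1]$, so $A\cap B=\tfrac12\Z$. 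So you cannot simply say the distance to $A\cap B$ is attained. Fortunately your own construction already repairs this: when $\rho(W)>0$ you have $g(\rho(W))=\rho(W)+\sup_{0<t\le\rho(W)}f(t)>f(\rho(W))\ge d(x,A\cap B)$, a strict inequality, so some point of $A\cap B$ lies at distance $<g(\rho(W))$ from $x$ and $x\in s(W)[A\cap B]$ follows with no compactness needed. Replace the two false justifications by these remarks and the proof is fine.
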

\begin{proof}
 We choose a $\Gamma$-invariant metric $d$ on $X$ which is compatible with the topology.
 By \cref{lem:nbhdvsmetricentourage},  {$(X,\cT_d)=\cU(X)$}.
 
  For a subset $Z$ of $Y$ and $e$ in $ (0,\infty)$ we consider the $e$-thickening
\[U_{e}[Z]:=\{y\in Y\:|\: d(y,Z)\le e\}\] (note that $U_{e}$ is defined in \eqref{fhbeufhiu24r}   with a $\le$-relation, too) of $Z$. If $Z$ is invariant, then $U_{e}[Z]$ is again invariant.  

If $A\cap B=\emptyset$, then  by $\Gamma$-compactness of $Y$ the subsets $A$ and $B$   are uniformly separated and  
$(A,B)$ is an equivariant uniform decomposition.

Assume now that $A\cap B\not=\emptyset$.
It suffices to  define a monotoneous   function
$s\colon (0,\infty)\to (0,\infty)$ such that:
\begin{enumerate}
\item \label{iewiouffwewefewfwefwef1} $\lim_{e\to 0} s(e)=0$.
\item \label{iewiouffwewefewfwefwef}  For all $e$  in $(0,\infty)$ we have
\[U_{e}[A]\cap U_{e}[B]\subseteq U_{s(e)}[A\cap B]\ .\]
\end{enumerate}

We define a function $s\colon (0,\infty)\to [0,\infty]$ by 
\[s(e):=\inf\{e^{\prime}\in (0,\infty)\:|\: U_{e}[A]\cap U_{e}[B] \subseteq U_{e^{\prime}}[A\cap B]\}\ .\]
By construction, this function is monotoneous.
Since $A\cap B\not=0$, and since by $\Gamma$-compactness of $Y$ there exists $R$ in $(0,\infty)$ such that $U_{R}[A\cap B]=Y$, the function $s$ is finite. Condition~\ref{iewiouffwewefewfwefwef} follows from this observation.

We  claim that
\begin{equation}
\label{eqjnk092332}
A\cap B=\bigcap_{e>0} U_{e}[A]\cap U_{e}[B]\ .
\end{equation}
It is clear that
\[A\cap B\subseteq \bigcap_{e>0} U_{e}[A]\cap U_{e}[B] \ .\] On the other hand, assume that $y\in Y\setminus  (A\cap B)$. Without loss of generality (interchange the roles of $A$ and $B$, if necessary) we can assume that $y\in Y\setminus A$. Since $A$ is closed  there exists $e$ in $(0,\infty)$  such that $y\not\in  U_{e}[A]$. Hence
$y\in Y\setminus  \bigcap_{e>0} U_{e}[A]\cap U_{e}[B]$. This  shows  the opposite inclusion
\[\bigcap_{e>0} U_{e}[A]\cap U_{e}[B]\subseteq A\cap B .\]
 
We now show Condition \ref{iewiouffwewefewfwefwef1}. Assume the contrary. Then there exists $\epsilon$ in $(0,\infty)$ such that 
$s(e)\ge \epsilon>0$ for all $e$ in $(0,\infty)$. For every integer $n$  there  exists a point $y_{n} $ in $U_{1/n}[A]\cap U_{1/n}[B]$
such that $y_{n}\not\in U_{\epsilon }[A\cap B]$.
We can assume (after replacing $y_{n}$ by $\gamma_{n}y_{n}$ for suitable elements $\gamma_{n}$ of $\Gamma$) by $\Gamma$-compactness that $y_{n}\to y$ for $n\to \infty$. Then
$y\not\in  U_{\epsilon/2 }[A\cap B]$ but
$y\in  A\cap B$ by \eqref{eqjnk092332}. This is a contradiction, and so we have verified Condition~\ref{iewiouffwewefewfwefwef1}.
\end{proof}

\begin{ddd}
 We define the functor
 \[ \cO^\infty_\homolg \colon \Gamma\Top \to \Gamma\Sp\cX \]
 as the left Kan extension
 \[\xymatrix@C=4em{
  \Gamma\Top^{cm}\ar[r]^-{\cO^\infty \circ \cM \circ \cU}\ar[d] & \Gamma\Sp\cX \\
  \Gamma\Top\ar@{.>}[ur]_{\cO^\infty_\homolg} & \\
 }\]
of $\cO^\infty \circ \cM \circ \cU$ along the fully faithful functor $\Gamma\Top^{cm} \hookrightarrow \Gamma\Top$.
\end{ddd}
  
\begin{prop}\label{groejeroigreogregregreg}
$\cO_{{\homolg}}^{\infty}$ is an equivariant $\Gamma\Sp\cX$-valued homology theory.
\end{prop}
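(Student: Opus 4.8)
The plan is to verify the three axioms of an equivariant homology theory from \cref{fweopfkjpwef345345} for the left Kan extension $\cO_\homolg^\infty$. Since $\cO_\homolg^\infty$ is defined as a left Kan extension along the inclusion $\Gamma\Top^{cm}\hookrightarrow\Gamma\Top$, on objects of $\Gamma\Top^{cm}$ it agrees with $\cO^\infty\circ\cM\circ\cU$ (the inclusion is fully faithful, so the unit of the Kan extension is an equivalence on $\Gamma\Top^{cm}$), and every $\Gamma$-topological space is a filtered colimit of its $\Gamma$-compact $\Gamma$-metrizable subspaces in a suitable sense, so the pointwise formula for the Kan extension expresses $\cO_\homolg^\infty(X)$ as a colimit of values on $\Gamma\Top^{cm}$. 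First I would record this reduction carefully: the comma category governing the Kan extension at $X$ is filtered (or at least sifted), because finite unions and finite products of $\Gamma$-compact $\Gamma$-metrizable spaces mapping to $X$ remain $\Gamma$-compact and $\Gamma$-metrizable, which lets us commute $\cO_\homolg^\infty$ past the relevant colimits.

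Then I would check the wedge axiom: for a family $(X_i)_{i\in I}$, a $\Gamma$-compact $\Gamma$-metrizable space mapping to $\coprod_i X_i$ factors through a finite subcoproduct (by $\Gamma$-compactness), so the comma category for $\coprod_i X_i$ is built from the comma categories of the $X_i$, and since $\cO^\infty\circ\cM\circ\cU$ preserves finite coproducts (as $\cO^\infty$ is excisive, hence sends finite disjoint unions to sums via \cref{kldjedjoiewufowe23435335} applied to clopen decompositions) the colimit formula yields $\cO_\homolg^\infty(\coprod_i X_i)\simeq\bigoplus_i\cO_\homolg^\infty(X_i)$. Homotopy invariance follows because $[0,1]\times X$ has the same comma category (up to cofinality) as $X$ after noting $[0,1]\times K$ is $\Gamma$-compact $\Gamma$-metrizable when $K$ is, together with homotopy invariance of $\cO^\infty$ (\cref{fijofiwewefewf}) and the fact that $\cU([0,1]\times K)\cong[0,1]_u\otimes\cU(K)$ compatibly; one passes the equivalence through the Kan-extension colimit. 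For excision, by \cref{kefhwefiuziu24234324} it suffices to prove closed excision: for a closed $\Gamma$-invariant decomposition $(A,B)$ of $X$ with $A\cup B=X$, the square with corners $\cO_\homolg^\infty(A\cap B),\cO_\homolg^\infty(A),\cO_\homolg^\infty(B),\cO_\homolg^\infty(X)$ is cocartesian. Here I would use \cref{feifhweifewffewfwf} (a closed $\Gamma$-invariant decomposition of a $\Gamma$-compact $\Gamma$-metrizable space is an equivariant uniform decomposition of $\cU(X)$) together with \cref{kldjedjoiewufowe23435335} (the cone-at-infinity sends equivariant uniform, coarsely excisive decompositions to push-outs — and with the maximal coarse structure every decomposition is coarsely excisive, as already observed in \cref{oerjgroigrgregegergeg} via the Decomposition Theorem for the Hausdorff metric uniformity). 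This gives closed excision on $\Gamma\Top^{cm}$, and then one propagates it to all of $\Gamma\Top$ through the colimit, using that intersecting a $\Gamma$-compact $\Gamma$-metrizable subspace of $X$ with the closed sets $A,B$ produces again such subspaces and that push-outs commute with filtered colimits in a stable $\infty$-category.

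The main obstacle I anticipate is the bookkeeping around the colimit over the comma category: one must show that this indexing category is filtered (so that it commutes with finite limits/push-outs of the relevant type in the stable target) and that the various subcategories (spaces factoring through $A$, through $B$, through $A\cap B$, through a finite subcoproduct) are cofinal in the appropriate comma categories. The subtlety is that a $\Gamma$-compact $\Gamma$-metrizable subspace of $X$ need not intersect $A$ and $B$ in a way that reassembles it from pieces unless one enlarges it; the fix is to observe that if $K\subseteq X$ is $\Gamma$-compact $\Gamma$-metrizable then $K=(K\cap A)\cup(K\cap B)$ is such a decomposition and $K\cap A$, $K\cap B$, $K\cap A\cap B$ are again $\Gamma$-compact $\Gamma$-metrizable (closed subspaces of $\Gamma$-compact $\Gamma$-metrizable spaces), so the restriction of the closed-excision square for $K$ to $X$ is already cocartesian and one simply takes the colimit. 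I would also need to double-check that $\cU$ is compatible with restriction to closed subspaces, i.e.\ $\cU(K\cap A)$ carries the subspace uniformity of $\cU(K)$, which is immediate from \cref{lem:nbhdvsmetricentourage} since a $\Gamma$-invariant metric restricts to one on the closed subspace. Once these cofinality statements are in place, the proof is a routine assembly of \cref{fijofiwewefewf}, \cref{kldjedjoiewufowe23435335}, \cref{feifhweifewffewfwf} and \cref{lem:nbhdvsmetricentourage}.
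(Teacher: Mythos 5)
Your proposal follows essentially the same route as the paper's proof: you use the pointwise Kan-extension formula, reduce homotopy invariance via the factorization through $[0,1]\times Y$, derive the wedge axiom from $\Gamma$-compactness forcing maps to factor through finite subcoproducts, and prove closed excision by pulling back the decomposition along $r\colon Y\to X$ for $Y$ in $\Gamma\Top^{cm}/X$ and applying \cref{feifhweifewffewfwf} together with \cref{kldjedjoiewufowe23435335} (via \cref{oerjgroigrgregegergeg}, since $\cU(Y)$ is Hausdorff), then taking colimits. One small inaccuracy in your write-up: you frame filteredness of the comma category as the main obstacle so that push-outs commute with the colimit, but this is not needed — colimits commute with colimits unconditionally, so a colimit of cocartesian squares in a stable $\infty$-category is automatically cocartesian; filteredness (or rather the cofinality of objects factoring through a fixed $X_i$, respectively through $r^{-1}(A)$) is what you actually use, for preservation of filtered colimits and for the cofinality of $\Gamma\Top^{cm}/A\subset\Gamma\Top^{cm}/X$, and you do invoke it correctly there.
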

\begin{proof}
We first note that the objectwise formula for the left Kan-extension gives
\begin{equation}\label{vervkj4io3rjoi3r3r43r43r3r}
 \cO_{{\homolg}}^{\infty}(X) \simeq  {\colim_{(Y\to X) \in \Gamma\Top^{cm}/X} \cO^{\infty}(\cM(\cU(Y)))}\ .
\end{equation}
Let $Y$ be a {$\Gamma$-compact and $\Gamma$-metrizable $\Gamma$-topological space and $(\phi,f) \colon Y \to [0,1] \times X$ be an equivariant map}.
Then  {we have the factorization
\[ (\phi,f) \colon Y \xrightarrow{(\phi,\id_{Y})}  [0,1] \times Y \xrightarrow{(\id_{ [0,1] },f)}  [0,1] \times X \ .\]
Note that $[0,1] \times Y$ is again a $\Gamma$-compact and $\Gamma$-metrizable $\Gamma$-topological space.}
This shows that the 
 category of maps of the form
 $(\id_{[0,1]},f) \colon  {[0,1] \times Y} \to [0,1]\times X$ for morphisms of $\Gamma$-topological spaces $f \colon Y\to X$  {with $Y$ in $\Gamma\Top^{cm}$} is cofinal in  {$\Gamma\Top^{cm}/ ([0,1]\times X)$.}  {By \cref{lem:nbhdvsmetricentourage}, we have an isomorphism of $\Gamma$-uniform spaces}
 \[ {\cU([0,1]\times Y) \cong [0,1]_{u} \otimes \cU(Y)\ .} \]
By the homotopy invariance of $\cO^{\infty}$ (\cref{fijofiwewefewf}) the projection
 $ {[0,1]_{u} \otimes \cU(Y) \to \cU(Y)}$ induces an equivalence
 $ {\cO^{\infty}(\cM(\cU([0,1]\times Y))) \simeq \cO^{\infty}(\cM(\cU(Y)))}$. Hence we conclude that
 $\cO^{\infty}_{\homolg}([0,1]\times X)\simeq \cO^{\infty}_{\homolg}(X)$, i.e., that $\cO^{\infty}_{\homolg}$ is homotopy invariant.
  
Assume now that $(X_{i})_{i\in I}$ is a filtered family of $\Gamma$-topological spaces and set
$X:=\colim_{i\in I}X_{i}$. Then every morphism
$Y\to X$ for a  {$Y$ in $\Gamma\Top^{cm}$} factorizes as $Y\to X_{i}\to X$ for some $i$ in $I$ since $Y$ is $\Gamma$-compact.
It follows that
\begin{align*}
\cO_{{\homolg}}^{\infty}(X) & \simeq  {\colim_{(Y\to X)\in  \Gamma\Top^{cm} / X} \cO^{\infty}(\cM(\cU(Y)))} \\
& \simeq  {\colim_{i\in I} \colim_{(Y\to X_{i}) \in  \Gamma\Top^{cm} / X_{i}}\cO^{\infty}(\cM(\cU(Y)))}\\
& \simeq  \colim_{i\in I}\cO_{{\homolg}}^{\infty}(X_{i})\ .
\end{align*}
This in particular implies the wedge axiom for $\cO_{{\homolg}}^{\infty}$.

 In order to show that $\cO_{{\homolg}}^{\infty}$ satisfies excision, by \cref{kefhwefiuziu24234324} it suffices to show the stronger result that for every decomposition $(A,B)$ of  a $\Gamma$-topological space $X$  into two closed invariant subsets we have a push-out square \begin{equation}\label{dqwdqkwudhuiu34324}
\xymatrix{
\cO_{{\homolg}}^{\infty}(A\cap B)\ar[r]\ar[d]&\cO_{{\homolg}}^{\infty}(A)\ar[d]\\
\cO_{{\homolg}}^{\infty}(B)\ar[r]&\cO_{{\homolg}}^{\infty}(X)}\ .
\end{equation}
Let  {$r\colon Y\to X$ be an object of $\Gamma\Top^{cm}/ X$}. Then
 $(r^{-1}(A),r^{-1}(B))$ is a closed decomposition of $Y$ into  {$\Gamma$-compact and $\Gamma$-metrizable subspaces}.
 Note that the objects of the form  {$r^{-1}(A)\to A$ of $\Gamma\Top^{cm} / A$}  for all $r\colon Y\to X$ are cofinal in  {$\Gamma\Top^{cm} / A$}.

We conclude that \eqref{dqwdqkwudhuiu34324} is a colimit of commuting squares 
\begin{equation}\label{fwefewf3244}
 {\xymatrix{
\cO^{\infty}(\cM(\cU(r^{-1}(A)\cap r^{-1}(B))))\ar[r]\ar[d]&\cO^{\infty}(\cM(\cU(r^{-1}(A))))\ar[d]\\
\cO^{\infty}(\cM(\cU(r^{-1}(B))))\ar[r]&\cO^{\infty}(\cM(\cU(Y)))}}
\end{equation}
Since $Y$ is a $\Gamma$-compact and $\Gamma$-metrizable $\Gamma$-topological space, by \cref{feifhweifewffewfwf} the decomposition 
 {$(r^{-1}(A),r^{-1}(B))$} is an equivariant uniform decomposition of $\cU(Y)$. Therefore the square \eqref{fwefewf3244} is a push-out square by \cref{kldjedjoiewufowe23435335}. Being a colimit of push-out squares the square \eqref{dqwdqkwudhuiu34324} is therefore also a push-out square.
\end{proof}

\begin{rem}\label{gioerjgioerg893ut9834t4}
Instead of $\cO^{\infty}\circ \cM$ we could consider any functor \[A\colon \Gamma\bU\to \bC\] for some cocomplete stable $\infty$-category {$\bC$}, which is excisive for closed decompositions and homotopy invariant. The construction above produces an equivariant $\bC$-valued homology theory $A_{{\homolg}}\colon \Gamma\Top\to \bC$ as a left Kan-extension
\[ { \xymatrix@C=4em{
\Gamma\Top^{cm}\ar[r]^-{A \circ \cU}\ar[d]&\bC\\
\Gamma\Top\ar@{..>}[ur]_-{\ A_{{\homolg}}}&
}} \]
The proof that this is indeed an equivariant $\bC$-valued homology theory is word-for-word the same as the proof of \cref{groejeroigreogregregreg}.

Note that the proof also shows that for a filtered family of $\Gamma$-topological spaces $(X_{i})_{i\in I}$ the following natural morphism is an equivalence
\begin{equation}\label{gpo4tjgopgpog54g4}
\colim_{i\in I} A_{\homolg}(X_{i})\xrightarrow{\simeq}A_{\homolg}(\colim_{i\in I}X_{i}) \ .
\end{equation}
We will use this equivalence later in this paper.
\end{rem}

A $\Gamma$-uniform space $X$ has an underlying $\Gamma$-topological space
which we will denote by $\tau(X)$. The topology  of $\tau(X)$ is generated by the sets
$V[x]$ for all points $x$ and uniform entourages $V$ of $X$. We actually get a functor
\begin{equation}
\label{eqerg45rert4re}
\tau\colon \Gamma\bU\to \Gamma\Top\ .
\end{equation}

\begin{rem}\label{fwejfkwjefiowejfoiwef}
 Let $Y$ be a $\Gamma$-compact and $\Gamma$-metrizable $\Gamma$-topological space, and let $X$ be a $\Gamma$-uniform space. Then every continuous map $Y\to \tau(X)$ induces a uniform map $\cU(Y)\to X$.
 Therefore, we obtain a natural morphism
 \begin{equation}
 \label{eq:morphism} A_{\homolg}(\tau(X)) \to A(X)\ .
 \end{equation} 
 Since the inclusion functor $\Gamma\Top^{cm} \hookrightarrow \Gamma\Top$ is fully faithful, the morphism \eqref{eq:morphism} is an equivalence for $X:=\cU(Y)$ if $Y$ is a $\Gamma$-compact and $\Gamma$-metrizable $\Gamma$-topological space.
\end{rem}

\subsection{Families of subgroups and the universal assembly map}
\label{secjks9990rfwe}

In this section we will define the classifying space $E_{\cF}\Gamma$ for a family $\cF$ of subgroups of $\Gamma$, and we will define the corresponding universal assembly map \[\alpha_{\cF}\colon \cO_{{\homolg}}^{\infty}(E_{\cF}\Gamma)\to  \cO_{{\homolg}}^{\infty}(*)\ .\] We will also introduce the motivic assembly map \[\alpha_{X,Q} \colon \cO_{{\homolg}}^{\infty}(X)\otimes \Yo^{s}(Q) \to \Sigma \Yo^{s}(Q)\] for a $\Gamma$-topological space $X$ twisted by a $\Gamma$-bornological coarse space $Q$.

\begin{ddd} A \emph{family of subgroups} $\cF$ of $\Gamma$ is a {non-empty} subset of the set of subgroups of $\Gamma$ which is closed under conjugation and taking subgroups.
\end{ddd}

\begin{ex}
Examples of families of subgroups are:
\begin{enumerate}
\item $\mathbf{\{1\}}$ -- the family containing only the trivial subgroup
\item $\Fin$ -- the family of all finite subgroups
\item $\mathbf{Vcyc}$ -- the family of all virtually cyclic subgroups
\item $\All$ -- the family of all subgroups
\end{enumerate}
In \cref{secjnk232332} we will mostly work with the family $\Fin$ since one can model the corresponding classifying space (see \cref{dddjk23wd}) by the Rips complex.
\end{ex}

For a family of subgroups $\cF$ we let $\Orb_{\cF}(\Gamma)\subseteq \Orb(\Gamma)$ be the full subcategory of transitive $\Gamma$-sets whose stabilizers belong to $\cF$. 

\begin{ex}
Note that $\Orb_{\All}(\Gamma)= \Orb(\Gamma)$.

There is furthermore an equivalence
$B\Gamma\to \Orb_{\{1\}}(\Gamma)^{op}$ which sends the unique object $*$ of $B\Gamma$ to the $\Gamma$-set $\Gamma$ (with the left action). On morphisms this equivalence sends the morphism
$\gamma$ in $\Hom_{B\Gamma}(*,*)=\Gamma$ to the  automorphism of the $\Gamma$-set $\Gamma$ given by right-multiplication with $\gamma$.
\end{ex}

For two families of subgroups $\cF,\cF^{\prime}$ satisfying  $\cF\subseteq \cF^{\prime}$ we have an inclusion \[\Orb_{\cF}(\Gamma)\hookrightarrow \Orb_{\cF^{\prime}}(\Gamma)\] of orbit categories. 
On presheaves this inclusion of orbit categories defines a restriction functor which   is the right-adjoint of  an adjunction
\begin{equation}\label{hfrjkfhiuiu34tr34t}
\Ind_{\cF}^{\cF^{\prime}}\colon\PSh(\Orb_{\cF }(\Gamma))\leftrightarrows \PSh(\Orb_{\cF^{\prime}}(\Gamma)):\Res^{\cF^{\prime}}_{\cF  }\ .
\end{equation} 

We let $*_{\cF}$ denote the final object in $\Orb_{\cF}(\Gamma)$. 
\begin{ddd}\label{dddjk23wd}
The object \[E_{\cF}\Gamma:= \Ind_{\cF}^{\All}(*_{\cF})\] of $ \PSh(\Orb(\Gamma))$
is called  the classifying space  of $\Gamma$ for the family $\cF$.
\end{ddd}

\begin{rem}\label{feiowofewfewfewfewf}
Assume that $X$ is a $\Gamma$-CW-complex with an equivalence $\ell(X)\simeq E_{\cF}\Gamma$. Then the  following \cref{ewiuhwefewfewwf} shows that for every subgroup $H$ of $\Gamma$  the fixed point set $X^{H}$ is contractible or empty depending on whether $H$ belongs to $\cF$ or not.  This property is the usual characterization of 
a classifying space of $\Gamma$ for the family $\cF$.  We say that $X$ is a model for $E_{\cF}\Gamma$. A model for $E_{\cF}\Gamma$ is unique up to contractible choice.
\end{rem}
 
Let $y\colon \Orb(\Gamma)\to \PSh(\Orb(\Gamma))$ denote the Yoneda embedding.
\begin{lem}\label{ewiuhwefewfewwf}
For  $T\in \Orb(\Gamma)$ we have
\[\Map (y(T),E_{\cF}\Gamma)\simeq
\begin{cases}
\emptyset & \text{if } T\not\in \Orb_{\cF}(\Gamma)\ ,\\
* & \text{if } T \in \Orb_{\cF}(\Gamma)\ .
\end{cases}\]
\end{lem}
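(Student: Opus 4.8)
The statement to prove is a standard computation of mapping spaces out of representable presheaves into the classifying space $E_{\cF}\Gamma = \Ind_{\cF}^{\All}(*_{\cF})$. The plan is to unwind the adjunction \eqref{hfrjkfhiuiu34tr34t} and use the Yoneda lemma. First I would recall that for the adjunction $\Ind_{\cF}^{\All} \colon \PSh(\Orb_{\cF}(\Gamma)) \leftrightarrows \PSh(\Orb(\Gamma)) : \Res^{\All}_{\cF}$, we have for $T$ in $\Orb(\Gamma)$ the natural equivalence
\[
\Map_{\PSh(\Orb(\Gamma))}(y(T), E_{\cF}\Gamma) \simeq \Map_{\PSh(\Orb(\Gamma))}(y(T), \Ind_{\cF}^{\All}(*_{\cF})).
\]
The key point is that $\Ind_{\cF}^{\All}$ is the \emph{left} Kan extension along the inclusion $j\colon \Orb_{\cF}(\Gamma) \hookrightarrow \Orb(\Gamma)$, so it fits into a diagram where $\Res^{\All}_{\cF}$ is restriction along $j$. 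I would then use that $\Res^{\All}_{\cF}(y(T))$ is the presheaf on $\Orb_{\cF}(\Gamma)$ given by $S \mapsto \Map_{\Orb(\Gamma)}(S, T)$ for $S$ in $\Orb_{\cF}(\Gamma)$, together with the fact that $*_{\cF}$ is the terminal object of $\Orb_{\cF}(\Gamma)$, hence $y(*_{\cF})$ is the terminal presheaf on $\Orb_{\cF}(\Gamma)$ (every $\Map(y(S), y(*_{\cF})) \simeq \Map_{\Orb_{\cF}(\Gamma)}(S, *_{\cF}) \simeq *$).

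\textbf{Key steps.} The computation proceeds as follows. By the adjunction and the Yoneda lemma,
\[
\Map(y(T), \Ind_{\cF}^{\All}(*_{\cF})) \simeq \Map_{\PSh(\Orb(\Gamma))}(y(T), \Ind_{\cF}^{\All}(y_{\cF}(*_{\cF}))),
\]
where $y_{\cF}$ is the Yoneda embedding for $\Orb_{\cF}(\Gamma)$ and I have used that $*_{\cF}$ being terminal means $y_{\cF}(*_{\cF})$ is the terminal presheaf, which $\Ind_{\cF}^{\All}$ as a left adjoint need not preserve — so I must be careful here. The cleaner route: since $*_{\cF}$ is terminal in $\Orb_{\cF}(\Gamma)$, the presheaf $*_{\cF}$ (viewed in $\PSh(\Orb_{\cF}(\Gamma))$ via Yoneda, or simply as the terminal object) has the property that its left Kan extension $\Ind_{\cF}^{\All}(*_{\cF})$ evaluated at $T \in \Orb(\Gamma)$ is the colimit $\colim_{(S \to T) \in \Orb_{\cF}(\Gamma)/T} *$, i.e.\ the geometric realization (classifying space) of the comma category $\Orb_{\cF}(\Gamma)/T$. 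So I would instead argue: by the pointwise formula for left Kan extension, $(\Ind_{\cF}^{\All}(*_{\cF}))(T) \simeq |N(\Orb_{\cF}(\Gamma)/T)|$. Then I must show this is empty when $T \notin \Orb_{\cF}(\Gamma)$ and contractible when $T \in \Orb_{\cF}(\Gamma)$.

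\textbf{Main obstacle.} The crux is the two case analysis for the comma category $\Orb_{\cF}(\Gamma)/T$. If $T \in \Orb_{\cF}(\Gamma)$, then $\mathrm{id}_T$ is a terminal object of $\Orb_{\cF}(\Gamma)/T$ (since any $S \to T$ factors through $\mathrm{id}_T$ trivially, and $T$ being terminal among objects mapping to it... no — rather $\mathrm{id}_T$ is terminal because for any $(S \xrightarrow{f} T)$ there is a unique morphism $(S \xrightarrow{f} T) \to (T \xrightarrow{\mathrm{id}} T)$ in the comma category, namely $f$ itself), so the nerve is contractible. If $T = \Gamma/K \notin \Orb_{\cF}(\Gamma)$, i.e.\ $K \notin \cF$, then I claim $\Orb_{\cF}(\Gamma)/T$ is empty: a morphism of $\Gamma$-sets $\Gamma/H \to \Gamma/K$ exists only when $H$ is subconjugate to $K$, but since $\cF$ is closed under subgroups and conjugation, $H \in \cF$ and $H$ subconjugate to $K$ would not force $K \in \cF$ — so this is where I need care. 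Actually a $\Gamma$-map $\Gamma/H \to \Gamma/K$ sends $eH \mapsto gK$ with $g^{-1}Hg \subseteq K$, which does \emph{not} imply $K \in \cF$. So $\Orb_{\cF}(\Gamma)/T$ need not be empty! This means the naive left Kan extension formula gives the \emph{wrong} answer, and the correct characterization of $E_{\cF}\Gamma$ via $\Ind_{\cF}^{\All}$ must be more subtle — presumably $E_{\cF}\Gamma$ is characterized as the object whose $\Map$-spaces are as stated, and the content of the lemma is precisely to verify that $\Ind_{\cF}^{\All}(*_{\cF})$ has this property. I expect the actual argument uses a different description: $\Ind_{\cF}^{\All}$ here should be interpreted so that $\Map(y(T), \Ind_{\cF}^{\All}(*_{\cF})) \simeq \Map_{\PSh(\Orb_{\cF}(\Gamma))}(\Res^{\All}_{\cF}... )$ — no. Given the genuine subtlety, the hard part will be pinning down exactly which functor $\Ind_{\cF}^{\All}$ denotes (left adjoint to restriction) and showing that the resulting presheaf, when restricted back to $\Orb_{\cF}(\Gamma)$, is terminal there, which forces all fixed points over $\cF$-orbits to be contractible; and separately showing it vanishes on non-$\cF$ orbits, which I expect follows from the fact that $E_{\cF}\Gamma$ lies in the essential image of $\Ind_{\cF}^{\All}$ combined with the projection formula, or more directly by identifying $\Ind_{\cF}^{\All}$ with the honest colimit over a cofinal subcategory. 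I would therefore carry out: (1) reduce via the adjunction to computing $\Res^{\All}_{\cF}$ of $y(T)$ and then its further structure; (2) handle the $T \in \Orb_{\cF}(\Gamma)$ case by terminality of $*_{\cF}$; (3) handle the $T \notin \Orb_{\cF}(\Gamma)$ case by a direct argument that the relevant colimit is empty, using that any orbit $S$ in $\Orb_{\cF}(\Gamma)$ mapping to $T$ would, combined with surjectivity-type considerations and the closure properties of $\cF$, be impossible — expecting to invoke that $E_{\cF}\Gamma$ restricted to $\Orb(\Gamma) \setminus \Orb_{\cF}(\Gamma)$ is empty by construction of the left Kan extension along a fully faithful functor only populating $\cF$-indexed cells.
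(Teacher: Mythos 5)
Your approach is the right one (pointwise formula for the left Kan extension, then a two-case analysis of the indexing comma category), but you commit a variance error that then sends you down a dead end. You write the colimit as over $\Orb_{\cF}(\Gamma)/T$, the category of maps $S \to T$ in $\Orb(\Gamma)$ with $S \in \Orb_{\cF}(\Gamma)$, and you correctly observe that a $\Gamma$-map $\Gamma/H \to \Gamma/K$ with $H$ subconjugate to $K$ and $H \in \cF$, $K \notin \cF$ is perfectly possible, so this category is not empty when $T \notin \Orb_\cF(\Gamma)$. But that is the wrong comma category. A presheaf is a functor out of $\Orb_{\cF}(\Gamma)^{op}$, and the pointwise formula for the left Kan extension along the fully faithful inclusion $\Orb_{\cF}(\Gamma)^{op} \hookrightarrow \Orb(\Gamma)^{op}$ is
\[
\Ind_{\cF}^{\All}(E)(T) \;\simeq\; \colim_{(S \to T) \in \Orb_{\cF}(\Gamma)^{op}/T} E(S)\ ,
\]
and a morphism $S \to T$ in $\Orb(\Gamma)^{op}$ is a morphism $T \to S$ in $\Orb(\Gamma)$. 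So the colimit is indexed by $\Gamma$-maps \emph{from} $T$ \emph{to} orbits with stabilizer in $\cF$, not the other way around.

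With the variance corrected, the analysis you were attempting works cleanly and is exactly the paper's argument. Applying it to $E = *_{\cF}$ reduces the computation to the shape of the indexing category. A $\Gamma$-map $\Gamma/K \to \Gamma/H$ exists precisely when $K$ is subconjugate to $H$; since $\cF$ is closed under conjugation and subgroups, the existence of such a map with $H \in \cF$ forces $K \in \cF$. Hence the comma category $\Orb_{\cF}(\Gamma)^{op}/T$ is empty whenever $T \notin \Orb_{\cF}(\Gamma)$, and the colimit of the constant point diagram over the empty category is the initial object $\emptyset$. If $T \in \Orb_{\cF}(\Gamma)$, then $\id_T$ is a final object of $\Orb_{\cF}(\Gamma)^{op}/T$, so the colimit is $*_{\cF}(T) \simeq *$. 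Your paragraph of hedging about needing "a different description" of $\Ind_{\cF}^{\All}$, projection formulas, and so on is not needed; the only fix required is the direction of the arrows in the comma category, after which your step (3) succeeds by the subconjugacy argument you already had in hand.
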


\begin{proof}
For a presheaf $E$ on $ \Orb_{\cF}(\Gamma)$ and object $T$ in $\Orb(\Gamma)$ we have (we  interpret the   $\Hom$-sets as discrete spaces)
\[\Ind_{\cF}^{\All}(E)(T) \simeq  \colim_{(S\to T)\in \Orb_{\cF}(\Gamma)^{op}/T}  E(S)\ .\]
Consequently, we have
\[\Map (y(T),E_{\cF}\Gamma)\simeq \colim_{(S\to T)\in \Orb_{\cF}(\Gamma)^{op}/T} *_\cF(S)\ .\]
Note that $*_{\cF}(S)\simeq *$ and $\Orb_{\cF}(\Gamma)^{op}/T$ is empty if $T$ is not in $\Orb_\cF(\Gamma)$ and otherwise has the identity on $T$ as final object.
\end{proof}

\begin{rem}\label{ergoiergergreg}
We must consider models for $E_{\cF}\Gamma$ since $\cO^{\infty}_{\homolg}$ or the equivariant homology theory $A_{\homolg}$ constructed in \cref{gioerjgioerg893ut9834t4} are not expected to be strong equivariant homology theories {(\cref{dddjk4r})} and therefore can not be applied to the object $E_{\cF}\Gamma$ of the $\infty$-category $\PSh(\Orb(\Gamma))$. But on the other hand, since these functors are homotopy invariant, the evaluations $\cO^{\infty}_{\homolg}(E_{\cF}\Gamma^{cw})$ or $A_{\homolg}(E_{\cF}\Gamma^{cw})$ on a model $E_{\cF}\Gamma^{cw}$ for 
$E_{\cF}\Gamma$  are well-defined up to equivalence.

From now on $E_{\cF}\Gamma^{cw}$ will denote some choice of a model.
\end{rem}

The projection $E_{\cF}\Gamma^{cw}\to *$ is a morphism in  $\Gamma\Top$.

If $F\colon \Gamma\Top\to \bC$ is an equivariant $\bC$-valued homology theory, then the Farrell--Jones and Baum--Connes type question is  
for which family $\cF$ this projection
induces an equivalence 
\[F(E_{\cF}\Gamma^{cw})\to F(*)\ .\]

The coarse geometric approach to the question uses that this projection 
induces a morphism in $\Gamma \Sp\cX$
\[\alpha_{E_{\cF}\Gamma^{cw}}\colon \cO_{{\homolg}}^{\infty}(E_{\cF}\Gamma^{cw})\to  \cO_{{\homolg}}^{\infty}(*)\ .\]
\begin{ddd}\label{welfjwefjoiuio23ui23u4324243}
The morphism $\alpha_{E_{\cF}\Gamma^{cw}}$ is called the \emph{universal assembly map} for the family of subgroups $\cF$.
\end{ddd} 
 
One could ask if there is an interesting family $\cF$ (i.e., not the family of all subgroups of $\Gamma$) for which the universal assembly map is an equivalence.

Most of the study of the assembly map is based on an identification of this map with a forget-control map, or equivalently, a boundary operator of a cone sequence. We develop this point of view below.

The following diagram is one of the starting points  of \cite{blr}. We consider a $\Gamma$-topological space $X$  and the projection $X\to *$. It induces the vertical maps in the diagram in $\Gamma\Sp\cX$ whose horizontal parts are segments of the cone sequence \cref{wefoiuweiofuewwefwwefwf}:
\begin{equation*}
\mathclap{\xymatrix{
\colim\limits_{(Y\to X)\in  \Gamma\Top^{cm} / X}
\Yo^{s}(Y_{max,max})\ar[r]\ar[d] &\colim\limits_{(Y\to X)\in  \Gamma\Top^{cm} / X}
\Yo^{s}(\cO( \cM(\cU(Y))))\ar[d]  \ar[r]& \cO_{{\homolg}}^{\infty}(X)\ar[d]^{\alpha_{X}} \\ \Yo^{s}(*)\ar[r]&\Yo^{s}(\cO(*))\ar[r]&\cO_{{\homolg}}^{\infty}(*)
}}\ .
\end{equation*}
 
Note that $\Yo^{s}(\cO(*))\simeq 0$ since $\cO(*)$ is flasque. Furthermore,
 if we tensor-multiply (with respect to $- \otimes -$) the diagram with $\Yo^{s}(Q)$ for a $\Gamma$-bornological coarse space $Q$ whose underlying $\Gamma$-set is free, then by \cref{roigirhgreiughreige} the left vertical map becomes an equivalence. 
\begin{ddd}\label{gioegregregeg}
We define the \emph{obstruction motive} to be the object
\[M(X):=\colim_{(Y\to X)\in  \Gamma\Top^{cm} / X }\Yo^{s}(\cO(\cM(\cU(Y))))\]
of the category $\Gamma\Sp\cX$.
\end{ddd}

The discussion above has the following corollary.
Let $Q$ be a $\Gamma$-bornological coarse space.
\begin{kor}\label{ropgerog948ut934ut93t}
If the underlying $\Gamma$-set of   $Q $ is free, then we have a fiber sequence
\[\mathclap{
M(X)\otimes\Yo^{s}(Q)\to \cO_{{\homolg}}^{\infty}(X)\otimes\Yo^{s}(Q) \xrightarrow{\alpha_{X,Q}} \Sigma \Yo^{s}(Q) \to \Sigma (M(X)\otimes  \Yo^{s}(Q))\ .}\]
\end{kor}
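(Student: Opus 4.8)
The statement asserts that, for a $\Gamma$-bornological coarse space $Q$ whose underlying $\Gamma$-set is free, there is a fiber sequence
\[
M(X)\otimes\Yo^{s}(Q)\to \cO_{{\homolg}}^{\infty}(X)\otimes\Yo^{s}(Q) \xrightarrow{\alpha_{X,Q}} \Sigma \Yo^{s}(Q) \to \Sigma (M(X)\otimes  \Yo^{s}(Q))\ .
\]
The entire proof is essentially bookkeeping around the cone fiber sequence of \cref{wefoiuweiofuewwefwwefwf}, combined with the behaviour of tensoring with $\Yo^{s}(Q)$ on free $\Gamma$-sets from \cref{roigirhgreiughreige}, and the fact that $\otimes$ commutes with colimits in each variable (\cref{lemjine2}).

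\textbf{Step 1: start from the cone fiber sequence.}
For each object $(Y\to X)$ in $\Gamma\Top^{cm}/X$ the space $Y$ is $\Gamma$-compact and $\Gamma$-metrizable, so $\cM(\cU(Y))$ is a $\Gamma$-uniform bornological coarse space, and \cref{wefoiuweiofuewwefwwefwf} gives the natural fiber sequence
\[
\Yo^{s}(F_{\cT}(\cM(\cU(Y))))\to \Yo^{s}(\cO(\cM(\cU(Y))))\to \cO^{\infty}(\cM(\cU(Y))) \xrightarrow{\partial} \Sigma \Yo^{s}(F_{\cT}(\cM(\cU(Y))))\ .
\]
Now $F_{\cT}(\cM(\cU(Y)))$ is the $\Gamma$-bornological coarse space $Y_{max,max}$. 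Taking the colimit over $(Y\to X)\in \Gamma\Top^{cm}/X$, and using that in a stable $\infty$-category filtered (indeed arbitrary) colimits of fiber sequences are fiber sequences, we obtain a fiber sequence
\[
\colim_{(Y\to X)} \Yo^{s}(Y_{max,max})\to M(X)\to \cO^{\infty}_{\homolg}(X) \xrightarrow{\partial} \Sigma\, \colim_{(Y\to X)} \Yo^{s}(Y_{max,max})\ ,
\]
where $M(X)$ is the obstruction motive of \cref{gioegregregeg} and the third term is identified using formula \eqref{vervkj4io3rjoi3r3r43r43r3r} for the left Kan extension defining $\cO^{\infty}_{\homolg}$.

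\textbf{Step 2: tensor with $\Yo^{s}(Q)$ and simplify the left-hand term.}
Apply $-\otimes \Yo^{s}(Q)$ to the previous fiber sequence. Since $\otimes$ is exact (it commutes with colimits in each variable by \cref{lemjine2}, hence is exact in the stable setting) this yields a fiber sequence
\[
\Big(\colim_{(Y\to X)} \Yo^{s}(Y_{max,max})\Big)\otimes\Yo^{s}(Q)\to M(X)\otimes\Yo^{s}(Q)\to \cO^{\infty}_{\homolg}(X)\otimes\Yo^{s}(Q) \to \Sigma\Big(\cdots\Big)\otimes\Yo^{s}(Q)\ .
\]
Because $\otimes$ commutes with colimits, the left-hand term is $\colim_{(Y\to X)} \big(\Yo^{s}(Y_{max,max})\otimes\Yo^{s}(Q)\big)$. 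For every $Y$ the underlying $\Gamma$-set of $Y_{max,max}$ maps to the point, so by \cref{roigirhgreiughreige} (using that the underlying $\Gamma$-set of $Q$ is free) the morphism $\Yo^{s}(Y_{max,max})\otimes\Yo^{s}(Q)\to \Yo^{s}(Q)$ induced by $Y_{max,max}\to *$ is an equivalence. These equivalences are compatible over the diagram $\Gamma\Top^{cm}/X$ — one checks this on the level of morphisms, but it is immediate from naturality of \eqref{jhqehuidhwdqwdqwd} — so the colimit of the left-hand term is equivalent to $\colim_{(Y\to X)}\Yo^{s}(Q)$. Since $\Gamma\Top^{cm}/X$ is non-empty and, more to the point, the transition maps are all the identity after this identification (every arrow $Y'\to Y$ over $X$ induces $\mathrm{id}_{\Yo^{s}(Q)}$ in the colimit system because of the naturality square), the colimit collapses to $\Yo^{s}(Q)$. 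Thus the left-hand term is $\Yo^{s}(Q)$, and under this identification the second map $M(X)\otimes\Yo^{s}(Q)\to \cO^{\infty}_{\homolg}(X)\otimes\Yo^{s}(Q)$ of the fiber sequence becomes exactly the map appearing in the statement, while the boundary map $\cO^{\infty}_{\homolg}(X)\otimes\Yo^{s}(Q)\to \Sigma\Yo^{s}(Q)$ is by definition $\alpha_{X,Q}$ (it is the twist of the boundary map $\partial$, equivalently the composite obtained from the projection $X\to *$ as in the diagram preceding \cref{gioegregregeg}). Rotating the fiber sequence one step gives precisely the claimed four-term sequence, with the last arrow $\Sigma\Yo^{s}(Q)\to \Sigma(M(X)\otimes\Yo^{s}(Q))$.

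\textbf{Main obstacle.}
The only genuinely delicate point is justifying that the system $\big(\Yo^{s}(Y_{max,max})\otimes\Yo^{s}(Q)\big)_{(Y\to X)}$ has colimit $\Yo^{s}(Q)$, i.e. that after the identification via \cref{roigirhgreiughreige} the diagram is (equivalent to) the constant diagram on $\Yo^{s}(Q)$. This requires checking that for a morphism $f\colon Y'\to Y$ over $X$ the square relating the two projections $Y'_{max,max}\to *$ and $Y_{max,max}\to *$ to $f$ commutes up to coherent homotopy, which follows from functoriality of $\Yo^{s}$ and the symmetric monoidal structure; and then that the colimit of a constant diagram indexed by the (weakly contractible, or at least non-empty and filtered-enough) category $\Gamma\Top^{cm}/X$ is the value of that constant diagram — here one uses that $\Gamma\Top^{cm}/X$ has contractible nerve, which is standard since it has the terminal-ish structure coming from $X$ being approximated by its $\Gamma$-compact $\Gamma$-metrizable subspaces, or more simply one notes that the identification already exhibits the transition maps as equivalences and invokes cofinality of a point. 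Everything else is a formal consequence of stability, exactness of $\otimes$, and the cone fiber sequence.
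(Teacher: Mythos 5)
Your proof is correct and follows essentially the same route as the paper. Both arguments boil down to: (1) the cone fiber sequence from \cref{wefoiuweiofuewwefwwefwf}, taken to the colimit over $\Gamma\Top^{cm}/X$; (2) exactness of $-\otimes\Yo^{s}(Q)$; (3) \cref{roigirhgreiughreige} to collapse the left-hand term to $\Yo^{s}(Q)$; and (4) identification of the boundary map with $\alpha_{X,Q}$ via the commuting square over the projection $X\to *$. The paper tensor-multiplies the whole two-row diagram (including the row for $*$) and reads off the result, while you colimit and tensor the single fiber sequence and then patch in the identification of the boundary; these are the same computation in slightly different order, and both implicitly use that $\Yo^{s}(\cO(*))\simeq 0$ to identify $\cO^{\infty}_{\homolg}(*)\otimes \Yo^{s}(Q)$ with $\Sigma\Yo^{s}(Q)$.

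Two small remarks on your write-up. First, your justification that $\colim_{\Gamma\Top^{cm}/X}\Yo^{s}(Q)\simeq\Yo^{s}(Q)$ is a bit muddled: the clean reason is simply that $\Gamma\Top^{cm}/X$ has an initial object, namely $\emptyset\to X$ (the empty space is vacuously $\Gamma$-compact and $\Gamma$-metrizable), so its nerve is contractible and the canonical map from the colimit of the constant diagram is an equivalence. Talk of ``terminal-ish structure'' and ``cofinality of a point'' is not needed and is not quite right as stated. Second, the sentence asserting that the boundary map ``is by definition $\alpha_{X,Q}$'' overstates matters: $\alpha_{X,Q}$ is defined as $\alpha_X\otimes\id_{\Yo^{s}(Q)}$, and identifying it with the boundary of your fiber sequence requires precisely the commutativity of the right square of the diagram preceding \cref{gioegregregeg} together with $\Yo^{s}(\cO(*))\simeq 0$; you gesture at this but it deserves to be said explicitly.
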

\begin{ddd}\label{rjiojfiofjiejoijo2i3roi2r23ori}
The morphism $\alpha_{X,Q}:=\alpha_{X}\otimes \id_{\Yo^{s}(Q)}$ is called the \emph{motivic assembly map} with twist $Q$.
\end{ddd}

Let $Q$ be a $\Gamma$-bornological coarse space.
\begin{kor}\label{gorjerigoiregreijgioj}
Let the underlying $\Gamma$-set of $Q $ be free.
Then the motivic assembly map with $Q$-twist $\alpha_{X,Q}$ is an equivalence if and only  if $M(X)\otimes  \Yo^{s}(Q)\simeq 0$.
\end{kor}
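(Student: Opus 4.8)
The plan is to deduce the statement directly from the fiber sequence established in \cref{ropgerog948ut934ut93t}. Since the underlying $\Gamma$-set of $Q$ is free, that corollary provides the fiber sequence
\[
M(X)\otimes\Yo^{s}(Q)\to \cO_{{\homolg}}^{\infty}(X)\otimes\Yo^{s}(Q) \xrightarrow{\alpha_{X,Q}} \Sigma \Yo^{s}(Q) \to \Sigma (M(X)\otimes  \Yo^{s}(Q))
\]
in the stable $\infty$-category $\Gamma\Sp\cX$. In particular $M(X)\otimes\Yo^{s}(Q)$ is identified with the fiber of $\alpha_{X,Q}$.

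The key observation is then simply that, in a stable $\infty$-category, a morphism is an equivalence if and only if its fiber vanishes. First I would recall that the fiber of $\alpha_{X,Q}$ sits in a fiber sequence together with $\cO_{{\homolg}}^{\infty}(X)\otimes\Yo^{s}(Q)$ and $\Sigma\Yo^{s}(Q)$; rotating, $\alpha_{X,Q}$ is an equivalence precisely when its cofiber is zero, equivalently (by stability) when its fiber $M(X)\otimes\Yo^{s}(Q)$ is zero. So if $M(X)\otimes\Yo^{s}(Q)\simeq 0$, then $\alpha_{X,Q}$ is an equivalence; conversely, if $\alpha_{X,Q}$ is an equivalence, then its fiber is a zero object, so $M(X)\otimes\Yo^{s}(Q)\simeq 0$. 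This completes the argument.

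There is essentially no obstacle here: the content has already been packaged into \cref{ropgerog948ut934ut93t}, and the remaining step is the standard fact that fiber sequences in stable $\infty$-categories detect equivalences via vanishing of the fiber. The only thing to be careful about is to invoke the freeness hypothesis on $Q$ so that \cref{ropgerog948ut934ut93t} applies and the left-hand term is indeed $M(X)\otimes\Yo^{s}(Q)$ rather than merely the fiber of $\alpha_{X,Q}$; once that identification is in place the equivalence of the two conditions is immediate.
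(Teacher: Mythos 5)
Your argument is correct and is exactly the intended one: the paper leaves this corollary without a written proof precisely because it follows immediately from the fiber sequence of \cref{ropgerog948ut934ut93t} together with the standard fact that in a stable $\infty$-category a morphism is an equivalence if and only if its fiber vanishes. You correctly flag the freeness hypothesis on $Q$ as the input needed to identify the fiber with $M(X)\otimes\Yo^s(Q)$.
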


A typical example for $Q$ is $\Gamma_{can,min}$, and variants like $\Gamma_{can,max}$, $\Gamma_{max,max}$, $\Gamma_{min,min}$, etc.

 In general we expect the assembly map $\alpha_{E_{\cF}\Gamma,Q}$ to become an equivalence only after applying suitable equivariant coarse homology theories. But the following is an example where the assembly map is an equivalence already motivically.

Let $\cF$ be a family of subgroups of $\Gamma$ and consider  a $\Gamma$-bornological coarse space $Q$.

\begin{lem}\label{flkjweoifewfewfewf}
Assume that:
\begin{enumerate}
\item \label{efojwefowfewfewf}$E_{\cF}\Gamma^{cw}$ is   $\Gamma$-compact and $\Gamma$-metrizable.
\item $Q$ is discrete as a coarse space.
\item $Q$   has stabilizers in $\cF$.
\item $Q$ is $\Gamma$-finite.
\end{enumerate}
Then we have
\[M (E_{\cF}\Gamma^{cw})\otimes \Yo^{s}(Q)\simeq 0\ .\]
\end{lem}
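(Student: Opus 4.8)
The plan is to reduce the statement to a computation of $\cO^{\infty}\circ\cM\circ\cU$ on the $\Gamma$-compact $\Gamma$-metrizable space $E_{\cF}\Gamma^{cw}$, using that under the hypotheses the comparison morphism \eqref{eq:morphism} for the functor $A:=\Yo^{s}\circ\cO\circ\cM$ is an equivalence, so that
\[
M(E_{\cF}\Gamma^{cw})\simeq \Yo^{s}\bigl(\cO(\cM(\cU(E_{\cF}\Gamma^{cw})))\bigr)\ .
\]
Indeed, since $E_{\cF}\Gamma^{cw}$ lies in $\Gamma\Top^{cm}$ by hypothesis~\ref{efojwefowfewfewf}, the identity of $E_{\cF}\Gamma^{cw}$ is a final object of $\Gamma\Top^{cm}/E_{\cF}\Gamma^{cw}$, so the colimit defining $M(E_{\cF}\Gamma^{cw})$ in \cref{gioegregregeg} collapses to this single value. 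Thus it suffices to show
\[
\Yo^{s}\bigl(\cO(\cM(\cU(E_{\cF}\Gamma^{cw})))\bigr)\otimes\Yo^{s}(Q)\simeq 0\ .
\]

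\textbf{Key steps.}
First I would use that $\Yo^{s}$ is symmetric monoidal (\cref{lemjine2}) together with \cref{reopgjreogjoregregregeg}: since $Q$ is discrete as a coarse space (hypothesis), $\cO(X)\otimes F_{\cT}(Q)\cong\cO(X\otimes Q')$ for an appropriate $\Gamma$-uniform bornological coarse space, so that
\[
\Yo^{s}(\cO(\cM(\cU(E_{\cF}\Gamma^{cw}))))\otimes\Yo^{s}(Q)\simeq \Yo^{s}\bigl(\cO(\cM(\cU(E_{\cF}\Gamma^{cw}))\otimes Q)\bigr)\ ,
\]
where $Q$ is regarded with the discrete uniform structure. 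Next, because $Q$ is $\Gamma$-finite with stabilizers in $\cF$, it is a finite coproduct of orbits $(\Gamma/H)$ with $H\in\cF$; using that $\Yo^{s}$ and $\cO$ preserve finite coproducts, one reduces to showing $\Yo^{s}(\cO(\cM(\cU(E_{\cF}\Gamma^{cw}))\otimes(\Gamma/H)_{min,min,disc}))\simeq 0$ for a single $H\in\cF$. The point is then that $\cM(\cU(E_{\cF}\Gamma^{cw}))\otimes(\Gamma/H)$, restricted to $H$ via the orbit, becomes (up to a coarse equivalence and using that $(E_{\cF}\Gamma^{cw})^{H}$ is contractible, hence in particular nonempty by \cref{ewiuhwefewfewwf} and $H\in\cF$) a space on which one can implement flasqueness of the cone: the contractibility of the $H$-fixed points provides an equivariant homotopy that, after applying the cone construction and pushing toward $\infty$ along the ray, yields a self-map implementing flasqueness of $\cO$ of the relevant space in the sense of \cref{hckjhckhwhuiechiu1}. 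Then $\Yo^{s}$ of a flasque space vanishes by \cref{kjeflwfjewofewuf98ewuf98u798798234234324324343}.\ref{ofjewofefoewiufewfiewf09i23423434234}.

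\textbf{Main obstacle.}
The hard part will be the last step: making precise the passage from ``$(E_{\cF}\Gamma^{cw})^{H}$ is contractible'' to ``$\cO$ of the corresponding twisted space is flasque''. One has to combine the induction/quotient description of $\Ind_{H}^{\Gamma}$-type twists (\cref{rem:jtjbzuzgkvrgkieilfu}) with homotopy invariance of $\cO^{\infty}$ (\cref{fijofiwewefewf}) and invariance under coarsening (\cref{fiiwofwufw9e8fuew9fwefwef}) to replace the space by one that is discrete as a uniform space, at which point \cref{oiefjewoifo23r4243455435345} and \cref{ropjreopg0iu09t34t34t34t3} apply; the $\Gamma$-compactness and $\Gamma$-metrizability hypotheses are exactly what is needed for \cref{feifhweifewffewfwf} and \cref{lem:nbhdvsmetricentourage} to control the uniform structures through this reduction. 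I would carry out this reduction orbit-by-orbit, contract each fixed-point space through an explicit $H$-equivariant homotopy, and verify the three conditions of \cref{hckjhckhwhuiechiu1} for the resulting ray self-map, using $\Gamma$-finiteness of $Q$ to ensure the bornological condition~\ref{hckjhckhwhuiechiu}.
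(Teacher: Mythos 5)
Your first two reductions match the paper exactly: since $E_{\cF}\Gamma^{cw}$ lies in $\Gamma\Top^{cm}$, the colimit in \cref{gioegregregeg} collapses to $\Yo^{s}(\cO(\cM(\cU(E_{\cF}\Gamma^{cw}))))$, and then \cref{reopgjreogjoregregregeg} together with symmetric monoidality of $\Yo^{s}$ rewrites the tensor product as $\Yo^{s}(\cO(\cM(\cU(E_{\cF}\Gamma^{cw}))\otimes Q_{disc}))$. The orbit-by-orbit reduction you insert after that is not needed, but it is not incorrect either (for a $\Gamma$-finite $Q$ discrete as a coarse space it really does decompose as a finite coproduct).

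The genuine gap is in your final step, where you claim that $\cO(\cM(\cU(E_{\cF}\Gamma^{cw}))\otimes(\Gamma/H)_{disc})$ is itself flasque, to be extracted from contractibility of $(E_{\cF}\Gamma^{cw})^H$ by ``pushing toward $\infty$ along the ray.'' This fails: for any $Y$ whose uniform structure is not discrete, the shift $(t,y)\mapsto(t+1,y)$ violates Condition~\ref{fjweoifjewoijeoiejfiwjeiofjewfewfewf} of \cref{hckjhckhwhuiechiu1}, because the hybrid entourages tighten uniform control as $t\to\infty$, so $\bigcup_n(f^n\times f^n)(U)$ is not contained in any hybrid entourage. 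Indeed \cref{ropjreopg0iu09t34t34t34t3} only gives flasqueness of $\cO(Y)$ when $Y$ is discrete \emph{as a uniform space}, and $\cM(\cU(E_{\cF}\Gamma^{cw}))\otimes(\Gamma/H)_{disc}$ is not, since its uniform structure comes from $E_{\cF}\Gamma^{cw}$. Your proposed workaround through \cref{fiiwofwufw9e8fuew9fwefwef} (invariance under coarsening) cannot repair this: a coarsening enlarges the coarse structure but leaves the uniform structure untouched, and in any case that lemma concerns $\cO^{\infty}$, whereas $M(-)$ is built from $\cO$. What actually makes the motive vanish is a \emph{retraction} argument rather than a flasqueness claim for the big cone: the hypothesis that $Q$ has stabilizers in $\cF$ lets you choose an equivariant map $\iota\colon Q\to E_{\cF}\Gamma^{cw}$ (unique up to equivariant homotopy, by the universal property of $E_{\cF}\Gamma^{cw}$), and the same universal property gives a $\Gamma$-homotopy between $\iota\circ\pr_{Q}$ and $\pr_{E_{\cF}\Gamma^{cw}}$ on $E_{\cF}\Gamma^{cw}\times Q$. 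The hypotheses of $\Gamma$-compactness, $\Gamma$-metrizability and $\Gamma$-finiteness guarantee that these maps and homotopies are uniform. Hence the identity of $\cM(\cU(E_{\cF}\Gamma^{cw}))\otimes Q_{disc}$ is homotopic to a composition factoring through $Q_{disc}$. By homotopy invariance of $\cO$ (the Homotopy Theorem~\ref{fiewfewifieowifowioioio23r}, not \cref{fijofiwewefewf} for $\cO^{\infty}$), the identity of the coarse motive then factors through $\Yo^{s}(\cO(Q_{disc}))$, and \emph{this} cone is flasque by \cref{ropjreopg0iu09t34t34t34t3} since $Q_{disc}$ is both uniformly and coarsely discrete. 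So the motive is a retract of $0$, hence zero. Your proof needs this retraction step made explicit; without it the flasqueness assertion you actually wrote down is false.
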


\begin{rem}
 Note that if the family $\cF$ is \textbf{VCyc}, i.e., the family of all virtually cyclic subgroups, then the condition of $E_{\cF}\Gamma^{cw}$ being $\Gamma$-compact is very restrictive: by a conjecture of Juan-Pineda--Leary \cite{vcyc_finite_conj} this implies that $\Gamma$ is virtually cyclic itself.

This conjecture is proven for hyperbolic groups (Juan-Pineda--Leary \cite{vcyc_finite_conj}), elementary amenable groups (Kochloukova--Martinez-Perez--Nucinkis \cite{kmpn_vcyc} and Groves--Wilson \cite{gw_vcyc}) and for one-relator groups, acylindrically hyperbolic groups, $3$-manifold groups, $\mathrm{CAT}(0)$ cube groups, linear groups (von Puttkamer--Wu \cite{puttkamer_wu_vcyc,puttkamer_wu_vcyc_linear}).
\end{rem}

\begin{proof}
Since $E_{\cF}\Gamma^{cw}$ is $\Gamma$-compact and $\Gamma$-metrizable the colimit in the \cref{gioegregregeg} of $M(E_{\cF}\Gamma)^{cw}$ stabilizes.  We consider $Q$ as a $\Gamma$-uniform bornological coarse space $Q_{disc}$ with the discrete uniform structure.
Using \cref{reopgjreogjoregregregeg}, we get the equivalence 
\[M(E_{\cF}\Gamma^{cw})\otimes \Yo^{s}(Q)\simeq {\Yo^{s}} (\cO(\cM(\cU(E_{\cF}\Gamma^{cw})))\otimes Q_{disc})\ .\]
The space $E_{\cF}\Gamma^{cw}$ has the following universal property: for every $\Gamma$-space $X$ with stabilizers in $\cF$  which is homotopy equivalent to a $CW$-complex  the space
$\Map_\Gamma(X,E_{\cF}\Gamma^{cw})$ is contractible.
It follows from the universal property of $E_{\cF}\Gamma^{cw}$ that there is a unique homotopy class of maps $\iota\colon Q \to E_{\cF}\Gamma^{cw}$. 
Furthermore, the maps
$\iota\circ \pr_{Q},\pr_{E_{\cF}\Gamma^{cw}} \colon E_{\cF}\Gamma^{cw}\times Q \to E_{\cF}\Gamma^{cw}$ are homotopic. 
Since $Q$ is $\Gamma$-finite and $E_{\cF}\Gamma^{cw}$ is $\Gamma$-compact $\Gamma$-metrizable these maps and homotopies are automatically uniform maps {when we consider $E_\cF\Gamma^{cw}$ as a uniform space by applying $\cU$.}
 Hence $\id_{\cM(\cU(E_{\cF}\Gamma^{cw}))\otimes Q_{disc}}$ is homotopic to the composition 
\[\cM(\cU(E_{\cF}\Gamma^{cw}))\otimes Q_{disc} \xrightarrow{\pr_{Q}} Q_{disc} \xrightarrow{{(\iota, \id_{Q})}} \cM(\cU(E_{\cF}\Gamma^{cw}))\otimes Q_{disc}\]
of morphisms between $\Gamma$-uniform bornological coarse spaces.
Using that $\cO$ is homotopy invariant 
we conclude that $\id_{M (E_{\cF}\Gamma^{cw})\otimes \Yo^{s}(Q)}$ factorizes over
$\Yo^{s}(\cO(Q_{disc}))$. We now use 
\cref{ropjreopg0iu09t34t34t34t3} in order to conclude that
$\Yo^{s}(\cO(Q_{disc}))\simeq 0$. This  finishes this proof.
\end{proof}

For the following we just combine \cref{flkjweoifewfewfewf} and \cref{gorjerigoiregreijgioj}.
\begin{kor}\label{kor239jksd}
If $E_{\cF}\Gamma^{cw}$ is $\Gamma$-compact {and $\Gamma$-metrizable}, then  the motivic assembly map $\alpha_{E_{\cF}\Gamma^{cw},\Gamma_{min,?}}$ is an equivalence for  $?\in \{\min,\max\}$.
\end{kor}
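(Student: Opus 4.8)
The plan is to deduce \cref{kor239jksd} directly from the two results it cites, namely \cref{flkjweoifewfewfewf} and \cref{gorjerigoiregreijgioj}, by checking that the hypotheses of \cref{flkjweoifewfewfewf} are met for the twist $Q = \Gamma_{\min,?}$ with $? \in \{\min,\max\}$. First I would fix such a $Q$ and observe that the $\Gamma$-set underlying $\Gamma_{\min,?}$ is $\Gamma$ with its left-translation action, which is free; this is exactly the hypothesis needed to invoke \cref{gorjerigoiregreijgioj}, so the assembly map $\alpha_{E_{\cF}\Gamma^{cw},\Gamma_{\min,?}}$ is an equivalence if and only if $M(E_{\cF}\Gamma^{cw}) \otimes \Yo^{s}(\Gamma_{\min,?}) \simeq 0$.

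Next I would verify the four numbered conditions of \cref{flkjweoifewfewfewf} in turn. Condition~\eqref{efojwefowfewfewf} is precisely the hypothesis of \cref{kor239jksd}, that $E_{\cF}\Gamma^{cw}$ is $\Gamma$-compact and $\Gamma$-metrizable. For the second condition, $\Gamma_{\min,?}$ carries the minimal coarse structure $\cC_{\min} = \cC\langle \diag_\Gamma\rangle$, hence is discrete as a coarse space. For the third condition, I would note that the stabilizer of any point of the free $\Gamma$-set $\Gamma$ is the trivial subgroup, which lies in every family of subgroups (a family is non-empty and closed under taking subgroups, so it contains $\{1\}$); therefore $\Gamma_{\min,?}$ has stabilizers in $\cF$. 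For the fourth condition, the $\Gamma$-set $\Gamma$ is transitive, hence $\Gamma$-finite (it is a single orbit). Thus all hypotheses of \cref{flkjweoifewfewfewf} are satisfied, giving $M(E_{\cF}\Gamma^{cw}) \otimes \Yo^{s}(\Gamma_{\min,?}) \simeq 0$, and combined with \cref{gorjerigoiregreijgioj} this yields that $\alpha_{E_{\cF}\Gamma^{cw},\Gamma_{\min,?}}$ is an equivalence.

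I do not anticipate a genuine obstacle here, since the statement is essentially a bookkeeping corollary: the real content is in \cref{flkjweoifewfewfewf} (whose proof uses the universal property of $E_{\cF}\Gamma^{cw}$, the homotopy invariance of the cone, and flasqueness of cones over coarsely discrete spaces via \cref{ropjreopg0iu09t34t34t34t3}) and in the fiber-sequence analysis preceding \cref{gorjerigoiregreijgioj}. The only mild subtlety worth spelling out is that the two coarse structures $?\in\{\min,\max\}$ on $\Gamma$ make no difference to the argument: the coarse structure is irrelevant for the stabilizer condition and the $\Gamma$-finiteness condition, and "discrete as a coarse space" refers only to the coarse structure, which is $\cC_{\min}$ in both cases; the bornology ($\min$ or $\max$) simply does not enter the hypotheses of \cref{flkjweoifewfewfewf}. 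Hence a single uniform argument covers both values of $?$.

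\begin{proof}
Let $? \in \{\min,\max\}$ and set $Q := \Gamma_{\min,?}$. The underlying $\Gamma$-set of $Q$ is $\Gamma$ with the left-translation action, which is free, so \cref{gorjerigoiregreijgioj} applies: the motivic assembly map $\alpha_{E_{\cF}\Gamma^{cw},Q}$ is an equivalence if and only if $M(E_{\cF}\Gamma^{cw}) \otimes \Yo^{s}(Q) \simeq 0$. We check the hypotheses of \cref{flkjweoifewfewfewf}. Hypothesis \eqref{efojwefowfewfewf} holds by assumption. Since $Q$ carries the minimal coarse structure $\cC\langle\diag_\Gamma\rangle$, it is discrete as a coarse space. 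The stabilizer of any point of the free $\Gamma$-set $\Gamma$ is the trivial subgroup, which belongs to every family of subgroups, so $Q$ has stabilizers in $\cF$. Finally, the $\Gamma$-set $\Gamma$ is transitive and hence $\Gamma$-finite. By \cref{flkjweoifewfewfewf} we conclude $M(E_{\cF}\Gamma^{cw}) \otimes \Yo^{s}(Q) \simeq 0$, and therefore $\alpha_{E_{\cF}\Gamma^{cw},\Gamma_{\min,?}}$ is an equivalence.
\end{proof}
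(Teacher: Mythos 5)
Your proof is correct and takes exactly the route the paper intends: the paper's proof is literally the one-line remark that one ``just combine[s]'' \cref{flkjweoifewfewfewf} and \cref{gorjerigoiregreijgioj}, and you have simply spelled out the hypothesis checks (free underlying $\Gamma$-set, discreteness of the coarse structure, trivial stabilizers lying in $\cF$, $\Gamma$-finiteness of the single orbit $\Gamma$), all of which are accurate.
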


\subsection{Homology theories from coarse homology theories}
\label{ifjwoifwefewfewff}

Let $E$ be an equivariant $\bC$-valued coarse homology theory, i.e., a colimit preserving  functor $E\colon \Gamma\Sp\cX\to \bC$. By \cref{groejeroigreogregregreg} we can use the cone $\cO_{{\homolg}}^{\infty}$ in order to pull-back $E$ to an equivariant homology theory:
 
\begin{ddd}\label{ioefuwo345345345}
Let us define
\[E\cO^{\infty}_{\homolg} := E\circ \cO_{{\homolg}}^{\infty}\colon \Gamma\Top\to \bC\ ,\]
which is an equivariant $\bC$-valued homology theory.
\end{ddd}

Recall that for an equivariant  coarse motivic spectrum $L$  the functor
\[-\otimes L\colon \Gamma\Sp\cX\to \Gamma\Sp\cX\] preserves colimits.
If $E$ is a $\bC$-valued equivariant coarse homology theory, then we can define a new equivariant coarse homology theory
\begin{equation}\label{fwefgzu2382t78t476t476t746t764t76t}
E_{L}(-):=E(-\otimes L)\colon \Gamma\Sp\cX \to \bC\ .
\end{equation}
We will refer to $L$ as a \emph{twist}.  

The Farrell--Jones/Baum--Connes type question for $E_{\Yo^s(Q)} \cO^{\infty}_\homolg$ is now the question for which family $\cF$ of subgroups  the morphism
$E_{\Yo^s(Q)} (\alpha_{E_{\cF}\Gamma})$ is an equivalence of spectra. The next corollary is a consequence of \cref{gorjerigoiregreijgioj}.

Let $Q$ be a $\Gamma$-bornological coarse space.
\begin{kor}
Assume that the underlying $\Gamma$-set of $Q $ is free.  Then
the assembly map $E_{\Yo^{s}(Q)} (\alpha_{E_{\cF}\Gamma})$ is an equivalence if and only if
\[E_{\Yo^{s}(Q)} (M (E_{\cF}\Gamma))\simeq 0\ .\]
\end{kor}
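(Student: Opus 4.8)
The plan is to deduce this corollary from the fiber sequence established in \cref{ropgerog948ut934ut93t} together with the exactness of the equivariant coarse homology theory $E$. Concretely, I would apply \cref{ropgerog948ut934ut93t} with $X := E_{\cF}\Gamma^{cw}$ a chosen model for $E_{\cF}\Gamma$; since the underlying $\Gamma$-set of $Q$ is free by hypothesis, that corollary provides a fiber sequence
\[
M(E_{\cF}\Gamma^{cw})\otimes\Yo^{s}(Q)\to \cO_{{\homolg}}^{\infty}(E_{\cF}\Gamma^{cw})\otimes\Yo^{s}(Q)\xrightarrow{\alpha_{E_{\cF}\Gamma^{cw},Q}}\Sigma\Yo^{s}(Q)\to\Sigma\bigl(M(E_{\cF}\Gamma^{cw})\otimes\Yo^{s}(Q)\bigr)
\]
in $\Gamma\Sp\cX$. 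Here I am using \cref{ergoiergergreg} to justify that the evaluation on a model is well-defined up to equivalence, and \cref{rjiojfiofjiejoijo2i3roi2r23ori} for the identification of $\alpha_{E_{\cF}\Gamma^{cw},Q}$ with $\alpha_{E_{\cF}\Gamma}\otimes\id_{\Yo^{s}(Q)}$ after recognizing $E_{\Yo^s(Q)}\cO^\infty_\homolg = E\circ(-\otimes\Yo^s(Q))\circ\cO^\infty_\homolg$.

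Next I would apply the functor $E$. Since $E\colon\Gamma\Sp\cX\to\bC$ is colimit-preserving (being an equivariant $\bC$-valued coarse homology theory, equivalently a colimit-preserving functor out of $\Gamma\Sp\cX$ by \cref{lkjiooiwoigewgewgwegfw123}) and $\bC$ is stable, $E$ preserves fiber sequences. Therefore applying $E$ to the fiber sequence above yields a fiber sequence in $\bC$
\[
E_{\Yo^{s}(Q)}(M(E_{\cF}\Gamma))\to E_{\Yo^{s}(Q)}\cO^{\infty}_{\homolg}(E_{\cF}\Gamma)\xrightarrow{E_{\Yo^{s}(Q)}(\alpha_{E_{\cF}\Gamma})} E_{\Yo^s(Q)}\Sigma\Yo^{s}(*)\to\cdots,
\]
where I have used that $\cO^\infty_\homolg(*)$ is represented (via the cone sequence of \cref{wefoiuweiofuewwefwwefwf} applied to the point, together with flasqueness of $\cO(*)$ as in \cref{ropjreopg0iu09t34t34t34t3}) by $\Sigma\Yo^s(*)$, so that the target of the assembly map after twisting and applying $E$ is $E(\Sigma\Yo^s(Q))=\Sigma E(\Yo^s(Q))$. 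In a stable $\infty$-category, a map in a fiber sequence is an equivalence if and only if the fiber (equivalently the cofiber) is zero; hence $E_{\Yo^{s}(Q)}(\alpha_{E_{\cF}\Gamma})$ is an equivalence if and only if $E_{\Yo^{s}(Q)}(M(E_{\cF}\Gamma))\simeq 0$.

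The only genuine subtlety — and the step I would flag as requiring care rather than being the "hard part" — is the bookkeeping identifying $E_{\Yo^s(Q)}\cO^\infty_\homolg$ applied to the assembly map with $E$ applied to the twisted motivic assembly map $\alpha_{X,Q}$: this is precisely the content of unwinding \cref{ioefuwo345345345}, the definition \eqref{fwefgzu2382t78t476t476t746t764t76t} of the twist, and \cref{rjiojfiofjiejoijo2i3roi2r23ori}, using that $-\otimes\Yo^s(Q)\colon\Gamma\Sp\cX\to\Gamma\Sp\cX$ is colimit-preserving so that it commutes with the colimit defining $\cO^\infty_\homolg$ in \eqref{vervkj4io3rjoi3r3r43r43r3r}. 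Once these identifications are in place, everything else is a formal consequence of stability of $\bC$ and exactness of $E$, so there is no serious obstacle; this corollary is essentially a restatement of \cref{gorjerigoiregreijgioj} after composing with $E$.
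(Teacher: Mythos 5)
Your proof is correct and follows exactly the route the paper intends: the corollary is the image under the colimit-preserving (hence exact) functor $E$ of the fiber sequence from \cref{ropgerog948ut934ut93t}, and the conclusion is the standard stable fact that a map is an equivalence iff its fiber vanishes. The bookkeeping you flag as the "subtlety" is indeed just unwinding the definitions in \eqref{fwefgzu2382t78t476t476t746t764t76t} and \cref{rjiojfiofjiejoijo2i3roi2r23ori}, and you handle it correctly.
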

 
Given the coarse homology theory $E$ and a twist $L$ it is  of  particular interest to characterize the homology theory
$  E_{L}\cO^{\infty}_\homolg$. As explained in \cref{wfifjweoifjwoifewfwfewf}, the restriction of  an equivariant homology theory from $\Gamma\Top$ to the full subcategory $\Gamma\mathbf{CW}$ of $\Gamma$-$CW$ complexes is determined by its restriction to the orbit category $\Orb(\Gamma)$.
So we must   understand the evaluations $E_{L}\cO_{{\homolg}}^{\infty}(S)$   for all transitive $\Gamma$-sets $S$. The main tool is   \cref{oiefjewoifo23r4243455435345}. It gives
\[E_{L}\cO_{{\homolg}}^{\infty}(S)\simeq \Sigma E_{L}({\Yo^{s}}(S_{min,max}))\ .\]
We have calculated these evaluations for equivariant coarse ordinary homology and for equivariant coarse algebraic $K$-{homology} explicitly:
\begin{enumerate}
\item see \cref{wifhuifhiwefhieuwhufewfewfwf} for $H\cX^{\Gamma}_{\Yo^{s}(\Gamma_{{can,min}})} $, and
\item see \cref{sec092323} for $ K\bA\cX^{\Gamma}_{\Yo^{s}(\Gamma_{?,?})}$.
\end{enumerate}

\section{Forget-control and assembly maps}
\label{secjnk232332}

\subsection{The forget-control map}

Let $X$ be a coarse space and $U$ an entourage of $X$.

Let $\mu\colon \cP(X)\to [0,1]$ be a probability measure on the measurable space $(X,\cP(X))$.  
\begin{ddd}
The measure $\mu$ is called finite $U$-bounded if there is a finite $U$-bounded subset $F$ of $X$ with $\mu(F)=1$. We then define the support $\supp(\mu)$ to be the smallest subset of $X$ with measure one.
	
We let $P_{U}(X)$ denote the topological space of finite $U$-bounded probability measures on $X$ equipped with the topology induced by the evaluation against finitely supported functions.
\end{ddd}
Every point $x$ in $X$ gives rise to a Dirac measure $\delta_{x}$ at $x$. If $U$ contains the diagonal of $X$, then we get a map $X \to P_U(X)$,  $x\mapsto \delta_{x}$, of sets.  A probability measure $\mu$ which is finite $U$-bounded can be
written as a finite convex combination of Dirac measures. More concretely,
\[\mu=\sum_{x\in \supp(\mu)} \mu(\{x\})\delta_{x}\ .\]
The set $P_{U}(X)$ has a natural structure of a simplicial complex. In other words, $P_{U}(X)$ is the simplicial complex with vertex set $X$ such that a subset $\{x_1,\ldots,x_m\}$ spans a simplex if and only if $(x_i,x_j)\in U$ for all $i,j$ in $\{1,\dots,m\}$. Let $P_{U}(X)_{u}$ denote the uniform space with the uniform structure induced by the canonical path quasi-metric on the simplicial complex, see \cref{rliheiogherigergregreg}. Note that the topology induced by this uniform structure is the  topology on $P_{U}(X)$  induced by the evaluation against finitely supported functions, i.e., we have
\[\tau(P_{U}(X)_{u}) = P_U(X)\ ,\]
where $\tau$ is the functor associating the underlying topological space to a uniform space, cf.~\eqref{eqerg45rert4re}. Eilenberg--Steenrod \cite[p.~75]{eilenberg_steenrod_foundations} called this the metric topology. In general this topology differs from  the weak topology on the simplicial complex in the sense of Eilenberg--Steenrod \cite[p.~75]{eilenberg_steenrod_foundations}.

If $X^{\prime}$ is a second coarse space with entourage    $U^{\prime}$   and $f\colon X\to X^{\prime}$ is a map such that $(f\times f)(U)\subseteq U^{\prime}$, then we get a  map of simplicial complexes
\[f_{*}\colon P_{U}(X)\to P_{U^{\prime}}(X^{\prime})\ ,\quad  \mu\mapsto f_{*}\mu\ ,\]
where the measure  $f_{*}\mu$ is the push-forward of the measure $\mu$.
 
If $X$ is a $\Gamma$-coarse space and the entourage $U$ is $\Gamma$-invariant, then $\Gamma$ acts on the simplicial complex $P_{U}(X)$ such that the latter becomes a $\Gamma$-simplicial complex, so in particular it becomes a $\Gamma$-topological space.  Furthermore we obtain a $\Gamma$-uniform space $P_{U}(X)_{u}$. If $X^{\prime}$ is a second $\Gamma$-coarse space, $U^{\prime}$ is a $\Gamma$-invariant entourage, and $f\colon X\to X^{\prime}$ is equivariant with $(f\times f)(U)\subseteq U^{\prime}$, then
$f_{*}\colon P_{U}(X)\to P_{U^{\prime}}(X^{\prime})$ is equivariant.

  Let $X$ be a $\Gamma$-coarse space. 
   
\begin{ddd}\label{fojwekopfwef233r4}
The $\Gamma$-topological space
\[\Rips(X):=\colim_{U\in \cC^{\Gamma}} P_{U}(X)\]
is called the \emph{Rips complex} of $X$. Note that the colimit is taken in $\Gamma\Top$.
\end{ddd}

\begin{rem}
Note that in general the simplicial complex  $P_U(X)$ is not locally finite. In this case its topology 
 does not exhibit it as a CW-complex (this happens if and only if the simplicial complex is locally finite).

But by a result of Dowker \cite[Thm.~1 on P.~575]{dowker} $P_U(X)$ has the   homotopy type of a CW-complex (see also Milnor \cite[Thm.~2]{milnor_CW} for a short proof of this). Concretely, the underlying identity map of the set induces a   homotopy equivalence $\operatorname{CW}(P_U(X)) \to P_U(X)$, where $\operatorname{CW}(P_U(X))$ denotes $P_U(X)$ retopologized as a CW-complex. 
\end{rem}

Let $\Gamma_{can}$ be the  group $\Gamma$ considered as a $\Gamma$-coarse space with its canonical coarse structure. Recall the localization \eqref{oiiuhf4u9824f23ff}.
\begin{lem}\label{wfoi8765ewfewf}
We have an equivalence $\ell(\Rips(\Gamma_{can}))\simeq  E_{\Fin}\Gamma$. 
\end{lem}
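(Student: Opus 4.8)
The statement to prove is that $\ell(\Rips(\Gamma_{can}))\simeq E_{\Fin}\Gamma$ in $\PSh(\Orb(\Gamma))$. By \cref{ewiuhwefewfewwf} and \cref{feiowofewfewfewfewf}, it suffices to check that for every transitive $\Gamma$-set $T$ with stabilizer $H$ (equivalently, for every subgroup $H\le\Gamma$, taking $T=\Gamma/H$) the fixed-point space $\Rips(\Gamma_{can})^{H}$ is contractible if $H$ is finite and empty if $H$ is infinite. Since $\Rips(\Gamma_{can})=\colim_{U\in\cC_{can}^{\Gamma}}P_{U}(\Gamma_{can})$ is a filtered colimit of $\Gamma$-simplicial complexes along closed inclusions, taking $H$-fixed points commutes with this colimit, so $\Rips(\Gamma_{can})^{H}\simeq\colim_{U}P_{U}(\Gamma_{can})^{H}$, and I would analyze each $P_{U}(\Gamma_{can})^{H}$.

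First I would treat the case $H$ infinite. A point of $P_{U}(\Gamma_{can})$ is a finite $U$-bounded probability measure $\mu$ on $\Gamma$; it is fixed by $H$ iff $h_{*}\mu=\mu$ for all $h\in H$, which forces $\supp(\mu)$ to be an $H$-invariant subset of $\Gamma$. But $\supp(\mu)$ is finite and $H$ acts freely on $\Gamma$ by left multiplication, so a nonempty finite $H$-invariant subset would have cardinality a multiple of $|H|$, impossible when $H$ is infinite. Hence $P_{U}(\Gamma_{can})^{H}=\emptyset$ for every $U$, and so $\Rips(\Gamma_{can})^{H}=\emptyset$, as required.

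Now suppose $H$ is finite. I must show $\colim_{U}P_{U}(\Gamma_{can})^{H}$ is contractible. The key geometric input is that the canonical coarse structure $\cC_{can}$ on $\Gamma$ is generated by the invariant entourages $\Gamma(B\times B)$ for $B$ a finite subset of $\Gamma$ (\cref{eifjwfiowefewfe32453534}); in particular any two points of $\Gamma$ lie in a common simplex of $P_{U}(\Gamma_{can})$ once $U$ is large enough, and the family is filtered and cofinal. The standard argument (cf.\ \cite{MR2030590}, and the analogous non-equivariant statement in \cite{buen}) is that for a fixed $H$-fixed point $\mu_{0}\in P_{U_{0}}(\Gamma_{can})^{H}$ — for instance the normalized counting measure on a single $H$-orbit $Hg$ — and any other $H$-fixed measure $\mu\in P_{U}(\Gamma_{can})^{H}$, the straight-line homotopy $t\mapsto (1-t)\mu_{0}+t\mu$ stays inside $P_{U'}(\Gamma_{can})$ for a sufficiently large invariant entourage $U'$ containing $U\cup U_{0}$ and ``connecting'' $\supp(\mu_0)$ to $\supp(\mu)$; this homotopy is $H$-equivariant because $\mu_0$ and $\mu$ are $H$-fixed, hence takes place in $P_{U'}(\Gamma_{can})^{H}$. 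Passing to the colimit over $U$, this exhibits a contraction of $\Rips(\Gamma_{can})^{H}$ onto $\mu_{0}$. I would phrase this as: the colimit $\colim_U P_U(\Gamma_{can})^H$ is a filtered colimit of spaces each of which is contractible after mapping into a later stage (indeed each $P_U(\Gamma_{can})^H$ becomes contractible in $P_{U'}(\Gamma_{can})^H$ for $U'$ large), and such a colimit is weakly contractible; alternatively one checks directly that all homotopy groups vanish by the same linear-interpolation argument applied to compact families of fixed measures, using that any compact subset of the colimit lands in some $P_U(\Gamma_{can})^H$.

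\textbf{Main obstacle.} The principal technical point to get right is the interplay between the colimit topology on $\Rips(X)$ (\cref{fojwekopfwef233r4}, colimit in $\Gamma\Top$) and the fixed-point functor and the contractibility argument: one needs that $H$-fixed points commute with the filtered colimit of closed $\Gamma$-cofibrations (true for a filtered colimit of closed embeddings of $\Gamma$-CW-type spaces, or one can retopologize via $\operatorname{CW}(P_U(X))$ as in the excerpt's remark on Dowker's theorem), and that the straight-line homotopies can be fit together coherently over the directed system of entourages — in particular that for any finite configuration of $H$-fixed measures there is a single later invariant entourage $U'$ in which they are all jointly contractible, which uses filteredness of $\cC_{can}^{\Gamma}$ and the fact that finite subsets of $\Gamma$ generate $\cC_{can}$. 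Once these points are in place the proof reduces to the two bullet computations above. I would likely just cite the non-equivariant argument from \cite{buen} (or \cite{MR2030590}) for the contractibility of $P_U(X)$-type colimits and indicate that it applies verbatim to $H$-fixed points because all the measures and homotopies involved are $H$-equivariant.
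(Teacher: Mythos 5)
Your proposal is correct and follows essentially the same strategy as the paper: reduce to computing $H$-fixed-point spaces, use the filtered colimit over invariant entourages together with compactness and coarse connectedness to get uniform diameter bounds, then contract. The paper phrases the contraction via mapping a bounded image into a single simplex of $P_{U^N}(\Gamma_{can})$ whose $H$-fixed locus is convex, while you phrase it as a straight-line homotopy to a chosen base point (the normalized counting measure on an $H$-orbit), but these are geometrically the same argument; you also supply an explicit verification that fixed points are empty for infinite $H$, which the paper asserts without proof.
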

\begin{proof}

By definition, $\ell(\Rips(\Gamma))$ is the presheaf on $\Orb(\Gamma)$ given by
\[S\mapsto \kappa(\Map_{\Gamma\Top}(S,\Rips(\Gamma_{can})))\ ,\]  where $\kappa$ is as in \eqref{fjiiioioio}. We now observe that the stabilizers of the points of $\Rips(\Gamma_{can})$ are finite subgroups of $\Gamma$.
Consequently, the presheaf belongs to the subcategory of presheaves supported 
on the orbits with finite stabilizers.
Since, by definition,
$E_{\Fin}\Gamma$ corresponds to the final object in this subcategory there exists a morphism
$\ell(\Rips(\Gamma_{can}))\to E_{\Fin}\Gamma$.
In order to show that it is an equivalence it suffices to show that the 
spaces $\Map_{\Gamma\Top}(\Gamma/H,\Rips(X))$ for all finite subgroups $H$ of $\Gamma$ are equivalent to $*$, i.e., that these spaces are connected and that their homotopy groups are trivial.

We now study these homotopy groups.
Since the spheres $S^{n}$ are compact for all $n$ in $\nat$ and the structure maps
\[\Map_{\Gamma\Top}(\Gamma/H,P_{U}(\Gamma_{can}))\to \Map_{\Gamma\Top}(\Gamma/H,P_{U^{\prime}}(\Gamma_{can}))\] for $U\subseteq U^{\prime}$
are inclusions of CW-complexes,
 we have 
\[\pi_{*}(\Map_{\Gamma\Top}(\Gamma/H,\Rips(\Gamma_{can})))\cong \colim_{U\in \cC^{\Gamma}}\pi_{*}(\Map_{\Gamma\Top}(\Gamma/H,P_{U}(\Gamma_{can})))\ .\]
We have a homeomorphism  
\[\Map_{\Gamma\Top}(\Gamma/H,P_{U}(\Gamma_{can}))\cong P_{U}(\Gamma_{can})^{H}\ .\]
We fix an integer $n$ and consider a map
\[f\colon S^{n}\to P_{U}(\Gamma_{can})^{H}\ .\]
  The image $f(S^{n})$ is  a compact subset of $P_{U}(\Gamma_{can})^{H}$.  
In the case $n=0$, since $\Gamma_{can}$ is coarsely connected, we can increase the  entourage $U $ such that the image $f(S^{0})$ belongs to a connected component of $P_{U}(\Gamma_{can})$.
  Let now $n$ be arbitrary. We can now assume that   $ f(S^{n})$  belongs to a connected component of $P_{U}(\Gamma_{can})$.
It is hence bounded in diameter by some integer $N$. We conclude that under the map $P_{U}(\Gamma_{can})\to P_{U^{N}}(\Gamma_{can})$ the image 
$f(S^{n})$  is mapped to a subset of a single simplex of $P_{U^{N}}(\Gamma_{can})$. 
The intersection of the $H$-fixed points with this simplex 
is a convex subset and hence itself contractible. We conclude that $f$ is homotopic to a constant map.   
\end{proof}

\begin{ddd}   
\label{243trfd34}
If $X$ is a $\Gamma$-bornological coarse space and $U$ an invariant entourage of $X$, then we equip the $\Gamma$-simplicial complex $P_{U}(X)$ with the bornology generated by the subsets $P_{U}(B)$ for all bounded subsets $B$ of $X$. Equipped with this bornology and the coarse structure induced by the metric we obtain a $\Gamma$-bornological coarse space which we denote by $P_{U}(X)_{bd}$. We furthermore write $P_{U}(X)_{bdu}$ for the corresponding $\Gamma$-uniform bornological coarse space. Note in contrast that the notation $P_{U}(X)_{d}$ (or  $P_{U}(X)_{du}$, respectively) would mean the $\Gamma$-bornological coarse space (or  $\Gamma$-uniform bornological coarse space, respectively)  whose  bornological and coarse (and uniform, respectively) structures are induced from the metric, see \cref{ifjwoifwoefwefewfewfw}.
\end{ddd}

Let $X$ be a $\Gamma$-bornological coarse space and $U$ be an invariant entourage of $X$. The Dirac measures provide a morphism of $\Gamma$-bornological coarse spaces
\begin{equation}\label{fwkfhik23r234r23r23r2r}
\delta\colon X_{U}\to P_{U}(X)_{bd}\ .
\end{equation}

\begin{lem}\label{weflkjweiofewfewfewf}
Assume:
\begin{enumerate}
\item $\Gamma$ is torsion-free.
\item The underlying $\Gamma$-set of $X$ is free.
\end{enumerate}
Then the morphism \eqref{fwkfhik23r234r23r23r2r} is an equivalence of $\Gamma$-bornological coarse spaces.
\end{lem}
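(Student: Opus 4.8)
The plan is to construct an explicit inverse morphism to $\delta$ up to closeness. Since $\Gamma$ is torsion-free and acts freely on $X$, every simplex of $P_U(X)$ has a trivial stabilizer: if $\{x_1,\dots,x_m\}$ spans a simplex and $\gamma$ fixes it setwise, then $\gamma$ permutes a finite set freely, hence $\gamma$ has finite order, hence $\gamma = e$. In particular, the action of $\Gamma$ on the set of (barycenters of) simplices is free, and we may choose, $\Gamma$-equivariantly, for each simplex a vertex lying in it. Concretely, fix a well-ordering of $X$; then define a map $r \colon P_U(X) \to X$ sending a measure $\mu$ to the smallest element of its support $\supp(\mu)$. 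Since $\supp(\gamma_*\mu) = \gamma\,\supp(\mu)$, this $r$ is $\Gamma$-equivariant provided the chosen well-ordering is itself $\Gamma$-invariant; as $X$ is a free $\Gamma$-set, pick a set-theoretic section of $X \to \Gamma\backslash X$, well-order $\Gamma\backslash X$ and well-order $\Gamma$, and order $X \cong \Gamma \times (\Gamma\backslash X)$ lexicographically with the $\Gamma\backslash X$-coordinate dominating — this gives a $\Gamma$-invariant well-ordering.

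Next I would check that $r$ is a morphism of $\Gamma$-bornological coarse spaces $P_U(X)_{bd} \to X_U$. For controlledness: if $\mu,\mu'$ lie in a common entourage of $P_U(X)_{bd}$ coming from the metric, then after passing to a suitable power of $U$ their supports are uniformly close, and in fact any metric entourage of $P_U(X)$ is contained in the entourage of measures whose supports are jointly contained in a single simplex's worth of $U$-thickening; so $(r(\mu), r(\mu'))$ lies in some fixed power $U^k$, which is an entourage of $X_U$ by definition of the coarse structure generated by $U$ (here I use that the simplicial metric entourages are cofinal and controlled for $P_U(X)_{bd}$, exactly as in the setup of \cref{243trfd34}). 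For properness: $r^{-1}(B) \subseteq P_U(U[B])$ for a bounded $B$, and $P_U$ of a bounded set is bounded in $P_U(X)_{bd}$ by the very definition of its bornology, so preimages of bounded sets are bounded. Then $r \circ \delta = \id_X$ on the nose, since the support of $\delta_x$ is $\{x\}$.

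Finally I would verify that $\delta \circ r$ is close to $\id_{P_U(X)_{bd}}$. For a measure $\mu$ with support contained in a simplex $\sigma$, both $\mu$ and $\delta_{r(\mu)}$ lie in (the $U$-thickening of) $\sigma$, hence $(\mu, \delta \circ r(\mu))$ lies in a metric entourage of $P_U(X)_{bd}$ of diameter bounded by the diameter of a single simplex — which is a fixed constant (at most $\pi$ in the spherical path metric). This set of pairs is $\Gamma$-invariant since $r$ is equivariant, and it is a controlled entourage of $P_U(X)_{bd}$, so $\delta \circ r$ and $\id$ are close. Thus $\delta$ is an equivalence with inverse $r$ in the sense of \cref{goreoigegerg}. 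I expect the main obstacle to be the bookkeeping in the previous paragraph: pinning down precisely which entourages of $P_U(X)_{bd}$ are generated by the simplicial metric and checking that $r$ sends them into the coarse structure $\cC_U$ generated by the single entourage $U$ on $X_U$ — one must be careful that the metric on $P_U(X)$ controls distances only up to the combinatorial diameter of simplices, which is why the target must be $X_U$ (generated by $U$, hence containing all finite powers $U^k$) rather than $X$ with its full structure, and why the freeness/torsion-freeness hypotheses are needed to make the equivariant choice of $r$ in the first place.
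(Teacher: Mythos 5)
Your overall strategy is the same as the paper's: show the $\Gamma$-action on $P_U(X)$ is free, construct an equivariant retraction $r$ that picks a point of $\supp(\mu)$, check that $r$ is controlled and proper, and check that $\delta \circ r$ is close to $\id$. The freeness argument, the controlledness and properness verifications, and the closeness estimate are all correct.

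The genuine gap is in your concrete construction of $r$. You propose to define $r(\mu) := \min \supp(\mu)$ with respect to a $\Gamma$-invariant well-ordering on $X$, and you claim to build such an ordering lexicographically from $X \cong \Gamma \times (\Gamma\backslash X)$ by well-ordering $\Gamma$ and $\Gamma\backslash X$ separately. But no $\Gamma$-invariant well-ordering can exist on a free $\Gamma$-set once $\Gamma$ is nontrivial: restricting to a single orbit $\Gamma\cdot x \cong \Gamma$, such an ordering would give a left-invariant well-ordering of $\Gamma$, which is impossible (pick $\gamma \neq e$ with, say, $\gamma > e$; then $e > \gamma^{-1} > \gamma^{-2} > \cdots$ is an infinite descending chain). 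In your lexicographic construction the failure is exactly at the step where two elements share the same $\Gamma\backslash X$-coordinate: the order is then decided by the $\Gamma$-coordinate, and left multiplication on $\Gamma$ does not preserve any well-order. Already $\Gamma = \Z$ is a counterexample, so the construction cannot be patched by shuffling the lexicographic priorities.

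The fix uses the freeness you already proved, rather than a global $\Gamma$-invariant order: since $\Gamma$ acts freely on $P_U(X)$, choose a set of representatives for the orbits $\Gamma\backslash P_U(X)$, for each representative $\mu$ pick any point $r(\mu) \in \supp(\mu)$, and then extend $r$ equivariantly to all of $P_U(X)$ (which is well-defined precisely because the action is free: every $\nu$ is $\gamma\mu$ for a \emph{unique} $\gamma$ and representative $\mu$). One then still has $r(\nu) \in \supp(\nu)$ for every $\nu$ by equivariance, so $r\circ\delta = \id_X$, and the rest of your argument goes through unchanged. This is exactly what the paper does, stated more tersely.
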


\begin{proof}
We first observe that $\Gamma$ acts freely on $P_{U}(X)$. Indeed, for $\gamma$ in $\Gamma$ and $\mu$ in $P_{U}(X)$  satisfying  $\gamma \mu=\mu$  the subgroup of $\Gamma$ generated by $\gamma$ has a finite orbit contained in $\supp(\mu)$. Since $\Gamma$ is torsion-free and $X$ is a free $\Gamma$-set this can only happen if $\gamma=1$.

To define an inverse morphism $g\colon P_{U}(X)_{bd}\to X_{U}$ we first choose representatives for the orbits $P_{U}(X)/\Gamma$. Then we define $g(\mu)$ for every chosen representative $\mu$  to be a point in $\supp(\mu)$, and extend equivariantly.  Then $g\circ \delta=\id_{X_{U}}$ and $\delta\circ g$ is close to $\id_{P_{U}(X) }$.
\end{proof}

The following corollary follows immediately from the above lemma.
\begin{kor}
If $\Gamma$ is a finitely generated torsion-free group, then $\Gamma_{can,min}\to P_{U}(\Gamma)_{bd}$ is an equivalence for every invariant generating entourage $U$ of $\Gamma$.
\end{kor}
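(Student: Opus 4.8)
The plan is to deduce this corollary directly from \cref{weflkjweiofewfewfewf} by verifying that its two hypotheses are met in the situation at hand. First I would observe that a finitely generated torsion-free group $\Gamma$ is in particular torsion-free, so the first hypothesis of \cref{weflkjweiofewfewfewf} holds trivially. Second, the $\Gamma$-bornological coarse space $\Gamma_{can,min}$ has underlying $\Gamma$-set $\Gamma$ with the left translation action, which is free; hence the second hypothesis is also satisfied.

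Next I would take $X := \Gamma_{can,min}$ and let $U$ be any invariant generating entourage of $\Gamma$, i.e.\ an invariant entourage such that $\cC_{can} = \cC\langle\{U\}\rangle$. Since $\Gamma$ is finitely generated, such an entourage exists: for instance, if $T$ is a finite symmetric generating set then $\Gamma(T\times T)$ (or equivalently the word-metric entourage $U_{1}$) generates $\cC_{can}$. For such $U$ we have $X_{U} = \Gamma_{can,min}$ by definition of the coarse structure $\cC_{U}$ from \cref{wefoiweofiewf}, because $\cC_{U} = \cC\langle\{U\}\rangle = \cC_{can}$ (the bornology is unchanged and remains $\cB_{min}$). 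Therefore the morphism \eqref{fwkfhik23r234r23r23r2r}, which in general is the Dirac morphism $X_{U}\to P_{U}(X)_{bd}$, specializes to a morphism $\Gamma_{can,min}\to P_{U}(\Gamma)_{bd}$.

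Applying \cref{weflkjweiofewfewfewf} to this $X$ and $U$ then yields that the morphism $\Gamma_{can,min}\to P_{U}(\Gamma)_{bd}$ is an equivalence of $\Gamma$-bornological coarse spaces, which is exactly the assertion of the corollary. I do not anticipate any genuine obstacle here: the statement is a direct corollary and the only things to check are the (immediate) verification of the hypotheses and the (routine) identification $X_{U} = \Gamma_{can,min}$ for a generating entourage $U$, which is essentially built into the definition of the canonical coarse structure in \cref{eifjwfiowefewfe32453534}. The proof is therefore a two-line invocation of the lemma after fixing notation.
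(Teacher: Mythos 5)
Your proof is correct and matches the paper's (unwritten) intent exactly: you apply \cref{weflkjweiofewfewfewf} with $X=\Gamma_{can,min}$, checking torsion-freeness, freeness of the underlying $\Gamma$-set, and the identification $(\Gamma_{can,min})_U=\Gamma_{can,min}$ for a generating entourage $U$. The paper simply asserts the corollary "follows immediately from the above lemma," and your expansion supplies precisely the bookkeeping that claim elides.
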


Assume that $Q$ is a $\Gamma$-bornological coarse space. Let $X$ be a $\Gamma$-bornological coarse space and $U$ be an invariant entourage of $X$.
\begin{lem}\label{iofiowefhewiofewfewfew}
If the underlying $\Gamma$-set of $Q$ is free, then
\[\delta\times \id_{Q}\colon X_{U}\otimes Q\to P_{U}(X)_{bd}\otimes Q\]     is an equivalence of $\Gamma$-bornological coarse spaces. 
\end{lem}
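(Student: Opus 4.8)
The statement to be proved is a twisted version of \cref{weflkjweiofewfewfewf}, but now without any assumption on $\Gamma$ being torsion-free. The point is that after tensoring with a $\Gamma$-bornological coarse space $Q$ whose underlying $\Gamma$-set is free, the $\Gamma$-action on $P_{U}(X)_{bd}\otimes Q$ becomes free, and one can run essentially the same argument as in \cref{weflkjweiofewfewfewf}. So the first step is to observe that the projection $P_{U}(X)_{bd}\otimes Q\to Q$ together with the action on $Q$ makes the underlying $\Gamma$-set of $P_{U}(X)_{bd}\otimes Q$ a free $\Gamma$-set: if $\gamma\cdot(\mu,q)=(\mu,q)$ then in particular $\gamma q=q$, and since $Q$ is a free $\Gamma$-set this forces $\gamma=1$. (This is the analogue of the freeness observation at the start of the proof of \cref{weflkjweiofewfewfewf}, and it is exactly the place where torsion-freeness of $\Gamma$ was used there; here the freeness of $Q$ does the job instead.)

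\textbf{Construction of the inverse.} Next I would construct an inverse morphism $g\colon P_{U}(X)_{bd}\otimes Q\to X_{U}\otimes Q$ to $\delta\times\id_{Q}$, up to closeness. Since the $\Gamma$-set underlying $P_{U}(X)\times Q$ is free, choose a set $R$ of representatives of the $\Gamma$-orbits. For a representative $(\mu,q)$ pick a point $x_{(\mu,q)}$ in $\supp(\mu)$, and define $g$ on $(\mu,q)$ by $g(\mu,q):=(x_{(\mu,q)},q)$, extending $\Gamma$-equivariantly. One then checks, exactly as in \cref{weflkjweiofewfewfewf}, that $g$ is a morphism of $\Gamma$-bornological coarse spaces: it is proper because the preimage of a generating bounded set $P_{U}(B)\times B'$ (with $B$ bounded in $X$, $B'$ bounded in $Q$) is contained in the bounded set $B''\times B'$ for a suitable bounded $B''$ obtained from $P_U(B)$ by the choice function (this uses that each $\mu$ in $P_U(B)$ has support in $B$ together with local finiteness of the choice over orbit representatives meeting a bounded set), and it is controlled because $(x_{(\mu,q)},x_{(\mu',q')})\in U$ whenever $(\mu,\mu')$ lies in the entourage $P_U(X)$ is equipped with and $q,q'$ agree, so $g$ sends generating entourages of $P_{U}(X)_{bd}\otimes Q$ into $U\times(\text{entourage of }Q)$, i.e.\ into an entourage of $X_U\otimes Q$.

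\textbf{Verifying the two composites.} Then one verifies $g\circ(\delta\times\id_{Q})=\id_{X_{U}\otimes Q}$ on the nose: for $(x,q)$ with $x$ a point of $X$, $\delta\times\id_Q$ sends it to $(\delta_x,q)$, and by the choice of representatives extended equivariantly, $g(\delta_x,q)=(x,q)$ because $\supp(\delta_x)=\{x\}$ and the choice function, being equivariant and support-valued, necessarily returns $x$ on $(\delta_x,q)$. Conversely $(\delta\times\id_Q)\circ g$ is close to $\id_{P_{U}(X)_{bd}\otimes Q}$: for $(\mu,q)$ we have $(\delta\times\id_Q)(g(\mu,q))=(\delta_{x_{(\mu,q)}},q)$, and $(\mu,\delta_{x_{(\mu,q)}})$ lies in the entourage of $P_U(X)$ given by $\{(\nu,\nu')\mid \nu'\in\overline{\Delta(\supp\nu)}\text{ in the simplex containing }\nu\}$ since $x_{(\mu,q)}\in\supp(\mu)$, so the pair $\{((\mu,q),(\delta_{x_{(\mu,q)}},q))\}$ is an entourage of $P_{U}(X)_{bd}\otimes Q$. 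This exhibits $\delta\times\id_Q$ as an equivalence of $\Gamma$-bornological coarse spaces.

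\textbf{Main obstacle.} The only subtle point, and the one I expect to need care, is the properness of the inverse $g$: one must argue that the choice function $R\to X$ (extended $\Gamma$-equivariantly) does not ``spread out'' bounded sets, i.e.\ that for a bounded $B\subseteq X$ and bounded $B'\subseteq Q$ the set $\{x_{(\mu,q)}:(\mu,q)\in P_U(B)\times B'\}$ is bounded in $X$. This follows because every $\mu$ in $P_U(B)$ has $\supp(\mu)\subseteq B$ (as $P_U(B)$ is defined using vertices in $B$), so $x_{(\mu,q)}\in B$ for \emph{any} choice of point in the support, regardless of the orbit-representative bookkeeping; hence the image lies in $B$ itself. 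With this in hand everything reduces, word for word, to the argument already given for \cref{weflkjweiofewfewfewf}, and indeed the phrase ``the following corollary follows immediately from the above lemma'' style of proof applies here too: it suffices to replace ``$\Gamma$ torsion-free and $X$ free'' by ``$Q$ free'' and repeat.
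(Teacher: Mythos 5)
Your proposal is correct and is essentially the same argument as the paper's: observe that freeness of $Q$ makes $\Gamma$ act freely on $P_U(X)\times Q$, choose equivariant orbit representatives, define $g$ by picking a support point for each representative, and check the two composites. The paper states this more tersely (it does not spell out properness/controlledness of $g$), so your added detail is consistent rather than divergent.
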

Note that we do not assume that $X$ is a free $\Gamma$-set.
\begin{proof}
First we note that $\Gamma$ acts freely on the set $P_{U}(X) \times Q$. We choose representatives for the orbits $(P_{U}(X)\times Q)/\Gamma$. Then we choose  for every
representative $(\mu,q) \in P_{U}(X)\times Q$ a point $x\in \supp(\mu)$ and set   $g(\mu,q):=(x,q)$. Then we extend  this to an equivariant  map  $g\colon P_{U}(X)\times Q\to X\times Q$.  This map of sets is a morphism 
\[g\colon P_{U}(X)_{bd}\otimes Q\to X_{U}\otimes Q\ .\]
By construction
$g\circ (\delta\times \id_{Q})=\id_{X\times Q}$ and $g\circ (\delta\times \id_{Q})$ is close to the identity.
\end{proof}

 For the following recall the cone functor $\cO$  from \cref{seccone} and the ``cone at infinity'' functor $\cO^\infty $ from \cref{secjknds2sd23}.

Let $X$ be a $\Gamma$-bornological coarse space.
\begin{ddd}\label{gkirjoiergegergergerg}
We define the following equivariant coarse motivic spectra:
\begin{align*}
F(X) & := \colim_{U\in \cC^{\Gamma}}\Yo^{s}(\cO(P_{U}(X)_{bdu}))\ ,\\
F^{\infty}(X) & := \colim_{U\in \cC^{\Gamma}} \cO^{\infty} (P_{U}(X)_{bdu})\ ,\\
F^{0}(X) & :=  \colim_{U\in \cC^{\Gamma}} \Yo^{s}(P_{U}(X)_{bd})\ .
\end{align*}\end{ddd}
Using standard Kan-extension techniques one can refine the above description to functors
\[F,F^{\infty},F^{0}\colon \Gamma\BC\to \Gamma\Sp\cX\ , \] see Remark \ref{ropewfwfwefwef} for details.
 The fiber sequence from \cref{wefoiuweiofuewwefwwefwf} provides a natural fiber sequence of functors 
\begin{equation}\label{vwlkvewlkvwevewvewve}
F^{0}(X)  \to F(X) \to F^{\infty} (X) \xrightarrow{\beta_{X}} \Sigma F^{0}(X)\ .
\end{equation}

\begin{ddd}\label{rlkjflewf89233iurz2i3urkjf2ff}
We call $\beta_{X}$ the \emph{forget-control map}.
\end{ddd}

\begin{rem}\label{ropewfwfwefwef}
We let $\Gamma\BC^{\cC}$ denote the category of pairs $(X,U)$, where $X$ is a $\Gamma$-bornological coarse space and
$U$ is an invariant entourage of $X$ containing the diagonal. A morphism $(X,U)\to (X^{\prime},U^{\prime})$ is a morphism $f\colon X\to X^{\prime}$ in $\Gamma\BC$ such that $(f\times f)(U)\subseteq U^{\prime}$.
We have a forgetful functor
\begin{equation}\label{vreoi34fg3vefv}
\Gamma\BC^{\cC}\to \Gamma\BC\ ,\quad (X,U)\mapsto X\ .
\end{equation}

Let \[\tilde E\colon 
\Gamma\BC^{\cC}\to \bC\] be a functor to some cocomplete target $\bC$ and
let $E$ be the left Kan extension of $\tilde E$ along \eqref{vreoi34fg3vefv}.  The evaluation of $E$ on 
  a $\Gamma$-bornological coarse space $X$ is then given as follows:
\begin{lem} We have an equivalence
\[
E(X)\simeq \colim_{U\in \cC^{\Gamma}(X)} \tilde E(X,U)\ .
\]
\end{lem}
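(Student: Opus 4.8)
The statement is a computation of a pointwise left Kan extension along the forgetful functor $\Gamma\BC^{\cC}\to \Gamma\BC$ of \eqref{vreoi34fg3vefv}, so the plan is to apply the standard pointwise formula for left Kan extensions (see \cite[Sec.~4.3]{htt}). For a $\Gamma$-bornological coarse space $X$ the pointwise formula gives
\[
E(X)\simeq \colim_{((X^{\prime},U^{\prime})\to X)\in \Gamma\BC^{\cC}/X} \tilde E(X^{\prime},U^{\prime})\ ,
\]
where $\Gamma\BC^{\cC}/X$ denotes the comma category whose objects are pairs consisting of an object $(X^{\prime},U^{\prime})$ of $\Gamma\BC^{\cC}$ together with a morphism $X^{\prime}\to X$ in $\Gamma\BC$. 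So the entire content of the lemma is the claim that the evident functor
\[
\cC^{\Gamma}(X)\to \Gamma\BC^{\cC}/X\ ,\qquad U\mapsto \big((X,U)\xrightarrow{\id_X} X\big)
\]
is cofinal (in the $\infty$-categorical sense), where $\cC^{\Gamma}(X)$ is regarded as the filtered poset of invariant entourages of $X$ containing the diagonal.

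\textbf{Key steps.} First I would record the pointwise Kan extension formula above and reduce to proving cofinality of the inclusion $\cC^{\Gamma}(X)\hookrightarrow \Gamma\BC^{\cC}/X$. Then, by Joyal's version of Quillen's Theorem A \cite[Thm.~4.1.3.1]{htt}, it suffices to show that for every object $\big((X^{\prime},U^{\prime})\xrightarrow{f} X\big)$ of $\Gamma\BC^{\cC}/X$ the comma category of factorizations through some $\big((X,U)\xrightarrow{\id_X}X\big)$ is weakly contractible. Concretely, given $f\colon X^{\prime}\to X$ with $(f\times f)(U^{\prime})\subseteq U^{\prime}$, the morphism $f$ itself defines a morphism $(X^{\prime},U^{\prime})\to (X,U)$ in $\Gamma\BC^{\cC}$ precisely when $(f\times f)(U^{\prime})\subseteq U$; since $(f\times f)(U^{\prime})$ is an invariant entourage of $X$ containing the diagonal (using that $U^{\prime}$ does), it lies in $\cC^{\Gamma}(X)$, and any $U\in \cC^{\Gamma}(X)$ with $(f\times f)(U^{\prime})\subseteq U$ gives such a factorization. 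The relevant comma category therefore has the cofinal subposet $\{U\in \cC^{\Gamma}(X)\mid (f\times f)(U^{\prime})\subseteq U\}$, which is nonempty (it contains $(f\times f)(U^{\prime})$) and filtered (being an up-set in the filtered poset $\cC^{\Gamma}(X)$), hence weakly contractible. This establishes cofinality and thus the desired equivalence. Finally one notes that $\tilde E\big((X,U)\xrightarrow{\id_X}X\big)=\tilde E(X,U)$, so the colimit is exactly $\colim_{U\in\cC^{\Gamma}(X)}\tilde E(X,U)$.

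\textbf{Main obstacle.} The only genuinely technical point is the bookkeeping needed to verify that the comma categories appearing in the cofinality criterion are filtered and nonempty; this rests on the fact that $\cC^{\Gamma}(X)$ is itself filtered, which in turn uses that the $\Gamma$-invariant entourages of $X$ are cofinal in all entourages (part of \cref{vegieirt456456}) and that entourages containing the diagonal are closed under finite unions. Everything else is a formal invocation of the pointwise formula for left Kan extensions together with Quillen's Theorem A, so no serious difficulty is expected beyond making these cofinality verifications precise.
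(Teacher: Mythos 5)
Your approach is essentially the same as the paper's: both apply the pointwise formula for the left Kan extension and then verify that the full subcategory of objects $\big((X,U)\xrightarrow{\id_X}X\big)$ with $U\in\cC^{\Gamma}(X)$ is cofinal in $\Gamma\BC^{\cC}/X$. You spell out the cofinality check via Quillen's Theorem A, which is a cleaner formulation of what the paper leaves as ``this easily implies\dots''.

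There is, however, one small slip in the execution. You claim that $(f\times f)(U^{\prime})$ is an invariant entourage of $X$ \emph{containing the diagonal}, and you use $(f\times f)(U^{\prime})$ itself as the witness that the up-set $\{U\in\cC^{\Gamma}(X)\mid (f\times f)(U^{\prime})\subseteq U\}$ is nonempty. This is not quite right: $(f\times f)(U^{\prime})$ contains $(f\times f)(\diag_{X^{\prime}})=\diag_{f(X^{\prime})}$, which need not be all of $\diag_X$ unless $f$ is surjective, so $(f\times f)(U^{\prime})$ may fail to lie in $\Gamma\BC^{\cC}$'s entourages (which are required to contain the diagonal). The fix is exactly the one the paper uses: take $(f\times f)(U^{\prime})\cup\diag_X$ as the witness, which is an invariant coarse entourage of $X$ containing the diagonal and still satisfies $(f\times f)(U^{\prime})\subseteq (f\times f)(U^{\prime})\cup\diag_X$. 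With that correction the nonemptiness and filteredness arguments go through, and the rest of your proof is sound.
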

\begin{proof}
By the pointwise formula for the left Kan extension we have an  equivalence
\[E(X)\simeq \colim_{((X^{\prime},U^{\prime}),f\colon X^{\prime}\to X) \in \Gamma\BC^{\cC}/X} \tilde E(X^{\prime},U^{\prime})\ .\]
 If $((X^{\prime},U^{\prime}),f\colon X^{\prime}\to X)$ belongs to $\Gamma\BC^{\cC}/X$, then we have a morphism \[(X^{\prime},U^{\prime})\to (X,f(U^{\prime})\cup \diag(X))\] in $\Gamma\BC^{\cC}/X$. 
This easily implies that the full subcategory of objects of the form $((X,U),\id_{X})$ of $\Gamma\BC^{\cC}/X$ with $U$ in $\cC^{\Gamma}(X)$ is cofinal in $\Gamma\BC^{\cC}/X$.
 \end{proof}
 We have a functor \[P\colon \Gamma\BC^{\cC}\to \Gamma\UBC\ , \quad (X,U)\mapsto P_{U}(X)_{bdu}\ .\]
 We construct the fibre sequence \eqref{vwlkvewlkvwevewvewve}  by  applying the left Kan extension to the fibre sequence of functors
 $\Gamma\BC^{\cC}\to \Gamma\Sp\cX$
 \[\Yo^{s}\circ \cF_{\cT}\circ P\to \Yo^{s}\circ \cO\circ  P\to \cO^{\infty}\circ P\to \Sigma \Yo^{s}\circ \cF_{\cT}\circ P\]
 obtained by  precomposing  the sequence from \cref{wefoiuweiofuewwefwwefwf} with $P$.
 \end{rem}

Let $X$ be a $\Gamma$-bornological space. In the following two corollaries  we  identify the  $\Gamma$-coarse motivic spectrum $F^0(X)$.

\begin{kor}\label{wefewfefewfwfwfw1}
If $\Gamma$ is torsion-free and  the underlying $\Gamma$-set of $X$ is  free, then
\[F^{0}(X)\simeq  \Yo^{s}(X)\ .\]
\end{kor}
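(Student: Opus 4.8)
The statement $F^{0}(X)\simeq \Yo^{s}(X)$ under the hypotheses that $\Gamma$ is torsion-free and the underlying $\Gamma$-set of $X$ is free should follow by combining the $u$-continuity of $\Yo^{s}$ (\cref{kjeflwfjewofewuf98ewuf98u798798234234324324343}.\ref{efwijfiewfoiefe3u40934332r}) with \cref{weflkjweiofewfewfewf}. Recall that by definition $F^{0}(X)=\colim_{U\in\cC^{\Gamma}}\Yo^{s}(P_{U}(X)_{bd})$, where the colimit runs over the invariant entourages of $X$ containing the diagonal (and these are cofinal in $\cC^{\Gamma}$ by the $\Gamma$-bornological coarse space axiom).

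First I would invoke \cref{weflkjweiofewfewfewf}: since $\Gamma$ is torsion-free and the underlying $\Gamma$-set of $X$ is free, the Dirac embedding $\delta\colon X_{U}\to P_{U}(X)_{bd}$ from \eqref{fwkfhik23r234r23r23r2r} is an equivalence of $\Gamma$-bornological coarse spaces for every invariant entourage $U$ of $X$ containing the diagonal. Applying $\Yo^{s}$, which preserves equivalences of $\Gamma$-bornological coarse spaces (\cref{kjeflwfjewofewuf98ewuf98u798798234234324324343}.\ref{iweufhf89wfu89ewfew245}), we obtain equivalences $\Yo^{s}(X_{U})\xrightarrow{\simeq}\Yo^{s}(P_{U}(X)_{bd})$. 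These are natural in $U$ (the map $\delta$ is compatible with the structure maps $X_{U}\to X_{U'}$ and $P_{U}(X)_{bd}\to P_{U'}(X)_{bd}$ for $U\subseteq U'$, since $\delta$ is just the identity on underlying sets composed with the point-measure assignment), so they assemble to an equivalence of diagrams over the filtered poset $\cC^{\Gamma}$.

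Passing to the colimit then gives
\[
F^{0}(X)=\colim_{U\in\cC^{\Gamma}}\Yo^{s}(P_{U}(X)_{bd})\simeq \colim_{U\in\cC^{\Gamma}}\Yo^{s}(X_{U})\ .
\]
Finally, by $u$-continuity of $\Yo^{s}$ (\cref{kjeflwfjewofewuf98ewuf98u798798234234324324343}.\ref{efwijfiewfoiefe3u40934332r}) the natural map $\colim_{U\in\cC^{\Gamma}}\Yo^{s}(X_{U})\xrightarrow{\simeq}\Yo^{s}(X)$ is an equivalence, and the composite equivalence is exactly the map induced by the Dirac morphisms. This completes the proof.

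\textbf{Main obstacle.} The only subtlety is bookkeeping: one must check that the equivalences $\delta$ are genuinely natural in $U$ as maps of filtered diagrams $\cC^{\Gamma}\to\Gamma\BC$, so that the induced comparison of colimits is well-defined and is the canonical one, rather than just an abstract equivalence; this is where one should be slightly careful, but it is routine given that $\delta$ is the identity on underlying sets followed by $x\mapsto\delta_x$ and hence strictly compatible with all the coarsening maps. Everything else is a direct citation of \cref{weflkjweiofewfewfewf} and the $u$-continuity of $\Yo^{s}$.
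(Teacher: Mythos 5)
Your proposal is correct and takes essentially the same route as the paper: Lemma~\ref{weflkjweiofewfewfewf} gives the level-wise equivalences $\Yo^{s}(X_{U})\simeq\Yo^{s}(P_{U}(X)_{bd})$, and $u$-continuity of $\Yo^{s}$ (Corollary~\ref{kjeflwfjewofewuf98ewuf98u798798234234324324343}.\ref{efwijfiewfoiefe3u40934332r}) identifies the colimit with $\Yo^{s}(X)$. The only difference is that you spell out the naturality-in-$U$ check, which the paper leaves implicit.
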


\begin{proof}
We have equivalences
\[F^{0}(X)= \colim_{U\in \cC^{\Gamma}} \Yo^{s}(P_{U}(X)_{bd})\stackrel{\cref{weflkjweiofewfewfewf}}{\simeq} \colim_{U\in \cC^{\Gamma}} \Yo^{s}( X_{U})\stackrel{\cref{kjeflwfjewofewuf98ewuf98u798798234234324324343}.\ref{efwijfiewfoiefe3u40934332r}}{\simeq}\Yo^{s}(X)\ ,\]
which proves the claim.
\end{proof}

Let $Q$ and $X$ be $\Gamma$-bornological coarse spaces.
\begin{kor}\label{wefewfefewfwfwfw}
If the underlying $\Gamma$-set of  $Q$   is free, then
\[F^{0}(X)\otimes \Yo^{s}(Q)\simeq  \Yo^{s}(X)\otimes \Yo^{s}(Q)\ .\]
\end{kor}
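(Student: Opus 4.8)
The strategy is to reduce to \cref{wefewfefewfwfwfw1} by formally "freeing up" the $\Gamma$-action. The key observation is that since the monoidal product $-\otimes Q$ commutes with colimits in each variable (\cref{lemjine2}), we have
\[
F^{0}(X)\otimes \Yo^{s}(Q)\simeq \colim_{U\in \cC^{\Gamma}}\big(\Yo^{s}(P_{U}(X)_{bd})\otimes \Yo^{s}(Q)\big)\simeq \colim_{U\in \cC^{\Gamma}}\Yo^{s}(P_{U}(X)_{bd}\otimes Q)\ ,
\]
using that $\Yo^{s}$ is symmetric monoidal (again \cref{lemjine2}). So it suffices to produce, for each invariant entourage $U$ of $X$ containing the diagonal, an equivalence $\Yo^{s}(P_{U}(X)_{bd}\otimes Q)\simeq \Yo^{s}(X_{U}\otimes Q)$ which is natural in $U$, and then pass to the colimit, invoking \cref{kjeflwfjewofewuf98ewuf98u798798234234324324343}.\ref{efwijfiewfoiefe3u40934332r} (u-continuity of $\Yo^{s}$) on the right-hand side to obtain $\colim_{U}\Yo^{s}(X_{U}\otimes Q)\simeq \Yo^{s}(X)\otimes \Yo^{s}(Q)$.

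First I would recall \cref{iofiowefhewiofewfewfew}: if the underlying $\Gamma$-set of $Q$ is free, then the Dirac-measure morphism
\[
\delta\times \id_{Q}\colon X_{U}\otimes Q\to P_{U}(X)_{bd}\otimes Q
\]
is an equivalence of $\Gamma$-bornological coarse spaces — crucially with \emph{no} freeness hypothesis on $X$ itself. Since $\Yo^{s}$ preserves equivalences of $\Gamma$-bornological coarse spaces (\cref{kjeflwfjewofewuf98ewuf98u798798234234324324343}.\ref{iweufhf89wfu89ewfew245}), applying $\Yo^{s}$ gives the desired equivalence $\Yo^{s}(X_{U}\otimes Q)\xrightarrow{\simeq}\Yo^{s}(P_{U}(X)_{bd}\otimes Q)$ for every $U$. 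These equivalences are natural in $U$ because the morphisms $\delta$ are compatible with the structure maps $P_{U}(X)_{bd}\to P_{U'}(X)_{bd}$ and $X_{U}\to X_{U'}$ for $U\subseteq U'$ (both induced by identity maps of underlying sets, intertwined by $\delta$).

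Then I would assemble the argument: take the colimit over $U\in\cC^{\Gamma}$ of the equivalences $\Yo^{s}(X_{U}\otimes Q)\simeq\Yo^{s}(P_{U}(X)_{bd}\otimes Q)$, commute $-\otimes\Yo^{s}(Q)$ past the colimit on both sides, and identify the left-hand colimit using u-continuity: $\colim_{U\in\cC^{\Gamma}}\Yo^{s}(X_{U})\simeq\Yo^{s}(X)$. This yields
\[
F^{0}(X)\otimes\Yo^{s}(Q)\simeq\colim_{U}\Yo^{s}(P_{U}(X)_{bd}\otimes Q)\simeq\colim_{U}\big(\Yo^{s}(X_{U})\otimes\Yo^{s}(Q)\big)\simeq\Yo^{s}(X)\otimes\Yo^{s}(Q)\ .
\]
The one genuine subtlety — and the point I'd expect to need the most care — is making the naturality in $U$ fully precise so that the colimit of equivalences is again an equivalence; concretely, one wants a natural transformation of $\cC^{\Gamma}$-indexed diagrams in $\Gamma\Sp\cX$ which is a pointwise equivalence. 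This is exactly the kind of bookkeeping handled by the Kan-extension formalism of \cref{ropewfwfwefwef} (applied to the functor $(X,U)\mapsto P_{U}(X)_{bd}$ on $\Gamma\BC^{\cC}$), so I would phrase the argument at the level of functors on $\Gamma\BC^{\cC}$: the natural transformation $\delta$ of functors $\Gamma\BC^{\cC}\to\Gamma\BC$ given by $(X,U)\mapsto(\delta\times\id_Q\colon X_U\otimes Q\to P_U(X)_{bd}\otimes Q)$ becomes, after $\Yo^{s}$ and left Kan extension along \eqref{vreoi34fg3vefv}, an equivalence of functors $\Gamma\BC\to\Gamma\Sp\cX$, because it is a pointwise equivalence on $\Gamma\BC^{\cC}$ by \cref{iofiowefhewiofewfewfew}. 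Everything else is formal.
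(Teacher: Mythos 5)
Your proof is correct and follows exactly the paper's intended argument: the paper itself says to use \cref{iofiowefhewiofewfewfew}, the symmetric monoidality of $\Yo^{s}$, and the fact that $-\otimes\Yo^{s}(Q)$ preserves colimits, and then to repeat the chain of equivalences from the proof of \cref{wefewfefewfwfwfw1}. You have simply spelled this out in full detail, including the (correct and worthwhile) observation that the naturality in $U$ is most cleanly handled via the Kan-extension formalism of \cref{ropewfwfwefwef}.
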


\begin{proof}
Here we use \cref{iofiowefhewiofewfewfew}, that $\Yo^{s}$ is symmetric monoidal, and that the functor
\[-\otimes \Yo^{s}(Q)\colon \Gamma\Sp\cX\to \Gamma\Sp\cX\] preserves colimits. Therefore we can write down an analogous sequence of equivalences as in the above proof of \cref{wefewfefewfwfwfw1}.
\end{proof}

\subsection{Comparison of the assembly and the forget-control map}\label{reoirgioerigueog3453455}

In this section we will compare the assembly map for the family of finite subgroups with the forget-control map. 

For every two $\Gamma$-bornological coarse spaces $X$ and $L$ we have the forget-control morphism (\cref{rlkjflewf89233iurz2i3urkjf2ff})
\[\beta_{X,L}\colon \colim_{U\in\cC^\Gamma}\cO^\infty(P_U(X)_{bdu})\otimes \Yo^s(L)\to \colim_{U\in\cC^\Gamma}\Sigma \Yo^s(P_U(X)_{bd}\otimes L)\ .\]
We have furthermore the assembly map (see \cref{rjiojfiofjiejoijo2i3roi2r23ori})
\[\alpha_{\Rips(X),L}\colon \cO^\infty_\homolg(\Rips (X))\otimes \Yo^s(L)\to  \cO^\infty_\homolg(*)\otimes \Yo^s(L)\simeq\Sigma \Yo^s(L)\]
induced by the morphism $\Rips(X)\to *$ of $\Gamma$-topological spaces.

{Recall \cite[Def.~2.28]{buen} that a coarse space $(X,\cC)$ is called \emph{coarsely connected} if for any two points $x,y$ in $X$ there exists an entourage $U$ in $\cC$ such that $(x,y) \in U$.}

\begin{ddd}
A $\Gamma$-bornological coarse space $X$ is \emph{eventually coarsely connected} if there exists a coarse entourage $U$ such that $X_U$ is coarsely connected.

Note that an eventually coarsely connected space is in particular coarsely connected.
\end{ddd}

While one is interested in $\alpha_{\Rips(X),\Gamma_{can,min}}$, using descent methods (like in \cite{desc}) we will only be able to derive split-injectivity of $\beta_{X,\Gamma_{max,max}}$ in several cases. The following theorem allows us to compare both maps. This comparison does not hold directly but only after forcing continuity.

Let $X$ be a $\Gamma$-bornological coarse space. Recall \cite[Def.~6.100]{buen} that $X$ has \emph{strongly bounded geometry} if it is equipped with the minimal compatible bornology and if for every entourage $U$ of $X$ there exists a uniform finite upper bound on the cardinalities of $U$-bounded subsets of~$X$. Furthermore recall the functor $C^{s}$ from \eqref{oihiuhfiuwheiuewf23}.

\begin{theorem}\label{oreorpgergregeg}
	Assume:
	\begin{enumerate}
		\item\label{54tref3f} $X$ has strongly bounded geometry.
		\item $X$ is $\Gamma$-finite.
		\item The action of $\Gamma$ on $X$ is proper (\cref{fewkfjwklefjewlkfewfewe}). 
		\item $X$ is eventually coarsely connected.
	\end{enumerate}
	Then the morphisms $C^s(\alpha_{\Rips(X),\Gamma_{can,min}})$ and $C^s(\beta_{X,\Gamma_{max,max}})$ are equivalent. 
\end{theorem}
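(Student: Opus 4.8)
The plan is to exhibit both $C^{s}(\alpha_{\Rips(X),\Gamma_{can,min}})$ and $C^{s}(\beta_{X,\Gamma_{max,max}})$ as the connecting map of one and the same fibre sequence, up to equivalence: on the coarse side this is the cone sequence of \cref{wefoiuweiofuewwefwwefwf} applied to the spaces $P_{U}(X)_{bdu}$, tensored with $\Yo^{s}(\Gamma_{max,max})$, and on the topological side it is the ``blr''-sequence preceding \cref{ropgerog948ut934ut93t} applied to $\Rips(X)$, tensored with $\Yo^{s}(\Gamma_{can,min})$; the work is to match the three terms after applying $C^{s}$.

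First I would rewrite the domain of the assembly map. Since $\Rips(X)=\colim_{U\in\cC^{\Gamma}}P_{U}(X)$ is a filtered colimit in $\Gamma\Top$ and $\cO^{\infty}_{\homolg}$ preserves filtered colimits (\cref{gioerjgioerg893ut9834t4}, equation \eqref{gpo4tjgopgpog54g4}), we get $\cO^{\infty}_{\homolg}(\Rips(X))\simeq\colim_{U}\cO^{\infty}_{\homolg}(P_{U}(X))$. The strongly bounded geometry hypothesis \ref{54tref3f} forces each star $U[x]$ to be finite, so $P_{U}(X)$ is a locally finite $\Gamma$-simplicial complex, and $\Gamma$-finiteness of $X$ makes it $\Gamma$-finite; hence $P_{U}(X)$ is $\Gamma$-compact and $\Gamma$-metrizable and its metric topology coincides with its CW-topology, so $P_{U}(X)\in\Gamma\Top^{cm}$, and by the defining left Kan extension together with \cref{lem:nbhdvsmetricentourage} we obtain $\cO^{\infty}_{\homolg}(P_{U}(X))\simeq\cO^{\infty}(\cM(\cU(P_{U}(X))))$. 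Invariance under coarsening (\cref{fiiwofwufw9e8fuew9fwefwef}) lets me replace the maximal coarse structure on $\cM(\cU(P_{U}(X)))$ by the metric one, so that the only remaining discrepancy with $P_{U}(X)_{bdu}$ is that $\cM(\cU(P_{U}(X)))$ carries the maximal bornology while $P_{U}(X)_{bdu}$ carries the $bd$-bornology generated by the $P_{U}(B)$.

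Second, and this is the heart of the matter, I would bridge this bornology discrepancy together with the change of twist $\Gamma_{can,min}\rightsquigarrow\Gamma_{max,max}$ after applying $C^{s}$. The point of passing to $\Gamma\Sp\cX_{c}$ is that a continuous motive is determined by its values on invariant locally finite subsets; properness of the $\Gamma$-action and $\Gamma$-finiteness of $X$ imply that all stabilizers occurring in $P_{U}(X)$ are finite, which makes the invariant locally finite subsets of $P_{U}(X)_{bdu}\otimes\Gamma_{max,max}$ and of $\cM(\cU(P_{U}(X)))\otimes\Gamma_{can,min}$ explicitly describable and lets one check that they contribute equivalently, with the Dirac transformation of \cref{iofiowefhewiofewfewfew} (which identifies $X_{U}\otimes\Gamma_{max,max}\xrightarrow{\sim}P_{U}(X)_{bd}\otimes\Gamma_{max,max}$ because $\Gamma_{max,max}$ has free underlying $\Gamma$-set) and the descent of $-\otimes Z$ along bounded $Z$ (\cref{fliio2fo2fjoijewfwefewfw}) providing the bookkeeping; eventual coarse connectedness of $X$ enters here to guarantee that the cofinal family of entourages $U$ eventually produces connected complexes, which is what matches the point $*$ and the $\Gamma_{can,min}$-twist of the target. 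Carrying this out in the colimit over $U$ and combining with $u$-continuity yields, after $C^{s}$, equivalences of the domains $\cO^{\infty}_{\homolg}(\Rips(X))\otimes\Yo^{s}(\Gamma_{can,min})\simeq F^{\infty}(X)\otimes\Yo^{s}(\Gamma_{max,max})$ and of the targets $\Sigma\Yo^{s}(\Gamma_{can,min})\simeq\Sigma F^{0}(X)\otimes\Yo^{s}(\Gamma_{max,max})$.

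Finally I would assemble the previous steps into a natural transformation of fibre sequences: the Dirac maps $X_{U}\to P_{U}(X)$ and the projections $P_{U}(X)\to\Rips(X)\to *$, fed through the cone sequence of \cref{wefoiuweiofuewwefwwefwf} and the diagram preceding \cref{ropgerog948ut934ut93t}, produce a map from the fibre sequence defining $\beta_{X,\Gamma_{max,max}}$ to the one defining $\alpha_{\Rips(X),\Gamma_{can,min}}$; by the second step this map becomes an equivalence on the outer two terms after applying $C^{s}$, hence on all three, and in particular it identifies the connecting maps, which is the assertion. I expect the main obstacle to be precisely the second step: controlling, naturally in $U$, how forcing continuity collapses simultaneously the gap between the $bd$-bornology and the maximal bornology on $P_{U}(X)$ and the gap between the twists $\Gamma_{max,max}$ and $\Gamma_{can,min}$ — this is the only place where properness, $\Gamma$-finiteness, and eventual coarse connectedness are all used, and it is delicate because $\otimes$ does not descend to $\Gamma\Sp\cX_{c}$ in general.
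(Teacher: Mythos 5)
Your overall plan---rewrite the domain using that $\cO^{\infty}_{\homolg}$ preserves filtered colimits, then compare fibre sequences by identifying domain and target after applying $C^{s}$---is the same strategy the paper follows. Your first step corresponds to \cref{lem:comparison1}, and this part is correct and essentially matches the paper's argument.

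The genuine gap is in your second step. You propose to pass from $\cM(\cU(P_{U}(X)))\otimes\Gamma_{can,min}$ to $P_{U}(X)_{bdu}\otimes\Gamma_{max,max}$ (and correspondingly for the cones) in one shot, by arguing that after $C^{s}$ the invariant locally finite subsets of both sides ``contribute equivalently.'' But these two spaces differ simultaneously in the bornology on $P_{U}(X)$ ($max$ vs.\ $bd$), the coarse structure on $P_{U}(X)$ ($max$ vs.\ metric), the coarse structure on the twist ($can$ vs.\ $max$), and the bornology on the twist ($min$ vs.\ $max$). Knowing abstractly that a continuous motive is determined by its values on locally finite subsets is not enough: to compare you need, for each comparison, a concrete trapping exhaustion that is trapping for \emph{both} sides, and you then need to check that the induced bornological coarse structures on the traps genuinely coincide. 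The paper does this by splitting the passage into three atomic steps (\cref{lem:comparison2}, \cref{lem:comparison3}, \cref{lem:comparison4}), each changing only one or two of the four structures, each using a \emph{different} exhaustion ($\cD$ from \cref{fwoiejoweijfoiwefewfewfew} in Lemma~\ref{lem:comparison2}, the more involved $\cY$ indexed by $I^{\IN}$ from \cref{fkwehfueuiewiuu423} in Lemma~\ref{lem:comparison3}, and excision plus continuity in Lemma~\ref{lem:comparison4}), and each consuming exactly one portion of the hypotheses. In particular eventual coarse connectedness enters precisely in Lemma~\ref{lem:comparison2} to show that the metric and maximal coarse structures on $D_{F}$ agree; it is not, as you suggest, absorbed into a general ``stabilizers are finite'' argument, and properness is used to get the \emph{minimal} bornology on $X$ and finiteness of index sets in Lemma~\ref{lem:comparison3}, not directly via finiteness of stabilizers of the Rips complex. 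Your appeal to the Dirac morphism from \cref{iofiowefhewiofewfewfew} is also misplaced here: it is what makes $F^{0}(X)\otimes\Yo^{s}(\Gamma_{max,max})\simeq\Yo^{s}(X\otimes\Gamma_{max,max})$ and underlies \cref{wefewfefewfwfwfw}, but it plays no role in the theorem's comparison, which proceeds instead by morphisms of Rips complexes with varying structures and coarsening invariance of $\cO^{\infty}$ (\cref{fiiwofwufw9e8fuew9fwefwef}). Without the intermediate stages and the explicit exhaustion arguments, the crucial step remains an announcement of intent rather than a proof.
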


\begin{rem}
Note that $X$ being $\Gamma$-finite and the action of $\Gamma$ on $X$ being proper implies that $X$ has the minimal bornology.

Furthermore, if $X$ is $\Gamma$-finite and $U$ is a $\Gamma$-invariant entourage of $X$, then the assumption that every $U$-bounded subset is finite, already implies a uniform upper bound on the cardinality of $U$-bounded subsets.
Hence Assumption~\ref{54tref3f} in \cref{oreorpgergregeg} above could be equivalently replaced by the seemingly weaker assumption  that every $U$-bounded subset is finite. \end{rem}

The rest of this section is devoted to the proof of \cref{oreorpgergregeg}.

We consider a $\Gamma$-simplicial complex $K$. Recall the notation introduced in \cref{ifjwoifwoefwefewfewfw}:
\begin{enumerate}
\item $K_{u}$ denotes the $\Gamma$-uniform space associated to $K$.
\item $K_{u,max,max}$ denotes the $\Gamma$-uniform bornological coarse space which has the uniform structure of $K_{u}$, but the maximal coarse and bornological structures.
\item $K_{u,d,max}$ denotes the $\Gamma$-uniform bornological coarse space with the uniform structure of $K_{u}$, the metric coarse structure and the maximal bornological structure.
\item $K_{d,max}$ denotes the $\Gamma$-bornological coarse space underlying $K_{u,d,max}$.
\end{enumerate}

We denote by
\[\beta_{X,L}^{max,max}\colon \colim_{U\in\cC^\Gamma}\cO^\infty(P_U(X)_{u,max,max})\otimes \Yo^s(L)\to \colim_{U\in\cC^\Gamma}\Sigma \Yo^s(P_U(X)_{max,max}\otimes L)\]
and
\[\beta_{X,L}^{d,max}\colon \colim_{U\in\cC^\Gamma}\cO^\infty(P_U(X)_{u,d,max})\otimes \Yo^s(L)\to \colim_{U\in\cC^\Gamma}\Sigma \Yo^s(P_U(X)_{d,max}\otimes L)\]
the forget-control maps.

The proof of \cref{oreorpgergregeg} consists of a sequence of lemmas.

Let $X$ and $L$ be $\Gamma$-bornological coarse spaces.
\begin{lem}
	\label{lem:comparison1}
	Assume:
	\begin{enumerate}
		\item $X$ has strongly bounded geometry.
		\item $X$ is $\Gamma$-finite.
		\item The underlying set of $L$ is a free $\Gamma$-set.
	\end{enumerate}
	Then $\alpha_{\Rips(X),L}$ and $\beta^{max,max}_{X,L}$ are equivalent.
\end{lem}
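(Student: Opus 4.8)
The plan is to unwind both sides of the claimed equivalence to a common expression built from the cone at infinity applied to the Rips complex, using the invariance properties of $\cO^{\infty}$ established earlier. First I would recall that $\alpha_{\Rips(X),L}$ is by definition (see \cref{rjiojfiofjiejoijo2i3roi2r23ori}) the map $\alpha_{\Rips(X)}\otimes\id_{\Yo^{s}(L)}$ fitting into the fiber sequence of \cref{ropgerog948ut934ut93t}, so that via the identification $\cO^{\infty}_{\homolg}(*)\simeq\Sigma\Yo^{s}(*)$ and $M(\Rips(X))\otimes\Yo^{s}(L)$ as the obstruction term, $\alpha_{\Rips(X),L}$ is the boundary map of a cone sequence. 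On the other side, $\beta^{max,max}_{X,L}$ is by \cref{gkirjoiergegergergerg} and \cref{rlkjflewf89233iurz2i3urkjf2ff} (applied with the $max,max$-structures on $P_U(X)$) the boundary map $\colim_{U}\cO^{\infty}(P_U(X)_{u,max,max})\otimes\Yo^{s}(L)\to\colim_{U}\Sigma\Yo^{s}(P_U(X)_{max,max}\otimes L)$. Both maps are thus boundary maps of cone fiber sequences, and to identify them it suffices to identify the three-term sequences compatibly.

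The key geometric input is that $\cO^{\infty}_{\homolg}(\Rips(X))$ can be computed through the filtered colimit defining $\Rips(X)=\colim_{U\in\cC^{\Gamma}}P_U(X)$. Here I would use \cref{gioerjgioerg893ut9834t4}: since $\cO^{\infty}_{\homolg}$ is built as a left Kan extension from $\Gamma\Top^{cm}$ and hence commutes with filtered colimits in the sense of \eqref{gpo4tjgopgpog54g4}, we get $\cO^{\infty}_{\homolg}(\Rips(X))\simeq\colim_{U\in\cC^{\Gamma}}\cO^{\infty}_{\homolg}(P_U(X))$. The hypotheses on $X$ (strongly bounded geometry, $\Gamma$-finite) guarantee that each $P_U(X)$, endowed with the uniform structure from the canonical path metric, is a $\Gamma$-compact $\Gamma$-metrizable $\Gamma$-topological space, so that $\cO^{\infty}_{\homolg}(P_U(X))\simeq\cO^{\infty}(\cM(\cU(P_U(X))))=\cO^{\infty}(P_U(X)_{u,max,max})$ by \cref{fwejfkwjefiowejfoiwef} (and \cref{lem:nbhdvsmetricentourage} to match $\cN$ with the metric uniform structure). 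Twisting by $\Yo^{s}(L)$ and using that $-\otimes\Yo^{s}(L)$ preserves colimits, the domain of $\alpha_{\Rips(X),L}$ is identified with the domain of $\beta^{max,max}_{X,L}$.

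For the target, I would observe that $\cO^{\infty}_{\homolg}(*)\otimes\Yo^{s}(L)\simeq\Sigma\Yo^{s}(L)$, while the target of $\beta^{max,max}_{X,L}$ is $\colim_{U}\Sigma\Yo^{s}(P_U(X)_{max,max}\otimes L)$; since the underlying $\Gamma$-set of $L$ is free, \cref{roigirhgreiughreige} gives $\Yo^{s}(P_U(X)_{max,max}\otimes L)\simeq\Yo^{s}((P_U(X))_{max,max})\otimes\Yo^{s}(L)\simeq\Yo^{s}(L)$ compatibly in $U$ (the transition maps become identities after this identification, because $P_U(X)_{max,max}\to P_{U'}(X)_{max,max}$ covers the identity on $L$ up to the projection to a point), so the colimit collapses to $\Sigma\Yo^{s}(L)$. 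Then I would check that under these identifications the fiber term — the obstruction motive $M(\Rips(X))\otimes\Yo^{s}(L)$ on one side and $\colim_{U}\Yo^{s}(\cO(P_U(X)_{u,max,max}))\otimes\Yo^{s}(L)=F(X)\otimes\Yo^{s}(L)$ (using the $max,max$-variant) on the other — agree, which follows again from \eqref{gpo4tjgopgpog54g4} and the fact that $M$ is defined (\cref{gioegregregeg}) precisely as the colimit over $\Gamma\Top^{cm}/X$ applied to $\Rips(X)$, cofinally indexed by the $P_U(X)$. Since all three terms and all structure maps of the two cone sequences match, the boundary maps $\alpha_{\Rips(X),L}$ and $\beta^{max,max}_{X,L}$ are equivalent.

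The main obstacle I anticipate is the bookkeeping needed to see that the filtered colimit indexed by $\cC^{\Gamma}$ is genuinely cofinal in (or final among) the diagram $\Gamma\Top^{cm}/\Rips(X)$ that computes $\cO^{\infty}_{\homolg}$ and $M$ — that is, that every $\Gamma$-compact $\Gamma$-metrizable space mapping to $\Rips(X)$ factors, up to the relevant homotopy, through some $P_U(X)$, and that this factorization is compatible with the cone constructions. This is where the strongly-bounded-geometry and $\Gamma$-finiteness hypotheses on $X$ do the real work: they ensure the $P_U(X)$ are themselves $\Gamma$-compact and $\Gamma$-metrizable, so they already live in $\Gamma\Top^{cm}/\Rips(X)$ and form a cofinal subsystem by $\Gamma$-compactness of the source (every compact subset of $\Rips(X)=\colim_U P_U(X)$ lands in some $P_U(X)$, as in the proof of \cref{wfoi8765ewfewf}). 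Once cofinality is pinned down, the remaining identifications are formal consequences of \cref{roigirhgreiughreige}, \cref{fwejfkwjefiowejfoiwef}, and the colimit-preservation of $-\otimes\Yo^{s}(L)$.
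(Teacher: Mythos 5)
Your proof is correct and uses essentially the same ingredients and strategy as the paper's: the filtered-colimit description $\Rips(X)=\colim_U P_U(X)$, the fact that each $P_U(X)$ is $\Gamma$-compact and $\Gamma$-metrizable (so \eqref{eq:morphism} identifies $\cO^\infty_\homolg(P_U(X))$ with $\cO^\infty(P_U(X)_{u,max,max})$), the colimit-preservation \eqref{gpo4tjgopgpog54g4}, and \cref{roigirhgreiughreige} for the target. The paper's proof is a bit more economical — it doesn't frame the comparison as matching full three-term fiber sequences (and hence doesn't need to bring in the obstruction motive $M(\Rips(X))$ or the middle term $F(X)\otimes\Yo^s(L)$); it just exhibits a commutative square whose left and right vertical maps and bottom horizontal map are equivalences, from which the equivalence of $\alpha_{\Rips(X),L}$ and $\beta^{max,max}_{X,L}$ follows directly. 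Your extra identification of the fiber term is harmless redundancy but does add bookkeeping you could avoid. Your concern about cofinality of the $P_U(X)$ in $\Gamma\Top^{cm}/\Rips(X)$ is resolved by exactly the observation you make — $\Gamma$-compactness forces every map from a $\Gamma$-compact space to factor through some finite stage — and this is precisely what \eqref{gpo4tjgopgpog54g4} encodes, so that obstacle is not a gap.
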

\begin{proof}
	The assumptions on $X$ imply that $P_{U}(X)$ is $\Gamma$-compact and locally finite, hence $\Gamma$-metrizable, for every invariant coarse entourage $U$ of $X$. Therefore, by \eqref{eq:morphism}, we have an equivalence 
	\[\cO^\infty_\homolg(P_U(X))\simeq \cO^\infty(P_U(X)_{u,max,max})\ .\]
	Using \cref{fojwekopfwef233r4} of the Rips complex as a filtered colimit of $\Gamma$-topological spaces and the relation \eqref{gpo4tjgopgpog54g4}  we get
	\begin{align*}
	\cO^\infty_\homolg(\Rips(X)) & \simeq \colim_{U\in \cC^{\Gamma}}\cO^\infty_\homolg(P_U(X))\\
	& \simeq \colim_{U\in \cC^{\Gamma}}\cO^\infty(P_U(X)_{u,max,max}) \ .
	\end{align*}
	Hence we get the following commutative diagram:
	\[\xymatrix@C=4em{
		\colim_{U\in \cC^{\Gamma}}\cO^\infty(P_U(X)_{u,max,max})\otimes \Yo^s(L) \ar[r]^-{\beta^{max,max}_{X,L}}\ar[d]^\simeq& \colim_{U\in\cC^\Gamma}\Sigma\Yo^s(P_U(X)_{max,max}\otimes L)\ar[dd]^\simeq\\
		\cO^\infty_\homolg(\Rips(X))\otimes \Yo^s(L)\ar[d]^{\alpha_{\Rips(X),L}}&\\
		\cO^\infty_\homolg(*)\otimes\Yo^s(L)\ar[r]^\simeq&\Sigma\Yo^s(L)		
	}\]
	The vertical arrow on the right is an equivalence by \cref{roigirhgreiughreige}.
\end{proof}

{Recall the functor $C^s$ from \eqref{oihiuhfiuwheiuewf23}.} Let $X$ be a $\Gamma$-bornological coarse space.
\begin{lem}
	\label{lem:comparison2}
	Assume:
	\begin{enumerate}
		\item $X$ has strongly bounded geometry.
		\item $X$ is eventually coarsely connected.
	\end{enumerate}
	Then $C^s(\beta_{X,\Gamma_{can,min}}^{max,max})$ and $C^s(\beta_{X,\Gamma_{can,min}}^{d,max})$ are equivalent.
\end{lem}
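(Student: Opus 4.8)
The statement compares the two forget-control maps $\beta_{X,\Gamma_{can,min}}^{max,max}$ and $\beta_{X,\Gamma_{can,min}}^{d,max}$ after applying $C^s$, i.e.\ after forcing continuity. The only difference between the two is the coarse structure on the spaces $P_U(X)$: in the first case it is the maximal coarse structure, in the second it is the metric coarse structure (in both cases the bornology is maximal and the uniform structure is the metric one). So the plan is to produce, for each invariant coarse entourage $U$ of $X$, a natural morphism of cone fiber sequences from the $(u,d,max)$-version to the $(u,max,max)$-version (this morphism exists because there is a coarsening morphism $P_U(X)_{u,d,max} \to P_U(X)_{u,max,max}$ of $\Gamma$-uniform bornological coarse spaces, given by the identity of the underlying set), and then argue that after applying $C^s$ and taking the colimit over $U \in \cC^\Gamma$ this morphism of fiber sequences becomes an equivalence.

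First I would examine the three terms of the cone sequence \eqref{vwlkvewlkvwevewvewve} separately. The middle term $F(X)$ is $\colim_U \Yo^s(\cO(P_U(X)_{bdu}))$, which depends on both structures, but I would not need to treat it directly: by the cone sequence it suffices to handle the outer two terms. For the $F^\infty$-term the key input is \cref{fiiwofwufw9e8fuew9fwefwef}: the cone at infinity $\cO^\infty$ is invariant under coarsenings, so the coarsening $P_U(X)_{u,d,max} \to P_U(X)_{u,max,max}$ induces an equivalence $\cO^\infty(P_U(X)_{u,d,max}) \xrightarrow{\simeq} \cO^\infty(P_U(X)_{u,max,max})$ \emph{already before} applying $C^s$ or taking the colimit. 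Tensoring with $\Yo^s(\Gamma_{can,min})$ and passing to the colimit over $U$ preserves this equivalence, so the $F^\infty$-parts of the two sequences agree. Thus the content is concentrated in the $F^0$-term, i.e.\ in comparing $\colim_U \Yo^s(P_U(X)_{max,max})$ with $\colim_U \Yo^s(P_U(X)_{d,max})$ (both tensored with $\Yo^s(\Gamma_{can,min})$), and this is precisely where continuity has to be forced: the identity map $P_U(X)_{d,max} \to P_U(X)_{max,max}$ is not a coarse equivalence, but I expect both sides to have the same continuous coarse motive.

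The main obstacle is therefore the $F^0$-comparison after applying $C^s$. Here the plan is to use continuity in the form of \cref{feoijewoif234345}/the description of $\Yo^s_c$: a continuous equivariant coarse homology theory is determined by its values on locally finite invariant subsets, and for a space with strongly bounded geometry that is $\Gamma$-finite — or more generally for the simplicial complex $P_U(X)$ which is locally finite because of strongly bounded geometry — the locally finite invariant subsets of $P_U(X)_{max,max}$ and of $P_U(X)_{d,max}$ coincide as $\Gamma$-bornological coarse spaces (a locally finite subset of the maximal bornology is finite, and on a finite subset the maximal and the metric coarse structures agree, exploiting that $P_U(X)$ is coarsely connected — this is where eventual coarse connectedness of $X$ enters, guaranteeing $P_U(X)$ is coarsely connected for large $U$). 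Hence $C^s$ applied to $\Yo^s(P_U(X)_{max,max})$ and to $\Yo^s(P_U(X)_{d,max})$ agree, compatibly with the maps in $U$; passing to the colimit over $U$ and tensoring with $\Yo^s(\Gamma_{can,min})$ — which commutes with $C^s$ up to the comparison functor since the twist is by a bounded-geometry space, cf.\ the discussion around \eqref{oihiuhfiuwheiuewf23} — and then feeding this back into the cone fiber sequence via the five lemma in $\Gamma\Sp\cX_c$, yields that $C^s(\beta_{X,\Gamma_{can,min}}^{max,max})$ and $C^s(\beta_{X,\Gamma_{can,min}}^{d,max})$ are equivalent as morphisms. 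The delicate bookkeeping is to make the comparison natural in $U$ throughout so that it survives the colimit; I would organize this, as the remark after \cref{gkirjoiergegergergerg} suggests, by working on the level of the category $\Gamma\BC^\cC$ and left Kan extending, so that all three functors $F^0, F, F^\infty$ and the coarsening transformation between the two variants are handled simultaneously before evaluating.
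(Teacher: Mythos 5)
The beginning of your proposal is on the right track: using coarsening invariance of $\cO^\infty$ (\cref{fiiwofwufw9e8fuew9fwefwef}) to take care of the $F^\infty$-term, and then focusing on the $F^0$-term where continuity has to be invoked, is exactly the structure of the paper's argument.

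However, there is a genuine gap in how you handle the $F^0$-comparison. You propose to show that $C^s(\Yo^s(P_U(X)_{max,max}))$ and $C^s(\Yo^s(P_U(X)_{d,max}))$ agree by comparing locally finite invariant subsets of $P_U(X)$ itself, and \emph{then} tensor with $\Yo^s(\Gamma_{can,min})$, asserting that this ``commutes with $C^s$ up to the comparison functor since the twist is by a bounded-geometry space.'' This is not correct. The paper's \cref{fliio2fo2fjoijewfwefewfw} states that $-\otimes Z$ descends to continuous motives only when $Z$ is a \emph{bounded} $\Gamma$-bornological coarse space (i.e.\ every subset is bounded), and $\Gamma_{can,min}$ — which carries the minimal bornology of finite subsets — is bounded only when $\Gamma$ is finite. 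So the step ``tensor with $\Yo^s(\Gamma_{can,min})$ after applying $C^s$'' is not available. And the problem is not merely cosmetic: the bornology of $P_U(X)_{?,max}\otimes \Gamma_{can,min}$ is \emph{not} maximal (it is generated by $P_U(X)\times F$ for finite $F\subseteq\Gamma$), so the locally finite invariant subsets of the tensored space are quite different from those of $P_U(X)$ itself, and the ``locally finite $=$ finite'' reduction you rely on simply does not apply to the objects actually appearing in $F^0(X)\otimes\Yo^s(\Gamma_{can,min})$.

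What the paper does instead, and what your approach is missing, is to work \emph{directly} with the tensored spaces $P_U(X)_{?,max}\otimes\Gamma_{can,min}$, constructing a common co-$\Gamma$-bounded (hence trapping) exhaustion $\cD=(D_F)_{F\in\cF(P_U(X))}$ where $D_F=\Gamma(F\times\{1\})\subseteq P_U(X)\times\Gamma$ for finite subcomplexes $F$ (this is \cref{fwoiejoweijfoiwefewfewfew}). These $D_F$ are diagonal, twisted subsets, not products. One then checks that the bornological coarse structures induced on each $D_F$ from $P_U(X)_{max,max}\otimes\Gamma_{can,min}$ and from $P_U(X)_{d,max}\otimes\Gamma_{can,min}$ coincide (this is where eventual coarse connectedness is used, exactly for the reason you identified: to bound the diameter of $BF$ in $P_U(X)$ for finite $B\subseteq\Gamma$). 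Applying continuity then gives the desired equivalence of the right-hand sides, compatibly in $U$. Your proposal would need to be rewritten around this tensored exhaustion rather than trying to factor the $\Gamma_{can,min}$-twist out of $C^s$.
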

\begin{proof}
	The morphism $P_U(X)_{u,d,max}\to P_U(X)_{u,max,max}$ of $\Gamma$-uniform bornological coarse spaces induces a diagram
	\[\xymatrix@C=4em{
		C^s(\cO^\infty(P_U(X)_{u,d,max})\otimes\Yo^s(\Gamma_{can,min}))\ar[r]^-{\beta_{X,\Gamma_{can,min}}^{d,max}}\ar[d]&C^s(\Sigma\Yo^s(P_U(X)_{d,max}\otimes \Gamma_{can,min}))\ar[d]\\
		C^s(\cO^\infty(P_U(X)_{u,max,max})\otimes\Yo^s(\Gamma_{can,min}))\ar[r]^-{\beta_{X,\Gamma_{can,min}}^{max,max}}&C^s(\Sigma\Yo^s(P_U(X)_{max,max}\otimes \Gamma_{can,min}))
	}\]
	Since $P_U(X)_{u,d,max}\to P_U(X)_{u,max,max}$ is a coarsening, the left vertical arrow is an equivalence by \cref{fiiwofwufw9e8fuew9fwefwef}. To show that the right vertical arrow is an equivalence for large entourages $U$ we will need continuity.
	
	For an invariant entourage $U$ of $X$ we denote the set of finite subcomplexes of $P_U(X)$ by $\cF(P_U(X))$. It is a filtered partially ordered set with respect to the inclusion relation.
	For every finite subcomplex $F$ we define a $\Gamma$-invariant subcomplex $D_{F}:=\Gamma(F\times \{1\})$ of $P_U(X) \times \Gamma$.
	We consider the family of $\Gamma$-invariant subsets
	\[\cD:=(D_{F})_{F\in \cF(P_U(X))}\]
	of $P_U(X)\times \Gamma$. By \cref{fwoiejoweijfoiwefewfewfew} the family $\cD$ is a co-$\Gamma$-bounded exhaustion of both spaces $P_U(X)_{max,max}\otimes \Gamma_{can,min}$ and $P_U(X)_{d,max}\otimes \Gamma_{can,min}$.
	By continuity, it suffices to show that the bornological coarse structures on $\cD$ induced from $P_U(X)_{max,max}\otimes \Gamma_{can,min}$ and from $P_U(X)_{d,max}\otimes \Gamma_{can,min}$, respectively, agree.
	
	Since the bornologies of $P_U(X)_{max,max}\otimes \Gamma_{can,min}$ and $P_U(X)_{d,max}\otimes \Gamma_{can,min}$ agree, we only have to care about the coarse structures. Let $U$ be large enough, such that $P_U(X)$ is connected.
	
	The coarse structure on $D_F$ induced by $P_U(X)_{d,max}\otimes \Gamma_{can,min}$ is generated by the entourages
	\[(D_{F}\times D_{F})\cap (U_{r}\times V_{B})\ ,\] where $U_{r}$ is a metric entourage of $P_U(X)$ of size $r$ in $(0,\infty)$ and $V_{B}:=\Gamma(B\times B)$ for a finite subset $B$ is one of the generating entourages of the canonical structure of $\Gamma$.
	
	The coarse structure on $D_F$ induced by $P_U(X)_{max,max}\otimes \Gamma_{can,min}$ is generated by the entourages
	\[(D_{F}\times D_{F})\cap ((P_U(X)\times P_U(X))\times V_{B})\] for finite subsets $B$  of $\Gamma$. 
	It is clear that the coarse structure of the latter is larger than the one of the first, and it remains to show to other inclusion.
	
	We have 
	\[(D_{F}\times D_{F})\cap ((P_U(X)\times P_U(X))\times V_{B})\cong \bigcup_{(\gamma,\gamma^{\prime})\in V_B} (\gamma F\times \gamma^{\prime}F)\times \{(\gamma,\gamma^{\prime})\}\ .\]
	Since $F$ is a finite subcomplex and $P_U(X)$ is connected, there exists an $r $ in $(0,\infty)$ such that
	$BF\times BF\subseteq U_{r}$. 
	This implies that
	\[(\gamma F \times \gamma^{\prime}F)\times \{(\gamma,\gamma^{\prime})\}\subseteq U_{r}\times V_{B}\] for all  pairs $(\gamma,\gamma^{\prime})$ in $V_B$. We conclude that
	\[(D_{F}\times D_{F})\cap ((P_U(X)\times P_U(X))\times V_{B})\subseteq (D_{F}\times D_{F})\cap(U_{r}\times V_{B})\ .\]
	This finishes the proof.		
\end{proof}

Let $X$ be a $\Gamma$-bornological coarse space.
\begin{lem}
	\label{lem:comparison3}
	Assume:
	\begin{enumerate}
		\item $X$ is $\Gamma$-finite.
		\item $X$ has strongly bounded geometry.
		\item the $\Gamma$-action on $X$ is proper.
	\end{enumerate}
	Then the maps $C^s(\beta_{X,\Gamma_{can,min}}^{d,max})$ and $C^s(\beta_{X,\Gamma_{can,max}})$ are equivalent.
\end{lem}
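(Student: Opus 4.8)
The plan is to compare the two forget-control maps by producing a morphism between the relevant diagrams of $\Gamma$-uniform bornological coarse spaces and showing that the induced map becomes an equivalence after applying $C^s$. The only difference between $\beta_{X,\Gamma_{can,min}}^{d,max}$ and $\beta_{X,\Gamma_{can,max}}$ lies in the bornology on the auxiliary spaces: in the first case the Rips complexes $P_U(X)$ carry the maximal bornology, while in the second (i.e. in $\beta_{X,\Gamma_{can,min}}$, which by \cref{243trfd34} uses $P_U(X)_{bdu}$) they carry the bornology $\cB$ generated by the subsets $P_U(B)$ for bounded $B$. Since $X$ is $\Gamma$-finite and proper, it has the minimal bornology, so the bounded subsets of $X$ are precisely the finite ones; since $X$ also has strongly bounded geometry, each $P_U(X)$ is locally finite and $\Gamma$-compact. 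The identity of the underlying sets gives a morphism $P_U(X)_{bdu}\to P_U(X)_{u,d,max}$ of $\Gamma$-uniform bornological coarse spaces (it only enlarges the bornology), natural in $U$, and hence a morphism of the whole cone fiber sequences, producing a commuting square
\[\xymatrix@C=4em{
	C^s(\cO^\infty(P_U(X)_{bdu})\otimes\Yo^s(\Gamma_{can,min}))\ar[r]^-{\beta_{X,\Gamma_{can,min}}}\ar[d]&C^s(\Sigma\Yo^s(P_U(X)_{bd}\otimes\Gamma_{can,min}))\ar[d]\\
	C^s(\cO^\infty(P_U(X)_{u,d,max})\otimes\Yo^s(\Gamma_{can,min}))\ar[r]^-{\beta_{X,\Gamma_{can,min}}^{d,max}}&C^s(\Sigma\Yo^s(P_U(X)_{d,max}\otimes\Gamma_{can,min}))}\]
compatible with the colimits over $\cC^\Gamma$.

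The first step is to see that the left vertical map is an equivalence. Here $\cO^\infty$ depends on the uniform and coarse structures but the argument for invariance under coarsening is not available since the coarse structures agree; instead one uses that enlarging the bornology of a $\Gamma$-uniform bornological coarse space induces an equivalence on cones at infinity, which follows from the observation that $\cO^\infty$ only sees the germ at infinity and, more precisely, from continuity of $\Yo^s_c$: after applying $C^s$ (equivalently, working in $\Gamma\Sp\cX_c$) the motive of a space is determined by its locally finite invariant subsets, and the locally finite subsets of $P_U(X)_{bd}$ and $P_U(X)_{d,max}$ coincide because $P_U(X)$ is locally finite as a simplicial complex, so a subset is locally finite for the $\cB$-bornology iff it is finite iff it is locally finite for the maximal bornology. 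Thus the continuous motives of the two cones agree. The second step handles the right vertical map: here one uses continuity directly, exhibiting a common co-$\Gamma$-bounded (hence trapping) exhaustion $\cD=(D_F)_{F\in\cF(P_U(X))}$ of both $P_U(X)_{bd}\otimes\Gamma_{can,min}$ and $P_U(X)_{d,max}\otimes\Gamma_{can,min}$ as in \cref{fwoiejoweijfoiwefewfewfew}, and checking that the induced bornological coarse structures on each $D_F$ agree — the coarse structures already agree since both are induced from the metric structure on $P_U(X)$, and the bornologies agree on $D_F$ because $D_F$ is $\Gamma(F\times\{1\})$ with $F$ finite, so it only meets finitely many translates of a fixed finite bounded set. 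By \cref{feiuhziur34rt34rf} (and the continuity of $\Yo^s_c$, which $C^s$ realizes on motives) the values of $C^s\circ\Sigma\Yo^s$ on the two spaces are computed as the colimit over $\cD$ of the values on the members, which therefore agree.

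Assembling these two equivalences with the commuting square above and passing to the colimit over $U\in\cC^\Gamma$ yields that $C^s(\beta_{X,\Gamma_{can,min}}^{d,max})$ and $C^s(\beta_{X,\Gamma_{can,min}})=C^s(\beta_{X,\Gamma_{can,max}})$ are equivalent, which is the assertion. The main obstacle I expect is the first step: one must be careful that enlarging the bornology really does induce an equivalence on $C^s\cO^\infty$, since $\cO^\infty$ is a relative motive $\Yo^s(\cO(Y),\cY(Y))$ and the bornology enters both the coarse structure of the cone and the big family $\cY(Y)=([0,n]\times Y)_{n\in\IN}$; the cleanest route is probably to argue as in the proof of \cref{lem:comparison2} via the common trapping exhaustion of the cones themselves (applying \cref{greughiuui43gt3gg} and \cref{fkwehfueuiewiuu423} to handle the extra $[0,\infty)$-direction) rather than trying to compare the ambient spaces directly, and to push everything through $C^s$ so that \cref{feiuhziur34rt34rf} and the continuity of $\Yo^s_c$ are available. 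The remaining verifications — that the relevant maps are morphisms of $\Gamma$-uniform bornological coarse spaces and that the coarse and bornological structures on the $D_F$ match — are routine, exactly parallel to the computations already carried out in \cref{lem:comparison2}.
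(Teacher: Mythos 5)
You have misidentified the two maps that the lemma asks you to compare, and this error propagates through the entire argument. The maps $\beta_{X,\Gamma_{can,min}}^{d,max}$ and $\beta_{X,\Gamma_{can,max}}$ differ in \emph{two} respects, not one: the bornology on the Rips complex ($d,max$ versus $bd$) \emph{and} the twist on the group factor ($\Gamma_{can,min}$ versus $\Gamma_{can,max}$). Your opening sentence, and the diagram you draw, treat the twist as if it were $\Gamma_{can,min}$ in both cases, and you even refer to ``the second (i.e. in $\beta_{X,\Gamma_{can,min}}$\dots)'' when the second map in the statement is $\beta_{X,\Gamma_{can,max}}$. As a consequence your argument (at best) would establish that $C^s(\beta_{X,\Gamma_{can,min}})\simeq C^s(\beta_{X,\Gamma_{can,min}}^{d,max})$, and the concluding step relies on the unproved identity $C^s(\beta_{X,\Gamma_{can,min}})=C^s(\beta_{X,\Gamma_{can,max}})$, which you write as if it were a definitional equality. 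It is not: enlarging the bornology on $\Gamma$ changes both the ambient space $P_U(X)_{bd}\otimes \Gamma_{?,?}$ and the cone, and justifying that $C^s$ does not see this change is essentially a separate non-trivial comparison that you have not addressed. This is the central gap.

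The crucial observation that makes the lemma work (and that your route misses) is that the two changes \emph{compensate}. The paper compares $P_U(X)_{bd}\otimes\Gamma_{can,max}$ with $P_U(X)_{d,max}\otimes\Gamma_{can,min}$: here the larger maximal bornology on the Rips factor is offset by the smaller minimal bornology on the group factor, and the point is that although these two bornological coarse structures differ globally, they induce the \emph{same} structure on each member $Y_\kappa$ of the trapping exhaustion from \cref{fkwehfueuiewiuu423}, while by \cref{fwoiejoweijfoiwefewfewfew} and \cref{fkwehfueuiewiuu423} the exhaustion is trapping for \emph{both}. That is why the comparison must be set up with both changes carried out simultaneously; keeping the twist fixed destroys the cancellation.

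Separately, your auxiliary claim that ``the locally finite subsets of $P_U(X)_{bd}$ and $P_U(X)_{d,max}$ coincide'' is false. Since $P_U(X)_{d,max}$ carries the maximal bornology, its locally finite subsets are precisely the finite ones; but $P_U(X)_{bd}$ has bornology generated by $P_U(B)$ for $B$ finite, so (for instance) the full vertex set $\{\delta_x : x\in X\}$ is locally finite in $P_U(X)_{bd}$ while being infinite whenever $X$ is. So even the first-step comparison you describe cannot be reduced to the statement that the two ambient spaces have the same locally finite subsets; one genuinely has to use a trapping exhaustion of the cone, as you suspect at the end. But fixing that technical point would not rescue the proof, because the comparison is between the wrong pair of maps in the first place.
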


\begin{proof}
{Recall from \cref{243trfd34} that $P_{U}(X)_{bd}$ denotes the $\Gamma$-bornological coarse space whose coarse structure is induced from the metric and whose bornology is generated by the subsets $P_{U}(B)$ for all bounded subsets $B$ of $X$. Recall furthermore that we write $P_{U}(X)_{bdu}$ for the corresponding $\Gamma$-uniform bornological coarse space.}

{Recall from \eqref{eq3t4rger435} that $\Yo_{c}^{s} \simeq C^s \circ \Yo^{s}$.} It suffices to produce diagrams
	\[	\xymatrix{
		\Yo_{c}^{s}([0,k]\otimes  P_U(X)_{bd}\otimes \Gamma_{can,max})\ar[r]\ar[d]^{\simeq} & C^s(\cO(P_U(X)_{bdu})\otimes \Yo^s(\Gamma_{can,max}))\ar[d]^{\simeq}\\
		\Yo_{c}^{s}([0,k]\otimes  P_U(X)_{d,max}\otimes \Gamma_{can,min})\ar[r]&C^s(\cO(P_U(X)_{u,d,max})\otimes \Yo^s(\Gamma_{can,min}))
	}\]
	for every natural number $k$ which are compatible with increasing $k$ and $U$. 
	
	To produce the diagram we will use continuity with the exhaustion
	$\cY=(Y_{\kappa})_{\kappa\in \cF(P_U(X))^{\nat}}$ from \cref{fkwehfueuiewiuu423}, where $\cF(P_U(X))$ denotes the set of all finite subcomplexes of $P_U(X)$.
	Recall that for $\kappa$ in  $\cF(P_U(X))^{\nat}$ we set
	\begin{equation}
	Y_{\kappa}:=\bigcup_{n\in \nat}[n-1,n]\times D_{\kappa(n)}\ .
	\end{equation} 
	Since $P_U(X)_{bd}$ and $P_U(X)_{d,max}$ are $\Gamma$-bounded the exhaustion is trapping by \cref{fkwehfueuiewiuu423} for both spaces
	\[\cO(P_U(X)_{u,d,max }) \otimes \Yo^s(\Gamma_{can,min}) \quad \text{and} \quad \cO(P_U(X)_{bdu})  \otimes \Yo^s(\Gamma_{can,max})\ .\]
	Note that the hybrid coarse structure does not play a role here since trapping exhaustions are a bornological concept.
	
	Since the definition of the exhaustion is independent of $k$ and compatible with increasing $U$, it remains for us to show that the bornological coarse structures on $Y_\kappa$ induced from $\cO(P_U(X)_{bdu})\otimes \Yo^s(\Gamma_{can,max})$ and $\cO(P_U(X)_{u,d,max})\otimes \Yo^s(\Gamma_{can,min})$, respectively, agree {in order to obtain (by continuity) the} equivalences in the above diagram. Since the coarse structures of $\cO(P_U(X)_{bdu})\otimes \Yo^s(\Gamma_{can,max})$ and $\cO(P_U(X)_{u,d,max})\otimes \Yo^s(\Gamma_{can,min})$ agree, we only have to consider the bornologies.
	
	Every bounded subset of 
	$\cO(P_U(X)_{u,d,max })\otimes \Yo^s(\Gamma_{can,min})$ or $\cO(P_U(X)_{bdu})\otimes \Yo^s(\Gamma_{can,max})$ is contained in $[0,n]\times P_U(X)\times \Gamma$ for some $n$. It therefore suffices to see that the induced bornologies on $([0,n]\times P_U(X)\times \Gamma)\cap Y_{\kappa}$ coincide. We can now  further finitely decompose
	\[([0,n]\times P_U(X)\times \Gamma)\cap Y_{\kappa}\subseteq\bigcup_{i=1}^{n+1}[i-1,i]\times D_{\kappa(i)}\ .\]
	It suffices to show that the induced bornologies on $[i-1,i]\times D_{\kappa(i)}$ coincide. For this we have to show that for every $F\in \cF(P_U(X))$ the bornologies on $D_F$ induced from $P_U(X)_{bd}$ and $P_U(X)_{d,max}$, respectively, agree.  
	
	Since $X$ is $\Gamma$-finite and the $\Gamma$-action on $X$ is proper, $X$ carries the minimal bornology. Consequently, every bounded subset of $P_U(X)_{bd}$ is contained in a finite subcomplex. Hence, the bornology on $D_F$ induced by $P_U(X)_{bd}$ is generated by the sets $D_{F}\cap(F^{\prime}\times \Gamma)$ for all $F^{\prime}$ in $\cF({P_U(X)})$. 
	This set is equal to
	\[\big(\bigcup_{\gamma\in \Gamma} \gamma F \times \{\gamma\}\big)\cap (F^{\prime}\times \Gamma)=\bigcup_{\{\gamma\in \Gamma\:|\: \gamma F\cap F^{\prime}\not=\emptyset\}} (\gamma F\cap F^{\prime})\times \{\gamma\}\ . \]
	Note that the index set of the union on the right hand side is finite since the $\Gamma$-action is proper.
	
	The bornology induced by $P_U(X)_{d,max}$ is generated by the sets $D_{F}\cap ({P_U(X)}\times B)$ for all finite subsets $B$ of $\Gamma$. This set can be written in the form
	\[\bigcup_{\gamma\in B} \gamma F\times \{\gamma\}\ .\]
	
	The families of subsets
	\[\Big(\bigcup_{\{\gamma\in \Gamma\mid\gamma F\cap F^{\prime}\not=\emptyset\}} (\gamma F\cap F^{\prime})\times \{\gamma\}\Big)_{F^{\prime}\in \cF({P_U(X)})}\quad \text{and} \quad \Big(\bigcup_{\gamma\in B} \gamma F\times \{\gamma\}\Big)_{B\subseteq \Gamma, |B|<\infty}\] generate the same bornologies. This finishes the proof of the lemma.
\end{proof}

Let $X$ be a $\Gamma$-bornological coarse space.
\begin{lem}
	\label{lem:comparison4}
	Assume:
	\begin{enumerate}
		\item $X$ has strongly bounded geometry.
		\item $X$ is $\Gamma$-finite.
	\end{enumerate}
	Then the morphisms
	$C^s(\beta_{X,\Gamma_{can,max}})$ and $C^s(\beta_{X,\Gamma_{max,max}})$ are equivalent.
\end{lem}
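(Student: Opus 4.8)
\textbf{Proof plan for Lemma~\ref{lem:comparison4}.}
The only difference between the two forget-control maps $\beta_{X,\Gamma_{can,max}}$ and $\beta_{X,\Gamma_{max,max}}$ is the coarse structure on the twist $\Gamma$: in the first case it is the canonical coarse structure $\cC_{can}$, in the second the maximal coarse structure. On the level of $\Gamma$-bornological coarse spaces the identity of the underlying set gives a morphism $\Gamma_{can,max}\to\Gamma_{max,max}$, hence a natural transformation of fiber sequences relating $\beta_{X,\Gamma_{can,max}}$ and $\beta_{X,\Gamma_{max,max}}$. After applying $C^{s}$, equivalently after passing to $\Yo^{s}_{c}$ via \eqref{eq3t4rger435}, the plan is to show that on every space appearing in \cref{gkirjoiergegergergerg} the induced map is an equivalence; since all constructions are filtered colimits over $U\in\cC^{\Gamma}$ of functors applied to $P_{U}(X)_{bdu}\otimes(\text{twist})$, it suffices to treat a single invariant entourage $U$ of $X$ and to check the three building blocks $\Yo^{s}_{c}(\cO(P_{U}(X)_{bdu})\otimes\Gamma_{?,max})$, $\Yo^{s}_{c}(\cO^{\infty}(P_{U}(X)_{bdu})\otimes\Gamma_{?,max})$ and $\Yo^{s}_{c}(P_{U}(X)_{bd}\otimes\Gamma_{?,max})$.

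The key step is a continuity argument of exactly the same shape as in \cref{lem:comparison3}. Using the hypotheses that $X$ has strongly bounded geometry and is $\Gamma$-finite, the simplicial complex $P_{U}(X)$ is $\Gamma$-compact and locally finite, so it is $\Gamma$-bounded; by \cref{greughiuui43gt3gg} together with \cref{fkwehfueuiewiuu423} the exhaustion $\cY=(Y_{\kappa})_{\kappa\in\cF(P_{U}(X))^{\nat}}$, with $Y_{\kappa}=\bigcup_{n\in\nat}[n-1,n]\times D_{\kappa(n)}$ and $D_{F}=\Gamma(F\times\{1\})$, is a trapping exhaustion of both $\cO(P_{U}(X)_{bdu})\otimes\Gamma_{can,max}$ and $\cO(P_{U}(X)_{bdu})\otimes\Gamma_{max,max}$ (and likewise, with $[0,k]$ in place of the ray, for the $P_{U}(X)_{bd}$-versions and for $F^{0}$). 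By continuity of $\Yo^{s}_{c}$ it then suffices to check that the bornological coarse structures on each $Y_{\kappa}$ induced from $\cO(P_{U}(X)_{bdu})\otimes\Gamma_{can,max}$ and from $\cO(P_{U}(X)_{bdu})\otimes\Gamma_{max,max}$ agree. The bornologies are literally the same (the twist carries the maximal bornology in both cases, and the hybrid coarse structure is irrelevant for bornological questions), so the point reduces to the coarse structures. As in \cref{lem:comparison3} we further decompose $([0,n]\times P_{U}(X)\times\Gamma)\cap Y_{\kappa}\subseteq\bigcup_{i=1}^{n+1}[i-1,i]\times D_{\kappa(i)}$ and are left to compare, for a fixed finite subcomplex $F$, the coarse structures on $D_{F}$ induced from $P_{U}(X)_{bd}\otimes\Gamma_{can,min}$-type data versus the maximal-on-$\Gamma$ data. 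On $D_{F}\cong\bigcup_{\gamma\in\Gamma}\gamma F\times\{\gamma\}$ an entourage coming from the maximal structure on $\Gamma$ over a finite piece $B\subseteq\Gamma$ is $\bigcup_{(\gamma,\gamma')\in B\times B}(\gamma F\times\gamma' F)\times\{(\gamma,\gamma')\}$; since $F$ is finite and $P_{U}(X)$ is a fixed (possibly disconnected, but with $\Gamma$ acting $\Gamma$-cocompactly) complex, one uses $\Gamma$-finiteness of $X$ to see $BF\cup B F\subseteq\bigcup_{\gamma\in B}\gamma F$ lies inside finitely many $\Gamma$-translates, whence $\gamma F\times\gamma' F$ sits in a fixed metric entourage $U_{r}$ of $P_{U}(X)$ for all $(\gamma,\gamma')\in B\times B$; this shows the maximal-on-$\Gamma$ coarse structure on $D_{F}$ is already generated by the $\Gamma_{can,min}$-type entourages, exactly as in \cref{lem:comparison3}.

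Assembling these equivalences on $F^{\infty}$, $F$ and $F^{0}$ and using the fiber sequence \eqref{vwlkvewlkvwevewvewve} (which $C^{s}$, being exact, preserves), we conclude that the map of fiber sequences induced by $\Gamma_{can,max}\to\Gamma_{max,max}$ becomes an equivalence after applying $C^{s}$, and in particular $C^{s}(\beta_{X,\Gamma_{can,max}})\simeq C^{s}(\beta_{X,\Gamma_{max,max}})$. The main obstacle I anticipate is purely bookkeeping: making sure the trapping exhaustion $\cY$ and the finite decomposition $[0,n]\times P_{U}(X)\times\Gamma\supseteq Y_{\kappa}$ are genuinely compatible with increasing both $k$ and $U$, so that the pointwise equivalences can be organized into a commuting diagram of filtered colimits of fiber sequences rather than just a levelwise statement; this is where \cref{lem:comparison3} already did the hard work, and the present lemma should follow by essentially repeating that argument with $\Gamma_{can,max}$ and $\Gamma_{max,max}$ in place of $\Gamma_{can,min}$ and $\Gamma_{can,max}$, noting that the crucial finiteness input ($F$ finite, $X$ $\Gamma$-finite with proper action, hence minimal bornology on $P_{U}(X)$) is unchanged.
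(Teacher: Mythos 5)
Your proposed proof has a genuine gap, and it arises in the very step that carries the weight of the argument: the claimed agreement of coarse structures on $D_F$.

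The coarse structures on $D_F$ induced from $P_U(X)_{bd}\otimes\Gamma_{can,max}$ and from $P_U(X)_{bd}\otimes\Gamma_{max,max}$ do \emph{not} agree in general under the hypotheses of this lemma. The generating entourage $(D_F\times D_F)\cap\big(U_r\times(\Gamma\times\Gamma)\big)$ coming from $\Gamma_{max,max}$ records all pairs $\big((x,\gamma),(y,\gamma')\big)$ with $x\in\gamma F$, $y\in\gamma'F$ and $d(x,y)\le r$; for it to be contained in a $\Gamma_{can,max}$-entourage $(D_F\times D_F)\cap(U_r\times V_B)$ one would need the set $\{g\in\Gamma \mid gF\cap U_r[F]\neq\emptyset\}$ to be finite. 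This is a properness statement for the $\Gamma$-action on $P_U(X)$, and properness is \emph{not} among the hypotheses of \cref{lem:comparison4} (it only appears in the overarching \cref{oreorpgergregeg}). For example, take $X=(\Gamma/H)_{min,min}$ with $H$ an infinite subgroup: then $X$ has strongly bounded geometry and is $\Gamma$-finite, but with $U=\diag$ and $F=\{eH\}$ the $\Gamma_{max,max}$-entourage of $D_F$ over $U_0=\diag$ contains the infinitely many pairs indexed by $H$, while every $\Gamma_{can,max}$-entourage meets only finitely many of these. Notice also that in the very same example \emph{there are no non-empty locally finite invariant subsets}, so by continuity both sides of the lemma vanish and the statement holds — but your argument cannot see this, because $D_F$ (a member of a trapping exhaustion, not a locally finite subset) is not trivial and the structures on it genuinely disagree.

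There is a secondary confusion in the justification you offer: the bound ``$\gamma F\times\gamma'F$ sits in a fixed metric entourage $U_r$'' is exactly the \cref{lem:comparison2} argument, which compares the \emph{maximal} with the \emph{metric} coarse structure on $P_U(X)$ while keeping the twist fixed. It does nothing to compare $\Gamma_{can}$ with $\Gamma_{max}$ on the twist, which is what \cref{lem:comparison4} is about. (You also invoke eventual coarse connectedness, which is a hypothesis of \cref{lem:comparison2} but not of this lemma.) Finally, even after restricting to a bounded slab, a finite union $\bigcup_{i}[i-1,i]\times D_{\kappa(i)}$ only helps to compare bornologies — which is why this decomposition works in \cref{lem:comparison3} — but entourages of $Y_\kappa$ can connect distinct pieces, so the reduction to individual $D_{\kappa(i)}$ is not valid for coarse structures.

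The paper's actual proof avoids all of this by splitting the two halves of the fiber sequence differently. For the $F^0$-part it applies continuity with respect to \emph{locally finite invariant subsets} $G$ (not $D_F$): here the key point is that if $X'$ is $\Gamma$-bounded, $A$ is bounded with $\Gamma A = X'$, and $G$ is locally finite, then $W' := G\cap(U[A]\times\Gamma)$ is already \emph{finite}, and one shows $(U\times\Gamma\times\Gamma)\cap(G\times G)\subseteq(U\times\Gamma(W\times W))\cap(G\times G)$ for $W$ the projection of $W'$ — local finiteness, not properness, is what produces the finite set $W$. For the $F^\infty$-part it uses that $\cO^\infty$ is homotopy invariant and excisive, together with the fact that $P_U(X)$ is a $\Gamma$-finite simplicial complex, to reduce to transitive $\Gamma$-sets $S$ with $\cO^\infty(S)\simeq\Sigma\Yo^s(S_{min,min})$ (via a coarsening and flasqueness of $\cO(S_{disc,min,min})$), and then feeds $S_{min,min}$ back into the same continuity argument. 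You would need both of these ideas to repair your proof.
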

\begin{proof}
	The morphism of $\Gamma$-bornological coarse spaces $\Gamma_{can,max}\to \Gamma_{max,max}$ induces the commutative diagram
	\begin{equation}
	\label{eq24354re3}\xymatrix{
	C^s(\cO^\infty(P_U(X)_{bdu})\otimes \Yo^s(\Gamma_{can,max}))\ar[r]\ar[d]& \Sigma \Yo_c^s(P_U(X)_{bd}\otimes \Gamma_{can,max})\ar[d]\\
		C^s(\cO^\infty(P_U(X)_{bdu})\otimes \Yo^s(\Gamma_{max,max}))\ar[r]& \Sigma \Yo_c^s(P_U(X)_{bd}\otimes \Gamma_{max,max})			
	}
	\end{equation}
	The functor $\cO^{\infty}$ from $\Gamma\UBC$ to $\Sp\cX$ is
	homotopy invariant and excisive for equivariant uniform decompositions. Since $X$ is $\Gamma$-finite and has strongly bounded geometry, for every invariant entourage $U$ the complex
	$P_{U}(X)$ is a $\Gamma$-finite simplicial complex.
	Using excision and homotopy invariance we conclude that the left vertical map in the above diagram is an equivalence if 
	\[C^s(\cO^\infty(S)\otimes \Yo^s(\Gamma_{can,max}))\to C^s(\cO^\infty(S)\otimes \Yo^s(\Gamma_{max,max}))\]
	is an equivalence for every $\Gamma$-uniform bornological coarse space $S$ which is a transitive $\Gamma$-set that has the minimal bornology and the discrete uniform structure.
	Note that in this case
	\[\cO^{\infty}(S)\simeq \cO^\infty(S_{disc,min,min})\simeq \Sigma \Yo^{s}(S_{min,min})\ ,\]
	since $S_{disc,min,min}\to S$ is a coarsening and $\cO(S_{disc,min,min})$ is flasque.
	
	If we can show that for every $\Gamma$-bounded $\Gamma$-bornological coarse space $X$ the map
	\[\Yo_c^s(X\otimes\Gamma_{can,max})\to \Yo_c^s(X\otimes\Gamma_{max,max})\]
	is an equivalence, then we can conclude that the right vertical map in \eqref{eq24354re3}  is an equivalence, and by the above argument  the left vertical map is an equivalence, too. The lemma then follows by taking the colimit over all invariant entourages $U$ of $X$.

	By continuity, it suffices to show that if $F$ is a locally finite, invariant subset of $X\times \Gamma_{?,max}$ {(the coarse structure on $\Gamma$ does not matter since local finiteness is a bornological concept)}, then the bornological coarse structures on $F$ induced by $X\times \Gamma_{can,max}$ and $X\times \Gamma_{max,max}$, respectively, agree. Since the bornologies are the same, we only have to care about the coarse structures.
	
	{Every entourage of $F_{\Gamma_{can,max}}$ is an entourage of $F_{\Gamma_{max,max}}$. So it remains to show the other inclusion.}
	We choose a bounded subset $A$ of $X$ such that $\Gamma A=X$. Let $U$ be an invariant entourage of $X$ containing the diagonal.
	Then 
	$U[A]$ is bounded. Furthermore, we have $U\subseteq \Gamma(A\times  U[A])$. The set
	$W':=F\cap (U[A]\times \Gamma)$ is finite since $F$ is locally finite and $U[A]\times \Gamma$ is bounded. We let $W$ denote the projection of $W'$ to $\Gamma$.
	Then we have
	\[(U\times \Gamma\times \Gamma)\cap (F\times F)\subseteq
	(U\times \Gamma (W\times W))\cap (F\times F)\ .\]
	Now note that $\Gamma(W\times W)$ is an entourage of $\Gamma_{can,max}$. This shows that every entourage of $F_{\Gamma_{max,max}}$ is an entourage of $F_{\Gamma_{can,max}}$.
\end{proof}

\cref{oreorpgergregeg} follows from combining \cref{lem:comparison1} (with $L=\Gamma_{can,min}$), \cref{lem:comparison2}, \cref{lem:comparison3} and \cref{lem:comparison4}.

\subsection{Homological properties of pull-backs by the cone}
\label{secjkfwe0002323}

Recall \cref{gkirjoiergegergergerg} of the three functors
$F$, $F^{0}$, and $F^{\infty}$. In this section we analyze the homological properties of the functor  
$F^{\infty} $. It turns out that this functor is almost a coarse homology theory. 
The only problematic axiom is   vanishing  on flasques. In order to improve on this point recall the definition of $\Gamma\Sp\cX_{\wfl}$ from \cref{def:wfl} and consider the composition
\[F^{\infty}_{\wfl}\colon \Gamma\BC\xrightarrow{F^{\infty}} \Gamma\Sp\cX\to \Gamma\Sp\cX_{\wfl}\ .\]
In a similar manner, we derive functors
$F^{0}_{\wfl}$ and $F_{\wfl}$ from $F^{0}$ and $F$, respectively.
For every $\Gamma$-bornological coarse space $X$ we have a fiber sequence in $\Gamma\Sp\cX_{\wfl}$
\begin{equation}\label{fr3f3f3f4ff}
F^{0}_{\wfl}(X)\to F_{\wfl}(X)\to F^{\infty}_{\wfl}(X) \xrightarrow{\beta_{X,\wfl}} \Sigma F^{0}_{\wfl}(X)\ .
\end{equation}
The morphism $\beta_{X,\wfl}$ is a version of the forget-control morphism from \cref{rlkjflewf89233iurz2i3urkjf2ff}.
\begin{prop}\label{roielrgergergg}
The functor
$F_{\wfl}^{\infty}$
is an   equivariant $\Gamma\Sp\cX_{\wfl}$-valued  coarse homology theory.
\end{prop}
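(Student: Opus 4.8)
The plan is to verify each of the four axioms of \cref{fewoijfewoifewofww45535345345} for $F^{\infty}_{\wfl}$, reducing everything to properties of $\Yo^{s}$ and $\cO^{\infty}$ that were established earlier in the paper, combined with the fiber sequence \eqref{fr3f3f3f4ff}. The guiding observation is that $F^{0}$ is essentially $\Yo^{s}$ (via $u$-continuity and the fact that $F_{\cT}(Y)\hookrightarrow ([0,n]\times Y)_{\cO(Y)}$ is an equivalence, cf.\ \cref{wefoiuweiofuewwefwwefwf}), and $F$ is built from $\Yo^{s}(\cO(P_U(X)_{bdu}))$, while $F^{\infty}$ sits in the cone fiber sequence between them. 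Since $\Yo^{s}$ is an equivariant coarse homology theory and the whole construction is a filtered colimit over $U\in\cC^{\Gamma}$ of left Kan extensions (see \cref{ropewfwfwefwef}), three of the four axioms will follow formally; the role of passing to $\Gamma\Sp\cX_{\wfl}$ is exactly to repair the remaining one, vanishing on flasques.

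First I would treat $u$-continuity: by \cref{ropewfwfwefwef} each of $F^{0},F,F^{\infty}$ is a left Kan extension along $\Gamma\BC^{\cC}\to\Gamma\BC$ of a functor factoring through $P\colon(X,U)\mapsto P_U(X)_{bdu}$, so $F^{\infty}(X)\simeq\colim_{U\in\cC^{\Gamma}}\cO^{\infty}(P_U(X)_{bdu})$. For the coarsened space $X_{V}$ with $V\in\cC^{\Gamma}$, the invariant entourages of $X_{V}$ are cofinal among those contained in $V$, and $P_{U}(X_{V})_{bdu}=P_U(X)_{bdu}$ whenever $U\subseteq V$; interchanging the two filtered colimits then gives $\colim_{V\in\cC^{\Gamma}}F^{\infty}(X_{V})\simeq F^{\infty}(X)$, and the same argument applies to $F^{0}$ and $F$. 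Next, coarse invariance: for a coarse equivalence $X\to X'$, or more precisely for the projection $\{0,1\}_{max,max}\otimes X\to X$, one checks that the induced maps $P_U(\{0,1\}_{max,max}\otimes X)_{bdu}\to P_U(X)_{bdu}$ become, after taking the colimit over $U$, morphisms of $\Gamma$-uniform bornological coarse spaces to which \cref{fijofiwewefewf} (homotopy invariance of $\cO^{\infty}$) and the homotopy invariance of $\Yo^{s}$ apply; alternatively, since $P_{\bullet}$ of a space and of its product with $\{0,1\}_{max,max}$ differ by an equivariant coarse homotopy, invariance of $\cO^{\infty}$ and $\Yo^{s}$ suffices. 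For excision, given an equivariant complementary pair $(Z,\cY)$ on $X$, for each $U$ the pair $(P_U(Z),P_U(\cY))$ is an equivariant complementary pair on $P_U(X)$ which is moreover a uniform decomposition (it comes from subcomplexes, cf.\ the example after \cref{fewoifweifoew23242342341}); then \cref{kldjedjoiewufowe23435335} gives that $\cO^{\infty}$ sends it to a push-out, and excisiveness of $\Yo^{s}$ does the same for $F^{0}$ and $F$, so the fiber sequence \eqref{fr3f3f3f4ff} shows $F^{\infty}$ is excisive; filtered colimits over $U$ preserve push-outs in the stable category, and localization to $\Gamma\Sp\cX_{\wfl}$ preserves push-outs as well, so $F^{\infty}_{\wfl}$ inherits excision.

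The main obstacle, and the only axiom requiring the modified target, is vanishing on flasques. Let $X$ be flasque with flasqueness implemented by $f\colon X\to X$. The induced map $f_{*}\colon P_U(X)\to P_{(f\times f)(U)}(X)$ (extended to a self-map of $\Rips(X)$, or of a suitable colimit) implements a weak flasqueness on the relevant cone spaces: condition \ref{iogegergeger1} of \cref{iogegergeger} holds because $f$ is close to $\id_X$ and so $\cO(P_U(X)_{bdu})$, $P_U(X)_{bd}$ receive a self-map equivalent to the identity after $\Yo^{s}$ (hence after $\cO^{\infty}$), while conditions (2) and (3) follow from the corresponding conditions on $f$ together with the description of the bornology of $P_U(X)_{bd}$ by the sets $P_U(B)$. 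Thus $F^{0}(X)$, $F(X)$ and $F^{\infty}(X)$ are each of the form $\colim_U \Yo^{s}(W_U)$ (or $\cO^{\infty}$ thereof) for $\Gamma$-bornological coarse spaces $W_U$ carrying a weakly flasque structure compatibly in $U$; for $F^{0}$ and $F$ this is literally of the shape $\Yo^{s}$ of a weakly flasque space, and for $F^{\infty}$ one uses \cref{ofwpefwefewfewf} to represent $\cO^{\infty}$ by $\Yo^{s}(\cO(Y)_{-})$, which is again weakly flasque. Therefore in $\Gamma\Sp\cX_{\wfl}$, where by \cref{def:wfl} the maps $0\to\Yo^{s}_{\wfl}(\text{weakly flasque})$ are inverted, each of $F^{0}_{\wfl}(X)$, $F_{\wfl}(X)$, $F^{\infty}_{\wfl}(X)$ vanishes; in particular $F^{\infty}_{\wfl}(X)\simeq 0$. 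The one point to be careful about is that the localization functor $\Gamma\Sp\cX\to\Gamma\Sp\cX_{\wfl}$ is colimit-preserving (being a left adjoint), so it commutes with the filtered colimit over $U$ and with the cofiber defining $\cO^{\infty}$ via \eqref{fkhwiufuiz823zr824234242424234234234234}, which is what lets us move the vanishing past these colimits. Combining the four verifications yields that $F^{\infty}_{\wfl}$ is an equivariant $\Gamma\Sp\cX_{\wfl}$-valued coarse homology theory.
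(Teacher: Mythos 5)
Your proposal follows the same overall architecture as the paper's proof: verify the four axioms one at a time, with $u$-continuity being a cofinality check, coarse invariance coming from the convex-combination homotopy in the simplices, excision coming from uniform decompositions of $P_U(X)$ and the excisiveness of $\cO^\infty$, and vanishing on flasques being the one axiom that forces passage to $\Gamma\Sp\cX_{\wfl}$. The ideas and the reduction to $\cO^\infty$ are right. I would flag two places where the argument, as written, is imprecise or slightly deviates from what one actually needs.

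In the excision step you invoke \cref{kldjedjoiewufowe23435335}, but that corollary requires the pair to be simultaneously uniformly \emph{and coarsely} excisive, whereas the example you cite (subcomplexes of a $\Gamma$-simplicial complex) only delivers the uniform decomposition. The paper sidesteps this by invoking the Hausdorff variant from \cref{oerjgroigrgregegergeg}, which drops the coarse-excisiveness hypothesis because $P_U(X)_{bdu}$ is Hausdorff. You should do the same, or else additionally check coarse excisiveness. Closely related: writing ``$(P_U(Z),P_U(\cY))$ is an equivariant complementary pair'' glosses over the fact that $P_U(Y_i)\cup P_U(Z)=P_U(X)$ only holds once $Y_i$ swallows a $U$-thickening of some $Y_{i_0}$ with $Y_{i_0}\cup Z=X$; a $U$-simplex can have vertices on both sides. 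The paper handles this index bookkeeping explicitly, and without it the claim as stated is not literally true for every $i$.

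For flasqueness you take a genuinely different route in the last step: instead of deducing $F^\infty_{\wfl}(X)\simeq 0$ from the fiber sequence \eqref{fr3f3f3f4ff} together with $F^0_{\wfl}(X)\simeq 0$ and $F_{\wfl}(X)\simeq 0$, you use \cref{ofwpefwefewfewf} to write $\cO^\infty(Y)\simeq\Yo^s(\cO(Y)_-)$ and then claim $\cO(Y)_-$ is itself weakly flasque. This is a legitimate alternative, and it has the small advantage of making all three vanishings ``of the same shape.'' But you should actually verify the three conditions of \cref{iogegergeger} for the self-map $\id_\R\times P_{\tilde U}(f)$ of $\cO(P_{\tilde U}(X)_{bdu})_-$; this is not quite the same verification as the paper's for $\cO(P_{\tilde U}(X)_{bdu})$, since the big family and the bornology of $\cO(Y)_-$ extend to $(-\infty,0]$. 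The check goes through (the map preserves the $\R$-coordinate and is Lipschitz on the simplicial factor, and condition 3 uses that bounded subsets of $\cO(Y)_-$ are contained in $[-m,m]\times P_{\tilde U}(B)$), but it should be spelled out rather than asserted. The paper's fiber-sequence route is arguably leaner because it only has to verify weak flasqueness of $\cO(Y)$, not $\cO(Y)_-$, and gets $F^0(X)\simeq 0$ already in $\Gamma\Sp\cX$ from honest flasqueness of $P_{\tilde U}(X)_{bd}$.
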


\begin{proof}
We verify the axioms. 
\begin{enumerate}
\item\label{woifewfewfewfe} (Coarse invariance)
We consider a $\Gamma$-bornological coarse space $X$.
For $i $ in $\{0,1\}$ let 
 \[\iota_{i}\colon X\to \{0,1\}_{max,max}\otimes X\] denote  the  corresponding inclusions. It suffices to show that
 $F_{\wfl}^{\infty}(\iota_{0})$ and 
 $F_{\wfl}^{\infty}(\iota_{1})$ are equivalent.
For every invariant entourage $U $ of $X$ we consider the invariant entourage $\tilde U:=\{0,1\}^{2}\times U$ of $\{0,1\}_{max,max}\otimes X$. Then the
 map \[[0,1]_{du}\otimes  P_{U}(X)_{bdu} \to P_{\tilde U}(\{0,1\}_{max,max}\otimes X)_{bdu}\]
given by 
\[(t,\mu)\mapsto (1-t)\iota_{0,*}\mu +t\iota_{1,*}\mu\]
is a homotopy between the morphisms of $\Gamma$-uniform bornological coarse spaces
\[P_{U}(X)_{bdu}\to P_{\tilde U}(\{0,1\}_{max,max}\otimes X)_{bdu}\] induced by $\iota_{0}$ and $\iota_{1}$.
 
By the homotopy invariance of the functor $\cO^{\infty}$ (\cref{fijofiwewefewf}) we  conclude that
the   morphisms
\[\cO^{\infty}(P_{U}(X)_{bdu})\to \cO^{\infty}(P_{\tilde U}(\{0,1\}_{max,max}\otimes X)_{bdu})\]  induced by $\iota_{0}$ and $\iota_{1}$ are equivalent.
Since the entourages of the form $\tilde U$ for all $U$ in $\cC^{\Gamma}$ are cofinal in the entourages of $\{0,1\}\otimes X$, we get the equivalence of
$F^{\infty}_{\wfl}(\iota_{0})$ and $F^{\infty}_{\wfl}(\iota_{1})$  as desired.
\item (Excision)
Let $X$ be a $\Gamma$-bornological coarse space and $Z$ an invariant subset. For an invariant entourage $U$ of $X$ the subset $P_{U}(Z)$ of $P_{U}(X)_{bdu}$ is invariant and closed.

Let $(\cY,Z)$ be an equivariant complementary pair on $X$ with $\cY=(Y_{i})_{i\in I}$.
Let $i_{0}$ in $I$ be such that $Y_{i_{0}}\cup Z=X$. Let $i_{1}$ in $I$ be such that
$U[Y_{i_{0}}]\subseteq Y_{i_{1}}$. Then for every $i$ in $I$ with $i\ge \max\{i_{0},i_{1}\}$ we have
$P_{U}(Y_{i})\cup P_{U}(Z)=P_{U}(X)$. The pair of invariant subsets
$(P_{U}(Y_{i}), P_{U}(Z))$ is then an equivariant uniform   decomposition of $P_{U}(X)_{bdu}$. By \cref{kldjedjoiewufowe23435335} and \cref{oerjgroigrgregegergeg} the functor   $\cO^{\infty}$ sends equivariant uniform   decompositions to push-outs.  We conclude that for $i $ in $I$ with $i\ge \max\{i_{0},i_{1}\}$
we have a push-out
\[\xymatrix{\cO^{\infty}(P_{U}(Z\cap Y_{i})_{bdu})\ar[r]\ar[d]&\cO^{\infty}(P_{U}(Y_{i})_{bdu})\ar[d]\\ \cO^{\infty}(P_{U}(Z)_{bdu})\ar[r]&\cO^{\infty}(P_{U}(X)_{bdu})}\]
{Since colimits of push-out squares are push-out squares, we now take the colimits over the invariant entourages $U$ in $\cC^{\Gamma}$ and over $i$ in $I$ to get the push-out square}
\[\xymatrix{F^{\infty}( Z\cap \cY)\ar[r]\ar[d]&F^{\infty}( \cY)\ar[d]\\ F^{\infty} (Z)\ar[r]&F^{\infty}(X)}\]
We get  the desired push-out \[\xymatrix{F_{\wfl}^{\infty}( Z\cap \cY)\ar[r]\ar[d]&F_{\wfl}^{\infty}( \cY)\ar[d]\\ F_{\wfl}^{\infty} (Z)\ar[r]&F_{\wfl}^{\infty}(X)}\]
\item (Flasqueness)
We assume that $X$ is flasque with the flasqueness implemented by the equivariant map 
$f\colon X\to X$. For an invariant entourage $U$ of $X$  with the property
$(\id,f)(\diag_{X})\subseteq U$
we form the entourage of $X$
\[\tilde U:=\bigcup_{n\in \nat} (f^{n}\times f^{n})(U)\ .\] Note that  
$(f\times f)(\tilde U)\subseteq \tilde U$. Therefore we have a morphism
\[P_{\tilde U}(f)\colon P_{\tilde U}(X)_{bdu}\to P_{\tilde U}(X)_{bdu}\ .\] Like every simplicial map 
  it is   distance decreasing.  Moreover,
for every $\mu\in P_{\tilde U}(X)$ we have \[d(\mu,P_{\tilde U}(f)(\mu))\le 2\ .\]
Finally, if $B$ is a bounded subset of $X$  and $n$ is an integer  such that $\tilde U[B]\cap f^{n}(X)=\emptyset$, then \[P_{\tilde U}(f^{n})(P_{\tilde U}(X))\cap P_{\tilde U}(B)=\emptyset\ .\]

We conclude that $P_{\tilde U}(f)$ implements flasqueness of the bornological coarse space $P_{\tilde U}(X)_{bd}$.
The set of invariant entourages of the form $\tilde U$ as above is cofinal in all invariant entourages of $X$. Therefore, we get $F^{0}(X)\simeq 0$ and hence $F^{0}_{\wfl}(X)\simeq 0$ by taking the colimit over these entourages.

We  now  claim that
$\cO(P_{\tilde U}(f))$ implements weak flasqueness of $\cO(P_{\tilde U}(X)_{bdu})$. In the following we verify the conditions stated in \cref{iogegergeger}. 

Since $f$ is $U$-close to $\id_{X}$, as in \ref{woifewfewfewfe} we can conclude that the map
$P_{\tilde U}(f)_{bdu}$ is uniformly homotopic to $\id_{P_{\tilde U}(X)_{bdu} }$.
By the homotopy invariance of
$\cO$ we conclude that
\[\Yo^{s}(\cO(P_{\tilde U}(f)))\simeq \id_{\Yo^{s}(\cO(P_{\tilde U}(f)_{bdu}))}\]
as required in \cref{iogegergeger}.\ref{iogegergeger1}.

In order to save notation we define the map 
 \[Q\colon \cP([0,\infty)\times P_{\tilde U}(X)\times [0,\infty)\times P_{\tilde U}(X))\to \cP([0,\infty)\times P_{\tilde U}(X)\times [0,\infty)\times P_{\tilde U}(X) )\]  by  \[Q(V):=\bigcup_{n\in \nat} \big(([0,\infty)\times  P_{\tilde U}(f))^{n}\times ([0,\infty)\times  P_{\tilde U}(f))^{n} \big) (V)\ .\]

 Let now $V$ be an entourage of $\cO(P_{\tilde U}(X)_{bdu})$.  We must show that $Q(V)$ is again 
an entourage of $\cO(P_{\tilde U}(X)_{bdu})$.
After enlarging $V$ we can assume that it is of the form $V=U_{\psi}\cap W_{r}$ as in the proof of \cref{ifeioewfueoiwfwfewfwf}, {where the function $\phi$ (which is the first component of $\psi$) is such that $\phi(i)$ is a uniform entourage of the form $U_{r(i)}$ for every $i$ in $\nat$, see \eqref{fhbeufhiu24r}.} Since $P_{\tilde U}(f)$ is distance decreasing we see that $Q(W_{r})\subseteq W_{r}$. Since $P_{\tilde U}(f)$ preserves the first coordinate of the cone {and is distance decreasing} we also see that $Q(U_{\psi})\subseteq {U_{\psi}}$. Hence we actually get $Q(V)\subseteq V$.
 
 Finally, for every bounded subset $A$ of $\cO(P_{\tilde U}(X)_{bdu})$ there exists $r$ in $(0,\infty)$ and a bounded subset $B$ of $X$ such that $A\subseteq [0,r]\times P_{\tilde U}(B)$. We can choose an integer $n$  such that $f^{n}(X)\cap B =\emptyset$.  Then $\cO(P_{\tilde U}(f))^{n}(\cO(P_{\tilde U}(X)_{bdu}))\cap A=\emptyset$.

We conclude that
\[\Yo^{s}_{\wfl}(\cO(P_{\tilde U}(X)_{bdu})) \simeq 0\ .\]
 Taking the colimit over the invariant entourages entourages $U$ and again using the cofinality of the resulting family of entourages $\tilde U$
 we get $F_{\wfl}(X)\simeq 0$. 
 
From  the fiber sequence \eqref{fr3f3f3f4ff}
we now conclude that
\[F^{\infty}_{\wfl}(X)\simeq 0\ .\]

\item ($u$-continuity) This is just a cofinality check:
\[\mathclap{
\colim_{U\in \cC^{\Gamma}} F^{\infty}_{\wfl}(X_{U})\simeq  \colim_{U\in \cC^{\Gamma}} \colim_{V\in \cC\langle U\rangle^{\Gamma}} \cO^{\infty}_{\wfl}(P_{V}(X)_{bdu})\simeq \colim_{V\in \cC^{\Gamma}} \cO^{\infty}_{\wfl}(P_{V}(X)_{bdu})\simeq F^{\infty}_{\wfl}(X) \ .
}\]
\end{enumerate}
This finishes the proof of \cref{roielrgergergg}.
\end{proof}

\begin{rem}
Let $X$ be flasque. In the above proof we have shown that $F_{\wfl}(X)\simeq 0$. Note that we do not expect that $F(X)\simeq 0$.
\end{rem}

Let $E$ be a strong $\Gamma$-equivariant $\bC$-valued coarse homology theory. Then we have an essentially unique factorization $E_{\wfl}\colon \Gamma\Sp\cX_{\wfl} \to \bC$. The composition
\[E_{\wfl}\circ F^{\infty}_{\wfl}\colon \Gamma\BC\to \bC\]
is then a $\Gamma$-equivariant $\bC$-valued coarse homology theory. We have an equivalence
\[E\circ F^{\infty}\simeq E_{\wfl}\circ F^{\infty}_{\wfl}\ .\]

\begin{kor}
If $E$ is a strong  equivariant $\bC$-valued coarse homology theory, then 
 \[E\circ F^{\infty}\colon \Gamma \BC\to  \bC\] is a $\Gamma$-equivariant coarse homology theory. 
\end{kor}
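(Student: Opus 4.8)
The statement to prove is the final corollary: if $E$ is a strong equivariant $\bC$-valued coarse homology theory, then $E \circ F^\infty \colon \Gamma\BC \to \bC$ is a $\Gamma$-equivariant coarse homology theory.

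The plan is to reduce this to \cref{roielrgergergg}, which asserts that $F^\infty_\wfl \colon \Gamma\BC \to \Gamma\Sp\cX_\wfl$ is an equivariant $\Gamma\Sp\cX_\wfl$-valued coarse homology theory. The key observation, already recorded in the paragraph immediately preceding the corollary, is that a strong equivariant $\bC$-valued coarse homology theory $E$ factors essentially uniquely through $\Gamma\Sp\cX_\wfl$: by \cref{lkjiooiwoigewgewgwegfw123} the functor $E$ corresponds to a colimit-preserving functor $\Gamma\Sp\cX \to \bC$, and strongness (\cref{foijofifwefwefewfw}) means exactly that this functor sends the morphisms $0 \to \Yo^s(X)$ for weakly flasque $X$ to equivalences, hence by the universal property of the localization $\Gamma\Sp\cX_\wfl$ (\cref{def:wfl}) it descends to a colimit-preserving functor $E_\wfl \colon \Gamma\Sp\cX_\wfl \to \bC$ with $E \simeq E_\wfl \circ (\text{localization})$, and therefore $E \circ F^\infty \simeq E_\wfl \circ F^\infty_\wfl$.

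Granting this, the proof is short: $F^\infty_\wfl$ is a $\Gamma\Sp\cX_\wfl$-valued equivariant coarse homology theory by \cref{roielrgergergg}, and $E_\wfl$ is colimit-preserving (being the factorization of the colimit-preserving functor corresponding to $E$). A colimit-preserving functor out of $\Gamma\Sp\cX_\wfl$ composed with a $\Gamma\Sp\cX_\wfl$-valued equivariant coarse homology theory is again an equivariant coarse homology theory, because each of the four defining properties of \cref{fewoijfewoifewofww45535345345} — coarse invariance, excision, vanishing on flasques, and $u$-continuity — is expressed in terms of equivalences and pushout squares (equivalently, certain colimits), all of which are preserved by a colimit-preserving functor; in particular vanishing on flasques follows since an ordinary flasque space is in particular weakly flasque, so $F^\infty_\wfl$ already vanishes on it and hence so does $E_\wfl \circ F^\infty_\wfl$. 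Composing with the equivalence $E \circ F^\infty \simeq E_\wfl \circ F^\infty_\wfl$ finishes the argument.

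There is no real obstacle here; the content of the corollary has been front-loaded into \cref{roielrgergergg} and into the discussion of the factorization $E_\wfl$ just above the corollary statement. The only thing to be careful about is the routine verification that composition of an equivariant coarse homology theory valued in a stable cocomplete $\infty$-category with a colimit-preserving stable functor yields an equivariant coarse homology theory, but this is immediate from the formulation of the axioms and is the direct analogue of the non-equivariant statement; indeed the whole point of the construction of $\Gamma\Sp\cX$ (and of $\Gamma\Sp\cX_\wfl$) is to make this bookkeeping automatic, as encoded in \cref{lkjiooiwoigewgewgwegfw123} and \cref{lkjiooiwoi123gewgewgwegfw1}.

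\begin{proof}
Since $E$ is strong, the colimit-preserving functor $\Gamma\Sp\cX\to \bC$ corresponding to $E$ under \cref{lkjiooiwoigewgewgwegfw123} sends $0\to \Yo^s(X)$ to an equivalence for every weakly flasque $\Gamma$-bornological coarse space $X$. By the universal property of the localization $\Gamma\Sp\cX_\wfl$ (\cref{def:wfl}) it therefore factors as $E_\wfl\circ(\text{localization})$ for an essentially unique colimit-preserving functor $E_\wfl\colon \Gamma\Sp\cX_\wfl\to \bC$, and consequently
\[E\circ F^{\infty}\simeq E_{\wfl}\circ F^{\infty}_{\wfl}\ .\]
By \cref{roielrgergergg}, $F^{\infty}_{\wfl}$ is a $\Gamma\Sp\cX_\wfl$-valued equivariant coarse homology theory. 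Since $E_\wfl$ is colimit-preserving and $\Gamma\Sp\cX_\wfl$ and $\bC$ are stable, $E_\wfl$ preserves the equivalences and the pushout squares appearing in the four axioms of \cref{fewoijfewoifewofww45535345345}; moreover every flasque $\Gamma$-bornological coarse space is weakly flasque, so $F^{\infty}_{\wfl}$ already vanishes on flasques and hence so does $E_\wfl\circ F^{\infty}_{\wfl}$. Thus $E_\wfl\circ F^{\infty}_{\wfl}$, and therefore $E\circ F^{\infty}$, is a $\Gamma$-equivariant $\bC$-valued coarse homology theory.
\end{proof}
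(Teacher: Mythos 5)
Your proof is correct and follows the same route the paper intends: factor the colimit-preserving functor corresponding to $E$ through $\Gamma\Sp\cX_\wfl$ using strongness (\cref{lkjiooiwoi123gewgewgwegfw1}), identify $E\circ F^\infty\simeq E_\wfl\circ F^\infty_\wfl$, and invoke \cref{roielrgergergg} together with the observation that a colimit-preserving functor between cocomplete stable $\infty$-categories preserves the four defining properties. This is exactly the content of the paragraph preceding the corollary in the paper, so there is no divergence in approach.
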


Let $E$ be an equivariant coarse homology theory and $Q$ be a $\Gamma$-bornological coarse space.

\begin{lem}
If $E$ is strong, then $E_{Q}$ is also strong. 
\end{lem}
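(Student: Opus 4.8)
The statement to prove is: if $E$ is a strong equivariant coarse homology theory and $Q$ is a $\Gamma$-bornological coarse space, then the twisted theory $E_{Q} = E(-\otimes Q)$ is also strong. Recall (\cref{foijofifwefwefewfw}) that strongness means $E(X)\simeq 0$ for every \emph{weakly flasque} $\Gamma$-bornological coarse space $X$ (\cref{iogegergeger}). So the heart of the matter is to show that whenever $X$ is weakly flasque, the product $X\otimes Q$ is again weakly flasque; once this is established, $E_{Q}(X)=E(X\otimes Q)\simeq 0$ follows immediately from the strongness of $E$.

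The plan is as follows. Suppose weak flasqueness of $X$ is implemented by a morphism $f\colon X\to X$ satisfying the three conditions of \cref{iogegergeger}: $\Yo^{s}(f)\simeq \id_{\Yo^{s}(X)}$, for every entourage $U$ of $X$ the set $\bigcup_{n\in\nat}(f^{n}\times f^{n})(U)$ is an entourage, and for every bounded $B\subseteq X$ there is $n$ with $\Gamma B\cap f^{n}(X)=\emptyset$. I would then consider the morphism $f\otimes \id_{Q}\colon X\otimes Q\to X\otimes Q$ and check the three conditions for it. The key point for the first condition is that $\Yo^{s}$ is symmetric monoidal (\cref{lemjine2}), so $\Yo^{s}(f\otimes\id_{Q})\simeq \Yo^{s}(f)\otimes\id_{\Yo^{s}(Q)}\simeq \id_{\Yo^{s}(X)}\otimes\id_{\Yo^{s}(Q)}\simeq \id_{\Yo^{s}(X\otimes Q)}$. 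The second condition follows because the coarse structure on $X\otimes Q$ is that of the cartesian product: a generating entourage has the form $U\times V$ with $U$ an entourage of $X$ and $V$ an entourage of $Q$, and $\bigcup_{n}((f\otimes\id)^{n}\times(f\otimes\id)^{n})(U\times V) = \big(\bigcup_{n}(f^{n}\times f^{n})(U)\big)\times V$, which is again an entourage since $\bigcup_{n}(f^{n}\times f^{n})(U)$ is one by hypothesis and $V$ is fixed. The third condition follows since every bounded subset of $X\otimes Q$ is contained in $B\times B_{Q}$ for bounded $B\subseteq X$, $B_{Q}\subseteq Q$; choosing $n$ with $\Gamma B\cap f^{n}(X)=\emptyset$ gives $\Gamma(B\times B_{Q})\cap (f\otimes\id_{Q})^{n}(X\otimes Q)\subseteq (\Gamma B\cap f^{n}(X))\times Q=\emptyset$.

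Thus $f\otimes\id_{Q}$ implements weak flasqueness of $X\otimes Q$, and the lemma follows. I expect the main (minor) obstacle to be the bookkeeping around the precise form of the coarse structure and bornology on $X\otimes Q$ — in particular making sure that the generating entourages $U\times V$ and generating bounded sets $B\times B_{Q}$ are the right objects to test the conditions on, and that $(f\otimes\id_{Q})^{n}=f^{n}\otimes\id_{Q}$ as maps of sets so that the set-theoretic identities above hold on the nose. None of this is deep; it is the same style of verification as in the proof of \cref{roielrgergergg}, where similar conditions were checked for cone constructions. So the proof will be short: reduce to showing $X\otimes Q$ is weakly flasque, exhibit $f\otimes\id_{Q}$ as an implementing morphism, verify the three conditions using the monoidality of $\Yo^{s}$ and the explicit description of $\otimes$ on $\Gamma\BC$, and conclude.

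\begin{proof}
By \cref{foijofifwefwefewfw} it suffices to show that $E_{Q}(X)\simeq 0$ for every weakly flasque $\Gamma$-bornological coarse space $X$. Since $E_{Q}(X)=E(X\otimes Q)$ and $E$ is strong, it is enough to show that $X\otimes Q$ is weakly flasque whenever $X$ is.

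Let $f\colon X\to X$ implement weak flasqueness of $X$ in the sense of \cref{iogegergeger}. We claim that $f\otimes\id_{Q}\colon X\otimes Q\to X\otimes Q$ implements weak flasqueness of $X\otimes Q$. First, since $\Yo^{s}$ is symmetric monoidal (\cref{lemjine2}), we have
\[
\Yo^{s}(f\otimes\id_{Q})\simeq \Yo^{s}(f)\otimes\id_{\Yo^{s}(Q)}\simeq \id_{\Yo^{s}(X)}\otimes\id_{\Yo^{s}(Q)}\simeq \id_{\Yo^{s}(X\otimes Q)}\ ,
\]
which is Condition \ref{iogegergeger1} of \cref{iogegergeger}.

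Second, the coarse structure of $X\otimes Q$ is that of the cartesian product, hence generated by entourages of the form $U\times V$ with $U$ an entourage of $X$ and $V$ an entourage of $Q$. Using that $(f\otimes\id_{Q})^{n}=f^{n}\otimes\id_{Q}$ on underlying sets, we compute
\[
\bigcup_{n\in\nat}\big((f\otimes\id_{Q})^{n}\times(f\otimes\id_{Q})^{n}\big)(U\times V)=\Big(\bigcup_{n\in\nat}(f^{n}\times f^{n})(U)\Big)\times V\ ,
\]
which is again an entourage of $X\otimes Q$ because $\bigcup_{n\in\nat}(f^{n}\times f^{n})(U)$ is an entourage of $X$ by hypothesis.

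Third, every bounded subset of $X\otimes Q$ is contained in a set of the form $B\times B_{Q}$ with $B$ a bounded subset of $X$ and $B_{Q}$ a bounded subset of $Q$. Choose an integer $n$ such that $\Gamma B\cap f^{n}(X)=\emptyset$. Then
\[
\Gamma(B\times B_{Q})\cap (f\otimes\id_{Q})^{n}(X\otimes Q)\subseteq (\Gamma B\cap f^{n}(X))\times Q=\emptyset\ .
\]
This verifies all conditions of \cref{iogegergeger}, so $X\otimes Q$ is weakly flasque. Consequently $E_{Q}(X)=E(X\otimes Q)\simeq 0$, and $E_{Q}$ is strong.
\end{proof}
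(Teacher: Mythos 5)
Your proof is correct and takes exactly the same approach as the paper: show that $f \otimes \id_Q$ implements weak flasqueness of $X \otimes Q$ and conclude by strongness of $E$. The paper's own proof simply asserts that $f \otimes \id_Q$ works without carrying out the verification, which you spell out correctly.
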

\begin{proof}
If $X$ is a weakly flasque $\Gamma$-bornological coarse space with weak flasqueness implemented by $f\colon X\to X$, then
$f\otimes \id_{Q}$ implements weak flasqueness of $
X\otimes Q$. This implies the lemma.
\end{proof}

If the underlying $\Gamma$-set of $Q$ is free, then by \cref{wefewfefewfwfwfw} we have
\[E_{Q}(F^{0}(X))\simeq E(F^{0}(X)\otimes \Yo^{s}(Q))\simeq E(X\otimes Q)\simeq E_{Q}(X)\ .\] In particular,
the functor
\[E_{Q}\circ F^{0}\colon \Gamma\BC\to \bC\]
is an equivariant coarse homology theory.

Let $Q$ be a $\Gamma$-bornological coarse space and $E$ be an   equivariant   coarse homology theory.

\begin{kor} 
If $E$ is   strong  and  the underlying $\Gamma$-set of $Q$ is free, then the forget-control map
\[\beta \colon E_{Q}\circ F^{\infty}\to \Sigma E_{Q}\circ F^{0}\]
is a transformation between equivariant coarse homology theories.\end{kor}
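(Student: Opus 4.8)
The statement to prove is a \textbf{Corollary}: if $E$ is a strong equivariant coarse homology theory and the underlying $\Gamma$-set of $Q$ is free, then the forget-control transformation $\beta\colon E_{Q}\circ F^{\infty}\to \Sigma E_{Q}\circ F^{0}$ is a transformation between equivariant coarse homology theories.

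The plan is to assemble the pieces already established just above the statement. First I would observe that $\beta$ is the natural transformation obtained by applying the colimit-preserving functor $E_{Q}(-) = E(-\otimes\Yo^{s}(Q))$ to the forget-control transformation $\beta_{X}\colon F^{\infty}(X)\to\Sigma F^{0}(X)$ coming from the fiber sequence \eqref{vwlkvewlkvwevewvewve}; since $E_{Q}$ is colimit-preserving and exact, and $\beta$ is built functorially from $F^{\infty}$, $F^{0}$, $F$ via Kan extension (see \cref{ropewfwfwefwef}), the assignment $X\mapsto\beta_{X}$ is indeed natural, so the only real content is that source and target are equivariant coarse homology theories.

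For the \emph{source} $E_{Q}\circ F^{\infty}$: by \cref{roielrgergergg} the functor $F^{\infty}_{\wfl}$ is an equivariant $\Gamma\Sp\cX_{\wfl}$-valued coarse homology theory. Since $E$ is strong, it factors essentially uniquely as $E_{\wfl}\colon\Gamma\Sp\cX_{\wfl}\to\bC$, and the twist $E_{Q}$ is again strong by the \cref{foijofifwefwefewfw}-lemma just stated (if $f$ implements weak flasqueness of $X$, then $f\otimes\id_{Q}$ does so for $X\otimes Q$), hence it too factors through $\Gamma\Sp\cX_{\wfl}$. Therefore $E_{Q}\circ F^{\infty}\simeq (E_{Q})_{\wfl}\circ F^{\infty}_{\wfl}$, a composite of a colimit-preserving functor out of $\Gamma\Sp\cX_{\wfl}$ with the coarse homology theory $F^{\infty}_{\wfl}$; composing an equivariant coarse homology theory with a colimit-preserving exact functor yields an equivariant coarse homology theory (coarse invariance, excision, vanishing on flasques and $u$-continuity are all preserved), which is exactly the content of the Corollary just above in the text. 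For the \emph{target} $\Sigma E_{Q}\circ F^{0}$: here I would invoke \cref{wefewfefewfwfwfw}, which gives $F^{0}(X)\otimes\Yo^{s}(Q)\simeq\Yo^{s}(X)\otimes\Yo^{s}(Q)$ for $Q$ with free underlying $\Gamma$-set, naturally in $X$; hence $E_{Q}\circ F^{0}\simeq E_{Q}\circ\Yo^{s} = E_{Q}$ as functors $\Gamma\BC\to\bC$, and $E_{Q}$ is an equivariant coarse homology theory (twisting a coarse homology theory by a fixed coarse motivic spectrum preserves all four axioms, since $-\otimes\Yo^{s}(Q)$ is colimit-preserving and $\Yo^{s}$ sends big families, complementary pairs, flasques and the $u$-continuity colimit correctly). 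Suspension $\Sigma$ is an equivalence of $\bC$, so $\Sigma E_{Q}\circ F^{0}$ is again an equivariant coarse homology theory.

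The main obstacle — really the only point requiring care — is making the identification $E_{Q}\circ F^{\infty}\simeq (E_{Q})_{\wfl}\circ F^{\infty}_{\wfl}$ precise and confirming that $(E_{Q})_{\wfl}$ is genuinely colimit-preserving, so that \cref{roielrgergergg} can be fed into the machinery of \cref{lkjiooiwoigewgewgwegfw123}/\cref{lkjiooiwoi123gewgewgwegfw1}. This is routine given that $\Gamma\Sp\cX_{\wfl}$ is defined as a localization of $\Gamma\Sp\cX$ and that strong coarse homology theories correspond to colimit-preserving functors out of it, but it is the step where one must actually cite the universal properties rather than wave hands. Once this is in place, the conclusion that $\beta$ is a morphism of equivariant coarse homology theories is immediate since a natural transformation between such functors lands automatically in the correct $\infty$-category.

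\begin{proof}
By \cref{ropewfwfwefwef} the functors $F^{\infty}$, $F$, $F^{0}$ are obtained by left Kan extension along $\Gamma\BC^{\cC}\to\Gamma\BC$ from the functors built out of $\cO^{\infty}$, $\cO$, $\cF_{\cT}$ and the cone sequence of \cref{wefoiuweiofuewwefwwefwf}, and $\beta$ is likewise induced by left Kan extension from this cone sequence. In particular $X\mapsto\beta_{X}$ is a natural transformation of functors $\Gamma\BC\to\Gamma\Sp\cX$, and applying the colimit-preserving exact functor $E_{Q}(-)=E(-\otimes\Yo^{s}(Q))$ yields the natural transformation $\beta\colon E_{Q}\circ F^{\infty}\to\Sigma E_{Q}\circ F^{0}$. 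It remains to verify that source and target are equivariant coarse homology theories.

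For the target we use \cref{wefewfefewfwfwfw}: since the underlying $\Gamma$-set of $Q$ is free, there is a natural equivalence $F^{0}(X)\otimes\Yo^{s}(Q)\simeq\Yo^{s}(X)\otimes\Yo^{s}(Q)$, hence $E_{Q}\circ F^{0}\simeq E_{Q}\circ\Yo^{s}=E_{Q}$ as functors $\Gamma\BC\to\bC$. As $-\otimes\Yo^{s}(Q)\colon\Gamma\Sp\cX\to\Gamma\Sp\cX$ preserves colimits (\cref{lemjine2}), $E_{Q}=E\circ(-\otimes\Yo^{s}(Q))$ is a colimit-preserving functor on $\Gamma\Sp\cX$; precomposing with $\Yo^{s}$ and using \cref{lkjiooiwoigewgewgwegfw123} shows that $E_{Q}$, and hence $\Sigma E_{Q}\circ F^{0}$, is a $\bC$-valued equivariant coarse homology theory.

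For the source we use that $E$ is strong. By \cref{lkjiooiwoi123gewgewgwegfw1} there is an essentially unique colimit-preserving functor $E_{\wfl}\colon\Gamma\Sp\cX_{\wfl}\to\bC$ with $E\simeq E_{\wfl}\circ C^{s}_{\wfl}$ (where $C^{s}_{\wfl}\colon\Gamma\Sp\cX\to\Gamma\Sp\cX_{\wfl}$ is the localization). Moreover $E_{Q}$ is again strong: if $f$ implements weak flasqueness of a $\Gamma$-bornological coarse space $X$, then $f\otimes\id_{Q}$ implements weak flasqueness of $X\otimes Q$, so $E_{Q}$ vanishes on weakly flasque spaces. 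Hence $E_{Q}$ factors as $(E_{Q})_{\wfl}\circ C^{s}_{\wfl}$ with $(E_{Q})_{\wfl}$ colimit-preserving. Therefore
\[
E_{Q}\circ F^{\infty}\simeq (E_{Q})_{\wfl}\circ C^{s}_{\wfl}\circ F^{\infty}\simeq (E_{Q})_{\wfl}\circ F^{\infty}_{\wfl}\ .
\]
By \cref{roielrgergergg} the functor $F^{\infty}_{\wfl}$ is a $\Gamma\Sp\cX_{\wfl}$-valued equivariant coarse homology theory, and $(E_{Q})_{\wfl}$ is colimit-preserving and exact; a colimit-preserving exact functor applied to an equivariant coarse homology theory again yields one, since coarse invariance, excision, vanishing on flasques and $u$-continuity are all colimits/cofibers that are preserved. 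Thus $E_{Q}\circ F^{\infty}$ is a $\bC$-valued equivariant coarse homology theory.

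Consequently $\beta$ is a natural transformation between two equivariant coarse homology theories with values in $\bC$, which is precisely the assertion.
\end{proof}
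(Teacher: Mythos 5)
Your proof is correct and follows essentially the same route the paper intends: the corollary carries no written proof in the paper because it is exactly the combination of the three preceding observations (the strongness of $E_Q$, the factorization $E_Q\circ F^{\infty}\simeq (E_Q)_{\wfl}\circ F^{\infty}_{\wfl}$ with $F^{\infty}_{\wfl}$ a $\Gamma\Sp\cX_{\wfl}$-valued coarse homology theory, and $E_Q\circ F^0\simeq E_Q$ via \cref{wefewfefewfwfwfw}), and your write-up assembles these correctly and spells out the needed universal-property arguments.
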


This aspect of the theory (in the case of a trivial group $\Gamma$) is further studied in \cite{ass}.

\bibliographystyle{alpha}
\bibliography{born-equiv}
\end{document}